\numberwithin{equation}{subsection}
\newtheorem{theorem}{Theorem}[subsection]
\newtheorem{proposition}[theorem]{Proposition}
\newtheorem{corollary}[theorem]{Corollary}
\newtheorem{conjecture}[theorem]{Conjecture}
\newtheorem{lemma}[theorem]{Lemma}
\theoremstyle{remark}
\newtheorem{remark}[theorem]{Remark}
\newtheorem{example}[theorem]{Example}
\newtheorem{notation}[theorem]{Notation}
\theoremstyle{definition}
\newtheorem{definition}[theorem]{Definition}
\def\beq{\begin{eqnarray}}
\def\eeq{\end{eqnarray}}
\def\bes{\begin{eqnarray*}}
\def\ees{\end{eqnarray*}}
\def\omhat{{\bm\omega}}
\def\tauhat{{\bm \tau}}
\def\muhat{{\bm \mu}}
\def\ttauhat{{\tilde{\tauhat}}}
\def\betahat{{\bm \beta}}
\def\thetahat{{\bm \theta}}
\def\zetahat{{\bm\zeta}}
\def\xihat{{\bm \xi}}
\def\lambdahat{{\bm \lambda}}
\DeclareMathOperator{\Tr}{Tr}
\DeclareMathOperator*{\rightoverleft}{\parbox{2em}{\centerline{$\longrightarrow$}\vskip -6pt\centerline{$\longleftarrow$}}}
\def\A{{\bf A}}
\def\Proj{{\rm Proj}\,}
\def\C{\mathbb{C}}
\def\M{{\mathcal{M}}}
\def\calQ{{\mathcal{Q}}}
\def\calA{{\mathcal{A}}}
\def\calX{{\mathcal{X}}}
\def\calE{{\mathcal{E}}}
\def\calV{{\mathcal{V}}}
\def\calF{{\mathcal{F}}}
\def\calP{\mathcal{P}}
\def\x{\mathbf{x}}
\def\v{\mathbf{v}}
\def\u{\mathbf{u}}
\def\e{\mathbf{e}}
\def\w{\mathbf{w}}
\def\IC{\mathcal{IC}^{\bullet}_}
\def\pIC{\underline{\mathcal{IC}}^{\bullet}_}
\def\P{\mathcal{P}}
\def\H{\mathbb{H}}
\def\tnu{{\tilde{\nu}}}
\def\tmu{{\tilde{\mu}}}
\def\tomega{{\tilde{\omega}}}
\def\tomhat{{\tilde{\omhat}}}
\def\ttau{{\tilde{\tau}}}
\def\ttauhat{{\tilde{\tauhat}}}
\def\N{\mathbb{Z}_{\geq 0}}
\def\F{\mathbb{F}}
\def\Q{\mathbb{Q}}
\def\Tr{\rm Tr}
\def\calC{{\mathcal C}}
\def\calO{{\mathcal O}}
\def\bfO{{\bf O}}
\def\Z{\mathbb{Z}}
\def\K{\mathbb{K}}
\def\gl{{\mathfrak g\mathfrak l}}
\def\g{{\mathfrak{g}}}
\newcommand{\nc}{\newcommand}
\def\bP{{\bf P}}
\nc{\op}[1]{\mathop{\mathchoice{\mbox{\rm #1}}{\mbox{\rm #1}}
{\mbox{\rm \scriptsize #1}}{\mbox{\rm \tiny #1}}}\nolimits}
\nc{\al}{\alpha}
\nc{\ep}{\varepsilon} \nc{\ga}{\gamma} \nc{\Ga}{\Gamma}
\nc{\la}{\lambda} \nc{\La}{\Lambda} \nc{\si}{\sigma}
\nc{\Sig}{{\Gamma}} \nc{\Om}{\Omega} \nc{\om}{\omega}
\nc{\SL}{{\rm SL}} \nc{\GL}{{\rm GL}} \nc{\PGL}{{\rm PGL}}
\nc{\G}{{\rm G}}
\nc{\cpt}{{\op{cpt}}} \nc{\Dol}{{\op{Dol}}} \nc{\DR}{{\op{DR}}}
\nc{\B}{{\op{B}}} \nc{\Triv}{\op{Triv}} \nc{\Hod}{{\op{Hod}}}
\nc{\Log}{{\op{Log}}} \nc{\Exp}{{\op{Exp}}} \nc{\Est}{E_{\op{st}}}
\nc{\Hst}{H_{\op{st}}} \nc{\Left}[1]{\hbox{$\left#1\vbox to
  10.5pt{}\right.\nulldelimiterspace=0pt \mathsurround=0pt$}}
\nc{\Right}[1]{\hbox{$\left.\vbox to
  10.5pt{}\right#1\nulldelimiterspace=0pt \mathsurround=0pt$}}
\nc{\LEFT}[1]{\hbox{$\left#1\vbox to
  15.5pt{}\right.\nulldelimiterspace=0pt \mathsurround=0pt$}}
\nc{\RIGHT}[1]{\hbox{$\left.\vbox to
 15.5pt{}\right#1\nulldelimiterspace=0pt \mathsurround=0pt$}}
\nc{\bee}{{\bf E}} \nc{\bphi}{{\bf \Phi}}
\begin{document}

\title{Quiver varieties and the character ring of general linear groups over finite fields}

\author{Emmanuel Letellier \\ {\it Universit\'e de Caen} \\ {\tt letellier.emmanuel@math.unicaen.fr}}

\pagestyle{myheadings}

\maketitle

\begin{center}\emph{To Gus Lehrer and Jean Michel on the occasion of their 63th and 60th birthday}\end{center}

\begin{abstract} Given a tuple $(\calX_1,\dots,\calX_k)$ of irreducible characters of $\GL_n(\F_q)$ we define a star-shaped quiver $\Gamma$ together with a dimension vector $\v$. Assume that $(\calX_1,\dots,\calX_k)$ is \emph{generic}. Our first result is a formula which expresses the multiplicity of the trivial character in the tensor product $\calX_1\otimes\cdots\otimes\calX_k$ as the trace of the action of some  Weyl group on the intersection cohomology of some (non-affine) quiver varieties associated to $(\Gamma,\v)$. The existence of such a quiver variety is subject to some condition. Assuming that this condition is satisfied, we prove our second result: The multiplicity $\langle \calX_1\otimes\cdots\otimes\calX_k,1\rangle$ is non-zero if and only if $\v$ is a root of the Kac-Moody algebra associated with $\Gamma$.  This is somehow similar to the connection between Horn's problem and the representation theory of $\GL_n(\C)$ \cite[Section 8]{knutson}.

\end{abstract}

\newpage
\tableofcontents
\newpage

\section{Introduction} 

\subsection{Decomposing tensor products of irreducible characters}\label{motivation}

The motivation of this paper is the study of  the decomposition $$\calX_1\otimes\calX_2=\sum_\calX\langle \calX_1\otimes\calX_2,\calX\rangle\calX$$of the tensor product  $\calX_1\otimes\calX_2$ of two irreducible complex characters of $\GL_n(\F_q)$ as a sum of irreducible characters. This is equivalent to the study of the multiplicities $\langle \calX_1\otimes\calX_2\otimes\calX_3,1\rangle$ of the trivial character $1$ in $\calX_1\otimes\calX_2\otimes\calX_3$.

Although the character table of $\GL_n(\F_q)$ is known since 1955 by the work of Green \cite{green}, the computation of these multiplicities remains an open problem which does not seem to have been studied much in the literature.

When $\calX_1,\calX_2,\calX_3$ are unipotent characters, the multiplicities $\langle \calX_1\otimes\calX_2\otimes\calX_3,1\rangle$ are computed by Hiss and L\"ubeck \cite{Hiss} using CHEVIE for $n\leq 8$ and appeared to be polynomials in $q$ with positive coefficients.

Let $\chi:\GL_n(\F_q)\rightarrow\C$ be the character of the conjugation action of $\GL_n(\F_q)$ on the group algebra $\C[\gl_n(\F_q)]$. Fix a non-negative integer $g$ and put $\Lambda:=\chi^{\otimes g}$ (with $\Lambda=1$ if $g=0$). 

In this paper we describe the multiplicities $\langle \Lambda\otimes\calX_1\otimes\cdots\otimes\calX_k,1\rangle$ for \emph{generic} tuples $(\calX_1,\dots,\calX_k)$ of irreducible characters of $\GL_n(\F_q)$ in terms of representations of a certain quiver $\Gamma$ (see \S \ref{gen} for the definition of generic tuple). Although the occurence of $\Lambda$ does not seem to be very interesting from the perspective of the representation theory of $\GL_n(\F_q)$ it will appear to be more interesting for the theory of quiver representations.
 
Let us now explain how to construct the quiver  together with a dimension vector  from any tuple of irreducible characters (not necessarily generic).

We first define a type $A$ quiver together with a dimension vector from a single irreducible character $\calX$.

Consider a total ordering $\geq$ on the set $\calP$ of partitions and define a total ordering denoted again by $\geq$ on the set $\Z_{>0}\times\left(\calP-\{0\}\right)$ as follows.  If $\mu\neq\lambda$ then $(d,\mu)\geq (d',\lambda)$ if $\mu\geq\lambda$, and $(d,\lambda)\geq (d',\lambda)$ if $d\geq d'$. Denote by ${\bf T}_n$ the set of non-increasing sequences  $\omega=(d_1,\omega^1)\cdots(d_r,\omega^r)$ such that $\sum_id_i|\omega^i|=n$. 

In \S \ref{gen}, we associate to the irreducible character $\calX$ an element $\omega=(d_1,\omega^1)\cdots(d_r,\omega^r)\in{\bf T}_n$ called the type of $\calX$. The $d_i$'s are called the degrees of $\calX$. If the degrees $d_i$'s are all equal to $1$ we say that $\calX$ is \emph{split}. Let us now draw the Young diagrams of these partitions $\omega_1,\dots,\omega_r$  from the left to the right with diagram of $\omega^i$ repeated $d_i$ times (partitions being represented by the rows of the Young diagram). Let $l$ be the total number of columns and let $n_i$ be the length of the $i$-th column. We obtain a striclty decreasing sequence $\u_\omega:=(v_0=n> v_1> v_2>\dots> v_{l-1})$ by putting $v_1:=n-n_1$, $v_i:=v_{i-1}-n_i$. We obtain then a type $A_l$-quiver with dimension vector $\u_\omega$. For instance if $\calX=1$, then $\omega=(1,(1,1,\dots,1))$ and so $A_l=A_1$ and $\u_\omega=n$. If $\calX$ is the Steinberg character then $\omega=(1,(n))$ and so $A_l=A_n$ and $\u_\omega=(n,n-1,n-2,\dots,1)$. If $\calX$ is of type $(1,1)(1,1)\cdots(1,1)$, then we still have $A_l=A_n$ and $\u_\omega=(n,n-1,n-2,\dots,1)$.

Given $\omhat=(\omega_1,\dots,\omega_k)\in\left({\bf T}_n\right)^k$, we obtain (as just explained) $k$ type $A$ quivers equipped with dimension vectors $\u_{\omega_1},\dots,\u_{\omega_k}$. Gluing together the vertices labelled by $0$ of these $k$ quivers   and adding $g$ loops at the central vertex of this new quiver we get a so-called comet-shaped quiver $\Gamma_\omhat$ with $k$ legs (see picture in \S \ref{star}) together with a dimension vector $\v_\omhat$ which is determined in the obvious way by $\u_{\omega_1},\dots,\u_{\omega_k}$. 

Let $\Phi(\Gamma_\omhat)$ be the root system associated with $\Gamma_\omhat$ (see Kac \cite{kac}). Let $C_{\Gamma_\omhat}$ be the Cartan matrix of $\Gamma_\omhat$ and put $d_\omhat=2-{^t}\v_\omhat C_{\Gamma_\omhat}\v_\omhat$.

In \S \ref{multi695} we show that for every multi-type $\omhat\in\left({\bf T}_n\right)^k$, there exists a polynomial $\H_\omhat(T)\in\Q[T]$ such that for any finite field $\F_q$ and any generic tuple $(\calX_1,\dots,\calX_k)$ of irreducible characters of $\GL_n(\F_q)$ of type $\omhat$, we have 

$$
\langle\Lambda\otimes\calX_1\otimes\cdots\otimes\calX_k,1\rangle=\H_\omhat(q).
$$

In \S \ref{Q} (see above Theorem \ref{TH3}) we define the notion of \emph{admissible} multi-type. This notion arises naturally in the theory of quiver varieties.

In this paper we use the geometry of quiver varieties to prove the following theorem (see next section for more details).

\begin{theorem}Assume that $\omhat\in\left({\bf T}_n\right)^k$ is admissible.

\noindent (a) $\H_\omhat(T)\neq 0$ if and only if $\v_\omhat\in\Phi(\Gamma_\omhat)$. Moreover $\H_\omhat(T)=1$ if and only if $\v_\omhat$ is a real root.

\noindent (b) If non-zero, $\H_\omhat(T)$ is a monic polynomial of degree $d_\omhat/2$ with integer coefficients. If moreover $\omhat$ is split, then the coefficients of $\H_\omhat(T)$ are non-negative.

\label{masterconj}\end{theorem}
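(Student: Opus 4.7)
The plan is to reduce the theorem to the geometry of the (non-affine) quiver variety $\M_\xi(\v_\omhat)$ attached to $(\Gamma_\omhat,\v_\omhat)$ for a generic stability parameter $\xi$, and to read off each conclusion from classical results about such varieties. The admissibility hypothesis on $\omhat$ is precisely what guarantees that $\M_\xi(\v_\omhat)$ exists and that the cohomological trace formula announced in the introduction applies; from now on I work inside this quiver variety.

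The key input is the formula proved as the first main result of the paper, which realises
\[
\langle \Lambda \otimes \calX_1 \otimes \cdots \otimes \calX_k, 1 \rangle
= \Tr\!\bigl(w \,\big|\, IH^*_c(\M_\xi(\v_\omhat),\overline{\Q}_\ell)\bigr),
\]
for a specific element $w$ of the Weyl group of $\Gamma_\omhat$ depending on the degrees $d_i$. Integrality of $\H_\omhat(T)$ is then automatic, because the $q$-dependence enters only through Frobenius eigenvalues on $IH^*$, which are algebraic integers.

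For part (a), I would apply Crawley-Boevey's non-emptiness criterion: $\M_\xi(\v_\omhat)\neq\emptyset$ if and only if $\v_\omhat\in\Phi(\Gamma_\omhat)$. When the variety is empty the trace is $0$; when it is non-empty the class of a point in $H^0$ already contributes $1$, so $\H_\omhat(T)\neq 0$. If $\v_\omhat$ is a real root, Crawley-Boevey's dimension formula forces $\dim \M_\xi(\v_\omhat)=0$ and the variety is a single point, so the trace equals $1$; if $\v_\omhat$ is imaginary, the dimension is strictly positive and $\H_\omhat(T)$ has positive degree.

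For part (b), the degree statement and monicity follow from the dimension formula $\dim \M_\xi(\v_\omhat) = d_\omhat/2$ combined with Deligne's purity: the top intersection cohomology is one-dimensional, pure of top weight, and $W$ acts trivially on the fundamental class, so it contributes exactly $T^{d_\omhat/2}$ to the trace while all lower terms have strictly smaller degree. For the non-negativity in the split case, the point is that when all $d_i=1$ the element $w$ collapses to the identity of $W$, so that $\H_\omhat(T)$ coincides with the (renormalised) Poincar\'e polynomial of $IH^*(\M_\xi(\v_\omhat))$; by purity and the Decomposition Theorem this polynomial has non-negative integer coefficients. The main obstacle I anticipate is precisely this last step --- identifying $w$ with the identity in the split case, and more generally controlling the $W$-action on $IH^*$ well enough so that its trace against $w$ still yields a genuine (not merely virtual) polynomial in $T$ with the asserted leading term.
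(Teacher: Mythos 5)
Your overall route is the one the paper itself takes: feed the trace formula of Theorem \ref{TH3} and the non-emptiness criterion of Theorem \ref{TH6} into a cohomological reading of the multiplicity, getting the degree and monicity from the top class and positivity in the split case from $\w=1$. However, two of your identifications are off, and the two facts on which the whole deduction rests are asserted rather than proved. The group whose trace appears is not the Weyl group of $\Gamma_\omhat$: it is the relative Weyl group $W({\bf L},{\bf \Sigma})$, a product of symmetric groups, acting through the Springer-type multiplicity spaces $A_\chi$ in the decomposition theorem applied to $\rho/_{\PGL_n}\colon\mathbb{Q}_{\bf L,P,\Sigma}\rightarrow\calQ_\bfO$; and the variety carrying the trace is the partial resolution $\mathbb{Q}_{\bf L,P,\Sigma}\simeq\mathfrak{M}_{\xihat,\thetahat}(\v_\omhat)$ for a specific, in general non-generic, stability parameter $\thetahat$, not $\mathfrak{M}_{\xihat}(\v_\omhat)$ at generic stability. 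The genericity in play is that of the eigenvalue data coming from admissibility, and the collapse of Crawley-Boevey's criterion to ``$\v_\omhat\in\Phi(\Gamma_\omhat)$'' is itself the content of Theorem \ref{TH6} (via Proposition \ref{propCB}), not a bare citation.

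The concrete gaps are these. (i) Non-vanishing cannot come from ``the class of a point in $H^0$'': the cohomology in the trace formula is compactly supported, and $IH_c^0$ of a positive-dimensional irreducible variety vanishes; non-vanishing must come from the top class, i.e.\ from monicity. (ii) Monicity needs ${\rm dim}\,IH_c^{2d_\omhat}\left(\mathbb{Q}_{\bf L,P,\Sigma},\C\right)=1$, which amounts to irreducibility of $\mathbb{Q}_{\bf L,P,\Sigma}$; this is Theorem \ref{strat}, a genuinely nontrivial result of the paper (proved via comparison with quiver varieties at generic stability, purity and point counting), and it does not follow from purity alone. (iii) Monicity also needs that $\w$ act trivially on the top class; you correctly flag control of the group action as the main obstacle, but it is not settled by analysing a Weyl-group action on $IH^*$ of a single quiver variety --- in the paper it drops out of the explicit decomposition $P_c^\w\left(\mathbb{Q}_{\bf L,P,\Sigma};q\right)=P_c(\calQ_\bfO;q)+\sum_{\chi\neq\chi_o}\Tr\,(\w\,|\,A_\chi)\,q^{-r_\chi}P_c(\calQ_{\bfO_\chi};q)$, where the summands with $\chi\neq\chi_o$ are supported on strictly smaller strata and the leading summand carries the trivial action (last statement of Proposition \ref{Springer}). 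This same formula is what yields integrality (symmetric-group characters are integers and the $P_c(\calQ_{\bfO_\chi};q)$ have non-negative integer coefficients) and positivity for $\w=1$; your appeal to Frobenius eigenvalues being algebraic integers does not give integrality of the coefficients. Once (ii) and (iii) are in place, the remainder of your deduction --- degree $d_\omhat/2$, $\H_\omhat=1$ exactly when $d_\omhat=0$, i.e.\ when $\v_\omhat$ is real, and non-negativity in the split case --- is indeed the paper's argument.
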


We will prove (see Proposition \ref{imaginary}) that if $g\geq 1$, then $\v_\omhat$ is always an imaginary root and so the second part of the assertion of (a) is relevant only when $g=0$.

The discussion and conjecture in \S \ref{CV}  together with the results of Crawley-Boevey \cite{crawleyind} imply that the  assertions (a) and (b) of the above theorem remain true in all types (not necessarily admissible).

In a future publication, we will investigate this assertion (a) by analyzing combinatorially the polynomial $\H_\omhat(T)$ which is defined in terms of Hall-Littlewood symmetric functions (see \S \ref{cauchy}).

\begin{example} We give examples  of generic tuples of irreducible characters which are not of admissible types and which satisfy (a) and (b) of the above theorem.

Assume that $g=0$ and $n=k=3$. 

For a partition $\lambda$, we denote by $R_\lambda$ the associated unipotent character of $\GL_3$. Recall that according to our parameterization (see beginning of this section), the trivial character corresponds to the partition $(1,1,1)$ and the Steinberg character to the partition $(3)$. 

For a linear character $\alpha:\F_q^{\times}\rightarrow\C^\times$ we put $R_\lambda^\alpha:=\left(\alpha\circ{\rm det}\right)\cdot R_\lambda$. This is again an irreducible character of type $(1,\lambda)$.
 
The triple $(R_\lambda^\alpha,R_\mu^\beta,R_\nu^\gamma)$ is generic if  the subgroup $\langle\alpha\beta\gamma\rangle$ of ${\rm Hom}(\F_q^\times,\C^\times)$ is of size $3$.

Assume now that $(R_\lambda^\alpha,R_\mu^\beta,R_\nu^\gamma)$ is generic (it is not admissible, see (iii) below Theorem \ref{TH5}). As mentioned earlier, the  multiplicity  $\left\langle 
R_\lambda^\alpha\otimes R_\mu^\beta\otimes R_\nu^\gamma,1\right\rangle$ depends only on $\lambda,\mu,\nu$ and not on $\alpha,\beta,\gamma$.

Put

$$R_{\lambda,\mu,\nu}:=R_\lambda^\alpha\otimes R_\mu^\beta\otimes R_\nu^\gamma.$$

We can easily verify that  the only non zero multiplicities (with unipotent type characters) are \begin{equation}\left\langle R_{(3),(3),(3)},1\right\rangle=q,\label{E6affine}\end{equation} \begin{equation}\left\langle R_{(2,1),(3),(3)},1\right\rangle=\left\langle R_{(3),(2,1),(3)},1\right\rangle=\left\langle R_{(3),(3),(2,1)},1\right\rangle=1.\end{equation}In the first case the underlying graph of $\Gamma_\omhat$ is $\tilde{E}_6$ and $\v_\omhat$ is the indivisible positive imaginary root. In the second case the underlying graph of $\Gamma_\omhat$ is the Dynkin diagram $E_6$ and $\v_\omhat$ is the positive real root $\alpha_1+\alpha_2+2\alpha_3+3\alpha_4+2\alpha_5+\alpha_6$ in the notation of \cite[PLANCHE V]{bourbaki}. Finally we can verify that there is no other pair $(\Gamma_\omhat,\v_\omhat)$ arising from $\omhat=((1,\lambda),(1,\mu),(1,\nu))$ with $\v_\omhat\in\Phi(\Gamma_\omhat)$.

\end{example}

\subsection{Quiver varieties}\label{Q}

We now introduce the quiver varieties which provide  a geometrical interpretation of  $\left\langle \Lambda\otimes\calX_1\otimes\cdots\otimes\calX_k,1\right\rangle$ for generic tuples $(\calX_1,\dots,\calX_k)$ of admissible type. 

Let $P$ be a parabolic subgroup of $\GL_n(\C)$, $L$ a Levi factor of $P$ and let $\Sigma=\sigma+C$ where $C$ is a nilpotent orbit of the Lie algebra $\mathfrak{l}$ of $L$ and where $\sigma$ is an element of the center $z_\mathfrak{l}$ of $\mathfrak{l}$. Put $$\mathbb{X}_{L,P,\Sigma}:=\{(X,gP)\in\gl_n\times (\GL_n/P)\,|\, g^{-1}Xg\in \overline{\Sigma}+\mathfrak{u}_P\}$$where  $\mathfrak{u}_P$ is the Lie algebra of the unipotent radical of $P$. We then denote by $\mathbb{X}_{L,P,\Sigma}^o$ the open subset of pairs $(X,gP)$ which verify $g^{-1}Xg\in\Sigma+\mathfrak{u}_P$.

It is known (cf. \S \ref{adjoint} for more details) that the image of the projection $\mathbb{X}_{L,P,\Sigma}\rightarrow\gl_n$ on the first coordinate is the Zariski closure $\overline{\calO}$ of an adjoint orbit $\calO$. 

We assume without loss of generality that $L$ is of the form $\prod_j\GL_{n_j}$ and that $P$ is the unique parabolic subgroup of $\GL_n$ containing the upper triangular matrices and having $L$ as a Levi factor (such a choice is only for convenience).

When $\calO$ is nilpotent regular, the varieties $\mathbb{X}_{L,P,\Sigma}$ appears in  Borho and MacPherson \cite{BM}. These varieties were also considered by Lusztig in the framework of his generalization of Springer correspondence \cite{LuIC}.

Consider triples $\{(L_i,P_i,\Sigma_i)\}_{i=1,\dots,k}$, with $\Sigma_i=\sigma_i+C_i$, as above and put ${\bf L}:=L_1\times\cdots\times L_k$, ${\bf P}:=P_1\times\cdots\times P_k$, ${\bf \Sigma}:=\Sigma_1\times\cdots\times\Sigma_k$ and ${\bf C}:=C_1\times\cdots\times C_k$.

Let $(\calO_1,\dots,\calO_k)$ be the tuple of adjoint orbits of $\gl_n(\C)$ such that the image  of $\mathbb{X}_{L_i,P_i,\Sigma_i}\rightarrow\gl_n$ is $\overline{\calO}_i$. 

We say that the pair $({\bf L,\Sigma})$ is \emph{generic} if the tuple  $(\calO_1,\dots,\calO_k)$ is \emph{generic}. The existence of  generic tuples of adjoint orbits with prescribed multiplicities of eigenvalues is subject to some restriction (cf. \S \ref{generic} for more details).

We assume now that $({\bf L,\Sigma})$ is generic.

Fix a non-negative integer $g$, put $\mathbb{O}_{\bf L,P,\Sigma}=(\gl_n)^{2g}\times\mathbb{X}_{\bf L,P,\Sigma}$, $\mathbb{O}_{\bf L,P,\Sigma}^o=(\gl_n)^{2g}\times\mathbb{X}_{\bf L,P,\Sigma}^o$ and define 

$$\mathbb{V}_{\bf L,P,\Sigma}:=\left\{\left(\left.A_1,B_1,\dots,A_g,B_g,(X_1,\dots,X_k,g_1P_1,\dots,g_kP_k)\right)\in\mathbb{O}_{\bf L,P,\Sigma}\,\right|\, \sum_j[A_j,B_j]+\sum_iX_i=0\right\}.$$Put ${\bf O}:=(\gl_n)^{2g}\times\overline{\calO}_1\times\cdots\times\overline{\calO}_k$, ${\bf O}^o:=(\gl_n)^{2g}\times\calO_1\times\cdots\times\calO_k$ and define $$\calV_{\bf O}:=\left\{\left(\left.A_1,B_1,\dots,A_g,B_g,X_1,\dots,X_k\right)\in {\bf O}\,\right|\, \sum_j[A_j,B_j]+\sum_i X_i=0\right\}.$$Let $\rho:\mathbb{V}_{\bf L,P,\Sigma}\rightarrow \calV_{\bf O}$ be the projection on the first $2g+k$ coordinates.

The group $\GL_n$ acts on  $\mathbb{V}_{\bf L,P,\Sigma}$ (resp. on $\calV_{\bf O}$) diagonally by conjugating the first $2g+k$ coordinates and by left multiplication of the last $k$-coordinates (resp. diagonally by conjugating the $2g+k$ coordinates). Since the tuple $(\calO_1,\dots,\calO_k)$ is generic, this action induces a set-theoritically free action of $\PGL_n$ on both $\mathbb{V}_{\bf L,P,\Sigma}$ and $\calV_{\bf O}$. The $\PGL_n$-orbits of these two spaces are then all closed. Consider the affine GIT quotient

$$\calQ_{\bf O}:=\calV_{\bf O}/\PGL_n={\rm Spec}\left(\C[\calV_{\bf O}]^{\PGL_n}\right).$$The quotient map $\calV_\bfO\rightarrow\calQ_\bfO$ is actually a principal $\PGL_n$-bundle in the \'etale topology. Since $\mathbb{V}_{\bf L,P,\Sigma}$ is projective over $\calV_\bfO$, by a result of Mumford \cite{mumford} the categorical quotient $\mathbb{Q}_{\bf L,P,\Sigma}$ of $\mathbb{V}_{\bf L,P,\Sigma}$ by $\PGL_n$ exists and the quotient map $\mathbb{V}_{\bf L,P,\Sigma}\rightarrow\mathbb{Q}_{\bf L,P,\Sigma}$ is also a principal $\PGL_n$-bundle. 

We will see that we can identify $\calQ_\bfO$ and $\mathbb{Q}_{\bf L,P,\Sigma}$ with quiver varieties $\mathfrak{M}_\xi(\v_\bfO)$ and $\mathfrak{M}_{\xi,\theta}(\v_{\bf L,P,\Sigma})$ made out of the same comet-shaped quiver $\Gamma_{\bf L,P,\Sigma}=\Gamma_{\bf O}$ equipped with (possibly different) dimension vector $\v_\bfO$ and $\v_{\bf L,P,\Sigma}$ (here we use Nakajima's notation, cf. \S \ref{uquiver}). The variety $\calQ_{\bf O}$ is also isomorphic to  the image $\pi\left(\mathfrak{M}_{\xihat,\thetahat}(\v_{\bf L,P,\Sigma})\right)$ of $\pi:\mathfrak{M}_{\xihat,\thetahat}(\v_{\bf L,P,\Sigma})\rightarrow\mathfrak{M}_\xihat(\v_{\bf L,P,\Sigma})$. 

The identification of $\calQ_\bfO$ with the quiver variety $\mathfrak{M}_\xi(\v_\bfO)$ is due to Crawley-Boevey \cite{crawley-mat} and is also available in the non-generic case (see \S \ref{star}). Although it may not be in the literature, the identification of $\mathbb{Q}_{\bf L,P,\Sigma}$ with $\mathfrak{M}_{\xi,\theta}(\v_{\bf L,P,\Sigma})$ is then quite natural to consider.

Under the identification $\calQ_\bfO\simeq\mathfrak{M}_\xi(\v_\bfO)$, the open subset $\calQ_{\bf O}^{\,o}\subset\calQ_\bfO$ defined as the image of $$\calV_{\bf O}^o:=\calV_{\bf O}\cap{\bf O}^o$$in $\calQ_{\bf O}$  corresponds to the subset $\mathfrak{M}_\xihat^s(\v_\bfO)\subset\mathfrak{M}_\xihat(\v_\bfO)$ of simple representations. The image $\mathbb{Q}_{\bf L,P,\Sigma}^o$ of $$\mathbb{V}_{\bf L,P,\Sigma}^o:=\mathbb{V}_{\bf L,P,\Sigma}\cap \mathbb{O}_{\bf L,P,\Sigma}^o$$in $\mathbb{Q}_{\bf L,P,\Sigma}$  corresponds to the subset $\mathfrak{M}_{\xihat,\thetahat}^s(\v_{\bf L,P,\Sigma})\subset\mathfrak{M}_{\xihat,\thetahat}(\v_{\bf L,P,\Sigma})$ of $\thetahat$-stable representations.

The generic quiver variety $\mathbb{Q}_{\bf L,P,\Sigma}$ (which does not seem to have been considered in the literature before) and $\calQ_{\bf O}$ will be one of the main focus of this paper. 

If $\calV_{\bf O}\neq\emptyset$, the varieties $\mathbb{Q}_{\bf L,P,\Sigma}^o$ and $\calQ_{\bf O}^{\,o}$ are both non-empty irreducible nonsingular dense open subsets of $\mathbb{Q}_{\bf L,P,\Sigma}$ and $\calQ_{\bf O}$ respectively. The irreducibility of $\calQ_\bfO$ follows from a more general result due to Crawley-Boevey (see Theorem \ref{irrquiver}). The irreducibility of $\mathbb{Q}_{\bf L,P,\Sigma}$ (see Theorem \ref{strat}) seems to be new and our proof uses Theorem \ref{HLRpure} and Crawley-Boevey's result in Theorem \ref{irrquiver}. The equivalence between the non-emptyness of $\calQ_\bfO$ and that of $\calQ_\bfO^o$ is not stated explicitely in Crawley-Boevey's paper but our proof follows very closely various arguments which are due to him. More precisely we have the following result which is important for this paper.

\begin{theorem} The following assertions are equivalent.
 
(i) The variety $\calQ_\bfO^o$ is not empty.

(ii) The variety $\calQ_\bfO$ is not empty.

(iii) $\v_\bfO\in\Phi(\Gamma)$.
\label{TH6}\end{theorem}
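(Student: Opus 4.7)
The plan is to transport the three statements to the quiver-theoretic side via the identification $\calQ_\bfO \simeq \mathfrak{M}_\xihat(\v_\bfO)$ of Crawley-Boevey, where $\calQ_\bfO^o$ corresponds to the open locus $\mathfrak{M}_\xihat^s(\v_\bfO)$ of simple representations of the deformed preprojective algebra, and then to invoke Crawley-Boevey's existence theorems for quiver varieties. Once this translation is made, the implication $(i)\Rightarrow(ii)$ is automatic because $\calQ_\bfO^o$ is (by construction) an open subset of $\calQ_\bfO$, so it suffices to treat $(iii)\Rightarrow(i)$ and $(ii)\Rightarrow(iii)$.

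For $(iii)\Rightarrow(i)$, assume $\v_\bfO \in \Phi(\Gamma)$. The genericity hypothesis on $({\bf L},{\bf \Sigma})$ translates into the statement that the parameter $\xihat$ satisfies $\xihat\cdot\v_\bfO=0$ but $\xihat\cdot\beta\neq 0$ for every positive root $\beta$ strictly smaller than $\v_\bfO$ in the root partial order of $\Gamma$. Under this condition Crawley-Boevey's theorem on the existence of simple representations of the deformed preprojective algebra guarantees that $\mathfrak{M}_\xihat^s(\v_\bfO)\neq\emptyset$, and hence that $\calQ_\bfO^o\neq\emptyset$. This is the step in which the ``generic + root'' hypothesis is used most essentially, and the place where I would have to carefully check that our notion of generic tuple of adjoint orbits matches the algebraic genericity condition on $\xihat$ that appears in Crawley-Boevey's formulation.

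For $(ii)\Rightarrow(iii)$, assume that $\calQ_\bfO\neq\emptyset$, equivalently $\mathfrak{M}_\xihat(\v_\bfO)\neq\emptyset$. Crawley-Boevey's characterization of non-emptiness of the (affine) quiver variety yields a decomposition
$$\v_\bfO=\beta_1+\cdots+\beta_s$$
into positive roots $\beta_i\in\Phi^+(\Gamma)$ with $\xihat\cdot\beta_i=0$ for every $i$. Invoking the genericity of $\xihat$ recalled above, no proper positive-root summand $\beta\lneq \v_\bfO$ can satisfy $\xihat\cdot\beta=0$, so the decomposition is forced to be trivial, i.e.\ $s=1$ and $\v_\bfO=\beta_1\in\Phi^+(\Gamma)\subset\Phi(\Gamma)$.

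The main obstacle is the genericity-to-root-system translation used in both non-trivial directions: one must verify that the multiplicative genericity of the tuple $(\calO_1,\dots,\calO_k)$ of adjoint orbits (as defined in \S\ref{generic}) is precisely the additive genericity that makes $\xihat$ avoid all proper sub-root hyperplanes of $\v_\bfO$. Once this dictionary is in place, Theorem \ref{TH6} follows by a direct citation of Crawley-Boevey's theorems on quiver varieties together with the identification $\calQ_\bfO\simeq\mathfrak{M}_\xihat(\v_\bfO)$ and its restriction to the simple locus.
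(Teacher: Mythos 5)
Your reduction to Crawley--Boevey's criteria is the right general strategy (it is also the paper's), but the dictionary you rely on in both non-trivial directions is false: genericity of $(\calO_1,\dots,\calO_k)$ does \emph{not} imply that $\xihat_\bfO\centerdot\beta\neq 0$ for every positive root $\beta\lneq\v_\bfO$. The genericity condition (Remark \ref{genericdef2}) only constrains sums $\sum_{i,f}m'_{i,f}\alpha_{i,f}$ with $0\le m'_{i,f}\le m_{i,f}$ and $\sum_f m'_{i,f}$ independent of $i$; on the quiver side these correspond exactly to \emph{strict} dimension vectors (coordinates non-increasing along each leg, positive central coordinate), not to arbitrary sub-roots. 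Concretely, as soon as some $\calO_i$ is not regular semisimple, two consecutive columns in its diagram carry the same eigenvalue, so some $\xi_{[i,j]}=0$ and the simple root $\e_{[i,j]}\lneq\v_\bfO$ already satisfies $\xihat_\bfO\centerdot\e_{[i,j]}=0$. This breaks (ii)$\Rightarrow$(iii) as you argue it: Theorem \ref{non-emptycond}(i) only yields a decomposition of $\v_\bfO$ into positive roots orthogonal to $\xihat_\bfO$, and since these summands need not be strict you cannot conclude that the decomposition is trivial. The paper's Proposition \ref{propCB} supplies the missing ingredient from the linear-algebra model: by Theorem \ref{CB5} a point of $\calV_\bfO$ lifts to a representation whose leg maps are injective, so the dimension vectors of its indecomposable summands are \emph{strict} positive roots orthogonal to $\xihat_\bfO$ (Kac's theorem plus Crawley--Boevey), and only then does the eigenvalue-multiplicity count combined with genericity force a single summand.

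The same issue affects your (iii)$\Rightarrow$(i). Since nontrivial decompositions $\v_\bfO=\sum_i\beta_i$ with $\xihat_\bfO\centerdot\beta_i=0$ can exist, Crawley--Boevey's simplicity criterion (Theorem \ref{non-emptycond}(ii)) is not vacuously satisfied, and you would still have to verify $p(\v_\bfO)>\sum_ip(\beta_i)$ for all such decompositions, which your argument does not do. The paper avoids this by invoking Crawley--Boevey's solution of the additive Deligne--Simpson problem \cite[\S 6]{crawley-mat}, where precisely this construction of an irreducible solution for $\v_\bfO\in\Phi^+(\Gamma_\bfO)$ is carried out; if you want a self-contained proof you must either reproduce that construction or genuinely check the $p$-inequality. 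Two smaller points: the identification of $\calQ_\bfO^o$ with the simple locus uses that, generically, every element of $\calV_\bfO^o$ is irreducible (Proposition \ref{affinepGb}), which should be cited; and your (i)$\Rightarrow$(ii) is indeed immediate, as in the paper.
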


Let us discuss this theorem. Say that an element $X$ in $\calV_\bfO^o$ is \emph{irreducible} if there is no  non-zero proper subspace of $\C^n$ which is preserved by all the coordinates of $X$. The existence of irreducible elements in $\calV_\bfO^o$ was  studied by Kostov \cite{Kostov} who calls it the (additive) Deligne-Simpson problem (in \cite{Kostov} the tuple $(\calO_1,\dots,\calO_k)$ is not necessarily generic). 
Later on, Crawley-Boevey \cite{crawley-mat} reformulated Kostov's answer to the Deligne-Simpson problem in terms of roots of $\Gamma$. This reformulation involves general results of Crawley-Boevey on quiver varieties (see \S \ref{uquiver} for more details) and his identification of $\calQ_\bfO$ with $\mathfrak{M}_\xihat(\v_\bfO)$. Our proof of Theorem \ref{TH6} consists of working out in the generic case Crawley-Boevey's results on the Deligne-Simpson problem.

For a pair $(L,\Sigma)$ as  above, we put $$W(L,\Sigma):=\{n\in N_{\GL_n}(L)\,|\, n\Sigma n^{-1}=\Sigma\}/L.$$The group $W(L,\Sigma)$ acts on the complex $p_*(\pIC {\mathbb{X}_{L,P,\Sigma}})$ where $p:\mathbb{X}_{L,P,\Sigma}\rightarrow \gl_n$ is the projection on the first coordinate, and $\pIC {\mathbb{X}_{L,P,\Sigma}}$ is the simple perverse sheaf with coefficient in the constant local system $\C$.

From this, we find an action of  $$W({\bf L,\Sigma}):=W(L_1,\Sigma_1)\times\cdots\times W(L_k,\Sigma_k)$$on the complex $\left(\rho/_{\PGL_n}\right)_*\left(\pIC {\mathbb{Q}_{\bf L,P,\Sigma}}\right)$ and so on the hypercohomology $\mathbb{H}_c^i\left(\mathbb{Q}_{\bf L,P,\Sigma},\IC {\mathbb{Q}_{\bf L,P,\Sigma}}\right)$ which we take as a definition for the compactly supported intersection cohomology $IH_c^i\left(\mathbb{Q}_{\bf L,P,\Sigma},\C\right)$.

From the theory of quiver varieties, we have $IH_c^i\left(\mathbb{Q}_{\bf L,P,\Sigma},\C\right)=0$ for odd $i$. Let us then consider the polynomials 

$$P_c^{\bf w}\left(\mathbb{Q}_{\bf L,P,\Sigma},q\right):=\sum_i{\rm Tr}\,\left({\bf w}\,\left|\, IH_c^{2i}\left(\mathbb{Q}_{\bf L,P,\Sigma},\C\right)\right.\right) q^i,$$with ${\bf w}\in W({\bf L,\Sigma})$.

As explained in \S \ref{adjoint} to each pair $(L,C)$ with $L=\prod_{i=1}^r\GL_{n_i}\subset \GL_n$ and $C$ a nilpotent orbit of $\bigoplus_{i=1}^r\gl_{n_i}$ corresponds a unique sequence of partitions  $$\tomega=\underbrace{\omega^1\cdots\omega^1}_{a_1}\underbrace{\omega^2\cdots\omega^2}_{a_2}\cdots\underbrace{\omega^l\cdots\omega^l}_{a_l}$$with $\omega^1\geq\omega^2\geq\cdots\geq\omega^l$ and $\omega^j\neq\omega^s$ if $j\neq s$. 

The group $W(L,C)$ is then isomorphic to $\prod_{j=1}^lS_{a_j}$ where $S_d$ denotes the symmetric group in $d$ letters.

The decomposition of the coordinates of an element  $w\in W(L,C)\simeq\prod_{j=1}^lS_{a_j}$ as a product of disjoint cycles provides a partition $(d_j^1,d_j^2,\dots,d_j^{r_j})$ of $a_j$ for each $j$, and so defines a unique type $$\omega=(d_1^1,\omega^1)\cdots(d_1^{r_1},\omega^1)(d_2^1,\omega^2)\cdots(d_2^{r_2},\omega^2)\cdots(d_l^1,\omega^l)\cdots(d_l^{r_l},\omega^l)\in{\bf T}_n.$$We thus have a surjective map from the set of triples $(L,C,w)$ with $w\in W(L,C)$ to the set ${\bf T}_n$.

Note that $W({\bf L},{\bf \Sigma})\subset W({\bf L},{\bf C})$.

Let $\w\in W({\bf L},{\bf \Sigma})$. The datum $({\bf L},{\bf C},\w)$ defines thus a multi-type $\omhat=(\omega_1,\dots,\omega_k)\in\left({\bf T}_n\right)^k$. We call \emph{admissible} the multi-types arising in this way from generic pairs $({\bf L,\Sigma})$.

Let $(\calX_1,\dots,\calX_k)$ be a generic tuple of irreducible characters of $\GL_n(\F_q)$ of type $\omhat$ (generic tuples of irreducible characters of a given type always exist assuming that the characteristic of $\F_q$ and $q$ are large enough).  The pair $(\Gamma_\omhat,\v_\omhat)$ defined in \S \ref{motivation} is the same as the pair $(\Gamma_{\bf L,P,\Sigma},\v_{\bf L,P,\Sigma})$ defined from  $({\bf L,P,\Sigma})$, moreover the integer $d_\omhat$  equals ${\rm dim}\,\mathbb{Q}_{\bf L,P,\Sigma}$. 

\begin{theorem} We have:

$$P_c^{\bf w}\left(\mathbb{Q}_{\bf L,P,\Sigma},q\right)=q^{\frac{1}{2}{\rm dim}\,\mathbb{Q}_{\bf L,P,\Sigma}}\langle \Lambda\otimes\calX_1\otimes\cdots\otimes\calX_k,1\rangle.$$

\label{TH3}\end{theorem}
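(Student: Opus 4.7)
The plan is to compute both sides of the equality as weighted $\F_q$-point counts and match them via Lusztig's Fourier transform on $\gl_n(\F_q)$.

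\smallskip

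\noindent\textbf{Cohomology side.} By purity of intersection cohomology of quiver varieties (Theorem \ref{HLRpure}) and the Grothendieck--Lefschetz trace formula applied to $(\rho/_{\PGL_n})_*\pIC{\mathbb{Q}_{\bf L,P,\Sigma}}$, the weighted Poincar\'e polynomial rewrites as
\[
P_c^{\w}(\mathbb{Q}_{\bf L,P,\Sigma},q)\;=\;\frac{q^{\frac{1}{2}\dim\mathbb{Q}_{\bf L,P,\Sigma}}}{|\PGL_n(\F_q)|}\sum_{(A_\cdot,B_\cdot,X_\cdot)\in\calV_\bfO^{\,o}(\F_q)}\prod_{i=1}^{k} K^{\w_i}_{L_i,P_i,\Sigma_i}(X_i),
\]
where $K^{\w_i}_{L_i,P_i,\Sigma_i}$ is the $\w_i$-weighted trace-of-Frobenius function on $\gl_n(\F_q)$ attached to $p_{i,*}\pIC{\mathbb{X}_{L_i,P_i,\Sigma_i}}$, with $p_i$ the projection on the first coordinate. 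One lifts the count from $\mathbb{Q}_{\bf L,P,\Sigma}$ to $\mathbb{V}_{\bf L,P,\Sigma}$ along the \'etale principal $\PGL_n$-bundle and then pushes it down along $\rho$ to $\calV_\bfO$.

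\smallskip

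\noindent\textbf{Character side.} The Frobenius formula for the multiplicity of the trivial character reads
\[
\langle\Lambda\otimes\calX_1\otimes\cdots\otimes\calX_k,1\rangle\;=\;\frac{1}{|\GL_n(\F_q)|}\sum_{h\in\GL_n(\F_q)}\chi(h)^g\prod_{i=1}^{k}\calX_i(h).
\]
Since $\chi(h)=q^{\dim Z_{\gl_n}(h)}$, an additive Fourier transform computation on $\gl_n(\F_q)$ identifies $\chi(h)^g$ (up to a fixed power of $q$) with the Fourier transform of the function $Y\mapsto\#\{(A_j,B_j)_{j=1}^g\in\gl_n(\F_q)^{2g}:\sum_j[A_j,B_j]=Y\}$. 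Pushing the product $\prod_i\calX_i(h)$ through the same Fourier transform rewrites the multiplicity as a weighted count over $(A_\cdot,B_\cdot,X_\cdot)\in\calV_\bfO^{\,o}(\F_q)$ in which each $\calX_i(h)$ is replaced by its Lie-algebra Fourier transform $\widehat{\calX_i}(X_i)$. Genericity of $(\calX_1,\ldots,\calX_k)$ localizes the sum onto $\calV_\bfO^{\,o}$ and trivializes the $\PGL_n$-stabilizers, yielding the factor $|\PGL_n(\F_q)|^{-1}$.

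\smallskip

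\noindent\textbf{Matching and main obstacle.} The bridge between the two expressions is an identification $\widehat{\calX_i}=K^{\w_i}_{L_i,P_i,\Sigma_i}$ (up to a controlled scalar). This is an instance of Lusztig's Fourier--Deligne transform on character sheaves on $\gl_n$: the datum $(L_i,P_i,\Sigma_i,\w_i)$ encodes a character sheaf whose Fourier transform is the character sheaf attached to the irreducible character $\calX_i$ of type $\omega_i$, via Lusztig's parameterization and Shoji's identification of almost characters with characteristic functions of character sheaves in type $A$. Substituting this identity into the two weighted counts and comparing the $q^{\frac{1}{2}\dim\mathbb{Q}}$ shifts yields Theorem \ref{TH3}. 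The main obstacle is precisely this matching: tracking the perverse shifts, Frobenius twists, and the compatibility of the $W({\bf L},{\bf\Sigma})$-action on $p_{i,*}\pIC{\mathbb{X}_{L_i,P_i,\Sigma_i}}$ with the combinatorial data of the multi-type $\omhat$. The genericity hypothesis on $({\bf L},{\bf\Sigma})$ is essential here, as it restricts the nonzero contributions to the simple summands of the Lusztig induction and eliminates the cuspidal cross-terms that would otherwise have to be controlled.
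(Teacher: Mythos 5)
Your overall shape (purity, spreading out, a weighted point count over $\calV_\bfO(\F_q)$, and the additive Fourier transform identifying $\chi(h)^g$ with the transform of the commutator-counting function $\Xi$) does match the paper's strategy for the untwisted part of the argument, but the pivotal matching step has a genuine gap. The operation ``push $\prod_i\calX_i(h)$ through the same Fourier transform, replacing $\calX_i$ by its Lie-algebra Fourier transform $\widehat{\calX_i}$'' is not defined: the $\calX_i$ are class functions on the group $\GL_n(\F_q)$, the transform $\calF^{\g}$ lives on $\gl_n(\F_q)$, and (as the paper stresses) there is no Fourier transform in the group setting. Consequently the asserted function-level identity $\widehat{\calX_i}=K^{\w_i}_{L_i,P_i,\Sigma_i}$ cannot be the bridge. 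What actually holds, and what the paper proves, is an identity of \emph{inner products} (Theorem \ref{multicomp1}): both $\langle\Lambda\otimes\calX_1\otimes\cdots\otimes\calX_k,1\rangle_{G^F}$ and $\langle\Theta\otimes\calF^{\g}(\mathbf{X}_{\IC{\overline{\calO}_1}})\otimes\cdots,1\rangle_{\g^F}$ are expanded, the first via the Lusztig--Srinivasan formula (\ref{charform1}) and the second via Theorem \ref{charfor} (whose proof rests on the commutation of $\calF^{\g}$ with Deligne--Lusztig induction, Theorem \ref{theolet}), and the resulting double sums are compared term by term. Genericity is then used through Propositions \ref{gen1} and \ref{gen2}, which evaluate the sums over $(z_{\mathfrak{m}})^F_{\rm reg}$ and $(Z_M)^F_{\rm reg}$ as $qK^o_M$ and $(q-1)K^o_M$; the whole point is that their ratio is independent of $M$, and the paper notes explicitly that without genericity one gets an $M$-dependent rational function, so no formal Fourier-inversion argument can replace this step. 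Invoking Shoji's theorem on almost characters does not supply it either; no group-level character-sheaf input is used or needed in the paper.

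The second missing piece is the treatment of the twist $\w$ itself. Even granting the split comparison (Theorem \ref{multiPP}), your sketch never explains why the $\w$-weighted combination $\sum_\chi{\rm Tr}(\w\,|\,A_\chi)$ of split-type contributions attached to the orbits $\calO_{\chi}$ reassembles into the single multiplicity for the non-split type $\omhat$ determined by $({\bf L},{\bf C},\w)$; this is exactly the content of Formulas (\ref{Heq}) and (\ref{Sfor}), which the paper proves through the twisted Littlewood--Richardson coefficients and Proposition \ref{twistedLRTR} together with Theorem \ref{compmulti}. Your appeal to ``Lusztig's parameterization'' leaves this combinatorial identity unproved, and it is precisely where the degrees $d_i>1$ of the type $\omega_i$ enter. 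Note also that the paper avoids your $\w$-twisted Grothendieck--Lefschetz step altogether: it works over $\C$ with the Springer-type decomposition of mixed Hodge structures (\ref{decompMHS}), reducing $P_c^{\w}$ via (\ref{PPeq}) to \emph{untwisted} Poincar\'e polynomials of the $\calQ_{\bfO_\chi}$, and only those are counted over $\F_q$; a direct twisted trace formula would additionally require controlling the Frobenius action against the $W({\bf L},{\bf\Sigma})$-action and comparing the $\ell$-adic and Betti realizations, which your proposal assumes without justification.
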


If ${\bf w}=1$ and if the adjoint orbits $\calO_1,\dots,\calO_k$ are semisimple in which case $\mathbb{Q}_{\bf L,P,\Sigma}\simeq\calQ_{\bf O}$, the theorem is proved in \cite{hausel-letellier-villegas}.

One of the consequence of Theorem \ref{TH3} is an explicit formula for $P_c^{\bf w}\left(\mathbb{Q}_{\bf L,P,\Sigma},q\right)$ in terms of Hall-Littlewood symmetric functions (cf. \S \ref{MSF}).

Note that if for each $i=1,\dots,k$, we have $C_{\GL_n}(\sigma_i)=L_i$, then the projection $\mathbb{X}_{L_i,P_i,\Sigma_i}\rightarrow\overline{\calO}_i$ is an isomorphism and so is the map $\rho/_{\PGL_n}:\mathbb{Q}_{\bf L,P,\Sigma}\rightarrow\calQ_{\bf O}$. Hence our main results will give in particular explicit formulas for the Poincar\'e polynomial $P_c\left(\calQ_{\bf O},q\right)$ where we write $P_c$ instead of $P_c^{\bf w}$ when ${\bf w}=1$.

Let $\calA_{({\bf L,C})}$ be the set of $\sigma=(\sigma_1,\dots,\sigma_k)\in z_{\mathfrak{l}_1}\times\cdots\times z_{\mathfrak{l}_k}$ such that the pair $({\bf L},\sigma+{\bf C})$ is generic. 

It follows from Theorem \ref{TH3} that $P_c\left(\mathbb{Q}_{\bf L,P,\Sigma},q\right)$ depends only on $({\bf L,C})$ and not on $\sigma\in\calA_{({\bf L,C})}$. 

We say that a generic tuple $(\calX_1,\dots,\calX_k)$ of irreducible characters is \emph{admissible} if it is of admissible type.

From Theorem \ref{TH3} and Theorem \ref{TH6}, we prove Theorem \ref{masterconj}, namely:

\begin{theorem} Let $(\calX_1,\dots,\calX_k)$ be an admissible generic tuple of irreducible characters of $\GL_n(\F_q)$ of type $\omhat$.

\noindent (a) We have $\left\langle \Lambda\otimes\mathcal{X}_1\otimes\cdots\otimes\mathcal{X}_k,1\right\rangle\neq 0$ if and only if $\v_\omhat\in\Phi(\Gamma_\omhat)$. Moreover $\left\langle \Lambda\otimes\mathcal{X}_1\otimes\cdots\otimes\mathcal{X}_k,1\right\rangle=1$ if and only if $\v_\omhat$ is real.

\noindent (b) If $\v_\omhat\in\Phi(\Gamma_\omhat)$, the multiplicity $\left\langle \Lambda\otimes\mathcal{X}_1\otimes\cdots\otimes\mathcal{X}_k,1\right\rangle$ is a monic polynomial in $q$ of degree $d_\omhat/2$ with integer coefficients. If moreover $\w=1$, then it has positive coefficients.

\label{TH5}\end{theorem}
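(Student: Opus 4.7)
\emph{Overall plan.} The plan is to deduce Theorem~\ref{TH5} from Theorem~\ref{TH3} and Theorem~\ref{TH6} by reading each assertion geometrically. Theorem~\ref{TH3} provides the bridge
$$
\langle \Lambda\otimes\calX_1\otimes\cdots\otimes\calX_k,1\rangle \;=\; q^{-d_\omhat/2}\,P_c^{\w}\!\left(\mathbb{Q}_{\bf L,P,\Sigma},q\right),
$$
once $({\bf L},{\bf P},{\bf \Sigma})$ and $\w\in W({\bf L},{\bf \Sigma})$ are chosen to encode the admissible type $\omhat$; the strategy is to read each statement of (a) and (b) off this identity, using the identifications $\v_\bfO=\v_\omhat$, $\Gamma=\Gamma_\omhat$ and $\dim\mathbb{Q}_{\bf L,P,\Sigma}=d_\omhat$ recorded in \S\ref{Q}.

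\emph{Treatment of (a).} First I would observe that the projection $\rho/\PGL_n\colon\mathbb{Q}_{\bf L,P,\Sigma}\to \calQ_\bfO$ is surjective, since each $\mathbb{X}_{L_i,P_i,\Sigma_i}\to \overline{\calO}_i$ is surjective by construction (\S\ref{adjoint}); hence $\mathbb{Q}_{\bf L,P,\Sigma}\neq\emptyset$ iff $\calQ_\bfO\neq\emptyset$, and Theorem~\ref{TH6} converts this into $\v_\omhat\in\Phi(\Gamma_\omhat)$. If $\mathbb{Q}_{\bf L,P,\Sigma}=\emptyset$ the Poincar\'e polynomial vanishes; otherwise $\mathbb{Q}_{\bf L,P,\Sigma}$ is smooth and irreducible (by the results of \S\ref{Q}), so the top intersection cohomology $IH_c^{2d_\omhat}(\mathbb{Q}_{\bf L,P,\Sigma},\C)$ is one-dimensional with trivial $\w$-action. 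Therefore $P_c^{\w}(\mathbb{Q}_{\bf L,P,\Sigma},q)$ has leading term $q^{d_\omhat}$ with coefficient $1$, so after the $q^{-d_\omhat/2}$ shift the multiplicity is a non-zero monic polynomial in $q$ of degree $d_\omhat/2$. The refined clause `multiplicity $=1$' is equivalent to this monic polynomial being constant, hence to $d_\omhat=0$, hence to $(\v_\omhat,\v_\omhat)=2$; within $\Phi(\Gamma_\omhat)$ this characterises real roots, since imaginary roots satisfy $(\v,\v)\leq 0$. These arguments also establish the degree and monicity assertions of (b).

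\emph{Finish of (b), and main obstacle.} To complete (b), when $\w=1$ the coefficients of $P_c(\mathbb{Q}_{\bf L,P,\Sigma},q)$ are the dimensions of the intersection cohomology groups, which are non-negative, yielding the positivity statement after the shift. For general $\w$ the coefficients are the traces $\mathrm{Tr}(\w\,|\,IH_c^{2i})$, and the hard part will be showing these are integers rather than merely rational. The route I would take is to check that the Weyl group action, descended from the Lusztig-type action on $p_*(\pIC{\mathbb{X}_{{\bf L},{\bf P},{\bf \Sigma}}})$ along the principal $\PGL_n$-bundle, preserves a natural $\Q$-form of $IH_c^\bullet(\mathbb{Q}_{\bf L,P,\Sigma},\C)$; the trace of any finite-order element will then automatically be a rational integer. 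Tracking this $\Q$-structure across the $\PGL_n$-quotient, together with the cohomological purity input already hidden inside Theorem~\ref{TH3}, is the main technical obstacle; everything else in the deduction is a structural rewriting of Theorem~\ref{TH6}.
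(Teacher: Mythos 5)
Your deduction skeleton (combine Theorem \ref{TH3} with Theorem \ref{TH6}, read off degree, monicity and nonvanishing from the geometry of $\mathbb{Q}_{\bf L,P,\Sigma}$) is the same as the paper's, but the engine the paper actually runs on is missing from your proposal: the decomposition of Proposition \ref{A3theo}, i.e. $(\rho/_{\PGL_n})_*\bigl(\pIC{\mathbb{Q}_{\bf L,P,\Sigma}}\bigr)\simeq\bigoplus_\chi A_\chi\otimes\pIC{\calQ_{\bfO_\chi}}$, which gives Formula (\ref{PPeq}): $P_c^{\w}(\mathbb{Q}_{\bf L,P,\Sigma};q)=P_c(\calQ_\bfO;q)+\sum_{\chi\neq\chi_o}{\rm Tr}(\w\,|\,A_\chi)\,q^{-r_\chi}P_c(\calQ_{\bfO_\chi};q)$. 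The three places where you improvise around it are exactly where the gaps are. First, $\mathbb{Q}_{\bf L,P,\Sigma}$ is not smooth in general (only the open stratum is; this is why the paper works with $IH_c$ and proves irreducibility in Theorem \ref{strat}); irreducibility alone does give $\dim IH_c^{2d_\omhat}=1$, so this is repairable, but the claim as written is false. Second, and more seriously, your assertion that $\w$ acts trivially on this one-dimensional top group is unproven and not formal: the $W({\bf L},{\bf \Sigma})$-action lives on the direct image complex, not on the variety, so there is no fundamental-class argument, and your proposed $\Q$-form route only pins the action down to $\pm 1$. The paper gets $+1$ because in (\ref{PPeq}) the top degree $q^{d_\bfO}$ comes solely from $P_c(\calQ_\bfO;q)$ (the terms with $\chi\neq\chi_o$ have degree $(d_\bfO+d_{\bfO_\chi})/2<d_\bfO$) and because $A_{\chi_o}$ is the trivial character of $W_{\bf M}({\bf L},{\bf C})$ (last assertion of Proposition \ref{Springer}). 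Without this input, monicity — hence the implication ``$\calQ_\bfO\neq\emptyset\Rightarrow$ multiplicity $\neq 0$'' (a priori a virtual trace could vanish) and your degree-zero argument for the ``$=1$ iff $\v_\omhat$ real'' clause — is unsupported.

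Third, the integrality of the coefficients for general $\w$, which you single out as the main technical obstacle, is immediate once (\ref{PPeq}) is in hand: the ${\rm Tr}(\w\,|\,A_\chi)$ are character values of products of symmetric groups (identified with twisted Littlewood--Richardson coefficients in Proposition \ref{twistedLRTR}), hence integers, and the $P_c(\calQ_{\bfO_\chi};q)$ have non-negative integer coefficients; for $\w=1$ the traces become $\dim A_\chi\geq 0$, giving the positivity claim of (b), which your $\Q$-form argument would not yield in any case. A final minor point: $\v_\bfO$ and $\v_\omhat=\v_{\bf L,P,\Sigma}$ need not coincide as dimension vectors (Example \ref{diffvect}); they are only Weyl-group conjugate (Lemma \ref{interlemma}), which is what allows Theorem \ref{TH6}, stated for $\v_\bfO$, to be transported into the statement about $\v_\omhat$, and similarly for the real-root clause (where the paper invokes Proposition \ref{real}, though your $d_\omhat=0$ argument also works once monicity is secured).
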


Now let us see some examples of generic tuples $(\calX_1,\dots,\calX_k)$ of irreducible characters which are not admissible. This is equivalent of giving examples of triples $({\bf L,C,w})$ for which there is no $\sigma\in\calA_{({\bf L,C})}$ such that $\w\in W({\bf L},\sigma+{\bf C})$.

The condition for the existence of such a $\sigma$  is subject to some restrictions which can be worked out explicitely using \S \ref{generic}. Let us see the explicit situations (i), (ii) and (iii) below.

(i) Assume that ${\bf L}$ is a maximal torus (in which case ${\bf C}$ is the trivial nilpotent orbit) and that the coordinates of ${\bf w}$ are all $n$-cycles. Then ${\bf w}$ belongs to a subgroup $W({\bf L},\sigma+{\bf C})$ of $W({\bf L,C})=W({\bf L})$ if and only if the coordinates of ${\bf \sigma}=(\sigma_1,\dots,\sigma_k)$ are all scalar matrices. But such a $\sigma$ does not belong to $\calA_{({\bf L,C})}$.

(ii)  When the dimension vector $\v$ of the comet-shaped quiver $\Gamma$ is divisible (i.e., the gcd of its coordinates is greater than $1$), then $\calA_{({\bf L,C})}=\emptyset$.  

(iii) If ${\bf L}=(\GL_n)^k$, then we also have $\calA_{({\bf L,C})}=\emptyset$.

When ${\bf C}=\{0\}$, then $\calA_{({\bf L,C})}\neq\emptyset$ if and only if $\v_\omhat$ is indivisible. This implies that a generic tuple of split semisimple irreducible characters is admissible if and only if $\v_\omhat$ is indivisible.

\subsection{Character varieties: A conjecture}\label{CV}

Now we propose a conjectural geometrical interpretation of $\left\langle \Lambda\otimes\mathcal{X}_1\otimes\cdots\otimes\mathcal{X}_k,1\right\rangle$ for any generic tuple $(\calX_1,\dots,\calX_k)$.

Let $P$ be a parabolic subgroup of $\GL_n(\C)$, $L$ a Levi factor of $P$ and let $\Sigma=\sigma C$ where $C$ is a unipotent conjugacy class of $L$ and where $\sigma$ is an element of the center $Z_L$ of $L$. Put $$\mathbb{Y}_{L,P,\Sigma}:=\left\{(x,gP)\in\GL_n\times (\GL_n/P)\,\left|\, g^{-1}xg\in \overline{\Sigma}.U_P\right.\right\}$$where $U_P$ is the unipotent radical of $P$. The variety $\mathbb{Y}_{L,P,\Sigma}$ is the multiplicative analogue of $\mathbb{X}_{L,P,\Sigma}$. 

We choose a tuple $(\mathfrak{O}_1,\dots,\mathfrak{O}_k)$ of conjugacy classes of $\GL_n(\C)$ and for each $i=1,\dots,k$ we let $\tilde{\mathfrak{O}}_i$ be the conjugacy class of the semisimple part of an element in $\mathfrak{O}_i$. We say that the tuple $(\mathfrak{O}_1,\dots,\mathfrak{O}_k)$ is \emph{generic} if $\prod_{i=1}^k{\rm det}\, (\mathfrak{O}_i)=1$ and if $V$ is a subspace of $\C^n$ which is stable by some $x_i\in\tilde{\mathfrak{O}}_i$ (for each $i$) such that $$\prod_{i=1}^k{\rm det}\,(x_i|_V)=1$$then either $V=0$ or $V=\C^n$. 
 Unlike the additive case, generic tuples of conjugacy classes always exist (the multiplicities of the eigenvalues being prescribed). For instance, while we can not form generic tuples of adjoint orbits of nilpotent type, we can always form generic tuples of conjugacy classes of unipotent type as follows. Let $\zeta$ be a primitive $n$-th root of unity, and $\mathfrak{O}_1=\zeta C_1$, $\mathfrak{O}_2=C_2,\dots,\mathfrak{O}_k=C_k$ where $C_1,\dots, C_k$ are unipotent conjugacy classes, then $(\mathfrak{O}_1,\dots,\mathfrak{O}_k)$ is generic. 

For each $i=1,\dots,k$, let $(L_i,P_i,\Sigma_i)$ be such that the image of the projection $\mathbb{Y}_{L_i,P_i,\Sigma_i}\rightarrow\gl_n$ is $\overline{\mathfrak{O}}_i$. As in \S \ref{Q}, we define ${\bf L,P,\Sigma,C}$ and we say that $({\bf L,\Sigma})$ is \emph{generic} if the tuple $(\mathfrak{O}_1,\dots,\mathfrak{O}_k)$ is generic which we now assume.  We define the multiplicative analogue of $\mathbb{V}_{\bf L,P,\Sigma}$ as
\vspace{.2cm}

$\mathbb{U}_{\bf L,P,\Sigma}:=$
$$\left\{\left(\left.a_1,b_1,\dots,a_g,b_g,(x_1,\dots,x_k,g_1P_1,\dots,g_kP_k)\right)\in(\GL_n)^{2g}\times\mathbb{Y}_{\bf L,P,\Sigma}\,\right|\, (a_1,b_1)\cdots(a_g,b_g)x_1\cdots x_k=1\right\}$$where $(a,b)$ denotes the commutator $aba^{-1}b^{-1}$. As in the quiver case,  the genericity condition ensures that the group $\PGL_n$ acts freely on $\mathbb{U}_{\bf L,P,\Sigma}$. Then consider the quotient $\mathbb{M}_{\bf L,P,\Sigma}=\mathbb{U}_{\bf L,P,\Sigma}/\PGL_n$. The projection $\mathbb{U}_{\bf L,P,\Sigma}\rightarrow \left(\GL_n\right)^{2g+k}$ on the $2g+k$ first coordinates induces a morphism from $\mathbb{M}_{\bf L,P,\Sigma}$ onto the affine GIT quotient

$$\M_{\bf \mathfrak{O}}:=\left\{(a_1,b_1,\dots,a_g,b_g,x_1,\dots,x_k)\in\left(\GL_n\right)^{2g}\times\overline{\mathfrak{O}}_1\times\cdots\times \overline{\mathfrak{O}}_k\,\left|\, \prod(a_i,b_i)\prod x_j=1\left.\right.\right\}\right/\PGL_n.$$

\begin{remark} If $S_g$ is a compact Riemann surface of genus $g$ with punctures $p=\{p_1,\dots,p_k\}\subset S_g$, then $\M_{\bf \mathfrak{O}}$ can be identified (hence the name of character varieties) with the affine GIT quotient

$$\left\{\rho\in{\rm Hom}\,\left(\pi_1(S_g\backslash p),\GL_n\right)\,\left|\, \rho(\gamma_i)\in \overline{\mathfrak{O}}_i\left.\right.\right\}\right/\PGL_n,$$where $\gamma_i$ is the class of a simple loop around $p_i$ with orientation compatible with that of $S_g$.

\end{remark}

Unlike quiver varieties, the mixed Hodge structure on $IH_c^k\left(\mathbb{M}_{\bf L,P,\Sigma},\C\right)$ is not pure (see for instance \cite{hausel-letellier-villegas} in the case where the conjugacy classes $\mathfrak{O}_i$ are semisimple).

We let $W_\bullet$ be the weight filtration on $IH_c^k\left(\mathbb{M}_{\bf L,P,\Sigma},\C\right)$ and put $$H^{i,k}\left(\mathbb{M}_{\bf L,P,\Sigma}\right):=W_iIH_c^k\left(\mathbb{M}_{\bf L,P,\Sigma},\C\right)/W_{i-1}IH_c^k\left(\mathbb{M}_{\bf L,P,\Sigma},\C\right).$$The action of $W({\bf L,\Sigma})$ preserves the weight filtration and so, for ${\bf w}\in W({\bf L,\Sigma})$, we may consider the mixed Poincar\'e polynomial

$$H_c^{\bf w}\left(\mathbb{M}_{\bf L,P,\Sigma};q,t\right):=\sum_{i,k}{\rm Tr}\,\left({\bf w}\,\left|\, H^{i,k}\left(\mathbb{M}_{\bf L,P,\Sigma}\right)\right.\right)q^it^k$$and its \emph{pure part}

$$PH_c^{\bf w}\left(\mathbb{M}_{\bf L,P,\Sigma},t\right):=\sum_i{\rm Tr}\,\left({\bf w}\,\left|\, H^{i,i}\left(\mathbb{M}_{\bf L,P,\Sigma}\right)\right.\right)t^i.$$

Recall that ${\bf \Sigma}={\bf \sigma}{\bf C}$ with ${\bf C}$ a unipotent conjugacy class of ${\bf L}$ and  $\sigma\in Z_{\bf L}$. 

Let $\w\in W({\bf L,\Sigma})$. As above Theorem \ref{TH3}, we can define a type $\omhat\in({\bf T}_n)^k$ from $({\bf L,C,w})$. Let $(\calX_1,\dots,\calX_k)$ be a generic tuple of irreducible characters of $\GL_n(\F_q)$  of type $\omhat$.

\begin{conjecture} We have \begin{equation}PH_c^{\bf w}\left(\mathbb{M}_{\bf L,P,\Sigma},\sqrt{q}\right)=q^{\frac{1}{2}{\rm dim}\,\mathbb{M}_{\bf L,P,\Sigma}}\left\langle\Lambda\otimes\calX_1\otimes\cdots\otimes\calX_k,1\right\rangle.\label{conjfor}\end{equation}
\label{conjecture}\end{conjecture}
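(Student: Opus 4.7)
The strategy is to mirror, on the Betti (character-variety) side, the two-step template underlying Theorem \ref{TH3} and the semisimple result of \cite{hausel-letellier-villegas}: first compute $|\mathbb{M}_{\bf L,P,\Sigma}(\F_q)|$ (and its ${\bf w}$-equivariant refinement) by a Frobenius-type character sum, and then pass from this arithmetic data to the pure part of the mixed Hodge polynomial.

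For the point count, I would exploit the fact that the fibres of $\mathbb{Y}_{L_i,P_i,\Sigma_i}\to\overline{\mathfrak{O}}_i$ realise Lusztig parabolic induction from $L_i$ to $\GL_n$, so that $|\mathbb{U}_{\bf L,P,\Sigma}(\F_q)|$ equals, up to explicit powers of $q$ and of $|\PGL_n(\F_q)|$, a surface-group character sum of the shape
$$\sum_{\chi\in\Irr\GL_n(\F_q)}\frac{|\GL_n(\F_q)|^{2g-2}\,\chi(1)^g}{\chi(1)^{2g-2+k}}\,\prod_{i=1}^k\widehat{R}_{(L_i,\Sigma_i)}(\chi),$$
where $\widehat{R}_{(L_i,\Sigma_i)}(\chi)$ is the sum of $\chi$ over the Lusztig-induced class attached to $(L_i,\Sigma_i)$. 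The ${\bf w}$-equivariant refinement would be obtained by inserting the ${\bf w}$-twisted variant of $\widehat{R}_{(L_i,\Sigma_i)}$, in strict parallel with the quiver-side treatment of the $W({\bf L},{\bf \Sigma})$-action carried out in this paper. The genericity hypothesis on $({\bf L},{\bf \Sigma})$ then plays exactly the same role as in \cite{hausel-letellier-villegas}: only the characters in the rational Lusztig series compatible with the prescribed multi-type $\omhat$ contribute, and the orthogonality relations together with the Mackey-type vanishing collapse the surviving sum to $\langle\Lambda\otimes\calX_1\otimes\cdots\otimes\calX_k,1\rangle$, the factor $\chi(1)^g$ accounting precisely for $\Lambda=\chi^{\otimes g}$. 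Combined with Katz's polynomial-count theorem, this would show that $\mathbb{M}_{\bf L,P,\Sigma}$ is polynomial-count with $E$-polynomial equal to $q^{\frac{1}{2}\dim\mathbb{M}_{\bf L,P,\Sigma}}\cdot\H_\omhat(q)$.

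The principal obstacle lies in the final step, passing from the $E$-polynomial to $PH_c^{\bf w}$: unlike the quiver side, $IH_c^k\bigl(\mathbb{M}_{\bf L,P,\Sigma},\C\bigr)$ is genuinely mixed, so the $E$-polynomial records only the alternating sum of the weight graded pieces and not the top-weight pieces individually. The natural strategy is to transport the computation to the Dolbeault side via non-abelian Hodge theory, where $\mathbb{M}_{\bf L,P,\Sigma}$ becomes diffeomorphic to a moduli space of parabolic Higgs bundles with prescribed residues and nilpotent parts; the cohomology of the latter is pure by the properness of the Hitchin map, and a $W({\bf L},{\bf \Sigma})$-equivariant parabolic form of the $P=W$ (``curious Poincar\'e duality'') conjecture would then identify $PH_c^{\bf w}\bigl(\mathbb{M}_{\bf L,P,\Sigma},q\bigr)$ with the equivariant Poincar\'e polynomial of this Dolbeault moduli, and hence with the arithmetic count above. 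Establishing this identification in the required nilpotent/parabolic generality — well beyond the semisimple case treated in \cite{hausel-letellier-villegas} — is where the main difficulty of a proof of the conjecture is concentrated; in all cases where such a transfer is available (in particular when all $C_i=\{1\}$), the argument outlined here goes through and yields the conjecture.
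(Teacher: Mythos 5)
You should first note that the statement you are trying to prove is stated in the paper as Conjecture \ref{conjecture}, not as a theorem: the paper offers no proof of it, and even the special case ${\bf w}=1$ with semisimple classes is cited there as a conjecture of \cite{hausel-letellier-villegas}, not as a known result. Measured against that, your text is a strategy outline rather than a proof, and the decisive step is missing. The arithmetic half of your plan (a Frobenius-type character sum for $|\mathbb{M}_{\bf L,P,\Sigma}(\F_q)|$, its ${\bf w}$-equivariant refinement via a group analogue of the Springer/parabolic-induction decomposition used on the quiver side, and Katz's theorem to get the $E$-polynomial) is plausible and is indeed the kind of computation the paper carries out on the additive side; but it can only ever produce the $E$-polynomial, i.e.\ the alternating sum of weight-graded pieces. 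The conjecture is about the \emph{pure part} $PH_c^{\bf w}$, and since the mixed Hodge structure on $IH_c^*\left(\mathbb{M}_{\bf L,P,\Sigma},\C\right)$ is genuinely non-pure, no amount of point counting closes that gap. Your proposed bridge --- non-abelian Hodge theory to a parabolic Higgs moduli space plus a ${\bf w}$-equivariant parabolic ``$P=W$''-type identification --- is itself an unproven conjecture in the generality needed (indeed it is essentially the content of what is being conjectured), so invoking it is circular as a proof.

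Your closing claim that the argument ``goes through and yields the conjecture'' whenever all $C_i=\{1\}$ is also not correct as stated: in that case $\mathbb{M}_{\bf L,P,\Sigma}\simeq\M_{\mathfrak{O}}$ with semisimple $\mathfrak{O}_i$, and what is known there (from \cite{hausel-letellier-villegas}) is the $E$-polynomial identity, not the pure-part identity (\ref{conjfor}); the latter remains open even in that case. A further, smaller gap: for nontrivial $C_i$ and nontrivial ${\bf w}$ you would also need a multiplicative analogue of the decomposition (\ref{A3ic}) giving the $W({\bf L},{\bf \Sigma})$-action on $IH_c^*\left(\mathbb{M}_{\bf L,P,\Sigma}\right)$ and its compatibility with Frobenius, which you assert ``in strict parallel'' with the quiver side but do not supply; on the group side this requires character-sheaf-theoretic input (Green functions, Lusztig induction for $\GL_n$) that must be spelled out before the character sum you write down is justified. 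In short: what you have is a reasonable reduction of the conjecture to (i) a computable $E$-polynomial statement and (ii) a purity/$P=W$-type transfer, but (ii) is precisely the open content, so the statement remains a conjecture.
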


If ${\bf w}=1$ and if the conjugacy classes $\mathfrak{O}_i$ are semisimple, in which case $\mathbb{M}_{\bf L,P,\Sigma}\simeq \M_{\mathfrak{O}}$, then this conjecture is already in \cite{hausel-letellier-villegas}.

Now put $\xi:=(\zeta \cdot 1,1,\dots,1)\in \left(Z_{\GL_n}\right)^k$ where $\zeta$ is a primitive $n$-th root of unity. Then for any triple $({\bf L,C,w})$ with $\w\in W({\bf L,C})$ the pair $({\bf L,\xi C})$ is always generic and $\w\in W({\bf L,\xi C})=W({\bf L,C})$. Hence Conjecture  \ref{conjecture} implies that for any generic tuple $(\calX_1,\dots,\calX_k)$ of irreducible characters there exists a triple $({\bf L,C,w})$ with $\w\in W({\bf L,C})$ such that if we put ${\bf \Sigma}:=\xi {\bf C}$, then Formula (\ref{conjfor}) holds.

Put ${\bf C}':={\bf C}-1$ and assume that there exists $\sigma'\in\calA_{({\bf L,C}')}$ such that $C_{\GL_n}(\sigma)=C_{\GL_n}(\sigma')$.  Then Conjecture \ref{conjecture} together with Theorem \ref{TH3} implies the following conjecture.

\begin{conjecture}We have $$PH_c^{\bf w}\left(\mathbb{M}_{\bf L,P,\Sigma},\sqrt{q}\right)=P_c^{\bf w}\left(\mathbb{Q}_{\bf L,P,\Sigma'},q\right).$$ \end{conjecture}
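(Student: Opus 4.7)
The plan is to apply Conjecture \ref{conjecture} to the left-hand side and Theorem \ref{TH3} to the right-hand side, and then verify that the two resulting expressions coincide. Let $\omhat\in ({\bf T}_n)^k$ be the type attached to $({\bf L},{\bf C},{\bf w})$, and fix a generic tuple $(\calX_1,\dots,\calX_k)$ of irreducible characters of $\GL_n(\F_q)$ of type $\omhat$. Conjecture \ref{conjecture} then asserts
\[
PH_c^{\bf w}(\mathbb{M}_{\bf L,P,\Sigma},q)=q^{\frac{1}{2}\dim\mathbb{M}_{\bf L,P,\Sigma}}\,\langle\Lambda\otimes\calX_1\otimes\cdots\otimes\calX_k,1\rangle,
\]
and Theorem \ref{TH3}, once its hypotheses are verified for $({\bf L},{\bf P},{\bf \Sigma}')$, gives the analogous identity for $P_c^{\bf w}(\mathbb{Q}_{\bf L,P,\Sigma'},q)$. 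Three things must then be checked: (i) the type attached to $({\bf L},{\bf C}',{\bf w})$ equals $\omhat$; (ii) the pair $({\bf L},{\bf \Sigma}')$ is generic and $\omhat$ is admissible so that Theorem \ref{TH3} applies; (iii) $\dim\mathbb{M}_{\bf L,P,\Sigma}=\dim\mathbb{Q}_{\bf L,P,\Sigma'}$.

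For (i), the type depends only on the partition data of ${\bf C}$ and the cycle structure of ${\bf w}$; since the map $u\mapsto u-1$ is a Jordan-type-preserving bijection between unipotent conjugacy classes of ${\bf L}$ and nilpotent orbits of its Lie algebra, ${\bf C}$ and ${\bf C}'$ yield identical partition data, and the two types coincide. For (ii), genericity is the standing assumption $\sigma'\in\calA_{({\bf L},{\bf C}')}$, while admissibility of $\omhat$ reduces to the condition ${\bf w}\in W({\bf L},{\bf \Sigma}')$. Unravelling the definition, $n\in N_{\GL_n}(L_i)$ preserves $\sigma_i'+C_i'$ if and only if it centralises $\sigma_i'$ and stabilises $C_i'$. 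The centraliser hypothesis $C_{\GL_n}(\sigma)=C_{\GL_n}(\sigma')$ forces the first condition to agree with the analogous one for $\sigma_i$; and $C_i'=C_i-1$ (the identity being $n$-fixed) has the same $N_{\GL_n}(L_i)$-stabiliser as $C_i$. Hence $W({\bf L},{\bf \Sigma}')=W({\bf L},{\bf \Sigma})$, and ${\bf w}$ lies in both.

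For (iii), each variety is a $\PGL_n$-quotient of a constraint subvariety of $(\GL_n)^{2g}\times\prod_i\mathbb{Y}_{L_i,P_i,\Sigma_i}$, respectively $(\gl_n)^{2g}\times\prod_i\mathbb{X}_{L_i,P_i,\Sigma_i'}$. Since $\mathbb{X}_{L,P,\Sigma}$ and $\mathbb{Y}_{L,P,\Sigma}$ are bundles over $\GL_n/P$ with fibres $\overline{\Sigma}+\mathfrak{u}_P$ and $\overline{\Sigma}\cdot U_P$ respectively, both have dimension $\dim\GL_n-\dim L+\dim C$; since $C_i$ and $C_i'$ have the same dimension, $\dim\mathbb{Y}_{L_i,P_i,\Sigma_i}=\dim\mathbb{X}_{L_i,P_i,\Sigma_i'}$. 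The remaining contributions (the $2g$ factors, the codimension-$(n^2-1)$ constraint, and the free $\PGL_n$-action) match, so the total dimensions coincide. Combining (i)--(iii) with the two displayed formulas yields the asserted equality. The main subtlety is the matching of Weyl-group data and types across the additive/multiplicative correspondence; once this bookkeeping is done, the argument is a direct composition of Conjecture \ref{conjecture} with Theorem \ref{TH3}.
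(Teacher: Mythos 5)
Your argument is exactly the derivation the paper intends: the statement is presented there as an immediate consequence of Conjecture \ref{conjecture} applied to $\mathbb{M}_{\bf L,P,\Sigma}$ and Theorem \ref{TH3} applied to $\mathbb{Q}_{\bf L,P,\Sigma'}$, under the hypotheses $\sigma'\in\calA_{({\bf L,C}')}$ and $C_{\GL_n}(\sigma)=C_{\GL_n}(\sigma')$. Your checks (i)--(iii) --- that $({\bf L},{\bf C},\w)$ and $({\bf L},{\bf C}',\w)$ give the same type, that $W({\bf L},{\bf \Sigma})=W({\bf L},{\bf \Sigma}')$ so the hypotheses of Theorem \ref{TH3} hold, and that the two quotients have equal dimension --- are precisely the bookkeeping the paper leaves implicit, and they are carried out correctly.
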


In the case where the adjoint orbits $\calO_1,\dots,\calO_k$ and the conjugacy classes $\mathfrak{O},\dots, \mathfrak{O}_k$ are semisimple and ${\bf w}=1$, then this conjecture is due to T. Hausel. If $g=0$, he actually conjectured that the identity between the two polynomials is realized by the Riemann-Hilbert monodromy map $\calQ_{\bf O}\rightarrow\M_{\mathfrak{O}}$.

In \cite{hausel-letellier-villegas} we gave a conjectural formula for the mixed Poincar\'e polynomial of $\M_\mathfrak{O}$ in terms of Macdonald polynomials when $\mathfrak{O}_1,\dots,\mathfrak{O}_k$ are semisimple. We will discuss the generalization of this conjecture for the twisted mixed Poincar\'e polynomial $H_c^{\bf w}\left(\mathbb{M}_{\bf L,P,\Sigma};q,t\right)$ in a forthcoming paper.

\paragraph{Acknowledgements.} I am very grateful to P. Satg\'e for helpful discussions on some parts of this paper and to the referee for his very careful reading and his suggestions to improve the writing of this paper. This work started during the special semester entitled ``Algebraic Lie Theory'' at the Newton Institute (Cambridge, 2009). I would like to thank the organisers for the invitation and the institute's staff for their kindness. This work is supported by ANR-09-JCJC-0102-01.

\section{Preliminaries on geometric invariant theory}\label{preliminaries}

In this section,  $\K$ is an algebraically closed field of arbitrary characteristic. 

%By an \emph{algebraic variety} over $\K$ we shall mean a separated reduced scheme of finite type over $\K$.

In the following the letter $G$ denotes a connected reductive algebraic groups over $\K$.

We review the construction by Mumford \cite{mumford} of GIT quotients.

\subsection{GIT quotients}\label{GIT}
For an algebraic variety $X$ over $\K$ we denote by $\K[X]:=H^0(X,\calO_X)$ the $\K$-algebra of regular functions on $X$. Let $G$ acts on $X$ and let $\sigma:G\times X\rightarrow X$, $pr_2:G\times X\rightarrow X$ denote respectively the $G$-action and the projection, then a $G$-linearization of a line bundle $L$ over $X$ is an isomorphism $\Phi:\sigma^*(L)\simeq pr_2^*(L)$ satisfying a certain co-cycle condition (see Mumford \cite{mumford}). The isomorphism $\Phi$ defines a linear action of $G$ on the space of sections $H^0(X,L)$ as $(g\cdot s)(x)=g\cdot s(g^{-1}\cdot x)$. We denote by $H^0(X,L)^G$ the space of $G$-invariant sections.

Fix a $G$-linearization $\Phi$ of $L$ and for an integer $n$, put $L(n):=L^{\otimes\,n}$. A point $x\in X$ is \emph{semistable} (with respect to $\Phi$) if there exists $m>0$ and $s\in H^0(X,L(m))^G$ such that $X_s:=\{y\in X\,|\, s(y)\neq 0\}$ is affine and contains $x$. If moreoever the $G$-orbits of $X_s$ are closed in $X_s$ and the stabilizer $C_G(x)$ of $x$ in $G$ is finite, then $x$ is said to be \emph{stable}.

We denote by $X^{ss}(\Phi)$ (resp. $X^s(\Phi)$) the open $G$-invariant subset of semistable (resp. stable) points of $X$. 

Let $q:X^{ss}(\Phi)\rightarrow X/\!/_\Phi G$ denote the GIT quotient map defined by Mumford \cite[Theorem 1.10]{mumford}. It is defined by glueing together the affine quotient maps $X_s\rightarrow X_s/\!/G:={\rm Spec}\,\left(\K[X_s]^G\right)$ where $s$ runs over the set of sections $H^0(X,L(m))^G$, with $m>0$, such that $X_s$ is affine.

We will use the following well-known properties of $q$.

\begin{theorem} 
 
\noindent (1) The quotient $q$ is a categorical quotient (in the category of algebraic varieties).

\noindent (2) If $x,y\in X^{ss}(\Phi)$, we have $q(x)=q(y)$ if and only if $\overline{G\cdot x}\cap \overline{G\cdot y}\neq \emptyset$.

\noindent (3) If $U$ is an $q$-saturated (i.e. $q^{-1}q(U)=U$) $G$-stable open subset of $X^{ss}(\Phi)$, then $q(U)$ is an open subset of $X/\!/_\Phi G$ and the restriction $U\rightarrow q(U)$ is a categorical quotient.

\noindent (4) Let $F$ be a closed $G$-stable subset of $X^{ss}(\Phi)$ then $q\left(F\right)$ is closed in $X/\!/_\Phi G$. 

\noindent (5) There is an ample line bundle $M$ on $X/\!/_\Phi G$ such that $q^*(M)\simeq L(n)$ for some $n$.
\label{goodquotient}\end{theorem}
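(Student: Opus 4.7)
The plan is to reduce everything to the affine case, since the quotient $q$ is constructed by gluing together the affine GIT quotients $q_s\colon X_s\to X_s/\!/G=\Spec(\K[X_s]^G)$ as $s$ ranges over $G$-invariant sections of $L(m)$ (for varying $m>0$) such that $X_s$ is affine. The affine theory, which follows from Hilbert's theorem that $\K[X_s]^G$ is finitely generated whenever $G$ is reductive acting on an affine variety, already supplies all the properties (1)--(4) in the local setting. The global statements then result from verifying that the relevant properties are local on the target.

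First I would check the gluing is well-defined: for two sections $s\in H^0(X,L(m))^G$ and $s'\in H^0(X,L(m'))^G$ with $X_s,X_{s'}$ affine, the product $ss'\in H^0(X,L(m+m'))^G$ satisfies $X_{ss'}=X_s\cap X_{s'}$ and is also affine, so the localization $\K[X_{ss'}]^G$ is a common localization of $\K[X_s]^G$ and $\K[X_{s'}]^G$. This yields the scheme $X/\!/_\Phi G$ together with the map $q$, and the open affine cover $\{X_s/\!/G\}$. With this in hand, (1) is immediate since the categorical quotient property can be tested Zariski-locally on the base, and each $q_s$ is a categorical quotient by classical affine GIT. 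For (2), if $q(x)=q(y)$ then both points lie in some common affine piece $X_s$ (because $X_s=q^{-1}(X_s/\!/G)$), and the affine version gives $\overline{G\cdot x}\cap\overline{G\cdot y}\neq\emptyset$ inside $X_s$; conversely, any $G$-invariant section separates disjoint closed $G$-orbit closures in the affine case.

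For (3), saturation of $U$ means $q^{-1}(q(U))=U$, so $q(U)$ is the complement of $q(X^{ss}(\Phi)\setminus U)$; openness follows once (4) is known. The categorical-quotient property of $U\to q(U)$ follows by intersecting with each affine chart and applying the affine result together with the compatibility of saturation under restriction. For (4), closedness is tested locally: $q(F)\cap (X_s/\!/G)=q_s(F\cap X_s)$, and the classical affine statement (reductive invariants separate disjoint closed $G$-stable subsets) shows this image is closed in $X_s/\!/G$.

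The deepest point is (5). Here I would construct $M$ globally by exhibiting $X/\!/_\Phi G$ as $\Proj R$ for the graded ring $R=\bigoplus_{n\geq 0} H^0(X,L(n))^G$, so that $\calO_{\Proj R}(1)$ provides an ample line bundle whose pullback equals $L(n)$ for some $n$ large enough to ensure the twisting sheaf is genuinely a line bundle on the Proj. The main obstacle is precisely this identification: one must check that the Proj construction and the glued quotient coincide and that enough invariant sections exist to cover $X^{ss}(\Phi)$ by affine opens of the form $X_s$ with $s$ of bounded degree. This is exactly the content of Mumford's construction, and I would follow his argument verbatim.
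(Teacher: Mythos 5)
Your proposal is correct and matches the paper's treatment: the paper offers no proof of this theorem, citing Mumford and Dolgachev, and your sketch (gluing the affine quotients $X_s\rightarrow \Spec\,\K[X_s]^G$, checking that properties (1)--(4) are local on the target via the standard affine results on reductive invariants, and obtaining (5) from the $\Proj$ description) is precisely the standard argument found in those references. The only substantive point, (5), you defer to Mumford's construction, which is exactly what the paper itself does.
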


The theorem can be found for instance in Mumford \cite{mumford} or in Dolgachev \cite[Theorem 8.1, 6.5]{Dolg}.

Since the Zariski closure of a $G$-orbit contains always a closed orbit, the assertion (2) shows that $X/\!/_\Phi G$ parameterizes the closed orbits of $X^{ss}(\Phi)$. If we identify $X/\!/_\Phi G$ with the set of closed orbits of $X^{ss}(\Phi)$, the map $q$ sends an orbit $\calO$ of $X^{ss}(\Phi)$ to the unique closed orbit contained in $\overline{\calO}$.

Let $G'$ be another connected reductive algebraic group over $\K$ acting on $X$. Assume that the two actions of $G$ and $G'$ on $X$ commutes. Put $G''=G\times G'$ and assume that there is a $G''$-linearization $\Phi''$ of $L$ extending $\Phi$. Denote by $\Phi'$ the  $G'$-linearization on $L$ obtained by restricting $\Phi''$ to $G'\times X$.  Let $\pi'':X^{ss}(\Phi'')\rightarrow X/\!/_{\Phi''}G''$ and $\pi':X^{ss}(\Phi')\rightarrow X/\!/_{\Phi'}G'$ be the quotient maps. Since the actions of $G$ and $G'$ commute, the group $G$ acts on the spaces $H^0(X,L(n))^{G'}$ and so the quotient map $\pi'$ is $G$-equivariant. Also the ample line bundle $M$ on  $X/\!/_{\Phi'}G'$ constructed in \cite[Proof of Theorem 8.1]{Dolg} such that $(\pi')^*(M)\simeq L(n)$ is $G$-equivariant and there is a $G$-linearization $\Psi$ of $M$ such that $(\pi')^*(\Psi)=\Phi(n)$.

\begin{proposition} Assume that the inclusion $X^{ss}(\Phi'')\subset X^{ss}(\Phi')$ is an equality and put $Z=X/\!/_{\Phi'}G'$. Then there is a canonical isomorphism $X/\!/_{\Phi''}G''\simeq Z/\!/_{\Psi}G$.\label{quotientransi}\end{proposition}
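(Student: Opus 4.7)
The plan is a two-step quotient argument: quotient $X^{ss}(\Phi'') = X^{ss}(\Phi')$ first by $G'$ via $\pi'$, then quotient the result $Z$ by $G$ using the induced linearization $\Psi$, and show that this composite construction agrees canonically with the single GIT quotient by $G'' = G \times G'$.

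First I would compare the relevant rings of invariant sections. The isomorphism $(\pi')^*(M) \simeq L(n)$ is $G$-equivariant in the sense that $(\pi')^*(\Psi) = \Phi(n)$, so pullback yields a map $H^0(Z, M^{\otimes m}) \to H^0(X^{ss}(\Phi'), L(nm))^{G'}$ for every $m \geq 0$. Since $\pi'$ is a good quotient map (in particular affine, with $\pi'_* \calO_{X^{ss}(\Phi')} = \calO_Z$), this pullback is actually an isomorphism. Taking $G$-invariants gives a canonical identification
$$H^0(Z, M^{\otimes m})^G \simeq H^0(X^{ss}(\Phi'), L(nm))^{G''}.$$

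Next I would match semistable loci. Given $\tilde s \in H^0(Z, M^{\otimes m})^G$ with image $s \in H^0(X^{ss}(\Phi'), L(nm))^{G''}$, we have $X_s = (\pi')^{-1}(Z_{\tilde s})$; since $\pi'$ is affine, $X_s$ is affine if and only if $Z_{\tilde s}$ is. Combined with the hypothesis $X^{ss}(\Phi'') = X^{ss}(\Phi')$ this gives the key identities
$$Z^{ss}(\Psi) = \pi'\bigl(X^{ss}(\Phi'')\bigr), \qquad (\pi')^{-1}\bigl(Z^{ss}(\Psi)\bigr) = X^{ss}(\Phi'').$$
The composition $X^{ss}(\Phi'') \to Z^{ss}(\Psi) \to Z/\!/_{\Psi} G$ is then $G''$-invariant, and by the universal property of the categorical quotient (Theorem \ref{goodquotient}(1)) factors through a unique morphism $\phi : X/\!/_{\Phi''} G'' \to Z/\!/_{\Psi} G$.

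Finally I would show $\phi$ is an isomorphism by constructing an inverse on distinguished affine opens. For $\tilde s$ and $s$ as above, the commutation of the $G$- and $G'$-actions gives
$$\K[X_s]^{G''} = \bigl(\K[X_s]^{G'}\bigr)^G = \K[Z_{\tilde s}]^G,$$
because $Z_{\tilde s} = X_s /\!/ G'$ is a good quotient. These ring isomorphisms identify the affine patches of $X/\!/_{\Phi''} G''$ with those of $Z/\!/_{\Psi} G$ and glue to an inverse of $\phi$. The main obstacle is the identification $Z^{ss}(\Psi) = \pi'(X^{ss}(\Phi''))$ together with its preimage version: it requires the pushforward description of $G'$-invariant sections (to descend and lift sections between $X$ and $Z$ compatibly with the linearizations), the affineness of the good quotient $\pi'$ (to transport the affineness of $X_s$ and $Z_{\tilde s}$ back and forth), and crucially the hypothesis $X^{ss}(\Phi'') = X^{ss}(\Phi')$, which is what prevents loss of semistable points under the two-step procedure. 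Once these stability loci are matched, the remainder reduces to straightforward bookkeeping with invariant algebras on affine charts.
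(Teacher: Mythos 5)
Your argument is correct and follows essentially the same route as the paper's proof, which simply notes that on an affine piece $X/\!/G''={\rm Spec}\,\K[X]^{G''}\simeq{\rm Spec}\,\bigl(\K[X]^{G'}\bigr)^G=(X/\!/G')/\!/G$ and then invokes the glueing construction of GIT quotients. You have merely made explicit the bookkeeping (descent of sections along $\pi'$, matching of semistable loci via affineness and the hypothesis $X^{ss}(\Phi'')=X^{ss}(\Phi')$) that the paper leaves implicit in the phrase ``glueing affine quotients.''
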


\begin{proof} If $X$ is affine clearly $X/\!/G''={\rm Spec}\,\K[X]^{G''}\simeq {\rm Spec}\,\left(\K[X]^{G'}\right)^G=(X/\!/G')/\!/G$. Hence the proposition follows  from the construction of GIT quotients by glueing afffine quotients.\end{proof}

Let $\psi: G\times X\rightarrow X\times X$, $(g,x)\mapsto (g\cdot x,x)$. According to Mumford (see \cite[Definition 0.6]{mumford} or \cite[\S 6]{Dolg}) we say that a morphism $\phi:X\rightarrow Y$ of algebraic varieties is a \emph{geometric quotient} (of $X$ by $G$) if the following conditions are satisfied:

(i) $\phi$ is surjective and constant on $G$-orbits,

(ii) the image of $\psi$ is $X\times_YX$,

(iii) $U\subset Y$ is open if and only if $\phi^{-1}(U)$ is open,

(iv) for any open subset $U$ of $Y$, the natural homomorphism $H^0(U,\calO_Y)\rightarrow H^0(\phi^{-1}(U),\calO_X)$ is an isomorphism onto the subring $H^0(\phi^{-1}(U),\calO_X)^G$ of $G$-invariant sections.

A geometric quotient is a categorical quotient, hence if it exists it is unique. The condition (ii) says that $Y$ parameterizes the $G$-orbits of $X$ and so we will sometimes use the notation $X/G$ to denote the geometric quotient of $X$ by $G$.

Recall that the restriction $X^s(\Phi)\rightarrow q(X^s(\Phi))$ of $q$ is a geometric quotient $X^s(\Phi)\rightarrow X^s(\Phi)/G$.

Unless specified, the principal $G$-bundles we will consider in throughout this paper will be with respect to the \'etale topology.

\begin{lemma}A geometric quotient $\pi:X\rightarrow Y$ is a principal $G$-bundle  if and only if $\pi$ is flat and $\psi:G\times X\rightarrow X\times_Y X, (g,x)\mapsto (g\cdot x,x)$ is an isomorphism.
\end{lemma}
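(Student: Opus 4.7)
The plan is to treat the two implications separately, relying on descent theory and the fact that $G$, being connected reductive, is smooth over $\K$.

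First I would handle the forward implication. Suppose $\pi:X\to Y$ is a principal $G$-bundle, so there exists an \'etale surjective morphism $Y'\to Y$ such that $X\times_YY'\simeq G\times Y'$ as $G$-spaces over $Y'$. Since the projection $G\times Y'\to Y'$ is flat and flatness descends along faithfully flat maps, $\pi$ is flat. For $\psi$, the statement that it is an isomorphism is local on $Y$ for the \'etale topology (since $X\times_YY'\to Y'$ and both source and target of $\psi$ base-change compatibly), so it suffices to check it for the trivial bundle $G\times Y'\to Y'$, where $\psi$ becomes the standard isomorphism $G\times G\times Y'\to (G\times Y')\times_{Y'}(G\times Y')$, $(g,h,y')\mapsto(gh,y',h,y')$.

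For the reverse implication, assume $\pi$ is flat and $\psi$ is an isomorphism. Since $\pi$ is a geometric quotient it is surjective, hence faithfully flat. I would take $X\to Y$ itself as a cover and compute the base change: $X\times_YX\simeq G\times X$ via $\psi^{-1}$, with the $G$-action on the left factor corresponding to the canonical $G$-action on $G\times X$ by left translation on the first factor. This shows that $\pi$ becomes trivial after the faithfully flat base change $X\to Y$, so $\pi$ is an fppf principal $G$-bundle.

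It remains to refine this to an \'etale-local trivialization. Because $G$ is connected reductive, it is smooth over $\K$, hence so is any principal $G$-bundle in the fppf topology; such a bundle admits sections \'etale-locally on the base by the standard fact that smooth surjective morphisms admit sections in the \'etale topology (a consequence of \cite[Expos\'e XVII]{SGA} or the Hilbert 90-type argument via smoothness of $G$). Pulling back $\pi$ along such local sections yields an \'etale cover $Y'\to Y$ trivializing $\pi$, which is exactly the required principal $G$-bundle structure. The main obstacle, I expect, is precisely this last step of descending from fppf to \'etale triviality; everything else is a direct application of the definitions together with the isomorphism $X\times_YX\simeq G\times X$ furnished by $\psi$.
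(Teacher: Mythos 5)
Your sketch is correct, and in fact the paper offers no proof of this lemma at all: it is quoted as the standard fact (going back to Mumford's GIT, where ``principal bundle'' is essentially \emph{defined} by flatness of $\pi$ together with $\psi$ being an isomorphism) that this characterization agrees with \'etale-local triviality when $G$ is smooth and affine. Your two steps are exactly the standard argument: fppf descent of flatness, smoothness and isomorphy along the cover $X\to Y$ (using that $\pi$, being a morphism of varieties, is of finite presentation and, by surjectivity of the geometric quotient, faithfully flat), and then the passage from fppf to \'etale triviality via the fact that the smooth surjective map $\pi$ admits sections \'etale-locally, a section trivializing the torsor through the isomorphism $\psi$. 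The only phrase worth tightening is ``pulling back $\pi$ along such local sections'': what one actually does is use a section $s:Y'\to X\times_YY'$ to define $G\times Y'\to X\times_YY'$, $(g,y')\mapsto g\cdot s(y')$, which is an isomorphism precisely because the base change of $\psi$ is; with that made explicit the argument is complete.
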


\begin{proposition}If $X\rightarrow P$ is a principal $G$-bundle with $P$ quasi-projective, then there exists a line bundle $L$ on $X$ together with a $G$-linearization $\Phi$ of $L$ such that $X^s(\Phi)=X$. In particular $P\simeq X/\!/_\Phi G$.\end{proposition}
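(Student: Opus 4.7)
The plan is to construct $L$ as the pullback of an ample line bundle from $P$, and then use the principal bundle structure to verify that every point of $X$ satisfies the three stability conditions (an affine non-vanishing locus of an invariant section, closed orbits inside it, and finite stabilizer).

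First I would fix an ample line bundle $M$ on $P$, which exists because $P$ is quasi-projective, and set $L := \pi^*(M)$. Since $\pi$ is $G$-invariant, we have $\pi\circ\sigma = \pi\circ pr_2$ as morphisms $G\times X\to P$, and this equality produces a canonical isomorphism $\Phi:\sigma^*L\simeq pr_2^*L$; the cocycle condition follows from functoriality. This $\Phi$ is the desired $G$-linearization.

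Next, to verify $X^s(\Phi)=X$, I would fix $x\in X$ and produce an invariant section of some $L^{\otimes m}$ certifying stability of $x$. By ampleness of $M$ there exist $m>0$ and $t\in H^0(P,M^{\otimes m})$ with $P_t$ affine and $\pi(x)\in P_t$. Then $s:=\pi^*(t)\in H^0(X,L^{\otimes m})$ is automatically $G$-invariant (it factors through $\pi$), and $X_s=\pi^{-1}(P_t)$ contains $x$. The key subtlety, and the one point in the argument that requires care, is showing that $X_s$ is affine: since $\pi:X\to P$ is a principal $G$-bundle with $G$ affine, étale-locally on $P$ it has the form $G\times U\to U$, which is an affine morphism; as affineness of a morphism is étale-local on the target, $\pi$ itself is affine, so $X_s$ is affine over the affine $P_t$. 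The $G$-orbits in $X_s$ are exactly the fibers of $\pi|_{X_s}$ (since $\pi$ is a geometric quotient), and these are closed in $X_s$. Finally, the action being free on $X$ gives $C_G(x)=\{1\}$, which is finite. Thus $x\in X^s(\Phi)$, and since $x$ was arbitrary, $X^s(\Phi)=X$.

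For the last assertion I would invoke universality: the principal bundle $\pi:X\to P$ is a geometric quotient, hence a categorical quotient, by the lemma immediately preceding the proposition; and $X=X^{ss}(\Phi)\to X/\!/_\Phi G$ is also a categorical quotient by Theorem~\ref{goodquotient}(1). Uniqueness of categorical quotients produces the canonical isomorphism $P\simeq X/\!/_\Phi G$. The main obstacle in the argument is the affineness of $X_s$; everything else is a routine consequence of the principal bundle structure and the definitions.
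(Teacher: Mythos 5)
Your proof is correct and in substance the same as the paper's: the paper disposes of the statement by citing Mumford's Converse 1.12 in \cite[\S 4]{mumford} together with the single observation that $X\rightarrow P$ is an affine morphism (because $G$ is affine), and your argument — pulling back an ample bundle from the quasi-projective base, noting $X_s=\pi^{-1}(P_t)$ is affine by affineness of $\pi$, and checking closed orbits and trivial stabilizers from the principal bundle structure — is precisely an unfolded proof of that cited converse, with the same key point (affineness of $\pi$) carrying the weight. One small quibble: the lemma you invoke at the end characterizes when a geometric quotient is a principal $G$-bundle rather than stating the converse, so strictly you should justify separately (by \'etale descent, which is standard) that a principal $G$-bundle is a geometric, hence categorical, quotient before applying uniqueness of categorical quotients.
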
 

\begin{proof} Follows from Mumford \cite[\S 4, Converse 1.12]{mumford} and the fact that the morphism $X\rightarrow P$ is an affine morphism (as $G$ is affine).\end{proof}

We say that the action of $G$ on $X$ is \emph{free} if $\psi:G\times X\rightarrow X\times X$ is a closed immersion.  Recall that a geometric quotient $X\rightarrow X/G$ by a free action of $G$ on $X$ is a principal $G$-bundle \cite[Proposition 0.9]{mumford}. In the case where $X$ is affine then the quotient map $X\rightarrow X/\!/G$ is a principal $G$-bundle if and only if the stabilizers $C_G(x)$, with $x\in X$, are all trivial and the $G$-orbits of $X$ are all separable (see Bardsley and Richardson \cite[Proposition 8.2]{BR}).

We have the following proposition (see Mumford \cite[Proposition 7.1]{mumford}).

\begin{proposition} Let $G$ act on the algebraic varieties $X$ and $Y$ and let $f:X\rightarrow Y$ be a $G$-equivariant morphism. Assume that $Y\rightarrow Z$ is a principal $G$-bundle with $Z$ quasi-projective. Assume also that there exists a $G$-equivariant line bundle $L$ over $X$ which is relatively ample for $f$. Then there exists a quasi-projective variety $P$ and a principal $G$-bundle  $X\rightarrow P$.  Moreover the commutative diagram

$$\xymatrix{X\ar[rr]^f\ar[d]&&Y\ar[d]\\P\ar[rr]^{f/G}&&Z}$$is Cartesian.

If $\K=\overline{\F}_q$ and if all our data are defined over $\F_q$ then $P$, $X\rightarrow P$ and $X\simeq P\times_Z Y$ are also defined over $\F_q$. \end{proposition}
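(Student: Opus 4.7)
The plan is to construct $P$ by descending a projective embedding of $X$ along the principal $G$-bundle $\pi:Y\to Z$, essentially following Mumford's proof of GIT Proposition~7.1.

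First I would exploit the relative ampleness of $L$. For sufficiently large $n$, the sheaf $\mathcal{E}:=f_*L^{\otimes n}$ is coherent on $Y$ and the canonical $Y$-morphism $X\hookrightarrow\mathbb{P}(\mathcal{E})$ is a closed immersion. Because $L$ is $G$-linearized and $f$ is $G$-equivariant, $\mathcal{E}$ inherits a canonical $G$-equivariant structure, and the embedding $X\hookrightarrow\mathbb{P}(\mathcal{E})$ becomes $G$-equivariant with respect to the induced $G$-action on the total space of $\mathbb{P}(\mathcal{E})\to Y$.

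Next I would descend along $\pi$. Since $\pi:Y\to Z$ is a principal $G$-bundle it is faithfully flat, and the standard equivalence of categories between $G$-equivariant quasi-coherent sheaves on $Y$ and quasi-coherent sheaves on $Z$ produces a coherent sheaf $\mathcal{E}'$ on $Z$ with $\mathcal{E}\simeq \pi^*\mathcal{E}'$. Consequently $\mathbb{P}(\mathcal{E})\simeq Y\times_Z\mathbb{P}(\mathcal{E}')$, where the $G$-action is carried entirely by the first factor. Applying the same descent to the $G$-stable closed subscheme $X\subset Y\times_Z\mathbb{P}(\mathcal{E}')$ yields a closed subscheme $P\subset\mathbb{P}(\mathcal{E}')$ such that $X\simeq Y\times_Z P$ as $G$-schemes.

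From this description the remaining claims are essentially free: $P$ is quasi-projective as a locally closed subscheme of the projective bundle $\mathbb{P}(\mathcal{E}')$ over the quasi-projective $Z$; the morphism $X\simeq Y\times_Z P\to P$ is a principal $G$-bundle as the base change of $\pi$; the diagram in the statement is Cartesian by construction, with $f/G$ identified with the natural projection $P\to Z$. For the rationality assertion, when $\K=\overline{\F}_q$ and the data $X,Y,Z,f,\pi,L$ together with their $G$-actions and linearization are all defined over $\F_q$, every construction above (direct image, projectivization, étale descent) commutes with the Frobenius, so the resulting $P$, the bundle $X\to P$, and the isomorphism $X\simeq P\times_Z Y$ inherit natural $\F_q$-structures.

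The main obstacle is the descent step: one must check that the $G$-equivariant structure on $\mathcal{E}$ satisfies the cocycle condition encoded by the torsor $\pi$, and analogously for the $G$-stable closed subscheme $X\subset\mathbb{P}(\mathcal{E})$. This is most cleanly handled by passing to an étale trivialization $Z'\to Z$ over which $Y\times_Z Z'\simeq G\times Z'$, verifying the statements on the trivialization where they reduce to taking $G$-invariants of a product, and then gluing by faithfully flat descent; this same local argument is what ensures that $X\to P$ is étale-locally of the form $G\times P'\to P'$, hence a principal $G$-bundle.
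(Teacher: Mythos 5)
Your argument is correct and is essentially the proof the paper relies on: the paper gives no independent argument but cites Mumford's GIT Proposition 7.1, whose proof is exactly this descent of the relatively ample $G$-linearized sheaf $f_*L^{\otimes n}$ and of the $G$-stable (locally closed) subscheme $X\subset\mathbb{P}(\mathcal{E})$ along the torsor $Y\rightarrow Z$, using the equivalence between $G$-equivariant sheaves on $Y$ and sheaves on $Z$. The only cosmetic point is that for non-proper $f$ the embedding $X\hookrightarrow\mathbb{P}(\mathcal{E})$ is an immersion rather than a closed immersion, which does not affect the descent or the quasi-projectivity of $P$.
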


Assume that $A$ is a finitely generated $\K$-algebra. The projective $r$-space over $A$ is the algebraic variety $\bP^r_A:=\Proj A[x_0,\dots,x_r]= {\rm Spec}\, A\times\bP^r_\K$. We denote by $\calO_A(1)$ the twisting sheaf on $\bP^r_A$. 

We now assume that $G$ acts on the algebraic varieties ${\rm Spec}\, A$ and $\bP^r_\K$ and so on $\bP^r_A$. The ample line bundle $\calO_A(n)$ admits a $G$-linearization for some $n$ sufficiently large (as the twisting sheaf $\calO(1)$ on $\bP^r_\K$ does by Dolgachev \cite[Corollary 7.2]{Dolg}). For such an $n$, the restriction $L$ of $\calO_A(n)$ to a closed $G$-stable subvariety $X$ of $\bP_A^r$ admits then a $G$-linearization $\Phi$. In this case, the $X_s$, with $s\in H^0(X,L(n))^G$, are always affine.

\begin{corollary}Let $f:X\rightarrow Y$ be a projective $G$-equivariant morphism with $Y$ affine. Assume moreover that $C_G(y)=1$ for all $y\in Y$ and that the $G$-orbits of $Y$ are all separable. Then the geometric quotients $Y\rightarrow Y/G$ and $X\rightarrow X/G$ exists (and are principal $G$-bundles) and $X\simeq X/G\times_{Y/G}Y$. If $\K=\overline{\F}_q$ and if $X$,$Y$, $G$ and $f$ are defined over $\F_q$, then $Y\rightarrow Y/G$, $X\rightarrow X/G$ and $X\simeq X/G\times_{Y/G}Y$ are also defined over $\F_q$. \label{goodquotient1}
\end{corollary}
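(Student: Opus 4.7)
The corollary is essentially an immediate consequence of the preceding proposition of Mumford together with the Bardsley--Richardson criterion cited in the text. My plan is as follows.

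First I would establish the geometric quotient of $Y$. Since $Y$ is affine, the categorical quotient $Y\to Y/\!/G$ exists as $\Spec\,\K[Y]^G$. By the Bardsley--Richardson criterion recalled just above the corollary \cite[Proposition 8.2]{BR}, the hypotheses that every stabilizer $C_G(y)$ is trivial and every $G$-orbit on $Y$ is separable force this quotient map to be a principal $G$-bundle. In particular $Z:=Y/\!/G$ is affine (hence quasi-projective) and coincides with the geometric quotient $Y/G$. Over $\F_q$ the quotient $Z=\Spec\,\K[Y]^G$ and the map $Y\to Z$ are defined over $\F_q$ since $G$, $Y$ and the action are.

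Next I would produce a $G$-equivariant relatively ample line bundle on $X$ in order to apply the preceding proposition. Since $f:X\to Y$ is projective and $Y$ is affine, $X$ embeds as a closed $G$-stable subvariety of some projective space $\bP^r_A$ over $A=\K[Y]$, where $G$ acts on $A$ via the $G$-action on $Y$ and on $\bP^r_\K$ through some linear action (this is possible because $f$ factors through such a $\bP^r_A$ after twisting by a sufficiently positive line bundle). By the discussion in the paragraph immediately preceding the corollary (using Dolgachev \cite[Corollary 7.2]{Dolg}), some power $\calO_A(n)$ of the twisting sheaf carries a $G$-linearization; restricting to $X$ gives a $G$-linearized line bundle $L$ on $X$ that is relatively ample for $f$. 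Over $\F_q$ the embedding, the power $n$, and the linearization can all be chosen $\F_q$-rationally because every construction is from $G$-representation theory applied to finite-dimensional pieces of $H^0(X,\calO_A(n))$.

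Now I apply the preceding proposition with the data $(f:X\to Y,\ Y\to Z,\ L)$: since $Y\to Z$ is a principal $G$-bundle with $Z$ quasi-projective and $L$ is a $G$-equivariant line bundle on $X$ relatively ample for $f$, the proposition produces a quasi-projective variety $P$, a principal $G$-bundle $X\to P$, and a Cartesian diagram showing $X\simeq P\times_Z Y$. Being a principal $G$-bundle, $X\to P$ is in particular a geometric quotient, so $P=X/G$. The $\F_q$-rationality statement is then the $\F_q$-rationality clause of the same proposition applied to our $\F_q$-rational data.

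The only mildly delicate point in this plan is the existence of a $G$-linearization on some positive power of an $f$-ample line bundle; once this is in hand, the corollary reduces to a direct application of the previous proposition. Since the projectivity of $f$ and affineness of $Y$ let us realize everything inside $\bP^r_A$, the linearization issue is handled by the result of Dolgachev recalled in the text and presents no real obstacle.
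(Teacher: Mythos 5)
Your argument is exactly the one the paper intends: the Bardsley--Richardson criterion makes $Y\to Y/\!/G$ a principal $G$-bundle with affine base, the projectivity of $f$ over affine $Y$ realizes $X$ inside $\bP^r_A$ with $A=\K[Y]$ so that the preceding discussion (via Dolgachev) supplies a $G$-linearized $f$-ample line bundle, and the previous proposition then yields $X\to X/G$, the Cartesian square, and the $\F_q$-rationality. This matches the paper's (implicit) proof, so there is nothing to add.
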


\subsection{Particular case: Affine varieties}

Assume now that $X$ is an affine algebraic variety. Let $\chi:G\rightarrow \K^{\times}$ be a linear character of $G$. Then the action of $G$ on $L^o=X\times\A^1$ given by $g\cdot(x,t)\mapsto (g\cdot x,\chi(g)^{-1}t)$ defines a $G$-linearization $\Phi$ of $L^o$. The space $H^o(X,L^o(n))^G$  with $n\geq 0$ can be then identified with the space $\K[X]^{G,\chi^n}$ of functions $f\in\K[X]$ which satisfy $f(g\cdot x)=\chi^n(g)f(x)$ for all $g\in G$ and $x\in X$. Such a function $f\in\K[X]$ is called a $\chi^n$-\emph{semi-invariant function}.

A polynomial $f=\sum_{i=0}^rf_i\cdot z^i\in\K[X][z]\simeq\K[X\times\A^1]$ is $G$-invariant if and only if for each $i$, the function $f_i$ is a $\chi^i$-semi-invariant, that is $$\K[X\times\A^1]^G=\bigoplus_{n\geq 0}\K[X]^{G,\chi^n}$$and so $$X/\!/_\Phi G=\Proj\left(\K[X\times\A^1]^G\right).$$The canonical projective morphism 
\begin{equation}\pi_X:X/\!/_\Phi G\rightarrow X/\!/G:={\rm Spec}\,\left(\K[X]^G\right).\label{pro}\end{equation}is induced by the inclusion of algebras $\K[X]^G\subset \K[X\times\A^1]^G$. Of course if $\Phi$ is trivial then $\pi_X$ is an isomorphism.

We will use the following important property. Let $q:X^{ss}(\Phi)\rightarrow X/\!/_\Phi G$ be the quotient with respect to $(L^o,\Phi)$.

\begin{proposition} If $F$ is closed subvariety of $X$, then $F^{ss}(\Phi)=X^{ss}(\Phi)\cap F$ and the canonical morphism $F/\!/_\Phi G\rightarrow q(F^{ss}(\Phi))$ is bijective. If $\K=\C$, it is an isomorphism.
\label{affinegoodquotient}\end{proposition}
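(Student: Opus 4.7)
The plan is to exploit the affine description of semistable points: since $X$ is affine and the linearization $\Phi$ comes from a character $\chi$, a point $x\in X$ lies in $X^{ss}(\Phi)$ if and only if there exists $m>0$ and a $\chi^m$-semi-invariant $f\in\K[X]^{G,\chi^m}$ with $f(x)\neq 0$ (the set $X_f$ being automatically affine). The same holds for $F$. Thus the first equality reduces to an extension/restriction problem for semi-invariant functions on the closed $G$-stable pair $F\subset X$.

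For the inclusion $X^{ss}(\Phi)\cap F\subset F^{ss}(\Phi)$, given $x\in X^{ss}(\Phi)\cap F$ and $f\in\K[X]^{G,\chi^m}$ with $f(x)\neq 0$, I would simply restrict $f$ to $F$ to obtain a $\chi^m$-semi-invariant on $F$ not vanishing at $x$. For the reverse inclusion $F^{ss}(\Phi)\subset X^{ss}(\Phi)\cap F$, given $x\in F^{ss}(\Phi)$ and $g\in\K[F]^{G,\chi^m}$ with $g(x)\neq 0$, one must extend (a power of) $g$ to a semi-invariant on $X$. Rephrasing $\chi$-semi-invariants as honest $G$-invariants on $X\times\A^1$ (via the action $g\cdot(x,t)=(g\cdot x,\chi(g)^{-1}t)$) turns this into the standard problem of extending a $G$-invariant function from the closed $G$-stable subvariety $F\times\A^1\subset X\times\A^1$. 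By (geometric) reductivity of $G$, some power $g^N$ extends to a $\chi^{Nm}$-semi-invariant $f\in\K[X]^{G,\chi^{Nm}}$, and $f(x)=g(x)^N\neq 0$ shows $x\in X^{ss}(\Phi)$.

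For the canonical morphism $F/\!/_\Phi G\to q(F^{ss}(\Phi))$, I would argue set-theoretic bijectivity using Theorem \ref{goodquotient}(2): the points of $F/\!/_\Phi G$ parameterize the closed $G$-orbits in $F^{ss}(\Phi)$, while $q(F^{ss}(\Phi))\subset X/\!/_\Phi G$ parameterizes the closed $G$-orbits in $X^{ss}(\Phi)$ which meet $F$. Since $F$ is closed in $X$, for any $x\in F^{ss}(\Phi)$ the orbit closure $\overline{G\cdot x}^{X^{ss}(\Phi)}$ is already contained in $F^{ss}(\Phi)=X^{ss}(\Phi)\cap F$, so a $G$-orbit in $F^{ss}(\Phi)$ is closed in $F^{ss}(\Phi)$ if and only if it is closed in $X^{ss}(\Phi)$. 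Consequently the equivalence relations ``orbit closures meet'' on $F^{ss}(\Phi)$ induced from $F$ or from $X$ agree, and the induced map on closed orbits is a bijection.

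Finally, for $\K=\C$ I would upgrade bijectivity to an isomorphism by using linear reductivity of $G$ in characteristic zero. Applying the Reynolds operator to the short exact sequence $0\to I\to \K[X\times\A^1]\to \K[F\times\A^1]\to 0$ of rational $G$-modules yields surjectivity of $\K[X\times\A^1]^G\to\K[F\times\A^1]^G$ in each $\chi^n$-graded piece, so the induced map on $\Proj$ is a closed immersion $F/\!/_\Phi G\hookrightarrow X/\!/_\Phi G$ whose image coincides set-theoretically with $q(F^{ss}(\Phi))$; since $F/\!/_\Phi G$ is reduced, this closed immersion provides the desired isomorphism onto $q(F^{ss}(\Phi))$. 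The main subtlety I anticipate is precisely the positive characteristic case of the extension step, where linear reductivity fails and one must genuinely invoke geometric reductivity (Haboush/Nagata) to extend some power of a semi-invariant—this is what restricts the final isomorphism statement to $\K=\C$ while the bijectivity survives in arbitrary characteristic.
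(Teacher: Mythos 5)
Your proof is correct, and it is actually more complete than anything the paper records: Proposition \ref{affinegoodquotient} is stated there without proof, and the remark that follows it only indicates how to recover the equality $F^{ss}(\Phi)=X^{ss}(\Phi)\cap F$ from two general facts about $G$-equivariant morphisms of affine varieties (restriction of semi-invariants along any equivariant map, plus preservation of semistability under finite maps, applied to the closed immersion $F\hookrightarrow X$). Your extension step --- passing to $G$-invariants on $X\times\A^1$ and lifting a power of a semi-invariant by geometric reductivity --- is exactly the content hidden behind that finite-morphism remark, so for the first assertion the two routes agree in substance; the one detail worth making explicit is that after lifting $(gz^m)^N$ you should take the homogeneous component of degree $Nm$ of the lift (each graded piece is a $G$-submodule because $G$ acts on the $\A^1$-coordinate through $\chi^{-1}$), which is what guarantees the extension is a semi-invariant of the definite weight $\chi^{Nm}$. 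For the remaining assertions the paper offers no argument at all, and yours fill the gap correctly: identifying both quotients with sets of closed orbits via Theorem \ref{goodquotient}(2), together with the observation that for $x\in F^{ss}(\Phi)$ the orbit closure taken in $X^{ss}(\Phi)$ stays inside $F^{ss}(\Phi)=X^{ss}(\Phi)\cap F$, does give bijectivity in any characteristic; and over $\C$ the Reynolds operator makes $\K[X\times\A^1]^G\rightarrow\K[F\times\A^1]^G$ surjective in each graded piece, so the induced map $F/\!/_\Phi G\rightarrow X/\!/_\Phi G$ is a closed immersion whose image is the closed set $q(F^{ss}(\Phi))$ (closedness by Theorem \ref{goodquotient}(4)), and reducedness of $F/\!/_\Phi G$ upgrades the bijection to an isomorphism onto $q(F^{ss}(\Phi))$ with its reduced structure. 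Your closing diagnosis --- that in positive characteristic only a power of the semi-invariant extends, so one retains bijectivity (indeed a finite universal homeomorphism) but not in general an isomorphism --- is also the correct explanation for the hypothesis $\K=\C$ in the last assertion.
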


\begin{remark} Note that for any $G$-equivariant morphism $\phi:X\rightarrow Y$ of affine algebraic varieties, then the co-morphism $\phi^\sharp:\K[Y]\rightarrow\K[X]$ preserves $\chi$-semi-invariants, hence we always have $\phi^{-1}(Y^{ss}(\Phi))\subset X^{ss}(\Phi)$. If moreover $\phi$ is a finite morphism then $\phi\left(X^{ss}(\Phi)\right)\subset Y^{ss}(\Phi)$  and so we recover the first assertion of the proposition. 
\end{remark}

\section{Intersection cohomology}\label{IC}

\subsection{Generalities and notation}Let $X$ be an algebraic variety over the algebraically closed field $\K$. Let $\ell$ be a prime which does not divide the characteristic of $\K$. The letter $\kappa$ denotes the field $\overline{\Q}_\ell$.

We denote by $\mathcal{D}_c^b(X)$ the bounded ``derived category'' of $\kappa$-(constructible) sheaves on $X$. For $K\in\mathcal{D}_c^b(X)$
 we denote by $\mathcal{H}^iK$ the $i$-th cohomology sheaf of $K$. If $m$ is an integer, then we denote by $K[m]$ the $m$-th shift of $K$\hspace{.05cm};\hspace{.05cm} we have $\mathcal{H}^iK[m]=\mathcal{H}^{i+m}K$.
For a morphism $f:X\rightarrow Y$, we have the usual functors $f_*, f_!:\mathcal{D}_c^b(X)\rightarrow\mathcal{D}_c^b(Y)$ and $f^*, f^!:\mathcal{D}_c^b(Y)\rightarrow\mathcal{D}_c^b(X)$. If $i:Y\hookrightarrow X$ is a closed immersion, the restriction $i^*K$ of $K\in\mathcal{D}_c^b(X)$ is denoted by $K|_Y$. We denote by $D_X:\mathcal{D}_c^b(X)\rightarrow\mathcal{D}_c^b(X)$ the Verdier dual operator.

Recall (see Beilinson-Bernstein-Deligne \cite{BBD}) that a perverse sheaf on $X$ is an object $K$ in $\mathcal{D}_c^b(X)$ which satisfies the following two conditions:
\vspace{.2cm}

${\rm dim}\hspace{.05cm}\big({\rm Supp}(\mathcal{H}^iK)\big)\leq-i,$

${\rm dim}\hspace{.05cm}\big({\rm Supp}(\mathcal{H}^iD_XK)\big)\leq-i\,\text{ for all }\,i\in\Z$.

\vspace{.2cm}

The full subcategory of $\mathcal{D}_c^b(X)$ of 
perverse sheaves on $X$ forms an abelian category (see BBD \cite[Th\'eor\`eme 1.3.6]{BBD}) and its objects are all of finite length (see BBD \cite[Th\'eor\`eme 4.3.1 (i)]{BBD}). 

 Let now $Y$ be an irreducible open nonsingular subset of $X$ such that $\overline{Y}=X$. Then for a local system $\xi$ on $Y$, we let $\mathcal{IC}_{X,\xi}^{\bullet}\in\mathcal{D}_c^b(X)$ be the intersection cohomology complex defined by Goresky-McPherson and Deligne. The perverse sheaf $K=\underline{\mathcal{IC}}_{X,\xi}^{\bullet}:=\mathcal{IC}_{X,\xi}^{\bullet}[{\rm dim}\hspace{.05cm}X]$ is characterized by the following properties:
\vspace{.2cm}

$\mathcal{H}^iK=0\,\text{ if }\,i<-{\rm dim}\hspace{.05cm}X,$

$\mathcal{H}^{-{\rm dim}\hspace{.05cm}X}K|_Y=\xi,$

${\rm dim}\hspace{.05cm}\big({\rm Supp}(\mathcal{H}^iK)\big)<-i\,\text{ if }\,i>-{\rm dim}\hspace{.05cm}X,$

${\rm dim}\hspace{.05cm}\big({\rm Supp}(\mathcal{H}^iD_XK)\big)<-i\,\text{ if }\,i>-{\rm dim}\hspace{.05cm}X.$
\vspace{.2cm}

\noindent If $U$ is another open nonsingular subset of $X$ and if $\zeta$ is any local system on $U$ such that $\zeta|_{U\cap Y}=\xi|_{U\cap Y}$, then $\IC {X,\xi}=\IC {X,\zeta}$. This is why we omitt the open set $Y$ from the notation $\IC {X,\xi}$. We will simply denote by $\IC X$ the complex $\IC {X,\overline{\Q}_{\ell}}$.

\begin{remark}Note that if $U$ is a locally closed subvariety of $X$ such that $\overline{U}\subsetneq X$ then $\mathcal{H}^{-{\rm dim}\,U}K|_U=0$.
\label{remlocal}\end{remark}

We have the following description of simple perverse sheaves due to Beilinson, Bernstein and Deligne. If $Z$ is an irreducible closed subvariety of $X$ and $\xi$ an irreducible local system on some open subset of 
$Z$ then the extension by zero of $\pIC {Z,\xi}$ on $X-Z$ is a simple perverse sheaf on $X$ and any simple perverse sheaf on $X$ arises in this way from some pair $(Z,\xi)$ (see BBD \cite[4.3.1]{BBD}). 

It will be convenient to continue to denote by $\IC {Z,\xi}$ and $\pIC {Z,\xi}$ their extension by zero on $X-Z$.

Note that if $X$ is nonsingular then $\pIC {X,\xi}=:\underline{\xi}$ is the complex $K^\bullet$ concentrated in degree $-{\rm dim}\,X$ with $K^{-{\rm dim}\, X}=\xi$.

We define the compactly supported $i$-th intersection cohomology groups $IH_c^i(X,\xi)$ with coefficient in the local system $\xi$ as the compactly supported $i$-th $\ell$-adic hypercohomology group $\H_c^i\big(X,\IC {X,\xi}\big)$.  If $f$ is the unique morphism $X\rightarrow \{pt\}$, then $IH^i_c(X,\xi)=\mathcal{H}^i\big(f_!\IC {X,\xi}\big)$.

If $X$ is nonsingular, then $\IC X$ is the constant sheaf $\kappa$ concentrated in degree $0$ and so $IH^i_c(X,\kappa)=H_c^i(X,\kappa)$.
\vspace{.2cm}

We will need the following decomposition theorem of Beilinson, Bernstein, Deligne and Gabber.

\begin{theorem}\label{resolution} Suppose that  $\varphi:X\rightarrow X'$ is a proper map with $X$ irreducible. Then $$\varphi_*(\IC X)\simeq \bigoplus_{Z,\xi,r}V_{Z,\xi,r}\otimes\IC {Z,\xi}[r]$$where $\xi$ is an irreducible local system on some open subset of a closed irreducible subvariety $Z$ of $X'$. 
If moreover $\varphi_*(\pIC X)$ is a perverse sheaf, then \begin{equation}\varphi_*(\pIC X)\simeq \bigoplus_{Z,\xi}V_{Z,\xi}\otimes\pIC {Z,\xi}\label{resol}\end{equation}
\label{BBDG}\end{theorem}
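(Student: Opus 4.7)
The plan is to invoke the purity machinery of Deligne (Weil II) together with Gabber's purity theorem for intersection cohomology complexes, and then apply the structural theorem of BBD that pure perverse sheaves are geometrically semisimple. Concretely, I would first reduce to a situation where the purity input is cleanest: spread out all the data over a finitely generated $\Z$-subring, pass to a closed point of positive characteristic, and work with $\overline{\Q}_\ell$-sheaves on varieties over $\overline{\F}_q$. Standard base change together with the constructibility of the relevant complexes then transfers a decomposition over $\overline{\F}_q$ to one over the original algebraically closed field $\K$.

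In this positive characteristic setting, the first key input I would use is Gabber's theorem: for any irreducible variety $X$ defined over $\F_q$, the complex $\pIC X$ is pure of weight $\dim X$. The second input is Deligne's stability theorem (Weil II): the direct image $\varphi_*$ of a pure complex by a proper morphism $\varphi$ is again pure of the same weight. Combining these, $\varphi_*(\pIC X)$ is a pure complex on $X'$.

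Next I would appeal to the BBD structural result (\cite{BBD}, Th\'eor\`eme 5.4.5 and 5.4.6), which states that every pure complex on a variety over $\overline{\F}_q$ is, after pullback to $\overline{\F}_q$, isomorphic to a direct sum $\bigoplus_r {}^p\!\mathcal{H}^r(-)[-r]$ of shifts of its perverse cohomology sheaves, and that each perverse cohomology sheaf is in turn a direct sum of simple perverse sheaves (since pure perverse sheaves are geometrically semisimple). Combined with the classification recalled in the excerpt, namely that the simple perverse sheaves on $X'$ are exactly the complexes $\pIC{Z,\xi}$ for $(Z,\xi)$ with $Z\subset X'$ closed irreducible and $\xi$ an irreducible local system on an open subset of $Z$, this yields the first decomposition
\[
\varphi_*(\IC X) \;\simeq\; \bigoplus_{Z,\xi,r} V_{Z,\xi,r}\otimes \IC{Z,\xi}[r]
\]
after unshifting. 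The second assertion is then immediate: if $\varphi_*(\pIC X)$ is already perverse, then ${}^p\!\mathcal{H}^r(\varphi_*\pIC X)=0$ for all $r\neq 0$, so only the $r=0$ summands survive, giving (\ref{resol}).

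The hard part is entirely the purity input: Gabber's purity of $\pIC X$ and the BBD theorem that pure perverse sheaves are semisimple are the technical heart of the decomposition theorem and are not at all formal. I would not attempt to reprove these; I would simply quote them. The remaining work, i.e.\ assembling the perverse-cohomology decomposition of a pure complex and identifying its simple constituents, is then a formal consequence.
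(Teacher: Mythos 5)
Your proposal is correct and coincides with the paper's treatment: the paper gives no independent proof of Theorem \ref{BBDG}, simply quoting it as the Beilinson--Bernstein--Deligne--Gabber decomposition theorem from \cite{BBD}, and your sketch is exactly the standard purity argument behind that citation (Gabber's purity of $\pIC X$, Weil II stability of purity under proper pushforward, and BBD's semisimplicity of pure complexes over $\overline{\F}_q$, transferred back by spreading out), with the perversity hypothesis killing all summands with $r\neq 0$ in the second assertion.
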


The theorem remains true if we replace $\IC X$ by a semisimple object of ``geometrical origin'' \cite[6.2.4]{BBD}.

\begin{remark} Let $Y$ be a closed irreducible subvariety of $X'$ and let $U$ be a non-empty nonsingular open subset of $Y$. Note that

$$
\mathcal{H}^{-{\rm dim}\,Y}\left(\left.\bigoplus_{Z,\xi}V_{Z,\xi}\otimes\pIC {Z,\xi}\right)\right|_U\simeq\bigoplus_\xi V_{Y,\xi}\otimes\xi
 $$
where the direct sum on the right hand side is over the irreducible local systems on $Y$.

\label{finrem}\end{remark}

\begin{definition} A proper surjective morphism $f:Z\rightarrow X$ is \emph{semi-small} if and only if one of the following equivalent conditions is satisfied:

(i) ${\rm dim}\hspace{.05cm}\{x\in X\,|\,{\rm dim}\,f^{-1}(x)\geq i\}\leq {\rm dim}\,X-2i$ for all $i\in\N$.

(ii) There exists a filtration $X:=F_0\supset F_1\supset\cdots\supset F_r=\emptyset$ of $X$ by closed subsets such that, for all $i\in\{0,\dots,r-1\}$ and $x\in F_i-F_{i+1}$, we have $2\,{\rm dim}\,f^{-1}(x)\leq {\rm dim}\,X-{\rm dim}\,F_i$.
 
\label{semi-small}\end{definition}

We will use the following easy fact.

\begin{lemma} Let $f:Z\rightarrow X$ be a proper surjective map and let $X:=F_0\supset F_1\supset\cdots\supset F_r=\emptyset$ be a filtration of $X$ by closed subsets. Let $h:X'\rightarrow X$ be a surjective map and put   $F'_i=h^{-1}(F_i)$. Assume that ${\rm dim}\,X-{\rm dim}\,F_i={\rm dim}\,X'-{\rm dim}\,F_i'$. Then the projection on the second coordinate $Z\times_XX'\rightarrow X'$ is semi-small with respect to the filtration $X':=F_0'\supset F_1'\supset\cdots\supset F_r'=\emptyset$ if and only if the map $f$ is semi-small with respect to $X:=F_0\supset F_1\supset\cdots\supset F_r=\emptyset$.
\label{easyfact}\end{lemma}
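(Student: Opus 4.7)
The plan is to verify the semi-smallness condition using characterization (ii) in Definition 3.1.5, transferring the fiber-dimension inequality across the base change via a direct computation.

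First I would record the basic properties of the base-changed map. Let $p\colon Z\times_X X'\to X'$ denote the projection on the second coordinate. Since $f$ is proper and surjective, so is $p$ (properness is preserved under base change, and surjectivity follows because $p(Z\times_X X')=h^{-1}(f(Z))=h^{-1}(X)=X'$). Next, for any $x'\in X'$ the fiber $p^{-1}(x')$ is canonically identified with $f^{-1}(h(x'))\times\{x'\}$, hence
\[
\dim p^{-1}(x')=\dim f^{-1}(h(x')).
\]
Finally, because $F_i'=h^{-1}(F_i)$ we have $F_i'-F_{i+1}'=h^{-1}(F_i-F_{i+1})$, and surjectivity of $h$ ensures $h(F_i'-F_{i+1}')=F_i-F_{i+1}$.

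With these three facts in hand, the equivalence is a direct translation. Assume first that $f$ is semi-small with respect to $F_0\supset\cdots\supset F_r$. For any $x'\in F_i'-F_{i+1}'$ we have $h(x')\in F_i-F_{i+1}$, so
\[
2\dim p^{-1}(x')=2\dim f^{-1}(h(x'))\le \dim X-\dim F_i=\dim X'-\dim F_i',
\]
where the last equality is precisely the hypothesis relating the two filtrations. Conversely, if $p$ is semi-small with respect to $F_0'\supset\cdots\supset F_r'$, then given any $x\in F_i-F_{i+1}$ the surjectivity of $h$ produces $x'\in F_i'-F_{i+1}'$ with $h(x')=x$; the same chain of equalities read backwards gives $2\dim f^{-1}(x)\le\dim X-\dim F_i$. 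This establishes characterization (ii) in either direction.

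I do not expect any serious obstacle: the argument is essentially bookkeeping, the only nontrivial ingredients being the hypothesis $\dim X-\dim F_i=\dim X'-\dim F_i'$ (which is exactly what is needed to match the two semi-smallness inequalities) and the surjectivity of $h$ (which guarantees that the condition on $F_i-F_{i+1}$ can actually be tested by lifting to $F_i'-F_{i+1}'$). The mildest point worth flagging is that one should appeal to criterion (ii) rather than (i) of Definition 3.1.5, since (ii) is adapted to a prescribed filtration and makes the correspondence between the two inequalities transparent.
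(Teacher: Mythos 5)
Your proof is correct, and it is exactly the direct verification the paper has in mind: the lemma is stated there as an ``easy fact'' with the proof omitted, and the intended argument is precisely this transfer of criterion (ii) of Definition \ref{semi-small} through the identifications $p^{-1}(x')\simeq f^{-1}(h(x'))$ and $F_i'-F_{i+1}'=h^{-1}(F_i-F_{i+1})$, using the dimension hypothesis and the surjectivity of $h$ for the two directions. Nothing to add.
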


\begin{definition} Let $X$ be an algebraic variety over $\K$. We say that $X=\coprod_{\alpha\in I}X_\alpha$ is a \emph{stratification} of $X$ if the set $\{\alpha\in I\,|\, X_\alpha\neq\emptyset\}$ is finite, for each $\alpha\in I$ such that $X_\alpha\neq\emptyset$, the subset $X_\alpha$ is a locally closed nonsingular equidimensional subvariety of $X$, and for each $\alpha,\beta\in I$, if  $X_\alpha\cap\overline{X}_\beta\neq\emptyset$, then $X_\alpha\subset \overline{X}_\beta$.
 \label{stratification}\end{definition}

It is well-known that if $f:Z\rightarrow X$ is a semi-small map with $Z$ nonsingular and irreducible, then the complex $f_*(\underline{\xi})$ is a perverse sheaf for any local system $\xi$ on $Z$.

We can actually generalize this result as follows.

\begin{proposition} Let $f:Z\rightarrow X$ be a proper surjective map with $Z$ irreducible and let $Z=\coprod_{\alpha\in I} Z_\alpha$ be a stratification of $Z$. For $x\in X$, put $f^{-1}(x)_\alpha:=f^{-1}(x)\cap Z_\alpha$. Assume that 

$${\rm dim}\left\{x\in X\,\left|\, {\rm dim}\, f^{-1}(x)_\alpha\geq \frac{i}{2}-\frac{1}{2}\,{\rm codim}_Z(Z_\alpha)\right.\right\}\leq{\rm dim}\, X-i$$holds for all $\alpha\in I$ and all $i\in\N$ where ${\rm codim}_Z(Z_\alpha):={\rm dim}\, Z-{\rm dim}\, Z_\alpha$. Then for any perverse sheaf $K$ on $Z$ with respect to the above stratification, the complex $f_*K$ is a perverse sheaf on $X$. 
\label{Lu1}\end{proposition}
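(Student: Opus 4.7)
The plan is to verify the two conditions defining a perverse sheaf for $f_* K$. Since $f$ is proper one has $f_! = f_*$ and $D(f_* K) \simeq f_*(DK)$; as $DK$ is again perverse and the hypothesis depends only on $f$ and the stratification (so it holds equally for $K$ and $DK$), it suffices to verify the support half of the perversity conditions: $\dim \operatorname{Supp}(\mathcal{H}^j f_* K) \leq -j$ for every $j$.

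Fix $x \in X$. By proper base change $\mathcal{H}^j(f_* K)_x \simeq \mathbb{H}^j(f^{-1}(x), K|_{f^{-1}(x)})$. After refining the stratification $\{Z_\alpha\}$ so that $K$ becomes constructible with respect to it (this is harmless, as discussed below), the induced stratification $f^{-1}(x) = \coprod_\alpha F_{x,\alpha}$ yields a Cousin-type spectral sequence whose $E_1$-terms are the compactly supported hypercohomology groups $\mathbb{H}^*_c(F_{x,\alpha}, K|_{Z_\alpha})$; these in turn are computed by a spectral sequence with $E_2$-page $H^p_c(F_{x,\alpha}, \mathcal{H}^q K|_{Z_\alpha})$. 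Perversity of $K$ forces $\mathcal{H}^q K|_{Z_\alpha} = 0$ for $q > -d_\alpha$, while top-degree vanishing of compactly supported cohomology forces $p \leq 2 \dim F_{x,\alpha}$. Hence $\mathcal{H}^j(f_* K)_x \neq 0$ requires the existence of $\alpha \in I$ with
$$j \;\leq\; p + q \;\leq\; 2 \dim F_{x,\alpha} - d_\alpha,$$
equivalently $\dim F_{x,\alpha} \geq (j + d_\alpha)/2 = i/2 - \operatorname{codim}_Z(Z_\alpha)/2$, where I set $i := j + \dim Z$ so that $i/2 - c_\alpha/2 = (j+d_\alpha)/2$ with $c_\alpha = \operatorname{codim}_Z(Z_\alpha)$.

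Applying the hypothesis of the proposition at this $\alpha$ and this $i$ gives
$$\dim\{x \in X : \dim F_{x,\alpha} \geq i/2 - c_\alpha/2\} \;\leq\; \dim X - i \;\leq\; -j,$$
the last inequality using $\dim X \leq \dim Z$ (valid since $f$ is surjective). Taking the union over the finitely many strata preserves the bound, so $\dim \operatorname{Supp}(\mathcal{H}^j f_* K) \leq -j$, as required.

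The main technical obstacle is the refinement step: a general perverse sheaf $K$ need not be constructible with respect to the given $\{Z_\alpha\}$, but to use the bound $\mathcal{H}^q K|_{Z_\alpha} = 0$ for $q > -d_\alpha$ one needs constructibility on a stratification of $Z_\alpha$. Naively applying the hypothesis to a refined stratum $Z_{\alpha'} \subset Z_\alpha$ (with $d_{\alpha'} \leq d_\alpha$, $c_{\alpha'} \geq c_\alpha$) weakens the estimate, so one must transport the information back to $\alpha$ using the inclusion $F_{x,\alpha'} \subseteq F_{x,\alpha}$ together with the global support estimate $\dim \operatorname{Supp}(\mathcal{H}^q K) \leq -q$ coming from perversity of $K$; the latter plays a double role, bounding both the degrees $q$ in which a given stratum can contribute and the locus of $x$ over which contributions from lower-dimensional refinement pieces are supported. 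This bookkeeping is the technical heart of the argument and is what makes the transition from the refined stratification to the original one work.
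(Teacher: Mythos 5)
Your reduction to the support condition via duality, the passage by proper base change to compactly supported hypercohomology of the pieces $f^{-1}(x)_\alpha$, and the spectral-sequence count $j\le 2\,{\rm dim}\,f^{-1}(x)_\alpha-\dim Z_\alpha$ followed by the hypothesis (using ${\rm dim}\,X\le{\rm dim}\,Z$) are exactly the paper's argument. The problem is the step you defer as "bookkeeping": transporting the estimate from a refinement of $\{Z_\alpha\}$ adapted to $K$ back to the original strata. That step cannot be carried out, because the statement for a perverse sheaf that is \emph{not} constructible along $\{Z_\alpha\}$ is simply false under the stated hypothesis. Concretely, let $f:Z\rightarrow X$ be the minimal resolution of a normal surface singularity with exceptional curve $C\simeq\mathbb{P}^1$ over the point $x_0$, and stratify $Z$ by the single stratum $Z$ (allowed by Definition \ref{stratification} since $Z$ is nonsingular). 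The displayed hypothesis is then just semi-smallness of $f$, which holds; but for the perverse sheaf $K=\underline{\kappa}_C[1]$ the stalk of $\mathcal{H}^1(f_*K)$ at $x_0$ is $H^2(\mathbb{P}^1,\kappa)\neq 0$, so ${\rm dim}\,{\rm Supp}\,\mathcal{H}^1(f_*K)=0>-1$ and $f_*K$ is not perverse. In particular the inequality you need on the original strata, $\mathcal{H}^qK|_{Z_\alpha}=0$ for $q>-\,{\rm dim}\,Z_\alpha$, cannot be recovered from the global perversity bound ${\rm dim}\,{\rm Supp}\,\mathcal{H}^qK\le -q$ together with information on refined strata: in the example it fails outright ($q=-1$, ${\rm dim}\,Z_\alpha=2$).

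What the paper actually does is assert directly that $\mathcal{H}^{i_2}K|_{Z_\alpha}\neq 0$ forces $i_2+{\rm dim}\,X\le{\rm codim}_Z(Z_\alpha)$; this is precisely the constructibility-along-$\{Z_\alpha\}$ estimate (valid, e.g., when each $\mathcal{H}^qK|_{Z_\alpha}$ is locally constant, so that a nonzero restriction has support of full dimension ${\rm dim}\,Z_\alpha$ inside ${\rm Supp}\,\mathcal{H}^qK$, whose dimension is $\le -q$), and it is satisfied in the paper's application, where $K$ is an intersection cohomology complex whose cohomology sheaves are locally constant along the strata $\mathbb{X}_{L,P,\Sigma_\alpha}^o$ (compare Propositions \ref{semi-smallgen} and \ref{localres}). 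So the fix for your write-up is not to refine the stratification but to add the hypothesis that the perverse sheaves considered are constructible with respect to the given $\{Z_\alpha\}$ (as in Lusztig's setting); with that reading, your computation is complete and coincides with the paper's proof, whereas the refinement-and-transport scheme you sketch has an unfillable gap.
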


This proposition is used and proved (without being stated explicitly) in Lusztig's generalisation of Springer correspondence \cite[proof of Proposition 4.5]{LuIC}. 

\begin{proof}  We need to prove that

\vspace{.2cm}

(i) ${\rm dim}\hspace{.05cm}\big({\rm Supp}(\mathcal{H}^if_*K)\big)\leq-i,$

(ii) ${\rm dim}\hspace{.05cm}\big({\rm Supp}(\mathcal{H}^iD_Xf_*K)\big)\leq-i\,\text{ for all }\,i\in\Z$.

\vspace{.2cm}

Since $f$ is proper, the Verdier dual commutes with $f_*$ and so  we only prove (i) as the proof of (ii) will be similar. The stalk $\mathcal{H}^i_xf_*K$ is the hypercohomology $\H^i\left(f^{-}(x),K|_{f^{-1}(x)}\right)$. If for $x\in X$ we have $\H^i\left(f^{-}(x),K|_{f^{-1}(x)}\right)\neq 0$ this means that there exists $\alpha\in I$ such that the compactly supported hypercohomology $\H_c^i\left(f^{-}(x)_\alpha,K|_{f^{-1}(x)_\alpha}\right)$ does not vanish. Hence to prove (i) we are reduced to see that for all $\alpha\in I$ and $i$, 

\begin{equation}{\rm dim}\, \left\{x\in X\,\left|\, \H_c^i\left(f^{-1}(x)_\alpha,K|_{f^{-1}(x)_\alpha}\right)\neq 0\right.\right\}\leq -i.\label{L1}\end{equation}

If $\H_c^i\left(f^{-1}(x)_\alpha,K|_{f^{-1}(x)_\alpha}\right)\neq 0$ then from the hypercohomology spectral sequence we may write $i$ as $i_1+i_2$ with $i_1\leq 2\, {\rm dim}\, f^{-1}(x)_\alpha$ and $\mathcal{H}^{i_2}\left(K|_{f^{-1}(x)_\alpha}\right)\neq 0$. The last condition implies that $\mathcal{H}^{i_2}K|_{Z_\alpha}\neq 0$. Since $K$ is a perverse sheaf, we must have $i_2+{\rm dim}\, X\leq{\rm codim}_Z(Z_\alpha)$.
We thus have $i+{\rm dim}\, X\leq 2\,{\rm dim}\,f^{-1}(x)_\alpha+{\rm codim}_Z(Z_\alpha)$. Hence the inequality (\ref{L1}) is a consequence of the following one 

$${\rm dim}\, \left\{x\in X\,\left|\, i+{\rm dim}\, X\leq 2\,{\rm dim}\,f^{-1}(x)_\alpha+{\rm codim}_Z(Z_\alpha) \right.\right\}\leq -i.$$Hence we are reduced to see that 

$${\rm dim}\, \left\{x\in X\,\left|\, {\rm dim}\,f^{-1}(x)_\alpha\geq \frac{1}{2}\left(i-{\rm codim}_Z(Z_\alpha)\right) \right.\right\}\leq {\rm dim}\, X-i$$for all $i$.
\end{proof}

\begin{corollary} Let $\varphi:X\rightarrow X'$ be a birational morphism which satisfies the condition in Proposition \ref{Lu1}, then  (\ref{resol}) becomes \begin{equation}\varphi_*(\pIC X)\simeq \pIC {X'}\oplus\left(\bigoplus_{Z,\xi}V_ {Z,\xi}\otimes\pIC {Z,\xi}\right)\label{resol3}\end{equation}with $Z\subsetneq X'$. In particular  \begin{equation}IH_c^i\big(X,\kappa\big)\simeq IH_c^i(X',\kappa)\oplus\left(\bigoplus_{Z,\xi}V_{Z,\xi}\otimes IH_c^{i+d_Z-d_X}(Z,\xi)\right).\label{resol2}\end{equation}where $d_Z$ the dimension of $Z$.
\label{coro3.1.8}\end{corollary}

The isomorphism (\ref{resol2}) is obtained from (\ref{resol3}) by applying the functor $f_!$ with $f:X'\rightarrow\{pt\}$.

\begin{corollary} Assume that $\varphi:X\rightarrow X'$ satisfies the condition in Proposition \ref{coro3.1.8}. If $X'=\bigcup_{\alpha\in I} X'_\alpha$ where $I$ is a finite set and where the $X'_\alpha$ are locally closed irreducible subvarieties of $X'$ such that the restriction of $\mathcal{H}^i\left(\varphi_*\left(\pIC X\right)\right)$ to $X_\alpha'$ is a locally constant sheaf for all $i$ and all $\alpha\in I$, then $$\varphi_*\left(\pIC X\right)\simeq \pIC {X'}\oplus\left(\bigoplus_{\alpha,\xi_\alpha} V_{\alpha,\xi_\alpha} \otimes\pIC {\overline{X}{'_\alpha},\xi_\alpha}\right)$$where the direct sum is over the $\alpha$ such that $\overline{X}{'_\alpha}\subsetneq Y$.
\label{remdecomp}\end{corollary}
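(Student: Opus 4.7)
The plan is to deduce the statement from the preceding Corollary (which already isolates the $\pIC{X'}$ summand) together with the local-constancy hypothesis on the $X'_\alpha$. That Corollary gives
$$
\varphi_*(\pIC X)\simeq \pIC{X'}\oplus\left(\bigoplus_{Z,\xi}V_{Z,\xi}\otimes\pIC{Z,\xi}\right),
$$
with $Z\subsetneq X'$ running over closed irreducible subvarieties of $X'$ and $\xi$ over irreducible local systems on dense open subsets of $Z$. Hence it suffices to show that every pair $(Z,\xi)$ with $V_{Z,\xi}\neq 0$ satisfies $Z=\overline{X'_\alpha}$ for some $\alpha\in I$.

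First I would fix such a pair and set $d:=\dim Z$. The initial step is to produce a point of $Z$ at which $\calF:=\mathcal{H}^{-d}(\varphi_*\pIC X)$ is nonzero and which lies in some $X'_\alpha$. Applying Remark \ref{finrem} and using the strict inequality $\dim\mathrm{Supp}(\mathcal{H}^i\pIC{X'})<-i$ for $i>-\dim X'$ to discard the $\pIC{X'}$ summand after further shrinking, one obtains a nonempty nonsingular open $V\subset Z$ on which $\calF|_V\simeq\bigoplus_{\xi'}V_{Z,\xi'}\otimes\xi'$, and this is nonzero by assumption. Since the finitely many irreducible locally closed subvarieties $X'_\alpha$ cover $X'$ and $Z$ is irreducible, one of the intersections $Z\cap X'_\alpha$ must be dense in $Z$; shrinking $V$ one picks a point $z\in V\cap X'_\alpha$ at which $\calF_z\neq 0$.

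The local-constancy hypothesis then enters decisively: $\calF|_{X'_\alpha}$ is locally constant on the irreducible, hence connected, variety $X'_\alpha$, so its nonvanishing at $z$ forces nonvanishing everywhere on $X'_\alpha$, giving $\overline{X'_\alpha}\subset\mathrm{Supp}(\calF)$. On the other hand, perversity of $\varphi_*\pIC X$ imposes $\dim\mathrm{Supp}(\calF)\leq d$, so $\dim\overline{X'_\alpha}\leq d$. Combined with $Z\subset\overline{X'_\alpha}$ (coming from the density of $Z\cap X'_\alpha$ in $Z$) and the irreducibility of $\overline{X'_\alpha}$, this yields $\overline{X'_\alpha}=Z$, as desired.

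The main obstacle is the very first step: isolating the $\pIC{Z,\xi}$ contribution to $\calF$ on an open of $Z$ against all the other summands. This reduces to the standard support analysis underlying perverse sheaves, namely that lower-dimensional summands $\pIC{Z',\xi'}$ with $\dim Z'<d$ contribute zero in degree $-d$, while $\pIC{X'}$ and the summands with $\dim Z'\geq d$ but $Z'\neq Z$ can only contribute on a closed subset of strictly smaller dimension than $d$; this is precisely what Remark \ref{finrem} encodes. Once this isolation is in place, the remainder of the argument is pure topology and a direct application of the defining support conditions for perverse sheaves.
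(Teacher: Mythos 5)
Your proof is correct and follows essentially the same route as the paper's: pick a summand $\pIC {Z,\xi}$, use irreducibility of $Z$ to find an $\alpha$ with $Z\cap X'_\alpha$ dense in $Z$, note that $\mathcal{H}^{-{\rm dim}\,Z}\left(\varphi_*\left(\pIC X\right)\right)$ is non-zero at a point of $X'_\alpha$, and then combine local constancy on the connected stratum $X'_\alpha$ with the perverse support bound to force $\overline{X}{'_\alpha}=Z$. The only difference is cosmetic: you conclude via $Z\subset\overline{X}{'_\alpha}$ together with ${\rm dim}\,\overline{X}{'_\alpha}\leq{\rm dim}\,Z$, whereas the paper deduces $X'_\alpha\subset Z$ from the dimension count inside the irreducible variety $X'_\alpha$.
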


\begin{proof} Let $Z$ be an irreducible closed subvariety of $X'$ such that $\pIC {Z,\xi}$ is a direct summand of $\varphi_*\left(\pIC X\right)$. We have $Z=\bigcup_\alpha (X_\alpha'\cap Z)$. Since $Z$ is irreducible, there exists an $\alpha$ such that $X_\alpha'\cap Z$ is dense in $Z$. We have $\mathcal{H}^{-{\rm dim}\, Z}\left.\varphi_*\left(\pIC X\right)\right|_{X_\alpha'\cap Z}\neq 0$. Since $\mathcal{H}^{-{\rm dim}\, Z}\left.\varphi_*\left(\pIC X\right)\right|_{X_\alpha'}$ is locally constant and non-zero, we have $X_\alpha'\subset {\rm Supp}\,\left(\mathcal{H}^{-{\rm dim}\, Z}\varphi_*\left(\pIC X\right)\right)$. Hence

$${\rm dim}\, X_\alpha'\leq {\rm dim}\,\left({\rm Supp}\,\left(\mathcal{H}^{-{\rm dim}\, Z}\varphi_*\left(\pIC X\right)\right)\right)\leq {\rm dim}\, Z.$$The right inequality holds because $\varphi_*\left(\pIC X\right)$ is a perverse sheaf. Since ${\rm dim}\, (X_\alpha'\cap Z)={\rm dim}\, Z$, we deduce that the inclusion $X_\alpha'\cap Z\subset X_\alpha'$ is an equality, i.e., $X_\alpha'\subset Z$, and so that $\overline{X}{_\alpha'}=Z$.
\end{proof}

Assume that $\K$ is an algebraic closure of a finite field $\F_q$ and that $X$ is an irreducible algebraic variety defined over $\F_q$. We denote by $F:X\rightarrow X$ the corresponding Frobenius endomorphism. We will use either the notation $X^F$ or $X(\F_q)$ to denote the fixed points of $X$ by $F$. Let $K\in\mathcal{D}_c^b(X)$ and assume that there exists an isomorphism $\varphi: F^*(K)\simeq K$. The \emph{characteristic function} $\mathbf{X}_{K,\varphi}:X^F\rightarrow\kappa$ of $(K,\varphi)$ is defined by $$\mathbf{X}_{K,\varphi}(x)=\sum_i(-1)^i{\rm Trace}\hspace{.05cm}\big(\varphi_x^i,\mathcal{H}^i_xK\big).$$If $r\in\Z$, we denote by $K(r)$ the $r$-th Tate twist of $K$. Then $\mathbf{X}_{K(r),\,\varphi(r)}=q^{-r}\,\mathbf{X}_{K,\,\varphi}$. 

Let $Y$ be an open nonsingular $F$-stable subset of $X$. We will simply denote by ${\bf X}_{\IC X}$ the function ${\bf X}_{\IC X,\varphi}$ where $\varphi:F^*\left(\IC X\right)\rightarrow \IC X$ is the unique isomorphism which induces the identity on $\mathcal{H}^0_x\left(\IC X\right)$ for all $x\in Y^F$.

\subsection{Restriction}

Assume that $X$ is irreducible. Let $Z$ be an irreducible closed subvariety of $X$ et let $i:Z\hookrightarrow X$ denotes the inclusion. We give a condition for $i^*(\IC X)=\IC Z$ to be true.

\begin{proposition} Assume that there is a decomposition $X=\bigcup_{\alpha\in I} X_\alpha$ of $X$ where $I$ is a finite set and where the $X_\alpha$ are locally closed irreducible subvarieties such that

(i) if $Z_\alpha:=X_\alpha\cap Z$ is not empty, then it is equidimensional and  ${\rm codim}_X\,X_\alpha={\rm codim}_Z\, Z_\alpha$.

\noindent Assume moreover that there exists a Cartesian diagram 

$$\xymatrix{\tilde{X}\ar^f[rr]&&X\\
\tilde{Z}\ar[rr]^g\ar^{\tilde{i}}[u]&&Z\ar^i[u]}$$such that the conditions (ii) and (iii) below are satisfied.

(ii) $f$ and $g$ are semi-small resolutions of singularities.

(iii) The restriction of the sheaf $\mathcal{H}^i(f_*(\kappa))$ to $X_\alpha$ is a locally constant sheaf for all $i$.

\noindent Then $i^*(\IC X)=\IC Z$.
\label{proprest}\end{proposition}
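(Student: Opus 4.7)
My strategy is to verify that $i^{*}(\IC X)$ satisfies the axiomatic characterization of $\IC Z$, i.e., the four Goresky--MacPherson conditions: (a) $\mathcal{H}^{j}(i^{*}\IC X)=0$ for $j<0$; (b) $\mathcal{H}^{0}(i^{*}\IC X)|_{Z_{0}}=\underline{\kappa}_{Z_{0}}$ on a dense smooth open $Z_{0}\subset Z$; (c) $\dim\mathrm{Supp}\,\mathcal{H}^{j}(i^{*}\IC X)<\dim Z-j$ for $j>0$; and (d) $\dim\mathrm{Supp}\,\mathcal{H}^{j}D_{Z}(i^{*}\IC X)<\dim Z-j$ for $j>0$. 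Conditions (a) and (b) are immediate: the first is inherited from $\IC X$, and the second follows from hypothesis (i), which forces $Z_{0}:=X_{0}\cap Z$ (with $X_{0}$ the dense smooth open stratum of $X$) to be dense and smooth in $Z$, on which $\IC X$ restricts to $\underline{\kappa}$.

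The key technical input is the pair of proper base-change isomorphisms derived from the Cartesian square in (ii). Since $\tilde X$ and $\tilde Z$ are smooth and $\tilde i:\tilde Z\hookrightarrow\tilde X$ is a closed immersion of codimension $c:=\dim X-\dim Z$, one has $\tilde i^{*}\underline{\kappa}_{\tilde X}=\underline{\kappa}_{\tilde Z}$ and, by Poincar\'e--Lefschetz duality, $\tilde i^{!}\underline{\kappa}_{\tilde X}=\underline{\kappa}_{\tilde Z}[-2c]$. Proper base change therefore yields
$$i^{*}f_{*}(\IC{\tilde X})\simeq g_{*}(\IC{\tilde Z}),\qquad i^{!}f_{*}(\IC{\tilde X})\simeq g_{*}(\IC{\tilde Z})[-2c].$$
Because $f$ is semi-small, $f_{*}(\pIC{\tilde X})$ is perverse, and by hypothesis (iii) together with Corollary \ref{remdecomp}, $\IC X$ appears as a direct summand of $f_{*}(\IC{\tilde X})$. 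Hence each $\mathcal{H}^{j}(\IC X)$ is a direct summand of $\mathcal{H}^{j}(f_{*}\IC{\tilde X})$, and is in particular locally constant on every stratum $X_{\alpha}$.

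For condition (c), local constancy on the irreducible (hence connected) $X_{\alpha}$ implies that $\mathrm{Supp}\,\mathcal{H}^{j}(\IC X)$ is a finite union of closures $\overline{X_{\alpha}}$; the IC-support condition on $X$ forces $\dim X_{\alpha}<\dim X-j$ for each contributing $\alpha$ when $j>0$. Interpreting $X=\bigcup X_{\alpha}$ as a closure-preserving stratification, one has $\overline{X_{\alpha}}\cap Z=\bigcup_{\beta:X_{\beta}\subset\overline{X_{\alpha}}}Z_{\beta}$, so hypothesis (i) converts $\dim X_{\beta}<\dim X-j$ into $\dim Z_{\beta}<\dim Z-j$, yielding the strict bound $\dim(Z\cap\mathrm{Supp}\,\mathcal{H}^{j}\IC X)<\dim Z-j$ that gives (c).

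For condition (d), the second base-change isomorphism exhibits $\mathcal{H}^{k}(i^{!}\IC X)$ as a direct summand of $\mathcal{H}^{k-2c}(g_{*}\IC{\tilde Z})$; since $g_{*}(\pIC{\tilde Z})$ is perverse (by semi-smallness of $g$), one has the uniform bound $\dim\mathrm{Supp}\,\mathcal{H}^{l}(g_{*}\IC{\tilde Z})\leq\dim Z-l$. Using $D_{Z}(i^{*}\IC X)=i^{!}(\IC X)[2\dim X]$ (which follows from $D_{X}\IC X=\IC X[2\dim X]$) and setting $k=j+2\dim X$, this gives
$$\dim\mathrm{Supp}\,\mathcal{H}^{j}D_{Z}(i^{*}\IC X)\leq\dim Z-(k-2c)=-\dim Z-j<\dim Z-j$$
for $j>0$, establishing (d). With all four conditions verified, the axiomatic characterization forces $i^{*}\IC X=\IC Z$. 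The main technical subtlety is that hypothesis (i) only directly controls $\dim Z_{\alpha}$; the transfer to a bound on $\dim(\overline{X_{\alpha}}\cap Z)$ requires reading the cover $X=\bigcup X_{\alpha}$ as a closure-preserving stratification, so that $\overline{X_{\alpha}}$ decomposes into lower-dimensional strata $X_{\beta}$ with known codimensions.
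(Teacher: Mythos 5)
Your route is genuinely different from the paper's: you try to verify the four defining conditions of $\IC Z$ directly for $i^*(\IC X)$, using the two base-change isomorphisms $i^*f_*\simeq g_*$ and $i^!f_*\simeq g_*[-2c]$, whereas the paper realizes $i^*(\IC X)[\dim Z]$ as a direct summand of the semisimple perverse sheaf $g_*(\kappa[\dim Z])$ (decomposition theorem for the semi-small $g$, via Corollary \ref{remdecomp}) and then kills the extra summands by the stalk computation $\mathcal{H}^{-\dim X_\alpha}(\pIC X)|_{X_\alpha}=0$ combined with the codimension equality (i). Your conditions (a), (b), (c) are essentially fine, although two remarks are in order: the smoothness of $Z_0=X_0\cap Z$ is neither justified nor needed (any dense smooth open subset of $Z_0$ suffices), and for (c) you do not need, and are not given, the frontier property of the decomposition: since $Z=\bigcup_\alpha Z_\alpha$, the locus of nonzero stalks of $\mathcal{H}^j(\IC X)|_Z$ is contained in the union of those $Z_\alpha$ for which $\mathcal{H}^j(\IC X)|_{X_\alpha}\neq 0$, and for such $\alpha$ local constancy plus the strict support condition on $X$ give $\dim X_\alpha<\dim X-j$, hence $\dim Z_\alpha<\dim Z-j$ by (i); taking closures inside $Z$ finishes it without any assumption on $\overline{X}_\alpha$.

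The genuine gap is in (d). The axiomatic characterization of $\IC Z$ requires the \emph{strict} cosupport inequality, which in your unshifted indexing reads $\dim\mathrm{Supp}\,\mathcal{H}^m\big(D_Z\, i^*(\IC X)\big)<-\dim Z-m$ for $m>-2\dim Z$ (your stated condition ``$<\dim Z-j$ for $j>0$'' is not the right axiom). What your argument actually yields is only the non-strict bound $\leq -\dim Z-m$: it comes from the perversity of $g_*(\pIC {\tilde{Z}})$, and perversity never gives strict support inequalities; indeed equality genuinely occurs on the summands $\pIC {Z',\zeta}$ of $g_*(\kappa[\dim Z])$ with $Z'\subsetneq Z$. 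Moreover (a), (b), strict (c) together with the non-strict cosupport bound do \emph{not} characterize $\IC Z$: for a nodal curve $Z$ with smooth locus $j:U\hookrightarrow Z$, the complex $j_!(\kappa_U)$ satisfies all of them (its dual is $Rj_*(\kappa_U)[2]$, whose cohomology sheaves meet the bound with equality at the node) but is not $\IC Z$. So the final step ``the axiomatic characterization forces $i^*(\IC X)=\IC Z$'' does not follow as written. The repair is short and brings you back to the paper's argument: since $g$ is semi-small, $g_*(\kappa[\dim Z])$ is a semisimple perverse sheaf, so its direct summand $i^*(\IC X)[\dim Z]$ is a direct sum of sheaves $\pIC {Z',\zeta}$ with $Z'\subseteq Z$ closed irreducible; your strict support estimate (c) (equivalently, Remark \ref{remlocal} together with (i)) excludes every summand with $Z'\subsetneq Z$, because such a summand would force $\dim\mathrm{Supp}\,\mathcal{H}^{\dim Z-\dim Z'}\big(i^*(\IC X)\big)=\dim Z'$, and condition (b) identifies the remaining summand as $\pIC Z$.
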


\begin{proof} If $Y$ is a variety, let $d_Y$ denote its dimension. Let $\alpha_o\in I$ be such that $X_{\alpha_o}$ is the open stratum of $X$. To avoid any confusion we will use the notation $\IC Z[d_Z]$ instead of $\pIC Z$. By Corollary \ref{remdecomp}, we have 

\begin{equation}f_*(\kappa[d_X])=\IC X[d_X]\oplus\left(\bigoplus_{\alpha\neq\alpha_o,\xi_\alpha}V_{\alpha,\xi_\alpha}\otimes\IC {\overline{X}_\alpha,\xi_\alpha}[d_{X_\alpha}]\right).\label{decompX}\end{equation}By (iii) and  $i^*f_*(\kappa)= g_*(\kappa)$ we see that the restriction  of $\mathcal{H}^i(g_*(\kappa))$ to $Z_\alpha$ is locally constant. Hence by Corollary \ref{remdecomp}, we have \begin{equation}g_*(\kappa[d_Z])=\IC Z[d_Z]\oplus\left(\bigoplus_{\alpha\neq\alpha_o,\beta\in I_\alpha, \zeta_{\alpha,\beta}}W_{(\alpha,\beta),\zeta_{\alpha,\beta}}\otimes\IC {\overline{Z}_{(\alpha,\beta),\zeta_{\alpha,\beta}}}[d_{Z_\alpha}]\right)\label{decompZ}\end{equation}where $\{Z_{(\alpha,\beta)}\}_{\beta\in I_\alpha}$ is the set of irreducible components of $Z_\alpha$. Using again $i^*f_*(\kappa)=g_*(\kappa)$ we see from (\ref{decompX}) and (\ref{decompZ}) that the complex $i^*(\IC X)[d_Z]$ is a direct summand of the semisimple perverse sheaf $g_*(\kappa[d_Z])$. It is therefore a semisimple perverse subsheaf of $g_*(\kappa[d_Z])$. Since the open stratum $Z_{\alpha_o}$ of $Z$ is contained in the open stratum of $X$, the restriction of $i^*(\IC X)[d_Z]$ to $Z_{\alpha_o}$ is the constant sheaf $\kappa[d_Z]$. Hence $i^*(\IC X)[d_Z]$ contains $\IC Z[d_Z]$ as a direct summand, i.e., 

$$i^*(\IC X)[d_Z]=\IC Z[d_Z]\oplus\left(\bigoplus_{\alpha\neq\alpha_o,\beta\in I_\alpha, \zeta_{\alpha,\beta}}W_{(\alpha,\beta),\zeta_{\alpha,\beta}}'\otimes\IC {\overline{Z}_{(\alpha,\beta),\zeta_{\alpha,\beta}}}[d_{Z_\alpha}]\right)$$for some subspaces $W_{(\alpha,\beta),\zeta_{\alpha,\beta}}'\subset W_{(\alpha,\beta),\zeta_{\alpha,\beta}}$. It remains to see that $W_{(\alpha,\beta),\zeta_{\alpha,\beta}}'=0$ for all $\alpha\neq\alpha_o$.
 
Put $K:=i^*(\IC X)[d_Z]$. Then for $\alpha\neq\alpha_o$ we have

\begin{align*} \mathcal{H}^{-d_{Z_\alpha}}K|_{Z_\alpha}&=\mathcal{H}^{d_Z-d_{Z_\alpha}}\IC X|_{Z_\alpha}\\
   &=\mathcal{H}^{d_X-d_{X_\alpha}}\IC X|_{Z_\alpha}\\
&=\mathcal{H}^{-d_{X_\alpha}}\IC X[d_X]|_{Z_\alpha}\\
&=0.\end{align*}The last equality follows from Remark \ref{remlocal}. Hence $W_{(\alpha,\beta),\zeta_{\alpha,\beta}}'=0$ by Remark \ref{finrem} and we proved the proposition.\end{proof}

\subsection{E-polynomial}\label{epolynome}

Recall that a mixed Hodge structure on a rational vector space $H$ consist of a finite increasing filtration $W_{\bullet}$ (the weight filtration) on $H$, and a finite decreasing filtration $F^{\bullet}$ (the Hodge filtration) on the complexification $H_\C$, which induces a
pure Hodge structure of weight $k$ on the complexified graded pieces ${\rm Gr}_k^WH_\C=(W_kH/W_{k-1}H)_\C$, i.e., 

$${\rm Gr}_k^WH_\C=\bigoplus_{p+q=k}\left({\rm Gr}_k^WH_\C\right)^{p,q}$$with 

$$\left({\rm Gr}_k^WH_\C\right)^{p,q}=F^p{\rm Gr}_k^WH_\C\cap \overline{F^q{\rm Gr}_k^WH_\C}.$$We call the integers $\left\{h^{p,q}:={\rm dim}\, \left({\rm Gr}_{p+q}^WH_\C\right)^{p,q}\right\}_{p,q}$ the \emph{mixed Hodge numbers}. 

Recall (Saito \cite{saito}, see also \cite[Chapter 14]{Peters-etal}) that for any complex algebraic variety $X$, the intersection cohomology group $IH_c^k(X,\C)$ is endowed with a mixed Hodge structure. If $X$ is nonsingular, it coincides with Deligne's mixed Hodge structure on $H_c(X,\C)$ which is defined in \cite{Del1}. 

We then denote by $\{ih_c^{p,q;k}(X)\}_{p,q}$ the mixed Hodge numbers of $IH_c^k(X,\C)$ and we define the mixed Hodge polynomial  of $X$ as $$IH_c(X;x,y,z)=\sum_{p,q,k}ih_c^{p,q;k}(X)x^py^qz^k.$$The compactly supported Poincar\'e polynomial of $X$ is then $IH_c(X;1,1,t)$. 

In this paper we will say that $X$ is \emph{pure} if the mixed Hodge structure on $IH_c^k(X,\C)$ is pure for all $k$, i.e., if $ih_c^{p,q;k}(X)=0$ when $p+q\neq k$.

The $E$-polynomial of $X$ is defined as $$E^{ic}(X;x,y):=IH_c(X;x,y,-1)=\sum_{p,q}\left(\sum_k(-1)^kih_c^{p,q;k}(X)\right)x^py^q.$$

Let $R$ be a subring of $\C$ which is finitely generated as a $\Z$-algebra and let $\calX$ be a separated $R$-scheme of finite type. According to \cite[Appendix]{hausel-villegas}, we say that $\calX$ is \emph{strongly polynomial count} if there exists a polynomial $P(T)\in\C[T]$ such that for any finite field $\F_q$ and any ring homomorphism $\varphi: R\rightarrow\F_q$, the $\F_q$-scheme $\calX^{\varphi}$ obtained from $\calX$ by base change is polynomial count with counting polynomial $P$, i.e., for every finite extension $\F_{q^n}/\F_q$, we have $$\sharp\{\calX^{\varphi}(\F_{q^n})\}=P(q^n).$$

According to Katz terminology (cf. [Appendix]\cite{hausel-villegas}), we call a separated $R$-scheme $\calX$ which gives back $X$ after extension of scalars from $R$ to $\C$ a \emph{spreading out} of $X$.

The complex variety $X$ is said to be \emph{polynomial count} if there exists a spreading out of $X$ which is strongly polynomial count.

Let us now denote by $\{h_c^{i,j;k}(X)\}_{i,j}$ the mixed Hodge numbers of $H_c^k(X,\C)$ and put $$E(X;x,y):=\sum_{i,j}\left(\sum_k(-1)^kh_c^{i,j;k}(X)\right)x^iy^j.$$
We recall the result of Katz in the appendix of \cite{hausel-villegas} (see also Kisin and Lehrer \cite{LK} for closely related results).

\begin{theorem}Assume that $X$ is  polynomial count with counting polynomial $P\in\C[T]$. Then 
$$E(X;x,y)=P(xy).$$
\label{Katz}\end{theorem}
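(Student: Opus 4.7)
The plan is to deduce this from Grothendieck's Lefschetz trace formula, Deligne's theory of weights on $\ell$-adic cohomology, and the comparison between $\ell$-adic weights and Hodge-theoretic weights. The strategy is to extract $E(X;x,y)$ from the counting polynomial by controlling, simultaneously, the Frobenius eigenvalues on étale cohomology and the mixed Hodge numbers on singular cohomology.

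First I would fix a strongly polynomial count spreading out $\calX/R$ with counting polynomial $P$. After possibly inverting finitely many elements of $R$, standard spreading-out arguments allow us to assume that for every homomorphism $\varphi\colon R\to\F_q$ the base change $\calX^\varphi$ is a separated $\F_q$-scheme of finite type whose $\ell$-adic Betti numbers (for $\ell$ invertible in $R$) agree with those of the complex fibre $X$, and for which the Artin comparison theorem applies. Applying the Lefschetz trace formula to the geometric Frobenius $F$ on $\calX^\varphi_{\overline{\F}_q}$ gives
\begin{equation*}
P(q^n)=\#\calX^\varphi(\F_{q^n})=\sum_k(-1)^k\Tr\!\left(F^n\,\bigl|\,H_c^k(\calX^\varphi_{\overline{\F}_q},\overline{\Q}_\ell)\right)
\end{equation*}
for all $n\ge 1$. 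By linear independence of characters applied to the equality of these two expressions as functions of $n$, the multiset of Frobenius eigenvalues (counted with sign $(-1)^k$) is forced to consist of integer powers of $q$: explicitly, if $P(T)=\sum_j c_j T^j$, then the weighted multiset of eigenvalues is $\{q^j\text{ with multiplicity }c_j\}$.

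Next, Deligne's theorem on weights tells us that the eigenvalues of $F$ on $H_c^k(\calX^\varphi_{\overline{\F}_q},\overline{\Q}_\ell)$ are algebraic numbers all of whose complex absolute values equal $q^{w/2}$ for integer weights $w\le k$, and the associated weight filtration matches, via the comparison isomorphism with singular cohomology, the weight filtration $W_\bullet$ of the mixed Hodge structure on $H_c^k(X,\C)$. Combined with the previous paragraph, every Frobenius eigenvalue is some $q^{j}$, hence purely of Hodge-theoretic type $(j,j)$; in particular $h_c^{p,q;k}(X)=0$ whenever $p\neq q$, and $\sum_k(-1)^k h_c^{p,p;k}(X)$ equals the coefficient $c_p$ of $T^p$ in $P(T)$. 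Substituting into the definition,
\begin{equation*}
E(X;x,y)=\sum_{p,q}\Bigl(\sum_k(-1)^kh_c^{p,q;k}(X)\Bigr)x^py^q=\sum_p c_p\,(xy)^p=P(xy).
\end{equation*}

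The substantive obstacle is the identification of the Frobenius weight filtration with the Hodge weight filtration under the Artin/étale-Betti comparison isomorphism; this requires Deligne's mixed Hodge theory together with its $\ell$-adic avatar and a careful spreading-out argument (as carried out in Katz's appendix to \cite{hausel-villegas}) to ensure that, after possibly shrinking $\Spec R$, the Hodge numbers of the generic complex fibre are computed by the Frobenius weights on a sufficiently dense set of closed fibres. The remaining steps (Lefschetz trace formula, linear independence of characters, extraction of the polynomial $P(xy)$) are then essentially formal.
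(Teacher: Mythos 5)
You should know at the outset that the paper does not prove this statement: it is quoted from Katz's appendix to \cite{hausel-villegas} (with \cite{LK} cited for closely related results), so there is no internal proof to compare against, and your proposal must stand on its own. It does not, because of a genuine gap at its central step. The trace formula together with $\sharp\,\calX^{\varphi}(\F_{q^n})=P(q^n)$ for all $n$ pins down only the \emph{virtual} multiplicity of each Frobenius eigenvalue, i.e.\ the alternating sum over $k$ of its multiplicities in $H_c^k$ (equivalently, the zeros and poles of the zeta function). It does not show that every eigenvalue occurring in some individual $H_c^k$ is an integer power of $q$, because eigenvalues can and do cancel between different cohomological degrees, so your sentence ``every Frobenius eigenvalue is some $q^j$, hence purely of Hodge-theoretic type $(j,j)$; in particular $h_c^{p,q;k}(X)=0$ whenever $p\neq q$'' is false. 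Concretely, let $E$ be an elliptic curve over $\Q$ and $X=(\mathbb{P}^2\setminus E)\sqcup E$: then $\sharp\, X(\F_{q^n})=q^{2n}+q^{n}+1$ at every place of good reduction, so $X$ is polynomial count, yet $H_c^2(\mathbb{P}^2\setminus E)\simeq H^1(E)$ carries Frobenius eigenvalues of absolute value $q^{1/2}$ and Hodge numbers of types $(1,0)$ and $(0,1)$; the conclusion $E(X;x,y)=(xy)^2+xy+1$ holds only because these off-diagonal contributions cancel in the alternating sum against those of the component $E$. Proving that such cancellation always occurs is exactly the content of the theorem, and it cannot be obtained by upgrading the virtual eigenvalue statement to a statement about individual cohomology groups.

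Even if you repair the eigenvalue claim to its correct virtual form, the tools you invoke (Weil II weights plus the compatibility of the Betti and $\ell$-adic weight filtrations) only control $p+q$: they yield $\sum_{p+q=w}\sum_k(-1)^k h_c^{p,q;k}(X)=c_{w/2}$ for $w$ even and $0$ for $w$ odd, i.e.\ they compute $E(X;u,u)=P(u^2)$, not $E(X;x,y)=P(xy)$. The assertion to be proved is strictly stronger: the off-diagonal alternating sums $\sum_k(-1)^k h_c^{p,q;k}$ with $p\neq q$ must vanish, and no amount of weight yoga or Hodge symmetry can separate $p$ from $q$ within a fixed weight. Crossing exactly this gap requires a further arithmetic input tying Frobenius eigenvalues to Hodge types --- the circle of ideas in the Kisin--Lehrer reference \cite{LK}, resting on $p$-adic comparison theorems (Hodge--Tate/crystalline), applied to the weight-graded pieces after the virtual Galois representation has been identified with a sum of Tate twists --- and this is precisely what your appeal to ``Deligne's theorem on weights'' and the Artin comparison cannot supply. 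The surrounding steps of your proposal (spreading out, Grothendieck--Lefschetz, determination of virtual multiplicities) are fine, but they constitute the formal part of the argument, not its substance.
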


Let $X=\coprod_{\alpha\in I} X_\alpha$ be a stratification and let $X_{\alpha_o}$ be the open stratum, i.e., $X=\overline{X}_{\alpha_o}$. Put $\alpha\leq\beta$ if $X_\alpha\subset\overline{X}_\beta$, and  $r_\alpha:=({\rm dim}\, X_\alpha-{\rm dim}\, X)/2$. 

We say that $X$ satisfies the property $(E)$ with respect to this stratification and the ring $R$ if there exists a spreading out $\calX$ of $X$, a stratification $\calX=\coprod_\alpha\calX_\alpha$,  and a morphism  $\mathcal{r}:\tilde{\calX}\rightarrow\calX$ of $R$-schemes such that:

\noindent (1) $\tilde{\calX}$ and the closed strata $\calX_\alpha$ are strongly polynomial count,

\noindent (2) for each $\alpha$, the stratum $\calX_\alpha$ is a spreading out of $X_\alpha$, the morphism $r:\tilde{X}\rightarrow X$ obtained from $\mathcal{r}$ after extension of scalars from $R$ to $\C$ yields an isomorphism of mixed Hodge structures

\begin{equation}H_c^i(\tilde{X},\Q)\simeq IH_c^i(X,\Q)\oplus\left(\bigoplus_{\alpha\neq\alpha_o} W_\alpha\otimes\left(IH_c^{i+2r_\alpha}(\overline{X}_\alpha,\Q)\otimes\Q(r_\alpha)\right)\right),\label{isomhs}\end{equation}where $\Q(-d)$ is the pure mixed Hodge structure on $\Q$ of weight $2d$ and with Hodge filtration $F^d=\C$ and $F^{d+1}=0$.

\noindent (3) for any ring homomorphism $\varphi: R\rightarrow \F_q$, the morphism $\mathcal{r}^{\varphi}:\tilde{\calX}^{ \varphi}\rightarrow\calX^{\varphi}$ obtained from $\mathcal{r}$ by base change yields an isomorphism   \begin{equation}\left(\mathcal{r}^{\varphi}\right)_*(\underline{\kappa})\simeq \pIC {\calX^\varphi}\oplus\left(\bigoplus_{\alpha\neq\alpha_o} W_\alpha\otimes \pIC {\overline{\calX}_\alpha^\varphi}(r_\alpha)\right)\label{isops}\end{equation}of perverse sheaves.

Assume now that  all complex varieties $\overline{X}_\alpha$ (in particular $X$) satisfy the property $(E)$ with respect to the stratification $\overline{X}_\alpha=\coprod_{\beta\leq\alpha}X_\beta$ and the ring $R_\alpha$. Since there is only a finite number of strata we may assume without loss of generalities that the rings $R_\alpha$ are all equal to the same ring $R$.

\begin{theorem} With the above assumption, there exists a polynomial $P(T)\in\Z[T]$ such that for any ring homomorphism $\varphi: R\rightarrow\F_q$, we have

\begin{equation}\sum_{x\in \calX^\varphi(\F_q)}{\bf X}_{\IC {\calX^\varphi(\overline{\F}_q)}}(x)=P(q)\label{equakatz}\end{equation}and $$E^{ic}(X;x,y)=P(xy).$$
\label{Katz2}\end{theorem}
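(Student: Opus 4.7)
The plan is to prove both assertions simultaneously by induction on the stratification, the induction hypothesis being that every $\overline{X}_\alpha$ with $\alpha\neq\alpha_o$ already satisfies both conclusions (the base case being an empty or zero-dimensional stratum, which is trivial). For the induction step applied to $X$, I would work both assertions in parallel using the two isomorphisms (\ref{isomhs}) and (\ref{isops}) supplied by property $(E)$.

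Set $Q_Z(q):=\sum_{x\in Z^\varphi(\F_q)}{\bf X}_{\IC{Z^\varphi(\overline{\F}_q)}}(x)$. First I would fix $\varphi:R\to\F_q$ and apply the Grothendieck--Lefschetz trace formula to $\mathcal r^\varphi$. Since $\underline{\kappa}$ is $\kappa$ placed in degree $-\dim\tilde X$, the trace formula gives $\sum_{x\in\calX^\varphi(\F_q)}{\bf X}_{(\mathcal r^\varphi)_*\underline{\kappa}}(x)=(-1)^{\dim\tilde X}\,\#\tilde\calX^\varphi(\F_q)$. Now take characteristic functions in the perverse sheaf decomposition (\ref{isops}); the Tate twist $(r_\alpha)$ contributes a factor $q^{-r_\alpha}$ and each $\pIC{\,\cdot\,}$ contributes a sign $(-1)^{\dim(\cdot)}$. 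Because $\dim\overline X_\alpha=\dim X+2r_\alpha$ and $\dim\tilde X=\dim X$, all these signs cancel uniformly and I obtain
\begin{equation*}
\#\tilde\calX^\varphi(\F_q)=Q_X(q)+\sum_{\alpha\neq\alpha_o}\dim W_\alpha\cdot q^{-r_\alpha}\,Q_{\overline X_\alpha}(q).
\end{equation*}
The strong polynomial count of $\tilde\calX$ gives $\#\tilde\calX^\varphi(\F_q)=P_{\tilde X}(q)$, and by the induction hypothesis each $Q_{\overline X_\alpha}(q)$ equals a polynomial $P_\alpha(q)\in\Z[T]$. Solving for $Q_X$ and noting $r_\alpha\leq 0$, we see $Q_X(q)=P_{\tilde X}(q)-\sum_\alpha\dim W_\alpha\cdot q^{-r_\alpha}P_\alpha(q)=:P(q)\in\Z[T]$, independent of $\varphi$. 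This proves (\ref{equakatz}).

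For the mixed Hodge half, I would take the $E$-polynomial of both sides of (\ref{isomhs}). The Tate twist $\Q(r_\alpha)$ shifts bi-degrees by $(-r_\alpha,-r_\alpha)$, contributing a factor $(xy)^{-r_\alpha}$; the cohomological shift by $2r_\alpha$ contributes $(-1)^{2r_\alpha}=1$. After reindexing one gets
\begin{equation*}
E(\tilde X;x,y)=E^{ic}(X;x,y)+\sum_{\alpha\neq\alpha_o}\dim W_\alpha\cdot(xy)^{-r_\alpha}\,E^{ic}(\overline X_\alpha;x,y).
\end{equation*}
By Theorem \ref{Katz} applied to the strongly polynomial count $\tilde\calX$, we have $E(\tilde X;x,y)=P_{\tilde X}(xy)$; by the induction hypothesis, $E^{ic}(\overline X_\alpha;x,y)=P_\alpha(xy)$. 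Substituting yields $E^{ic}(X;x,y)=P_{\tilde X}(xy)-\sum_\alpha\dim W_\alpha\cdot(xy)^{-r_\alpha}P_\alpha(xy)=P(xy)$, with exactly the same polynomial $P$ produced by the counting argument.

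The main obstacle is the sign/twist bookkeeping: one has to check that the shift of $\underline\kappa$ into perverse degree, the Tate twist $(r_\alpha)$ on each summand, the cohomological shift $2r_\alpha$ in (\ref{isomhs}), and the difference between $\pIC{}$ and $\IC{}$ all align so that the counting identity and the $E$-polynomial identity have the same coefficients. Everything works because $\dim X_\alpha-\dim X=2r_\alpha$ is even, so all stray signs cancel. The remainder of the argument is a straightforward induction, with the base case handled by the trivial stratification of a point (or of the empty set).
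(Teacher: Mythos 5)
Your proposal is correct and follows essentially the same argument as the paper: induction on the strata, taking $E$-polynomials of the mixed Hodge isomorphism (\ref{isomhs}) together with Theorem \ref{Katz} for $\tilde{X}$, and taking characteristic functions of the perverse-sheaf isomorphism (\ref{isops}) together with the Grothendieck trace formula, the shifts and Tate twists cancelling exactly as you note. The only difference is the order in which you treat the two halves (you extract $P$ from the point-count identity first, the paper from the $E$-polynomial identity first), which is immaterial since both yield the same polynomial $P=\tilde{P}-\sum_{\alpha\neq\alpha_o}(\dim W_\alpha)\,T^{-r_\alpha}P_\alpha$.
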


\begin{proof} If there is only one stratum, i.e., if $X$ is nonsingular, then the theorem is true by Theorem \ref{Katz}. The theorem is now easy to prove by induction on $\alpha<\beta$. Assume that the theorem is true for all $\alpha<\alpha_o$. By Formula (\ref{isomhs}), we have $$E(\tilde{X};x,y)=E^{ic}(X;x,y)+\sum_{\alpha<\alpha_o}\left({\rm dim}\, W_\alpha\right) x^{-r_\alpha}y^{-r_\alpha}E^{ic}\left(\overline{X}_\alpha;x,y\right).$$By induction hypothesis and since $\tilde{X}$ is polynomial count, this formula shows that $E^{ic}(X;x,y)$ depends only on the product $xy$, i.e., that there exists a unique polynomial $P$ such that $E^{ic}(X;x,y)=P(xy)$, more precisely $P$ is defined as $P=\tilde{P}-\sum_{\alpha<\alpha_o}\left({\rm dim}\, W_\alpha\right)x^{-r_\alpha}y^{-r_\alpha} P_\alpha(xy)$ where $\tilde{P}$ is the counting polynomial of $X$ and $P_\alpha$ (with $\alpha\neq\alpha_o$) is the polynomial which satifies the theorem for $X=\overline{X}_\alpha$. It remains to see that $P$ satisfies Formula (\ref{equakatz}).

By Formula (\ref{isops}), we have 

\begin{equation}{\bf X}_{\left(\mathcal{r}^\varphi\right)_*(\kappa)}={\bf X}_{\IC {\calX^\varphi}}+\sum_{\alpha<\alpha_o}\left({\rm dim}\, W_\alpha\right)q^{-r_\alpha}{\bf X}_{\IC {\overline{\calX}{^\varphi_\alpha}}}.\label{equaps1}\end{equation}By Grothendieck trace formula we have $$\sum_{x\in\calX^{\varphi}(\F_q)}{\bf X}_{\left(\mathcal{r}^\varphi\right)_*(\kappa)}(x)=\sharp\{\tilde{\calX}^\varphi(\F_q)\}=\tilde{P}(q).$$Now integrating Formula (\ref{equaps1}) over $\calX^\varphi(\F_q)$ proves Formula (\ref{equakatz}).
\end{proof}

\begin{proposition}Assume that $X$ satifies the assumptions of Theorem \ref{Katz2} and that $X$ is pure. Then for any ring homomorphism $\varphi:R\rightarrow\F_q$ we have
 
$$\sum_{x\in \calX^\varphi(\F_q)}{\bf X}_{\IC {\calX^\varphi(\overline{\F}_q)}}(x)=P_c(X;q)$$where $P_c(X;t):=\sum_i\left({\rm dim}\,IH_c^{2i}(X,\C)\right)t^i$.
\label{rempure}\end{proposition}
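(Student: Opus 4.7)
The plan is to deduce this directly from Theorem \ref{Katz2} using the purity hypothesis; no new geometry is needed. By Theorem \ref{Katz2} there exists a polynomial $P(T)\in\Z[T]$ such that, for every $\varphi:R\to\F_q$,
$$\sum_{x\in\calX^\varphi(\F_q)}\mathbf{X}_{\IC{\calX^\varphi(\overline{\F}_q)}}(x)=P(q),\qquad E^{ic}(X;x,y)=P(xy).$$
It therefore suffices to identify $P$ with $P_c(X;T)$.

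Next I would expand $E^{ic}$ from its definition and feed in purity. Since $X$ is pure, $ih_c^{p,q;k}(X)=0$ unless $k=p+q$, so
$$E^{ic}(X;x,y)=\sum_{p,q,k}(-1)^k ih_c^{p,q;k}(X)\,x^py^q=\sum_{p,q}(-1)^{p+q}ih_c^{p,q;p+q}(X)\,x^py^q.$$
Writing $P(T)=\sum_d P_d T^d$ and comparing coefficients of the monomial $x^py^q$ in the identity $E^{ic}(X;x,y)=P(xy)$: for $p\neq q$ the right-hand side has no $x^py^q$ term, hence $ih_c^{p,q;p+q}(X)=0$ for all $p\neq q$; and on the diagonal $p=q$ one reads off $ih_c^{p,p;2p}(X)=P_p$ (using $(-1)^{2p}=1$).

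Finally I would conclude. The previous step shows that the only non-vanishing mixed Hodge numbers of the intersection cohomology of $X$ are of Hodge type $(p,p)$ and live in degree $2p$. Consequently $IH_c^k(X,\C)=0$ for all odd $k$, and $\dim IH_c^{2p}(X,\C)=ih_c^{p,p;2p}(X)=P_p$ for every $p$. Therefore
$$P_c(X;q)=\sum_p\dim IH_c^{2p}(X,\C)\,q^p=\sum_p P_p\,q^p=P(q),$$
which combined with the first displayed identity proves the proposition.

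There is no real obstacle: the argument is a purely formal extraction from Theorem \ref{Katz2} together with the definition of purity. The only substantive observation, which is the step most worth emphasizing in the write-up, is that the combination \emph{polynomial count} (which forces $E^{ic}$ to depend only on $xy$, i.e.\ to sit on the diagonal $p=q$) plus \emph{pure} (which forces the Hodge numbers to sit on the diagonal $k=p+q$) together pin the entire intersection cohomology to Tate type $(p,p)$ in even degrees $2p$; this is what simultaneously kills the odd cohomology and makes $P$ visibly equal to the Poincaré polynomial $P_c(X;T)$.
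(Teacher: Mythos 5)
Your argument is correct and is essentially the paper's own proof: purity restricts the $E$-polynomial to $E^{ic}(X;x,y)=\sum_{p,q}(-1)^{p+q}ih_c^{p,q;p+q}(X)x^py^q$, Theorem \ref{Katz2} forces this to depend only on $xy$, hence the off-diagonal Hodge numbers vanish and $E^{ic}(X;x,y)=P_c(X;xy)=P(xy)$, which combined with the counting identity of Theorem \ref{Katz2} gives the statement. Your write-up merely makes explicit the coefficient comparison and the vanishing of odd intersection cohomology, which the paper leaves implicit.
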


\begin{proof} Since $X$ is pure we have $E^{ic}(X;x,y)=\sum_{p,q}(-1)^{p+q}ih_c^{p,q;p+q}(X)x^py^q$. By Theorem \ref{Katz2}, the polynomial $E^{ic}(X;x,y)$  depends only on the product $xy$, hence $ih_c^{p,q;p+q}(X)=0$ if $p\neq q$. The mixed Hodge numbers of $X$ are thus all of the form $ih_c^{p,p;2p}(X)$ and so $E^{ic}\left(X;x,y\right)=P_c(X;xy)$.
\end{proof}

\section{Preliminaries on quiver varieties}\label{quiver}

 We introduce the so-called quiver varieties $\mathfrak{M}_{\xi,\thetahat}(\v)$ and $\mathfrak{M}_{\xi,\thetahat}(\v,\w)$ over $\K$ which were considered by many authors including Kronheimer, Lusztig, Nakajima and Crawley-Boevey. The second one, due to Nakajima and called \emph{framed} quiver varieties, can be realized as the first one by an observation due to Crawley-Boevey \cite[introduction]{crawley-quiver}. For our application we found more convenient to introduce them separatly. Here we recall the basic results we need.

In this section we will only consider quotients of affine varieties by  (finite) direct products of $\GL_n$'s. If $G=\GL_{n_1}\times\cdots\times\GL_{n_r}$ is such a group and if $\chi:G\rightarrow\K^{\times}$, $(g_i)\mapsto\prod_i({\rm det}\,g_i)^{-\theta_i}$ is the character given by $\thetahat\in\Z^{\{1,\dots,r\}}$, then we will use the notation  $X/\!/_\thetahat G$ instead of $X/\!/_\Phi G$ and we will use often $X^{ss}$ instead of $X^{ss}(\Phi)$ when the context is clear.

\subsection{Generalities on quiver varieties}\label{uquiver}

Let $\Gamma$ be a quiver and let $I$ denote the set of its vertices. We assume that $I$ is finite. A
\emph{dimension vector} for $\Gamma$ is a collection of non-negative
integers $\mathbf{v}=\{v_i\}_{i\in I}\in \N^I$ and a representation
of $\Gamma$ of dimension $\v$ over $\K$ is a collection of
$\K$-linear maps $\varphi_{i,j}:\K^{v_i}\rightarrow \K^{v_j}$, for each
arrow $i\rightarrow j$ of $\Gamma$, that we identify with matrices
(using the canonical basis of $\K^r$). We define a morphism between two representations (possibly of different dimension) in the obvious way. A \emph{subrepresentation} of $\varphi$ is a representation $\varphi'$ together with an injective morphism $\varphi'\rightarrow\varphi$. Let $\Omega$ be a set
indexing the edges of $\Gamma$. For $\gamma\in\Omega$, let
$h(\gamma),t(\gamma)\in I$ denote respectively the head and the tail
of $\gamma$. The algebraic group $\GL_\v:=\prod_{i\in I}\GL_{v_i}(\K)$ acts
on the space $${\bf
M}(\Gamma,\v):=\bigoplus_{\gamma\in\Omega}\text{Mat}_{v_{h(\gamma)},v_{t(\gamma)}}(\K)$$
of representations of dimension $\mathbf{v}$ in the obvious way, i.e., for $g=(g_i)_{i\in I}\in \GL_\v$ and $B=(x_\gamma)_{\gamma\in\Omega}$, we have $g\cdot B:=(g_{v_{h(\gamma)}}x_\gamma g_{v_{t(\gamma)}}^{-1})$. As
the diagonal center $Z=\{(\lambda .{\rm Id}_{v_i})_{i\in I}\,|\,\lambda\in\K^\times\}\subset \GL_\v$ acts trivially, the action of $\GL_\v$ induces an
action of
$$\G_\mathbf{v}:=\GL_\v/Z.$$
Clearly two elements of ${\bf M}(\Gamma,\mathbf{v})$ are
isomorphic if and only if they are $\G_\mathbf{v}$-conjugate.

We define a bilinear form on $\K^I$ by
$\mathbf{a}\centerdot\mathbf{b}=\sum_ia_ib_i$. Let $\thetahat\in\Z^I$ be such that $\thetahat\centerdot\mathbf{v}=0$. This defines a character $\chi:\G_\mathbf{v}\rightarrow\K^{\times}$ given by $(g_i)_i\mapsto \prod_i{\rm det}\,(g_i)^{-\theta_i}$. 

\begin{theorem}\cite{king} A point $B\in{\bf M}\big(\Gamma,\mathbf{v}\big)$ is $\chi$-semistable if and only if  $$\thetahat\centerdot{\rm dim}\,B'\leq 0$$for every subrepresentation $B'$ of $B$. It is $\chi$-stable if and only if it is semistable and the inequality is strict unless $B'=0$ or $B'\simeq B$.
\label{king}\end{theorem}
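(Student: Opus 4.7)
The plan is to deduce King's criterion from the Hilbert--Mumford numerical criterion applied to the action of $\G_\v$ on ${\bf M}(\Gamma,\v)$ linearized by $L = {\bf M}(\Gamma,\v) \times \A^1$ with $\G_\v$-action $g \cdot (B,t) = (g \cdot B, \chi(g)^{-1} t)$, where $\chi((g_i)) = \prod_i \det(g_i)^{-\theta_i}$. A one-parameter subgroup $\lambda : \Gm \to \G_\v$ lifts (modulo the central $Z$) to a 1-PS of $\GL_\v$, which amounts to prescribing $\Z$-gradings $\K^{v_i} = \bigoplus_{n \in \Z} V_i^n$ with $\lambda(t)$ acting on $V_i^n$ as $t^n$. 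Only the differences of the weights matter, so the shift freedom from $Z$ is harmless, and the condition $\thetahat \cdot \v = 0$ guarantees $\chi \circ \lambda$ is well-defined.

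I would first analyze when $\lim_{t\to 0} \lambda(t) \cdot B$ exists. The entries of $B$ along an arrow $\gamma$ decompose as maps $V_{t(\gamma)}^n \to V_{h(\gamma)}^m$, rescaled by $t^{m-n}$; so the limit exists iff all components with $m < n$ vanish. Equivalently, the decreasing filtration $F_i^p := \bigoplus_{n \geq p} V_i^n$ is a filtration of $B$ by subrepresentations, and the limit $B_0$ is the associated graded $\bigoplus_n \mathrm{gr}^n B$ with each $\mathrm{gr}^n B$ itself a subrepresentation-compatible piece.

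Next I would compute the Hilbert--Mumford weight at the fixed point $B_0$: by the sign convention built into the linearization, it equals $\sum_i \theta_i \sum_n n \dim V_i^n$, which by Abel summation can be rewritten as $\sum_{p \in \Z} \thetahat \cdot \dim F^p$ (the constant shift $\thetahat \cdot \v$ is zero, so all truncations converge). The Hilbert--Mumford criterion states that $B$ is $\chi$-semistable iff this quantity has the right sign for every 1-PS with existing limit, i.e.\ iff $\thetahat \cdot \dim F^p \leq 0$ for every step of every subrepresentation-filtration. Since any single subrepresentation $B' \subset B$ arises as one step of such a filtration (choose a vector-space complement and take the two-step grading), this translates exactly to the stated condition $\thetahat \cdot \dim B' \leq 0$ for all subrepresentations.

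For the stable case, the analogous Hilbert--Mumford statement demands strict inequality for nontrivial 1-PS of $\G_\v$, i.e.\ those that do not lie in $Z$; these correspond precisely to filtrations whose pieces are neither $0$ nor all of $B$, giving the stated strict inequality. The main obstacle I anticipate is bookkeeping: carefully reconciling the sign and shift conventions (the sign of $\chi$ in the linearization, the convention for the Hilbert--Mumford weight, and the quotient by $Z$), and verifying that every subrepresentation really does arise from some 1-PS on the nose so that the Hilbert--Mumford test reduces to testing just on arbitrary subrepresentations rather than on filtrations of arbitrary length.
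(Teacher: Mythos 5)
The paper gives no proof of this statement — it is quoted directly from King's paper — so there is nothing internal to compare against; your sketch is in fact essentially King's original argument, namely the Hilbert–Mumford numerical criterion for the trivial bundle twisted by $\chi$ (King's Prop.~2.5), combined with the computation that a one-parameter subgroup with existing limit is the same thing as a weight filtration of $B$ by subrepresentations and that its $\chi$-weight is $-\sum_p\thetahat\centerdot\dim F^p$. The outline is correct, including the use of $\thetahat\centerdot\v=0$ to make the Abel summation and the passage from filtrations to single subrepresentations (two-step filtrations one way, termwise nonpositivity the other) legitimate; the only content you are black-boxing is the twisted-linearization form of the numerical criterion itself, which is exactly what King proves before deriving the statement.
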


We will use the terminology  ``$\thetahat$-semistable'' instead of ``$\chi$-semistable''. We denote respectively by ${\bf M}^{\rm ss}_{\thetahat}\big(\Gamma,\mathbf{v}\big)$ and ${\bf M}^{\rm s}_{\thetahat}\big(\Gamma,\mathbf{v}\big)$ the $\thetahat$-semistable and $\thetahat$-stable representations.

Let $\overline{\Gamma}$ be the \emph{double quiver} of $\Gamma$ i.e.
$\overline{\Gamma}$ has the same vertices as $\Gamma$ but the edges
are given by
$\overline{\Omega}:=\{\gamma,\gamma^*|\hspace{.05cm}\gamma\in\Omega\}$
where $h(\gamma^*)=t(\gamma)$ and $t(\gamma^*)=h(\gamma)$. Then via
the trace pairing we may identify  ${\bf
M}\big(\overline{\Gamma},\mathbf{v}\big)$ with the cotangent
bundle ${\rm T^*}{\bf M}(\Gamma,\mathbf{v})$. Put $\gl_\v={\rm Lie}\,(\GL_\v)=\bigoplus_i\gl_{v_i}(\K)$ and $\g_\v:={\rm Lie}\,(\G_\v)$. Define the
\emph{moment map} \begin{equation} \mu_{\mathbf{v}}:{\bf M}\big(\overline{\Gamma},\mathbf{v}\big)\rightarrow
{\rm M}(\mathbf{v})^0\end{equation}$$
(x_{\gamma})_{\gamma\in\overline{\Omega}}\mapsto\sum_{\gamma\in\Omega}[x_{\gamma},x_{\gamma^*}],
$$ where $${\rm M}(\mathbf{v})^0:=\left\{\left(f_i\right)_{i\in
I}\in\gl_\v\,\left|\,\sum_{i\in I}{\rm
Tr}\right.\,(f_i)=0\right\}.$$Note that we can identify ${\rm M}(\mathbf{v})^0$ with $(\g_\v)^*$ via the trace pairing. The moment map $\mu_\v$ is $\G_\mathbf{v}$-equivariant.

Let
$\xihat=(\xi_i)_i\in \K^I$ be such that $\xihat\centerdot
\mathbf{v}=0$. Then $$(\xi_i.{\rm Id})_i\in\gl_\v$$ is in fact in ${\rm M}(\mathbf{v})^0$. By abuse of notation we denote by $\xihat$ the element $(\xi_i.{\rm Id}_{v_i})_i\in{\rm M}(\mathbf{v})^0$. The affine variety $\mu_{\mathbf{v}}^{-1}(\xihat)$
is $\G_\mathbf{v}$-stable.

Define $$\mathfrak{M}_{\xihat,\thetahat}(\mathbf{v}):=\mu_\v^{-1}(\xihat)/\!/_\thetahat\G_\v.$$
 We define $\mathfrak{M}_{\xihat,\thetahat}^s(\mathbf{v})$ as the image of $\mu_\v^{-1}(\xihat)^s$ in $\mathfrak{M}_{\xihat,\thetahat}(\mathbf{v})$. By Theorem \ref{goodquotient}, it is an open subset of $\mathfrak{M}_{\xihat,\thetahat}(\mathbf{v})$.

Since stabilizers in $\G_\v$ of quiver representations are connected, the action of ${\rm G}_{\mathbf{v}}$ on the space ${\bf M}_{\thetahat}^s(\overline{\Gamma},{\bf v})$ is set-theoritically free and so the restriction $\mu_\v^{-1}(\xi)^s\rightarrow\mathfrak{M}_{\xihat,\thetahat}^s(\mathbf{v})$ of $\varphi$  is the set theoritical quotient $\mu_{\mathbf{v}}^{-1}(\xihat)^s\rightarrow\mu_{\mathbf{v}}^{-1}(\xihat)^s/\G_\mathbf{v}$. By \cite[Lemma 6.5]{reineke}, the map $\mu_{\mathbf{v}}^{-1}(\xihat)^s\rightarrow\mu_{\mathbf{v}}^{-1}(\xihat)^s/\G_\mathbf{v}$ is actually a principal $\G_\v$-bundle (in the \'etale topology).

We put $\mathfrak{M}_{\xi}(\mathbf{v}):=\mathfrak{M}_{\xihat,0}(\mathbf{v})$. It is the affine GIT quotient $
\mu_{\mathbf{v}}^{-1}(\xihat)/\!/\G_\mathbf{v}={\rm Spec}\big(\K[\mu_{\mathbf{v}}^{-1}(\xihat)]^{{\rm G}_{\mathbf{v}}}\big)
$. The set $\mathfrak{M}_{\xihat}(\mathbf{v})$ parameterizes the set of conjugacy classes of the semisimple representations of $
\mu_{\mathbf{v}}^{-1}(\xihat)$. Under this parameterization, the open subset $\mathfrak{M}_{\xihat}^s(\mathbf{v})$ of $0$-stable points coincides with the set of conjugacy classes of simple representations.

The natural projective morphism $\pi:\mathfrak{M}_{\xihat,\thetahat}(\mathbf{v})\rightarrow\mathfrak{M}_{\xi}(\mathbf{v})$  takes a representation to its semi-simplification. 

Let $C_\Gamma=(c_{ij})_{i,j}$ be the Cartan matrix of the quiver $\Gamma$, namely

$$c_{ij}=\begin{cases} 2-2(\text{the number of edges joining $i$ to itself})\hspace{.2cm}\text{if }i=j\\
          - (\text{the number of edges joining $i$ to $j$})\hspace{1.2cm}\text{ otherwise}.
         \end{cases}$$

We say that a variety $X$ is of \emph{pure dimension} $d$ if its irreducible components are all of same dimension $d$. We have the following well-known theorem (the irreducibility is an important result of Crawley-Boevey \cite{crawley-quiver}).

\begin{theorem} Let $\thetahat\in\Z^I$ be such that $\thetahat\centerdot\v=0$. If $\mathfrak{M}_{\xihat,\thetahat}^s(\mathbf{v})\neq\emptyset$, then  it is nonsingular of pure dimension  $2-{^t}\v C_\Gamma\v$. If $\mathfrak{M}_\xihat^s(\mathbf{v})$ is not empty, then $\mathfrak{M}_{\xihat,\thetahat}^s(\mathbf{v})$ is also not empty and $\mathfrak{M}_{\xihat,\thetahat}(\mathbf{v})$ is irreducible.
\label{irrquiver}\end{theorem}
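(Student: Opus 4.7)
The plan is to prove the three assertions separately: the stability-related parts (nonsingularity, dimension count, non-emptyness) reduce to standard moment-map and stability bookkeeping, while the irreducibility is the genuinely deep input and will be cited from Crawley--Boevey. For the nonsingularity and dimension of $\mathfrak{M}_{\xihat,\thetahat}^s(\v)$, I would work at the level of $\mu_\v^{-1}(\xihat)^s\subset\mathbf{M}(\overline{\Gamma},\v)$. The key point is that the differential $d\mu_\v$ is surjective at every $\thetahat$-stable point $B$: by the standard symplectic formalism, $\mathrm{Im}(d\mu_\v|_B)$ is the annihilator of the kernel of the infinitesimal action $\alpha_B\colon\g_\v\to T_B\mathbf{M}(\overline{\Gamma},\v)$, and since a $\thetahat$-stable point has trivial stabilizer in $\G_\v$ the map $\alpha_B$ is injective, so $d\mu_\v|_B$ is surjective. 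Hence $\mu_\v^{-1}(\xihat)^s$ is smooth of pure dimension $\dim\mathbf{M}(\overline{\Gamma},\v)-\dim\mathrm{M}(\v)^0=2\sum_{\gamma\in\Omega}v_{h(\gamma)}v_{t(\gamma)}-\sum_i v_i^2+1$. Since, as recalled just above the statement, $\mu_\v^{-1}(\xihat)^s\to\mathfrak{M}_{\xihat,\thetahat}^s(\v)$ is a principal $\G_\v$-bundle, subtracting $\dim\G_\v=\sum_i v_i^2-1$ yields the asserted pure dimension $2-{}^t\v\mathbf{C}\v$.

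For the non-emptyness of $\mathfrak{M}_{\xihat,\thetahat}^s(\v)$, I would pick a simple representation $B\in\mu_\v^{-1}(\xihat)$ whose orbit corresponds to a point of the non-empty set $\mathfrak{M}_\xihat^s(\v)$. Because $B$ admits no non-zero proper subrepresentation, the inequality in King's stability criterion (Theorem \ref{king}) is vacuous and $B$ is automatically $\thetahat$-stable for \emph{every} parameter $\thetahat$ with $\thetahat\centerdot\v=0$. Therefore $\mu_\v^{-1}(\xihat)^s\neq\emptyset$, and the image of $B$ gives a point of $\mathfrak{M}_{\xihat,\thetahat}^s(\v)$.

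For the irreducibility of $\mathfrak{M}_{\xihat,\thetahat}(\v)$, I would invoke Crawley--Boevey's main theorem from \cite{crawley-quiver}: under the present hypothesis that there exists a simple representation of the deformed preprojective algebra $\Pi^\xihat$ of dimension vector $\v$ (equivalently, $\mathfrak{M}_\xihat^s(\v)\neq\emptyset$), the affine scheme $\mu_\v^{-1}(\xihat)$ is irreducible of dimension $\dim\G_\v+2-{}^t\v\mathbf{C}\v$. The $\thetahat$-semistable locus $\mu_\v^{-1}(\xihat)^{ss}$ is a non-empty (by the previous paragraph) open subvariety of this irreducible scheme, hence itself irreducible, and $\mathfrak{M}_{\xihat,\thetahat}(\v)$, being its categorical quotient by the connected group $\G_\v$, inherits irreducibility.

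The main obstacle is precisely the irreducibility of $\mu_\v^{-1}(\xihat)$, which is Crawley--Boevey's deep result: its proof combines reflection functors with an inductive stratification of $\mu_\v^{-1}(\xihat)$ according to the isomorphism type of the underlying semisimple representation, reducing the problem to the case of simple modules where a direct dimension estimate applies. Everything else in the theorem is a formal consequence of the symplectic identities above together with Theorem \ref{king}.
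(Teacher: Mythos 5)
Your proposal is correct and follows essentially the same route as the paper: simple representations are automatically $\thetahat$-stable (giving non-emptiness), the trivial stabilizer at stable points makes the moment map submersive there (giving smoothness and the dimension $2-{}^t\v\mathbf{C}\v$ after quotienting by $\G_\v$), and irreducibility is imported from Crawley-Boevey's theorem that $\mu_\v^{-1}(\xihat)$ is irreducible once simple representations exist, then passed to the GIT quotient. You merely spell out the moment-map surjectivity and the descent of irreducibility in slightly more detail than the paper does.
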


\begin{proof} First a simple representation is necessarily $\thetahat$-stable, hence   $\mathfrak{M}_\xihat^s(\mathbf{v})\neq\emptyset$ implies $\mathfrak{M}_{\xihat,\thetahat}^s(\mathbf{v})\neq\emptyset$. It is a result of Crawley-Boevey \cite[Theorem 1.2]{crawley-quiver} that the existence of simple representations in $\mu_\v^{-1}(\xihat)$ implies the irreducibility of  $\mu_\v^{-1}(\xihat)$ and so the irreducibility of  $\mathfrak{M}_{\xihat,\thetahat}^s(\mathbf{v})$ and $\mathfrak{M}_{\xihat,\thetahat}(\mathbf{v})$. 
Note that a point $\alpha\in\mu_\v^{-1}(\xihat)$ is nonsingular if $\mu_\v$ is smooth at $\alpha$, that is if the stabilizer  of $\alpha$ in $\G_\v$ is trivial. From this we deduce that the space $\mu_\v^{-1}(\xihat)^s$ of $\thetahat$-stable representations is a nonsingular space of dimension ${\rm dim}\,{\bf M}\big(\overline{\Gamma},\mathbf{v}\big)-{\rm dim}\,\G_v$, and so that $\mathfrak{M}_{\xihat,\thetahat}^s(\mathbf{v})$ is nonsingular  of dimension $$2-{^t}\v C_\Gamma\v={\rm dim}\,{\bf M}\big(\overline{\Gamma},\mathbf{v}\big)-2{\rm dim}\,\G_v.$$\end{proof}

We put an order on $\Z^I$ as follows: we say that $\w\leq \v$ if we have $w_i\leq v_i$ for each $i\in I$. We denote by $\calE(\v)$ the set of $\w$ such that $0<\w<\v$, $\xihat\centerdot\w=0$ and $\mu_\w^{-1}(\xihat)\neq\emptyset$.

For $\w\in \N^I$, we denote by $H_\w$ the hyperplane $\{\alpha\in\Q^I\,|\,\alpha\centerdot\w=0\}$ of $\Q^I$. Put $H_{\v\w}:=H_\v\cap H_\w$ and $$D_\v:=H_\v-\bigcup_{\w\in\calE(\v)}H_{\v\w}.$$We say that $\v$ is \emph{indivisible} if the gcd of $\{v_i\}_{i\in I}$ is $1$. Note that $D_\v$ is not empty if and only if $\v$ is \emph{indivisible}.

When $\v$ is indivisible, the spaces $H_{\v\w}$ are hyperplanes of $H_\v$ and so defines a system of \emph{faces}      \cite[Chapter 1,\S 1]{bourbaki}.

\begin{definition}We say that $\thetahat$ is \emph{generic} with respect to $\v$ if $\thetahat\in D_\v$.\label{gen6}\end{definition}

If $\thetahat$ is generic then $\thetahat$-semistability coincides with $\thetahat$-stability, and so $$\mathfrak{M}_{\xihat,\thetahat}^s(\mathbf{v})=\mathfrak{M}_{\xihat,\thetahat}(\mathbf{v}).$$The variety $\mathfrak{M}_{\xihat,\thetahat}(\mathbf{v})$ is thus nonsingular for generic $\thetahat$.

We have \cite{ALE}\cite[\S 2.5]{nakajima-quiver4}:

\begin{proposition}  Assume that $\thetahat$ is generic and that $\mathfrak{M}_\xihat^s(\mathbf{v})\neq\emptyset$. Then the map $\pi:\mathfrak{M}_{\xihat,\thetahat}(\mathbf{v})\rightarrow\mathfrak{M}_\xihat(\mathbf{v})$ is a resolution of singularities.
\label{resolquiver}\end{proposition}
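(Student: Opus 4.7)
The plan is to verify the three defining conditions for $\pi$ to be a resolution of singularities: (a) $\mathfrak{M}_{\xihat,\thetahat}(\v)$ is irreducible and nonsingular; (b) $\pi$ is proper; (c) $\pi$ is birational, that is, an isomorphism over a dense open subset of $\mathfrak{M}_\xihat(\v)$.

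For (a), I would invoke the genericity of $\thetahat$ (Definition \ref{gen6}): this forces $\thetahat$-semistability and $\thetahat$-stability to coincide, so $\mathfrak{M}_{\xihat,\thetahat}(\v)=\mathfrak{M}_{\xihat,\thetahat}^s(\v)$. Theorem \ref{irrquiver} then gives, from the hypothesis $\mathfrak{M}_\xihat^s(\v)\neq\emptyset$, that this common variety is nonempty and nonsingular of pure dimension $2-{^t}\v\,\mathbf{C}\v$, and that $\mathfrak{M}_{\xihat,\thetahat}(\v)$ is irreducible. Condition (b) is immediate since $\pi$ is projective by its construction in \S \ref{uquiver}.

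The substance lies in (c). My plan is to show that $\pi$ restricts to an isomorphism over the open subset $\mathfrak{M}_\xihat^s(\v)\subset\mathfrak{M}_\xihat(\v)$. This subset is open by Theorem \ref{goodquotient}, nonempty by assumption, and dense since $\mathfrak{M}_\xihat(\v)=\mathfrak{M}_{\xihat,0}(\v)$ is irreducible by another application of Theorem \ref{irrquiver}. The key observation is that a simple representation admits no proper nonzero subrepresentation, so King's criterion (Theorem \ref{king}) is vacuously satisfied: every simple representation in $\mu_\v^{-1}(\xihat)$ is $\thetahat$-stable for any $\thetahat$ with $\thetahat\centerdot\v=0$. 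Conversely, if a $\thetahat$-semistable representation has semi-simplification equal to a simple representation, the representation is itself simple. Hence $\pi^{-1}(\mathfrak{M}_\xihat^s(\v))$ and $\mathfrak{M}_\xihat^s(\v)$ are both $\G_\v$-quotients of the single open locus $U\subset\mu_\v^{-1}(\xihat)$ consisting of simple representations.

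The delicate point will be upgrading this set-theoretic bijection to a scheme-theoretic isomorphism. This will follow from the fact that on $U$ both quotient maps are principal $\G_\v$-bundles in the étale topology, as recalled in \S \ref{uquiver} using \cite[Lemma 6.5]{reineke}; since $\pi$ is tautologically $\G_\v$-equivariant lifting the identity of $U$, the universal property of geometric quotients identifies the restriction of $\pi$ over $\mathfrak{M}_\xihat^s(\v)$ with the identity on their common base. Combined with (a) and (b), this completes the verification that $\pi$ is a resolution of singularities.
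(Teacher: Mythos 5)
Your argument is correct, but note that the paper does not actually prove this statement: it is quoted from Nakajima (\cite{ALE}, \cite[\S 2.5]{nakajima-quiver4}), so there is no in-text proof to compare with. What you write is essentially the standard argument behind that citation, and it hangs together: genericity of $\thetahat$ gives $\mathfrak{M}_{\xihat,\thetahat}(\v)=\mathfrak{M}_{\xihat,\thetahat}^s(\v)$, Theorem \ref{irrquiver} (applied once with $\thetahat$ and once with $\thetahat=0$) gives nonemptiness, smoothness, and irreducibility of source and target, properness is built into the GIT construction, and birationality comes from the two-way comparison ``simple $\Rightarrow$ $\thetahat$-stable'' (King's criterion is vacuous) and ``$\thetahat$-semistable with simple semisimplification $\Rightarrow$ simple'' (composition length one). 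Two small points you should make explicit when writing this up: first, to identify $\pi^{-1}\bigl(\mathfrak{M}_\xihat^s(\v)\bigr)$ scheme-theoretically with the quotient of the simple locus $U$, you need that $U$ is saturated for the quotient map $\mu_\v^{-1}(\xihat)^{ss}\rightarrow\mathfrak{M}_{\xihat,\thetahat}(\v)$, which follows from your own converse observation, and then Theorem \ref{goodquotient}(3) (together with the principal-bundle statement via \cite[Lemma 6.5]{reineke}) lets the categorical-quotient uniqueness argument go through as you indicate; second, a resolution must be surjective, which you do not state but which follows immediately since $\pi$ is proper, its image is closed, and it contains the dense open $\mathfrak{M}_\xihat^s(\v)$ of the irreducible variety $\mathfrak{M}_\xihat(\v)$.
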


The following proposition is proved in \cite[Proof of Proposition 2.2.6]{hausel-letellier-villegas}.

\begin{theorem}Assume that $\K=\C$ and that $\thetahat$ is generic. Then for any parameter $\xihat$, the varieties $\mathfrak{M}_{\xihat,\thetahat}(\v)$ and $\mathfrak{M}_{0,\thetahat}(\v)$ have isomorphic cohomology supporting pure mixed Hodge structure. 
\label{HLRpure}\end{theorem}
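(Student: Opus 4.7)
The plan is to realize all the varieties $\mathfrak{M}_{\xihat,\thetahat}(\v)$ as fibers of a single smooth family over $H_\v$ carrying a contracting $\C^*$-action, and then to deduce both the cohomological isomorphism and the purity of the mixed Hodge structure from this together with the Hausel--Rodriguez-Villegas purity theorem for semi-projective varieties. First I would form the family
$$\Phi:\calX:=\mu_\v^{-1}(H_\v)^{ss(\thetahat)}/\G_\v\longrightarrow H_\v,$$
whose fiber over $\xihat\in H_\v$ is $\mathfrak{M}_{\xihat,\thetahat}(\v)$. Since $\thetahat$ is generic with respect to $\v$ (Definition \ref{gen6}), $\thetahat$-semistability coincides with $\thetahat$-stability on every fiber, so the $\G_\v$-action on the semistable locus is set-theoretically free, the quotient is a principal $\G_\v$-bundle, and every fiber of $\Phi$ is smooth of dimension $2-{}^t\v\mathbf{C}\v$ by Theorem \ref{irrquiver}. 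The scaling $t\cdot x:=tx$ on $\mathbf{M}(\overline{\Gamma},\v)$ rescales the quadratic moment map $\mu_\v$ by $t^2$, hence lifts to a $\C^*$-action on $\calX$ covering the weight-$2$ scaling of $H_\v$; as $t\to 0$ this action contracts $\calX$ onto the central fiber $\Phi^{-1}(0)=\mathfrak{M}_{0,\thetahat}(\v)$.

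For purity, I would observe that the restricted $\C^*$-action on $\mathfrak{M}_{0,\thetahat}(\v)$ commutes with the projective morphism $\pi:\mathfrak{M}_{0,\thetahat}(\v)\to\mathfrak{M}_0(\v)$ sending a representation to its semi-simplification, and contracts the affine quotient $\mathfrak{M}_0(\v)$ onto the single closed point corresponding to the zero representation. Consequently $\mathfrak{M}_{0,\thetahat}(\v)$ is semi-projective in the sense of Hausel and Rodriguez-Villegas: the limit $\lim_{t\to 0}t\cdot x$ exists for every $x$, and the $\C^*$-fixed locus is compact, being the preimage of a point under the projective morphism $\pi$. By their purity theorem for semi-projective varieties, $H^*(\mathfrak{M}_{0,\thetahat}(\v))$ carries a pure mixed Hodge structure.

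To transfer this conclusion to $\mathfrak{M}_{\xihat,\thetahat}(\v)$ for arbitrary $\xihat\in H_\v$, I would combine the $\C^*$-contraction on $\calX$ with smoothness of $\Phi$ to produce a $C^\infty$-trivialization $\calX\cong\mathfrak{M}_{0,\thetahat}(\v)\times H_\v$ (Nakajima's hyperk\"ahler rotation provides an equivalent construction), yielding $H^*(\mathfrak{M}_{\xihat,\thetahat}(\v))\cong H^*(\mathfrak{M}_{0,\thetahat}(\v))$. For compatibility with the mixed Hodge structures, I would spread the entire family out over a finitely generated subring $R\subset\C$ and invoke Katz' point-counting theorem (Theorem \ref{Katz}): all fibers over $\F_q$-points of $H_\v$ are smoothly diffeomorphic and therefore counted by the same polynomial, so their $E$-polynomials coincide, and purity of the central fiber together with strictness of weight filtrations forces purity of every fiber with the same Hodge--Deligne numbers as the central fiber.

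The hardest step, I expect, is this last one: upgrading the topological trivialization to a Hodge-theoretic isomorphism. The $\C^*$-flow producing the $C^\infty$-diffeomorphism is analytic rather than algebraic, so one cannot directly transport the mixed Hodge structure along it; the point-counting detour replaces direct transport by an equality of $E$-polynomials, but one must still verify carefully that the counting polynomial of $\mathfrak{M}_{\xihat,\thetahat}(\v)$ is genuinely independent of $\xihat$, which ultimately rests on fiberwise application of Crawley-Boevey's irreducibility result in Theorem \ref{irrquiver}.
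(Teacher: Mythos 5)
Your setup is essentially the right one, and the first half of your argument is sound: realizing the varieties $\mathfrak{M}_{\xihat,\thetahat}(\v)$ as fibers of the family obtained by letting the moment-map value vary, and deducing purity of $H^*\big(\mathfrak{M}_{0,\thetahat}(\v)\big)$ from the contracting $\C^*$-action together with properness of $\pi:\mathfrak{M}_{0,\thetahat}(\v)\to\mathfrak{M}_{0}(\v)$ over the cone $\mathfrak{M}_{0}(\v)$, is exactly the expected use of semiprojectivity. The genuine gap is the transfer of purity to $\mathfrak{M}_{\xihat,\thetahat}(\v)$, and the point-counting detour you propose does not close it. First, a $C^\infty$-diffeomorphism between the complex fibers gives no information about $\F_q$-points of a spreading out; the equality of counting polynomials for $\mathfrak{M}_{\xihat,\thetahat}(\v)$ and $\mathfrak{M}_{0,\thetahat}(\v)$ is precisely Nakajima's result quoted as Theorem \ref{Nak} in this paper, an independent nontrivial theorem (and stated only for sufficiently large extensions), so your step (iii) silently assumes an input at least as deep as what you are proving. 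Second, even granting equal $E$-polynomials, purity does not follow: the $E$-polynomial records only an alternating sum of Hodge--Deligne numbers, and ``strictness'' has nothing to act on, because there is no algebraic morphism between $\mathfrak{M}_{\xihat,\thetahat}(\v)$ and $\mathfrak{M}_{0,\thetahat}(\v)$ along which to compare weight filtrations --- as you yourself observe, the trivialization is only analytic.

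The missing idea --- and essentially the route of the proof the paper cites (\cite[proof of Proposition 2.2.6]{hausel-letellier-villegas}) --- is to stay algebraic: work with the total space $\calX$ of the family over the single line $\C\xihat$, i.e.\ the $\thetahat$-semistable locus of $\mu_\v^{-1}(\C\xihat)$ modulo $\G_\v$, with the action $s\cdot x=sx$ covering $t\mapsto s^2t$. This $\calX$ is smooth (at $\thetahat$-stable points the stabilizer is trivial, so $\mu_\v$ is submersive there), it is projective over an affine quotient which is a cone contracted to its vertex by the action, hence $\calX$ is itself semiprojective and $H^*(\calX)$ is pure; the action retracts $\calX$ onto the central fiber, and using it one checks that the inclusions of the fiber over $0$ and of the fiber over $1$ both induce isomorphisms on cohomology. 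These restrictions are morphisms of mixed Hodge structures, and a morphism of mixed Hodge structures that is bijective on underlying vector spaces is an isomorphism of mixed Hodge structures; this yields simultaneously $H^*\big(\mathfrak{M}_{\xihat,\thetahat}(\v)\big)\cong H^*\big(\mathfrak{M}_{0,\thetahat}(\v)\big)$ and purity of both sides, with no recourse to point counting. Your counting route could at best be salvaged by feeding in Theorem \ref{Nak} together with odd-degree vanishing and equality of all Betti numbers and then a bookkeeping argument with the weight bounds on $H_c^k$ of a smooth variety --- inputs which are themselves most naturally obtained from the algebraic family argument just described.
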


We also have the following result of Nakajima \cite[Appendix B]{crawley-boevey-etal}.

\begin{theorem}Assume that $\K=\overline{\F}_q$ and that $\thetahat$ is generic. Then there exists $r_0\in\N$ such that for all $r\geq r_O$ the varieties $\mathfrak{M}_{\xihat,\thetahat}(\v)$ and $\mathfrak{M}_{0,\thetahat}(\v)$  have the same number of points over $\F_{q^r}$.\label{Nak}
\end{theorem}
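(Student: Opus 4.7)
The plan is to combine the cohomological isomorphism in characteristic zero (Theorem~\ref{HLRpure}) with smooth base change and Deligne's Weil~II purity for $\ell$-adic cohomology, concluding via the Grothendieck--Lefschetz trace formula.

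First I would spread out the data: choose a finitely generated subring $R\subset\C$ and a ring map $R\to\overline{\F}_q$ so that the quiver data $(\Gamma,\v,\xihat,\thetahat)$ make sense over $R$ and specialize to the given $\overline{\F}_q$-objects. Since $\thetahat$ is generic, $\thetahat$-semistability equals $\thetahat$-stability, so by Theorem~\ref{irrquiver} both $\mathfrak{M}_{\xihat,\thetahat}(\v)$ and $\mathfrak{M}_{0,\thetahat}(\v)$ are smooth over their respective base fields (assuming they are non-empty); after enlarging $R$ if necessary, the corresponding $R$-schemes are smooth over $\Spec(R)$. Smooth base change for $\ell$-adic cohomology then identifies the Betti numbers of each $\overline{\F}_q$-fiber with those of the corresponding $\C$-fiber; combined with Theorem~\ref{HLRpure}, the two varieties over $\overline{\F}_q$ have equal $\ell$-adic Betti numbers in every degree.

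Next, I would establish that the $\ell$-adic cohomology of each quiver variety over $\overline{\F}_q$ is pure and concentrated in even degrees, with Frobenius eigenvalues on $H^{2i}_c$ of the form $\zeta\,q^i$ for $\zeta$ a root of unity of bounded order. Purity follows from Weil~II applied to the smooth family over $R$, combined with purity of the mixed Hodge structure on the generic fiber (Theorem~\ref{HLRpure}) transferred via specialization. Odd-degree vanishing and the eigenvalue structure come from a standard $\Gm$-action on the quiver variety scaling the representations, whose Bialynicki--Birula decomposition yields an affine paving in both characteristics. By the Grothendieck--Lefschetz trace formula,
$$\sharp\,\mathfrak{M}_{\xihat,\thetahat}(\v)(\F_{q^r})=\sum_i\left(\sum_j\zeta_{i,j}^r\right)q^{ir}.$$
Taking $N$ to be a common multiple of the orders of the $\zeta_{i,j}$ occurring for either of the two varieties, for $r$ divisible by $N$ each inner sum collapses to the Betti number $b_{2i}$, and equality of Betti numbers then forces equality of the point counts along the progression $N\Nstar$. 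To upgrade to all $r\geq r_0$ as stated, one argues via the Bialynicki--Birula cell decomposition that each $\zeta_{i,j}$ in fact equals $1$, so that each point count is a genuine polynomial in $q^r$.

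The main obstacle is propagating purity and controlling the Frobenius eigenvalues uniformly through specialization — in particular producing a threshold $r_0$ that is independent of the chosen lift $R\to\overline{\F}_q$. A clean route is to invoke the decomposition theorem for the projective resolution $\pi:\mathfrak{M}_{\xihat,\thetahat}(\v)\to\mathfrak{M}_\xihat(\v)$, reducing the question to the affine quotient $\mathfrak{M}_\xihat(\v)$, where Crawley-Boevey's analysis of the moment-map fibre provides the flatness and purity properties needed to control the Frobenius structure uniformly across the parameter space of $\xihat$.
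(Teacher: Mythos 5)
The paper does not prove this statement itself: it quotes it from Nakajima's appendix to Crawley-Boevey--Van den Bergh \cite[Appendix B]{crawley-boevey-etal}, whose argument stays entirely in characteristic $p$ (roughly, it compares $\mathfrak{M}_{\xihat,\thetahat}(\v)$ and $\mathfrak{M}_{0,\thetahat}(\v)$ through a $\Gm$-equivariant one-parameter family degenerating $\xihat$ to $0$, using properness of $\pi$ over the corresponding family of affine quotients) and never passes through a comparison with complex fibres. Your route, by contrast, lifts both varieties to characteristic zero and tries to come back, and this is exactly where it has a genuine gap. The varieties are smooth but not proper over the base, so ``smooth base change'' does not identify the cohomology of the prescribed $\overline{\F}_q$-fibre with that of the $\C$-fibre of your model: for a non-proper family the sheaves $R^if_!\Q_\ell$ need not be locally constant, and Deligne's generic base change only produces a dense open set of good primes of ${\rm Spec}\,R$, with no way to force the given homomorphism $R\rightarrow\F_q$ (which is imposed on you by $\xihat$ and the ground field) to land in it; replacing $\F_q$ by $\F_{q^r}$ does not move the point of ${\rm Spec}\,R$. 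The assertion that the given characteristic-$p$ fibres have the Betti numbers of their complex lifts is therefore unproved, and it is a statement of exactly the same nature as the theorem (``no jump under specialization''), so this step is close to circular; Nakajima's in-characteristic-$p$ degeneration is precisely the device that replaces it.

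The second gap is the control of Frobenius eigenvalues. Scaling the representations rescales the value of the moment map, so no $\Gm$-action of this kind preserves $\mu_\v^{-1}(\xihat)$ when $\xihat\neq 0$; there is thus no ``standard'' Bialynicki--Birula decomposition or affine paving of $\mathfrak{M}_{\xihat,\thetahat}(\v)$ to invoke, and even for $\xihat=0$ and a comet-shaped quiver with $g\geq 1$ loops such a paving is not known. Purity gives that every Frobenius eigenvalue on $H_c^{2i}$ has all its complex absolute values equal to $q^i$, but not that it equals $\zeta q^i$ with $\zeta$ a root of unity of bounded order, so the collapse of the trace formula along an arithmetic progression is unsupported, and the final upgrade ``each $\zeta_{i,j}=1$'' even more so (note it would yield equality of point counts for all $r\geq 1$, strictly stronger than the theorem, which should raise suspicion). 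The closing appeal to the decomposition theorem for $\pi$ and to ``flatness and purity'' on the singular affine quotient $\mathfrak{M}_{\xihat}(\v)$ is too vague to close either hole: the Frobenius structure there is no easier to control than on $\mathfrak{M}_{\xihat,\thetahat}(\v)$. To repair the argument you essentially have to abandon the characteristic-zero detour and compare $\xihat$ with $0$ inside characteristic $p$, which is what the cited proof does.
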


We now give a criterion due to Crawley-Boevey for the non-emptyness of $\mathfrak{M}_{\xihat}^s(\mathbf{v})$.  For $i\in I$ let $\e_i\in\Z^I$ be the vector with $1$ at the vertex $i$ and zero elsewhere and let $\Phi(\Gamma)\subset\Z^I$ be the root system associated to $\Gamma$ defined as in \cite{kac}. We denote by $\Phi^+(\Gamma)$ the set of positive roots. Let $(\,,\,)$ be the symmetric bilinear form on the root lattice $\Z^I$ given by $(\e_i,\e_j)=c_{ij}$. Note that vertices of $\Gamma$ may support loops.

 For $\alpha\in\Z^I$, we put $p(\alpha)=1-\frac{1}{2}(\alpha,\alpha)$. If $\alpha$ is a real root we have $p(\alpha)=0$ and if $\alpha$ is an imaginary root then $p(\alpha)>0$. 

The following theorem is due to Crawley-Boevey \cite[Theorem 1.2]{crawley-quiver}.

\begin{theorem} (i) The space $\mathfrak{M}_{\xi}(\mathbf{v})$ is non-empty if and only if  $\mathbf{v}=\beta_1+\beta_2+\dots$ with $\beta_i\in\Phi^+(\Gamma)$ and $\beta_i\centerdot\xihat=0$ for all $i$.

\noindent (ii) The space $\mathfrak{M}_{\xi}^s(\mathbf{v})$ is non-empty if and only if $\mathbf{v}\in\Phi^+(\Gamma)$ and $p(\mathbf{v})>p(\beta_1)+p(\beta_2)+\dots$ for any nontrivial decomposition of $\mathbf{v}$ as a sum $\mathbf{v}=\beta_1+\beta_2+\dots$ with $\beta_i\in\Phi^+(\Gamma)$ and $\beta_i\centerdot\xihat=0$ for all $i$.
 \label{non-emptycond}\end{theorem}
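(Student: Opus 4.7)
The plan is to reduce (i) to (ii) and then handle (ii) by reflection functors plus a dimension count. For (i), one direction is straightforward: given a decomposition $\v=\beta_1+\beta_2+\cdots$ with $\beta_i\in\Phi^+(\Gamma)$ and $\beta_i\centerdot\xihat=0$, part (ii) applied to each $\beta_i$ produces simple representations $S_i\in\mu_{\beta_i}^{-1}(\xihat)^s$, and then $S_1\oplus S_2\oplus\cdots$ defines a closed $\G_\v$-orbit in $\mu_\v^{-1}(\xihat)$, hence a point of $\mathfrak{M}_\xihat(\v)$. Conversely, if $\mathfrak{M}_\xihat(\v)\neq\emptyset$, pick a representation $B\in\mu_\v^{-1}(\xihat)$ in a closed $\G_\v$-orbit; such a $B$ is semisimple, so $B=\bigoplus S_i$ with each $S_i$ simple of some dimension vector $\beta_i$. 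Taking the trace of the moment map equation on each simple factor shows $\beta_i\centerdot\xihat=0$, and (ii) then forces $\beta_i\in\Phi^+(\Gamma)$.

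For (ii), I would argue by induction on the height $\sum v_i$, using Kac's reflection functors at loop-free vertices. For each loop-free vertex $i$, there is a reflection functor (Crawley-Boevey--Holland) giving an isomorphism of varieties $\mathfrak{M}_\xihat^s(\v)\simeq \mathfrak{M}_{s_i\xihat}^s(s_i\v)$, where $s_i$ is the simple reflection of the Weyl group of $\Gamma$ acting on both $\Z^I$ and $\K^I$. Since $s_i$ preserves $\Phi^+(\Gamma)$ up to sign, the defect function $p(\cdot)$, the pairing $\beta\centerdot\xihat$, and the set of admissible decompositions, the inductive statement is equivariant under these reflections. By Kac's theorem, any $\v\in\Phi^+(\Gamma)$ is either Weyl-equivalent to a simple root $\e_i$ at a loop-free vertex (the real case), or Weyl-equivalent to an element of the fundamental region (the imaginary case). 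In the real case $\v=\e_i$, loop-freeness gives $p(\e_i)=0$, there is a unique representation up to iso, it is automatically simple, the condition $\beta\centerdot\xihat=0$ forces $\xi_i=0$, and there are no nontrivial decompositions into positive roots, so the statement is immediate.

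The hard case is the imaginary/fundamental region case, where one must actually construct simple representations when the strict inequality $p(\v)>\sum p(\beta_i)$ holds for every nontrivial decomposition into roots orthogonal to $\xihat$. The idea is to show that the non-simple locus in $\mu_\v^{-1}(\xihat)$ has codimension at least $1$. Stratify $\mu_\v^{-1}(\xihat)$ by the multiset of composition factor dimensions $(\beta_1,\beta_2,\ldots)$; by induction each stratum is contained in the image of an extension map whose source has dimension at most $\sum_i \dim\mathfrak{M}_\xihat^s(\beta_i)+\dim\G_\v-\sum_i\dim\G_{\beta_i}$ computed via the standard exact sequence for $\mathrm{Ext}^1$ between representations, which by the Ringel formula equals $\dim\mu_\v^{-1}(\xihat)-2\left(p(\v)-\sum p(\beta_i)\right)$. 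The hypothesis $p(\v)>\sum p(\beta_i)$ then shows each stratum has strictly smaller dimension, so their union is a proper closed subset and the open complement, which consists of simple representations, is non-empty.

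The main obstacle is precisely this last codimension estimate: one needs to control the image of the extension/gluing map and verify that its dimension is bounded by the expression coming from the Euler form, which requires either an explicit analysis of $\mathrm{Ext}^1$ in the deformed preprojective algebra $\Pi^\xihat$ or (as Crawley-Boevey does) a careful stratification of $\mu_\v^{-1}(\xihat)$ by isomorphism type of the socle filtration combined with the general bound $\dim\mathrm{Ext}^1_{\Pi^\xihat}(S_i,S_j)=\dim\mathrm{Hom}(S_i,S_j)+\dim\mathrm{Hom}(S_j,S_i)-(\beta_i,\beta_j)$. Once this is in hand, the induction closes and both (i) and (ii) follow.
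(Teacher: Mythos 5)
The paper does not prove Theorem \ref{non-emptycond}; it quotes it from Crawley-Boevey \cite[Theorem 1.2]{crawley-quiver}, so what you are really sketching is his argument, and in outline (reflections plus a dimension count over strata of $\mu_\v^{-1}(\xihat)$ by composition-factor type) you have the right strategy. However, there is a genuine gap in the reduction step: the Crawley-Boevey--Holland reflection functor at a loop-free vertex $i$ exists only when $\xi_i\neq 0$. When $\xi_i=0$ there is no isomorphism $\mathfrak{M}_{\xihat}^s(\v)\simeq\mathfrak{M}_{s_i\xihat}^s(s_i\v)$: already for $\v=\e_i$ and $\xihat=0$ the left-hand side is a (nonempty) point while $s_i\v=-\e_i$ is not even a dimension vector. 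Consequently you may not invoke Kac's theorem to move $\v$ to a coordinate vector or into the fundamental region --- only reflections at vertices with $\xi_i\neq 0$ are admissible, and for $\xihat=0$ there are none at all. The induction has to be organized relative to $\xihat$: reduce the height by an admissible reflection when one exists, and take as terminal the case where at every loop-free vertex either $\xi_i=0$ or $(\v,\e_i)\leq 0$; this terminal case is not ``the fundamental region'', and it is exactly where the stratification argument must carry the full weight. Moreover your dimension bound for a stratum is not justified as stated: the stratum consists of \emph{all} representations with the given composition factors, not only those in the image of inducing direct sums of stable pieces, so one must control iterated extensions over the deformed preprojective algebra; getting a usable estimate (of the shape $\v\centerdot\v-1+p(\v)+\sum_t p(\beta_t)$, rather than your $\v\centerdot\v-1+2\sum_t p(\beta_t)$, whose two proposed expressions do not even agree) is the technical heart of Crawley-Boevey's proof, which you acknowledge you are deferring.

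There is also a smaller gap in (i): you apply (ii) to each summand $\beta_i$, but an arbitrary positive root orthogonal to $\xihat$ need not satisfy the strict inequality in (ii), so $\mathfrak{M}_{\xihat}^s(\beta_i)$ may be empty. You need the (easy but necessary) refinement step: if $\beta_i$ fails the criterion it admits a nontrivial decomposition into positive roots orthogonal to $\xihat$ of strictly smaller height, and iterating yields a decomposition of $\v$ all of whose parts do satisfy the criterion; only then can you take the direct sum of the corresponding simples to get a closed orbit in $\mu_\v^{-1}(\xihat)$. With these repairs your outline coincides with the cited proof of Crawley-Boevey; as written, the reflection step and the codimension estimate do not go through.
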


\subsection{Nakajima's framed quiver varieties}\label{fquiver}

The construction of the so-called framed quiver varieties follows  the above one's except that we have an additional graded vector space $W$.

Let $\Gamma$ and $\v$ be as in \S\ref{uquiver}. Let $\mathbf{w}\in\N^I$ be an other dimension vector. Put $L_{\v,\w}=\bigoplus_{i\in I}{\rm Mat}_{w_i,v_i}(\K)\simeq \bigoplus_{i\in I}{\rm Hom}(\K^{v_i},\K^{w_i})$, $L_{\w,\v}=\bigoplus_{i\in I}{\rm Mat}_{v_i,w_i}(\K)$, and $$\mathbf{M}(\overline{\Gamma},\mathbf{v},\mathbf{w}):=\mathbf{M}(\overline{\Gamma},\mathbf{v})\oplus L_{\v,\w}\oplus L_{\v,\w}.$$An element of $\mathbf{M}(\overline{\Gamma},\mathbf{v},\mathbf{w})$ is then denoted by $(B,a,b)$ with $B\in \mathbf{M}(\overline{\Gamma},\mathbf{v})$, $a\in L_{\v,\w}$ and $b\in L_{\w,\v}$.  The group $\GL_\v$ acts on $\mathbf{M}(\overline{\Gamma},\mathbf{v},\mathbf{w})$ by \begin{equation}g\cdot(B,a,b)=(g\cdot B,a\cdot g^{-1},g\cdot b)\label{action}\end{equation}where $g\cdot B$ is the action defined in \S \ref{uquiver}.

Consider the moment map $$\mu_{\mathbf{v},\mathbf{w}}:\mathbf{M}(\overline{\Gamma},\mathbf{v},\mathbf{w})\rightarrow \gl_\v\simeq (\gl_\v)^*$$that maps $(B,a,b)$ to $-ba+\mu_{\mathbf{v}}(B)$. For $\xihat\in\Z^I$ we denote by $\mathfrak{M}_{\xihat}(\mathbf{v},\mathbf{w})$ the affine framed quiver variety $\mu_{\mathbf{v},\mathbf{w}}^{-1}(\xihat)/\!/\GL_\v$ as in \cite{nakajima-quiver2}. Note that unlike in \S\ref{uquiver}, we do not assume that  $\xihat\centerdot\v=0$.

\begin{definition} Let $\thetahat\in\Z^I$. A point $(B,a,b)\in \mathbf{M}(\overline{\Gamma},\mathbf{v},\mathbf{w})$ is $\thetahat$-\emph{semistable} if the two following conditions are satisfied: 

(i) For any $B$-invariant subspace $S$ of $V$ such that $S_i$ is contained in ${\rm Ker}\,(a_i)$ for all $i\in I$, the inequality 

$\thetahat\centerdot{\rm dim}\hspace{.05cm}S\leq 0$ holds.

(ii) For any $B$-invariant subspace $T$ of $V$ such that $T_i$ contains ${\rm Im}\,(b_i)$ for all $i\in I$, the inequality 

$\thetahat\centerdot{\rm dim}\hspace{.05cm}T\leq\thetahat\centerdot{\bf v}$ holds.

\noindent The point $(B,a,b)$ is called $\thetahat$-\emph{stable} if  strict inequalities hold in (i), (ii) unless $S=0$, $T=V$ respectively. \label{stab}\end{definition}

We denote respectively by $\mathbf{M}_{\thetahat}^{ss}(\overline{\Gamma},\mathbf{v},\mathbf{w})$ and $\mathbf{M}_{\thetahat}^s(\overline{\Gamma},\mathbf{v},\mathbf{w})$ the set of $\thetahat$-semistable and $\thetahat$-stable points. Then $\mathbf{M}_{\thetahat}^s(\overline{\Gamma},\mathbf{v},\mathbf{w})$ is an open subset of $\mathbf{M}_{\thetahat}^{ss}(\overline{\Gamma},\mathbf{v},\mathbf{w})$ on which the group $\GL_\v$ acts set-theoritically freely.

\begin{remark} (i) If $\theta_i\geq 0$ for all $i\in I$, then the condition (ii) of Definition \ref{stab} is always satisfied and so a representation is $\thetahat$-semistable if and only if the condition (i) is satisfied.

\noindent (ii) Let $\thetahat,\thetahat'\in\N^I$ and let $J_\thetahat:=\{i\in I\,|\, \theta_i=0\}$ and $J_{\thetahat'}:=\{i\in I\,|\, \theta_i'=0\}$. If $J_\thetahat\subset J_{\thetahat'}$, then $\mathbf{M}_{\thetahat}^{ss}(\overline{\Gamma},\mathbf{v},\mathbf{w})\subset\mathbf{M}_{\thetahat'}^{ss}(\overline{\Gamma},\mathbf{v},\mathbf{w})$.
\label{zeros}\end{remark}

Let $\chi:\GL_\v\rightarrow\K^{\times}$, $(g_i)\mapsto \prod_i{\rm det}\,(g_i)^{-\theta_i}$ be the character associated to $\thetahat$. Then a representation in $\mathbf{M}(\overline{\Gamma},\mathbf{v},\mathbf{w})$ is $\chi$-semistable if and only if it is $\thetahat$-semistable. The framed quiver variety $\mathfrak{M}_{\xihat,\thetahat}(\mathbf{v},\mathbf{w})$ is defined as $$\mathfrak{M}_{\xihat,\thetahat}(\mathbf{v},\mathbf{w}):=\mu_{\v,\w}^{-1}(\xihat)/\!/_\thetahat \GL_\v.$$Define also $\mathfrak{M}_{\xihat,\thetahat}^s(\mathbf{v},\mathbf{w})$ as the image of $\mu_{\v,\w}^{-1}(\xihat)^s$ in $\mathfrak{M}_{\xihat,\thetahat}(\mathbf{v},\mathbf{w})$. If  not empty, the variety $\mathfrak{M}_{\xihat,\thetahat}^s(\mathbf{v},\mathbf{w})$ is a nonsingular open  subset of $\mathfrak{M}_{\xihat,\thetahat}(\mathbf{v},\mathbf{w})$.

Note that $\mathfrak{M}_{\xihat,0}(\mathbf{v},\mathbf{w})$ is the affine framed quiver variety $\mathfrak{M}_{\xihat}(\mathbf{v},\mathbf{w})$ as all points of $\mathbf{M}(\overline{\Gamma},\mathbf{v},\mathbf{w})$ are $0$-semistable. We thus have a natural projective morphism $\pi:\mathfrak{M}_{\xihat,\thetahat}(\mathbf{v},\mathbf{w})\rightarrow\mathfrak{M}_{\xihat}(\mathbf{v},\mathbf{w})$. 

It was observed by Crawley-Boevey \cite[Introduction]{crawley-quiver} that any framed quiver variety can be in fact realized as an ``unframed'' quiver variety of  \S\ref{uquiver}. This is done as follows.

From $\Gamma$ and  $W$ we construct a new quiver $\Gamma^*$ by adding to $\Gamma$ a new vertex $\infty$ and and for each vertex $i$ of $\Gamma$, we add $w_i$ arrows starting at $\infty$ toward $i$. Put $I^*=I\cup\{\infty\}$. We then define $\big(\bf v^*,\thetahat^*\big)\in\N^{I^*}\times\Z^{I^*}$ as follows. We put 

(i) $v_i^*=v_i$ if $i\in I$ and $v_\infty^*=1$,

(ii) $\theta^*_i=\theta_i$ if $i\in I$ and $\theta^*_\infty=-\thetahat\cdot\mathbf{v}$.

We have a natural group embedding $\GL_\v\hookrightarrow \GL_{\v^*}$ that sends an element $g=(g_i)_{i\in I}$ to the element $g^*=(g_i^*)_{i\in I^*}$ with $g_i^*:=g_i$ if $i\in I$ and $g_\infty^*:=1$. This induces an isomorphism $\GL_\v\simeq {\rm G}_{\v^*}=\GL_{\v^*}/\K^{\times}$. We have a $\GL_\v$-equivariant isomorphism $\mathbf{M}(\overline{\Gamma}{^*},\mathbf{v}^*)\rightarrow \mathbf{M}(\overline{\Gamma},\mathbf{v},\mathbf{w})$. Under this isomorphism, the $\thetahat$-semistability (resp. stability) of Definition \ref{stab} coincides with the $\thetahat^*$-semistability (resp. stability) in \S\ref{uquiver}. 

In the context of framed quiver, we say that $\thetahat$ is \emph{generic} if $\thetahat^*$ is generic with respect to $\bf v^*$ in the sense of Definition \ref{gen6}. In this case we have 
$$
\mathbf{M}_{\thetahat}^{ss}(\overline{\Gamma},\mathbf{v},\mathbf{w})=\mathbf{M}_{\thetahat}^s(\overline{\Gamma},\mathbf{v},\mathbf{w})
$$
We have (see Nakajima \cite{ALE}):

\begin{proposition} Assume that $\thetahat$ is generic and that $\mathfrak{M}_{\xihat}^s(\mathbf{v},\mathbf{w})\neq\emptyset$. Then  $\mathfrak{M}_{\xihat,\thetahat}(\mathbf{v},\mathbf{w})=\mathfrak{M}_{\xihat,\thetahat}^s(\mathbf{v},\mathbf{w})$ and the map $\pi:\mathfrak{M}_{\xihat,\thetahat}(\mathbf{v},\mathbf{w})\rightarrow\mathfrak{M}_{\xihat}(\mathbf{v},\mathbf{w})$ is a resolution of singularities.
\label{resolquiver2}\end{proposition}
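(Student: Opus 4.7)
The plan is to reduce the statement to the unframed analogue already established in Proposition~\ref{resolquiver}, using Crawley-Boevey's identification of framed quiver varieties with unframed ones attached to the enlarged quiver $\Gamma^*$ together with the dimension vector $\v^*$ and parameter $\thetahat^*$ introduced just above the proposition.

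First I would make the identifications precise. Choose $\xihat^*\in\K^{I^*}$ to be the unique extension of $\xihat$ satisfying $\xihat^*\cdot\v^*=0$ (so $\xi^*_\infty=-\xihat\cdot\v$). The $\GL_\v$-equivariant isomorphism $\mathbf{M}(\overline{\Gamma^*},\v^*)\simeq\mathbf{M}(\overline{\Gamma},\v,\w)$ intertwines the moment maps $\mu_{\v^*}$ and $\mu_{\v,\w}$ up to the constant $\xihat^*_\infty$, and hence restricts to an isomorphism $\mu_{\v^*}^{-1}(\xihat^*)\simeq\mu_{\v,\w}^{-1}(\xihat)$ which is $\GL_\v\simeq\G_{\v^*}$-equivariant. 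As recalled in the text, under this isomorphism $\thetahat$-(semi)stability of Definition~\ref{stab} coincides with $\thetahat^*$-(semi)stability in the sense of \S\ref{uquiver}. Passing to GIT quotients therefore yields canonical identifications
\[
\mathfrak{M}_{\xihat,\thetahat}(\v,\w)\simeq\mathfrak{M}_{\xihat^*,\thetahat^*}(\v^*),\qquad \mathfrak{M}_{\xihat}(\v,\w)\simeq\mathfrak{M}_{\xihat^*}(\v^*),
\]
compatible with the projective morphisms $\pi$ on both sides, and identifications of the open loci of stable points.

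Next I would invoke genericity. By definition, $\thetahat$ being generic for the framed setting means $\thetahat^*$ is generic with respect to $\v^*$ in the sense of Definition~\ref{gen6}. For such $\thetahat^*$, the discussion following Definition~\ref{gen6} tells us that $\thetahat^*$-semistability coincides with $\thetahat^*$-stability, so
\[
\mathfrak{M}_{\xihat^*,\thetahat^*}(\v^*)=\mathfrak{M}_{\xihat^*,\thetahat^*}^s(\v^*),
\]
and transporting across the identification gives the first claim $\mathfrak{M}_{\xihat,\thetahat}(\v,\w)=\mathfrak{M}_{\xihat,\thetahat}^s(\v,\w)$. Combined with Theorem~\ref{irrquiver} applied to $(\Gamma^*,\v^*,\xihat^*,\thetahat^*)$, this yields nonsingularity.

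It remains to argue that $\pi$ is a resolution, i.e.\ that the target is irreducible and $\pi$ is birational. Here I would apply Proposition~\ref{resolquiver} directly to the unframed data $(\Gamma^*,\v^*)$: one only has to verify that $\mathfrak{M}_{\xihat^*}^s(\v^*)\neq\emptyset$. This is where the hypothesis $\mathfrak{M}_{\xihat}^s(\v,\w)\neq\emptyset$ comes in, and it is the main point of the proof: a framed-stable (with respect to the $0$-parameter on the framed side) representation corresponds, via the Crawley-Boevey identification, to a simple representation of the preprojective algebra of $\Gamma^*$ at the vertex $\v^*$, because the extra vertex $\infty$ has dimension one and any subrepresentation either contains $\infty$ or is killed by all arrows out of $\infty$. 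Granting this correspondence, Theorem~\ref{irrquiver} gives irreducibility of $\mathfrak{M}_{\xihat^*,\thetahat^*}(\v^*)$, and then Proposition~\ref{resolquiver} upgrades $\pi$ to a resolution of singularities. Transporting back via the canonical identifications completes the proof.

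The main obstacle is the last verification: showing rigorously that framed-simplicity in the $(\v,\w)$ setting matches ordinary simplicity of the associated $\v^*$-dimensional representation of $\overline{\Gamma^*}$, and consequently that the non-emptiness hypothesis on $\mathfrak{M}_{\xihat}^s(\v,\w)$ produces simple representations on the unframed side to which Theorem~\ref{irrquiver} and Proposition~\ref{resolquiver} can be applied. Everything else is a direct transport along the Crawley-Boevey identification.
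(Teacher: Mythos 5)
Your argument is correct, and it is worth noting that the paper itself offers no proof of this proposition: it is quoted from Nakajima \cite{ALE}, immediately after the paper sets up precisely the Crawley--Boevey identification you exploit. Your reduction to Proposition \ref{resolquiver} for $(\Gamma^*,\v^*,\xihat^*,\thetahat^*)$ is the natural route that setup suggests, and all the ingredients you need are already in the text: $\thetahat^*\centerdot\v^*=0$ and $\xihat^*\centerdot\v^*=0$ hold by construction, $\v^*$ is automatically indivisible (so Definition \ref{gen6} makes sense), and the statement ``under this isomorphism the $\thetahat$-(semi)stability of Definition \ref{stab} coincides with the $\thetahat^*$-(semi)stability of \S\ref{uquiver}'' applies verbatim with $\thetahat=0$, in which case $\thetahat^*=0$ and $0$-stability on the unframed side is simplicity; this settles what you single out as the main obstacle, namely that $\mathfrak{M}^s_{\xihat}(\v,\w)\neq\emptyset$ forces $\mathfrak{M}^s_{\xihat^*}(\v^*)\neq\emptyset$, so that Theorem \ref{irrquiver} and Proposition \ref{resolquiver} apply. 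If you prefer the direct check, your sketch is right but the phrasing should be fixed: a subrepresentation $Y$ of the $\v^*$-dimensional representation has either $Y_\infty=0$, in which case each $Y_i$ lies in $\ker(a_i)$ (the arrows \emph{into} $\infty$ in the doubled quiver), giving a subspace of type $S$ in Definition \ref{stab}(i), or $Y_\infty=\K$, in which case $Y_i\supset{\rm Im}(b_i)$, giving a subspace of type $T$ in (ii). One more point deserves an explicit sentence rather than the phrase ``up to the constant $\xi^*_\infty$'': the moment map equation at the vertex $\infty$ is automatically satisfied once the equations at the vertices of $I$ hold, because $\mu_{\v^*}$ takes values in ${\rm M}(\v^*)^0$ (trace zero) and $v^*_\infty=1$, so the scalar component at $\infty$ is forced to equal $-\xihat\centerdot\v=\xi^*_\infty$; this is what makes $\mu_{\v^*}^{-1}(\xihat^*)\simeq\mu_{\v,\w}^{-1}(\xihat)$ an honest $\GL_\v\simeq\G_{\v^*}$-equivariant identification, compatible with the characters defining the GIT quotients and hence with the two maps $\pi$.
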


\begin{remark} If $\theta_i>0$ for all $i$, then $\thetahat^*$ is always generic with respect to $\v^*$.
\label{remgeneric} \end{remark}

\subsection{Quiver varieties of type $A$}\label{type}

We review known results by Kraft-Procesi \cite{KP}, Nakajima \cite{nakajima-quiver2} \cite{nakajima-quiver3}, Crawley-Boevey  \cite{crawley-mat} \cite{crawleyind} and Shmelkin \cite{shmelkin} and give a slight generalization of some of them.

\subsubsection{Partitions and types}\label{partype}

We denote by $\calP$ the set of all partitions including the unique
partition $0$ of $0$, by $\calP^*$ the set of non-zero partitions
and by $\calP_n$ the set of partitions of $n$. Partitions $\lambda$
are denoted by $\lambda=(\lambda_1,\lambda_2,\dots)$, where
$\lambda_1\geq\lambda_2\geq\cdots\geq 0$, or by $(1^{n_1},2^{n_2},\dots)$ where $n_i$ denotes the number of parts of $\lambda_i$ equal to $i$. We put
$|\lambda|:=\sum_i\lambda_i$ for the size of $\lambda$. The length of
$\lambda$ is the maximum $i$ with $\lambda_i>0$ and we denote by $\lambda'$ the dual partition of $\lambda$. For two partitions $\lambda=(\lambda_1,\dots,\lambda_r)$ and $\mu=(\mu_1,\dots,\mu_s)$ we define the partition $\lambda+\mu$ as $(\lambda_1+\mu_1,\lambda_2+\mu_2,\dots)$, and for $\lambda=(1^{n_1},2^{n_2},\dots)$, $\mu=(1^{m_1},2^{m_2},\dots)$, we define the union $\lambda\cup\mu$ as $(1^{n_1+m_1},2^{n_2+m_2},\dots)$. For a partition $\lambda=(\lambda_1,\dots,\lambda_s)$ and a positive integer $d$, we denote by $d\cdot\lambda$ the partition $(d\lambda_1,\dots,d\lambda_s)$. Recall that $(\lambda+\mu)'=\lambda'\cup\mu'$.

Given a total ordering $\leq_t$ on $\calP$, we denote by $\tilde{\bf T}^t$ the set of non-increasing sequences $\tilde{\omega}=\omega^1\omega^2\cdots\omega^r$ with $\omega^i\in\calP$ and let $\tilde{\bf T}_n^t$ be the subset of sequences $\tilde{\omega}$ such that $\sum_i|\omega^i|=n$. We will see in \S \ref{adjoint} that the set $\tilde{\bf T}_n^t$ parameterizes the types of the adjoint orbits in $\gl_n(\K)$. Although the choice of a particular total ordering will be sometimes convenient it will not be essential for the results of this paper. We will actually often use the notation $\tilde{\bf T}$ and $\tilde{\bf T}_n$ instead of $\tilde{\bf T}^t$ and $\tilde{\bf T}_n^t$ when the reference to the ordering $\leq_t$ is not necessary. 

We extend the ordering $\leq_t$ to a  total ordering on the set $\{(d,\lambda)\,|\, d\in\N^*, \lambda\in\calP^*\}$ which we continue to denote by $\leq_t$ as follows. If $\mu\neq\lambda$, we say that $(d,\mu)\leq_t(d',\lambda)$ if  $\mu\leq_t\lambda$, and we say that $(d,\lambda)\leq_t(d',\lambda)$ if $d'\leq d$. We denote by $\mathbf{T}^t$ the set of all non-increasing sequences $\omega=(d_1,\lambda^1)(d_2,\lambda^2)\cdots(d_r,\lambda^r)$ and by ${\bf T}_n^t$ the subset of ${\bf T}^t$ of these sequences which satisfy $|\omega|:=\sum_id_i|\lambda^i|=n$. The first coordinate of a pair $(d,\lambda)$ is called the \emph{degree}. We will see in \S \ref{gen} that ${\bf T}_n^t$ parametrizes both the types of the adjoint orbits in $\gl_n(\F_q)$ and the types of the irreducible characters of $\GL_n(\F_q)$. As for $\tilde{\bf T}$ and $\tilde{\bf T}_n$ we will often use the notation ${\bf T}$ and ${\bf T}_n$ instead of ${\bf T}^t$ and ${\bf T}_n^t$.

Since the terminology ``type'' has two meanings in this paper, we use the letters $\{\omega,\tau,\dots\}$ to denote the  elements of ${\bf T}$ and the symbols $\{\tomega,\ttau,\dots\}$ for the elements of $\tilde{\bf T}$.

Given a type $\omega=(d_1,\omega^1)\cdots (d_r,\omega^r)\in {\bf T}^t$, we assign the type 

$$\tilde{\omega}=\overbrace{\omega^1\cdots\omega^1}^{d_1}\overbrace{\omega^2\cdots\omega^2}^{d_2}\cdots\overbrace{\omega^r\cdots\omega^r}^{d_r}$$of $\tilde{\bf T}^t$. We thus have  a surjective map $\mathfrak{H}:{\bf T}^t\rightarrow\tilde{\bf T}^t$, $\omega\mapsto\tilde{\omega}$. 

Let $$\tomega=\overbrace{\omega^1\cdots\omega^1}^{a_1}\overbrace{\omega^2\cdots\omega^2}^{a_2}\cdots\overbrace{\omega^r\cdots\omega^r}^{a_r}\in\tilde{{\bf T}}^t$$ with $\omega^i\neq\omega^j$ if $i\neq j$ and put $$W_{\tilde{\omega}}:=\prod_{i=1}^rS_{a_i}.$$Note that the elements in the fiber $\mathfrak{H}^{-1}(\tomega)$ are parametrized by $\calP_{a_1}\times\cdots\times\calP_{a_r}$ and so by the conjugacy classes of $W_{\tomega}$.

\subsubsection{Zariski closure of adjoint orbits as quiver varieties}\label{adjoint}

Let $A\in\gl_n(\K)$ with semisimple part $A_s$ and nilpotent part $A_n$. We assume for simplicity that $A_s$ is a  diagonal matrix so that its centralizer $L$ in $\GL_n$ is exactly a product of $\GL_{m_i}$'s. We have $A=A_s+A_n$ with
$[A_s,A_n]=0$ where $[x,y]=xy-yx$. We put
$C_{\gl_n}(A):=\{X\in\gl_n|\,[A,X]=0\}={\rm Lie}(L)$. Let $C$ be the $L$-orbit of $A_n$. Then  the $\GL_n$-conjugacy class of the pair $(L,C)$ is called the \emph{type} of the $\GL_n$-orbit $\calO$ of $A$. 

Fix a total ordering $\leq_t$ on $\calP$. The types of the adjoint orbits of $\gl_n$ are parameterized by the set $\tilde{\bf T}_n^t$ as follows.

Let $m_1,\dots,m_r$ be the multiplicities of the $r$ distinct
eigenvalues $\alpha_1,\dots,\alpha_r$ of $A$. We may assume that $A_s$ is the diagonal matrix $$\left(\overbrace{\alpha_1,\dots,\alpha_1}^{m_1},\overbrace{\alpha_2,\dots\alpha_2}^{m_2},\dots,\overbrace{\alpha_r,\dots,\alpha_r}^{m_r}\right).$$The Jordan form of the element $A_n\in
C_{\gl_n}(\sigma)=\gl_{m_1}\oplus\gl_{m_2}\oplus\cdots\oplus\gl_{m_r}$ defines
a unique partition $\omega^i$ of $m_i$ for each
$i\in\{1,2,\dots,r\}$. Re-indexing if necessary we may assume that
$\omega^r\leq_t\omega^{r-1}\leq_t\cdots\leq_t\omega^1$ in which case we have
$\tomega=\omega^1\cdots\omega^r\in\tilde{\bf T}_n^t$. Conversely, any element of $\tilde{\bf T}_n^t$ arises as the type of some adjoint orbit of $\gl_n$. Types of semisimple orbits are of the form
$(1^{n_1})\cdots(1^{n_r})$ and types of nilpotent orbits are just partitions of $n$.

\begin{lemma}The dimension of $\calO$ is \begin{equation}n^2-\sum_{j=1}^r\langle
\omega^j,\omega^j\rangle\label{dimorb}\end{equation} where for a partition
$\lambda=(\lambda_1,\lambda_2,\dots)$, we put $\langle
\lambda,\lambda\rangle=2n(\lambda)+|\lambda|$ with
$n(\lambda)=\sum_{i\geq 1}(i-1)\lambda_i$.\label{dimO}\end{lemma}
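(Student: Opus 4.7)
The plan is to reduce the orbit dimension formula to the classical computation of the dimension of the centralizer of a nilpotent matrix in $\gl_m$ with a prescribed Jordan form.

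First I would use the standard identity $\dim\calO = n^2 - \dim C_{\gl_n}(A)$, which follows from $\calO\simeq \GL_n/C_{\GL_n}(A)$ and the fact that $C_{\gl_n}(A) = \mathrm{Lie}(C_{\GL_n}(A))$. So the task is to prove
\begin{equation*}
\dim C_{\gl_n}(A) = \sum_{j=1}^r \langle \omega^j,\omega^j\rangle.
\end{equation*}

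Next I would exploit the Jordan decomposition $A=A_s+A_n$ with $[A_s,A_n]=0$ to write $C_{\gl_n}(A) = C_{C_{\gl_n}(A_s)}(A_n)$. Since $A_s$ is diagonal with eigenvalues $\alpha_1,\dots,\alpha_r$ of multiplicities $m_1,\dots,m_r$, we have the block decomposition
\begin{equation*}
C_{\gl_n}(A_s) = \gl_{m_1}\oplus\cdots\oplus\gl_{m_r},
\end{equation*}
and $A_n$ respects this decomposition, acting on the $j$-th block as a nilpotent element $A_n^j\in\gl_{m_j}$ with Jordan type $\omega^j$. Consequently
\begin{equation*}
\dim C_{\gl_n}(A) = \sum_{j=1}^r \dim C_{\gl_{m_j}}(A_n^j).
\end{equation*}

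Finally the proof reduces to the classical formula $\dim C_{\gl_m}(N) = |\lambda| + 2n(\lambda)$ for a nilpotent $N\in\gl_m$ of Jordan type $\lambda$. The cleanest way to verify this is to observe that $C_{\gl_m}(N) = \mathrm{End}_{\K[t]}(V)$, where $V=\K^m$ is viewed as a $\K[t]$-module via $t\cdot v = Nv$, so $V\simeq\bigoplus_i \K[t]/(t^{\lambda_i})$, and a direct computation of $\mathrm{Hom}_{\K[t]}(\K[t]/(t^a),\K[t]/(t^b))$, which has dimension $\min(a,b)$, yields
\begin{equation*}
\dim C_{\gl_m}(N) = \sum_{i,j}\min(\lambda_i,\lambda_j) = \sum_i (\lambda'_i)^2 = |\lambda|+2n(\lambda) = \langle\lambda,\lambda\rangle.
\end{equation*}
Summing over $j=1,\dots,r$ gives the desired expression for $\dim C_{\gl_n}(A)$ and hence for $\dim\calO$. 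No step is a serious obstacle; the main content is really the classical centralizer-dimension identity, which can be quoted from any reference on nilpotent orbits (e.g.\ Collingwood--McGovern).
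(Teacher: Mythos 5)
Your proof is correct: the reduction $\dim\calO=n^2-\dim C_{\gl_n}(A)$, the block decomposition via $C_{\gl_n}(A)=C_{C_{\gl_n}(A_s)}(A_n)$ (valid since $A_s,A_n$ are polynomials in $A$), and the classical identity $\dim C_{\gl_m}(N)=\sum_{i,j}\min(\lambda_i,\lambda_j)=\sum_i(\lambda_i')^2=2n(\lambda)+|\lambda|$ together give exactly the stated formula. The paper offers no proof of this lemma, treating it as the standard fact from the theory of adjoint orbits, and your argument is precisely that standard one.
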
\vspace{.2cm}

We now explain how to construct a quiver $\Gamma_\calO$ and a pair $(\xihat_\calO,\v_\calO)$ from $\calO$ such that $\mathfrak{M}_{\xihat_\calO}(\v_\calO,\w)\simeq\overline{\calO}$. While the quiver $\Gamma_\calO$ and $\w$ will be independent from the choice of $\leq_t$, the parameters $\xihat_\calO,\v_\calO$ will depend on the choice of $\leq_t$.

\noindent We draw the Young diagrams respectively of $\omega^1,\dots,\omega^r$ from the left to the right and we label the columns from the left to the right (with the convention that partitions are represented by the rows of the Young diagrams).  Let $d$ be the total number of columns and let $n_i$ be the length of the $i$-th-column with respect to this labeling. We define the dimension vector $\v_\calO=(v_1,\dots,v_{d-1})$ by $v_1:=n-n_1$ and $v_i:=v_{i-1}-n_i$ for $i>1$ and the parameter $\zetahat_\calO=(\zeta_1,\dots,\zeta_d)$ as follows. If the $i$-th column belongs to the Young diagram of $\omega^j$ then we put $\zeta_i=\alpha_j$.

We then have

$$(A-\zeta_1{\rm Id})\cdots(A-\zeta_d{\rm Id})=0.$$

\begin{example} Take the lexicographic ordering  for $\leq_t$ and assume that $\mathcal{O}$ is of type $(2,2)(1,1)$ with eigenvalues $\alpha_1$ and $\alpha_2$ respectively of multiplicity $4$ and $2$. The corrresponding Young diagrams are $$\overset{1\hspace{.3cm}2}{\yng(2,2)} \quad \overset{3}{\yng(1,1)}
$$Then the vector dimension is $\v_\calO=(4,2)$ and $\zetahat_\calO=(\alpha_1,\alpha_1,\alpha_2)$.

\end{example}

We have\begin{lemma} For $i>0$, the integer $v_i$ is the rank of the partial product $$(A-\zeta_1{\rm Id})\cdots(A-\zeta_i{\rm Id}).$$\label{partrk}\end{lemma}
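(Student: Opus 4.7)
The plan is a direct computation using the primary decomposition of $A$. First I would write $\K^n=\bigoplus_{j=1}^r V_j$, where $V_j$ is the generalised $\alpha_j$-eigenspace of $A$; each $V_j$ is $A$-stable of dimension $m_j=|\omega^j|$, and the restriction $N_j:=(A-\alpha_j\mathrm{Id})|_{V_j}$ is nilpotent of Jordan type $\omega^j$. For any scalar $\zeta$, the operator $(A-\zeta\mathrm{Id})|_{V_j}$ equals $N_j$ when $\zeta=\alpha_j$ and is invertible otherwise (its only eigenvalue on $V_j$ being $\alpha_j-\zeta\neq 0$). Consequently, writing $c_{i,j}$ for the number of indices $k\in\{1,\dots,i\}$ with $\zeta_k=\alpha_j$, the restriction of the partial product $(A-\zeta_1\mathrm{Id})\cdots(A-\zeta_i\mathrm{Id})$ to $V_j$ has the same rank as $N_j^{c_{i,j}}$, and in particular the kernels on different $V_j$'s contribute independently to the total nullity.

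Next I would invoke the standard formula for a nilpotent with Jordan type $\omega^j$: the nullity of $N_j^c$ equals $\sum_{s=1}^{c}(\omega^j)'_s$, i.e.\ the number of boxes in the first $c$ columns of the Young diagram of $\omega^j$ (with the convention that columns of length zero add nothing once $c$ exceeds $\omega^j_1$). Summing over $j$ yields
\[
\dim\ker\bigl[(A-\zeta_1\mathrm{Id})\cdots(A-\zeta_i\mathrm{Id})\bigr]=\sum_{j=1}^{r}\sum_{s=1}^{c_{i,j}}(\omega^j)'_s.
\]

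Finally I would match this with $n_1+\cdots+n_i$ using the drawing convention. The diagrams of $\omega^1,\dots,\omega^r$ are placed left to right, each $\omega^j$ occupying $\omega^j_1$ consecutive columns, and by definition $\zeta_k=\alpha_j$ precisely when the $k$-th global column lies in the diagram of $\omega^j$. Write $i=\omega^1_1+\cdots+\omega^{j-1}_1+k$ with $1\le k\le \omega^j_1$; then $c_{i,j'}=\omega^{j'}_1$ for $j'<j$, $c_{i,j}=k$, and $c_{i,j'}=0$ for $j'>j$. The contribution from $j'<j$ collapses to $\sum_{s=1}^{\omega^{j'}_1}(\omega^{j'})'_s=|\omega^{j'}|=m_{j'}$, while the length of the $k$-th column inside $\omega^j$ is exactly $(\omega^j)'_k=n_{\omega^1_1+\cdots+\omega^{j-1}_1+k}$. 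Adding these up gives $n_1+\cdots+n_i$, and hence
\[
\mathrm{rank}\bigl[(A-\zeta_1\mathrm{Id})\cdots(A-\zeta_i\mathrm{Id})\bigr]=n-(n_1+\cdots+n_i)=v_i.
\]

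There is no real obstacle here: the only mildly delicate point is the bookkeeping that matches the column-ordering used to define $(\v_\calO,\zetahat_\calO)$ with the ``first $c$ columns'' interpretation of the nullity of $N_j^c$, and this is handled by the choice of indexing of the columns from left to right.
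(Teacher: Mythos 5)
Your argument is correct: the primary decomposition of $A$, the observation that each factor acts on the generalised $\alpha_j$-eigenspace either as the nilpotent $N_j$ or invertibly, the formula $\dim\ker N_j^c=\sum_{s\le c}(\omega^j)'_s$, and the column bookkeeping together give exactly $\mathrm{rank}=n-(n_1+\cdots+n_i)=v_i$. The paper states this lemma without proof, and what you have written is precisely the standard verification it leaves implicit, so there is nothing further to reconcile.
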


The following result is due to Crawley-Boevey \cite{crawleyind} (in characteristic zero with $\calO$ nilpotent  it is due to Kraft and Procesi \cite{KP}).

\begin{theorem} Let $B\in\gl_n$. The following assertions are equivalent.

\noindent (1) $B\in\overline{\calO}$.

\noindent (2) There is a flag of subspaces $\K^n=V_0\supset V_1\supset V_2\supset\cdots\supset V_{d-1}\supset V_d=0$ with ${\rm dim}\, V_i=v_i$ and such that $(B-\zeta_i{\rm Id})(V_{i-1})\subset V_i$ for all $1\leq i\leq d$.

\noindent (3) There are vector spaces $V_j$ and linear maps $a$, $b$, $\phi_j$, $\phi_j^*$,
\[
V=V_0 \rightoverleft^{b}_{a} V_{1} \rightoverleft^{\phi_{1}^*}_{\phi_{1}} V_{2}
\rightoverleft^{\phi_{2}^*}_{\phi_{2}} \dots
\rightoverleft^{\phi_{d-1}^*}_{\phi_{d-1}} V_{d}=0
\]
where $V_j$ has dimension $v_j$, and satisfying
\[
\begin{split}
B &=ab + \zeta_{1} {\rm Id},
\\
\phi_{j}\phi_{j}^*-\phi_{j-1}^* \phi_{j-1}
&= (\zeta_j-\zeta_{j+1}) {\rm Id},
\qquad (1\le j < d).
\end{split}
\] 
where $\phi_0^*=b$ and $\phi_0=a$.

\label{CB5}\end{theorem}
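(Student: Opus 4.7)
The plan is to use condition (2) as a middle ground and prove (2)$\Leftrightarrow$(3) by direct linear algebra, together with (1)$\Leftrightarrow$(2) via Lemma \ref{partrk} combined with the Kraft--Procesi / Crawley-Boevey characterization of orbit closures in $\gl_n$.

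For (2)$\Rightarrow$(3): given the flag $V = V_0 \supset V_1 \supset \cdots \supset V_d = 0$ with $(B - \zeta_i\,{\rm Id})(V_{i-1}) \subset V_i$, I would take $\phi_{j-1} : V_j \hookrightarrow V_{j-1}$ to be the canonical inclusion and $\phi_{j-1}^* : V_{j-1} \to V_j$ to be the corestriction of $B - \zeta_j\,{\rm Id}$. With $a := \phi_0$ and $b := \phi_0^*$ one has $ab = B - \zeta_1\,{\rm Id}$ on $V$, and the moment-map relation $\phi_j \phi_j^* - \phi_{j-1}^*\phi_{j-1} = (\zeta_j - \zeta_{j+1}){\rm Id}_{V_j}$ collapses, after tracking the target through the inclusion $V_{j+1} \hookrightarrow V_j$, to the tautology $(B-\zeta_{j+1})|_{V_j} - (B-\zeta_j)|_{V_j} = (\zeta_j - \zeta_{j+1}){\rm Id}_{V_j}$. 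For the converse (3)$\Rightarrow$(2) the key computation is an inductive telescoping, substituting $\phi_j\phi_j^* = \phi_{j-1}^*\phi_{j-1} + (\zeta_j - \zeta_{j+1}){\rm Id}$ repeatedly, to obtain the factorization
$$
(B - \zeta_1\,{\rm Id})\cdots(B - \zeta_i\,{\rm Id}) = a\,\phi_1\cdots\phi_{i-1}\,\phi_{i-1}^*\cdots\phi_1^*\,b.
$$
At $i = d$ this forces $(B - \zeta_1\,{\rm Id})\cdots(B - \zeta_d\,{\rm Id}) = 0$ since $V_d = 0$, and in general it yields $\rank\bigl((B - \zeta_1\,{\rm Id})\cdots(B - \zeta_i\,{\rm Id})\bigr) \leq v_i$. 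Setting $V_i := \operatorname{Im}(a\phi_1\cdots\phi_{i-1} : V_i \to V)$ then produces a decreasing filtration with $(B - \zeta_i\,{\rm Id})(V_{i-1}) \subset V_i$ and $\dim V_i \leq v_i$, and the spaces can be enlarged inductively to have exact dimension $v_i$ while preserving invariance, using that the rank bound matches the target dimension.

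For (1)$\Rightarrow$(2): on the open dense orbit $\calO$, Lemma \ref{partrk} gives that $V_i^B := \bigl((B-\zeta_1\,{\rm Id})\cdots(B-\zeta_i\,{\rm Id})\bigr)(\K^n)$ has dimension exactly $v_i$; commuting the factors yields both $V_i^B \subset V_{i-1}^B$ and $(B-\zeta_i\,{\rm Id})(V_{i-1}^B) = V_i^B$. To propagate to $\overline{\calO}$ I would invoke the standard properness trick: the incidence variety
$$
Z = \bigl\{(B, V_1, \dots, V_{d-1}) \,\big|\, \dim V_i = v_i,\ V_i \subset V_{i-1},\ (B - \zeta_i\,{\rm Id})(V_{i-1}) \subset V_i\bigr\}
$$
inside $\gl_n \times \prod_i \mathrm{Gr}(v_i, n)$ is projective over $\gl_n$ via the first projection, so its image is Zariski closed, contains $\calO$, and hence contains $\overline{\calO}$. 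The converse (2)$\Rightarrow$(1) reduces, by upper semi-continuity of rank, to the theorem of Kraft--Procesi in the nilpotent case and its extension by Crawley-Boevey \cite{crawleyind}: the orbit closure $\overline{\calO}$ is cut out inside $\gl_n$ precisely by the rank inequalities $\rank\bigl(\prod_{j \leq i} (B - \zeta_j\,{\rm Id})\bigr) \leq v_i$ together with $\prod_{j \leq d}(B - \zeta_j\,{\rm Id}) = 0$, and the flag of (2) supplies exactly these inequalities.

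The main obstacle is the implication (2)$\Rightarrow$(1): when several columns of the Young diagrams of $\omega^1, \dots, \omega^r$ share the same eigenvalue, rank inequalities for a single polynomial in $B$ are too coarse to separate nearby orbits, and one must exploit the specific interleaved ordering of eigenvalues and multiplicities encoded in the sequence $\zetahat_\calO$ in order to reconstruct the Jordan type of $B$ up to degeneration. This is precisely the substance of the Kraft--Procesi / Crawley-Boevey result, and it is what makes Theorem \ref{CB5} a nontrivial structural statement rather than a purely formal linear-algebra identity.
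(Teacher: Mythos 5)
The paper does not actually prove Theorem \ref{CB5}: it quotes the result from Crawley-Boevey \cite{crawleyind} (Kraft--Procesi \cite{KP} in the nilpotent case), the only argument supplied being Remark \ref{remCB}, which records precisely your construction for (2)$\Rightarrow$(3) (inclusions for $\phi_i$, restriction of $B-\zeta_{i+1}{\rm Id}$ for $\phi_i^*$). Your proposal is consistent with this, in that the essential content --- that $\overline{\calO}$ is cut out by the rank conditions $\rank\bigl((B-\zeta_1{\rm Id})\cdots(B-\zeta_i{\rm Id})\bigr)\leq v_i$ --- is in your write-up too deferred to the same references, as you yourself acknowledge; what you add (the telescoping identity $(B-\zeta_1{\rm Id})\cdots(B-\zeta_i{\rm Id})=a\phi_1\cdots\phi_{i-1}\phi_{i-1}^*\cdots\phi_1^*b$, the observation via Lemma \ref{partrk} that on $\calO$ the images of the partial products form a flag of the exact dimensions $v_i$ with $(B-\zeta_i{\rm Id})(V_{i-1}^B)=V_i^B$, and the properness of the incidence variety $Z$ over $\gl_n$ to pass from $\calO$ to $\overline{\calO}$) is correct and standard. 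So your route is essentially the paper's: a reduction to Crawley-Boevey's rank-theoretic description rather than an independent proof of the core.

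One caveat: your direct argument for (3)$\Rightarrow$(2), where you ``enlarge'' the images ${\rm Im}(a\phi_1\cdots\phi_{i-1})$ to subspaces of dimension exactly $v_i$, is not justified as stated --- enlarging $V_{i-1}$ enlarges $(B-\zeta_i{\rm Id})(V_{i-1})$, so keeping the invariance condition while hitting the prescribed dimension at every level is genuinely delicate and does not follow merely from the rank bound. This is not a fatal gap, because the cycle (2)$\Rightarrow$(3)$\Rightarrow$(rank conditions)$\Rightarrow$(1)$\Rightarrow$(2) closes using only the steps you do justify; but you should either drop the direct enlargement claim or prove it, and you should state explicitly that the implication (rank conditions)$\Rightarrow$(1) is the imported Kraft--Procesi/Crawley-Boevey input (see also \cite{crawley-mat}), since in Crawley-Boevey's formulation that statement is essentially Theorem \ref{CB5} itself.
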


\begin{remark} We obtain (3) from (2) by putting $\phi_i^*:=(B-\zeta_{i+1}{\rm Id})|_{V_i}$ and by letting $\phi_i$ be the inclusion $V_{i+1}\subset V_i$.
 
\label{remCB}
\end{remark}

Let $\Gamma_\calO$ be the quiver
$$\xymatrix{\bullet^1&\bullet^2\ar[l]&\cdots\ar[l]&\bullet^{d-1}\ar[l]}$$whose underlying graph is the Dynkin diagram of type $A_{d-1}$ and put $I:=\{1,\dots,d-1\}$. Put $\w:=(n,0,\dots,0)$ and define $\xihat_\calO=(\xi_1,\dots,\xi_{d-1})$ by $\xi_j:=\zeta_j-\zeta_{j+1}$.

\begin{theorem} The map $q:\mu_{\v_\calO,\w}^{-1}(\xihat_\calO)\rightarrow\overline{\calO}$ given by $(B,a,b)\mapsto ab+\zeta_1{\rm Id}$ is well-defined and surjective. It induces a bijective morphism $\tilde{q}:\mathfrak{M}_{\xihat_\calO}(\v_\calO,\w)\longrightarrow\overline{\calO}$. If $\K=\C$, then $q$ is a categorical quotient by $\GL_\v$, i.e., the map $\mathfrak{M}_{\xihat_\calO}(\v_\calO,\w)\longrightarrow\overline{\calO}$ is an isomorphism. The bijective  morphism $\tilde{q}$ restricts to $\mathfrak{M}_{\xihat_\calO}^s(\v_\calO,\w)\longrightarrow\calO$.
\label{NC}\end{theorem}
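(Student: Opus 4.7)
The plan is to unwrap a framed quiver representation $(B,a,b)$ into the flag-plus-maps data of Theorem~\ref{CB5} and then directly import its equivalences. A point $(B,a,b)\in\mathbf{M}(\overline{\Gamma}_\calO,\v_\calO,\w)$ decomposes, according to the type-$A$ shape of $\Gamma_\calO$, as maps $\phi_j:V_{j+1}\to V_j$ and $\phi_j^*:V_j\to V_{j+1}$ for $1\le j\le d-2$, together with the framing $a:V_1\to\K^n$ and $b:\K^n\to V_1$. Writing out the moment-map equation $\mu_{\v_\calO,\w}(B,a,b)=\xihat_\calO$ vertex by vertex yields exactly the system $\phi_j\phi_j^*-\phi_{j-1}^*\phi_{j-1}=(\zeta_j-\zeta_{j+1})\operatorname{Id}$ of Theorem~\ref{CB5}(3) (with the convention $\phi_0=a$, $\phi_0^*=b$); hence (3)$\Rightarrow$(1) gives $q(B,a,b)=ab+\zeta_1\operatorname{Id}\in\overline{\calO}$. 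Surjectivity is the reverse implication, explicitly realized by Remark~\ref{remCB} through the flag $V_j=\operatorname{Im}\bigl((B'-\zeta_1\operatorname{Id})\cdots(B'-\zeta_j\operatorname{Id})\bigr)$ and the associated inclusions and restrictions. Finally, the $\GL_\v$-action~\eqref{action} sends $a\mapsto ag_1^{-1}$ and $b\mapsto g_1 b$, so $ab$ is invariant and $q$ descends to a morphism $\tilde q:\mathfrak{M}_{\xihat_\calO}(\v_\calO,\w)\to\overline{\calO}$.

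For bijectivity of $\tilde q$ the only nontrivial direction is injectivity. By Theorem~\ref{goodquotient}(2) I must show that for each $B'\in\overline{\calO}$ the fiber $q^{-1}(B')$ contains exactly one closed $\GL_\v$-orbit. The canonical representative $p_0$ supplied by Remark~\ref{remCB} singles out a distinguished point, and the task is to verify that for every $(B,a,b)\in q^{-1}(B')$ the closure $\overline{\GL_\v\cdot(B,a,b)}$ contains $p_0$. I would do this by the standard scaling argument: acting by the one-parameter subgroup $(t^j\operatorname{Id}_{V_j})_j\subset\GL_\v$ rescales $\phi_j$ and $\phi_j^*$ oppositely while leaving $ab$ untouched, and letting $t\to 0$ forces the chain $\operatorname{Im}(\phi_1\phi_2\cdots\phi_j)$ to limit to its intrinsic value $\operatorname{Im}\bigl((B'-\zeta_1\operatorname{Id})\cdots(B'-\zeta_j\operatorname{Id})\bigr)$, i.e.\ to the canonical flag of $p_0$. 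This step is the technical heart of the proof and is precisely what Crawley-Boevey~\cite{crawleyind} (generalising Kraft-Procesi~\cite{KP}) establishes in full generality.

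To promote $\tilde q$ from a bijective morphism to an isomorphism when $\K=\C$, I would appeal to the normality of $\overline{\calO}$ (Kraft-Procesi in the nilpotent case, and parabolic induction for the general case) together with the irreducibility of $\mathfrak{M}_{\xihat_\calO}(\v_\calO,\w)$ supplied by Theorem~\ref{irrquiver}; Zariski's main theorem then forces a bijective morphism from an irreducible variety to a normal one to be an isomorphism, which is equivalent to $q$ being a categorical quotient. For the restriction to the stable locus, a representation of the extended quiver $\Gamma_\calO^*$ is $0$-stable iff it is simple, i.e.\ no proper nonzero $B$-invariant subspace of $\bigoplus V_j$ contained in $\operatorname{Ker}a$ and no proper $B$-invariant subspace containing $\operatorname{Im}b$ exists; by Lemma~\ref{partrk} this translates into the partial products $(B'-\zeta_1\operatorname{Id})\cdots(B'-\zeta_j\operatorname{Id})$ having rank exactly $v_j$, equivalently $B'\in\calO$ and not in a smaller orbit of $\overline{\calO}$.

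The main obstacle is the uniqueness of a closed $\GL_\v$-orbit in each fiber in step two; it is both technically delicate and the reason why the characteristic-free statement requires the full strength of Crawley-Boevey's generalisation of the Kraft-Procesi correspondence.
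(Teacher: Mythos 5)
Your handling of well-definedness and surjectivity (via Theorem \ref{CB5} and Remark \ref{remCB}) and of the last assertion (stability $=$ injectivity of the downward maps and $a$, surjectivity of the upward maps and $b$) follows the same lines as the paper, which delegates the stable-locus characterization to \cite[\S 3]{crawley-mat}. The genuine gap is in your second step. The one-parameter subgroup $(t^j{\rm Id}_{V_j})_j$ sends $\phi_j\mapsto t^{-1}\phi_j$ and $a\mapsto t^{-1}a$ while $\phi_j^*\mapsto t\phi_j^*$, $b\mapsto tb$, so as $t\to 0$ half of the data blows up and the limit you invoke does not exist; and the intended limit point is in any case the wrong one, since the canonical representative $p_0$ of Remark \ref{remCB} has all $\phi_j$ and $a$ injective and its orbit is in general neither closed nor contained in the closure of every orbit of the fibre. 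Concretely, for the regular nilpotent orbit of $\gl_2$ one has $\v_\calO=(1)$, $\w=(2)$, $\xihat_\calO=0$, the relation is $ba=0$, and the fibre of $q$ over $B'=0$ is $\{a=0\}\cup\{b=0\}$; its unique closed $\GL_\v$-orbit is $(a,b)=(0,0)$, whereas $p_0$ has $a$ an inclusion, so $p_0$ lies in no orbit closure inside the fibre except its own. The unique closed orbit in a fibre is the semisimplification, and showing that two points with the same $ab$ have the same semisimplification is precisely the invariant-theoretic content of the theorem: the paper obtains it from the First Fundamental Theorem of Invariant Theory as in Kraft--Procesi \cite[\S 2]{KP} (see also \cite[Lemma 9.1]{crawleyred}), which shows that the $\GL_\v$-invariants of $\mu_{\v_\calO,\w}^{-1}(\xihat_\calO)$ are generated by the matrix entries of $ab$; since the GIT quotient separates closed orbits, injectivity of $\tilde q$ and, over $\C$, the categorical-quotient assertion follow at once. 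Your fallback citation to \cite{crawleyind} points to the source of Theorem \ref{CB5}, not to this quotient statement, so the heart of the proof is neither proved nor correctly delegated as written.

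Two further remarks. Your passage from bijectivity to the isomorphism over $\C$ via normality of $\overline{\calO}$ and Zariski's main theorem is a genuinely different and workable route from the paper's, which deduces the isomorphism directly from the generation of the invariant ring by the entries of $ab$ and never uses normality; but it presupposes the bijectivity you have not established, and it imports the nontrivial normality theorem of Kraft--Procesi (plus its extension to non-nilpotent orbit closures). Finally, a small slip in your first step: for $B'$ in the boundary of $\calO$ the subspaces ${\rm Im}\left((B'-\zeta_1{\rm Id})\cdots(B'-\zeta_j{\rm Id})\right)$ may have dimension strictly smaller than $v_j$ (Lemma \ref{partrk} applies only on $\calO$), so the flag in Theorem \ref{CB5}(2) must be chosen to contain these images rather than to equal them; the cited equivalence of Theorem \ref{CB5} covers this, but your explicit formula for the flag does not.
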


\begin{proof} The first assertion follows from Theorem \ref{CB5}. The second assertion can be proved using the ``First Fundamental Theorem of Invariant Theory'' as in Kraft and Procesi \cite[\S 2]{KP}. The third assertion follows from the second one using Proposition \ref{affinegoodquotient} (this assertion is actually stated in Kraft and Procesi \cite[\S 2]{KP} for nilpotent orbits and  in Crawley-Boevey \cite[Lemma 9.1]{crawleyred} for any orbits). For an arrow of $\overline{\Gamma}_\calO$ with tail $i$ and head $j$, we denote by $B_{i,j}$ the corresponding coordinate of $B$. By Crawley-Boevey \cite[\S 3]{crawley-mat}, we have $f(B,a,b)\in\calO$ if and only if  the $B_{i+1,i}$'s and $a$ are all injective and if the maps $B_{i,i+1}$'s and $b$ are all surjective, i.e., $(B,a,b)$ is a $0$-stable representation. Hence the last assertion.
 \end{proof}

\begin{remark}  If $\calC$ is the $\GL_\v$-orbit of any representation $(B,a,b)\in\mu_{\v_\calO,\w}^{-1}(\xihat_\calO)$ then $a'b'=ab$ for any $(B',a',b')\in\overline{\calC}$.
\label{catorb}\end{remark}

We says that $(n_1,\dots,n_{d-1})\in(\Z_{>0})^{d-1}$ is \emph{decreasing} if $n_1>\cdots>n_{d-1}$.

\begin{remark}Let $\v=(v_1,\dots,v_{d-1})$ be a decreasing sequence with $n>v_1$, and  let $\xihat=(\xi_1,\dots,\xi_{d-1})$. Then there is a total ordering $\leq_t$ on $\calP$ and an adjoint orbit $\calO$ such that $(\xihat,\v)=(\xihat_\calO,\v_\calO)$  if and only if the following condition is satisfied, see Crawley-Boevey \cite[\S 2]{crawleyind}. 

(*) For any $j\in I$ with $\xi_j=0$ we have $v_{j-1}-v_j\geq v_j-v_{j+1}$ with $v_0:=n$. 
\label{monotone}\end{remark}

\subsubsection{Partial resolutions of Zariski closure of adjoint orbits as quiver varieties}\label{parresol}

Let $P$ be a parabolic subgroup of $\GL_n(\K)$ (which for simplicity is assumed to contain the upper triangular matrices), $L$ a Levi subgroup of $P$ and let $\Sigma=\sigma+C$ where $\sigma$ is in the center $z_\mathfrak{l}$ of the Lie algebra $\mathfrak{l}$ of $L$ and where $C$ is a nilpotent orbit of $\mathfrak{l}$. We denote  by $U_P$ the unipotent radical of $P$ and by $\mathfrak{u}_P$ the Lie algebra of $U_P$. The aim of this section is to identify the variety$$\mathbb{X}_{L,P,\Sigma}:=\left\{(X,gP)\in\gl_n\times (\GL_n/P)\,\left|\, g^{-1}Xg\in\overline{\Sigma}+\mathfrak{u}_P\right\}\right.$$with a quiver variety of the form $\mathfrak{M}_{\xihat,\thetahat}(\v,\w)$ when $\K=\C$ (in positive characteristic we have a bijective morphism $\mathfrak{M}_{\xihat,\thetahat}(\v,\w)\rightarrow \mathbb{X}_{L,P,\Sigma}$).

Note that \begin{equation}{\rm dim}\,\mathbb{X}_{L,P,\Sigma}={\rm dim}\, \GL_n-{\rm dim}\, L+{\rm dim}\,\Sigma.\label{dimfor}\end{equation}

Taking a $\GL_n$-conjugate of $L$ if necessary, we may assume that $L=\GL_{s_{p+1}}\times \GL_{s_p}\times\cdots\times\GL_{s_1}$. Since $\sigma$ is in the center of $\mathfrak{l}$, we may write $\sigma$ as the diagonal matrix 

$$\left(\overbrace{\epsilon_{p+1},\dots,\epsilon_{p+1}}^{s_{p+1}},\overbrace{\epsilon_p,\dots,\epsilon_p}^{s_p},\dots,\overbrace{\epsilon_1,\dots,\epsilon_1}^{s_1}\right).$$

The nilpotent orbit $C$ of $\mathfrak{l}$ decomposes as $$C=C_{p+1}\times\cdots\times C_1$$with $C_i$ a nilpotent orbit of $\gl_{s_i}$. For $i=1,\dots,p+1$, let $\mu^i$ be the partition of $s_i$ which gives the size of the blocks of the Jordan form of $C_i$. 

We choose a total ordering $\leq_t$ on $\calP$ such that, re-ordering if necessary, we have $\mu^{p+1}\leq_t \mu^p\leq_t\cdots\leq_t\mu^1$ and the following condition is satisfied 

(**) if $\epsilon_i=\epsilon_j$ then for any $i\leq k\leq j$ we have $\epsilon_k=\epsilon_i$. 

This choice of $\leq_t$ is only for convenience (see above Example \ref{diffvect}).

Let $\alpha_1,\dots,\alpha_k$ be the distinct eigenvalues of $\sigma$ with respective multiplicities $m_1,\dots,m_k$. For each $i=1,\dots,k$, we define a partition $\lambda_i$ of $m_i$ as the sum of the partitions $\mu^r$ where $r$ runs over the set $\{r\,|\, \epsilon_r=\alpha_i\}$. The partitions $\lambda_1,\dots,\lambda_k$ defines a unique nilpotent orbits of the Lie algebra $\mathfrak{m}$ of $M:=C_{\GL_n}(\sigma)$. Let $v$ be an element in this orbit and let $\calO$ be the unique adjoint orbit of $\gl_n$ that contains $\sigma+v$. The following proposition is well-known.

\begin{proposition} The image of the projection $p:\mathbb{X}_{L,P,\Sigma}\rightarrow\gl_n$ is $\overline{\calO}$. Moreover it induces an isomorphism $p^{-1}(\calO)\simeq \calO$. If $M=L$, the map $p$ is an isomorphism $\mathbb{X}_{L,P,\Sigma}\simeq\overline{\calO}$.
 \label{proj}\end{proposition}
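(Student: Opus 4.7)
\noindent\textbf{Proof plan for Proposition \ref{proj}.}

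The first step is to rewrite $\mathbb{X}_{L,P,\Sigma}$ as the associated bundle $\GL_n\times^P(\overline{\Sigma}+\mathfrak{u}_P)$ via $(X,gP)\mapsto[g,g^{-1}Xg]$. Under this identification the projection $p$ becomes the collapsing map $[g,Y]\mapsto gYg^{-1}$, which is proper because $\GL_n/P$ is projective. Hence the image of $p$ is a closed, irreducible, $\GL_n$-stable subvariety of $\gl_n$, and so equals the closure of some adjoint orbit. Using (\ref{dimfor}) together with the Lusztig--Spaltenstein dimension formula $\dim\,{\rm Ind}_L^M C=\dim M-\dim L+\dim C$ (classical for $\GL$) and $\dim\calO=\dim\GL_n-\dim C_M(v)$ for $v\in{\rm Ind}_L^M C$, one checks that $\dim\mathbb{X}_{L,P,\Sigma}=\dim\calO$, so $p$ is generically finite onto its image.

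To identify the image with $\overline\calO$, it is enough to exhibit one point of the image that lies in $\calO$. Pick a generic $Z=\sigma+c+u$ with $c\in C$ and $u\in\mathfrak{u}_P$. Because Jordan factors of $Z$ are polynomials in $Z$, the semisimple part $Z_s$ and nilpotent part $Z_n$ both lie in $\mathfrak{p}$, and their images under $\mathfrak{p}\twoheadrightarrow\mathfrak{l}$ are $\sigma$ and $c$ respectively. Hence $Z_s$ is a semisimple element of $\mathfrak{p}$ with the same eigenvalues as $\sigma$ and the same $\mathfrak{l}$-projection as $\sigma$, which forces $Z_s$ to be $U_P$-conjugate to $\sigma$. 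Consequently $Z$ is $\GL_n$-conjugate to an element of the form $\sigma+v'$ with $v'\in\mathfrak{m}$ nilpotent, and as $(c,u)$ varies the element $v'$ sweeps out precisely the induced orbit ${\rm Ind}_L^M C$ by its very definition. Combined with closedness and irreducibility of the image and the equality of dimensions, this yields $p(\mathbb{X}_{L,P,\Sigma})=\overline\calO$.

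For the isomorphism $p^{-1}(\calO)\simeq\calO$, fix $X=\sigma+v\in\calO$ with $v\in{\rm Ind}_L^M C$ and analyze the fiber $\{gP\mid g^{-1}Xg\in\overline\Sigma+\mathfrak{u}_P\}$. Writing $Y=g^{-1}Xg$ and applying Jordan decomposition inside $\mathfrak{p}$ forces $g^{-1}\sigma g$ to be a semisimple element of $\mathfrak{p}$ mapping to $\sigma$ in $\mathfrak{l}$; the set of such elements is exactly the $U_P$-orbit of $\sigma$, so $g\in MU_P$. Writing $g=mu$ with $m\in M$, $u\in U_P$ then translates the membership condition into $m^{-1}vm\in\overline C+\mathfrak{u}_{M\cap P}$, i.e.\ the fiber is controlled by the analogous ``collapsing'' map for $M$ and its parabolic $M\cap P$. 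In type $A$ this induction map is birational (the nilpotent Lusztig--Spaltenstein procedure for $\GL$ is always birational onto its image), so the fiber over the open orbit ${\rm Ind}_L^M C$ is a single reduced point. Since $\overline\calO$ is normal in type $A$ by Kraft--Procesi and $p^{-1}(\calO)$ is smooth, Zariski's main theorem upgrades the bijection to an isomorphism. When $M=L$ we have $MU_P=P$, so the fiber collapses to a single point over \emph{every} $X\in\overline\calO$, and the same normality argument promotes the bijective proper morphism to an isomorphism $\mathbb{X}_{L,P,\Sigma}\simeq\overline\calO$.

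The main obstacle is the fiber analysis in the case $M\neq L$: one has to disentangle the $P$-conjugacy of semisimple parts from the combinatorics of the induced nilpotent orbit, and then invoke the birationality of nilpotent Lusztig--Spaltenstein induction in type $A$ together with normality of $\overline\calO$ to pass from set-theoretic bijection to scheme-theoretic isomorphism.
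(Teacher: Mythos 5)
The paper gives no proof of Proposition \ref{proj} — it is invoked as ``well-known'' — so there is no argument of the author's to compare yours against; judged on its own terms, your outline is the standard one and is correct in substance. Two things should be tightened. First, the inference ``closed, irreducible, $\GL_n$-stable, hence an orbit closure'' is false as stated (the trace-zero matrices are closed, irreducible and stable but are not an orbit closure, since $\gl_n$ has infinitely many orbits); either note that the image has fixed characteristic polynomial, hence meets only finitely many orbits, or simply drop the remark — your subsequent argument (image closed, irreducible, of dimension at most $\dim\mathbb{X}_{L,P,\Sigma}=\dim\GL_n-\dim L+\dim\Sigma=\dim\calO$ by (\ref{dimfor}) and the Lusztig--Spaltenstein dimension count, and containing a point of $\calO$ because by definition of the induced orbit some $c+u'$ with $c\in C$, $u'\in\mathfrak{u}_P\cap\mathfrak{m}$ lies in ${\rm Ind}_L^M C$) already forces the image to equal $\overline{\calO}$. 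Second, you silently identify the paper's $\calO$, defined by adding the partitions $\mu^r$ over blocks with equal eigenvalue, with the orbit of $\sigma+v$, $v\in{\rm Ind}_L^M C$; this is exactly the classical type-$A$ fact that Lusztig--Spaltenstein induction corresponds to addition of partitions, and since both your dimension count and your choice of test point rest on it, it must be stated or cited.

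Your fiber analysis is the right mechanism, and the two lemmas you assert without proof are true but are genuine lemmas: that a semisimple element of $\sigma+\mathfrak{u}_P$ is $U_P$-conjugate to $\sigma$ (an eigenspace-versus-flag argument using that $\sigma$ acts by the scalar $\epsilon_r$ on each graded piece of the flag), and that in type $A$ the fiber of the nilpotent induction map over a point $v$ of ${\rm Ind}_L^M C$ is a single reduced point. For the latter, one clean justification: if $E$ is $v$-stable with ${\rm type}(v|_E)\unlhd\mu$, ${\rm type}(v|_{V/E})\unlhd\lambda$ and ${\rm type}(v)=\lambda+\mu$, then both types are forced to be exact and $E$ is unique (e.g.\ because the Hall polynomial $g^{\lambda+\mu}_{\lambda\mu}$ equals $1$), and iterating this along the flag handles the multi-step case; some such argument or reference is needed, as ``birationality of induction'' fails outside type $A$ and is the crux here. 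Finally, normality of $\overline{\calO}$ is not needed for $p^{-1}(\calO)\simeq\calO$: once the fibers over $\calO$ are single points, $p^{-1}(\calO)$ is a single $\GL_n$-orbit whose stabilizer equals $C_{\GL_n}(X)$ (the unique point of the fiber is fixed by $C_{\GL_n}(X)$), so equivariance gives the isomorphism directly; Kraft--Procesi normality, together with bijectivity over all of $\overline{\calO}$ — which your $M=L$ analysis does yield, since when the $\epsilon_r$ are distinct the condition $v'\in\overline{C}$ holds automatically on $\overline{\calO}$ — is what the last assertion genuinely requires.
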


We have ${\rm dim}\,\calO={\rm dim}\,\mathbb{X}_{L,P,\Sigma}$ and so

\begin{equation}{\rm dim}\,\calO={\rm dim}\, G-{\rm dim}\, L+{\rm dim}\,\Sigma\label{dimcalO}\end{equation}

We now denote by $\calF$ the variety of partial flags $\{0\}=E^{p+1}\subset E^p\subset\cdots\subset E^1\subset E^0=\K^n$ with ${\rm dim}\, E^{r-1}/E^r=s_r$. For an element $X\in\gl_n$ that leaves stable a partial flag $$\left(\{0\}=E^{p+1}\subset E^p\subset\cdots\subset E^1\subset E^0=\K^n\right)\in\calF$$we denote by $X_r$, $r=1,\dots,p+1$, the induced endomorphism of $E^{r-1}/E^r\simeq\K^{s_r}$.

We denote by $\mathbb{Z}_{L,P,\Sigma}$ (resp. $\mathbb{Z}_{L,P,\Sigma}^o$) the subvariety of $\gl_n\times\calF$ of pairs $(X,f)$ such that $X\cdot f=f$ and such that for all $r=1,\dots,p+1$, we have $X_r\in \epsilon_r{\rm Id}+\overline{C}_r$ (resp. $X_r\in\epsilon_r{\rm Id}+C_r$).

Note that $\calF\simeq \GL_n/P$ and so the two varieties $\mathbb{Z}_{L,P,\Sigma}$ and $\mathbb{X}_{L,P,\Sigma}$ are isomorphic.

There exist a unique positive integer $d$, a decreasing sequence of positive integers $$\v_{L,P,\Sigma}=(v_1,\dots,v_{d-1})\in\left(\Z_{>0}\right)^{d-1},$$and $p$ elements $i_1<\cdots<i_p$ in $\{1,\dots,d-1\}$ such that if we put $i_0:=0$, $v_0:=n$, $i_{p+1}:=d$, and $v_d:=0$, then for each $r=1,\dots,p+1$, we have $v_{i_{r-1}}-v_{i_r}=s_r$, and $\left(v_{i_{r-1}}-v_{i_{r-1}+1},\dots,v_{i_r-1}-v_{i_r}\right)$ is the dual partition of $\mu^r$.

This defines a type $A_{d-1}$ quiver $\Gamma_{L,P,\Sigma}$ as in \S \ref{adjoint}. We keep the same $\w$ as in \S \ref{adjoint} and we define  $\zetahat_{L,P,\Sigma}=(\zeta_1,\dots,\zeta_d)$ by $\zeta_j=\epsilon_{r+1}$ if $i_r<j\leq i_{r+1}$ with $r=0,\dots,p$.
As in \S \ref{adjoint}, this defines a unique parameter $\xihat_{L,P,\Sigma}=(\xi_1,\dots,\xi_{d-1})\in \K^I$ such that  $\xi_i=\zeta_i-\zeta_{i+1}$. We now choose a stability parameter $\thetahat\in\left(\N\right)^I$ with the requirement that $\thetahat_j\neq 0$ exactly when $j\in\{i_1,\dots,i_p\}$.

 The quiver $\Gamma_{L,P,\Sigma}$ defined above is the same as the quiver $\Gamma_\calO$ associated with the adjoint orbit $\calO$ in \S \ref{adjoint}. Denote by $(\v_\calO,\xihat_\calO)$ the datum arising from $\calO$ as in  \S \ref{adjoint} with respect to $\leq_t$. The dimension vector $\v_\calO$ might differ from $\v_{L,P,\Sigma}$ as shown in the example below. However since $\leq_t$ respect the condition (**) on the $\epsilon_i$'s, we always have $\xihat_{L,P,\Sigma}=\xihat_\calO$.

\begin{example} Assume that  $L=\GL_1\times\GL_2\times\GL_2\times\GL_3\times\GL_3$, $C=C_{(1)}\times C_{(1,1)}\times C_{(2)}\times C_{(2,1)}\times C_{(3)}$ where $C_\mu$ denotes the nilpotent orbit corresponding to the partition $\mu$, and that $\sigma$ is the diagonal matrix $$(\underbrace{\alpha,\alpha,\alpha,\alpha,\alpha}_5,\underbrace{\beta,\beta,\beta,\beta,\beta,\beta}_6)$$ with $\alpha\neq\beta$. Clearly $\sigma$ is in the center of $\mathfrak{l}$ and $M=\GL_5\times\GL_6$. The underlying graph of $\Gamma_{L,P,\Sigma}$ is $A_8$ and $\w=(11,0,0,0)$. 

Assume that $\leq_t$ is the lexicographic ordering. The type of $\calO$ is $(5,1)(4,1)\in\tilde{{\bf T}}_{11}^t$. Note that $(1)\leq_t (1,1)\leq_t (2)\leq_t(2,1)\leq_t(3)$. We thus have $\epsilon_1=\epsilon_2=\beta$ and $\epsilon_3=\epsilon_4=\epsilon_5=\alpha$. Hence $\v_{L,P,\Sigma}=(10,9,8,6,5,4,3,1)$, $(i_1,\dots,i_p)=(3,5,7,8)$,  $\thetahat=(0,0,\theta_3,0,\theta_5,0,\theta_7,\theta_8)$ with $\theta_3,\theta_5,\theta_7,\theta_8>0$, $\zetahat_{L,P,\Sigma}=(\beta,\beta,\beta,\beta,\beta,\alpha,\alpha,\alpha,\alpha)$, $\xihat_{L,P,\Sigma}=(0,0,0,0,\beta-\alpha,0,0,0)$. Finally note that $\v_\calO=(9,8,7,6,5,3,2,1)\neq\v_{L,P,\Sigma}$.
\label{diffvect}\end{example}

The aim of the section is to show that there is a bijective morphism $\mathfrak{M}_{\xihat_{L,P,\Sigma},\thetahat}(\v_{L,P,\Sigma},\w)\rightarrow\mathbb{Z}_{L,P,\Sigma}$ which is an isomorphism when $\K=\C$.

Given $(B,a,b)\in\mu_{\v_{L,P,\Sigma},\w}^{-1}(\xihat_{L,P,\Sigma})$ and an arrow of $\overline{\Gamma}_{L,P,\Sigma}$ with tail $i$ and head $j$, we denote by $B_{i,j}$ the corresponding coordinate of $B$.

For a parameter $x\in \K^I$, put $J_x=\{i\in I\,|\, x_i=0\}$ where $I$ denotes the set of vertices of $\Gamma_{L,P,\Sigma}$. We will need the following lemma:

\begin{lemma} Let $(B,a,b)\in \mu_{\v_{L,P,\Sigma},\w}^{-1}(\xihat_{L,P,\Sigma})$. Then  $(B,a,b)$ is $\thetahat$-semistable if and only if for all $i\in I-J_\thetahat$ the map 
$a\circ B_{2,1}\circ\cdots\circ B_{i,i-1}:\K^{v_i}\rightarrow \K^n$ is injective.
\label{injective}\end{lemma}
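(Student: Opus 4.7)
Denote $\phi_i:=a\circ B_{2,1}\circ\cdots\circ B_{i,i-1}\colon V_i\to V_0:=\K^n$ for $i\geq 1$, with $\phi_0:=\mathrm{Id}_{V_0}$ and $U_j:=\ker\phi_j\subset V_j$, and set $A:=ab+\zeta_1\mathrm{Id}_{V_0}\in\gl_n$ (the element from Theorem \ref{CB5}). My strategy is to show that the single subspace $U:=\bigoplus_j U_j$ is the universal $B$-invariant witness for the semistability inequality, so that testing the semistability condition on $U$ alone captures the full condition.

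\emph{Reducing the semistability condition.} Since $\thetahat\in\N^I$, condition (ii) of Definition \ref{stab} is automatic by Remark \ref{zeros}(i), and $\w=(n,0,\dots,0)$ reduces condition (i) to the single requirement $S_1\subset\ker a$. Non-negativity of $\theta_i$ and $\dim S_i$ then turns $\sum_i\theta_i\dim S_i\leq 0$ into $S_j=0$ for every $j\in I-J_\thetahat$. Hence $\thetahat$-semistability of $(B,a,b)$ amounts to: every $B$-invariant $S\subset V$ with $S_1\subset\ker a$ has $S_j=0$ for $j\in I-J_\thetahat$. For any such $S$, iterating $B_{j,j-1}(S_j)\subset S_{j-1}$ yields $\phi_j(S_j)\subset a(S_1)=0$, whence $S_j\subset U_j$. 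This already gives the direction ($\Leftarrow$): if every $\phi_i$ with $i\in I-J_\thetahat$ is injective, then $U_i=0$ and so $S_i=0$.

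\emph{Hard direction ($\Rightarrow$).} It is enough to show that $U$ is itself $B$-invariant, for then $U_1=\ker a$ and applying semistability to $U$ forces $U_i=0$, i.e.\ $\phi_i$ injective, for each $i\in I-J_\thetahat$. The inclusion $B_{j,j-1}(U_j)\subset U_{j-1}$ is immediate from $\phi_{j-1}\circ B_{j,j-1}=\phi_j$. The reverse inclusion $B_{j-1,j}(U_{j-1})\subset U_j$ will follow from the key identity
\begin{equation*}
\phi_j\circ B_{j-1,j}\;=\;(A-\zeta_j\mathrm{Id}_{V_0})\circ\phi_{j-1}\qquad\text{as maps }V_{j-1}\to V_0,\quad 1\leq j\leq d-1,
\end{equation*}
since a kernel element $u\in U_{j-1}$ then gives $\phi_j(B_{j-1,j}u)=(A-\zeta_j)\cdot 0=0$.

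\emph{Proof of the identity, the main obstacle.} I proceed by induction on $j$. The base case $j=1$ reads $ab=A-\zeta_1\mathrm{Id}$, the definition of $A$. For the induction step, the moment-map equation at vertex $j-1$ reads
\begin{equation*}
B_{j,j-1}B_{j-1,j}\;=\;B_{j-2,j-1}B_{j-1,j-2}+\xi_{j-1}\mathrm{Id}_{V_{j-1}},
\end{equation*}
with the framing contributing $B_{0,1}B_{1,0}=ba$ at $j=2$. Using $\phi_j=\phi_{j-1}\circ B_{j,j-1}$ and $\phi_{j-1}=\phi_{j-2}\circ B_{j-1,j-2}$, the left-hand side of the identity becomes $\phi_{j-1}B_{j-2,j-1}B_{j-1,j-2}+\xi_{j-1}\phi_{j-1}$. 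The inductive hypothesis converts $\phi_{j-1}B_{j-2,j-1}$ into $(A-\zeta_{j-1})\phi_{j-2}$; combining with $\phi_{j-2}B_{j-1,j-2}=\phi_{j-1}$ and $\xi_{j-1}=\zeta_{j-1}-\zeta_j$ collapses the sum to $(A-\zeta_j)\phi_{j-1}$. This identity is the only non-formal ingredient; the rest of the argument is bookkeeping.
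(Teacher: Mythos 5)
Your proof is correct and follows essentially the same route as the paper: reduce $\thetahat$-semistability (with $\w=(n,0,\dots,0)$ and $\thetahat\in\N^I$) to the vanishing of $B$-invariant subspaces $S$ with $S_1\subset\ker a$ at the vertices where $\theta_i\neq 0$, get the easy direction from $S_i\subset\ker(a\circ B_{2,1}\circ\cdots\circ B_{i,i-1})$, and for the hard direction exhibit a $B$-invariant graded subspace built from these kernels, checking invariance via the preprojective relations. The only (harmless) variation is that you use the single subspace $\bigoplus_j\ker\phi_j$ with the intertwining identity $\phi_j\circ B_{j-1,j}=(A-\zeta_j\mathrm{Id})\circ\phi_{j-1}$, whereas the paper builds, for each offending vertex $s$, the subspace $L^s$ made of kernels up to $s$ and their images beyond $s$; both rest on the same inductive computation with the vertex relations (your sign convention matches Theorem \ref{CB5}, and in any case only the factorization through $\phi_{j-1}$ is needed).
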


\begin{proof} Put $V:=\bigoplus_i\K^{v_i}$. We first construct for each $s\in I$ a $B$-invariant graded subspace $L^s=\bigoplus_iL^s_i$ of $V$. Put $L^s_1:={\rm Ker}\,(a)$, for all $i\in\{2,\dots,s\}$ put $L^s_i:={\rm Ker}\,(a\circ B_{2,1}\circ\cdots\circ B_{i,i-1}\big)$, and for $i>s$ put $L^s_i:=B_{i-1,i}\circ B_{i-2,i-1}\circ\cdots\circ B_{s+1,s+2}\circ B_{s,s+1}\,(L^s_s)$. Let us see that $L^s$ is a $B$-invariant subspace of $V$. For $i<s$ we need to see that $B_{i,i+1}(L^s_i)\subset L^s_{i+1}$. We first prove it when $i=1$. We have $ba-B_{2,1}B_{1,2}=\xi_1{\rm Id}$, hence $(a\circ B_{2,1})(B_{1,2}({\rm Ker}\,(a))=a\circ (ba-\xi_1{\rm Id})({\rm Ker}\,(a))=0$ and so $B_{2,1}(L^s_1)\subset L^s_2$.  Assume that this is true for all $j<i$. At the vertex $i$, we have the relation $B_{i-1,i}B_{i,i-1}-B_{i+1,i}B_{i,i+1}=\xi_2{\rm Id}$. For $x\in L^s_i$ we have 

\begin{align*}a\circ B_{2,1}\circ\cdots \circ B_{i,i-1}\circ B_{i+1,i}\,\big(B_{i,i+1}(x)\big)&=a\circ B_{2,1}\circ\cdots\circ B_{i,i-1}\circ (B_{i-1,i}B_{i,i-1}-\xi_2{\rm Id})(x)\\
 &=a\circ B_{2,1}\circ\cdots\circ B_{i,i-1}\circ (B_{i-1,i}B_{i,i-1}(x)).
\end{align*} We need to see that the RHS is $0$. By definition of $L^s$ it is clear that $B_{i,i-1}\,(L^s_i)\subset L^s_{i-1}$ hence $B_{i,i-1}\,(x)\in L^s_{i-1}$. By induction hypothesis we then have $B_{i-1,i}\,\big(B_{i,i-1}\,(x)\big)\subset L^s_i$. By definition of $L^s_i$ we thus have $a\circ B_{2,1}\circ\cdots\circ B_{i,i-1}\big( B_{i-1,i}\circ B_{i,i-1}(x)\big)=0$. To see that $L^s$ is a $B$-invariant subspace of $V$ it remains to see that for all $i\geq s$ we have $B_{i+1,i}\,(L^s_{i+1})\subset L^s_i$ which again can be proved by induction using the relations at the vertices.

Assume that $(B,a,b)$ is $\thetahat$-semistable. Assume that $s\in I-J_\thetahat$. If the map $a_s:=a\circ B_{2,1}\circ\cdots\circ B_{s,s-1}$ is not injective then $L^s$ is a non-trivial $B$-invariant subspace of $V$ such that $\thetahat\centerdot{\rm dim}\,L^s>0$ (as $\theta_s\neq 0$) which contradicts the stability condition (i) of Definition \ref{stab}. Hence the map $a_s$ must be injective for all $s\in I-J_\thetahat$. 

Let us prove the converse. Assume that $V'$ is a $B$-invariant subspace of $V$ such that $V_1'\subset{\rm ker}\,(a)$. Hence for all $i$ and $x\in V_i'$ we have $B_{2,1}\circ\cdots\circ B_{i,i-1}(x)\in{\rm Ker}\,(a)$, i.e., $a\circ B_{2,1}\circ\cdots\circ B_{i,i-1}(x)=0$, and so $V'_i\subset {\rm Ker}\,(a\circ B_{2,1}\circ\cdots\circ B_{i,i-1})$. Hence for $i\in I-J_\thetahat$ we have $V_i'=0$ by assumption. Therefore $\thetahat\centerdot {\rm dim}\,V'=0$ and so the condition (i) of Definition \ref{stab} is satisfied. \end{proof}

For $(B,a,b)\in\mathfrak{M}_{\xihat_{L,P,\Sigma},\thetahat}(\v_{L,P,\Sigma},\w)$, we denote by $f_{(B,a,b)}$ the partial flag $\{0\}=\calE^{p+1}\subset \calE^p\subset\cdots\subset \calE^1\subset \calE^0=\K^n$ with $\calE^r:={\rm Im}\,\left(a\circ B_{2,1}\circ\cdots\circ B_{i_r,i_r-1}\right)$. By Lemma \ref{injective}, we have $f_{(B,a,b)}\in\calF$.

\begin{proposition} The map $\mu_{\v_{L,P,\Sigma},\w}^{-1}(\xihat_{L,P,\Sigma})^{ss}\rightarrow \mathbb{Z}_{L,P,\Sigma}$, $(B,a,b)\mapsto \left(ab+\zeta_1{\rm Id},f_{(B,a,b)}\right)$ is well-defined and induces a canonical bijective morphism $\mathfrak{M}_{\xihat_{L,P,\Sigma},\thetahat}(\v_{L,P,\Sigma},\w)\rightarrow\mathbb{Z}_{L,P,\Sigma}$ which restricts to $\mathfrak{M}_{\xihat_{L,P,\Sigma},\thetahat}^s(\v_{L,P,\Sigma},\w)\rightarrow\mathbb{Z}_{L,P,\Sigma}^o$ and which makes the following diagram commutative

$$\xymatrix{\mathfrak{M}_{\xihat_{L,P,\Sigma},\thetahat}(\v_{L,P,\Sigma},\w)\ar[rr]\ar[d]^{\pi}&&\mathbb{Z}_{L,P,\Sigma}\ar[d]^{pr_1}\\
\mathfrak{M}_{\xihat_{L,P,\Sigma}}(\v_{L,P,\Sigma},\w)\ar[rr]^{\rho}&&\gl_n}$$where $\rho$ maps a semisimple representation $(B,a,b)$ to $ab+\zeta_1{\rm Id}$. If $\K=\C$ this bijective map is  an isomorphism $\mathfrak{M}_{\xihat_{L,P,\Sigma},\thetahat}(\v_{L,P,\Sigma},\w)\overset{\sim}{\rightarrow}\mathbb{Z}_{L,P,\Sigma}$. 
\label{square}\end{proposition}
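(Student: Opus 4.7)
The plan is to adapt the strategy used to prove Theorem \ref{NC} and Theorem \ref{CB5}, upgrading it to allow for the $\thetahat$-semistable setting and the presence of a non-trivial partial flag. I would proceed in four steps: well-definedness, $\GL_\v$-invariance and descent, set-theoretic bijectivity, and promotion to a scheme isomorphism over $\C$.

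For well-definedness, given $(B,a,b)\in\mu_{\v,\w}^{-1}(\xihat_{L,P,\Sigma})^{ss}$, the choice of $\thetahat$ forces $I-J_\thetahat=\{i_1,\dots,i_p\}$, so by Lemma \ref{injective} the compositions $a\circ B_{2,1}\circ\cdots\circ B_{i_r,i_r-1}$ are injective and $\dim\calE^r=v_{i_r}$; the successive codimensions $v_{i_{r-1}}-v_{i_r}=s_r$ then agree with the datum of $\calF$. By construction of $\xihat_{L,P,\Sigma}$ the parameters $\xi_j$ vanish for $i_{r-1}<j<i_r$ while $\zeta_j=\epsilon_r$ is constant on this range, so the restriction of the quiver data to each block $r$ is, via Theorem \ref{CB5}(3) and Remark \ref{remCB}, the datum attached to a point of $\overline{\epsilon_r{\rm Id}+C_r}$; this simultaneously shows that $X=ab+\zeta_1{\rm Id}$ preserves $f_{(B,a,b)}$ and that the induced map on $\calE^{r-1}/\calE^r$ lies in $\epsilon_r{\rm Id}+\overline{C}_r$.

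Next, $\GL_\v$-invariance is a direct computation: $ab$ is unaffected by $(a,b)\mapsto(ag_1^{-1},g_1 b)$, and the successive images $\calE^r$ are invariant since $(ag_1^{-1})\circ(g_1 B_{2,1}g_2^{-1})\circ\cdots\circ(g_{i_r-1}B_{i_r,i_r-1}g_{i_r}^{-1})$ differs from $a\circ B_{2,1}\circ\cdots\circ B_{i_r,i_r-1}$ only by a right factor $g_{i_r}^{-1}$. The assignment therefore descends to a morphism from the GIT quotient, and the commutativity of the stated diagram with $\rho$ is immediate since both paths send a representation to $ab+\zeta_1{\rm Id}$.

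The heart of the argument is bijectivity. Given $(X,f)\in\mathbb{Z}_{L,P,\Sigma}$, each $X_r-\epsilon_r{\rm Id}$ is a nilpotent endomorphism of $E^{r-1}/E^r$ with Jordan type in $\overline{C}_r$. Applying Theorem \ref{CB5}(3) within each block produces, uniquely up to the action of the factor $\prod_{i_{r-1}<j\leq i_r}\GL_{v_j}$ of $\GL_\v$, the corresponding sub-quiver data together with a refining flag $E^{r-1}\supset V_{i_{r-1}+1}\supset\cdots\supset V_{i_r}=E^r$ with prescribed dimensions. Gluing the blocks using the inclusions $E^r\subset E^{r-1}$ as the jump maps at the marked vertices produces a representation $(B,a,b)\in\mu_{\v,\w}^{-1}(\xihat_{L,P,\Sigma})$, whose $\thetahat$-semistability is immediate from Lemma \ref{injective} because, by construction, each composite $a\circ B_{2,1}\circ\cdots\circ B_{i_r,i_r-1}$ is the inclusion $E^r\hookrightarrow\K^n$. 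Uniqueness follows from block-by-block uniqueness in Theorem \ref{CB5}(3), giving the bijection. The restriction to the stable locus uses the last assertion of Theorem \ref{NC}: $X_r-\epsilon_r{\rm Id}$ lies in $C_r$ rather than merely $\overline{C}_r$ exactly when the block representation is simple, i.e., $\thetahat$-stable. Finally, to upgrade to a scheme isomorphism when $\K=\C$, I would argue blockwise as in Theorem \ref{NC}: within each block the First Fundamental Theorem of Invariant Theory promotes the bijection to an isomorphism of schemes onto the corresponding $\overline{\epsilon_r{\rm Id}+C_r}$, and these glue along the fibration $\mathbb{Z}_{L,P,\Sigma}\to\calF$ to yield the global isomorphism.

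The main technical obstacle is the bookkeeping in the bijectivity step: matching, across the whole quiver, the global partial flag structure at the marked vertices $i_1,\dots,i_p$ (which pair with $\thetahat$-stability) with the vanishing of $\xihat_{L,P,\Sigma}$ inside each block (which pair with the nilpotency of $X_r-\epsilon_r{\rm Id}$). Once this bookkeeping is carried out, the reduction to Theorem \ref{NC} blockwise is clean, and the final isomorphism over $\C$ follows by gluing.
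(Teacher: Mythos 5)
Your overall plan --- reduce blockwise to Theorem \ref{NC} via Lemma \ref{injective}, Theorem \ref{CB5} and Remark \ref{remCB} --- is the same strategy as the paper's, but your well-definedness step has a genuine gap exactly where the real work lies. You assert that ``the restriction of the quiver data to each block $r$'' is the datum attached to a point of $\overline{\epsilon_r{\rm Id}+C_r}$, and that this shows $X_r\in\epsilon_r{\rm Id}+\overline{C}_r$. But the maps of block $r$ act on the full spaces $\K^{v_j}$ (of dimension $v_j$, not $v_j-v_{i_r}$), and the preprojective relations at the vertices $i_{r-1}+1,\dots,i_r-1$ involve the arrows joining the block to the vertex $i_r$, i.e.\ to the next block; so the naive restriction is not an element of $\mu_{\v',\w'}^{-1}(\xihat')$ for the framed sub-quiver, and Theorem \ref{NC} cannot be applied to it. What is needed (and what the paper does) is to pass first to the quotient of $(B,a,b)$ by the $B$-stable graded subspace $H$ with $H_i={\rm Im}(f_{i_r,i})$ for $i<i_r$ and $H_i=\K^{v_i}$ for $i\geq i_r$; only the induced representation on the quotients $U_i=\K^{v_i}/H_i$, projected to the sub-quiver, satisfies the block relations and realizes $X_r$ on $\calE^{r-1}/\calE^r$ as $a'b'+\epsilon_r{\rm Id}$. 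Your surjectivity construction has the mirror-image problem: the data produced by Theorem \ref{CB5}(3) inside block $r$ live on subquotients of $E^{r-1}/E^r$, and ``gluing using the inclusions $E^r\subset E^{r-1}$'' does not by itself produce maps on the spaces $\K^{v_j}$ satisfying the relation at the junction vertex $i_r$, where $\xi_{i_r}=\epsilon_r-\epsilon_{r+1}$ need not vanish; the clean route is to refine the flag $f$ blockwise inside $\K^n$ and apply the construction of Remark \ref{remCB} (namely $\phi_j^*=(X-\zeta_{j+1}{\rm Id})|_{V_j}$ and $\phi_j$ the inclusion) to the whole refined flag at once, which yields the relations at every vertex simultaneously.

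Two further assertions are unjustified. Injectivity of the induced map on the GIT quotient does not follow from ``block-by-block uniqueness in Theorem \ref{CB5}(3)'': Theorem \ref{CB5} is an equivalence of existence statements and contains no uniqueness; in Theorem \ref{NC} bijectivity rests on invariant-theoretic input (the First Fundamental Theorem, Kraft--Procesi, Crawley-Boevey), and some such input --- or a reduction to the cases of Nakajima and Shmelkin cited in the paper --- is needed here as well. Likewise, commutativity of the square is not ``immediate'': the bottom composite goes through $\pi$, which replaces a semistable representation by the closed orbit in its closure, so one must know that $ab$ is constant on orbit closures; this is the generalization of Remark \ref{catorb} (Kraft--Procesi) that the paper invokes.
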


If $\theta_i>0$ for all $i$ and if $\xihat_{L,P,\Sigma}=0$, then this is a result of Nakajima \cite[Theorem 7.3]{ALE}, see also \cite{shmelkin} for more details.

\begin{proof} The fact that the diagram is commutative  follows from a generalization of Remark \ref{catorb} to any decreasing dimension vector (see Kraft and Procesi \cite[Proposition 3.4]{KP}). To alleviate the notation we omitt $L,P,\Sigma$ from the notation $\xihat_{L,P,\Sigma},\v_{L,P,\Sigma}, \zetahat_{L,P,\Sigma}, \Gamma_{L,P,\Sigma}$. Let us see that the map $$h:\mu_{\v,\w}^{-1}(\xihat)^{ss}\rightarrow \mathbb{Z}_{L,P,\Sigma}, (B,a,b)\mapsto \left(ab+\zeta_1{\rm Id},f_{(B,a,b)}\right)$$is well-defined. Let $(B,a,b)\in \mu_{\v,\w}^{-1}(\xihat)^{ss}$ and put $X:=ab+\zeta_1{\rm Id}$ and $\calE^r:={\rm Im}\,\left(a\circ B_{2,1}\circ\cdots\circ B_{i_r,i_r-1}\right)$. The fact that $X$ leaves stable the partial flag $f_{(B,a,b)}$ is straightforward from the preprojective relations $$B_{i-1,i}B_{i,i-1}-B_{i+1,i}B_{i,i+1}=\xi_i{\rm Id}$$with $B_{0,1}:=b$ and $B_{1,0}:=a$. 

To alleviate the notation, for all $i<j$ we denote by $f_{j,i}$ the map $B_{i+1,i}\circ\cdots\circ B_{j,j-1}:\K^{v_j}\rightarrow\K^{v_i}$. 

Fix $r\in\{1,\dots,p+1\}$ and define $H=\bigoplus_{i\in I\cup\{0\}}H_i$ by $H_i=\K^{v_i}$ if $i\geq i_r$ and by $H_i={\rm Im}\,(f_{i_r,i})$ if not. From the preprojective relations we see that $(B,a,b)$ leaves $H$ stable and so we can consider the restriction $(B_H,a_H,b_H)$ of $(B,a,b)$ to $H$ and the quotient $(\overline{B},\overline{a},\overline{b})$ of $(B,a,b)$ by $(B_H,a_H,b_H)$. Put $U_i:=\K^{v_i}/H_i$. Then $U_i\simeq \K^{v_i-v_{i_r}}$ if $i<i_r$ and $U_i=\{0\}$ otherwise. From the preprojective relations we see that $X_r:\calE^{r-1}/\calE^r\rightarrow \calE^{r-1}/\calE^r$ coincides, with the map $Y_r:U_{i_{r-1}}\rightarrow U_{i_{r-1}}$ induced by $B_{i_{r-1}+1,i_{r-1}}B_{i_{r-1},i_{r-1}+1}+\zeta_{i_{r-1}+1}{\rm Id}$. In other words the diagram

$$\xymatrix{\calE^{r-1}/\calE^r\ar[d]_{X_r}&&U_{i_{r-1}}\ar[d]^{Y_r}\ar[ll]_{f_{i_{r-1},0}/H_{i_{r-1}}}\\
\calE^{r-1}/\calE^r&&U_{i_{r-1}}\ar[ll]_{f_{i_{r-1},0}/H_{i_{r-1}}}}$$is commutative. 

We want to see that the map $Y_r\in{\rm End}(U_{i_{r-1}})\simeq {\rm End}(\K^{s_r})$ leaves in $\zeta_{i_{r-1}+1}{\rm Id}+\overline{C}_r$.

Consider the subquiver $\Gamma'$

$$\xymatrix{\bullet^{i_{r-1}+1}&\ar[l]\cdots&\bullet^{i_r-1}\ar[l]}$$of $\Gamma$. Put $d':=i_r$, $\w':=(v_{i_{r-1}}-v_{i_r},0,\dots,0)$, $\v':=(v_{i_{r-1}+1}-v_{i_r},v_{i_{r-1}+2}-v_{i_r},\dots,v_{i_r-1}-v_{i_r})$, and $\zetahat'=(\zeta_{i_{r-1}+1},\zeta_{i_{r-1}+2},\dots,\zeta_{i_{r-1}+d'})$. We have $\xihat'_i=0$ for all $i=i_{r-1}+1,\dots,i_r-1$, i.e., $\zeta_{i_{r-1}+1}=\zeta_{i_{r-1}+2}=\cdots=\zeta_{i_{r-1}+d'}$. Consider the projection of $(\overline{B},\overline{a},\overline{b})$ on $$\left(\bigoplus_{i\in\{i_{r-1},\dots,i_r-2\}}{\rm Hom}\left(U_i,U_{i+1}\right)\oplus\bigoplus_{i\in\{i_{r-1}+1,\dots,i_r-1\}}{\rm Hom}\left(U_i,U_{i-1}\right)\right)\simeq {\bf M}(\overline{\Gamma}{'},\v',\w')$$and denote by $(B',a',b')$ the corresponding element in ${\bf M}(\overline{\Gamma}{'},\v',\w')$. Note that $a'$ and $b'$ come from $B_{i_{r-1}+1,i_{r-1}}$ and $B_{i_{r-1},i_{r-1}+1}$ respectively. The map $Y_r:U_{i_{r-1}}\rightarrow U_{i_{r-1}}$  is thus $a'b'+\zeta_{i_{r-1}+1}{\rm Id}$. 

The sequence $(w'_1-v'_1,v'_1-v'_2,v'_2-v'_3,\dots, v'_{d'-1})$ is the partition $\mu_r'$. Now  apply Proposition \ref{NC} to $\left(\Gamma',\v',\w',\xihat'\right)$. Then we see that $a'b'$ belongs to the Zariski closure of nilpotent orbit $\overline{C}_r$ proving thus that $Y_r\in \zeta_{i_{r-1}+1}{\rm Id}+\overline{C}_r$.
\end{proof}

By Proposition \ref{square} and Proposition \ref{proj} we have

\begin{corollary} The image of the composition  $\mathfrak{M}_{\xihat_{L,P,\Sigma},\thetahat}(\v_{L_P,\Sigma},\w)\overset{\pi}{\rightarrow}\mathfrak{M}_{\xihat_{L,P,\Sigma}}(\v_{L,P,\Sigma},\w)\overset{\rho}{\rightarrow}\gl_n$ is $\overline{\calO}$. Moreover if $J_\thetahat=J_\xihat$, then $\pi\circ\rho$ is a bijective morphism onto its image (if $\K=\C$, it is an isomorphism).
\label{piorho}\end{corollary}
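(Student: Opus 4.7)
My plan is to deduce both assertions by a diagram chase of the commutative square furnished by Proposition \ref{square}. The top horizontal arrow of that square is a bijective morphism $\mathfrak{M}_{\xihat_{L,P,\Sigma},\thetahat}(\v_{L,P,\Sigma},\w)\to\mathbb{Z}_{L,P,\Sigma}$ (an isomorphism over $\C$), and under the canonical identification $\mathbb{X}_{L,P,\Sigma}\simeq\mathbb{Z}_{L,P,\Sigma}$ the right vertical arrow $pr_1:\mathbb{Z}_{L,P,\Sigma}\to\gl_n$ corresponds to the map $p$ of Proposition \ref{proj}. Since $p$ has image $\overline{\calO}$ by that proposition, and commutativity of the square identifies $\rho\circ\pi$ with $pr_1$ composed with the top bijective morphism, the image of $\rho\circ\pi$ is $\overline{\calO}$. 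This handles the first assertion.

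For the second assertion, the crucial preliminary step is to recognize that the hypothesis $J_\thetahat=J_\xihat$ is equivalent to $M=L$, where $M:=C_{\GL_n}(\sigma)$. By the choice of $\thetahat$ we have $J_\thetahat=I-\{i_1,\dots,i_p\}$, and by the definition of $\zetahat_{L,P,\Sigma}$ the parameter $\xi_j=\zeta_j-\zeta_{j+1}$ automatically vanishes whenever $j\notin\{i_1,\dots,i_p\}$ (because $\zeta$ is then constant on a block), so the inclusion $J_\thetahat\subseteq J_\xihat$ holds in all cases. Equality therefore reduces to $\xi_{i_r}=\epsilon_r-\epsilon_{r+1}\neq 0$ for every $r=1,\dots,p$; in view of the ordering condition (**), which forces equal $\epsilon_i$'s to appear in contiguous blocks, this is equivalent to the $\epsilon_i$'s being pairwise distinct, i.e., to $C_{\GL_n}(\sigma)=L$.

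Once $M=L$ is in force, Proposition \ref{proj} upgrades $p$ to an isomorphism $\mathbb{X}_{L,P,\Sigma}\simeq\overline{\calO}$, so that $pr_1:\mathbb{Z}_{L,P,\Sigma}\to\overline{\calO}$ is an isomorphism. Composing with the top horizontal of Proposition \ref{square}, which is a bijective morphism (an isomorphism when $\K=\C$), and invoking commutativity of the square, I conclude that $\rho\circ\pi$ is a bijective morphism onto $\overline{\calO}$, and an isomorphism whenever $\K=\C$. The only non-routine step is the translation of the combinatorial equality $J_\thetahat=J_\xihat$ into the Lie-theoretic condition $M=L$; everything else is a direct diagram chase.
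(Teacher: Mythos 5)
Your proof is correct and follows essentially the same route as the paper, which simply deduces the corollary from Proposition \ref{square} (the commutative square with the bijective morphism to $\mathbb{Z}_{L,P,\Sigma}$) and Proposition \ref{proj}. The only detail you add is the explicit verification that, given the construction of $\thetahat$ and $\xihat$ in \S\ref{parresol} and condition (**), the hypothesis $J_\thetahat=J_\xihat$ amounts to $C_{\GL_n}(\sigma)=L$; this is left implicit in the paper and your computation of it ($\xi_{i_r}=\epsilon_r-\epsilon_{r+1}$, and $\xi_j=0$ for $j\notin\{i_1,\dots,i_p\}$) is accurate.
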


\begin{remark}Assume that $\K=\C$. The condition in Remark \ref{monotone}
to have $\mathfrak{M}_{\xihat_{L,P,\Sigma}}(\v_{L,P,\Sigma},\w)\simeq\overline{\calO}$ may not be satisfied here. For instance in the example given by Shmelkin \cite[Example 4.3]{shmelkin} we have $\v_{L,P,\Sigma}=(4,1)$, $\w=(5,0)$, $\zetahat_{L,P,\Sigma}=(0,0)$, $\thetahat=(1,1)$, the adjoint orbit $\calO$ is the nilpotent orbit with partition $(3,1,1)$ while $\mathfrak{M}_{\xihat_{L,P,\Sigma}}(\v_{L,P,\Sigma},\w)$ is isomorphic to the Zariski closure of the nilpotent orbit with partition $(3,2)$.\label{shmelkin}\end{remark}

\subsubsection{Geometry of resolutions and parabolic induction}\label{georesol}

We review well-known results on the geometry of resolutions of Zariski  closure of adjoint orbits (Proposition \ref{semi-small1} and Proposition \ref{localres}). In the case where the adjoint orbit is regular nilpotent the results are contained in Borho-Macpherson's paper \cite{BM}. In order to clarify the picture we also find appropriate to review Lusztig's parabolic induction of perverse sheaves \cite{LuCS1}.

Let $L,P,\Sigma,\sigma,C,\calO$ be as in \S \ref{parresol} with $L=\GL_{s_{p+1}}\times\cdots\times \GL_{s_1}\subset \GL_n$. Recall also that $\mu_i$ is a partition of $s_i$ defined by the coordinate of $C$ in $\gl_{s_i}$. For each $i=1,\dots,p+1$, the dual partition $\mu_i'=(\mu'_{i,1},\dots,\mu'_{i,r_i})$ of $\mu_i$ defines a Levi subgroup $\hat{L}_i=\prod_j\GL_{\mu'_{i,j}}\subset\GL_{s_i}$. Let $\hat{P}_i$ be a parabolic subgroup of $\GL_{s_i}$ having $\hat{L}_i$ as a Levi subgroup and containing the upper triangular matrices. Then $\tilde{P}:=\prod_i\hat{P}_i$ is a parabolic subgroup of $L$ having $\hat{L}:=\prod_{i=1}^{p+1}\hat{L}_i$ as  a Levi factor. Put $\hat{P}:=\tilde{P}.U_P$. It is the unique parabolic subgroup of $\GL_n$ having $\hat{L}$ as  a Levi factor and contained in $P$.  

Consider the following maps 

\begin{equation}\xymatrix{\mathbb{X}_{\hat{L},\hat{P},\{\sigma\}}\ar[rr]^{\tilde{\pi}}&&\mathbb{X}_{L,P,\Sigma}\ar[rr]^{p}&&\overline{\calO}}\label{hatresol}\end{equation}where $\tilde{\pi}(X,g\hat{P})=(X,gP)$ and $p(X,gP)=X$.

Note that the variety $\mathbb{X}_{\hat{L},\hat{P},\{\sigma\}}$ is nonsingular and that $\tilde{\pi}$ is surjective. 

The decomposition $\overline{C}=\coprod_\alpha C_\alpha$ as a disjoint union of $L$-orbits provides a stratification $\overline{\Sigma}=\coprod_\alpha\Sigma_\alpha$ with $\Sigma_\alpha=\sigma+C_\alpha$ and therefore a stratification of $\mathbb{X}_{L,P,\Sigma}=\coprod_\alpha\mathbb{X}_{L,P,\Sigma_\alpha}^o$ where $$\mathbb{X}_{L,P,\Sigma_\alpha}^o:=\{(X,gP)\in \mathfrak{g}\times (\GL_n/P)|\, g^{-1}Xg\in \Sigma_\alpha+\mathfrak{u}_P\}$$is the smooth locus of $\mathbb{X}_{L,P,\Sigma_\alpha}$.

The following proposition is a particular case of a result of Lusztig \cite{LuIC} (cf. \cite[proof of Proposition 5.1.19]{letellier} for more details). 

\begin{proposition} For $x\in\overline{\calO}$, put $p^{-1}(x)_\alpha:=p^{-1}(x)\cap\mathbb{X}_{L,P,\Sigma_\alpha}^o$. Then 

$${\rm dim}\,\left\{\left.x\in\overline{\calO}\,\right|\,{\rm dim}\,p^{-1}(x)_\alpha\geq \frac{i}{2}-\frac{1}{2}({\rm dim}\, \Sigma-{\rm dim}\,\Sigma_\alpha)\right\}\leq {\rm dim}\, \overline{\calO}-i$$for all $i\in\N$.
\label{semi-smallgen}\end{proposition}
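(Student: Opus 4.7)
The plan is to reduce, via the $\GL_n$-equivariance of $p$, to the ordinary semi-smallness of a single parabolic Springer-type projection, and then to invoke Spaltenstein's classical dimension bound for partial Springer fibers.

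First, since $p$ and each stratum $\mathbb{X}_{L,P,\Sigma_\alpha}^o$ are $\GL_n$-equivariant, the function $x \mapsto \dim p^{-1}(x)_\alpha$ is constant on each adjoint orbit; hence the sublevel set on the left of the desired inequality is a union of adjoint orbits $\calO_\beta \subset \overline{\calO}$. Let $\calO_\alpha$ denote the adjoint orbit whose closure equals the image of $\mathbb{X}_{L,P,\Sigma_\alpha} \to \gl_n$ (Proposition~\ref{proj} applied to $(L,P,\Sigma_\alpha)$); then $p^{-1}(x)_\alpha = \emptyset$ unless $x \in \overline{\calO_\alpha}$, and by formula~(\ref{dimcalO}) one has $\dim \overline{\calO} - \dim \overline{\calO_\alpha} = \dim \Sigma - \dim \Sigma_\alpha$. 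Substituting, the statement reduces to
$$
2\dim p^{-1}(x)_\alpha + \dim \calO_\beta \;\leq\; \dim \overline{\calO_\alpha} \qquad \text{for all } x \in \calO_\beta \subset \overline{\calO_\alpha},
$$
i.e., to the ordinary semi-smallness of the single map $\mathbb{X}_{L,P,\Sigma_\alpha} \to \overline{\calO_\alpha}$.

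To prove this pointwise inequality, I would fix $x \in \calO_\beta$ with Jordan decomposition $x = x_s + x_n$ and analyze
$$
p^{-1}(x)_\alpha = \{gP \in \GL_n/P \,:\, g^{-1}xg \in \sigma + C_\alpha + \mathfrak{u}_P\}.
$$
After conjugating in $\GL_n$ we may assume $x_s = \sigma$. The classical straightening lemma in the theory of parabolic induction of nilpotent orbits asserts that any element of $\sigma + \mathfrak{l}^{\mathrm{nil}} + \mathfrak{u}_P$ is $U_P$-conjugate to an element of $\mathfrak{m} := \operatorname{Lie} C_{\GL_n}(\sigma)$ (using that $\operatorname{ad}(\sigma)$ is invertible on the complement of $\mathfrak{u}_P \cap \mathfrak{m}$ in $\mathfrak{u}_P$). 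Combined with $g^{-1}x_s g = \sigma$, this forces $gP \in M \cdot P / P$, and $p^{-1}(x)_\alpha$ is thereby identified with a partial flag variety in $M$ parametrizing parabolic subgroups of $M$ of type $P \cap M$ fixed by the nilpotent $x_n \in \mathfrak{m}$, with prescribed induced nilpotent action of type $C_\alpha$ on the successive Levi quotients. This is exactly a Spaltenstein partial Springer fiber for $x_n$ in $\mathfrak{m}$.

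By Spaltenstein's classical dimension theorem, this Springer fiber has dimension at most $\tfrac{1}{2}(\dim \calO_\alpha^M - \dim \calO_\beta^M)$, where $\calO_\ast^M$ denotes the $M$-orbit of $x_n$ corresponding to $\calO_\ast$ under the Lusztig--Jordan decomposition bijection. Since $C_{\GL_n}(x) = C_M(x_n)$ when $x_s = \sigma$, one has $\dim \calO_\ast - \dim \calO_\ast^M = \dim \GL_n - \dim M$ for $\ast \in \{\alpha, \beta\}$, and this promotes the $M$-level estimate to the required $\GL_n$-level bound. The main technical ingredient is the straightening lemma for parabolic induction, a routine extension from the regular-nilpotent picture of Borho--MacPherson~\cite{BM} to Lusztig's general parabolic induction~\cite{LuIC}; accommodating it in characteristic zero or large positive characteristic presents no difficulty, while careful bookkeeping of the correspondence between $\GL_n$-orbits and $M$-orbits with semisimple part $\sigma$ is what makes the dimensions line up.
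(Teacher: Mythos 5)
Your argument is essentially correct, but it is a genuinely different route from the paper's, because the paper does not prove Proposition \ref{semi-smallgen} at all: it quotes it as a special case of Lusztig's dimension estimates in \cite{LuIC}, with details delegated to \cite[proof of Proposition 5.1.19]{letellier}. Your reduction is sound: by equivariance the sublevel set is a finite union of orbits $\calO_\beta$ (all with semisimple part conjugate to $\sigma$), and since (\ref{dimcalO}) applied to $(L,P,\Sigma_\alpha)$ gives $\dim\calO_\alpha=\dim\GL_n-\dim L+\dim\Sigma_\alpha$, the proposition is equivalent to $2\dim p^{-1}(x)_\alpha\leq\dim\calO_\alpha-\dim\calO_\beta$ for $x\in\calO_\beta$. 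The straightening step also works, though your phrasing is slightly off: the lemma conjugates $g^{-1}xg$ by $U_P$ into $\sigma+\mathfrak{l}^{\mathrm{nil}}+(\mathfrak{u}_P\cap\mathfrak{m})$, and what forces $gP\in MP/P$ is that the semisimple part of the straightened element is $\sigma$ while being $U_P$-conjugate to $g^{-1}\sigma g$ (not a hypothesis ``$g^{-1}x_sg=\sigma$''); with $Q=P\cap M$ one then gets $p^{-1}(x)_\alpha\simeq\{hQ\in M/Q\mid h^{-1}x_nh\in C_\alpha+\mathfrak{u}_Q\}$, and $C_{\GL_n}(x)=C_M(x_n)$ transfers codimensions as you say. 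Two attributions need care: $\calO_\alpha^M$ is not ``the $M$-orbit of $x_n$'' but the induced nilpotent orbit ${\rm Ind}_{\mathfrak{l}\subset\mathfrak{q}}^{\mathfrak{m}}(C_\alpha)$, which corresponds to $\calO_\alpha$ under $y\mapsto y_n$; and the final dimension bound for partial flags with prescribed Jordan types on the subquotients is due to Spaltenstein--Shimomura (or can be extracted from Lusztig--Spaltenstein's theory of induced classes, or from \cite{LuIC} itself), not to Spaltenstein's theorem on the full flag manifold. In type $A$ that variety, when non-empty, has dimension $n(\lambda)-\sum_i n(\mu^{(i)})$, which equals exactly $\tfrac{1}{2}\bigl(\dim\calO_\alpha^M-\dim\calO_\beta^M\bigr)$, so your inequality holds (in fact with equality on non-empty strata). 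The trade-off: your route yields an explicit, elementary type-$A$ proof with sharp dimensions, whereas the paper's citation buys brevity and a statement valid for arbitrary reductive groups and arbitrary pairs $(L,\Sigma)$ without case analysis.
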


Hence the map $p$ satifies the condition of Proposition \ref{Lu1} and so $p_*\left(\pIC {\mathbb{X}_{L,P,\Sigma}}\right)$ is a perverse sheaf by Proposition \ref{Lu1}.  If we apply the proposition to $(\hat{L},\hat{P},\{\sigma\})$ instead of $(L,P,\Sigma)$ we find that $p\circ\tilde{\pi}$ is semi-small.

We now recall briefly Lusztig's parabolic induction of perverse sheaves \cite[\S 4]{LuCV}. It will help to clarify the picture and also some references to the literature in \S \ref{actionW}.

Put $V_1:=\{(X,g)\in\gl_n\times \GL_n\,|\, g^{-1}Xg\in\mathfrak{p}\}$ and $V_2:=\{(X,gP)\in\gl_n\times (\GL_n/P)\,|\, g^{-1}Xg\in\mathfrak{p}\}$ and consider the diagram

$$\xymatrix{\mathfrak{l}&V_1\ar[r]^{\rho'}\ar[l]_{\rho}&V_2\ar[r]^{\rho''}&\gl_n}$$where $\rho(X,g)=\pi_P(g^{-1}Xg)$ with $\pi_\mathfrak{p}:\mathfrak{p}=\mathfrak{l}\oplus\mathfrak{u}_P\rightarrow \mathfrak{l}$ the natural projection, $\rho'(X,g)=(X,gP)$, $\rho''(X,gP)=X$. The parabolic induction functor ${\rm Ind}_{\mathfrak{l}\subset\mathfrak{p}}^{\gl_n}$ is a functor from the category $\mathcal{M}_L(\mathfrak{l})$ of $L$-equivariant perverse sheaves on $\mathfrak{l}$ to $\mathcal{D}_c^b(\gl_n)$. Recall that a perverse sheaf $K$ on $\mathfrak{l}$ is said to be $L$-equivariant if  $(pr_2)^*K\simeq m^*K$ where $m:L\times\mathfrak{l}\rightarrow\mathfrak{l}, (l,X)\mapsto lXl^{-1}$ and $pr_2:L\times\mathfrak{l}\rightarrow\mathfrak{l}$ is the projection. The category $\mathcal{M}_L(\mathfrak{l})$ is then a full subcategory of $\mathcal{D}_c^b(\mathfrak{l})$ (see \cite[4.2]{letellier} for a detailed discussion on this). The morphism $\rho$ is $P$-equivariant if we let $P$ acts on $V_1$ as $g\cdot(X,h)=(X,hg^{-1})$ and on $\mathfrak{l}$ as $g\cdot X=\pi_P(g)X\pi_P(g)^{-1}$ where $\pi_P$ is the canonical projection $P=L\ltimes U_P\rightarrow L$. It is also a smooth morphism with connected fibers of dimension $m={\rm dim}\,\GL_n+{\rm dim}\, U_P$. Hence if $K\in\mathcal{M}_L(\mathfrak{l})$ then $\rho^*K[m]$ is a $P$-equivariant perverse sheaf on $V_1$. Since $\rho'$ is a locally trivial (for Zariski topology) principal $P$-bundle, the functor $(\rho')^*[{\rm dim}\, P]$ induces an equivalence of categories from the category of perverse sheaves on $V_2$ to the category of $P$-equivariant perverse sheaves on $V_1$. Hence for any $K\in\mathcal{M}_L(\mathfrak{l})$, there exists a unique (up to isomorphism) perverse sheaf $\tilde{K}$ on $V_2$ such that 

$$\rho^*K[m]\simeq (\rho')^*\tilde{K}[{\rm dim} P].$$We define ${\rm ind}_{\mathfrak{l}\subset\mathfrak{p}}^{\gl_n}(K):=(\rho'')_*\tilde{K}$.

The following result is due to Lusztig \cite[\S 4]{LuCV}.

\begin{proposition}Let $Q=MU_Q$ be another Levi decomposition in $\GL_n$ with corresponding Lie algebra decomposition $\mathfrak{q}=\mathfrak{m}\oplus\mathfrak{u}_Q$. Assume that $L\subset M$ and $P\subset Q$. Let $K\in\mathcal{M}_L(\mathfrak{l})$ and assume that ${\rm Ind}_{\mathfrak{l}\subset \mathfrak{p}\cap\mathfrak{m}}^\mathfrak{m}(K)$ is a perverse sheaf (it is then automatically $M$-equivariant). Then 

$${\rm Ind}_{\mathfrak{l}\subset\mathfrak{p}}^{\gl_n}(K)\simeq {\rm Ind}_{\mathfrak{m}\subset\mathfrak{q}}^{\gl_n}\left({\rm Ind}_{\mathfrak{l}\subset \mathfrak{p}\cap\mathfrak{m}}^\mathfrak{m}(K)\right).$$
\label{transitivity}\end{proposition}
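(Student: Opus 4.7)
The plan is to unpack both sides using the three-step smooth-pullback/proper-pushforward diagram that defines parabolic induction, and to compare them via an auxiliary two-step flag variety. Write $(\rho_{L,P}, \rho'_{L,P}, \rho''_{L,P})$ for the morphisms $\mathfrak{l} \leftarrow V_1^{L,P} \to V_2^{L,P} \to \gl_n$ used in defining ${\rm Ind}_{\mathfrak{l}\subset\mathfrak{p}}^{\gl_n}$ as in the excerpt, and similarly for the inductions from $(M,Q)$ to $\gl_n$ and from $(L,P\cap M)$ to $\mathfrak{m}$. The basic geometric inputs are: (i) since $L \subset M$ and $P \subset Q$, the intersection $P \cap M$ is a parabolic of $M$ with Levi $L$, and the Lie algebra decompositions give $\mathfrak{u}_P = \mathfrak{u}_{P\cap M} \oplus \mathfrak{u}_Q$ and $\mathfrak{p} = (\mathfrak{p}\cap\mathfrak{m}) \oplus \mathfrak{u}_Q$; (ii) the inclusion $P \subset Q$ gives a natural smooth projection $\GL_n/P \twoheadrightarrow \GL_n/Q$ with fibers $Q/P \simeq M/(P\cap M)$.

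Next I would introduce the auxiliary variety
\begin{equation*}
\tilde V := \bigl\{(X, gQ, hP) \in \gl_n \times (\GL_n/Q) \times (\GL_n/P) \,\bigl|\, hP \subset gQ,\ h^{-1}Xh \in \mathfrak{p}\bigr\}
\end{equation*}
with projections $\alpha : \tilde V \to V_2^{L,P}$, $(X,gQ,hP) \mapsto (X,hP)$, and $\beta : \tilde V \to V_2^{M,Q}$, $(X,gQ,hP) \mapsto (X,gQ)$. The map $\alpha$ is an isomorphism because $hP$ determines $hQ = gQ$, so $\tilde V$ is just a convenient relabelling of $V_2^{L,P}$. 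The map $\beta$, on the other hand, is a locally trivial fibration whose fiber over $(X,gQ)$ is precisely the $(L, P\cap M)$-space $V_2^{L, P\cap M}$ attached to $Y := \pi_Q(g^{-1}Xg) \in \mathfrak{m}$. This identification realizes the composition of $(L, P\cap M)$-induction to $\mathfrak{m}$ followed by $(M,Q)$-induction to $\gl_n$ on the same underlying space that realizes $(L,P)$-induction directly. Using that $\rho_{M,Q}$ is a smooth morphism, flat base change identifies $(\rho_{M,Q})^*{\rm Ind}_{\mathfrak{l}\subset\mathfrak{p}\cap\mathfrak{m}}^{\mathfrak{m}}(K)$ with the result of running $K$ through the $(L, P\cap M)$-diagram on each fiber of $\beta$; the hypothesis that ${\rm Ind}_{\mathfrak{l}\subset\mathfrak{p}\cap\mathfrak{m}}^{\mathfrak{m}}(K)$ is perverse is used precisely to guarantee that this intermediate object is a well-defined perverse sheaf to which the functor ${\rm Ind}_{\mathfrak{m}\subset\mathfrak{q}}^{\gl_n}$ applies.

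Finally I would match shifts. With the convention of the excerpt, $m_{L,P} = \dim \GL_n + \dim U_P$, and the uniquely determined perverse sheaf on $V_2^{L,P}$ is obtained from the identity $\rho_{L,P}^* K[m_{L,P}] \simeq (\rho'_{L,P})^* \tilde K_{L,P}[{\rm dim}\, P]$. The additivity
\begin{equation*}
(m_{L,P} - \dim P) \;=\; (m_{M,Q} - \dim Q) + (m_{L, P\cap M} - \dim(P \cap M)),
\end{equation*}
which amounts to $\dim U_P = \dim U_Q + \dim U_{P\cap M}$, together with the uniqueness of the descent of a perverse sheaf along the principal $P$-bundle $\rho'_{L,P}$, forces both sides of the claimed isomorphism to coincide with $(\rho''_{L,P})_* \tilde K_{L,P}$. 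The only real obstacle is bookkeeping: tracking shifts through three smooth morphisms and verifying that the perverse descent along $\rho'$ is compatible with the factorization $P = (P\cap M) \ltimes U_Q$. The geometric content is the clean factorization $\mathfrak{p} = (\mathfrak{p}\cap\mathfrak{m}) \oplus \mathfrak{u}_Q$ already noted in step one.
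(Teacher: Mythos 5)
Your construction is correct: the two-step flag correspondence $\tilde V\simeq V_2^{L,P}\to V_2^{M,Q}$, the Cartesian identification of its fibers using $\mathfrak{p}=(\mathfrak{p}\cap\mathfrak{m})\oplus\mathfrak{u}_Q$ and $Q/P\simeq M/(P\cap M)$, and the smooth/proper base change plus descent-and-shift bookkeeping you flag constitute exactly the standard (Lusztig) proof of this transitivity, and the perversity hypothesis is invoked in the right place. The paper does not reprove the proposition --- it cites Lusztig \cite[\S 4]{LuCV} --- and its only in-house indication of the argument, the remark following Proposition \ref{localres}, sketches precisely this kind of Cartesian-diagram comparison (diagram (\ref{trans})) in a special case, so your route coincides with the intended one.
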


The following result is easy to prove from the following cartesian diagram:

$$\xymatrix{\mathfrak{l}&V_1\ar[l]_\rho\ar[r]^{\rho'}&V_2\ar[r]^{\rho''}&\mathfrak{g}\\
\overline{\Sigma}\ar[u]&\mathbb{Y}_{L,P,\Sigma}\ar[u]\ar[l]_{b_1}\ar[r]^{b_2}&\mathbb{X}_{L,P,\Sigma}\ar[u]\ar[r]^p&\overline{\calO}\ar[u]}$$where $\mathbb{Y}_{L,P,\Sigma}:=\{(X,g)\in\gl_n\times\GL_n\,|\,g^{-1}Xg\in\overline{\Sigma}+\mathfrak{u}_P\}$, and where the vertical arrows are inclusions and $b_1,b_2,p$ are the restrictions of $\rho,\rho',\rho''$.

\begin{lemma} The $\GL_n$-equivariant perverse sheaf $p_*\left(\pIC {\mathbb{X}_{L,P,\Sigma}}\right)$ is isomorphic to ${\rm Ind}_{\mathfrak{l}\subset\mathfrak{p}}^{\gl_n}\left(\pIC {\overline{\Sigma}}\right)$. Similarly the $\GL_n$-equivariant perverse sheaf $(p\tilde{\pi})_*\big(\underline{\kappa}\big)$ is isomorphic to ${\rm Ind}_{\hat{\mathfrak{l}}\subset\hat{\mathfrak{p}}}^{\gl_n}\big(\underline{\kappa}_\sigma\big)$ where $\underline{\kappa}_\sigma$ is the constant sheaf on $\{\sigma\}$ extended by zero on $\hat{\mathfrak{l}}-\{\sigma\}$.
 \label{ind}\end{lemma}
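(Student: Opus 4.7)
The plan is to use the Cartesian diagram displayed just above the lemma, together with the uniqueness (up to isomorphism) of the perverse sheaf $\tilde{K}$ in the definition of ${\rm Ind}_{\mathfrak{l}\subset\mathfrak{p}}^{\gl_n}$. Specifically, I will show that taking $\tilde{K}=\pIC{\mathbb{X}_{L,P,\Sigma}}$ satisfies the defining isomorphism $\rho^*(\pIC{\overline{\Sigma}})[m]\simeq(\rho')^*\tilde{K}[\dim P]$ on $V_1$. Once this is established, the first assertion follows since $\rho''$ is proper (because $\GL_n/P$ is projective), its restriction to the closed subvariety $\mathbb{X}_{L,P,\Sigma}\subset V_2$ is $p$, and $\pIC{\mathbb{X}_{L,P,\Sigma}}$ is extended by zero from $\mathbb{X}_{L,P,\Sigma}$; hence $\rho''_*(\pIC{\mathbb{X}_{L,P,\Sigma}})=p_*(\pIC{\mathbb{X}_{L,P,\Sigma}})$. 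That the latter is actually perverse was already recorded in the discussion following Proposition \ref{semi-smallgen}.

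The core identification is that both $\rho^*(\pIC{\overline{\Sigma}})[m]$ and $(\rho')^*(\pIC{\mathbb{X}_{L,P,\Sigma}})[\dim P]$ are canonically isomorphic to $\pIC{\mathbb{Y}_{L,P,\Sigma}}$ (extended by zero to $V_1$). For the first, the left square of the Cartesian diagram exhibits $b_1:\mathbb{Y}_{L,P,\Sigma}\to\overline{\Sigma}$ as the base change of the smooth morphism $\rho$ along the closed immersion $\overline{\Sigma}\hookrightarrow\mathfrak{l}$; thus $b_1$ is smooth with connected fibers of dimension $m$, and since $\overline{\Sigma}$ is irreducible (as the closure of a single $L$-orbit), so is $\mathbb{Y}_{L,P,\Sigma}$. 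The standard fact that the shifted pullback under a smooth morphism with connected fibers preserves $\pIC{}$ gives $b_1^*(\pIC{\overline{\Sigma}})[m]\simeq\pIC{\mathbb{Y}_{L,P,\Sigma}}$, and since $\pIC{\overline{\Sigma}}$ is supported on $\overline{\Sigma}$, its pullback $\rho^*(\pIC{\overline{\Sigma}})$ is supported on $\rho^{-1}(\overline{\Sigma})=\mathbb{Y}_{L,P,\Sigma}$, yielding the identification. For the second, the middle square exhibits $b_2:\mathbb{Y}_{L,P,\Sigma}\to\mathbb{X}_{L,P,\Sigma}$ as the base change of the locally trivial principal $P$-bundle $\rho'$, hence $b_2$ is itself a principal $P$-bundle and $b_2^*(\pIC{\mathbb{X}_{L,P,\Sigma}})[\dim P]\simeq\pIC{\mathbb{Y}_{L,P,\Sigma}}$.

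The second assertion is proved by the same argument applied to the triple $(\hat{L},\hat{P},\{\sigma\})$ in place of $(L,P,\Sigma)$, together with $p\circ\tilde{\pi}$ in place of $p$: since $\{\sigma\}$ is a point, $\pIC{\{\sigma\}}=\underline{\kappa}_\sigma$, and since $\mathbb{X}_{\hat{L},\hat{P},\{\sigma\}}$ is nonsingular, $\pIC{\mathbb{X}_{\hat{L},\hat{P},\{\sigma\}}}=\underline{\kappa}[\dim \mathbb{X}_{\hat{L},\hat{P},\{\sigma\}}]$. The main technical obstacle I anticipate is the careful bookkeeping of the cohomological shifts: one must verify that $m$ is precisely the relative dimension of $b_1$, that $\dim P$ is the relative dimension of $b_2$, and that $\dim\mathbb{X}_{L,P,\Sigma}+\dim P=\dim\overline{\Sigma}+m$, which is consistent with the formula $\dim\mathbb{X}_{L,P,\Sigma}=\dim\GL_n-\dim L+\dim\Sigma$ stated earlier in this section, so that the isomorphisms hold as perverse sheaves rather than merely up to shift.
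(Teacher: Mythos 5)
Your argument is correct and is exactly the paper's: the paper offers no further proof than the remark that the lemma follows from the displayed Cartesian diagram, and you have simply filled in the routine verification — identifying $\rho^*(\pIC{\overline{\Sigma}})[m]$ and $(\rho')^*(\pIC{\mathbb{X}_{L,P,\Sigma}})[\dim P]$ with the extension by zero of $\pIC{\mathbb{Y}_{L,P,\Sigma}}$ via smooth base change, invoking the uniqueness of $\tilde{K}$, and noting $\rho''$ restricts to $p$ on the closed subvariety $\mathbb{X}_{L,P,\Sigma}$. The shift bookkeeping you flag indeed checks out against (\ref{dimfor}), and the second assertion follows as you say by specializing to $(\hat{L},\hat{P},\{\sigma\})$.
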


Define $\mathbb{X}_{\hat{L},\tilde{P},\{\sigma\}}:=\{(X,g\tilde{P})\in\mathfrak{l}\times (L/\tilde{P})\,|\,g^{-1}Xg\in\sigma+\mathfrak{u}_{\tilde{P}}\}$ and let $\mathbb{Y}$ be the variety $\{(y,z,g)\in P\times\gl_n\times\GL_n\,|\, g^{-1}zg\in\sigma+\mathfrak{u}_{\hat{P}}\}$ modulo the action of $\hat{P}$ given by $p\cdot (y,z,g):=(yp^{-1},z,gp^{-1})$.

Consider the following Cartesian diagram (see Borho and MacPherson \cite[\S 2.10]{BM} in the case where $\calO$ is regular nilpotent).

\begin{equation}\xymatrix{\mathbb{X}_{\hat{L},\tilde{P},\{\sigma\}}\ar[d]^r&\mathbb{Y}\ar[d]^c\ar[r]^{a_2}\ar[l]_{a_1}&\mathbb{X}_{\hat{L},\hat{P},\sigma}\ar[d]^{\tilde{\pi}}\\
\overline{\Sigma}&\mathbb{Y}_{L,P,\Sigma}\ar[r]^{b_2}\ar[l]_{b_1}&\mathbb{X}_{L,P,\Sigma}\ar[d]^p\\
&&\overline{\calO}}\label{trans}\end{equation}where $a_1(y,z,g)=\left(\pi_\mathfrak{p}(yg^{-1}zgy^{-1}),\pi_P(y)\tilde{P}\right)$, $a_2(y,z,g)=(z,g\hat{P})$, $c(y,z,g)=(z,gy^{-1})$, $r(X,g\tilde{P})=X$ where $\pi_P:L\ltimes U_P\rightarrow L$ is the canonical projection.

We now use this diagram to prove the following proposition.

\begin{proposition} The morphism $\tilde{\pi}$ is  semi-small with respect to $\mathbb{X}_{L,P,\Sigma}=\coprod_\alpha\mathbb{X}_{L,P,\Sigma_\alpha}^o$.
\label{semi-small1}\end{proposition}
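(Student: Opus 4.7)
The plan is to compare $\tilde\pi$ with the map $r:\mathbb{X}_{\hat L,\tilde P,\{\sigma\}}\to\overline{\Sigma}$ appearing in the leftmost column of the Cartesian diagram (\ref{trans}) and to transfer semi-smallness from $r$ to $\tilde\pi$ through two applications of Lemma \ref{easyfact}.

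First I would show that $r$ is semi-small with respect to the stratification $\overline{\Sigma}=\coprod_\alpha\Sigma_\alpha$. Ordinary semi-smallness of $r$ is exactly Proposition \ref{semi-smallgen} applied to the triple $(\hat L,\tilde P,\{\sigma\})$ viewed inside the Levi subgroup $L$: the relevant nilpotent orbit in $\hat{\mathfrak{l}}$ is $\{0\}$, so there is only a single source stratum, and the proposition degenerates to the bound $\dim\{Y\in\overline{\Sigma}\,|\,\dim r^{-1}(Y)\geq i/2\}\leq\dim\overline{\Sigma}-i$. To upgrade this to the stratified version I would invoke $L$-equivariance of $r$: each $\Sigma_\alpha$ is a single $L$-orbit, hence $d_\alpha:=\dim r^{-1}(Y)$ is constant on $Y\in\Sigma_\alpha$, and upper semi-continuity of fibre dimension forces $\overline{\Sigma}_\alpha$ to sit inside the closed locus $\{Y\,|\,\dim r^{-1}(Y)\geq d_\alpha\}$, whose dimension is at most $\dim\overline{\Sigma}-2d_\alpha$ by the displayed bound; this yields $2d_\alpha\leq\dim\overline{\Sigma}-\dim\Sigma_\alpha$ for $Y\in\Sigma_\alpha$, which is the desired stratified semi-smallness.

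Next I would exploit the fact that both squares in the diagram (\ref{trans}) are Cartesian. The left square realises $c:\mathbb{Y}\to\mathbb{Y}_{L,P,\Sigma}$ as the base change of $r$ along the smooth surjective morphism $b_1$ (of constant relative dimension $\dim\GL_n+\dim\mathfrak{u}_P$), while the right square realises the same $c$ as the base change of $\tilde\pi$ along the principal $P$-bundle $b_2$. Since $g^{-1}Xg\in\Sigma_\alpha+\mathfrak{u}_P$ if and only if $\pi_P(g^{-1}Xg)\in\Sigma_\alpha$, the preimages $b_1^{-1}(\Sigma_\alpha)$ and $b_2^{-1}(\mathbb{X}_{L,P,\Sigma_\alpha}^o)$ coincide, and define a common stratum $\mathbb{Y}_{L,P,\Sigma_\alpha}^o$ of $\mathbb{Y}_{L,P,\Sigma}$. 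Smoothness of $b_1$ and $b_2$ with constant-dimensional fibres then yields
\[
\dim\mathbb{Y}_{L,P,\Sigma}-\dim\mathbb{Y}_{L,P,\Sigma_\alpha}^o=\dim\overline{\Sigma}-\dim\Sigma_\alpha=\dim\mathbb{X}_{L,P,\Sigma}-\dim\mathbb{X}_{L,P,\Sigma_\alpha}^o,
\]
which is exactly the hypothesis needed to apply Lemma \ref{easyfact}.

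Two applications of Lemma \ref{easyfact} then close the argument: applied to the left Cartesian square, it converts the stratified semi-smallness of $r$ established above into semi-smallness of $c$ with respect to $\mathbb{Y}_{L,P,\Sigma}=\coprod_\alpha\mathbb{Y}_{L,P,\Sigma_\alpha}^o$; applied to the right Cartesian square, it turns the latter into semi-smallness of $\tilde\pi$ with respect to $\mathbb{X}_{L,P,\Sigma}=\coprod_\alpha\mathbb{X}_{L,P,\Sigma_\alpha}^o$, which is the claim. The main obstacle I anticipate is the first step: Proposition \ref{semi-smallgen} only supplies ordinary semi-smallness of $r$, and the upgrade to the stratified version by means of $L$-equivariance and upper semi-continuity of fibre dimension has to be carried out by hand rather than quoted.
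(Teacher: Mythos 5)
Your proposal is correct and follows essentially the same route as the paper: apply Proposition \ref{semi-smallgen} to $(\hat{L},\tilde{P},\{\sigma\})$ to get semi-smallness of $r:\mathbb{X}_{\hat{L},\tilde{P},\{\sigma\}}\rightarrow\overline{\Sigma}$ with respect to $\coprod_\alpha\Sigma_\alpha$, note the codimension equalities coming from (\ref{dimfor}), and then transfer through the two Cartesian squares of (\ref{trans}) by two applications of Lemma \ref{easyfact}. The only difference is that the paper asserts the stratified semi-smallness of $r$ directly, whereas you spell out the (correct, and in fact even easier than you indicate, since $L$-equivariance already gives constant fibre dimension on each orbit $\Sigma_\alpha$) upgrade from the one-stratum bound of Proposition \ref{semi-smallgen}; this is a detail the paper leaves implicit, not a different argument.
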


\begin{proof} By Proposition \ref{semi-smallgen} applied to $(\hat{L},\tilde{P},\{\sigma\})$ instead of $(L,P,\Sigma)$ we find that  $r:\mathbb{X}_{\hat{L},\tilde{P},\{\sigma\}}\rightarrow\overline{\Sigma}$ is semi-small with respect to the stratification $\overline{\Sigma}=\coprod_\alpha\Sigma_\alpha$. On the other hand we see from the identity (\ref{dimfor}) that \begin{equation}{\rm codim}_{\overline{\Sigma}}(\Sigma_\alpha)={\rm codim}_{\mathbb{Y}_{L,P,\Sigma}}\mathbb{Y}_{L,P,\Sigma_\alpha}={\rm codim}_{\mathbb{X}_{L,P,\Sigma}}\mathbb{X}_{L,P,\Sigma_\alpha}.\label{double=}\end{equation}From the first equality and Lemma \ref{easyfact} we deduce that $c$ is semi-small with respect to $\mathbb{Y}_{L,P,\Sigma}=\coprod_\alpha\mathbb{Y}_{L,P,\Sigma_\alpha}$. Then applying Lemma \ref{easyfact} to the right square of the diagram (\ref{trans}) we deduce the proposition.\end{proof}

\begin{proposition}The restriction of the sheaves $\mathcal{H}^i\left(\tilde{\pi}_*(\kappa)\right)$ to $\mathbb{X}_{L,P,\Sigma_\alpha}^o$ are locally constant for all $i$ and $\alpha$.
\label{localres}\end{proposition}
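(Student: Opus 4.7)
The strategy is to reduce the statement to the analogous local-constancy assertion for the map $r \colon \mathbb{X}_{\hat L,\tilde P,\{\sigma\}}\to\overline{\Sigma}$ by exploiting the Cartesian diagram (\ref{trans}), and then to derive that assertion from the $L$-equivariance of $r$.

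First I would invoke proper base change. Both $r$ and $\tilde\pi$ are proper: $r$ is a locally trivial bundle with projective fibre $L/\tilde P$, while $\tilde\pi$ is a locally trivial bundle with projective fibre $P/\hat P \simeq L/\tilde P$. Applied to the left and right Cartesian squares of (\ref{trans}), proper base change yields canonical isomorphisms
$$
b_1^{\,*}\, r_{*}(\kappa)\;\simeq\; c_{*}(\kappa)\;\simeq\; b_2^{\,*}\, \tilde\pi_{*}(\kappa),
$$
whence $b_2^{\,*}\,\mathcal{H}^i\!\left(\tilde\pi_{*}\kappa\right)\simeq b_1^{\,*}\,\mathcal{H}^i\!\left(r_{*}\kappa\right)$ for every $i$.

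Next I would use that $b_1$ and $b_2$ are smooth, being the restrictions to $\mathbb{Y}_{L,P,\Sigma}$ of the smooth morphisms $\rho$ and $\rho'$ recalled in \S\ref{georesol} (a direct check shows that $\rho^{-1}(\overline{\Sigma})=\mathbb{Y}_{L,P,\Sigma}=(\rho')^{-1}(\mathbb{X}_{L,P,\Sigma})$, so smoothness descends by base change). Moreover $b_2$ is a locally trivial principal $P$-bundle, hence smooth and surjective, and a constructible sheaf on $\mathbb{X}_{L,P,\Sigma_\alpha}^o$ is locally constant if and only if its pullback by such a $b_2$ is. Since clearly
$$
b_2^{-1}\!\left(\mathbb{X}_{L,P,\Sigma_\alpha}^o\right)\;=\;b_1^{-1}(\Sigma_\alpha)\;=\;\{(X,g)\in\mathbb{Y}_{L,P,\Sigma}\,|\,g^{-1}Xg\in\Sigma_\alpha+\mathfrak{u}_P\},
$$
the isomorphism of the previous paragraph reduces the problem to showing that $\mathcal{H}^i(r_{*}\kappa)|_{\Sigma_\alpha}$ is locally constant.

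Finally, I would observe that $r$ is $L$-equivariant for the natural $L$-actions on $\mathbb{X}_{\hat L,\tilde P,\{\sigma\}}$ and on $\overline{\Sigma}$, and that the stratum $\Sigma_\alpha=\sigma+C_\alpha$ is a \emph{single} $L$-orbit (by definition of the stratification $\overline{C}=\coprod_\alpha C_\alpha$ into $L$-orbits). Hence each cohomology sheaf $\mathcal{H}^i(r_{*}\kappa)|_{\Sigma_\alpha}$ is an $L$-equivariant constructible $\kappa$-sheaf on the homogeneous $L$-space $\Sigma_\alpha$; any such sheaf is determined by a finite-dimensional representation of the component group of the $L$-stabilizer of a point, and is therefore locally constant. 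The delicate points to check are just the applicability of proper base change (properness of $r$ and $\tilde\pi$) and the smoothness of $b_1,b_2$, and both are essentially built into the parabolic-induction set-up recalled in \S\ref{georesol}.
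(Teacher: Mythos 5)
Your proposal is correct, and its skeleton coincides with the paper's own proof: both reduce along the diagram (\ref{trans}) to the map $r\colon \mathbb{X}_{\hat{L},\tilde{P},\{\sigma\}}\to\overline{\Sigma}$, using that $b_2$ is a Zariski-locally trivial principal $P$-bundle so that local constancy may be checked after pullback by $b_2$; the isomorphism you extract by proper base change in the two Cartesian squares is exactly the paper's (\ref{isofor}), up to shifts. Where you genuinely diverge is the last step. The paper applies Proposition \ref{semi-smallgen} to $(\hat{L},\tilde{P},\{\sigma\})$ to see that $r$ is semi-small, invokes the decomposition theorem to write $r_*(\underline{\kappa})$ as a semisimple $L$-equivariant perverse sheaf, and then uses that $\overline{\Sigma}$ has finitely many $L$-orbits to conclude that the simple constituents are of the form $\pIC {\overline{\Sigma}_\alpha}$, whence local constancy on each stratum. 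You instead note that each $\mathcal{H}^i(r_*\kappa)$ is an $L$-equivariant constructible sheaf (from $L$-equivariance of the proper map $r$) and that $\Sigma_\alpha=\sigma+C_\alpha$ is a single $L$-orbit, so these sheaves are locally constant there; this is more elementary, bypassing semi-smallness and the decomposition theorem, at the small cost of spelling out why an equivariant constructible sheaf on a homogeneous space is locally constant (pull back along the orbit map and descend along a smooth surjection). One inaccuracy to correct: neither $r$ nor $\tilde{\pi}$ is a locally trivial bundle with fibre $L/\tilde{P}$ resp.\ $P/\hat{P}$ --- their fibres are Springer-type fibres whose dimension jumps --- but properness, which is all that proper base change needs, is immediate since $\mathbb{X}_{\hat{L},\tilde{P},\{\sigma\}}$ is closed in $\overline{\Sigma}\times(L/\tilde{P})$ and $\tilde{\pi}$ is the restriction of the projective morphism $\gl_n\times(\GL_n/\hat{P})\to\gl_n\times(\GL_n/P)$.
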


\begin{proof} From the above diagram (\ref{trans}) we see that \begin{equation}(b_2)^*\left(\tilde{\pi}_*(\underline{\kappa})\right)[{\rm dim}\, P]\simeq (b_1)^*(r_*(\underline{\kappa}))[m].\label{isofor}\end{equation}Since $b_2$ is a locally trivial principal $P$-bundle for the Zariski topology it is enough to prove that the restriction of $\mathcal{H}^i\left(r_*(\kappa)\right)$ to $\Sigma_\alpha$ is locally constant for all $i$ and $\alpha$. The map $r$ is semi-small and $L$-equivariant if we let $L$ acts on $\mathbb{X}_{\hat{L},\tilde{P},\{\sigma\}}$ by $v\cdot (X,m\tilde{P})=(vXv^{-1},vm\tilde{P})$. The complex  $r_*(\underline{\kappa})$ is thus a semisimple $L$-equivariant perverse sheaf. Since $\overline{\Sigma}$ has only a finite number of $L$-orbits, the simple constituents of $r_*(\underline{\kappa})$ are of the form $\pIC {\overline{\Sigma}_\alpha}$.
\end{proof}

\begin{remark} Diagrams similar to (\ref{trans}) are used by Lusztig to prove Proposition \ref{transitivity}. In our situation this works as follows. As in Lemma \ref{ind} we have $r_*(\underline{\kappa})={\rm Ind}_{\hat{\mathfrak{l}}\subset\tilde{\mathfrak{p}}}^\mathfrak{l}(\underline{\kappa}_\sigma)$. Hence it follows from the isomorphism (\ref{isofor}) that $${\rm Ind}_{\mathfrak{l}\subset\mathfrak{p}}^{\gl_n}\left({\rm Ind}_{\hat{\mathfrak{l}}\subset\tilde{\mathfrak{p}}}^\mathfrak{l}\big(\underline{\kappa}_\sigma\big)\right)\simeq{\rm Ind}_{\hat{\mathfrak{l}}\subset\hat{\mathfrak{p}}}^{\gl_n}\big(\underline{\kappa}_\sigma\big)$$which is a particular case of Proposition \ref{transitivity}.
\end{remark}

\section{Comet-shaped quiver varieties}

\subsection{Generic tuples of adjoint orbits}\label{generic}

Let $\calO_1,\dots,\calO_k$ be $k$-orbits of $\gl_n(\K)$ and let
$\tomega_i$ be the type of $\calO_i$, then
$\tomhat:=(\tomega_1,\dots,\tomega_k)$ is called the type of
$(\calO_1,\dots,\calO_k)$. 

\begin{definition} A $k$-tuple $(\calC_1,\dots,\calC_k)$ of semisimple adjoint orbits is said to be \emph{generic} if $\sum_{i=1}^k\Tr\,\calC_i=0$ and the following
holds. If $V\subset \K^n$ is a subspace stable by some
$X_i\in\calC_i$ for each $i$ such that
$$\sum_{i=1}^k\Tr\,(X_i|_V)=0,$$then either $V=0$ or $V=\K^n$.

Let $\calC_i$ be the adjoint orbit of the semisimple part of
an element of $\calO_i$. Then we say that $(\calO_1,\dots,\calO_k)$ is \emph{generic} if the tuple
$(\calC_1,\dots,\calC_k)$ of semisimple orbits is generic. 
\label{genorb}\end{definition}

We have  \cite[Lemma
2.2.2]{hausel-letellier-villegas}:

\begin{lemma} For $i=1,\dots,k$, put
$\tomega_i=\omega_i^1\omega_i^2\cdots\omega_i^{r_i}$ with
$\omega_i^j\in\calP^*$ such that $\sum_j|\omega_i^j|=n$. Put $D={\rm
min}_i{\rm max}_j|\omega_i^j|$ and let $d={\rm
gcd}\,\{|\omega_i^j|\}$. Assume that $${\rm char}(\K)\nmid D!$$If
$d>1$, generic $k$-tuples of adjoint orbits of $\gl_n$ of type
$(\tomega_1,\dots,\tomega_k)$ do not exist. If $d=1$, they
do.\label{existence}\end{lemma}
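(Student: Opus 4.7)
The plan is to treat the two implications separately. The forward direction ($d>1\Rightarrow$ non-existence) is constructive and holds in any characteristic; the backward direction ($d=1\Rightarrow$ existence) is an open-density argument whose sole use of the characteristic hypothesis is at one explicit point. Throughout set $m_{i,j}:=|\omega_i^j|$, so that the eigenvalue multiplicities of the semisimple part of any orbit of type $\tomega_i$ are the $m_{i,j}$'s.

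For the first implication, note that since $d\mid m_{i,j}$ for all $i,j$ and $\sum_j m_{i,j}=n$, we have $d\mid n$. Given any semisimple tuple $(\calC_1,\dots,\calC_k)$ of the prescribed type, fix a decomposition $\K^n=V\oplus W$ with $\dim V=n/d$ and, for each $i$, choose $X_i\in\calC_i$ preserving this decomposition such that each eigenvalue $\alpha_{i,j}$ of $X_i$ appears on $V$ with multiplicity $m_{i,j}/d$; this is possible precisely because $d\mid m_{i,j}$. Then $\Tr(X_i|_V)=(1/d)\Tr X_i$, so $\sum_i\Tr(X_i|_V)=(1/d)\sum_i\Tr\calC_i=0$, and since $0<\dim V<n$ this violates the second clause of Definition \ref{genorb}. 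No hypothesis on $\charx(\K)$ is used in this direction.

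For the second implication I would parametrize semisimple orbits of the prescribed type by their eigenvalue vectors $\alpha=(\alpha_{i,j})\in\K^N$, $N:=\sum_i r_i$, subject to the open conditions $\alpha_{i,j}\ne\alpha_{i,j'}$ for $j\ne j'$. The trace-zero constraint cuts out the hyperplane $H_0:=\{\sum_{i,j}m_{i,j}\alpha_{i,j}=0\}\subset\K^N$. By the same bookkeeping as in the first direction, non-genericity of the tuple attached to $\alpha\in H_0$ is equivalent to $\alpha\in H_{(e)}:=\{\sum_{i,j}e_{i,j}\alpha_{i,j}=0\}$ for some integer tuple $(e_{i,j})\notin\{0,(m_{i,j})\}$ satisfying $0\le e_{i,j}\le m_{i,j}$ and $\sum_j e_{i,j}$ independent of $i$. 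Only finitely many such $(e_{i,j})$ arise, so it suffices to show $H_{(e)}\ne H_0$ in each case; any $\alpha\in H_0$ avoiding the resulting proper closed subset (and the diagonals) then yields a generic tuple, and such $\alpha$ exists because $\K$ is infinite. The identity $H_{(e)}=H_0$ amounts to $(e_{i,j})$ being a $\K$-scalar multiple of $(m_{i,j})$, so the problem reduces to excluding non-trivial $\K$-proportionality of these two integer vectors.

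The main obstacle is precisely this non-proportionality claim. In characteristic zero it follows from Bezout: if $(e_{i,j})=\lambda(m_{i,j})$ then, using integers $c_{i,j}$ with $\sum c_{i,j}m_{i,j}=1$ (available because $d=1$), we have $\lambda=\sum c_{i,j}e_{i,j}\in\Z$; combined with $0\le\lambda\le 1$ (forced by $0\le e_{i,j}\le m_{i,j}$ with some $m_{i,j}>0$), this gives $\lambda\in\{0,1\}$ and hence $(e_{i,j})\in\{0,(m_{i,j})\}$. In positive characteristic the integrality of $\lambda$ can fail, and this is exactly what the hypothesis $\charx(\K)\nmid D!$, i.e.\ $\charx(\K)=0$ or $\charx(\K)>D$, is designed to rule out. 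I would focus on the distinguished index $i^*$ realizing $D=\max_j m_{i^*,j}$: all entries $m_{i^*,j}$ and admissible $e_{i^*,j}$ then lie in $\{0,1,\dots,D\}$, so the reduction $\Z\to\F_p$ is injective on this range when $p>D$. Performing the Bezout calculation within column $i^*$ lifts the $\F_p$-proportionality back to an honest integer proportionality at that column, and combining this with the global Bezout relation (using $d=1$) excludes $\lambda\ne 0,1$. The bookkeeping at column $i^*$ is the technical heart of the argument; the remaining steps are routine genericity statements.
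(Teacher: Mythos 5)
This lemma is not proved in the paper at all: it is quoted from \cite[Lemma 2.2.2]{hausel-letellier-villegas}, so there is no in-paper argument to compare with, and your proposal has to stand on its own. Its overall strategy (encode non-genericity by integer vectors $(e_{i,j})$ with $0\le e_{i,j}\le m_{i,j}$ and equal row sums, then avoid finitely many linear conditions on the trace hyperplane, with a Bezout argument when $d=1$) is the natural one, and in characteristic zero your argument is complete. But both halves have problems in positive characteristic. In the $d>1$ direction your parenthetical claim that no hypothesis on $\charx(\K)$ is used is false: the step $\sum_i\Tr(X_i|_V)=(1/d)\sum_i\Tr\,\calC_i$ divides by $d$ in $\K$. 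If $\charx(\K)=p$ divides $d$ the implication genuinely fails: for $k=2$, $n=p$ and both types a single partition of $n$, the central orbits $\alpha_1\,{\rm Id}$, $\alpha_2\,{\rm Id}$ with $\alpha_1+\alpha_2\neq 0$ form a generic pair (the condition $p\alpha_1+p\alpha_2=0$ is automatic, and no proper nonzero subspace has vanishing trace sum), even though $d=p>1$. Under the stated hypothesis the step can be repaired because $d\le D$, so $\charx(\K)\nmid D!$ forces $\charx(\K)\nmid d$ — but this observation must be made; as written the division is unjustified and the characteristic-free claim is wrong.

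The more serious gap is in the $d=1$ direction in characteristic $p>D$, which you yourself defer as ``the technical heart''. What you must show is that the trace hyperplane $H_0$ is contained neither in any bad $H_{(e)}$ (equivalently, that the reduction of $(e_{i,j})$ is not a $\K$-multiple of that of $(m_{i,j})$) nor in any diagonal $\{\alpha_{i,j}=\alpha_{i,j'}\}$; the diagonals are dismissed in one word, and the proposed column-$i^*$ computation only yields $\lambda\in\F_p$ together with the fact that each $e_{i^*,j}$ is the representative of $\lambda m_{i^*,j}$ in $[0,m_{i^*,j}]$ — by itself this does not pin $\lambda$ to $\{0,1\}$ (e.g. $p=5$, $m_{i^*,j}=4$, $\lambda=4$, $e_{i^*,j}=1$ is consistent entrywise), and it says nothing about rows all of whose multiplicities vanish mod $p$, which the hypothesis does not exclude since $D$ is a minimum over $i$. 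Concretely, take $p=5$, $k=2$, multiplicities $(3,2)$ and $(5)$: then $d=1$, $D=3$, $5\nmid D!$, yet $3x_{1,1}+2x_{1,2}+5x_{2,1}\equiv 3(x_{1,1}-x_{1,2})\pmod 5$, so the trace form degenerates onto a diagonal and there is not even a trace-zero tuple of this type with distinct eigenvalues — your ``routine genericity statements'' are exactly where all the difficulty (and the role of the characteristic hypothesis, applied globally and not just in column $i^*$) sits. As it stands, the positive-characteristic half of the existence statement is not proved, and the sketch you give does not indicate an argument that would close it.
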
 

\begin{remark} Our definition of generic tuple is equivalent to that given in Kostov \cite[\S 1.2]{Kostov} and in Crawley-Boevey \cite[\S 6]{crawley-mat}. Let us recall that definition as we will need it. To do that, for each $i=1,2,\dots,k$, we let $\alpha_{i,1},\alpha_{i,2},\dots,\alpha_{i,p_i}$ be the distinct eigenvalues of $\calO_i$ with respective multiplicities $m_{i,1},m_{i,2},\dots,m_{i,p_i}$. Then  $(\calO_1,\dots,\calO_k)$ is generic if we have $$\sum_{i=1}^k\sum_{j=1}^{p_i}m_{i,j}\alpha_{i,j}=0$$which corresponds to our condition $\sum_{i=1}^k{\rm Tr}\,(\calO_i)=0$, and if for any integers $0\leq m_{i,j}'\leq m_{i,j}$ such that $\sum_{j=1}^{p_i}m_{i,j}'$ does not depend on $i$ the equality $$\sum_{i=1}^k\sum_{j=1}^{p_i}m_{i,j}'\alpha_{i,j}=0$$holds if and only if $m_{i,j}'=m_{i,j}$ for all $i,j$ or $m_{i,j}'=0$ for all $i,j$.
\label{genericdef2}\end{remark}

\subsection{Affine comet-shaped quiver varieties}\label{star}

Let $(\calO_1,\dots,\calO_k)$ be a  $k$-tuple of
adjoint orbits of $\gl_n(\K)$ and let $g\geq 0$ be an integer. Put

$$\bfO:=(\gl_n)^{2g}\times\overline{\calO}_1\times\cdots\times\overline{\calO}_k,$$

$$\bfO^o:=(\gl_n)^{2g}\times\calO_1\times\cdots\times\calO_k.$$

Consider the affine variety

$$\calV_\bfO:=\left\{(A_1,B_1,\dots,A_g,B_g,X_1,\dots,X_k)\in\bfO\,\left|\, \sum_{i=1}^g[A_i,B_i]+\sum_{i=1}^kX_i=0\right.\right\},$$and let $\calV_\bfO^o$ denote the open subset $\calV_\bfO\cap\bfO^o$  of $\calV_\bfO$. 

We assume that $\sum_{i=1}^k{\rm Tr}\,(\calO_i)=0$ since otherwise $\calV_\bfO$ is clearly empty.

If $(\calO_1',\dots,\calO_k')$ is an other $k$-tuple of adjoint orbits of $\gl_n$, then we write $\bfO'\unlhd\bfO$ if for all $i=1,2,\dots,k$ we have $\calO_i'\subset\overline{\calO}_i$.  Note that is $(\calO_1,\dots,\calO_k)$ is generic and $\bfO'\unlhd\bfO$, then $(\calO_1',\dots,\calO_k')$ is also generic.

Note that we have the finite partition $$\calV_\bfO=\coprod_{\bfO'\unlhd\bfO}\calV_{\bfO'}^o.$$

Let $\PGL_n(\K)$ acts on $\calV_\bfO$ by simultaneoulsy
conjugating the $2g+k$ matrices and define $$\calQ_\bfO:=\calV_\bfO/\!/\PGL_n={\rm Spec}\left(\K[\calV_\bfO]^{\PGL_n}\right).$$We denote by $\calQ_\bfO^o$ the image of $\calV_\bfO^o$ in $\calQ_\bfO$. By Theorem \ref{goodquotient}(3) it an open subset of $\calQ_\bfO$.

\begin{definition} An element $(A_1,B_1,\dots,A_g,B_g,X_1,\dots,X_k)\in\calV_\bfO^o$ is said to be \emph{irreducible} if there is no non-zero proper subspace of $\K^n$ which is preserved by all  matrices $A_1,B_1,\dots,A_g,B_g,X_1,\dots,X_k$.
\end{definition}

When $g=0$, the problem of describing the $k$-tuples $(\calO_1,\dots,\calO_k)$ for which $\calV_\bfO^o$ admits irreducible elements is stated and studied by Kostov (see \cite{Kostov} for a survey) who calls it the (additive)  Deligne-Simpson problem.

In \cite{crawley-mat}, Crawley-Boevey reformulates this problem and Kostov's answer in terms of preprojective algebras and the moment map for representations of quivers. 

Let us now review Crawley-Boevey's work as we will need it later. More precisely we define a quiver $\Gamma_\bfO$ and parameters $\v_\bfO$, $\w$, $\xihat_\bfO$ such that there is a bijective morphism $\mathfrak{M}_{\xihat_\bfO}\left(\v_\bfO,\w\right)\rightarrow\calQ_\bfO$ which is an isomorphism when $\K=\C$.

 Consider the following quiver $\Gamma_\bfO$ \footnote{The picture is from
\cite{daisuke}.}with $g$ loops at the central vertex $0$ and with set of vertices $I=\{0\}\cup\{[i,j]\}_{1\leq i\leq k,1\leq j\leq s_i}$:

\vspace{10pt}

\begin{center}
\unitlength 0.1in
\begin{picture}( 52.1000, 15.4500)(  4.0000,-17.0000)
% CIRCLE 2 0 3 0
% 4 1375 1010 1305 1010 975 1010 975 1010
%
\special{pn 8}%
\special{ar 1376 1010 70 70  0.0000000 6.2831853}%
% CIRCLE 2 0 3 0
% 4 1945 410 1875 410 1545 410 1545 410
%
\special{pn 8}%
\special{ar 1946 410 70 70  0.0000000 6.2831853}%
% CIRCLE 2 0 3 0
% 4 2945 410 2875 410 2545 410 2545 410
%
\special{pn 8}%
\special{ar 2946 410 70 70  0.0000000 6.2831853}%
% CIRCLE 2 0 3 0
% 4 5540 410 5470 410 5140 410 5140 410
%
\special{pn 8}%
\special{ar 5540 410 70 70  0.0000000 6.2831853}%
% CIRCLE 2 0 3 0
% 4 1945 810 1875 810 1545 810 1545 810
%
\special{pn 8}%
\special{ar 1946 810 70 70  0.0000000 6.2831853}%
% CIRCLE 2 0 3 0
% 4 2945 810 2875 810 2545 810 2545 810
%
\special{pn 8}%
\special{ar 2946 810 70 70  0.0000000 6.2831853}%
% CIRCLE 2 0 3 0
% 4 5540 810 5470 810 5140 810 5140 810
%
\special{pn 8}%
\special{ar 5540 810 70 70  0.0000000 6.2831853}%
% CIRCLE 2 0 3 0
% 4 1945 1610 1875 1610 1545 1610 1545 1610
%
\special{pn 8}%
\special{ar 1946 1610 70 70  0.0000000 6.2831853}%
% CIRCLE 2 0 3 0
% 4 2945 1610 2875 1610 2545 1610 2545 1610
%
\special{pn 8}%
\special{ar 2946 1610 70 70  0.0000000 6.2831853}%
% CIRCLE 2 0 3 0
% 4 5540 1610 5470 1610 5140 1610 5140 1610
%
\special{pn 8}%
\special{ar 5540 1610 70 70  0.0000000 6.2831853}%
% VECTOR 2 0 3 0
% 2 1890 1560 1440 1050
%
\special{pn 8}%
\special{pa 1890 1560}%
\special{pa 1440 1050}%
\special{fp}%
\special{sh 1}%
\special{pa 1440 1050}%
\special{pa 1470 1114}%
\special{pa 1476 1090}%
\special{pa 1500 1088}%
\special{pa 1440 1050}%
\special{fp}%
% VECTOR 2 0 3 0
% 2 2870 410 2020 410
%
\special{pn 8}%
\special{pa 2870 410}%
\special{pa 2020 410}%
\special{fp}%
\special{sh 1}%
\special{pa 2020 410}%
\special{pa 2088 430}%
\special{pa 2074 410}%
\special{pa 2088 390}%
\special{pa 2020 410}%
\special{fp}%
% VECTOR 2 0 3 0
% 4 3720 410 3010 410 3730 410 3010 410
%
\special{pn 8}%
\special{pa 3720 410}%
\special{pa 3010 410}%
\special{fp}%
\special{sh 1}%
\special{pa 3010 410}%
\special{pa 3078 430}%
\special{pa 3064 410}%
\special{pa 3078 390}%
\special{pa 3010 410}%
\special{fp}%
\special{pa 3730 410}%
\special{pa 3010 410}%
\special{fp}%
\special{sh 1}%
\special{pa 3010 410}%
\special{pa 3078 430}%
\special{pa 3064 410}%
\special{pa 3078 390}%
\special{pa 3010 410}%
\special{fp}%
% VECTOR 2 0 3 0
% 2 2870 810 2020 810
%
\special{pn 8}%
\special{pa 2870 810}%
\special{pa 2020 810}%
\special{fp}%
\special{sh 1}%
\special{pa 2020 810}%
\special{pa 2088 830}%
\special{pa 2074 810}%
\special{pa 2088 790}%
\special{pa 2020 810}%
\special{fp}%
% VECTOR 2 0 3 0
% 2 2870 1610 2020 1610
%
\special{pn 8}%
\special{pa 2870 1610}%
\special{pa 2020 1610}%
\special{fp}%
\special{sh 1}%
\special{pa 2020 1610}%
\special{pa 2088 1630}%
\special{pa 2074 1610}%
\special{pa 2088 1590}%
\special{pa 2020 1610}%
\special{fp}%
% VECTOR 2 0 3 0
% 4 3730 810 3020 810 3740 810 3020 810
%
\special{pn 8}%
\special{pa 3730 810}%
\special{pa 3020 810}%
\special{fp}%
\special{sh 1}%
\special{pa 3020 810}%
\special{pa 3088 830}%
\special{pa 3074 810}%
\special{pa 3088 790}%
\special{pa 3020 810}%
\special{fp}%
\special{pa 3740 810}%
\special{pa 3020 810}%
\special{fp}%
\special{sh 1}%
\special{pa 3020 810}%
\special{pa 3088 830}%
\special{pa 3074 810}%
\special{pa 3088 790}%
\special{pa 3020 810}%
\special{fp}%
% VECTOR 2 0 3 0
% 4 3730 1610 3020 1610 3740 1610 3020 1610
%
\special{pn 8}%
\special{pa 3730 1610}%
\special{pa 3020 1610}%
\special{fp}%
\special{sh 1}%
\special{pa 3020 1610}%
\special{pa 3088 1630}%
\special{pa 3074 1610}%
\special{pa 3088 1590}%
\special{pa 3020 1610}%
\special{fp}%
\special{pa 3740 1610}%
\special{pa 3020 1610}%
\special{fp}%
\special{sh 1}%
\special{pa 3020 1610}%
\special{pa 3088 1630}%
\special{pa 3074 1610}%
\special{pa 3088 1590}%
\special{pa 3020 1610}%
\special{fp}%
% VECTOR 2 0 3 0
% 2 5465 410 4745 410
%
\special{pn 8}%
\special{pa 5466 410}%
\special{pa 4746 410}%
\special{fp}%
\special{sh 1}%
\special{pa 4746 410}%
\special{pa 4812 430}%
\special{pa 4798 410}%
\special{pa 4812 390}%
\special{pa 4746 410}%
\special{fp}%
% VECTOR 2 0 3 0
% 2 5465 810 4745 810
%
\special{pn 8}%
\special{pa 5466 810}%
\special{pa 4746 810}%
\special{fp}%
\special{sh 1}%
\special{pa 4746 810}%
\special{pa 4812 830}%
\special{pa 4798 810}%
\special{pa 4812 790}%
\special{pa 4746 810}%
\special{fp}%
% VECTOR 2 0 3 0
% 2 5465 1610 4745 1610
%
\special{pn 8}%
\special{pa 5466 1610}%
\special{pa 4746 1610}%
\special{fp}%
\special{sh 1}%
\special{pa 4746 1610}%
\special{pa 4812 1630}%
\special{pa 4798 1610}%
\special{pa 4812 1590}%
\special{pa 4746 1610}%
\special{fp}%
% VECTOR 2 0 3 0
% 2 1880 840 1450 990
%
\special{pn 8}%
\special{pa 1880 840}%
\special{pa 1450 990}%
\special{fp}%
\special{sh 1}%
\special{pa 1450 990}%
\special{pa 1520 988}%
\special{pa 1500 972}%
\special{pa 1506 950}%
\special{pa 1450 990}%
\special{fp}%
% VECTOR 2 0 3 0
% 2 1900 460 1430 960
%
\special{pn 8}%
\special{pa 1900 460}%
\special{pa 1430 960}%
\special{fp}%
\special{sh 1}%
\special{pa 1430 960}%
\special{pa 1490 926}%
\special{pa 1468 922}%
\special{pa 1462 898}%
\special{pa 1430 960}%
\special{fp}%
% DOT 2 0 3 0
% 4 1945 1010 1945 1210 1945 1410 1945 1410
%
\special{pn 8}%
\special{sh 1}%
\special{ar 1946 1010 10 10 0  6.28318530717959E+0000}%
\special{sh 1}%
\special{ar 1946 1210 10 10 0  6.28318530717959E+0000}%
\special{sh 1}%
\special{ar 1946 1410 10 10 0  6.28318530717959E+0000}%
\special{sh 1}%
\special{ar 1946 1410 10 10 0  6.28318530717959E+0000}%
% DOT 2 0 3 0
% 4 4055 410 4265 410 4455 410 4455 410
%
\special{pn 8}%
\special{sh 1}%
\special{ar 4056 410 10 10 0  6.28318530717959E+0000}%
\special{sh 1}%
\special{ar 4266 410 10 10 0  6.28318530717959E+0000}%
\special{sh 1}%
\special{ar 4456 410 10 10 0  6.28318530717959E+0000}%
\special{sh 1}%
\special{ar 4456 410 10 10 0  6.28318530717959E+0000}%
% DOT 2 0 3 0
% 4 4055 810 4265 810 4455 810 4455 810
%
\special{pn 8}%
\special{sh 1}%
\special{ar 4056 810 10 10 0  6.28318530717959E+0000}%
\special{sh 1}%
\special{ar 4266 810 10 10 0  6.28318530717959E+0000}%
\special{sh 1}%
\special{ar 4456 810 10 10 0  6.28318530717959E+0000}%
\special{sh 1}%
\special{ar 4456 810 10 10 0  6.28318530717959E+0000}%
% DOT 2 0 3 0
% 4 4055 1610 4265 1610 4455 1610 4455 1610
%
\special{pn 8}%
\special{sh 1}%
\special{ar 4056 1610 10 10 0  6.28318530717959E+0000}%
\special{sh 1}%
\special{ar 4266 1610 10 10 0  6.28318530717959E+0000}%
\special{sh 1}%
\special{ar 4456 1610 10 10 0  6.28318530717959E+0000}%
\special{sh 1}%
\special{ar 4456 1610 10 10 0  6.28318530717959E+0000}%
\put(19.7000,-2.4500){\makebox(0,0){$[1,1]$}}%
\put(29.7000,-2.4000){\makebox(0,0){$[1,2]$}}%
\put(55.7000,-2.5000){\makebox(0,0){$[1,s_1]$}}%
\put(19.7000,-6.5500){\makebox(0,0){$[2,1]$}}%
\put(29.7000,-6.4500){\makebox(0,0){$[2,2]$}}%
\put(55.7000,-6.5500){\makebox(0,0){$[2,s_2]$}}%
\put(19.7000,-17.8500){\makebox(0,0){$[k,1]$}}%
\put(29.7000,-17.8500){\makebox(0,0){$[k,2]$}}%
\put(55.7000,-17.8500){\makebox(0,0){$[k,s_k]$}}%
\put(14.3000,-7.6000){\makebox(0,0){$0$}}%
\special{pn 8}%
\special{sh 1}%
\special{ar 2950 1010 10 10 0  6.28318530717959E+0000}%
\special{sh 1}%
\special{ar 2950 1210 10 10 0  6.28318530717959E+0000}%
\special{sh 1}%
\special{ar 2950 1410 10 10 0  6.28318530717959E+0000}%
\special{sh 1}%
\special{ar 2950 1410 10 10 0  6.28318530717959E+0000}%
\special{pn 8}%
\special{ar 1110 1000 290 220  0.4187469 5.9693013}%
\special{pn 8}%
\special{pa 1368 1102}%
\special{pa 1376 1090}%
\special{fp}%
\special{sh 1}%
\special{pa 1376 1090}%
\special{pa 1324 1138}%
\special{pa 1348 1136}%
\special{pa 1360 1158}%
\special{pa 1376 1090}%
\special{fp}%
\special{pn 8}%
\special{ar 910 1000 510 340  0.2464396 6.0978374}%
\special{pn 8}%
\special{pa 1400 1096}%
\special{pa 1406 1084}%
\special{fp}%
\special{sh 1}%
\special{pa 1406 1084}%
\special{pa 1362 1138}%
\special{pa 1384 1132}%
\special{pa 1398 1152}%
\special{pa 1406 1084}%
\special{fp}%
\special{pn 8}%
\special{sh 1}%
\special{ar 540 1000 10 10 0  6.28318530717959E+0000}%
\special{sh 1}%
\special{ar 620 1000 10 10 0  6.28318530717959E+0000}%
\special{sh 1}%
\special{ar 700 1000 10 10 0  6.28318530717959E+0000}%
\special{pn 8}%
\special{ar 1200 1000 170 100  0.7298997 5.6860086}%
\special{pn 8}%
\special{pa 1314 1076}%
\special{pa 1328 1068}%
\special{fp}%
\special{sh 1}%
\special{pa 1328 1068}%
\special{pa 1260 1084}%
\special{pa 1282 1094}%
\special{pa 1280 1118}%
\special{pa 1328 1068}%
\special{fp}%
\end{picture}%
\end{center}

\vspace{10pt}

The dimension vector $\mathbf{v}_\bfO$ of $\Gamma_\bfO$ with coordinate $v_i$ at $i\in I$ is defined as follows. We choose $k$ total orderings $\leq_i$ on $\calP$ and for each $i=1,2,\dots,k$, we define the  sequence $v_{[i,1]}> v_{[i,2]}>\cdots> v_{[i,s_i]}$ as the dimension vector $\v_{\calO_i}$ associated with the orbit $\calO_i$ with respect to $\leq_i$ as in \S\ref{type}. Note that the vector $\v_\bfO$ depends only on the type of the adjoint orbits $\calO_1,\dots,\calO_k$.

We also define $\xihat_\bfO\in\K^I$ as follows. For each $i$, let $\zetahat_{\calO_i}=(\zeta_{i,1},\dots,\zeta_{i,s_i+1})$ and $\xihat_{\calO_i}=(\xi_{[i,1]},\dots,\xi_{[i,s_i]})$ be the two sequences defined from  $\calO_i$ as in \S\ref{type}. We also put $\xi_0=-\sum_{i=1}^k\zeta_{i,1}$. This defines an element $\xihat_\bfO=\{\xi_0\}\cup\{\xi_{[i,j]}\}_{i,j}\in\K^I$ such that $\xihat_\bfO\centerdot\v_\bfO=0$. For a representation $\varphi$ of $\overline{\Gamma}_\bfO$, denote by $\varphi_{[i,1]}$ the linear map associated to the arrow whose tail is $[i,1]$, by $\varphi_1,\dots,\varphi_g$ the matrices associated to the loops in $\Omega$ and by $\varphi_1^*,\dots,\varphi_g^*$ the ones associated to the loops in $\overline{\Omega}-\Omega$. We have the following consequence of Proposition \ref{NC} (see Crawley-Boevey \cite{crawley-mat}\cite{crawleyred}).

\begin{proposition}  The map $\mu_{\v_\bfO}^{-1}(\xihat_\bfO)\rightarrow \calV_\bfO$ given by $\varphi\mapsto (A_1,B_1,\dots,A_g,B_g,X_1,\dots,X_k)$, with \begin{equation}A_i=\varphi_i,\, B_i=\varphi_i^*,\,X_i=\varphi_{[i,1]}\varphi_{[i,1]}^*+\zeta_{i,1}{\rm Id},\label{equationXi}\end{equation} is well-defined and maps simple representations onto the subset $\big(\calV_\bfO^o\big)^{\rm irr}$ of irreducible elements. This map induces a bijective morphism $$\mathfrak{M}_{\xihat_\bfO}(\v_\bfO)\longrightarrow\calQ_\bfO$$which maps $\mathfrak{M}_{\xihat_\bfO}^s(\v_\bfO)$ onto  $\big(\calQ_\bfO^o\big)^{\rm irr}$. If $\K=\C$, this bijective map is an isomorphism.\label{crawley}\end{proposition}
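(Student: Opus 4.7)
The plan is to reduce Proposition~\ref{crawley} to the single-orbit case, Theorem~\ref{NC}, by exploiting the comet structure of $\Gamma_\bfO$. A representation $\varphi\in\mu_{\v_\bfO}^{-1}(\xihat_\bfO)$ decomposes naturally according to the central vertex $0$ and the $k$ legs: at $0$, the $g$ pairs of loops give $g$ pairs $(A_i,B_i)=(\varphi_i,\varphi_i^*)\in\gl_n\times\gl_n$, and for each $i=1,\dots,k$ the restriction of $\varphi$ to the subquiver with vertices $0,[i,1],\dots,[i,s_i]$ is a representation of a type-$A$ framed quiver as in \S\ref{adjoint}, with $\w=(n,0,\dots,0)$ and parameters $\xihat_{\calO_i}$ inherited from the definition of $\xihat_\bfO$.

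The first step is well-definedness. At each non-central vertex $[i,j]$, the moment map relation is exactly the preprojective relation in condition (3) of Theorem~\ref{CB5} on the $i$-th leg, so that theorem yields $X_i:=\varphi_{[i,1]}\varphi_{[i,1]}^*+\zeta_{i,1}\mathrm{Id}\in\overline{\calO}_i$. The moment map at the central vertex $0$ reads
$$
\sum_{i=1}^g[\varphi_i,\varphi_i^*]+\sum_{i=1}^k\varphi_{[i,1]}\varphi_{[i,1]}^*=\xi_0\,\mathrm{Id};
$$
since $\xi_0=-\sum_{i=1}^k\zeta_{i,1}$, subtracting $\sum_i\zeta_{i,1}\mathrm{Id}$ gives $\sum[A_i,B_i]+\sum X_i=0$, so the image lies in $\calV_\bfO$.

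Next I would match simple representations with irreducible tuples. Given a subrepresentation $\varphi'\subset\varphi$ with central component $V_0\subset\K^n$, the loops and the arrows adjacent to $0$ force $V_0$ to be stable under all $A_i,B_i,X_i$; conversely, given a $\K^n$-stable subspace $V$, one constructs a subrepresentation leg-by-leg by setting $V_0:=V$ and $V_{[i,j]}$ equal to the image of $V$ under the appropriate composition of maps on the $i$-th leg, in the spirit of Remark~\ref{remCB}. Dimension counting combined with Theorem~\ref{CB5}(3) shows this subrepresentation has dimensions forced by those of $\v_\bfO$. This identifies simple $\varphi$ with irreducible tuples, and simultaneously (using the characterisation of $\calO_i$ in Theorem~\ref{NC}) identifies simple representations on the $i$-th leg with the openness condition $X_i\in\calO_i$, hence matching $\mathfrak{M}_{\xihat_\bfO}^s(\v_\bfO)$ with $(\calQ_\bfO^o)^{\mathrm{irr}}$.

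Finally, to obtain the bijective morphism $\mathfrak{M}_{\xihat_\bfO}(\v_\bfO)\to\calQ_\bfO$ and its upgrade to an isomorphism over $\C$, I would follow the argument of Theorem~\ref{NC}. The assignment $\varphi\mapsto(A_i,B_i,X_i)$ is $\GL_{\v_\bfO}$-invariant (the $\GL_{v_{[i,j]}}$-factors act trivially on the $X_i$ since they occur as $\varphi_{[i,1]}\varphi_{[i,1]}^*$) and $\PGL_n$-equivariant, giving a morphism of affine quotients; bijectivity on closed points follows because both sides parameterise semisimple/closed $\PGL_n$-orbits, and the leg-by-leg correspondence together with the matching of simple and irreducible loci identifies them. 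The main obstacle is the isomorphism assertion in characteristic zero: one must show the pullback $\K[\calV_\bfO]^{\PGL_n}\to\K[\mu_{\v_\bfO}^{-1}(\xihat_\bfO)]^{\GL_{\v_\bfO}}$ is surjective. This is the First Fundamental Theorem of Invariant Theory applied along each leg (as in Kraft--Procesi~\cite{KP} and Crawley-Boevey~\cite{crawley-mat,crawleyred}) combined at the central vertex; concretely, every $\GL_{v_{[i,j]}}$-invariant polynomial in the $\varphi_{[i,j]},\varphi_{[i,j]}^*$ is generated by traces of products of the composite maps $\varphi_{[i,1]}^*\cdots\varphi_{[i,j]}^*\varphi_{[i,j]}\cdots\varphi_{[i,1]}$, which on $\calV_\bfO$ become polynomials in $X_i-\zeta_{i,1}\mathrm{Id}$ and hence in $X_i$. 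Combined with Proposition~\ref{affinegoodquotient} this yields the required isomorphism.
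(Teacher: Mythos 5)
Your proposal is correct and follows exactly the route the paper intends: the paper states this proposition without proof, as a consequence of Theorem \ref{NC} (with references to Crawley-Boevey), and your leg-by-leg reduction — the moment-map relation at the central vertex, Theorem \ref{CB5}/Theorem \ref{NC} on each leg, the kernel/image argument matching simple representations with irreducible tuples in $\calV_\bfO^o$, and the trace/first-fundamental-theorem argument (via Proposition \ref{affinegoodquotient}) for the isomorphism over $\C$ — is precisely that derivation. Apart from a harmless notational slip in the displayed composite (the down-and-back excursion at the central vertex should read $\varphi_{[i,1]}\cdots\varphi_{[i,j]}\varphi_{[i,j]}^*\cdots\varphi_{[i,1]}^*$, whose entries reduce via the preprojective relations to polynomials in $X_i-\zeta_{i,1}{\rm Id}$), nothing needs changing.
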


The above proposition together with Theorem \ref{non-emptycond} implies a criterion in terms of roots for the non-emptyness of $\big(\calV_\bfO^o\big)^{\rm irr}$ solving thus the additive Deligne-Simpson problem.

From Proposition \ref{crawley} and Theorem \ref{irrquiver} we have the following result:

\begin{corollary} If $\big(\calV_\bfO^o\big)^{\rm irr}\neq\emptyset$ then both $\calV_\bfO$ and $\calQ_\bfO$ are irreducible respectively of dimension ${\rm dim}\,\bfO-n^2+1$ and \begin{equation}d_\bfO=2-{^t}\v_\bfO C_{\Gamma_\bfO}\v_\bfO={\rm dim}\,\bfO-2n^2+2\label{dimeq}\end{equation}where $C_{\Gamma_\bfO}$ is the Cartan matrix of $\Gamma_\bfO$.
\label{lissite} \end{corollary}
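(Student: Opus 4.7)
The plan is to reduce both assertions to Proposition \ref{crawley} and Theorem \ref{irrquiver}. By the last statement of Proposition \ref{crawley}, the assumption $\bigl(\calV_\bfO^o\bigr)^{\rm irr}\neq\emptyset$ gives $\mathfrak{M}_{\xihat_\bfO}^s(\v_\bfO)\neq\emptyset$, so Theorem \ref{irrquiver} applies: $\mu_{\v_\bfO}^{-1}(\xihat_\bfO)$ and $\mathfrak{M}_{\xihat_\bfO}(\v_\bfO)$ are both irreducible, and $\mathfrak{M}_{\xihat_\bfO}^s(\v_\bfO)$ is a nonempty open subset of pure dimension $d_\bfO=2-{^t}\v_\bfO{\bf C}_\bfO\v_\bfO$. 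The bijective morphism of Proposition \ref{crawley} then transfers irreducibility and this dimension from $\mathfrak{M}_{\xihat_\bfO}(\v_\bfO)$ to $\calQ_\bfO$.

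For $\calV_\bfO$, I would first upgrade Proposition \ref{crawley} to the observation that the whole map $\mu_{\v_\bfO}^{-1}(\xihat_\bfO)\to\calV_\bfO$ is surjective: given $(A_1,B_1,\dots,A_g,B_g,X_1,\dots,X_k)\in\calV_\bfO$, set $\varphi_j=A_j$, $\varphi_j^*=B_j$ and apply Theorem \ref{NC} to each $\overline{\calO}_i$ to obtain a representation of the $i$-th leg with $\varphi_{[i,1]}\varphi_{[i,1]}^*=X_i-\zeta_{i,1}{\rm Id}$; the choice $\xi_0=-\sum_i\zeta_{i,1}$ is precisely what makes the moment-map equation at vertex $0$ collapse to the relation $\sum_j[A_j,B_j]+\sum_iX_i=0$. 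Irreducibility of the source then forces irreducibility of $\calV_\bfO$. For the dimension, the subset $(\calV_\bfO^o)^{\rm irr}$ is open in $\calV_\bfO$ (irreducibility of a representation is an open condition inside $\calV_\bfO^o$) and nonempty by hypothesis, hence dense in the irreducible $\calV_\bfO$; on this open dense set Schur's lemma yields a free $\PGL_n$-action with closed orbits, so the restriction of $\calV_\bfO\to\calQ_\bfO$ is a principal $\PGL_n$-bundle onto the open dense subset $(\calQ_\bfO^o)^{\rm irr}\subset\calQ_\bfO$. This gives $\dim\calV_\bfO=d_\bfO+(n^2-1)$.

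The main computational step, and the only substantive check, is the identity $d_\bfO=\dim\bfO-2n^2+2$. With $\dim\bfO=2gn^2+\sum_i\dim\calO_i$ and $\dim\calO_i=n^2-\sum_a\langle\omega_i^a,\omega_i^a\rangle$ from Lemma \ref{dimO}, I would expand ${^t}\v_\bfO{\bf C}_\bfO\v_\bfO$ directly from the Cartan matrix of $\Gamma_\bfO$: the central vertex with its $g$ loops contributes $(2-2g)n^2$, and the $i$-th leg produces, after telescoping, $\sum_j n_{i,j}^2-n^2$, where $n_{i,j}$ is the $j$-th column length of the horizontal concatenation of the Young diagrams of $\omega_i^1,\dots,\omega_i^{r_i}$. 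Because those diagrams are placed side by side, their columns remain unmixed, so $\sum_j n_{i,j}^2=\sum_a\sum_j((\omega_i^a)'_j)^2=\sum_a\langle\omega_i^a,\omega_i^a\rangle=n^2-\dim\calO_i$. Plugging in and simplifying yields $d_\bfO=2+(2g-2)n^2+\sum_i\dim\calO_i=\dim\bfO-2n^2+2$, as required.
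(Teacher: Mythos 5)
Your proof is correct and follows essentially the route the paper intends: the corollary is presented there as a direct consequence of Proposition \ref{crawley} and Theorem \ref{irrquiver}, and you merely supply the implicit details — surjectivity of $\mu_{\v_\bfO}^{-1}(\xihat_\bfO)\rightarrow\calV_\bfO$ via Theorem \ref{NC}, the generic-fibre dimension count for $\calV_\bfO\rightarrow\calQ_\bfO$ over the image of the irreducible locus, and the telescoping computation that identifies $2-{^t}\v_\bfO{\bf C}_\bfO\v_\bfO$ with ${\rm dim}\,\bfO-2n^2+2$ using Lemma \ref{dimO}. Nothing in these added steps is problematic.
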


We now state a result in the generic case. The proof is omitted as it is an easy generalisation of the  case of semisimple orbits \cite[Proposition 2.2.3]{hausel-letellier-villegas}.

\begin{proposition} Assume that $(\calO_1,\dots,\calO_k)$ is generic. Then  $\big(\calV_\bfO^o\big)^{\rm irr}=\calV_\bfO^o$ and the map $\calV_\bfO\rightarrow\calQ_\bfO$ is a principal $\PGL_n$-bundle for the \'etale topology (and so it is a geometric quotient). In particular the $\PGL_n$-orbits of $\calV_\bfO$ are all closed of same dimension ${\rm dim}\,\PGL_n$. Finally the two varieties $\calV_\bfO^o$ and $\calQ_\bfO^o$ are nonsingular.
\label{affinepGb}\end{proposition}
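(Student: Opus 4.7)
The plan is to follow the proof of \cite[Proposition 2.2.3]{hausel-letellier-villegas} (the semisimple case) verbatim, with one small extra input: the characteristic polynomial is constant on the closure of an adjoint orbit, so the semisimple part of any element of $\overline{\calO}_i$ still lies in $\calC_i$. This lets the genericity hypothesis be applied not only on $\calV_\bfO^o$ but on all of $\calV_\bfO$.

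First I would show that every element $(A_1,B_1,\dots,A_g,B_g,X_1,\dots,X_k)\in\calV_\bfO$ is \emph{irreducible}, i.e., admits no proper non-zero invariant subspace $V\subset\K^n$. Indeed, if $V$ is preserved by all the coordinates, then restricting the moment-map relation $\sum_j[A_j,B_j]+\sum_i X_i=0$ to $V$ and taking traces kills the commutators and yields $\sum_i\Tr(X_i|_V)=0$. Because the semisimple part $(X_i)_s$ is a polynomial in $X_i$ it still preserves $V$ and has the same trace on $V$, and by the remark above $(X_i)_s\in\calC_i$; genericity in the form of Remark~\ref{genericdef2} forces $V=0$ or $V=\K^n$. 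Specialising to $\calV_\bfO^o$ gives $(\calV_\bfO^o)^{\rm irr}=\calV_\bfO^o$.

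Next I would deduce that the $\PGL_n$-action on $\calV_\bfO$ is set-theoretically free: any $g\in\GL_n$ commuting with all $2g+k$ coordinates has its Jordan factors $g_s,g_u$ doing the same, the eigenspaces of $g_s$ and $\ker(g_u-\Id)$ are invariant subspaces, hence equal to $\K^n$ by Step~1, so $g$ is scalar. Once stabilisers in $\PGL_n$ are trivial, orbits all have the maximal dimension $\dim\PGL_n$; two orbits lying over the same point of $\calQ_\bfO$ would share a closed orbit in their closures of strictly smaller dimension, which is impossible, so each fibre of $\calV_\bfO\to\calQ_\bfO$ is a single orbit, necessarily closed. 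Applying Bardsley--Richardson \cite[Proposition 8.2]{BR} (quoted in \S\ref{GIT}) to the affine variety $\calV_\bfO$, with trivial stabilisers and orbits automatically separable in characteristic prime to the relevant data (equivalently, orbits of maximal dimension on the smooth locus, combined with Corollary~\ref{lissite}), gives that $\calV_\bfO\to\calQ_\bfO$ is a principal $\PGL_n$-bundle in the \'etale topology.

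Finally, smoothness of $\calV_\bfO^o$ follows from the standard moment-map computation: at a point whose coordinates generate an irreducible representation of $\K^n$, the differential of $\mu_{\v_\bfO}$ is surjective onto ${\rm M}(\v_\bfO)^0$ (its cokernel being identified with the centraliser of the representation, which is the scalars by Schur), so $\calV_\bfO^o$ is smooth; then $\calQ_\bfO^o$ is smooth as well because $\calV_\bfO^o\to\calQ_\bfO^o$ is a principal bundle under the smooth group $\PGL_n$. Alternatively one can invoke Proposition~\ref{crawley} together with Theorem~\ref{irrquiver} to identify $\calQ_\bfO^o$ with the smooth simple locus $\mathfrak{M}_{\xihat_\bfO}^s(\v_\bfO)$. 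The only step requiring care is the passage from the open stratum $\calV_\bfO^o$ to the whole of $\calV_\bfO$ in Steps~1--2, and this is handled by the closure observation made above.
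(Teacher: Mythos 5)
Your proposal is correct and is essentially the proof the paper has in mind: the paper omits it, saying it is an easy generalisation of \cite[Proposition 2.2.3]{hausel-letellier-villegas}, and your single extra input — that the characteristic polynomial is constant on $\overline{\calO}_i$, so semisimple parts of elements of $\overline{\calO}_i$ stay in $\calC_i$ and genericity applies on all of $\calV_\bfO$ — is exactly the point that makes the generalisation go through (it is also recorded in the paper as the remark that $\bfO'\unlhd\bfO$ with $(\calO_1,\dots,\calO_k)$ generic forces $(\calO_1',\dots,\calO_k')$ generic). The trace argument, the Schur-type freeness argument, closedness of orbits from equidimensionality, and the surjectivity of the differential of the moment map at irreducible tuples (with smoothness of $\calQ_\bfO^o$ by flat descent along the bundle) all match the intended route; your ``alternative'' via Proposition \ref{crawley} should be used only over $\C$, since there the identification is merely a bijective morphism in positive characteristic.

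The one place where your justification should be repaired is the separability hypothesis in the Bardsley--Richardson criterion: ``automatically separable in characteristic prime to the relevant data'' and ``orbits of maximal dimension on the smooth locus, combined with Corollary \ref{lissite}'' are not valid reasons (the latter is not equivalent to separability and says nothing about orbits through singular points of $\calV_\bfO$), and the case $\K=\overline{\F}_q$ genuinely matters since the proposition is invoked there in Proposition \ref{quotientgen}. The correct argument is the same Schur lemma you already use, applied at the Lie-algebra level: for any point of $\calV_\bfO$ the tuple of coordinates is irreducible by your Step 1, so its centralizer in $\gl_n$ is $\K\cdot{\rm Id}$; hence the scheme-theoretic stabilizer in $\PGL_n$ has trivial Lie algebra as well as trivial group of points, so it is trivial and every orbit map is separable in any characteristic. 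With that substitution the application of \cite[Proposition 8.2]{BR} is legitimate and the rest of your argument stands.
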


The following result is a consequence of Proposition \ref{affinepGb} and Corollary \ref{lissite}.

\begin{corollary}Assume that $(\calO_1,\dots,\calO_k)$ is generic. Then the partitions 

\begin{equation}\calV_\bfO=\coprod_{\bfO'\unlhd\bfO}\calV_{\bfO'}^o,\,\,\text{ and }\,\,\calQ_\bfO=\coprod_{\bfO'\unlhd\bfO}\calQ_{\bfO'}^o\label{partition}\end{equation}are stratifications.

\end{corollary}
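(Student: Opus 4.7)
The aim is to verify, for each of the two partitions, the three conditions of Definition \ref{stratification}: finiteness of the index set (after discarding empty pieces), local closedness together with nonsingularity and equidimensionality of each piece, and the closure-compatibility condition. Finiteness is immediate since there are only finitely many tuples $\bfO'\unlhd\bfO$: each $\calO_i$ has only finitely many adjoint orbits in its Zariski closure.

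For the partition of $\calV_\bfO$, the plan is as follows. First, observe that if $(\calO_1,\dots,\calO_k)$ is generic, then so is $(\calO_1',\dots,\calO_k')$ for every $\bfO'\unlhd\bfO$ (this follows directly from Definition \ref{genorb}, since the new eigenvalues are obtained by taking sub-multisets of the original ones, and the genericity condition specialises). Hence Proposition \ref{affinepGb} applies to each $\bfO'$: whenever $\calV_{\bfO'}^o\neq\emptyset$, it is a nonsingular variety, and moreover $\calV_{\bfO'}^o=(\calV_{\bfO'}^o)^{\mathrm{irr}}$. Combining the latter with Corollary \ref{lissite}, $\calV_{\bfO'}$ is irreducible of dimension $\dim\bfO'-n^2+1$, and $\calV_{\bfO'}^o$ is a non-empty open subset, hence irreducible (in particular equidimensional). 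Local closedness of $\calV_{\bfO'}^o$ in $\calV_\bfO$ is inherited from the local closedness of $\bfO'^o$ in $\bfO$: indeed each adjoint orbit $\calO_i'$ is locally closed in $\overline{\calO}_i$, so $\bfO'^o=(\gl_n)^{2g}\times\calO_1'\times\cdots\times\calO_k'$ is locally closed in $\bfO$, and $\calV_{\bfO'}^o=\calV_\bfO\cap\bfO'^o$ inherits this property.

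For the closure-compatibility condition, suppose $\calV_{\bfO'}^o\cap\overline{\calV_{\bfO''}^o}\neq\emptyset$. Since $\calV_{\bfO''}^o$ is an open dense subset of the irreducible variety $\calV_{\bfO''}$, we have $\overline{\calV_{\bfO''}^o}=\calV_{\bfO''}$. The intersection $\calV_{\bfO'}^o\cap\calV_{\bfO''}$ consists of points whose $X_i$-coordinates lie simultaneously in $\calO_i'$ and in $\overline{\calO}_i''$; hence non-emptyness forces $\calO_i'\subset\overline{\calO}_i''$ for every $i$, i.e.\ $\bfO'\unlhd\bfO''$. But then every point of $\calV_{\bfO'}^o$ has $X_i$-coordinates in $\calO_i'\subset\overline{\calO}_i''$, which shows $\calV_{\bfO'}^o\subset\calV_{\bfO''}=\overline{\calV_{\bfO''}^o}$, as required.

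Finally, transfer to $\calQ_\bfO$ via the principal $\PGL_n$-bundle $\pi:\calV_\bfO\to\calQ_\bfO$ of Proposition \ref{affinepGb}. Since $\pi$ is smooth and surjective with connected fibres, it is an open map, takes irreducible subsets to irreducible subsets, commutes with closure for saturated subsets (the $\calV_{\bfO'}^o$ are $\PGL_n$-stable, hence saturated), and preserves nonsingularity étale-locally. Moreover $\dim\calV_{\bfO'}^o-\dim\calQ_{\bfO'}^o=\dim\PGL_n$ is independent of $\bfO'$, so equidimensionality carries over. The local closedness and the closure condition for the strata of $\calQ_\bfO$ then follow formally from the corresponding statements for $\calV_\bfO$. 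The main technical point is really the first step---the fact that genericity is inherited by sub-tuples, so that Proposition \ref{affinepGb} and Corollary \ref{lissite} can be applied uniformly to every non-empty $\calV_{\bfO'}^o$; once this is in hand, the remaining verifications are formal.
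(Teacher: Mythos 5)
Your proof is correct and takes essentially the same route the paper intends: the paper offers no written argument, simply asserting the corollary as a consequence of Proposition \ref{affinepGb} and Corollary \ref{lissite}, and these (together with the paper's remark that genericity passes to any $\bfO'\unlhd\bfO$) are exactly the ingredients you assemble, plus the routine transfer along the principal $\PGL_n$-bundle. One small correction to your justification of the inherited genericity: every orbit in $\overline{\calO}_i$ has exactly the same eigenvalues with the same multiplicities as $\calO_i$ (the characteristic polynomial is constant on an orbit closure), so the tuple of semisimple orbits $(\calC_1,\dots,\calC_k)$ is literally unchanged rather than a ``sub-multiset'' specialisation, and the genericity condition of Definition \ref{genorb} carries over verbatim.
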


Crawley-Boevey's criterion for the non-emptyness of $\calV_\bfO$ and $\calV_\bfO^o$ simplifies in the generic case as follows.

\begin{theorem} Assume that $(\calO_1,\dots,\calO_k)$ is a generic tuple. Then the following three assertions are equivalent.

(i) The set $\calV_\bfO$ is not empty.

(ii) The set $\calV_\bfO^o$ is not empty.

(iii) $\v_\bfO\in\Phi^+(\Gamma_\bfO)$. 
\label{nonemptymu}\end{theorem}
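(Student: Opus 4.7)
The plan is to translate the statement into the language of quiver varieties via Proposition~\ref{crawley} and then apply Crawley-Boevey's existence criterion (Theorem~\ref{non-emptycond}). Proposition~\ref{crawley} identifies $\calV_\bfO$ with $\mathfrak{M}_{\xihat_\bfO}(\v_\bfO)$ (bijectively as schemes, isomorphically over $\C$) and sends simple representations to irreducible elements of $\calV_\bfO^o$; together with Proposition~\ref{affinepGb} this yields $\calV_\bfO\neq\emptyset\Longleftrightarrow\mathfrak{M}_{\xihat_\bfO}(\v_\bfO)\neq\emptyset$ and $\calV_\bfO^o\neq\emptyset\Longleftrightarrow\mathfrak{M}_{\xihat_\bfO}^s(\v_\bfO)\neq\emptyset$. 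Since (ii)$\Rightarrow$(i) is immediate, the whole theorem reduces to the following combinatorial \emph{key lemma}: \emph{under the genericity hypothesis, any $\beta\in\Phi^+(\Gamma_\bfO)$ with $\beta\leq\v_\bfO$ and $\beta\cdot\xihat_\bfO=0$ equals $\v_\bfO$.}

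Granted the lemma, (i)$\Rightarrow$(iii) follows from Theorem~\ref{non-emptycond}(i): any decomposition $\v_\bfO=\beta_1+\cdots+\beta_r$ into positive roots orthogonal to $\xihat_\bfO$ has each $\beta_j\leq\v_\bfO$, hence $\beta_j=\v_\bfO$ by the lemma, so $r=1$ and $\v_\bfO\in\Phi^+(\Gamma_\bfO)$. Likewise (iii)$\Rightarrow$(ii) follows from Theorem~\ref{non-emptycond}(ii): the lemma rules out every nontrivial such decomposition of $\v_\bfO$, so the strict inequality $p(\v_\bfO)>\sum_jp(\beta_j)$ holds vacuously and $\mathfrak{M}_{\xihat_\bfO}^s(\v_\bfO)\neq\emptyset$, producing an irreducible element of $\calV_\bfO^o$.

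To prove the key lemma I would compute $\beta\cdot\xihat_\bfO$ explicitly. Writing $\alpha_{i,1},\dots,\alpha_{i,p_i}$ for the distinct eigenvalues of $\calO_i$ with multiplicities $m_{i,1},\dots,m_{i,p_i}$ and grouping the columns of the Young diagrams of $\calO_i$ by the eigenvalue they carry, Abel summation using $\xi_0=-\sum_i\zeta_{i,1}$ and $\xi_{[i,j]}=\zeta_{i,j}-\zeta_{i,j+1}$ yields
\[
\beta\cdot\xihat_\bfO\;=\;-\sum_{i=1}^k\sum_{\ell=1}^{p_i}\tilde{m}'_{i,\ell}\,\alpha_{i,\ell},\qquad
\tilde{m}'_{i,\ell}\;:=\;\beta_{[i,c_{\ell-1}]}-\beta_{[i,c_\ell]},
\]
where $c_\ell$ is the last column of the $i$-th leg carrying $\alpha_{i,\ell}$ and one uses the conventions $c_0=0$, $\beta_{[i,0]}=\beta_0$, $\beta_{[i,c_{p_i}]}=0$. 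A further telescoping gives $\sum_\ell\tilde{m}'_{i,\ell}=\beta_0$ for every $i$, and once the bounds $0\leq\tilde{m}'_{i,\ell}\leq m_{i,\ell}$ are verified the identity $\beta\cdot\xihat_\bfO=0$ is exactly the trace relation controlled by Remark~\ref{genericdef2}. The two cases allowed by genericity are $\tilde{m}'_{i,\ell}\equiv 0$, which forces $\beta_0=0$ and hence $\beta=0$ (ruled out by $\beta\in\Phi^+$), and $\tilde{m}'_{i,\ell}\equiv m_{i,\ell}$, which combined with $\beta\leq\v_\bfO$ and the explicit construction of $\v_\bfO$ from the partitions propagates to $\beta=\v_\bfO$.

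The main obstacle is verifying the bounds $0\leq\tilde{m}'_{i,\ell}\leq m_{i,\ell}$ for a positive root $\beta\leq\v_\bfO$: when $g\geq 1$ the quiver $\Gamma_\bfO$ has loops at the central vertex, $\Phi(\Gamma_\bfO)$ is a Kac-Moody root system, and positive roots can have nonmonotone support on the legs. To handle this I would combine the relation $\beta\cdot\xihat_\bfO=0$ with $0\leq\beta\leq\v_\bfO$ and the strict decrease $v_{[i,0]}>v_{[i,1]}>\cdots>v_{[i,s_i]}$ of $\v_\bfO$ along each leg in order to control the drop of $\beta$ across each eigenvalue block $\{c_{\ell-1}+1,\dots,c_\ell\}$. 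This is essentially the specialisation to the generic case of the combinatorial analysis carried out by Crawley-Boevey in \cite{crawley-mat}.
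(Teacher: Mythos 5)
Your reduction to the quiver side (via Proposition \ref{crawley} and Proposition \ref{affinepGb}) is fine, but the combinatorial \emph{key lemma} on which both nontrivial implications rest is false, and with it both of your arguments collapse. Counterexample: take $g\geq 1$, $k=2$, $n=2$, $\calO_1$ the orbit with single eigenvalue $a$ and Jordan type $(2)$, $\calO_2$ semisimple with distinct eigenvalues $c_1,c_2$, $2a+c_1+c_2=0$; this tuple is generic (the conditions $a+c_i\neq 0$ follow automatically from $c_1\neq c_2$). Here $\v_\bfO=(2;1,1)$, $\xihat_\bfO=(\xi_0;0,\,c_1-c_2)$ with $\xi_0=-(a+c_1)$. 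The simple root $\e_{[1,1]}$ satisfies $\e_{[1,1]}\leq\v_\bfO$ and $\e_{[1,1]}\centerdot\xihat_\bfO=\xi_{[1,1]}=0$ but $\e_{[1,1]}\neq\v_\bfO$; worse, $\betahat=(2;0,1)$ lies in the fundamental set (all $(\betahat,\e_i)\leq 0$ since $g\geq 1$, connected support), hence is a positive imaginary root, and $\betahat\centerdot\xihat_\bfO=2\xi_0+(c_1-c_2)=-(2a+c_1+c_2)=0$. So $\v_\bfO=(2;0,1)+\e_{[1,1]}$ is a \emph{nontrivial} decomposition into positive roots orthogonal to $\xihat_\bfO$. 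Consequently your (i)$\Rightarrow$(iii) step (``every summand equals $\v_\bfO$, hence $r=1$'') is unjustified, and your (iii)$\Rightarrow$(ii) step fails as well: the hypothesis of Theorem \ref{non-emptycond}(ii) is \emph{not} vacuous, and one must actually prove the strict inequality $p(\v_\bfO)>\sum_jp(\beta_j)$ for such decompositions (in the example $p(\v_\bfO)=4g-1>4g-2=p((2;0,1))+p(\e_{[1,1]})$, but this needs an argument). This example also pinpoints where your sketch of the lemma breaks: the quantities $\tilde m'_{i,\ell}$ are telescoped block sums, so they satisfy $0\leq\tilde m'_{i,\ell}\leq m_{i,\ell}$ and even $\tilde m'_{i,\ell}=m_{i,\ell}$ for $\betahat=(2;0,1)$, yet $\betahat\neq\v_\bfO$; block sums simply do not see the interior coordinates of a non-monotone root, so neither the bounds nor the final ``propagation'' can be forced from $\beta\leq\v_\bfO$ and orthogonality alone. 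Genericity constrains only collections $(m'_{i,\ell})$ with $0\leq m'_{i,\ell}\leq m_{i,\ell}$, which is useless without monotonicity along the legs.

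The paper's route is different precisely at this point. For (i)$\Rightarrow$(iii) (Proposition \ref{propCB}) it does not work with an arbitrary root $\beta\leq\v_\bfO$: starting from a point of $\calV_\bfO$ it uses Theorem \ref{CB5} and Remark \ref{remCB} to produce a lift in $\mu_{\v_\bfO}^{-1}(\xihat_\bfO)$ whose maps along the legs are all injective, then decomposes the underlying $\Gamma_\bfO$-representation into indecomposables; by Kac's theorem the summands are positive roots, by Crawley-Boevey's Theorem 3.3 they are orthogonal to $\xihat_\bfO$, and — crucially — the injectivity makes them \emph{strict} (non-increasing along each leg). Only for strict summands are the block quantities $m^s_{i,f}$ non-negative, so that genericity (Remark \ref{genericdef2}) forces $m^s_{i,f}=m_{i,f}$ for every $s$, and then $\sum_s m^s_{i,f}=m_{i,f}$ gives $r=1$ directly, with no need for your propagation step. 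For (iii)$\Rightarrow$(ii) the paper does not attempt a vacuity argument at all; it invokes Crawley-Boevey's solution of the generic additive Deligne--Simpson problem \cite[\S 6]{crawley-mat}, which is where the inequality $p(\v_\bfO)>\sum_jp(\beta_j)$ for the genuinely existing nontrivial decompositions is established. To repair your proof you would either have to import that argument or restrict your key lemma to strict roots and supply the geometric input (injective lifts) that produces them.
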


Although this theorem is not stated in Crawley-Boevey's papers, the main ingredients for its proof are there. For the convenience of the reader we give the proof in details (repeating if necessary some arguments of Crawley-Boevey).

We start with an intermediate result.

Following Crawley-Boevey's terminology \cite{crawleyind}, we say that a dimension vector $\betahat=\{\beta_i\}_{i\in I}$ of $\Gamma_\bfO$ with $\beta_0=n$ is \emph{strict} if for any $i=1,2,\dots,k$ we have $n\geq \beta_{[i,1]}\geq\cdots\geq\beta_{[i,s_i]}$.

We have the following proposition.

\begin{proposition} Assume that $\calV_\bfO$ is not empty. Then the dimension vector $\v_\bfO$ is a sum $\betahat^1+\betahat^2+\cdots+\betahat^r$ of strict positive roots such that $\xihat_\bfO\centerdot\betahat^i=0$ for all $i=1,2,\dots,r$. If moreover $(\calO_1,\dots,\calO_k)$ is generic, then $r=1$, i.e., $\v_\bfO$ is a positive root. \label{propCB}\end{proposition}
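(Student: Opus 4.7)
The plan is to proceed in three stages: extract a positive-root decomposition from the hypothesis, refine it to a strict decomposition, and then exploit genericity to cut it down to a single summand. The strictness step is where the real work lies.

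First I would apply Proposition \ref{crawley}: since the surjection $\mu_{\v_\bfO}^{-1}(\xihat_\bfO)\to\calV_\bfO$ has nonempty source as soon as $\calV_\bfO\neq\emptyset$, the quiver variety $\mathfrak{M}_{\xihat_\bfO}(\v_\bfO)$ is nonempty. Theorem \ref{non-emptycond}(i) then immediately yields a decomposition
\[
\v_\bfO=\betahat^1+\cdots+\betahat^r,\qquad \betahat^\ell\in\Phi^+(\Gamma_\bfO),\quad \xihat_\bfO\cdot\betahat^\ell=0.
\]
This step is essentially formal once one has the quiver-theoretic translation in hand.

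The second stage is to ensure that the $\betahat^\ell$ can be chosen strict. Here I would mimic Crawley-Boevey's reflection-functor argument from \cite{crawley-mat,crawleyind}. The vector $\v_\bfO$ is itself strict by construction, and the parameters are tied together by $\xi_0=-\sum_i\zeta_{i,1}$ and $\xi_{[i,j]}=\zeta_{i,j}-\zeta_{i,j+1}$. Reflecting at a leg vertex $[i,j]$ where $\xi_{[i,j]}=0$ preserves both the total sum $\v_\bfO$ and each equality $\xihat_\bfO\cdot\betahat^\ell=0$, while reducing the failure of strictness of an offending summand. The main technical obstacle is to show that iterating such reflections terminates in a strict decomposition. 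I expect this to be delicate but essentially bookkeeping, following Crawley-Boevey's template.

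For the third stage, I would use the special form of $\xihat_\bfO$ quantitatively. Setting $\beta_{[i,0]}:=\beta_0$ and $\beta_{[i,s_i+1]}:=0$, summation by parts gives
\[
\xihat_\bfO\cdot\betahat=-\sum_{i=1}^k\sum_{j=1}^{s_i+1}\zeta_{i,j}\bigl(\beta_{[i,j-1]}-\beta_{[i,j]}\bigr).
\]
For a strict $\betahat^\ell$ the integers $m'_{i,j}:=\beta^\ell_{[i,j-1]}-\beta^\ell_{[i,j]}\geq 0$ telescope to $\sum_j m'_{i,j}=\beta^\ell_0$, independently of $i$; grouping columns by eigenvalue of $\calO_i$ produces partial multiplicities $M'_{i,\alpha}$ with $0\leq M'_{i,\alpha}\leq m_{i,\alpha}$ and $\sum_\alpha M'_{i,\alpha}=\beta^\ell_0$. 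The vanishing $\xihat_\bfO\cdot\betahat^\ell=0$ rewrites as the linear dependence $\sum_{i,\alpha}M'_{i,\alpha}\alpha=0$ forbidden by Remark \ref{genericdef2} unless $M'_{i,\alpha}=0$ for all $(i,\alpha)$ or $M'_{i,\alpha}=m_{i,\alpha}$ for all $(i,\alpha)$. Strictness forces $\beta^\ell_0>0$ (otherwise $\betahat^\ell=0$), excluding the first alternative; hence $\beta^\ell_0=n$ for every $\ell$, and summing central coordinates in $\v_\bfO=\sum_\ell\betahat^\ell$ yields $rn=n$, so $r=1$. In summary, the real obstacle is the reflection argument of stage two; once strict roots are in hand, the genericity stage reduces almost mechanically to the Kostov--Crawley-Boevey criterion.
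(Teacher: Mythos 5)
Your stages 1 and 3 are fine: getting \emph{some} decomposition of $\v_\bfO$ into positive roots orthogonal to $\xihat_\bfO$ from Proposition \ref{crawley} plus Theorem \ref{non-emptycond}(i) is legitimate, and your genericity computation with the partial multiplicities is exactly the argument the paper uses (it forces $\beta^\ell_0\in\{0,n\}$, and strictness rules out $0$). The genuine gap is stage 2, which is precisely where the content of the proposition lies. First, your mechanism is flawed as stated: if you reflect at a leg vertex $[i,j]$ with $\xi_{[i,j]}=0$, the dual reflection indeed fixes $\xihat_\bfO$, but applying $s_{[i,j]}$ to the summands replaces their sum by $s_{[i,j]}(\v_\bfO)=\v_\bfO-(\v_\bfO,\e_{[i,j]})\e_{[i,j]}$, and $(\v_\bfO,\e_{[i,j]})=2v_{[i,j]}-v_{[i,j-1]}-v_{[i,j+1]}$ is in general nonzero (at vertices with $\xi_{[i,j]}=0$ one only knows it is $\leq 0$, cf.\ Remark \ref{monotone}); reflecting a single offending summand changes the sum as well. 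So the total $\v_\bfO$ is not preserved, and no termination argument is offered. Second, and more fundamentally, there is no reason an \emph{arbitrary} decomposition produced by Theorem \ref{non-emptycond}(i) can be upgraded to a strict one; strictness is not a property you can graft on afterwards by combinatorics on roots.

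The paper avoids this entirely by producing strictness at the level of an actual representation. By Theorem \ref{CB5} and Remark \ref{remCB} one can choose $B\in\mu_{\v_\bfO}^{-1}(\xihat_\bfO)$ all of whose non-loop arrow maps are injective (each leg realizes a point of $\overline{\calO}_i$ via a genuine flag). Projecting to ${\bf M}(\Gamma_\bfO,\v_\bfO)$ and decomposing $\pi(B)=I_1\oplus\cdots\oplus I_r$ into indecomposables, injectivity passes to direct summands, so each $\betahat^m=\dim I_m$ is automatically strict; Kac's theorem says each $\betahat^m$ is a positive root; and Crawley-Boevey's lifting theorem \cite[Theorem 3.3]{crawley-quiver} (a direct summand of a $\Gamma_\bfO$-representation that lifts to $\mu_{\v_\bfO}^{-1}(\xihat_\bfO)$ has dimension vector orthogonal to $\xihat_\bfO$) gives $\xihat_\bfO\centerdot\betahat^m=0$. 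If you want to salvage your plan, replace stage 1 and stage 2 by this construction; as written, the proposal does not prove the first assertion of the proposition.
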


\begin{proof} By Theorem \ref{CB5} and Remark \ref{remCB}, we can choose an element $B\in\mu_{\v_\bfO}^{-1}(\xihat_\bfO)$ whose coordinates $B_h$, where $h$ describes the set of arrows of $\Gamma_\bfO$ which are not loops, are injective. Let $\pi$ be the canonical projection ${\bf M}\left(\overline{\Gamma}_\bfO,\v_\bfO\right)\rightarrow {\bf M}\left(\Gamma_\bfO,\v_\bfO\right)$. Write $\pi(B)$ as a direct sum $I_1\oplus I_2\oplus\cdots\oplus I_r$ of indecomposable representations of $\Gamma_\bfO$ and let $\betahat^m$ be the dimension vector of $I_m$. We have $\v_\bfO=\betahat^1+\cdots+\betahat^r$ and since the maps $B_h$ are injective, the maps $(I_m)_h$ are also injective and so $\betahat^m$ is a strict dimension vector for all $m=1,\dots,r$. It is a well-known theorem of Kac \cite{kac} that the dimension vector of an indecomposable representation is a positive root. Hence   the $\betahat^1,\dots,\betahat^r$ are positive strict roots. It remains to see that $\betahat^m\centerdot \xihat_\bfO=0$ for all $m=1,\dots,r$.  But  $\betahat^m$ is the dimension vector of  a direct summand of a representation of $\Gamma_\bfO$ that lifts to a representation of $\mu_{\v_\bfO}^{-1}(\xihat_\bfO)$, hence by Crawley-Boevey's theorem \cite[Theorem 3.3]{crawley-quiver} we must have $\betahat^m\centerdot\xihat_\bfO=0$.

Assume now that $(\calO_1,\dots,\calO_k)$ is generic. To prove that $r=1$ we repeat Crawley-Boevey's argument in \cite[\S 3]{crawley-mat}. For each $i=1,2,\dots,k$, we let $\alpha_{i,1},\alpha_{i,2},\dots,\alpha_{i,p_i}$ be the distinct eigenvalues of $\calO_i$ with respective multiplicities $m_{i,1},m_{i,2},\dots,m_{i,p_i}$. Let $s\in\{1,\dots,r\}$. For given $1\leq i\leq k, 1\leq f\leq p_i$, define $$m_{i,f}^s=\sum_{\overset{j=1}{\zeta_{i,j}=\alpha_{i,f}}}^{s_i+1}\left(\beta_{[i,j-1]}^s-\beta_{[i,j]}^s\right)$$where for convenience $\beta_{[i,s_i+1]}^s=0$ and $[i,0]$ denotes also the vertex $0$. Since $\betahat^s$ is strict, the integer $m_{i,f}^s$ is positive. Moreover \begin{equation}\sum_{f=1}^{p_i}m_{i,f}^s=\beta_0^s\label{CB6}\end{equation} is independent of $i$. Now

\begin{equation}\sum_{s=1}^rm_{i,f}^s=\sum_{\overset{j=1}{\zeta_{i,j}=\alpha_{i,f}}}^{s_i+1}\left(v_{[i,j-1]}-v_{[i,j]}\right)=m_{i,f}\label{CB7}\end{equation}where $v_{[i,s_i+1]}=0$. Hence $0\leq m_{i,f}^s\leq m_{i,f}$ and \begin{align*}0=\xihat_\bfO\centerdot\betahat^s&=\left(\sum_{i=1}^k\sum_{j=1}^{s_i}\left(\zeta_{i,j}-\zeta_{i,j+1}\right)\beta_{[i,j]}^s\right)-\left(\sum_{i=1}^k\zeta_{i,1}\right)\beta_0^s\\
&=-\sum_{i=1}^k\sum_{j=1}^{s_i+1}\zeta_{i,j}\left(\beta_{[i,j-1]}^s-\beta_{[i,j]}^s\right)\\
&=-\sum_{i=1}^k\sum_{f=1}^{p_i}\alpha_{i,f}m_{i,f}^s\end{align*}which contradicts the genericity condition (see Remark \ref{genericdef2}) unless $m_{i,f}^s=m_{i,f}$ for all $i,f$, or $m_{i,f}^s=0$ for all $i,f$. But since $\betahat^s$ is a strict root we must have $\beta_0^s>0$ and so  by (\ref{CB6}) we can not have $m_{i,f}^s=0$ for all $i,f$. Hence we must have $m_{i,f}^s=m_{i,f}$ for all $i,f$ and so from the identity (\ref{CB7}) we must have $r=1$.\end{proof}

\begin{proof}[Proof of Theorem \ref{nonemptymu}] (ii) implies (i) is trivial and by Proposition \ref{propCB} (i) implies (iii). Hence it remains to see that (iii) implies (ii). But this is exactly what is proved in Crawley-Boevey \cite[\S 6]{crawley-mat}.\end{proof}

For each $i\in I-\{0\}$, we let $s_i:\Z^I\rightarrow\Z^I$ be the reflection defined by $$s_i(x)=x-(x,{\bf e}_i){\bf e}_i,$$where $(\,,\,)$ is the form defined by $({\bf e}_i,{\bf e}_j)=c_{ij}$, cf. \S \ref{uquiver}. 

For $\u\in\Z^I$ and $i=1,2,\dots,k$ denote by $\u^\sharp_i$ the unordered collection of numbers

$$n-u_{[i,1]},\, u_{[i,1]}-u_{[i,2]},\dots,\,u_{[i,s_i-1]}-u_{[i,s_i]},\,u_{[i,s_i]}.$$Since the action of the reflexion $s_{[i,j]}$ has the effect of exchanging the $j$th and $(j+1)$th  terms in this collection we have the following lemma.

\begin{lemma} If $\u,\v\in\Z^I$ satisfies $(\v)_i^\sharp=(\u)_i^\sharp$ for all $i=1,\dots,k$, then there exists an element $w$ in the subgroup of the Weyl group of $\Gamma$ generated by the reflexions $s_{[i,j]}$ such that $\u=w(\v)$.

\label{Ws}\end{lemma}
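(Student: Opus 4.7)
The plan is to turn the parenthetical remark preceding the lemma into a rigorous argument: each reflection $s_{[i,j]}$ acts on $\Z^I$ in such a way that it swaps the $j$-th and $(j+1)$-st entries of $\u^\sharp_i$ while leaving $u_0$ and the coordinates on all other legs fixed. Once this is verified the lemma reduces to the observation that the simple transpositions $(j,j{+}1)$ for $1\le j\le s_i$ generate the symmetric group $S_{s_i+1}$.

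First I would compute the action of $s_{[i,j]}$ explicitly. Setting $u_{[i,0]}:=n$ and $u_{[i,s_i+1]}:=0$, the defining form of $\Gamma$ gives $(\e_{[i,j]},\e_{[i,j']}) = 2,-1,0$ according as $j'=j$, $|j-j'|=1$, or otherwise, while $(\e_{[i,j]},\e_0)=-1$ iff $j=1$ and $(\e_{[i,j]},\e_{[i',j']})=0$ for $i'\ne i$. Therefore $s_{[i,j]}$ only modifies the coordinate at $[i,j]$, replacing $u_{[i,j]}$ by $u_{[i,j-1]}+u_{[i,j+1]}-u_{[i,j]}$. A direct substitution shows that the new pair of consecutive entries of $\u^\sharp_i$ at positions $j$ and $j+1$ is obtained from the old one by transposition, and that every other entry of $\u^\sharp_i$ and of $\u^\sharp_{i'}$ for $i'\ne i$ is unchanged, as is $u_0$.

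Next, for fixed $i$, the reflections $s_{[i,1]},\dots,s_{[i,s_i]}$ act on the ordered tuple $\u^\sharp_i$ as the Coxeter generators of $S_{s_i+1}$, so the subgroup they generate realizes every permutation of $\u^\sharp_i$; and since reflections at different legs commute (they act on disjoint coordinates, once $u_0$ is treated as the fixed constant $n$), the full subgroup generated by $\{s_{[i,j]}\}_{i,j}$ realizes the action of $\prod_{i=1}^k S_{s_i+1}$ on the tuples $(\u^\sharp_1,\dots,\u^\sharp_k)$.

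Finally, given the hypothesis, choose for each $i$ a permutation $\pi_i\in S_{s_i+1}$ sending the ordered tuple $\v^\sharp_i$ to the ordered tuple $\u^\sharp_i$, write $\pi_i$ as a product of simple transpositions, and let $w$ be the corresponding product of reflections $s_{[i,j]}$. By construction $w(\v)^\sharp_i=\u^\sharp_i$ as ordered tuples for every $i$. Since the entries $u_{[i,1]},\dots,u_{[i,s_i]}$ are recovered from $\u^\sharp_i$ as partial sums once $n$ is fixed, this forces $w(\v)_{[i,j]}=u_{[i,j]}$ for all $i,j$; and $w$ fixes the coordinate at $0$, where both vectors equal $n$ by the very definition of $\u^\sharp_i$ and $\v^\sharp_i$. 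Hence $w(\v)=\u$. There is no real obstacle; the only care needed is the explicit computation in the first step, especially the boundary cases $j=1$ (involving $u_0=n$) and $j=s_i$ (involving the convention $u_{[i,s_i+1]}=0$).
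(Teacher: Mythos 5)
Your proposal is correct and follows essentially the same route as the paper, whose entire proof is the one-line remark that $s_{[i,j]}$ exchanges the $j$-th and $(j+1)$-th terms of $\u^\sharp_i$; you simply make that computation explicit (including the boundary cases $j=1$ and $j=s_i$) and invoke the fact that adjacent transpositions generate each $S_{s_i+1}$, with reflections on different legs commuting. Your remark that the case $j=1$ requires the central coordinate to equal $n$ matches the implicit convention under which the paper states and applies the lemma (both vectors in its applications, e.g.\ $\v$ and $\v_\bfO$, have central coordinate $n$), so nothing is lost.
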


\begin{proposition} If $g\geq 1$, then $\v_\bfO$ is always an imaginary root.\label{imaginary}\end{proposition}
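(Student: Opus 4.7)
The strategy is a direct estimate of the symmetric bilinear form on the root lattice of $\Gamma_\bfO$. Recall that real roots satisfy $(\alpha,\alpha)=2$, equivalently $p(\alpha)=0$, whereas imaginary roots satisfy $(\alpha,\alpha)\leq 0$, equivalently $p(\alpha)\geq 1$. It therefore suffices to prove $(\v_\bfO,\v_\bfO)\leq 0$ whenever $g\geq 1$: this forces $\v_\bfO$ to be imaginary as soon as it is a root.

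The only nonzero entries of the Cartan matrix of $\Gamma_\bfO$ are $c_{00}=2-2g$ (coming from the $g$ loops at the central vertex), $c_{ii}=2$ at every leg vertex, and $c_{ij}=-1$ for pairs of adjacent vertices along the legs or at each junction $0\leftrightarrow[i,1]$. Setting $v_{[i,0]}:=n$ and $v_{[i,s_i+1]}:=0$, a direct expansion followed by a standard telescoping identity on each leg collects the contribution of leg $i$ together with its edge to $0$ into $-n^2+\sum_{j=0}^{s_i}(v_{[i,j]}-v_{[i,j+1]})^2$, so that
\begin{equation*}
(\v_\bfO,\v_\bfO) \;=\; (2-2g-k)\,n^2 \;+\; \sum_{i=1}^{k}\sum_{j=0}^{s_i}\bigl(v_{[i,j]}-v_{[i,j+1]}\bigr)^2.
\end{equation*}

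By the construction of $\v_\bfO$ from the type $\tilde{\omega}_i$ of $\calO_i$, the differences $n_{[i,j+1]}:=v_{[i,j]}-v_{[i,j+1]}$ are exactly the positive integer lengths of the columns of the Young diagram attached to $\tilde{\omega}_i$, so $\sum_{j}n_{[i,j]}=n$. The trivial bound $\sum_j n_{[i,j]}^2 \leq n\cdot\max_j n_{[i,j]} \leq n^2$ applied to each leg then yields
\begin{equation*}
(\v_\bfO,\v_\bfO) \;\leq\; (2-2g-k)\,n^2 + k\,n^2 \;=\; (2-2g)\,n^2 \;\leq\; 0 \qquad\text{whenever } g\geq 1,
\end{equation*}
so $p(\v_\bfO)\geq 1>0$ and $\v_\bfO$ cannot be a real root. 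There is no genuine obstacle: once the bilinear form is split vertex-by-vertex and the telescoping identity applied leg-by-leg, the remaining estimate is elementary. The only bookkeeping subtlety is the boundary convention $v_{[i,0]}=n$, $v_{[i,s_i+1]}=0$, which makes the formula uniform across all legs, including degenerate cases such as $\tilde{\omega}_i=(1^n)$.
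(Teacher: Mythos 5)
Your computation of the norm is correct: with the boundary convention $v_{[i,0]}=n$, $v_{[i,s_i+1]}=0$ one indeed gets $(\v_\bfO,\v_\bfO)=(2-2g-k)n^2+\sum_{i,j}(v_{[i,j]}-v_{[i,j+1]})^2\leq(2-2g)n^2\leq 0$ for $g\geq 1$. But this only proves that $\v_\bfO$ is \emph{not a real root}; you say so yourself ("this forces $\v_\bfO$ to be imaginary as soon as it is a root"). The proposition asserts more: that $\v_\bfO$ \emph{is} a root, and that is precisely what is needed later (the corollary that $\calV_\bfO^o\neq\emptyset$ for $g\geq 1$ via Theorem \ref{nonemptymu}, and part (d) of the final multiplicity theorem, both use $\v_\bfO\in\Phi(\Gamma_\bfO)$). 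A positive vector with connected support and non-positive norm need not be a root of the Kac--Moody root system: for instance, for the comet-shaped quiver with $g=2$ loops at the central vertex and a single leg of length one, the vector with coordinates $(2,3)$ has norm $-2$ and connected support, yet it is neither real nor in the Weyl orbit of Kac's fundamental set, so it is not a root at all. (The equivalence "norm $\leq 0$ $\Leftrightarrow$ imaginary root" holds for hyperbolic Cartan matrices, but the comet-shaped quivers here are not hyperbolic in general, so it cannot be invoked.)

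The paper closes exactly this gap: it produces a vector $\mathbf{f}$ obtained from $\v_\bfO$ by rearranging, on each leg, the successive differences into non-increasing order (i.e.\ into the dual partitions $\mu^i$), checks that $(\mathbf{e}_0,\mathbf{f})=(2-2g)n-\sum_i f_{[i,1]}\leq 0$ (this is where $g\geq 1$ enters) and $(\mathbf{e}_{[i,j]},\mathbf{f})=\mu^i_{j+1}-\mu^i_j\leq 0$, so that $\mathbf{f}$ lies in Kac's fundamental set, which consists of imaginary roots; by Lemma \ref{Ws}, $\v_\bfO$ is in the Weyl-group orbit of $\mathbf{f}$, hence is itself an imaginary root. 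To repair your argument you would either have to reproduce such a fundamental-set (or other) argument establishing $\v_\bfO\in\Phi(\Gamma_\bfO)$, or prove rootness by some independent route; the norm estimate alone does not suffice.
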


\begin{proof} Since $\v_\bfO$ is a decreasing dimension vector, for each $i=1,2,\dots,k$,  it defines a unique partition $\mu^i=(\mu^i_1,\dots,\mu^i_{r_i})$ of $n$ whose parts  are of the form  $v_{[i,j]}-v_{[i,j+1]}$, $j=0,\dots,s_i$ (with the convention that $v_{[i,0]}=n$ and $v_{[i,s_i+1]}=0$). Define a dimension vector ${\bf f}$ of $\Gamma_\bfO$ with the requirement that $f_0=n$ and $f_{[i,j]}=n-\sum_{r=1}^j\mu^i_r$. Note that ${\bf f}=\v_\bfO$ if and only if $v_{[i,j]}-v_{[i,j+1]}\geq v_{[i,j+1]}-v_{[i,j+2]}$ for all $i,j$. We have $({\bf e}_0,{\bf f})=(2-2g)n-\sum_{i=1}^kf_{[i,1]}\leq 0$, and $({\bf e}_{[i,j]},{\bf f})=\mu^i_{j+1}-\mu^i_j\leq 0$. Hence ${\bf f}$ is in the fundamental set of imaginary roots by definition (see Kac \cite[Chapter 1]{kac}). By Lemma \ref{Ws}, the vector ${\bf f}$ can be obtained from $\v_\bfO$ by an element in the Weyl group of $\Gamma_\bfO$, we conclude that $\v_\bfO$ is always an imaginary root of $\Gamma_\bfO$.
\end{proof}

Theorem \ref{nonemptymu} and Proposition \ref{imaginary} have the following consequence.

\begin{corollary} If $(\calO_1,\dots,\calO_k)$ is generic and $g\geq 1$, then $\calV_\bfO^o$ is not empty.
\end{corollary}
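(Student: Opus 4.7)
The statement is an immediate combination of the two results cited just above it, so the proof plan is essentially to verify this clean assembly works.

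First I would invoke Proposition \ref{imaginary}, which asserts that whenever $g \geq 1$, the dimension vector $\v_\bfO$ attached to the comet-shaped quiver $\Gamma_\bfO$ lies in the set of imaginary roots of $\Gamma_\bfO$. Note that this proposition requires no genericity hypothesis on $(\calO_1,\dots,\calO_k)$; it is purely a statement about the combinatorics of the decreasing dimension vector built from the partitions $\mu^i$ (one shows $\v_\bfO$ is Weyl-conjugate to a vector ${\bf f}$ in the fundamental set via Lemma \ref{Ws}). In particular, $\v_\bfO \in \Phi^+(\Gamma_\bfO)$.

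Next, since we now know $\v_\bfO \in \Phi^+(\Gamma_\bfO)$ and the hypothesis that $(\calO_1,\dots,\calO_k)$ is generic is in force, I would apply Theorem \ref{nonemptymu}, which states that in the generic case the three conditions (i) $\calV_\bfO \neq \emptyset$, (ii) $\calV_\bfO^o \neq \emptyset$, and (iii) $\v_\bfO \in \Phi^+(\Gamma_\bfO)$ are equivalent. The implication (iii) $\Rightarrow$ (ii) then gives $\calV_\bfO^o \neq \emptyset$, which is what we want.

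There is no genuine obstacle here: the entire content was already packaged into Proposition \ref{imaginary} (whose proof contained the real combinatorial work of recognizing $\v_\bfO$ as Weyl-conjugate to a vector in the fundamental set) and Theorem \ref{nonemptymu} (whose nontrivial direction (iii) $\Rightarrow$ (ii) was established via Crawley-Boevey's work on the Deligne--Simpson problem). The corollary is thus a one-line deduction, and the plan is simply to state it explicitly as the conjunction of these two results.
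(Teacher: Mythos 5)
Your proposal is correct and is exactly the paper's argument: the corollary is stated as an immediate consequence of Proposition \ref{imaginary} (for $g\geq 1$ the vector $\v_\bfO$ is a positive imaginary root, hence lies in $\Phi^+(\Gamma_\bfO)$) combined with the implication (iii) $\Rightarrow$ (ii) of Theorem \ref{nonemptymu} under the genericity hypothesis.
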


The following proposition is due to Crawley-Boevey \cite{crawley-mat}.

\begin{proposition}If $(\calO_1,\dots,\calO_k)$ is generic and $g=0$, then $\v_\bfO$ is a real root if and only if $\calV_\bfO^o$ consists of a single $\PGL_n$-orbit (in which case $\calV_\bfO^o=\calV_\bfO$).\label{real}\end{proposition}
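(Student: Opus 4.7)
The plan is to reduce the statement to a dimension count on the affine comet-shaped quiver variety $\calQ_\bfO$, using the structural results of Proposition \ref{affinepGb}, Corollary \ref{lissite}, and Theorem \ref{nonemptymu}. The key observation is that, by Kac's formula $p(\alpha) = 1 - \tfrac{1}{2}(\alpha,\alpha)$, the dimension vector $\v_\bfO$ is a real root if and only if $p(\v_\bfO) = 0$, which in view of Corollary \ref{lissite} is equivalent to $\calQ_\bfO$ being zero-dimensional when non-empty.

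For the direction ($\Rightarrow$), suppose $\v_\bfO$ is a real root. Theorem \ref{nonemptymu} guarantees that $\calV_\bfO^o$ is non-empty, so by the genericity hypothesis and Proposition \ref{affinepGb} the projection $\calV_\bfO \to \calQ_\bfO$ is a principal $\PGL_n$-bundle with $\calV_\bfO^o$ mapping onto $\calQ_\bfO^o$. By Corollary \ref{lissite} the variety $\calQ_\bfO$ is irreducible of dimension $d_\bfO = 2p(\v_\bfO) = 0$, hence a single reduced point. Consequently $\calV_\bfO$ consists of a single $\PGL_n$-orbit. Since $\PGL_n$ preserves $\calV_\bfO^o$ (conjugation sends each $\calO_i$ to itself) and $\calV_\bfO^o$ is non-empty, we conclude $\calV_\bfO^o = \calV_\bfO$, which also yields the parenthetical claim.

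For the converse ($\Leftarrow$), suppose $\calV_\bfO^o$ consists of a single $\PGL_n$-orbit. Being non-empty, Theorem \ref{nonemptymu} yields $\v_\bfO \in \Phi^+(\Gamma_\bfO)$ and Corollary \ref{lissite} applies: $\calQ_\bfO$ is irreducible of dimension $d_\bfO = 2p(\v_\bfO)$. The principal bundle $\calV_\bfO^o \to \calQ_\bfO^o$ from Proposition \ref{affinepGb} then forces $\calQ_\bfO^o$ to be zero-dimensional, and since $\calQ_\bfO^o$ is a non-empty open subset of the irreducible variety $\calQ_\bfO$, we conclude $\dim \calQ_\bfO = 0$. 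Therefore $p(\v_\bfO) = 0$, so $\v_\bfO$ is a real root.

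There is no real obstacle here, since the structural machinery has already been built up earlier in the section; the proof is essentially a bookkeeping exercise, translating the geometric condition on $\calV_\bfO^o$ into the Kac-Moody condition on $\v_\bfO$ via the dimension formula. The only mild subtlety is in verifying $\calV_\bfO^o = \calV_\bfO$ in the $(\Rightarrow)$ direction, and this is immediate once one observes that $\PGL_n$-orbits are contained either entirely inside or entirely outside the $\PGL_n$-stable subset $\calV_\bfO^o$.
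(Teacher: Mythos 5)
Your proof is correct. Note that the paper itself gives no argument for this proposition: it is stated with an attribution to Crawley-Boevey \cite{crawley-mat}, so there is no internal proof to compare against. What you have done is supply a self-contained derivation from the structural results already established in \S \ref{star}, and the logic holds up: genericity plus Proposition \ref{affinepGb} makes $\calV_\bfO\rightarrow\calQ_\bfO$ a geometric quotient with all orbits free of dimension ${\rm dim}\,\PGL_n$, Corollary \ref{lissite} gives irreducibility and ${\rm dim}\,\calQ_\bfO=d_\bfO=2p(\v_\bfO)$, and Theorem \ref{nonemptymu} supplies both the non-emptiness of $\calV_\bfO^o$ in the forward direction and $\v_\bfO\in\Phi^+(\Gamma_\bfO)$ in the converse, so that $p(\v_\bfO)=0$ does characterize realness among roots. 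The handling of the parenthetical claim ($\calV_\bfO^o=\calV_\bfO$, via $\PGL_n$-stability of $\calV_\bfO^o$ inside a single orbit) is also fine. The only stylistic caution is that $p(\v_\bfO)=0$ alone does not make $\v_\bfO$ a real root — it must first be known to be a root — but you do establish root-ness in each direction before invoking this, so there is no gap. Crawley-Boevey's original argument runs through the representation theory of the deformed preprojective algebra (uniqueness/rigidity of the simple representation of real root dimension), whereas your route is a pure dimension count on $\calQ_\bfO$; the latter is shorter given the machinery of this paper, the former is what makes the result available without the genericity-specific bundle structure.
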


\begin{example} Here we assume that $g=0$, $k=3=n$. Let  $\calO$ be the regular nilpotent orbits of $\gl_3$ and let $\mathcal{S}$ be the regular semisimple adjoint orbit with eigenvalues $1,2,-3$. The tuple  $(\calO_1,\calO_2,\calO_3)=(\calO,\calO,\mathcal{S})$ is then generic, the underlying graph of the associated quiver $\Gamma_\bfO$ is $\tilde{E}_6$ and $\v_\bfO$ is the indivisible positive imaginary root. Hence $\calV_\bfO$ is not empty by Theorem \ref{nonemptymu}. Moreover we can use again Theorem \ref{nonemptymu} to verify that the only non-empty strata of $\calV_\bfO$ are $\calV_{\bfO}^o$ and  the two strata $\calV_{\bfO_1}^o$ and $\calV_{\bfO_2}^o$ corresponding respectively to $(\calO,\calC,\mathcal{S})$ and $(\calC,\calO,\mathcal{S})$ where $\calC$ is the nilpotent subregular adjoint orbit.  Note that $\v_{\bfO_i}$, $i=1,2$, is the real root $\alpha_1+\alpha_2+2\alpha_3+3\alpha_4+2 \alpha_5+\alpha_6$ of $E_6$ (in the notation of \cite[PLANCHE V]{bourbaki}) and so $\calV_{\bfO_i}$ is a single $\PGL_n$-orbit by Proposition \ref{real}.\end{example}

\begin{remark} If $\calV_{\bfO'}$ is not empty then for any $\bfO$ such that $\bfO'\unlhd\bfO$ the variety $\calV_\bfO$ will be also not empty. We may use this together with the equivalence between the two assertions (i) and (iii) of Theorem \ref{nonemptymu} to construct new roots of quivers from known ones. 
\end{remark}

\subsection{General comet-shaped quiver varieties}\label{intervar}

Let $(\calO_1,\dots,\calO_k)$ be a tuple of adjoint orbits of $\gl_n(\K)$, and for each $i=1,\dots,k$, let $(L_i,P_i,\sigma_i,C_i)$ be as in \S \ref{parresol} such that the image of the first projection $p_i:\mathbb{X}_{L_i,P_i,\Sigma_i}\rightarrow\gl_n$ is $\overline{\calO}_i$ where $\Sigma_i=\sigma_i+C_i$. As in the introduction we put ${\bf P}=P_1\times\cdots\times P_k$, ${\bf L}=L_1\times\cdots\times L_k$ and ${\bf \Sigma}=\Sigma_1\times\cdots\times\Sigma_k$, ${\bf C}:=C_1\times\cdots\times C_k$. Put $\mathbb{O}_{\bf L,P,\Sigma}=(\gl_n)^{2g}\times\mathbb{X}_{\bf L,P,\Sigma}$, $\mathbb{O}_{\bf L,P,\Sigma}^o=(\gl_n)^{2g}\times\mathbb{X}_{\bf L,P,\Sigma}^o$ and

$$\mathbb{V}_{\bf L,P,\Sigma}:=\left\{\left(\left.A_1,B_1,\dots,A_g,B_g,(X_1,\dots,X_k,g_1P_1,\dots,g_kP_k)\right)\in\mathbb{O}_{\bf L,P,\Sigma}\,\right|\, \sum_j[A_j,B_j]+\sum_iX_i=0\right\}.$$

Let ${\rm p}=(id)^{2g}\times p_1\times\cdots\times p_k:\mathbb{O}_{\bf L,P,\Sigma}\rightarrow \bfO$ and let $\rho:\mathbb{V}_{\bf L,P,\Sigma}\rightarrow\calV_\bfO$ be its restriction. The map $\rho$ is clearly projective. Let $\GL_n$ act on $\mathbb{V}_{\bf L,P,\Sigma}$ diagonally by conjugation on the first $2g+k$ coordinates and by left multiplication on the last $k$ coordinates. These actions of $\GL_n$ on $\mathbb{V}_{\bf L,P,\Sigma}$ and $\calV_\bfO$ induces actions of $\PGL_n$ for which the morphism $\rho$ is $\PGL_n$-equivariant.

\begin{proposition} Assume that the tuple $(\calO_1,\dots,\calO_k)$ is generic. Then the geometric quotient $\mathbb{V}_{\bf L,P,\Sigma}\rightarrow\mathbb{Q}_{\bf L,P,\Sigma}$ exists and is a principal $\PGL_n$-bundle. Moreover the diagram 

$$\xymatrix{\mathbb{V}_{\bf L,P,\Sigma}\ar[rr]^\rho\ar[d]&&\calV_\bfO\ar[d]\\\mathbb{Q}_{\bf L,P,\Sigma}\ar[rr]^{\rho/_{\PGL_n}}&&\calQ_\bfO}$$is Cartesian. If $\K=\overline{\F}_q$ and if our data $({\bf L,P,\Sigma})$ is defined over $\F_q$ then the above diagram is also defined over $\F_q$.
\label{quotientgen}\end{proposition}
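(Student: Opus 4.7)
The strategy is to verify the hypotheses of Corollary \ref{goodquotient1} for the $\PGL_n$-equivariant morphism $\rho:\mathbb{V}_{\bf L,P,\Sigma}\to\calV_\bfO$ and then read off the conclusion. So the plan is in three steps: (i) show $\rho$ is projective, (ii) show the $\PGL_n$-action on the affine base $\calV_\bfO$ is set-theoretically free with separable orbits, (iii) apply Corollary \ref{goodquotient1}.

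For (i): each $p_i:\mathbb{X}_{L_i,P_i,\Sigma_i}\to\overline{\calO}_i$ is projective because $\mathbb{X}_{L_i,P_i,\Sigma_i}$ sits as a closed subvariety of $\overline{\calO}_i\times(\GL_n/P_i)$ and $\GL_n/P_i$ is projective. Hence the product map ${\rm p}=(id)^{2g}\times p_1\times\cdots\times p_k:\mathbb{O}_{\bf L,P,\Sigma}\to\bfO$ is projective, and therefore so is its restriction $\rho$ to the closed subvariety $\mathbb{V}_{\bf L,P,\Sigma}\subset\mathbb{O}_{\bf L,P,\Sigma}$. The $\PGL_n$-equivariance is built into the definition of the actions.

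For (ii): $\calV_\bfO$ is affine since it is cut out by polynomial equations inside the affine variety $(\gl_n)^{2g}\times\overline{\calO}_1\times\cdots\times\overline{\calO}_k$. By Proposition \ref{affinepGb} (which uses the assumption that $(\calO_1,\dots,\calO_k)$ is generic), the action of $\PGL_n$ on $\calV_\bfO$ is set-theoretically free and the quotient map $\calV_\bfO\to\calQ_\bfO$ is a principal $\PGL_n$-bundle in the \'etale topology; in particular every orbit is separable (the orbit map $\PGL_n\to\PGL_n\cdot y$ is smooth).

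For (iii): Corollary \ref{goodquotient1} now applies verbatim with $G=\PGL_n$, $X=\mathbb{V}_{\bf L,P,\Sigma}$, $Y=\calV_\bfO$, $f=\rho$. It yields the existence of the geometric quotient $\mathbb{V}_{\bf L,P,\Sigma}\to\mathbb{Q}_{\bf L,P,\Sigma}$ as a principal $\PGL_n$-bundle, together with the Cartesian square
\[
\xymatrix{\mathbb{V}_{\bf L,P,\Sigma}\ar[rr]^\rho\ar[d]&&\calV_\bfO\ar[d]\\ \mathbb{Q}_{\bf L,P,\Sigma}\ar[rr]^{\rho/_{\PGL_n}}&&\calQ_\bfO.}
\]
The last sentence of Corollary \ref{goodquotient1} gives the $\F_q$-descent statement when $\K=\overline{\F}_q$ and $({\bf L,P,\Sigma})$ is defined over $\F_q$, since all the ingredients (the map $\rho$, the action, and the base $\calV_\bfO\to\calQ_\bfO$) are then defined over $\F_q$.

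There is no serious obstacle here: all the non-trivial input (freeness, separability, the existence of the principal bundle on the affine base) is already packaged in Proposition \ref{affinepGb}, and projectivity of the fiber-flag factors is standard. The only point worth a line of verification is that the orbit-separability hypothesis of Corollary \ref{goodquotient1} is indeed furnished by the fact that $\calV_\bfO\to\calQ_\bfO$ is a principal bundle, rather than something that must be reproved from genericity.
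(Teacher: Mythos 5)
Your proof is correct and follows essentially the same route as the paper: the paper's own argument is precisely that genericity gives, via Proposition \ref{affinepGb}, that $\calV_\bfO\rightarrow\calQ_\bfO$ is a principal $\PGL_n$-bundle (equivalently, trivial stabilizers and separable orbits, by the Bardsley--Richardson criterion quoted before Corollary \ref{goodquotient1}), and then Corollary \ref{goodquotient1} applied to the projective equivariant map $\rho$ gives the quotient, the Cartesian square, and the $\F_q$-rationality. Your write-up merely makes explicit the verification of the hypotheses that the paper leaves implicit.
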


\begin{proof} Since the tuple $(\calO_1,\dots,\calO_k)$ is generic, the quotient $\calV_\bfO\rightarrow\calQ_\bfO$ is a principal $\PGL_n$-bundle in the \'etale topology (see Proposition \ref{affinepGb}) and so the result follows from Corollary \ref{goodquotient1}.\end{proof}

In general (i.e. when the tuple $(\calO_1,\dots,\calO_k)$ is not necessarily generic) we can always define the GIT quotient $$\mathbb{V}_{\bf L,P,\Sigma}/\!/_\Psi\GL_n$$with respect to some $\GL_n$-linearization $\Psi$ of some ample line bundle $M$ on $\mathbb{V}_{\bf L,P,\Sigma}$. Indeed $\mathbb{V}_{\bf L,P,\Sigma}$ is projective over $\calV_\bfO$ and so such a pair $(M,\Psi)$ always exists (see above Corollary \ref{goodquotient1}).

Assuming that $(\calO_1,\dots,\calO_k)$ is generic, we show in this section that  the quotient $\mathbb{Q}_{\bf L,P,\Sigma}/\!/_\Psi\GL_n$ can be identified (at least when $\K=\C$) with a quiver variety $\mathfrak{M}_{\xihat,\thetahat}(\v)$ for appropriate choices of $\xihat,\thetahat,\v$.

For each $i=1,\dots,k$, we can define a type $A$ quiver $\Gamma_{L_i,P_i,\Sigma_i}$ together with parameters $\xihat_{L_i,P_i,\Sigma_i},\thetahat_i,\v_{L_i,P_i,\Sigma_i}$ as in \S \ref{parresol} such that  there is a canonical bijective morphism $\mathbb{X}_{L_i,P_i,\Sigma_i}\rightarrow \mathfrak{M}_{\xihat_{L_i,P_i,\Sigma_i},\thetahat_i}(\v_{L_i,P_i,\Sigma_i},\w)$ which is an isomorphism when $\K=\C$.

We now define a comet shaped quiver $\Gamma_{\bf L,P,\Sigma}$ as in \S \ref{star} such that each leg with vertices $[1,1],\dots,[1,s_i]$ is exactly the quiver $\Gamma_{L_i,P_i,\Sigma_i}$. I.e., if we delete the central vertex $\{0\}$ from $\Gamma_{\bf L,P,\Sigma}$, we recover the $k$ type $A$ quivers $\Gamma_{L_1,P_1,\Sigma_1},\dots,\Gamma_{L_k,P_k,\Sigma_k}$. We denote by $I$ the set of vertices of $\Gamma_{\bf L,P,\Sigma}$, and we define a dimension vector $\v_{\bf L,P,\Sigma}=\{v_i\}_{i\in I}$ by putting $v_0:=n$ and, for each $i=1,\dots,k$,  $(v_{[i,1]},\dots,v_{[i,s_i]}):=\v_{L_i,P_i,\Sigma_i}$. Multiplying the vectors $\thetahat_i$ by a strictly positive integer if necessary, there is   $\thetahat\in \Z^I$ such that its projection on $\Gamma_{L_i,P_i,\Sigma_i}$ is $\thetahat_i$ for each $i$ and such that $\thetahat\centerdot\v_{\bf L,P,\Sigma}=0$. There is a unique $\xihat_{\bf L,P,\Sigma}\in\K^I$ whose projection on $\Gamma_{L_i,P_i,\Sigma_i}$ is $\xihat_{L_i,P_i,\Sigma_i}$ for all $i$ and $\xihat_{\bf L,P,\Sigma}\centerdot\v_{\bf L,P,\Sigma}=0$. Note that $\theta_0$ must be negative.

The quiver $\Gamma_{\bf L,P,\Sigma}$ and the parameter $\xihat_{\bf L,P,\Sigma}$ are the same as $\Gamma_\bfO$ and $\xihat_\bfO$ obtained from $(\calO_1,\dots,\calO_k)$, see above Example \ref{diffvect}. However in general the dimension vector $\v_{\bf L,P,\Sigma}$ differs from $\v_\bfO$ as shown in Example \ref{diffvect}. 

To alleviate the notation we will use $\Gamma$, $\xihat$, $\v$ instead of $\Gamma_{\bf L,P,\Sigma}$, $\xihat_{\bf L,P,\Sigma}$ and $\v_{\bf L,P,\Sigma}$.

Let $\Gamma^{\dagger}$ be the quiver obtained from $\Gamma$ by deleting the central vertex (i.e., it is the union of  the quivers $\Gamma_{L_1,P_1,\Sigma_1},\dots,\Gamma_{L_k,P_k,\Sigma_k}$). We denote by $I^{\dagger}=\{[i,j]\}_{i,j}$ the set of vertices of $\Gamma^{\dagger}$. For a parameter $x\in\K^I$, we denote by $x^{\dagger}$ its projection on $\K^{I^{\dagger}}$.

We put $${\bf Z}\left(\overline{\Gamma}{^\dagger},\mathbf{v}^\dagger,\mathbf{w}\right):=(\gl_n)^{2g}\times\mathbf{M}\left(\overline{\Gamma}{^{\dagger}},\mathbf{v}^\dagger,\mathbf{w}\right).$$

We let $\GL_{\v^\dagger}$ acts on ${\bf Z}\left(\overline{\Gamma}{^{\dagger}},\mathbf{v}^\dagger,\mathbf{w}\right)$  by the trivial action on $(\gl_n)^{2g}$ and by the usual action on the second coordinate.

We identify in the obvious way $\mathbf{M}\left(\overline{\Gamma},\v\right)$ with $\mathbf{Z}\left(\overline{\Gamma}{^\dagger},\v^\dagger,\w\right)$ and we regard $\mu_\v^{-1}(\xihat)$ as a $\GL_{\v^\dagger}$-stable closed subvariety of $(\gl_n)^{2g}\times\mu_{\v^\dagger,\w}^{-1}(\xihat^{\dagger})$. To avoid any confusion, for a closed $\GL_\v$-stable subset $X$  of $\mathbf{M}\left(\overline{\Gamma},\v\right)=\mathbf{Z}\left(\overline{\Gamma}{^\dagger},\v^\dagger,\w\right)$ we denote by $X^{ss}(\Phi)$ the set of $\thetahat$-semistable points of $X$ and by $X^{ss}(\Phi^\dagger)$ the set of $\thetahat^\dagger$-semistable points. Clearly $X^{ss}(\Phi)\subset X^{ss}(\Phi^\dagger)$.

%Put ${\bf Z}_{\thetahat^{\dagger}}^{ss}\left(\overline{\Gamma}{^{\dagger}},\mathbf{v}^\dagger,\mathbf{w}\right):=(\gl_n)^{2g}\times\mathbf{M}_{\thetahat^\dagger}^{ss}\left(\overline{\Gamma}{^\dagger},\mathbf{v}^\dagger,\mathbf{w}\right)$

Define 

$$\mathfrak{Z}_{\xihat^\dagger,\thetahat^\dagger}\left(\v^\dagger,\w\right):=\left((\gl_n)^{2g}\times\mu_{\v^\dagger,\w}^{-1}(\xihat^\dagger)\right)\left/\left/_{\thetahat^\dagger}\GL_{\v^\dagger}\right.\right.\simeq (\gl_n)^{2g}\times\mathfrak{M}_{\xihat^\dagger,\thetahat^\dagger}(\v^\dagger,\w).$$There is a canonical bijective map $f_1:\mathfrak{Z}_{\xihat^\dagger,\thetahat^\dagger}(\v^\dagger,\w)\longrightarrow\mathbb{O}_{\bf L,P,\Sigma}$ (which is an isomorphism when $\K=\C$). Let $q:\left((\gl_n)^{2g}\times\mu_{\v^\dagger,\w}^{-1}(\xihat^\dagger)\right)^{ss}(\Phi^\dagger)\rightarrow \mathfrak{Z}_{\xihat^\dagger,\thetahat^\dagger}\left(\v^\dagger,\w\right)$ denote the quotient map. By Proposition \ref{affinegoodquotient} the map $f_1$ restricts to a bijective morphism $q\left(\mu_\v^{-1}(\xihat)^{ss}(\Phi^\dagger)\right)\rightarrow \mathbb{V}_{\bf L,P,\Sigma}$ and there is a canonical bijective map  $\mu_\v^{-1}(\xihat)/\!/_{\thetahat^\dagger}\GL_{\v^\dagger}\rightarrow q\left(\mu_\v^{-1}(\xihat)^{ss}(\Phi^\dagger)\right)$. Composing the two bijective morphisms we end up with a bijective morphism $f_2:\mu_\v^{-1}(\xihat)/\!/_{\thetahat^\dagger}\GL_{\v^\dagger}\rightarrow\mathbb{V}_{\bf L,P,\Sigma}$ which is an isomorphism when $\K=\C$. 

\begin{proposition} Assume that tuple $(\calO_1,\dots,\calO_k)$ is generic, then an element of $\mu_\v^{-1}(\xihat)$ is $\thetahat$-semistable (resp. $\thetahat$-stable) if and only if it is $\thetahat^{\dagger}$-semistable (resp. $\thetahat^{\dagger}$-stable).
\label{thetastability}\end{proposition}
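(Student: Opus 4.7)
My plan is to apply King's criterion (Theorem \ref{king}) for $\thetahat$-(semi)stability on the comet quiver $\Gamma$ and compare it with the framed criterion of Definition \ref{stab} on $\Gamma^\dagger$, where the central vertex has been erased and $V_0 = \K^n$ now plays the role of the framing.

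The first step is to set up a simple dictionary. A subrepresentation of $B \in \mathbf{M}(\overline{\Gamma}, \v)$ amounts to a pair $(S_0, S^\dagger)$ with $S_0 \subset \K^n$ preserved by the loops $A_j, B_j$, a $B^\dagger$-invariant $S^\dagger \subset V^\dagger$, and the compatibility $a_i(S_{[i,1]}) \subset S_0 \subset b_i^{-1}(S_{[i,1]})$ for each $i$. Specializing to $S_0 = 0$ (resp. $S_0 = \K^n$) forces exactly $S_{[i,1]} \subset \ker a_i$ (resp. $\mathrm{Im}(b_i) \subset S_{[i,1]}$), which are the subspaces featured in Definition \ref{stab}(i) (resp. \ref{stab}(ii)). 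Using $\thetahat \cdot \v = 0$, King's inequality $\thetahat \cdot \dim(S_0, S^\dagger) \leq 0$ specializes in these two cases to exactly the inequalities (i) and (ii). Thus $\thetahat$-(semi)stability immediately implies $\thetahat^\dagger$-(semi)stability, and this direction does not use genericity.

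The converse reduces to the claim that, under genericity of $(\calO_1, \ldots, \calO_k)$, any subrepresentation of $B \in \mu_\v^{-1}(\xihat)$ has $v_0' := \dim S_0 \in \{0, n\}$. To prove the claim, note that $a_i(S_{[i,1]}) \subset S_0$ forces $X_i = a_i b_i + \zeta_{i,1}\mathrm{Id} \in \overline{\calO}_i$ (Proposition \ref{crawley}) to preserve $S_0$. The moment map relation at vertex $0$, combined with $\xi_0 = -\sum_i \zeta_{i,1}$, gives $\sum_j [A_j, B_j] + \sum_i X_i = 0$; restricting to $S_0$ and taking traces yields $\sum_i \Tr(X_i|_{S_0}) = 0$. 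Since the characteristic polynomial is $\GL_n$-invariant and hence constant on $\overline{\calO}_i$, the eigenvalues of $X_i$ (with multiplicity) are those of the semisimple part $\sigma_i$, namely $\alpha_{i,f}$ with multiplicities $m_{i,f}$. The eigenvalues of $X_i|_{S_0}$ are thus among the $\alpha_{i,f}$'s with multiplicities $m_{i,f}' \leq m_{i,f}$, and $\sum_f m_{i,f}' = v_0'$ is independent of $i$. Genericity in the form of Remark \ref{genericdef2} then forces either all $m_{i,f}' = 0$ or all $m_{i,f}' = m_{i,f}$, i.e.\ $v_0' \in \{0, n\}$.

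Once this is in hand, both implications of the semistable statement follow directly from the dictionary, and the stable version is handled identically by tracking strict inequalities: when $\v' \neq 0, \v$ and $v_0' \in \{0, n\}$, one automatically has $S^\dagger \neq 0$ (if $v_0' = 0$) or $S^\dagger \neq V^\dagger$ (if $v_0' = n$), matching the strict exceptions in Definition \ref{stab}. I expect the main delicate point to be the multiplicity argument when $X_i$ lies in $\overline{\calO}_i \setminus \calO_i$; this is resolved uniformly by the fact that the characteristic polynomial is constant on $\overline{\calO}_i$, so the eigenvalue multiset of $X_i$, and hence the relevant multiplicities $m_{i,f}'$ obtained by restriction to $S_0$, coincide with those prescribed by $\sigma_i$, making the abstract reformulation of genericity in Remark \ref{genericdef2} applicable verbatim.
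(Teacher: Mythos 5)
Your proof is correct, and its skeleton is the same as the paper's: translate King's criterion on the full comet quiver into the framed conditions of Definition \ref{stab} via the dictionary between subrepresentations $(S_0,S^\dagger)$ and the subspaces appearing in (i) and (ii), observe that the direction ``$\thetahat$-(semi)stable $\Rightarrow$ $\thetahat^\dagger$-(semi)stable'' needs no genericity, and reduce the converse to the claim that any subrepresentation of an element of $\mu_\v^{-1}(\xihat)$ has $\dim S_0\in\{0,n\}$. The genuine difference is how that claim is proved. The paper notes that $S_0$ is preserved by all of $A_1^\varphi,B_1^\varphi,\dots,X_k^\varphi$, that this tuple lies in $\calV_{\bfO'}^o$ for some $\bfO'\unlhd\bfO$ which is again generic, and then invokes Proposition \ref{affinepGb} (every element of a generic $\calV_{\bfO'}^o$ is irreducible) -- a result whose proof is itself omitted and referred to \cite{hausel-letellier-villegas}. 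You instead reprove the needed statement directly: restrict the moment map relation to $S_0$, take traces to get $\sum_i\Tr(X_i|_{S_0})=0$, use that the characteristic polynomial is constant on $\overline{\calO}_i$ so the eigenvalue multiset of $X_i$ is that of $\calO_i$, and apply the numerical reformulation of genericity in Remark \ref{genericdef2} to force $m'_{i,f}=0$ for all $i,f$ or $m'_{i,f}=m_{i,f}$ for all $i,f$. What your route buys is self-containedness (it is in effect a proof of the relevant case of Proposition \ref{affinepGb}, and the constancy of the characteristic polynomial on the closure neatly sidesteps any separate discussion of boundary orbits $\bfO'\unlhd\bfO$); what the paper's route buys is brevity, since Proposition \ref{affinepGb} is already on record and is used elsewhere. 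You also write out explicitly the inclusion $X^{ss}(\Phi)\subset X^{ss}(\Phi^\dagger)$ and the bookkeeping of strict inequalities in the stable case, both of which the paper leaves implicit; your handling of these, including the use of $\thetahat\centerdot\v=0$ to match condition (ii), is accurate.
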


\begin{proof} Assume that $\varphi\in \mu_\v^{-1}(\xihat)$ is $\thetahat^{\dagger}$-semistable. Let $\psi$ be a subrepresentation of $\varphi$. It is an element in $\mu_{\v'}^{-1}(\xihat)$ for some $\v'\leq\v$. We need to verify that $\thetahat\centerdot\v'\leq 0$. If $v'_0=v_0$, then we must have $\thetahat\centerdot\v'\leq\thetahat\centerdot\v= 0$ since $\thetahat^{\dagger}\in\N^{I^{\dagger}}$. If $v'_0=0$, then the subspaces $V_{[i,1]}'$ are contained in ${\rm Ker}\,(a_i)$ for all $i=1,\dots,k$ and so  $\thetahat\centerdot\v'=\thetahat^{\dagger}\centerdot(\v')^{\dagger}\leq 0$ since $\varphi$ is $\thetahat^{\dagger}$-semistable. Let $(A_1^\varphi,B_1^\varphi,\dots,A_g^\varphi,B_g^\varphi,X_1^\varphi,\dots,X_k^\varphi)$ be given by Formula (\ref{equationXi}).  Since $\psi$ is a subrepresentation of $\varphi$, the subspace $V'_0\subset V_0=\K^{v_0}$ is preserved by all matrices $A_1^\varphi,B_1^\varphi,\dots,A_g^\varphi,B_g^\varphi,X_1^\varphi,\dots,X_k^\varphi$. Recall also that any tuple $(\calO'_1,\dots,\calO'_k)\unlhd(\calO_1,\dots,\calO_k)$ is  generic. Hence by Proposition \ref{affinepGb}, the tuple $(A_1^\varphi,B_1^\varphi,\dots,A_g^\varphi,B_g^\varphi,X_1^\varphi,\dots,X_k^\varphi)$, which belongs to some $(\calO'_1,\dots,\calO'_k)\unlhd(\calO_1,\dots,\calO_k)$, is irreducible. Hence $v'_0=0$ or $v'_0=n$.\end{proof}

\begin{proposition}  Assume that  $(\calO_1,\dots,\calO_k)$ is generic. Then the morphism $f_2$ induces a bijective morphism  $\mathfrak{M}_{\xihat,\thetahat}(\v)\rightarrow\mathbb{Q}_{\bf L,P,\Sigma}$ (which is an isomorphism when $\K=\C$).
\end{proposition}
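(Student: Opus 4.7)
The plan is to realize $\mathfrak{M}_{\xihat,\thetahat}(\v)$ as a two-stage GIT quotient --- first by $\GL_{\v^\dagger}$, then by the residual $\PGL_n$ --- and match each stage with the construction of $\mathbb{Q}_{\bf L,P,\Sigma}$ via the bijective morphism $f_2$. The technical tool is Proposition \ref{quotientransi} (transitivity of GIT quotients), whose hypothesis --- the coincidence of the two semistable loci --- is precisely the content of the just-proved Proposition \ref{thetastability}.

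First, decompose $\GL_\v = \GL_n\times\GL_{\v^\dagger}$ so that $\G_\v = \GL_\v/Z$ with $Z$ the diagonal center; let $\Phi$ denote the $\G_\v$-linearization on the trivial line bundle over $\mu_\v^{-1}(\xihat)$ twisted by $\thetahat$, and $\Phi^\dagger$ its restriction to $\GL_{\v^\dagger}$ (which corresponds to $\thetahat^\dagger$). By Proposition \ref{thetastability}, the equality
\[
\mu_\v^{-1}(\xihat)^{ss}(\Phi) = \mu_\v^{-1}(\xihat)^{ss}(\Phi^\dagger)
\]
holds under our genericity hypothesis. Proposition \ref{quotientransi} then gives a canonical isomorphism
\[
\mathfrak{M}_{\xihat,\thetahat}(\v) \simeq \calW /\!/_\Psi \PGL_n, \qquad \calW := \mu_\v^{-1}(\xihat)/\!/_{\thetahat^\dagger}\GL_{\v^\dagger},
\]
for the $\PGL_n$-linearization $\Psi$ on an ample line bundle $M$ on $\calW$ induced by descent from $\Phi$.

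Next, the morphism $f_2 : \calW \to \mathbb{V}_{\bf L,P,\Sigma}$ constructed above Proposition \ref{thetastability} is $\PGL_n$-equivariant by inspection (the residual $\PGL_n$-action on $\calW$ acts on $\mathbf{M}(\overline\Gamma,\v)$ by left multiplication on $W$-components, which under $f_2$ matches the conjugation/left-translation action on $\mathbb{V}_{\bf L,P,\Sigma}$), and is bijective in general, an isomorphism when $\K=\C$. By Proposition \ref{quotientgen}, the genericity of $(\calO_1,\dots,\calO_k)$ makes $\mathbb{V}_{\bf L,P,\Sigma}\to \mathbb{Q}_{\bf L,P,\Sigma}$ a principal $\PGL_n$-bundle, so the $\PGL_n$-orbits of $\mathbb{V}_{\bf L,P,\Sigma}$ are all closed and the stabilizers trivial; pulling back through the bijection $f_2$, the $\PGL_n$-orbits in $\calW$ are also all closed with trivial stabilizers, hence every point of $\calW$ is $\Psi$-stable and the GIT quotient $\calW/\!/_\Psi\PGL_n$ coincides with the geometric quotient $\calW/\PGL_n$. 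The universal property of categorical quotients then produces a canonical morphism $\calW/\!/_\Psi \PGL_n \to \mathbb{V}_{\bf L,P,\Sigma}/\PGL_n = \mathbb{Q}_{\bf L,P,\Sigma}$, which is bijective because $f_2$ is, and an isomorphism over $\C$ because $f_2$ is then an isomorphism and both quotient maps are principal $\PGL_n$-bundles (so the comparison map is Cartesian over an isomorphism of bases).

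\textbf{Main obstacle.} The conceptual point is Step 1: the decomposition of $\G_\v$ into $\GL_{\v^\dagger}$ and $\PGL_n$ is not a direct product (because of the diagonal center), so one must be careful that the $\PGL_n$-linearization $\Psi$ produced by Proposition \ref{quotientransi} on $\calW$ is genuinely the one whose semistable locus recovers $\mathfrak{M}_{\xihat,\thetahat}(\v)$. The verification reduces to the equality of semistable loci in Proposition \ref{thetastability}, together with the formal fact (used in the proof of Proposition \ref{quotientransi}) that in the affine-over-affine setting, twisting the line bundle by the character $\thetahat$ and then taking invariants commutes with restricting first to $\GL_{\v^\dagger}$-invariants, since the actions of $\GL_{\v^\dagger}$ and $\PGL_n$ commute.
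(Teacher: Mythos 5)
Your proposal is correct and follows essentially the same route as the paper: the paper's proof consists precisely of invoking Proposition \ref{quotientransi} with $X=\mu_\v^{-1}(\xihat)$ and $G''=\GL_\v=\GL_n\times\GL_{\v^\dagger}$, the hypothesis on coinciding semistable loci being supplied by Proposition \ref{thetastability}, and then transporting the quotient through $f_2$ to $\mathbb{Q}_{\bf L,P,\Sigma}$. Your additional verifications (equivariance of $f_2$, closedness of $\PGL_n$-orbits via Proposition \ref{quotientgen}, the care about the diagonal center) are consistent elaborations of the same argument rather than a different method.
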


\begin{proof}The proposition follows from  Proposition \ref{thetastability} and Proposition \ref{quotientransi} applied to $X=\mu_\v^{-1}(\xihat)$, $G''=\GL_\v=\GL_n\times\GL_{\v^\dagger}$.
\end{proof}

\begin{remark} If $(\calO_1,\dots,\calO_k)$ is not generic, a-priori we only have a bijective morphism $\mathfrak{M}_{\xihat,\thetahat}(\v)$ onto an open subset  of a quotient $\mathbb{V}_{\bf L,P,\Sigma}/\!/_\Psi\GL_n$.\end{remark}

{\bf We now assume until the end of this section that the tuple $(\calO_1,\dots,\calO_k)$ is generic}. 

Thanks to Proposition \ref{thetastability} we can now omitt $\Phi$ and $\Phi^\dagger$ from the notation $\mu_\v^{-1}(\xihat)^{ss}(\Phi)$ or $\mu_\v^{-1}(\xihat)^{ss}(\Phi^\dagger)$ and write simply $\mu_\v^{-1}(\xihat)^{ss}$.

\begin{remark} Assume that the $\thetahat_i$'s, $i=1,\dots,k$, have striclty positive coordinates. Then $\mu_\v^{-1}(\xihat)^{ss}=\mu_\v^{-1}(\xihat)^s$. This identity also happens when $\thetahat$ is generic. We want to notice that in this situation we can actually choose our $\thetahat_i$'s (taking larger values of the coordinates if necessary) such that $\thetahat$ is generic. Indeed the set $\mu_\v^{-1}(\xihat)^{ss}$ depends only on the position of the non-zero coordinates of the $\thetahat_i$'s and not on their values (cf. Remark \ref{zeros} (ii)).
\label{assgeneric}\end{remark}

Put $$\mathfrak{N}_{\xihat,\thetahat^{\dagger}}(\v^{\dagger},\w):=\mu_\v^{-1}(\xihat)/\!/_{\thetahat^{\dagger}}\GL_{\v^{\dagger}}.$$

We summarize what we said with the following commutative diagram

\begin{equation}\xymatrix{\mathfrak{Z}_{\xihat^{\dagger},\thetahat^{\dagger}}(\v^{\dagger},\w)\ar[rr]^{f_1}&&\mathbb{O}_{\bf L,P,\Sigma}\ar[rr]^{\rm p}&&\bfO\\
\mathfrak{N}_{\xihat,\thetahat^{\dagger}}(\v^{\dagger},\w)\ar[rr]^{f_2}\ar[u]\ar[d]&&\mathbb{V}_{\bf L,P,\Sigma}\ar[rr]^{\rho}\ar[d]\ar[u]&&\calV_\bfO\ar[u]\ar[d]\\
\mathfrak{M}_{\xihat,\thetahat}(\v)\ar[rr]^{f_3}&&\mathbb{Q}_{\bf L,P,\Sigma}\ar[rr]^{\rho/_{\PGL_n}}&&\calQ_\bfO}.\label{mainpicture}\end{equation}where $\mathbb{Q}_{\bf L,P,\Sigma}$ is defined as in Proposition \ref{quotientgen} and where $f_3$ is the factorization morphism (as $q\circ\pi_2\circ f_2$ is constant on $\GL_n$-orbits).  The top vertical arrows are the canonical inclusions and the bottom vertical arrows are the canonical quotient maps. 

\begin{remark}When $\K=\C$, the maps $f_1,f_2,f_3$ are isomorphisms and the diagram is Cartesian.\label{cartesianpic}\end{remark}

Recall that $\Sigma_i=\sigma_i+C_i$. Put ${\bf C}=C_1\times\cdots\times C_k$. Then the decomposition of $\overline{\bf C}=\coprod_\alpha {\bf C}_\alpha$ as a union of ${\bf L}$-orbits provides a stratification $\overline{\bf \Sigma}=\coprod_\alpha {\bf \Sigma}_{\alpha}$. We thus a have a decomposition \begin{equation}\mathbb{V}_{\bf L,P,\Sigma}=\coprod_\alpha \mathbb{V}_{\bf L,P,\Sigma_\alpha}^o\label{partition1}\end{equation}where $\mathbb{V}_{\bf L,P,\Sigma_\alpha}^o:=\mathbb{V}_{\bf L,P,\Sigma_\alpha}\cap \mathbb{O}_{\bf L,P,\Sigma_\alpha}^o$. By Proposition \ref{square}, the subset $\mathbb{V}_{\bf L,P,\Sigma}^o\subset \mathbb{V}_{\bf L,P,\Sigma}$ corresponds to the stable points, i.e., $\mathbb{V}_{\bf L,P,\Sigma}^o\simeq \mathfrak{N}_{\xihat,\thetahat^{\dagger}}^s(\v^{\dagger},\w)=\mu_\v^{-1}(\xihat)^s/\GL_{\v^{\dagger}}$. The image of $\mathbb{V}_{\bf L,P,\Sigma_\alpha}$ by the projective morphism $$\rho:\mathbb{V}_{\bf L,P,\Sigma}\longrightarrow\calV_\bfO$$ is of the form $\calV_{\bfO_\alpha}$ for some $\bfO_\alpha\unlhd\bfO$.

\begin{theorem} The variety $\mathbb{V}_{\bf L,P,\Sigma_\alpha}$ is not empty if and only if $\v_{\bf L,P,\Sigma_\alpha}$ is a root of $\Gamma_{\bf L,P,\Sigma_\alpha}$. In this case the piece $\mathbb{V}_{\bf L,P,\Sigma_\alpha}^o$ is also not empty and is an irreducible nonsingular dense open subset of $\mathbb{V}_{\bf L,P,\Sigma_\alpha}$ of dimension

$$(2g+k-1)n^2+1-{\rm dim}\,{\bf L}+{\rm dim}\,{\bf \Sigma}_\alpha.$$In particular the partition (\ref{partition1}) is a stratification. \label{strat}\end{theorem}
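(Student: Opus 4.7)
The plan is to exploit diagram (\ref{mainpicture}) and reduce every assertion to known facts about the affine comet-shaped variety $\calV_\bfO$ and about quiver varieties. Passing to the closed stratum $\mathbf{\Sigma}_\alpha$ amounts to running the same construction with $\mathbf{\Sigma}$ replaced by $\mathbf{\Sigma}_\alpha$ (the data $\mathbf{L},\mathbf{P}$ being unchanged), so without loss of generality I would prove the statement for the full variety $\mathbb{V}_{\bf L,P,\Sigma}$.

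For nonemptyness, the projection $\rho:\mathbb{V}_{\bf L,P,\Sigma}\to \calV_\bfO$ is projective with image $\calV_\bfO$, so $\mathbb{V}_{\bf L,P,\Sigma}\neq\emptyset$ iff $\calV_\bfO\neq\emptyset$. By Theorem \ref{nonemptymu} this is iff $\v_\bfO\in\Phi^+(\Gamma_\bfO)$. Since $\v_\bfO$ and $\v_{\bf L,P,\Sigma}$ have the same $\sharp$-collections (both are determined leg-wise by the same partitions $\mu^i$) and $\xihat_\bfO=\xihat_{\bf L,P,\Sigma}$, Lemma \ref{Ws} shows they are Weyl-conjugate under reflections at vertices where $\xihat$ vanishes; hence $\v_\bfO\in\Phi^+(\Gamma)$ iff $\v_{\bf L,P,\Sigma}\in\Phi(\Gamma)$. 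For the open stratum, Proposition \ref{proj} asserts that each $p_i$ restricts to an isomorphism over $\calO_i$, so $\rho$ restricts to an isomorphism $\mathbb{V}_{\bf L,P,\Sigma}^o\stackrel{\sim}{\to}\calV_\bfO^o$. Combining Proposition \ref{affinepGb} and Corollary \ref{lissite} (which give that $\calV_\bfO^o$ is nonempty exactly when $\v_\bfO$ is a root, and then is irreducible nonsingular of dimension $\dim\bfO-n^2+1$) with the dimension computation $\dim\bfO=2gn^2+kn^2-\dim\mathbf{L}+\dim\mathbf{\Sigma}$ deduced from (\ref{dimfor}), these properties transfer to $\mathbb{V}_{\bf L,P,\Sigma}^o$, yielding the claimed dimension $(2g+k-1)n^2+1-\dim\mathbf{L}+\dim\mathbf{\Sigma}$.

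It remains to establish density of $\mathbb{V}^o$ in $\mathbb{V}$, which reduces, since $\mathbb{V}^o$ is a nonempty open subset, to showing $\mathbb{V}_{\bf L,P,\Sigma}$ is irreducible. Via the principal $\PGL_n$-bundle $\mathbb{V}_{\bf L,P,\Sigma}\to \mathbb{Q}_{\bf L,P,\Sigma}$ of Proposition \ref{quotientgen} and the bijective morphism $\mathfrak{M}_{\xihat,\thetahat}(\v_{\bf L,P,\Sigma})\to\mathbb{Q}_{\bf L,P,\Sigma}$ of diagram (\ref{mainpicture})---with $\thetahat$ taken generic, which is allowed by Remark \ref{assgeneric}---this amounts to irreducibility of the quiver variety $\mathfrak{M}_{\xihat,\thetahat}(\v_{\bf L,P,\Sigma})$ over $\C$, then transferred to positive characteristic by spreading out.

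The main obstacle is precisely this last irreducibility: a direct application of Theorem \ref{irrquiver} would require $\mathfrak{M}_\xihat^s(\v_{\bf L,P,\Sigma})\neq\emptyset$, but $\v_{\bf L,P,\Sigma}$ need not be a ``simple'' dimension vector for the preprojective algebra at $\xihat$, and the usual injectivity/surjectivity constraints on the leg maps may fail. To circumvent this I would combine Theorem \ref{HLRpure} with Theorem \ref{irrquiver}: for generic $\thetahat$, the smooth varieties $\mathfrak{M}_{\xihat,\thetahat}(\v_{\bf L,P,\Sigma})$ and $\mathfrak{M}_{0,\thetahat}(\v_{\bf L,P,\Sigma})$ share the same cohomology supporting a pure mixed Hodge structure, so in particular they have the same top Betti number and, being smooth, the same number of connected (equivalently, irreducible) components. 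Irreducibility of $\mathfrak{M}_{0,\thetahat}(\v_{\bf L,P,\Sigma})$ is accessible via Theorem \ref{irrquiver} applied at $\xihat=0$, using the non-emptyness criterion of Theorem \ref{non-emptycond} to verify that $\mathfrak{M}_0^s(\v_{\bf L,P,\Sigma})$ is non-empty (the constraint $\beta_i\cdot\xihat=0$ being vacuous there, the only condition to check is the strict $p$-inequality on decompositions of $\v_{\bf L,P,\Sigma}$ into positive roots). Transferring through Theorem \ref{HLRpure} gives irreducibility of $\mathfrak{M}_{\xihat,\thetahat}(\v_{\bf L,P,\Sigma})$, hence of $\mathbb{V}_{\bf L,P,\Sigma}$, completing the proof.
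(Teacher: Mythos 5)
There are genuine gaps at three points. First, your identification of the open stratum is wrong: Proposition \ref{proj} gives an isomorphism $p_i^{-1}(\calO_i)\simeq\calO_i$, i.e. $\rho^{-1}(\calV_\bfO^o)\simeq\calV_\bfO^o$, but $\mathbb{V}_{\bf L,P,\Sigma}^o$ is cut out by the condition $g_i^{-1}X_ig_i\in\Sigma_i+\mathfrak{u}_{P_i}$ and in general strictly contains $\rho^{-1}(\calV_\bfO^o)$; for instance when $L_i$ is a maximal torus and $\Sigma_i=\{\sigma_i\}$ with $\sigma_i$ non-regular (already $\sigma_i=0$), $\mathbb{X}_{L_i,P_i,\Sigma_i}^o$ is the whole Springer-type resolution, which is neither injective over, nor supported only on, $\calO_i$. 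The restriction of $\rho$ to $\mathbb{V}^o_{\bf L,P,\Sigma}$ is an isomorphism onto $\calV_\bfO^o$ only when $C_{\GL_n}(\sigma_i)=L_i$ for all $i$. Consequently nonsingularity (and, as you use it, the dimension) of $\mathbb{V}^o_{\bf L,P,\Sigma}$ cannot be transferred from $\calV_\bfO^o$; the paper proves smoothness directly, by exhibiting the locally trivial principal ${\bf P}$-bundle $\mathbb{L}^o_{\bf L,P,\Sigma}\rightarrow\mathbb{V}^o_{\bf L,P,\Sigma}$ and checking surjectivity of the differential of the moment-type map, the dimension then coming from this computation or from Theorem \ref{irrquiver}.

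Second, the irreducibility argument breaks twice. You cannot invoke Remark \ref{assgeneric} to make $\thetahat$ generic: that remark applies only when all coordinates of the $\thetahat_i$'s are nonzero, which happens exactly when each $\Sigma_i$ is a single point; for general $\Sigma_\alpha$ the zero coordinates of $\thetahat$ are forced by $({\bf L,P,\Sigma})$. The paper first reduces to the point case via the surjection $\mathbb{V}_{\bf \hat{L},\hat{P},\{\sigma\}}\rightarrow\mathbb{V}_{\bf L,P,\Sigma}$ and only then assumes $\thetahat$ generic. More seriously, your route to irreducibility of $\mathfrak{M}_{0,\thetahat}(\v)$ needs $\mathfrak{M}_{0}^s(\v)\neq\emptyset$, i.e. the strict inequality $p(\v)>p(\beta_1)+p(\beta_2)+\cdots$ for every nontrivial decomposition of $\v$ into positive roots; this fails in precisely the cases the theorem must cover, e.g. $g=0$ and $\v$ a non-simple real root (the multiplicity-one cases), where decomposing $\v$ into simple roots gives $0>0$. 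The paper's fix is to compare instead with $\mathfrak{M}_{\thetahat,\thetahat}(\v)$: at the generic deformation parameter $\xihat=\thetahat$ every representation in $\mu_\v^{-1}(\thetahat)$ is simple, so $\mathfrak{M}_{\thetahat}(\v)$ is irreducible and nonsingular and $\mathfrak{M}_{\thetahat,\thetahat}(\v)\rightarrow\mathfrak{M}_{\thetahat}(\v)$ is an isomorphism. Finally, ``spreading out'' does not by itself give irreducibility over a prescribed $\overline{\F}_q$; the paper handles positive characteristic by Nakajima's counting result (Theorem \ref{Nak}) together with the identification of the coefficient of $q^{e}$, $e=\dim$, in the point count with the number of top-dimensional components.
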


Since $\mathbb{V}_{\bf L,P,\Sigma}\rightarrow\mathbb{Q}_{\bf L,P,\Sigma}$ is a principal $\PGL_n$-bundle we have the following result.

\begin{corollary} The stratum $\mathbb{Q}_{\bf L,P,\Sigma_\alpha}^o$ is irreducible and the decomposition $$\mathbb{Q}_{\bf L,P,\Sigma}=\coprod_\alpha \mathbb{Q}_{\bf L,P,\Sigma_\alpha}^o$$is a stratification.
 \label{strat2}\end{corollary}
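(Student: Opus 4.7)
The plan is to deduce Corollary \ref{strat2} directly from Theorem \ref{strat} by transferring the stratification of $\mathbb{V}_{\bf L,P,\Sigma}$ across the principal $\PGL_n$-bundle $\pi:\mathbb{V}_{\bf L,P,\Sigma}\rightarrow\mathbb{Q}_{\bf L,P,\Sigma}$ established in Proposition \ref{quotientgen}. The key observation is that each piece $\mathbb{V}_{\bf L,P,\Sigma_\alpha}^o$ is $\GL_n$-stable (hence $\PGL_n$-stable and $\pi$-saturated), since the defining condition $g_i^{-1}X_ig_i\in\Sigma_{i,\alpha_i}+\mathfrak{u}_{P_i}$ together with the equation $\sum_j[A_j,B_j]+\sum_iX_i=0$ is preserved under the diagonal conjugation action combined with the left multiplication on the flags.

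First I would verify that $\mathbb{Q}_{\bf L,P,\Sigma_\alpha}^o:=\pi\left(\mathbb{V}_{\bf L,P,\Sigma_\alpha}^o\right)$ is open in $\mathbb{Q}_{\bf L,P,\Sigma}$ (or in the closed stratum $\mathbb{Q}_{\bf L,P,\Sigma_\alpha}$ defined similarly without the ``$^o$''): this follows from Theorem \ref{goodquotient}(3) applied to the saturated open subset $\mathbb{V}_{\bf L,P,\Sigma_\alpha}^o\subset\mathbb{V}_{\bf L,P,\Sigma}$, and for the closed strata from Theorem \ref{goodquotient}(4). Since $\pi$ is an étale principal $\PGL_n$-bundle, the geometric quotient $\mathbb{Q}_{\bf L,P,\Sigma_\alpha}^o\simeq\mathbb{V}_{\bf L,P,\Sigma_\alpha}^o/\PGL_n$ exists and inherits local properties from $\mathbb{V}_{\bf L,P,\Sigma_\alpha}^o$.

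Next I would address irreducibility and nonsingularity. By Theorem \ref{strat}, $\mathbb{V}_{\bf L,P,\Sigma_\alpha}^o$ is irreducible and nonsingular when nonempty. Since $\PGL_n$ is connected and the étale quotient map $\pi$ is faithfully flat with connected fibers, the image $\mathbb{Q}_{\bf L,P,\Sigma_\alpha}^o$ is irreducible and nonsingular, of dimension
\[
{\rm dim}\,\mathbb{Q}_{\bf L,P,\Sigma_\alpha}^o=(2g+k-1)n^2+1-{\rm dim}\,{\bf L}+{\rm dim}\,{\bf \Sigma}_\alpha-{\rm dim}\,\PGL_n.
\]
Equidimensionality is automatic since each stratum is irreducible.

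Finally I would check the closure condition of Definition \ref{stratification}. If $\mathbb{Q}_{\bf L,P,\Sigma_\alpha}^o\cap\overline{\mathbb{Q}_{\bf L,P,\Sigma_\beta}^o}\neq\emptyset$, then pulling back by the surjective $\pi$ and using that $\pi^{-1}\bigl(\overline{\mathbb{Q}_{\bf L,P,\Sigma_\beta}^o}\bigr)=\overline{\pi^{-1}(\mathbb{Q}_{\bf L,P,\Sigma_\beta}^o)}=\overline{\mathbb{V}_{\bf L,P,\Sigma_\beta}^o}$ (since $\pi$ is open and its fibers are irreducible), one obtains $\mathbb{V}_{\bf L,P,\Sigma_\alpha}^o\cap\overline{\mathbb{V}_{\bf L,P,\Sigma_\beta}^o}\neq\emptyset$; by the stratification statement of Theorem \ref{strat} this forces $\mathbb{V}_{\bf L,P,\Sigma_\alpha}^o\subset\overline{\mathbb{V}_{\bf L,P,\Sigma_\beta}^o}$, and applying $\pi$ yields $\mathbb{Q}_{\bf L,P,\Sigma_\alpha}^o\subset\overline{\mathbb{Q}_{\bf L,P,\Sigma_\beta}^o}$. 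The main (minor) obstacle is the bookkeeping check that a principal $\PGL_n$-bundle commutes with taking Zariski closures of saturated sets, which is standard given flatness and surjectivity of $\pi$; no new geometric input is required beyond Theorem \ref{strat} and Proposition \ref{quotientgen}.
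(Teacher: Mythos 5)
Your proposal is correct and is essentially the paper's own argument: the paper deduces the corollary in one line from Theorem \ref{strat} precisely because $\mathbb{V}_{\bf L,P,\Sigma}\rightarrow\mathbb{Q}_{\bf L,P,\Sigma}$ is a principal $\PGL_n$-bundle, which is what you spell out (descent of irreducibility, openness of the pieces, and transfer of the closure relations along the open, flat, surjective quotient map). The details you add — saturation of the $\PGL_n$-stable strata and compatibility of closures with the bundle map — are the standard verifications implicit in the paper's statement, so no new input is needed.
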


Recall that $\v_\bfO$ be the dimension vector of $\Gamma$ obtained from the tuple $(\calO_1,\dots,\calO_k)$ as in \S \ref{star}. Let $W(\Gamma^\dagger)$ denote the Weyl group of $\Gamma^\dagger$.

\begin{lemma} The two vectors $\v$ and $\v_\bfO$ are in the same $W(\Gamma^\dagger)$-orbit.
\label{interlemma} \end{lemma}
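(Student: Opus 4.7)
The strategy is to apply Lemma \ref{Ws}, so the task reduces to showing that for each $i \in \{1,\dots,k\}$ the unordered multisets $(\v)_i^\sharp$ and $(\v_\bfO)_i^\sharp$ coincide. The plan is to identify both multisets with the same combinatorial invariant attached to the pair $(L_i,C_i)$, namely the disjoint union of the dual partitions of the Jordan blocks of $C_i$.

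First, I would unwind the construction of $\v_\bfO$ on the $i$-th leg. Recall that $\v_{\calO_i}$ is built by drawing the Young diagrams of the partitions $\omega^{i,1},\dots,\omega^{i,r_i}$ (the Jordan partitions of $\calO_i$, one for each distinct eigenvalue $\alpha_{i,t}$) side by side and reading off column lengths: the $j$-th consecutive difference $v_{[i,j-1]}-v_{[i,j]}$ (with $v_{[i,0]}=n$ and $v_{[i,s_i+1]}=0$) is the length of the $j$-th column. Hence, as an unordered collection,
\[
(\v_\bfO)_i^\sharp \;=\; \bigsqcup_{t=1}^{r_i} \bigl(\omega^{i,t}\bigr)'.
\]

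Next, I would do the same for $\v = \v_{\bf L,P,\Sigma}$. The construction in \S\ref{parresol} packages the data of $(L_i,P_i,\Sigma_i)$ as follows: $L_i = \GL_{s_{p_i+1}}\times\cdots\times\GL_{s_1}$, $\sigma_i$ has eigenvalue $\epsilon_r$ of multiplicity $s_r$, and $C_i = C_{p_i+1}\times\cdots\times C_1$ with $C_r$ a nilpotent orbit of $\gl_{s_r}$ of Jordan type $\mu^r$. By the prescription $v_{i_{r-1}}-v_{i_r}=s_r$ and $(v_{i_{r-1}}-v_{i_{r-1}+1},\dots,v_{i_r-1}-v_{i_r})=(\mu^r)'$, the consecutive differences on the $i$-th leg recover exactly the parts of each $(\mu^r)'$. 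Therefore
\[
(\v_{\bf L,P,\Sigma})_i^\sharp \;=\; \bigsqcup_{r=1}^{p_i+1} (\mu^r)'.
\]

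The bridge between the two is Proposition \ref{proj}: the Jordan partition $\omega^{i,t}$ of $\calO_i$ at the eigenvalue $\alpha_{i,t}$ is obtained as the sum (in the partition sense) of those $\mu^r$'s for which $\epsilon_r = \alpha_{i,t}$. Using the identity $(\lambda+\mu)' = \lambda'\cup\mu'$ recalled in \S\ref{partype}, this gives
\[
\bigl(\omega^{i,t}\bigr)' \;=\; \bigsqcup_{r:\,\epsilon_r=\alpha_{i,t}} (\mu^r)',
\]
and summing over $t$ collapses the right-hand side to $\bigsqcup_{r=1}^{p_i+1}(\mu^r)'$. Thus $(\v)_i^\sharp = (\v_\bfO)_i^\sharp$ for every $i$, and Lemma \ref{Ws} produces the desired element of $W(\Gamma^\dagger)$ sending one vector to the other. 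The only mildly subtle point is bookkeeping between the two ordering conventions used in \S\ref{adjoint} and \S\ref{parresol}; but since Lemma \ref{Ws} gives a statement about unordered multisets, the choice of total ordering $\leq_t$ (and in particular condition (**)) is irrelevant to this comparison and causes no obstruction.
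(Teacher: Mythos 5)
Your proposal is correct and follows the same route as the paper: the paper's proof is exactly the one-line observation that $(\v_\bfO)^\sharp_i=\v^\sharp_i$ for each $i$, followed by an appeal to Lemma \ref{Ws}. Your write-up simply fills in the verification of that multiset equality (column lengths of the Young diagrams on one side, concatenated dual partitions $(\mu^r)'$ on the other, matched via $(\lambda+\mu)'=\lambda'\cup\mu'$ and the description of $\calO_i$ above Proposition \ref{proj}), which is exactly the computation the paper leaves implicit.
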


\begin{proof} It follows from Lemma \ref{Ws} as for each $i=1,\dots,k$, we have $(\v_\bfO)^\sharp_i=\v^\sharp_i$.

\end{proof}

\begin{proof}[Proof of Theorem \ref{strat}] We prove it for ${\bf \Sigma}={\bf \Sigma}_\alpha$ as the proof will be the same for any ${\bf \Sigma}_\alpha$. Note that $\mathbb{V}_{\bf L,P,\Sigma}$ is not empty if and only if $\calV_\bfO$ is not empty. Hence the first assertion follows from Lemma \ref{interlemma} and Proposition \ref{nonemptymu}.

Assume that  $\mathbb{V}_{\bf L,P,\Sigma}$ is not empty. Then $\calV_\bfO$ is not empty and so by Proposition \ref{nonemptymu} the set $\calV_\bfO^o$ is also not empty. Since the inverse image of $\calV_\bfO^o$ by the map $\rho:\mathbb{V}_{\bf L,P,\Sigma}\rightarrow\calV_\bfO$ is contained in  $\mathbb{V}_{\bf L,P,\Sigma}^o$, the open subset  $\mathbb{V}_{\bf L,P,\Sigma}^o$  of $\mathbb{V}_{\bf L,P,\Sigma}$ is not empty.

Consider $\mathbb{Y}_{L,P,\Sigma}^o:=\{(X,g)\in \gl_n\times\GL_n\,|\, g^{-1}Xg\in\Sigma+\mathfrak{u}_P\}$. Then the canonical map $\mathbb{Y}_{L,P,\Sigma}^o\rightarrow \mathbb{X}_{L,P,\Sigma}^o$, $(X,g)\mapsto (X,gP)$ is a locally trivial principal $P$-bundle (for the Zariski topology). Note that $\mathbb{Y}_{L,P,\Sigma}^o\simeq G\times (\Sigma+\mathfrak{u}_P)$. Now consider the set $
\mathbb{L}_{\bf L,P,\Sigma}^o$ of $(2g+k)$-tuples $\left(A_1,B_1,\dots,A_g,B_g,(g_1,y_1),\dots,(g_k,y_k)\right)$ in 
$(\gl_n)^{2g}\times\left(G\times(\Sigma_1+\mathfrak{u}_P)\right)\times\cdots\times\left(G\times(\Sigma_k+\mathfrak{u}_P)\right)$ such that $$\sum_j[A_j,B_j]+\sum_ig_iy_ig_i^{-1}=0.$$The natural map $\mathbb{L}_{\bf L,P,\Sigma}^o\rightarrow\mathbb{V}^o_{\bf L,P,\Sigma}$ is then a locally trivial principal ${\bf P}$-bundle. Hence we are reduced to prove that $\mathbb{L}_{\bf L,P,\Sigma}^o$ is nonsingular. A sufficient condition for a point ${\bf x}\in \mathbb{L}_{\bf L,P,\Sigma}^o$ to be nonsingular is that the differential $d_{\bf x}\mu$ of the map 

$$\mu:(\gl_n)^{2g}\times\left(G\times(\Sigma_1+\mathfrak{u}_P)\right)\times\cdots\times\left(G\times(\Sigma_k+\mathfrak{u}_P)\right)\longrightarrow \mathfrak{sl}_n$$given by $\left(A_1,B_1,\dots,A_g,B_g,(g_1,\sigma_1),\dots,(g_k,\sigma_k)\right)\mapsto \sum_j[A_j,B_j]+\sum_ig_iy_ig_i^{-1}$ is surjective.

Let $y_i$ be the coordinate of ${\bf x}$ in $\Sigma_i+\mathfrak{u}_P$. Consider the restriction $\lambda$ of $\mu$  to the closed subset $(\gl_n)^{2g}\times\left(G\times\{y_1\}\right)\times\cdots\times\left(G\times\{y_k\}\right)$. It is enough to prove that the differential $d_{\bf x}\lambda$ is surjective. But this what we prove to see that the variety $\calV_{\bf S}^o$ is nonsingular (${\bf S}$ being $(\gl_n)^{2g}\times \overline{S}_1\times\cdots\times \overline{S}_k$ where $S_i\subset\overline{\calO}_i$ is the adjoint orbit of $y_k$), see Theorem \ref{affinepGb} and references therein. The variety $\mathbb{V}^o_{\bf L,P,\Sigma}$ is thus nonsingular and its irreducible components are all of same dimension. To compute the dimension of $\mathbb{V}^o_{\bf L,P,\Sigma}$ we may use what we just said or use the fact that there is a bijective morphism $\mathfrak{N}_{\xihat,\thetahat^{\dagger}}^s(\v^{\dagger},\w)\rightarrow\mathbb{V}^o_{\bf L,P,\Sigma}$ and then use Theorem \ref{irrquiver}.

Let us see now that $\mathbb{V}_{\bf L,P,\Sigma}$ is irreducible. Let ${\bf \hat{L},\hat{P}}$ be defined as in \S \ref{georesol} and put $\sigma:=(\sigma_1,\dots,\sigma_k)$. The canonical map $\mathbb{V}_{\bf \hat{L},\hat{P},\{\bf \sigma\}}\rightarrow \mathbb{V}_{\bf L,P,\Sigma}$ defined by $(X,g{\bf \hat{P}})\mapsto (X,g{\bf P})$  being surjective it is enough to show that $\mathbb{V}_{\bf \hat{L},\hat{P},\{\bf \sigma\}}$ is irreducible. We are thus reduced to prove the irreducibility of $\mathbb{V}_{\bf L,P,\Sigma}$ when $\Sigma$ is reduced to a point $\{\bf \sigma\}$ which we now assume. Hence $\mathbb{V}_{\bf L,P,\{\sigma\}}=\mathbb{V}_{\bf L,P,\{\sigma\}}^o$ and the parameter $\thetahat$ satisfies $\theta_i>0$ for all $i\in I^{\dagger}$. By Remark \ref{assgeneric}, we may assume that $\thetahat$ is generic with respect to $\v$. We now need to prove the irreducibility of $\mathfrak{N}_{\xihat,\thetahat^{\dagger}}(\v,\w)$. Since $\mathfrak{N}_{\xihat,\thetahat^{\dagger}}(\v,\w)\rightarrow \mathfrak{M}_{\xihat,\thetahat}(\v)$ is a principal $\PGL_n$-bundle, we are reduced to prove that $\mathfrak{M}_{\xihat,\thetahat}(\v)$ is irreducible.

Assume first that $\K=\C$. Then by Theorem \ref{HLRpure} we have  $H_c^i\left(\mathfrak{M}_{\xihat,\thetahat}(\v),\C\right)\simeq H_c^i\left(\mathfrak{M}_{\thetahat,\thetahat}(\v),\C\right)$. Recall that the dimension of $H_c^{2e}(X,\C)$ where $e$ is the dimension of $X$ equals the number of irreducible components of $X$ of dimension $e$. The varieties $\mathfrak{M}_{\xihat,\thetahat}(\v)$ and $\mathfrak{M}_{\thetahat,\thetahat}(\v)$ are both of pure dimension by Theorem \ref{irrquiver}. Hence  we are reduced to see that $\mathfrak{M}_{\thetahat,\thetahat}(\v)$ is irreducible. The representations in $\mu_\v^{-1}(\thetahat)$ are all simple because $\thetahat$ is generic, hence $\mathfrak{M}_{\thetahat}(\v)$ is irreducible and nonsingular (see Theorem \ref{irrquiver}). The canonical map $\mathfrak{M}_{\thetahat,\thetahat}(\v)\rightarrow \mathfrak{M}_{\thetahat}(\v)$ being a resolution of singularities is thus an isomorphism and so $\mathfrak{M}_{\thetahat,\thetahat}(\v)$ is irreducible.

Assume that $\K=\overline{\F}_q$. By Theorem \ref{Nak} there exists $r_0$ such that for all $r\geq r_0$ we have $\sharp\{\mathfrak{M}_{\xihat,\thetahat}(\v)(\F_{q^r})\}=\sharp\{\mathfrak{M}_{\thetahat,\thetahat}(\v)(\F_{q^r})\}$. As the canonical map $\mathfrak{M}_{\thetahat,\thetahat}(\v)\rightarrow \mathfrak{M}_{\thetahat}(\v)$ is an isomorphism we actually have \begin{equation}\sharp\{\mathfrak{M}_{\xihat,\thetahat}(\v)(\F_{q^r})\}=\sharp\{\mathfrak{M}_\thetahat(\v)(\F_{q^r})\}.\label{eqcount}\end{equation}Note that the dimension of the  compactly supported $\ell$-adic cohomology group $H_c^{2e}(X,\kappa)$ with $\ell$ invertible in $\K$ and $e={\rm dim}\, X$ also equals the number $m$ of irreducible components of $X$ of dimension $e$. Moreover if $X$ is defined over $\F_q$, then the Frobenius $F^*$ acts on $H_c^{2e}(X,\kappa)$ as multiplication by $q^e$. Therefore, the coefficient of $q^e$ in $\sharp\{X(\F_q)\}$ equals $m$. From the identity (\ref{eqcount}) we deduce that $\mathfrak{M}_{\xihat,\thetahat}(\v)$ is irreducible if and only if $\mathfrak{M}_\thetahat(\v)$ is irreducible. But as above the variety $\mathfrak{M}_\thetahat(\v)$ is irreducible as $\thetahat$ is generic.\end{proof}

\subsection{A restriction property}\label{restrictionprop}

We keep the notation  of \S \ref{intervar} and we assume that  $(\calO_1,\dots,\calO_k)$ is generic and that   $\calV_\bfO$ is not empty. Note that $\mathbb{V}_{\bf L,P,\Sigma}^o$ is then also not empty by Theorem \ref{strat}.

The aim of this section is to prove the following theorem.

\begin{theorem}Let $i$ be the natural inclusion $\mathbb{V}_{\bf L,P,\Sigma}\hookrightarrow \mathbb{O}_{\bf L,P,\Sigma}$. Then $$i^*\left(\IC {\mathbb{O}_{\bf L,P,\Sigma}}\right)=\IC {\mathbb{V}_{\bf L,P,\Sigma}}.$$
 
\label{restriction}\end{theorem}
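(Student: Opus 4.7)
The plan is to verify the hypotheses of Proposition \ref{proprest} applied to the inclusion $i : \mathbb{V}_{\bf L,P,\Sigma} \hookrightarrow \mathbb{O}_{\bf L,P,\Sigma}$, using the stratifications $\mathbb{O}_{\bf L,P,\Sigma} = \coprod_\alpha \mathbb{O}^o_{\bf L,P,\Sigma_\alpha}$ and $\mathbb{V}_{\bf L,P,\Sigma} = \coprod_\alpha \mathbb{V}^o_{\bf L,P,\Sigma_\alpha}$ indexed by tuples $\alpha = (\alpha_i)$ of ${\bf L}$-orbits inside $\overline{\bf C}$. By Theorem \ref{strat}, each non-empty stratum $\mathbb{V}^o_{\bf L,P,\Sigma_\alpha}$ is irreducible of dimension $(2g+k-1)n^2 + 1 - \dim\,{\bf L} + \dim\,{\bf \Sigma}_\alpha$, while a direct calculation from (\ref{dimfor}) shows that the codimension of $\mathbb{O}^o_{\bf L,P,\Sigma_\alpha}$ inside $\mathbb{O}_{\bf L,P,\Sigma}$ equals $\dim\,{\bf \Sigma} - \dim\,{\bf \Sigma}_\alpha$, which matches the codimension of $\mathbb{V}^o_{\bf L,P,\Sigma_\alpha}$ inside $\mathbb{V}_{\bf L,P,\Sigma}$. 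This verifies condition (i) of Proposition \ref{proprest}; note also that the open stratum of $\mathbb{V}_{\bf L,P,\Sigma}$ is automatically contained in the open stratum of $\mathbb{O}_{\bf L,P,\Sigma}$.

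Next I construct the Cartesian resolution diagram. Set $\tilde{\mathbb{O}} := (\gl_n)^{2g} \times \prod_{i=1}^k \mathbb{X}_{\hat L_i,\hat P_i,\{\sigma_i\}}$, with $(\hat L_i,\hat P_i)$ chosen as in \S \ref{georesol}, and let $f : \tilde{\mathbb{O}} \to \mathbb{O}_{\bf L,P,\Sigma}$ be built from the maps $\tilde\pi_i$ of (\ref{hatresol}). Define $\tilde{\mathbb{V}} := f^{-1}(\mathbb{V}_{\bf L,P,\Sigma})$; by inspection of the moment-map equations, $\tilde{\mathbb{V}}$ coincides with $\mathbb{V}_{\hat{\bf L},\hat{\bf P},\{{\bf \sigma}\}}$. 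Since the semisimple parts of the adjoint orbits attached to $(\hat L_i,\hat P_i,\{\sigma_i\})$ agree with those attached to $(L_i,P_i,\Sigma_i)$, the associated tuple of adjoint orbits is again generic, so Theorem \ref{strat} applies to this new datum; the nilpotent part being $\{0\}$, there is only the open stratum, whence $\tilde{\mathbb{V}}$ is nonsingular and irreducible. Both $f$ and its restriction $g : \tilde{\mathbb{V}} \to \mathbb{V}_{\bf L,P,\Sigma}$ are proper, surjective, and restrict to isomorphisms over the corresponding open strata, hence are resolutions of singularities. Semi-smallness of $f$ is Proposition \ref{semi-small1} applied factorwise (and trivially across $(\gl_n)^{2g}$); semi-smallness of $g$ then follows from Lemma \ref{easyfact} combined with the codimension matching above. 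Finally, condition (iii) — the local constancy of $\mathcal{H}^i(f_*(\underline{\kappa}))$ along each $\mathbb{O}^o_{\bf L,P,\Sigma_\alpha}$ — is precisely Proposition \ref{localres}.

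With all three conditions of Proposition \ref{proprest} verified, its conclusion yields $i^*(\IC {\mathbb{O}_{\bf L,P,\Sigma}}) = \IC {\mathbb{V}_{\bf L,P,\Sigma}}$. The principal technical input is the nonsingularity and irreducibility of the moment-map slice $\tilde{\mathbb{V}}$, which is exactly what Theorem \ref{strat} delivers in the generic setting via the identification of the corresponding quotient with a smooth irreducible Nakajima-type quiver variety; without this input one would not even know that $g$ is a resolution of singularities, let alone semi-small.
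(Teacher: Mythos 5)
Your proposal is correct and follows essentially the same route as the paper: the paper also verifies the hypotheses of Proposition \ref{proprest} using the stratification $\mathbb{O}_{\bf L,P,\Sigma}=\coprod_\alpha\mathbb{O}_{\bf L,P,\Sigma_\alpha}^o$, the Cartesian diagram formed by the semi-small resolution $\tilde{\pi}:\mathbb{O}_{\bf \hat{L},\hat{P},\{\sigma\}}\rightarrow\mathbb{O}_{\bf L,P,\Sigma}$ and its restriction $\tilde{\rho}$, with nonsingularity of $\mathbb{V}_{\bf \hat{L},\hat{P},\{\sigma\}}$ and the matching of codimensions supplied by Theorem \ref{strat}, semi-smallness by Proposition \ref{semi-small1} (via Lemma \ref{easyfact}), and local constancy by Proposition \ref{localres}. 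The only cosmetic difference is that you spell out the codimension computation and the genericity of the lifted datum explicitly, which the paper leaves implicit.
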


By \S \ref{intervar}, we have a stratification 

$$\mathbb{O}_{\bf L,P,\Sigma}=\coprod_\alpha\mathbb{O}_{\bf L,P,\Sigma_\alpha}^o$$with $\mathbb{O}_{\bf L,P,\Sigma_\alpha}^o:=(\gl_n)^{2g}\times\mathbb{X}_{\bf L,P,\Sigma_\alpha}^o$. It satisfies the conditions (i) of Proposition \ref{proprest}.

We consider the semi-small resolution $\tilde{\pi}:\mathbb{O}_{\bf \hat{L},\hat{P},\{\sigma\}}\rightarrow \mathbb{O}_{\bf L,P,\Sigma}$ considered in \S \ref{georesol} and its restriction $\tilde{\rho}:\mathbb{V}_{\bf \hat{L},\hat{P},\{\sigma\}}\rightarrow \mathbb{V}_{\bf L,P,\Sigma}$.

\begin{proposition} The morphism $\tilde{\rho}$ is a semi-small resolution. Moreover the diagram 
$$\xymatrix{\mathbb{O}_{\bf \hat{L},\hat{P},\{\sigma\}}\ar[rr]^-{\tilde{\pi}}&&\mathbb{O}_{\bf L,P,\Sigma}\\\mathbb{V}_{\bf \hat{L},\hat{P},\{\sigma\}}\ar[u]\ar^{\tilde{\rho}}[rr]&&\ar[u]\mathbb{V}_{\bf L,P,\Sigma}}$$is Cartesian (the vertical arrows being the canonical inclusions) and the restriction of the sheaf $\mathcal{H}^i(\tilde{\pi}_*(\kappa))$ to each piece $\mathbb{O}_{\bf L,P,\Sigma_\alpha}^o$ is a locally constant sheaf.
\label{cartesian} \end{proposition}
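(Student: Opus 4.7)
The plan is to derive all three assertions from the Cartesian property, which I would check first by examining how $\tilde{\pi}$ acts. Since $\tilde{\pi}$ is the identity on each of the $2g$ Lie-algebra factors and on the $\gl_n$-component $X_i$ of every $\mathbb{X}_{L_i,P_i,\Sigma_i}$---it only refines the parabolic $P_i$-flag to a $\hat{P}_i$-flag---the moment map equation $\sum_j[A_j,B_j]+\sum_iX_i=0$ that cuts $\mathbb{V}_{\bf L,P,\Sigma}$ out of $\mathbb{O}_{\bf L,P,\Sigma}$ pulls back to the identical equation cutting $\mathbb{V}_{\bf \hat{L},\hat{P},\{\sigma\}}$ out of $\mathbb{O}_{\bf \hat{L},\hat{P},\{\sigma\}}$. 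This identifies $\tilde{\pi}^{-1}(\mathbb{V}_{\bf L,P,\Sigma})$ with $\mathbb{V}_{\bf \hat{L},\hat{P},\{\sigma\}}$, yielding the Cartesian square.

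Granted this, the local constancy follows from Proposition~\ref{localres} applied leg by leg together with the K\"unneth formula for proper direct images: writing $\tilde{\pi}=\mathrm{id}\times\prod_i\tilde{\pi}_i$ one has $\tilde{\pi}_*(\kappa)\simeq\kappa_{(\gl_n)^{2g}}\boxtimes\boxtimes_i(\tilde{\pi}_i)_*(\kappa)$, whose cohomology sheaves are external products of sheaves locally constant on each piece $\mathbb{X}_{L_i,P_i,\Sigma_{i,\alpha_i}}^o$, hence locally constant on the product stratum $(\gl_n)^{2g}\times\prod_i\mathbb{X}_{L_i,P_i,\Sigma_{i,\alpha_i}}^o=\mathbb{O}_{\bf L,P,\Sigma_\alpha}^o$. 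Likewise $\tilde{\pi}$ is semi-small with respect to this product stratification by Proposition~\ref{semi-small1} applied to each factor. Then semi-smallness of $\tilde{\rho}$ follows from Lemma~\ref{easyfact} once one verifies the codimension equality $\mathrm{codim}_{\mathbb{V}_{\bf L,P,\Sigma}}\mathbb{V}_{\bf L,P,\Sigma_\alpha}^o=\mathrm{codim}_{\mathbb{O}_{\bf L,P,\Sigma}}\mathbb{O}_{\bf L,P,\Sigma_\alpha}^o$; both equal ${\rm dim}\,{\bf \Sigma}-{\rm dim}\,{\bf \Sigma}_\alpha$ by combining Formula~(\ref{dimfor}) with the dimension computation of Theorem~\ref{strat}.

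It remains to see that $\tilde{\rho}$ is a resolution. The triple $(\hat{\bf L},\hat{\bf P},\{\sigma\})$ gives rise to a tuple of adjoint orbits with the same semisimple eigenvalue data as $(\calO_1,\dots,\calO_k)$, hence still generic in the sense of Definition~\ref{genorb}, so Theorem~\ref{strat} guarantees that $\mathbb{V}_{\hat{\bf L},\hat{\bf P},\{\sigma\}}=\mathbb{V}_{\hat{\bf L},\hat{\bf P},\{\sigma\}}^o$ is a nonsingular irreducible variety of the expected dimension. Surjectivity and properness of $\tilde{\rho}$ are inherited from $\tilde{\pi}$ by base change along $\mathbb{V}_{\bf L,P,\Sigma}\hookrightarrow\mathbb{O}_{\bf L,P,\Sigma}$, and birationality is automatic from the dimension count together with the fact that $\tilde{\rho}$ is an isomorphism above the open stratum where $\tilde{\pi}$ already is. The main technical point is thus the Cartesianness and the bookkeeping of the codimension equality; once these are in hand, the remaining assertions reduce to formal consequences of proper base change and K\"unneth.
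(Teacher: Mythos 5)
Your argument is correct and follows essentially the same route as the paper: the square is Cartesian by the very definition of the varieties $\mathbb{V}$, nonsingularity of $\mathbb{V}_{\bf \hat{L},\hat{P},\{\sigma\}}$ and the codimension equality come from Theorem \ref{strat} (with (\ref{dimfor})), semi-smallness of $\tilde{\rho}$ then follows from Proposition \ref{semi-small1} together with Lemma \ref{easyfact}, and local constancy from Proposition \ref{localres} (your K\"unneth reduction over the legs and the $(\gl_n)^{2g}$ factor is exactly what the paper leaves implicit). The extra details you supply (genericity of the orbit tuple attached to $(\hat{\bf L},\hat{\bf P},\{\sigma\})$, birationality over the open stratum) are consistent with, and no less rigorous than, the paper's own treatment.
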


\begin{proof} The diagram is Cartesian by definition of the varieties $\mathbb{V}_{\bf L,P,\Sigma}$. The variety $\mathbb{V}_{\bf \hat{L},\hat{P},\{\sigma\}}$ is also nonsingular by Theorem \ref{strat}. Hence $\tilde{\rho}$ is a resolution of singularities.

By Proposition \ref{semi-small1} the map $\tilde{\pi}$ is semi-small with respect to  $\mathbb{O}_{\bf L,P,\Sigma}=\coprod_\alpha\mathbb{O}_{\bf L,P,\Sigma_\alpha}^o$. By Theorem \ref{strat} we see that the codimension of $\mathbb{V}_{\bf L,P,\Sigma_\alpha}^o$ in $\mathbb{V}_{\bf L,P,\Sigma}$ equals the codimension of $\mathbb{O}_{\bf L,P,\Sigma_\alpha}^o$ in $\mathbb{O}_{\bf L,P,\Sigma}$, hence $\tilde{\rho}$ is also semi-small. The last assertion of the proposition follows from Proposition \ref{localres}.
\end{proof}

Theorem \ref{restriction} is now a consequence of Proposition \ref{cartesian} and Proposition \ref{proprest}. 

We have the following particular case of Theorem \ref{restriction}.

\begin{proposition} Let $i$ denotes the inclusion $\calV_\bfO\hookrightarrow\bfO$. Then $i^*\left(\IC {\bfO}\right)\simeq\IC {\calV_\bfO}$.
\label{restrictionVw}\end{proposition}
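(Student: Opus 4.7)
The plan is to exhibit Proposition \ref{restrictionVw} as the special case of Theorem \ref{restriction} in which the ``partial resolution'' $\mathbb{O}_{\bf L,P,\Sigma}\to\bfO$ is already an isomorphism; the two closed inclusions in the two statements are then identified, and the conclusion transfers immediately.

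Concretely, for each $i=1,\dots,k$ I would choose a representative $X_i\in\calO_i$ with Jordan decomposition $X_i=\sigma_i+v_i$ (with $\sigma_i$ semisimple and $v_i$ nilpotent commuting with $\sigma_i$) and set $L_i:=C_{\GL_n}(\sigma_i)$, which is a product of general linear groups. Let $C_i\subset\mathfrak{l}_i$ be the $L_i$-orbit of $v_i$, put $\Sigma_i:=\sigma_i+C_i$, and let $P_i$ be any parabolic subgroup of $\GL_n$ admitting $L_i$ as a Levi factor. Since here $M_i:=C_{\GL_n}(\sigma_i)$ coincides with $L_i$, the last assertion of Proposition \ref{proj} gives that the first projection $p_i:\mathbb{X}_{L_i,P_i,\Sigma_i}\to\overline{\calO}_i$ is an isomorphism.

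Taking the product of these isomorphisms (and the identity on $(\gl_n)^{2g}$) yields an isomorphism ${\rm p}:\mathbb{O}_{\bf L,P,\Sigma}\overset{\sim}{\to}\bfO$. Since $\mathbb{V}_{\bf L,P,\Sigma}$ and $\calV_\bfO$ are cut out from their respective ambient spaces by the identical moment-type equation $\sum_j[A_j,B_j]+\sum_iX_i=0$, the map ${\rm p}$ restricts to an isomorphism $\rho:\mathbb{V}_{\bf L,P,\Sigma}\overset{\sim}{\to}\calV_\bfO$ that intertwines the closed immersion $\mathbb{V}_{\bf L,P,\Sigma}\hookrightarrow\mathbb{O}_{\bf L,P,\Sigma}$ with the closed immersion $i:\calV_\bfO\hookrightarrow\bfO$. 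Applying Theorem \ref{restriction} to $({\bf L,P,\Sigma})$ and transporting the resulting isomorphism along ${\rm p}$ and $\rho$ then gives $i^*(\IC {\bfO})\simeq\IC {\calV_\bfO}$.

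The main (in fact, the only) thing to check is that the constructed data $({\bf L,P,\Sigma})$ fits the framework of \S \ref{intervar}: after $\GL_n$-conjugation (which does not affect intersection cohomology) each $L_i$ is a standard Levi of the form $\prod_j\GL_{n_j}$, each $\Sigma_i=\sigma_i+C_i$ has $\sigma_i\in z_{\mathfrak{l}_i}$ and $C_i$ a nilpotent $L_i$-orbit by construction, and the standing hypotheses of \S \ref{restrictionprop} (genericity of $(\calO_1,\dots,\calO_k)$ and non-emptiness of $\calV_\bfO$) transfer verbatim to $({\bf L},{\bf \Sigma})$ so that Theorem \ref{restriction} applies.
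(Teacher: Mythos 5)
Your proposal is correct and matches the paper's own argument: the paper presents Proposition \ref{restrictionVw} precisely as the particular case of Theorem \ref{restriction} in which $L_i=C_{\GL_n}(\sigma_i)$ (so $M_i=L_i$), where Proposition \ref{proj} makes ${\rm p}:\mathbb{O}_{\bf L,P,\Sigma}\to\bfO$ and hence $\rho:\mathbb{V}_{\bf L,P,\Sigma}\to\calV_\bfO$ isomorphisms identifying the two inclusions. Your explicit construction of $({\bf L,P,\Sigma})$ from the Jordan decompositions, and your check that genericity and non-emptiness transfer, is exactly the intended specialization.
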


\section{Characters and Fourier transforms}

Here $\K$ is an algebraic closure of a finite field $\F_q$. In this section we put $G:=\GL_n(\K)$ and $\mathfrak{g}:=\gl_n(\K)$. We denote by $F$ the standard Frobenius endomorphism $\mathfrak{g}\rightarrow\mathfrak{g}$ that maps a matrix $(a_{ij})_{i,j}$ to $(a_{ij}^q)_{i,j}$ so that $G^F=\GL_n(\F_q)$ and $\mathfrak{g}^F=\gl_n(\F_q)$. 

\subsection{Preliminaries on finite groups}\label{prefini}

Let $\kappa$ be an algbraically closed field of characteristic $0$. Let $z\mapsto \overline{z}$ be an involution of $\kappa$ that maps roots of unity to their inverses. For a finite set $E$, we define $\langle\,,\,\rangle_E$ on the space of all functions $E\rightarrow\kappa$ by $$\langle f,g\rangle_E=\frac{1}{|E|}\sum_{x\in E}f(x)\overline{g(x)}.$$

Now let $H$ be a subgroup of a finite group $K$ and let  $\tilde{H}$ be a subgroup of $N_K(H)$ containing $H$. Let $\rho^1:\tilde{H}\rightarrow \GL(V^1)$ and $\rho^2:\tilde{H}\rightarrow \GL(V^2)$ be two representations of $\tilde{H}$ in the finite dimensional $\kappa$-vector spaces $V^1, V^2$. We denote by $\chi^1$ and $\chi^2$ their associated characters. The group $\tilde{H}$ acts on the space ${\rm Hom}\,(V^1,V^2)$ as follows. For $f\in {\rm Hom}\,(V^1,V^2)$, we define $r\cdot f:V^1\rightarrow V^2$ by $(r\cdot f)(v)=r\cdot f(r^{-1}\cdot v)$. Moreover we see that the subspace ${\rm Hom}_H(V^1,V^2)$ of fixed points of ${\rm Hom}\,(V^1,V^2)$ by $H$  is clearly $\tilde{H}$-stable (it is therefore an $\kappa[\tilde{H}/H]$-module).

 For any $r\in \tilde{H}$, we have 

\begin{equation}{\rm Tr}\,\left(r\,|\, {\rm Hom}\,(V^1,V^2)\right)=\chi^1(r)\,\chi^2(r^{-1}).\label{hom}\end{equation}

For $s\in \tilde{H} $, we denote by $\chi^i_s$ the restriction of $\chi^i$ to the coset $Hs:=\{hs\,|\, h\in H\}$.

\begin{proposition} Let $s\in \tilde{H}$. We have $${\rm Tr}\,\left(s\,\left|\,{\rm Hom}_H(V^1,V ^2)\right.\right)=\left\langle \chi^1_s,\chi^2_s\right\rangle_{Hs}.$$\label{tranches}\end{proposition}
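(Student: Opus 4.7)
The plan is to realize $\mathrm{Hom}_H(V^1,V^2)$ as the image of an averaging projector inside $\mathrm{Hom}(V^1,V^2)$, then reduce the trace of $s$ on this invariant subspace to a sum of traces on the whole of $\mathrm{Hom}(V^1,V^2)$, and finally rewrite that sum using the hypothesis of the proposition.

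First I would introduce the $\kappa$-linear map
\[
p:\mathrm{Hom}(V^1,V^2)\longrightarrow \mathrm{Hom}(V^1,V^2),\qquad p(f)=\frac{1}{|H|}\sum_{h\in H}h\cdot f,
\]
and observe that it is a projector onto the subspace $\mathrm{Hom}_H(V^1,V^2)$ of $H$-equivariant homomorphisms. Since $s\in\tilde H\subset N_K(H)$, the element $s$ normalises $H$, so the $\tilde H$-action on $\mathrm{Hom}(V^1,V^2)$ satisfies $s\circ p=p\circ s$; in particular $s$ preserves $\mathrm{Hom}_H(V^1,V^2)$ (as already noticed in the text) and
\[
\mathrm{Tr}\bigl(s\,\big|\,\mathrm{Hom}_H(V^1,V^2)\bigr)=\mathrm{Tr}\bigl(s\circ p\,\big|\,\mathrm{Hom}(V^1,V^2)\bigr)=\frac{1}{|H|}\sum_{h\in H}\mathrm{Tr}\bigl(sh\,\big|\,\mathrm{Hom}(V^1,V^2)\bigr).
\]

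Next I would apply formula (\ref{hom}) to the element $sh\in\tilde H$, which yields
\[
\mathrm{Tr}\bigl(sh\,\big|\,\mathrm{Hom}(V^1,V^2)\bigr)=\chi^1(sh)\,\chi^2((sh)^{-1})=\chi^1(sh)\,\overline{\chi^2(sh)},
\]
using that the involution $z\mapsto\bar z$ sends roots of unity to their inverses and that character values are sums of roots of unity, so $\chi^2(r^{-1})=\overline{\chi^2(r)}$ for every $r\in\tilde H$.

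Finally I would re-index the sum using $sH=Hs$ (which holds because $s\in N_K(H)$): substituting $h'=shs^{-1}\in H$ gives $sh=h's$, hence
\[
\mathrm{Tr}\bigl(s\,\big|\,\mathrm{Hom}_H(V^1,V^2)\bigr)=\frac{1}{|H|}\sum_{h'\in H}\chi^1(h's)\,\overline{\chi^2(h's)}=\frac{1}{|Hs|}\sum_{y\in Hs}\chi^1(y)\,\overline{\chi^2(y)}=\langle\chi^1_s,\chi^2_s\rangle_{Hs}.
\]
There is no real obstacle here: the only mild subtlety is to make sure that $s$ genuinely commutes with the averaging projector $p$, which is exactly the content of $s\in N_K(H)$, and to remember that $sh$ ranges over $sH=Hs$ as $h$ ranges over $H$ so that the resulting sum is recognisable as $\langle\chi^1_s,\chi^2_s\rangle_{Hs}$.
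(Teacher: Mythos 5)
Your proof is correct and follows essentially the same route as the paper: the paper decomposes ${\rm Hom}\,(V^1,V^2)$ as ${\rm Hom}_H(V^1,V^2)\oplus\ker p$ and observes that the averaged operator $\frac{1}{|H|}\sum_{h\in H}hs$ vanishes on the complement, which is exactly your projector argument, and then both arguments conclude with Formula (\ref{hom}). Your only additions are to make explicit the identity $\chi^2(r^{-1})=\overline{\chi^2(r)}$ and the reindexing $sH=Hs$, which the paper leaves implicit.
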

 
\begin{proof} Put $E:={\rm Hom}\,(V^1,V^2)$ and $E_H:={\rm Hom}_H(V^1,V^2)$ and denote $p:E\rightarrow E_H$ the map $p(x)=\frac{1}{|H|}\sum_{h\in H}h\cdot x$. Then $E':={\rm Ker}\,p$ is an $\tilde{H}$-stable subspace of $E$ and $E=E_H\oplus E'$. Since $$\left.\left(\frac{1}{|H|}\sum_{h\in H}hs\right)\right|_{E'}=0$$we deduce that $${\rm Tr}\,(s\,|\,E_H)=\frac{1}{|H|}\sum_{h\in H}{\rm Tr}\,(hs\,|\, E).$$By Formula (\ref{hom}), the right hand side of this equation is  $\left\langle \chi^1_s,\chi^2_s\right\rangle_{Hs}$.
\end{proof}

We now let $\varphi$ and $\psi$ be the characters of $\tilde{H}$ and $K$ associated respectively to representations $\tilde{H}\rightarrow \GL(V)$ and $K\rightarrow\GL(W)$. The group $\tilde{H}$ acts on the $K$-module ${\rm Ind}_H^K(V):=\kappa[K]\otimes_{\kappa[H]}V$ by $t\cdot(x\otimes v)=xt^{-1}\otimes t\cdot v$. Its restriction to $H$ being trivial, it factorizes through an action of $\tilde{H}/H$ on ${\rm Ind}_H^K(V)$. Under the natural isomorphism (Frobenius reciprocity) 

\begin{equation}{\rm Hom}_H(V,W)\simeq{\rm Hom}_K\left({\rm Ind}_H^K(V),W\right)\label{Frobeniusrecip}\end{equation}the action of $\tilde{H}/H$ on ${\rm Hom}_H(V,W)$ described earlier corresponds to the action of $\tilde{H}/H$ on the $\kappa$-vector space ${\rm Hom}_K\left({\rm Ind}_H^K(V),W\right)$ given by $(t\cdot f)(x\otimes v)=f(t^{-1}\cdot (x\otimes v))$. For a subset $E$ of $K$ and a function $f:E\rightarrow \kappa$, we define ${\rm Ind}_E^K(f):K\rightarrow \kappa$ by 

$${\rm Ind}_E^K(f)(k)=\frac{1}{|E|}\sum_{\{g\in K\,|\,g^{-1}kg\in E\}}f(g^{-1}kg).$$

Then we have the following generalization of Frobenius reciprocity for functions:

\begin{lemma} Let $h:K\rightarrow\kappa$ be a function. Then $$\left\langle {\rm Ind}_E^K(f),h\right\rangle_K=\left\langle f,{\rm Res}^K_E(h)\right\rangle_E.$$
\end{lemma}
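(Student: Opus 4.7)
The plan is to unfold both inner products and the definition of $\mathrm{Ind}_E^K$, then do a single substitution in the resulting double sum. This is a direct analogue of the classical Frobenius reciprocity (where $E$ is a subgroup) and the proof follows exactly the same pattern.

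First I would expand the left-hand side as
\[
\langle \mathrm{Ind}_E^K(f),h\rangle_K \;=\; \frac{1}{|K|\,|E|}\sum_{\substack{k,g\in K\\ g^{-1}kg\in E}} f(g^{-1}kg)\,\overline{h(k)}.
\]
The key move is to reparametrize the index set $\{(k,g)\in K\times K\mid g^{-1}kg\in E\}$ by the pair $(g,e)\in K\times E$ via $e:=g^{-1}kg$, $k=geg^{-1}$. This is a bijection, and the sum becomes
\[
\frac{1}{|K|\,|E|}\sum_{g\in K}\sum_{e\in E} f(e)\,\overline{h(geg^{-1})}.
\]

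The last step uses the implicit assumption (carried by the intended applications, e.g.\ Proposition~\ref{tranches}, where $h$ will be the character of a representation of $K$) that $h$ is a class function on $K$; then $h(geg^{-1})=h(e)$ and the sum over $g$ contributes a factor $|K|$, producing $\frac{1}{|E|}\sum_{e\in E}f(e)\overline{h(e)} = \langle f,\mathrm{Res}^K_E(h)\rangle_E$.

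There is no real obstacle here; the only point worth flagging is that $\mathrm{Ind}_E^K(f)$ is always a class function by construction, so the left-hand side only depends on the ``class-function part'' $h^\natural(k):=|K|^{-1}\sum_{g}h(gkg^{-1})$ of $h$. Consequently the stated identity is correct as written when $h$ is a class function, and in general one should read it as $\langle \mathrm{Ind}_E^K(f),h\rangle_K=\langle f,\mathrm{Res}^K_E(h^\natural)\rangle_E$ — but since the lemma is invoked with characters, this subtlety does not affect the applications.
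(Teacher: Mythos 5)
Your unfolding of both pairings followed by the substitution $e=g^{-1}kg$ is precisely the ``straightforward calculation'' the paper leaves to the reader, so your proof takes the same (and essentially the only) route. Your caveat is also well taken: as stated for an arbitrary $h:K\rightarrow\kappa$ the identity really does require $h$ to be a class function (equivalently, one replaces $h$ by its conjugation-average), which is harmless here since the lemma is only invoked with characters $\psi$ of $K$.
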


\begin{proof} It follows from a straightforward calculation.\end{proof}

By Proposition \ref{tranches}, (\ref{Frobeniusrecip}) and the above lemma, we have the following proposition:

\begin{proposition} Let $v\in \tilde{H}/H$ and let $\dot{v}\in \tilde{H}$ be a representative of $v$. Then 

$${\rm Tr}\,\left(v\,\left|\,{\rm Hom}_K\left({\rm Ind}_H^K(V),W\right.\right)\right)=\left\langle {\rm Ind}_{H\dot{v}}^K(\varphi_v),\psi\right\rangle_K$$where $\varphi_v$ denotes the restriction of $\varphi$ to $H\dot{v}$.\label{indrel}
\end{proposition}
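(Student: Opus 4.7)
The proof is essentially a three-line assembly of tools that have just been set up. The plan is as follows.

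First, I would apply Proposition \ref{tranches} with $V^1 := V$ and $V^2$ equal to the restriction of $W$ from $K$ to $\tilde{H}$ (this restriction makes sense since $\tilde{H} \subset K$, and the relevant characters of these two $\tilde{H}$-modules are $\varphi$ and the restriction of $\psi$ to $\tilde{H}$). Choosing a representative $\dot{v}\in \tilde{H}$ of $v$, this yields
\begin{equation*}
{\rm Tr}\,\bigl(\dot v\,\bigl|\,{\rm Hom}_H(V,W)\bigr)=\bigl\langle \varphi_v,\psi|_{H\dot v}\bigr\rangle_{H\dot v},
\end{equation*}
where $\varphi_v$ is the restriction of $\varphi$ to $H\dot v$ (as in the statement) and $\psi|_{H\dot v}$ is the restriction of $\psi$ to the coset $H\dot v$.

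Second, I would invoke the Frobenius reciprocity for functions (the unnumbered lemma immediately preceding the proposition) with $E = H\dot v$, $f = \varphi_v$ and $h = \psi$. Since ${\rm Res}^K_{H\dot v}(\psi) = \psi|_{H\dot v}$, this gives
\begin{equation*}
\bigl\langle \varphi_v,\psi|_{H\dot v}\bigr\rangle_{H\dot v}=\bigl\langle {\rm Ind}_{H\dot v}^K(\varphi_v),\psi\bigr\rangle_K.
\end{equation*}

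Third, I would use the Frobenius reciprocity isomorphism (\ref{Frobeniusrecip}), together with the remark made just after it that this isomorphism intertwines the described $\tilde H/H$-action on ${\rm Hom}_H(V,W)$ with the $\tilde H/H$-action on ${\rm Hom}_K({\rm Ind}_H^K(V),W)$. Consequently the trace of $\dot v$ on these two $\kappa$-vector spaces agrees, which combined with the two displayed equalities above yields the proposition.

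No step looks like a real obstacle here: the substantive content has already been done in Proposition \ref{tranches} and in the functional Frobenius reciprocity lemma. The only point warranting a line of verification is the compatibility of (\ref{Frobeniusrecip}) with the two $\tilde H/H$-actions, but this is routine from the explicit formulas for the actions (namely $t\cdot f := \rho_W(t)\circ f\circ \rho_V(t^{-1})$ on the left and $(t\cdot f)(x\otimes v) = f(xt\otimes t^{-1}\cdot v)$ on the right, which coincide under the standard identification).
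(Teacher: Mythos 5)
Your proof is correct and follows essentially the same route as the paper, which proves the proposition precisely by combining Proposition \ref{tranches}, the isomorphism (\ref{Frobeniusrecip}) with its stated compatibility of the two $\tilde H/H$-actions, and the functional Frobenius reciprocity lemma with $E=H\dot v$. The only detail worth the line of verification you mention (that (\ref{Frobeniusrecip}) intertwines the actions) is exactly what the paper asserts just after introducing that isomorphism, so nothing is missing.
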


\subsection{Littlewood-Richardson coefficients}\label{LR}

For a positive integer $m$, we denote by $S_m$ the symmetric group in $m$ letters. 

\begin{notation} For a subgroup $H$ of a group $K$, we denote by $W_K(H)$ the quotient $N_K(H)/H$.
 \label{notation}\end{notation}

Fix a sequence $\tau_o=(a_1,m_1)(a_2,m_2)\cdots(a_s,m_s)$ with $a_i,m_i\in\Z_{>0}$ such that $\sum_ia_im_i=n$ and $m_i\neq m_j$ if $i\neq j$. Put $$S:=(S_{m_1})^{a_1}\times\cdots\times (S_{m_s})^{a_s}\subset S_n$$where $(S_m)^d$ stands for $S_m\times\cdots\times S_m$ ($d$ times). Then we may write $N_{S_n}(S)$ as the semidirect product $S\rtimes\left(\prod_{i=1}^sS_{a_i}\right)$ where each $S_{a_i}$ acts on $(S_{m_i})^{a_i}$ by permutation of the coordinates. 

Hence\begin{equation}W_{S_n}(S)\simeq \prod_{i=1}^sS_{a_i}.\label{iso1}\end{equation}The group $N_{S_n}(S)$ acts on the category of $\kappa[S]$-modules in the natural way, i.e., if $\rho:S\rightarrow \GL(V)$ and $n\in N_{S_n}(S)$, then $n^*(\rho)$ is the representation $\rho\circ n^{-1}:S\rightarrow\GL(V)$.

For a representation $\rho:S\rightarrow \GL(V)$, we denote by $W_{S_n}(S,\rho)$ the quotient $N_{S_n}(S,\rho)/S$ where $$N_{S_n}(S,\rho)=\{n\in N_{S_n}(S)\,|\, n^*(\rho)\simeq\rho\}.$$

Let $\rho:S\rightarrow\GL(V)$ be an irreducible representation. Then for each $i=1,\dots,s$, there exists a partition $(d_{i,1},\dots,d_{i,r_i})$ of $a_i$ and non-isomorphic irreducible $\kappa[S_{m_i}]$-modules  $V_{i,1},\dots, V_{i,r_i}$ such that $$V=\bigotimes_{i,j}T^{d_{i,j}}\left(V_{i,j}\right)$$where for a $\kappa$-vector space $E$, we put $T^dE:=E\otimes\cdots\otimes E$ with $E$ repeated $d$ times.

Then the isomorphism (\ref{iso1}) restricts to an isomorphism 

$$W_{S_n}(S,\rho)\simeq \prod_{i,j}S_{d_{i,j}}.$$

For each $(i,j)$, the group $(S_{m_i})^{d_{i,j}}\rtimes S_{d_{i,j}}$ acts on $T^{d_{i,j}}(V_{i,j})=V_{i,j}\otimes\cdots \otimes V_{i,j}$ as $$(w,s)\cdot(x_1\otimes\cdots\otimes x_{d_{i,j}})=(w_1\cdot x_{s^{-1}(1)}\otimes\cdots \otimes w_{d_{i,j}}\cdot x_{s^{-1}(d_{i,j})}).$$This defines an action of $N_{S_n}(S,\rho)\simeq \prod_{i,j}\left((S_{m_i})^{d_{i,j}}\rtimes S_{d_{i,j}}\right)\simeq S\rtimes\left(\prod_{i,j}S_{d_{i,j}}\right)$ on $V$. We denote by $\tilde{\chi}:N_{S_n}(S,\rho)\rightarrow\kappa$ the corresponding character, and for $v\in \prod_{i,j}S_{d_{i,j}}$ we denote by $\tilde{\chi}_v$ its restriction to the coset $Sv$. 

By Proposition \ref{indrel} we have:

\begin{proposition} For any $\kappa[S_n]$-module $W$ with character $\psi$ and any $v\in W_{S_n}(S,\rho)$ we have 

$${\rm Tr}\,\left(v\,\left|\,{\rm Hom}_{S_n}\left({\rm Ind}_S^{S_n}(V),W\right.\right)\right)=\left\langle {\rm Ind}_{S v}^{S_n}(\tilde{\chi}_v),\psi\right\rangle_{S_n}.$$
\label{indrel2}\end{proposition}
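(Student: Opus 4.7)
The plan is to deduce Proposition~\ref{indrel2} as a direct specialization of Proposition~\ref{indrel}, with $K=S_n$, $H=S$, and $\tilde{H}=N_{S_n}(S,\rho)$. The only real content is checking that the extension $\tilde{\chi}$ of the character $\chi$ of $\rho$ to $N_{S_n}(S,\rho)$, constructed explicitly in the paragraphs preceding the proposition via the identification $V=\bigotimes_{i,j}T^{d_{i,j}}(V_{i,j})$ and the tensor-permutation action of $\prod_{i,j}\bigl((S_{m_i})^{d_{i,j}}\rtimes S_{d_{i,j}}\bigr)$, gives rise (through Frobenius reciprocity) to the same $W_{S_n}(S,\rho)$-action on ${\rm Hom}_{S_n}({\rm Ind}_S^{S_n}(V),W)$ as the abstract one used in Proposition~\ref{indrel}.

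First I would recall that Proposition~\ref{indrel} (via Proposition~\ref{tranches} and the Frobenius reciprocity isomorphism~(\ref{Frobeniusrecip})) requires, as input, only \emph{some} extension of the $S$-representation $\rho$ to a representation of $N_{S_n}(S,\rho)$. So the task reduces to verifying: (i)~$\tilde{\chi}$ really is the character of such an extension, i.e.\ the formula $(w,s)\cdot (x_1\otimes\cdots\otimes x_{d_{i,j}})=(w_1\cdot x_{s^{-1}(1)}\otimes\cdots\otimes w_{d_{i,j}}\cdot x_{s^{-1}(d_{i,j})})$ indeed defines a group action of $(S_{m_i})^{d_{i,j}}\rtimes S_{d_{i,j}}$ on $T^{d_{i,j}}(V_{i,j})$, and (ii)~that this extension restricts to $\rho$ on $S$. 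Both are immediate: the associativity is a one-line check on the semidirect product multiplication, and the restriction to the normal subgroup $(S_{m_i})^{d_{i,j}}$ recovers the tensor-product representation $T^{d_{i,j}}(V_{i,j})$ of $\rho$.

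Next I would apply Proposition~\ref{indrel} to $(K,H,\tilde{H},\varphi,\psi)=(S_n,S,N_{S_n}(S,\rho),\tilde{\chi},\psi)$. For any $v\in W_{S_n}(S,\rho)=N_{S_n}(S,\rho)/S$ and representative $\dot{v}\in N_{S_n}(S,\rho)$, the conclusion reads
\[
{\rm Tr}\bigl(v\,\big|\,{\rm Hom}_{S_n}({\rm Ind}_S^{S_n}(V),W)\bigr)=\bigl\langle {\rm Ind}_{S\dot{v}}^{S_n}(\tilde{\chi}_v),\psi\bigr\rangle_{S_n},
\]
which, once one observes that $S\dot{v}=Sv$ in the notation used in the statement, is exactly the claimed identity.

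The only mildly delicate point is a verification that the $W_{S_n}(S,\rho)$-action induced on ${\rm Hom}_{S_n}({\rm Ind}_S^{S_n}(V),W)$ by the extension $\tilde{\chi}$ agrees with the action built in Proposition~\ref{indrel} through the rule $t\cdot(x\otimes v)=xt^{-1}\otimes t\cdot v$ on ${\rm Ind}_S^{S_n}(V)$; but this is a formal identification: in both cases one obtains the unique $W_{S_n}(S,\rho)$-module structure making Frobenius reciprocity $N_{S_n}(S,\rho)$-equivariant, and is determined by the extension of $\rho$ to $N_{S_n}(S,\rho)$. I do not anticipate a genuine obstacle; the statement is essentially a bookkeeping reformulation of Proposition~\ref{indrel} tailored to the symmetric-group setup needed in the sequel.
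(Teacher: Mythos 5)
Your proposal is correct and follows the same route as the paper: the paper proves Proposition \ref{indrel2} simply by specializing Proposition \ref{indrel} to $K=S_n$, $H=S$, $\tilde{H}=N_{S_n}(S,\rho)$ with the extension $\tilde{\chi}$ constructed just before the statement. Your extra verifications (that the tensor-permutation formula defines an extension of $\rho$ and that the induced actions on ${\rm Hom}_{S_n}({\rm Ind}_S^{S_n}(V),W)$ agree) are exactly the bookkeeping the paper leaves implicit.
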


\begin{lemma} Let $\chi_{i,j}$ be the character associated with the $\kappa[S_{m_i}]$-modules $V_{i,j}$. Assume that $v$ acts on each $(S_{m_i})^{d_{i,j}}$ by circular permutation of the coordinates, namely $v\cdot (g_1,\dots,g_{d_{i,j}})=(g_2,g_3,\dots,g_{d_{i,j}},g_1)$. Let $w_{i,j}=(w_{i,j,1},w_{i,j,2},\dots,w_{i,j,d_{i,j}})\in (S_{m_i})^{d_{i,j}}$ and let $w\in S=\prod_{i,j}(S_{m_i})^{d_{i,j}}$ be the element with coordinates $w_{i,j}$. We have$$\tilde{\chi}(w,v)=\prod_{i,j}\chi_{i,j}(w_{i,j,1}w_{i,j,2}\cdots w_{i,j,d_{i,j}}).$$
\end{lemma}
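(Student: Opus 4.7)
The plan is to reduce the computation to a single block and then recognise the resulting sum as a matrix trace.

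First, since $V=\bigotimes_{i,j}T^{d_{i,j}}(V_{i,j})$ as a representation and the extended action of $N_{S_n}(S,\rho)\simeq S\rtimes\prod_{i,j}S_{d_{i,j}}$ preserves this tensor decomposition factor by factor, the character $\tilde{\chi}$ is multiplicative:
\[
\tilde{\chi}(w,v)=\prod_{i,j}\tilde{\chi}_{i,j}\bigl((w_{i,j,1},\dots,w_{i,j,d_{i,j}}),v_{i,j}\bigr),
\]
where $\tilde{\chi}_{i,j}$ is the character of the $(S_{m_i})^{d_{i,j}}\rtimes S_{d_{i,j}}$-action on $T^{d_{i,j}}(V_{i,j})$ and $v_{i,j}$ is the $(i,j)$-component of $v$. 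Thus it suffices to prove the identity for one block: writing $d:=d_{i,j}$, $m:=m_i$, $V':=V_{i,j}$, $\chi':=\chi_{i,j}$, $w_k:=w_{i,j,k}$, and letting $v\in S_d$ denote the cyclic permutation with $v\cdot(g_1,\dots,g_d)=(g_2,\dots,g_d,g_1)$, we must show that
\[
\tilde{\chi}\bigl((w_1,\dots,w_d),v\bigr)=\chi'(w_1w_2\cdots w_d).
\]

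Next, I would compute this single-block trace directly. Fix a basis $\{e_\alpha\}_{\alpha}$ of $V'$ and write $[w_k]_{\beta,\alpha}$ for the matrix coefficients of $w_k$ acting on $V'$. With the convention in the statement, $v^{-1}(k)=k+1\pmod d$, so the action on a pure tensor reads
\[
(w,v)\cdot(e_{\alpha_1}\otimes\cdots\otimes e_{\alpha_d})=w_1e_{\alpha_2}\otimes w_2e_{\alpha_3}\otimes\cdots\otimes w_de_{\alpha_1}.
\]
Expanding each factor in the basis and extracting the coefficient of $e_{\alpha_1}\otimes\cdots\otimes e_{\alpha_d}$ gives
\[
\tilde{\chi}\bigl((w_1,\dots,w_d),v\bigr)=\sum_{\alpha_1,\dots,\alpha_d}[w_1]_{\alpha_1,\alpha_2}[w_2]_{\alpha_2,\alpha_3}\cdots[w_d]_{\alpha_d,\alpha_1},
\]
with indices read cyclically. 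This last sum is precisely $\operatorname{tr}(w_1w_2\cdots w_d)$ acting on $V'$, i.e.\ $\chi'(w_1w_2\cdots w_d)$.

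Combining the two steps yields the stated product formula. The main (minor) point requiring care is the bookkeeping of the permutation convention: one must ensure that the cycle $v\cdot(g_1,\dots,g_d)=(g_2,\dots,g_d,g_1)$ is correctly translated into $v^{-1}(k)=k+1\pmod d$ so that the diagonal entries telescope into a single trace of the ordered product $w_1w_2\cdots w_d$ rather than, say, $w_d\cdots w_1$; once this is fixed the computation is immediate. No further representation-theoretic input (such as Schur's lemma or irreducibility of $V'$) is actually needed — the formula is a purely combinatorial wreath-product identity valid for any representation of $S_m$.
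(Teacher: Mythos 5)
Your proof is correct. The paper states this lemma without proof (it is the standard character computation for wreath-product actions on tensor powers), so there is no argument in the paper to compare against; your two-step reduction is exactly the expected one. The block-by-block factorization is valid because $(w,v)$ acts on $V=\bigotimes_{i,j}T^{d_{i,j}}(V_{i,j})$ as a tensor product of operators, so traces multiply; and in a single block your bookkeeping is right: with $v\cdot(g_1,\dots,g_d)=(g_2,\dots,g_d,g_1)$ one has $v^{-1}(k)=k+1\pmod d$, the image of $e_{\alpha_1}\otimes\cdots\otimes e_{\alpha_d}$ is $w_1e_{\alpha_2}\otimes\cdots\otimes w_de_{\alpha_1}$, and the diagonal-coefficient sum $\sum_{\alpha}[w_1]_{\alpha_1\alpha_2}\cdots[w_d]_{\alpha_d\alpha_1}$ telescopes to $\operatorname{Tr}(w_1w_2\cdots w_d)$, giving $\chi'(w_1\cdots w_d)$ with the product in the order claimed. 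Your closing remark is also accurate: no irreducibility of the $V_{i,j}$ is used, only the definition of the extended action.
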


We now show that  this trace  is also a Littlewood-Richardson coefficient (or more precisely a twisted version of it). We will use this result later on.

Let $\x=\{x_1,x_2,\dots\}$ be an infinite set of variables and let $\Lambda(\x)$ be the corresponding ring of symmetric functions. For a partition $\lambda$, let $s_\lambda(\x)$ be the associated Schur symmetric function. 
Let $\unlhd$ denote the dominance ordering on the set of partitions $\calP$. For a type $\omega=(d_1,\omega^1)\cdots(d_r,\omega^r)\in{\bf T}_n$, define $\omega^+$ as the partition $\sum_{i=1}^rd_i\cdot\omega^i$. 

For a type $\omega=(d_1,\omega^1)\cdots(d_r,\omega^r)\in{\bf T}_n$, we define $\{c_\omega^\mu\}_{\mu\in\calP_n}$ by 

$$s_\omega(\x):=s_{\omega^1}(\x^{d_1})s_{\omega^2}(\x^{d_2})\cdots s_{\omega^r}(\x^{d_r})=\sum_{\mu\unlhd \omega^+} c_\omega^\mu s_\mu(\x)$$where $\x^d:=\{x_1^d,x_2^d,\dots\}$. We call the coefficients $c_\omega^\mu$ the \emph{twisted Littlewood-Richardson coefficients}. If $d_1=d_2=\cdots=d_r=1$, these are the usual Littlewood-Richardson coefficients.

For $\lambda=(1^{m_1},2^{m_2},\dots)\in\calP$, put $$z_\lambda:=\prod_{i\geq 1}i^{\,m_i}.m_i!.$$It is also the cardinality of the centralizer in $S_{|\lambda|}$ of an element of type $\lambda$ (i.e. whose decomposition as a product of disjoint cycles is given by $\lambda$). We denote by $\chi^\lambda$ the irreducible character of $S_{|\lambda|}$ associated to $\lambda$ as in Macdonald \cite[I, \S 7]{macdonald} and by $\chi^\lambda_\mu$ its value at an element of type $\mu$. 

\begin{proposition} We have 

$$c_\omega^\mu=\sum_\rho\chi^\mu_\rho\sum_{\alpha}\left(\prod_{i=1}^rz_{\alpha_i}^{-1}\chi^{\omega^i}_{\alpha^i}\right)$$where the second sum runs over the $\alpha=(\alpha^1,\dots,\alpha^r)\in \calP_{|\omega^1|}\times\cdots\times\calP_{|\omega^r|}$ such that $\cup_i d_i\cdot\alpha^i=\rho$.
\label{twistedLR}\end{proposition}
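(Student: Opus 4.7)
The plan is to prove this by passing through the power-sum basis of the ring of symmetric functions, using the standard transition formulas between Schur functions, power sums, and irreducible characters of the symmetric group.

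First, I would recall the two classical expansions (see Macdonald \cite[I, \S 7]{macdonald}): for any partition $\lambda$,
\begin{equation*}
s_\lambda(\x) = \sum_{\alpha \in \calP_{|\lambda|}} z_\alpha^{-1}\, \chi^\lambda_\alpha\, p_\alpha(\x), \qquad p_\rho(\x) = \sum_{\mu \in \calP_{|\rho|}} \chi^\mu_\rho\, s_\mu(\x).
\end{equation*}
The other key identity is the behaviour of power sums under the substitution $\x \mapsto \x^d$: since $p_k(\x^d) = \sum_i x_i^{dk} = p_{dk}(\x)$, we have $p_\alpha(\x^d) = p_{d\cdot\alpha}(\x)$ for any partition $\alpha$.

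Next, I would apply the first expansion to each factor $s_{\omega^i}(\x^{d_i})$, obtaining
\begin{equation*}
s_{\omega^i}(\x^{d_i}) = \sum_{\alpha^i \in \calP_{|\omega^i|}} z_{\alpha^i}^{-1}\, \chi^{\omega^i}_{\alpha^i}\, p_{\alpha^i}(\x^{d_i}) = \sum_{\alpha^i \in \calP_{|\omega^i|}} z_{\alpha^i}^{-1}\, \chi^{\omega^i}_{\alpha^i}\, p_{d_i \cdot \alpha^i}(\x).
\end{equation*}
Multiplying these together (and using $p_\lambda \cdot p_\mu = p_{\lambda \cup \mu}$), the product $s_\omega(\x) = \prod_i s_{\omega^i}(\x^{d_i})$ becomes
\begin{equation*}
s_\omega(\x) = \sum_{\alpha = (\alpha^1,\ldots,\alpha^r)} \left(\prod_{i=1}^r z_{\alpha^i}^{-1}\, \chi^{\omega^i}_{\alpha^i}\right) p_{\rho(\alpha)}(\x), \qquad \rho(\alpha) := \bigcup_i d_i \cdot \alpha^i.
\end{equation*}

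Finally, I would substitute the second transition formula $p_{\rho(\alpha)} = \sum_\mu \chi^\mu_{\rho(\alpha)}\, s_\mu$ and regroup terms by grouping first by $\mu$ and then, for each $\mu$, by the common value $\rho = \rho(\alpha)$:
\begin{equation*}
s_\omega(\x) = \sum_\mu \left( \sum_\rho \chi^\mu_\rho \sum_{\alpha : \rho(\alpha) = \rho} \prod_{i=1}^r z_{\alpha^i}^{-1}\, \chi^{\omega^i}_{\alpha^i} \right) s_\mu(\x).
\end{equation*}
Comparing coefficients of $s_\mu$ with the defining expansion $s_\omega = \sum_\mu c_\omega^\mu s_\mu$ yields the claim. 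There is essentially no obstacle here: the only point requiring a bit of care is the bookkeeping in the last step, namely verifying that when one collects partitions $\alpha^i$ into the union $\rho$, no contribution is double-counted and the formula matches the one stated (with $z_{\alpha_i}$ in the statement being a typo for $z_{\alpha^i}$).
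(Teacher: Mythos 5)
Your proposal is correct and follows exactly the paper's own argument: expand each $s_{\omega^i}(\x^{d_i})$ in power sums, use $p_{\alpha}(\x^{d})=p_{d\cdot\alpha}(\x)$ together with $p_\lambda p_\mu=p_{\lambda\cup\mu}$, and then decompose $p_\rho$ back in the Schur basis via $p_\rho=\sum_\mu\chi^\mu_\rho s_\mu$. The bookkeeping you flag is exactly what the paper does implicitly, and indeed $z_{\alpha_i}$ in the statement is just notation for $z_{\alpha^i}$.
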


\begin{proof} We have $s_\lambda(\x^d)=\sum_\rho z_\rho^{-1}\chi^\lambda_\rho p_\rho(\x^d)$ where $p_\rho$ is the power symmetric function (see \cite{macdonald}). On the other hand, $p_{\rho^1}(\x^{d_1})\cdots p_{\rho^r}(\x^{d_r})=p_{\cup_id_i\cdot\rho^i}(\x)$. Hence 

$$s_\omega(\x)=\sum_\rho\left(\sum_\alpha\prod_iz_{\alpha^i}^{-1}\chi^{\omega^i}_{\alpha^i}\right)p_\rho(\x)$$where the second sum runs over the $\alpha=(\alpha^1,\dots,\alpha^r)\in \calP_{|\omega^1|}\times\cdots\times\calP_{|\omega^r|}$ such that $\cup_i d_i\cdot\alpha^i=\rho$. We now decompose $p_\rho$ in the basis $\{s_\lambda\}_\lambda$ and we get the result.
\end{proof}

For $\lambda\in\calP$, we denote by $V_\lambda$ an irreducible $\kappa[S_{|\lambda|}]$-module with corresponding character $\chi^\lambda$.

\begin{proposition}Put $V_\omega:=\bigotimes_{i=1}^rT^{d_i}V_{\omega^i}$ and  $S:=\prod_i (S_{|\omega^i|})^{d_i}$ and let $\rho$ be the representation $S\rightarrow \GL(V_\omega)$. Let $v\in W_{S_n}(S,\rho)$ be the element which acts on each $(S_{|\omega^i|})^{d_i}$ by circular permutation of the coordinates. For any $\mu\in\calP_n$ we have
 
$${\rm Tr}\,\left(v\,\left|\,{\rm Hom}_{S_n}\left({\rm Ind}_S^{S_n}(V_\omega),V_\mu\right.\right)\right)=c_\omega^\mu.$$

\label{twistedLRTR}\end{proposition}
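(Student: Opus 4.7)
The plan is to apply Proposition \ref{indrel2} to the pair $(S \subset S_n, V_\omega)$ and to the element $v$, combined with Frobenius reciprocity, and then to compare the resulting inner product directly with the expression for $c_\omega^\mu$ in Proposition \ref{twistedLR}.

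By Proposition \ref{indrel2}, one has
\[
\mathrm{Tr}\bigl(v\,\bigl|\,\mathrm{Hom}_{S_n}(\mathrm{Ind}_S^{S_n}(V_\omega),V_\mu)\bigr) \;=\; \bigl\langle \mathrm{Ind}_{Sv}^{S_n}(\tilde{\chi}_v),\,\chi^\mu\bigr\rangle_{S_n} \;=\; \bigl\langle \tilde{\chi}_v,\,\mathrm{Res}^{S_n}_{Sv}(\chi^\mu)\bigr\rangle_{Sv},
\]
the second equality being the Frobenius reciprocity lemma preceding Proposition \ref{indrel}. Next, I would invoke the lemma just before the statement to write, for $w = (w_{i,1},\dots,w_{i,d_i})_i \in S$,
\[
\tilde{\chi}(wv) \;=\; \prod_{i=1}^r \chi^{\omega^i}(u_i), \qquad u_i := w_{i,1}w_{i,2}\cdots w_{i,d_i}.
\]

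The main computational step is to identify the cycle type in $S_n$ of the element $wv$ under the standard wreath product embedding $(S_{|\omega^i|})^{d_i}\rtimes S_{d_i}\hookrightarrow S_{d_i|\omega^i|}$. Since the $S_{d_i}$-component of $v$ is the long cycle $(1,2,\dots,d_i)$, a direct orbit calculation on $\{1,\dots,|\omega^i|\}\times\{1,\dots,d_i\}$ shows that an orbit returns to the first block only after $d_i$ steps, at which point $x$ is sent to $u_i(x)$ (up to reordering). Consequently, if $u_i$ has cycle type $\alpha^i \vdash |\omega^i|$, then the restriction of $wv$ to the $i$-th block has cycle type $d_i\cdot\alpha^i$, and globally $wv$ has cycle type $\rho := \bigcup_i d_i\cdot\alpha^i$. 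This is the key (and slightly delicate) step, but it is a routine wreath-product computation.

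Finally, I would assemble the pieces. Counting the tuples $(w_{i,1},\dots,w_{i,d_i})\in (S_{|\omega^i|})^{d_i}$ whose product $u_i$ has cycle type $\alpha^i$ gives $|S_{|\omega^i|}|^{d_i}/z_{\alpha^i}$ (fix all but one coordinate and use that the remaining factor varies over a conjugacy class of size $|S_{|\omega^i|}|/z_{\alpha^i}$). Plugging into the inner product and grouping terms by $\alpha=(\alpha^1,\dots,\alpha^r)$ yields
\[
\bigl\langle \tilde{\chi}_v,\,\mathrm{Res}^{S_n}_{Sv}(\chi^\mu)\bigr\rangle_{Sv}
= \sum_{\alpha} \Bigl(\prod_{i=1}^r \tfrac{1}{z_{\alpha^i}}\chi^{\omega^i}_{\alpha^i}\Bigr)\,\chi^\mu_{\bigcup_i d_i\cdot\alpha^i}
= \sum_{\rho} \chi^\mu_\rho \sum_{\alpha:\,\bigcup_i d_i\cdot\alpha^i=\rho}\prod_{i=1}^r \tfrac{1}{z_{\alpha^i}}\chi^{\omega^i}_{\alpha^i},
\]
which is exactly the expression for $c_\omega^\mu$ given by Proposition \ref{twistedLR}. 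This completes the proof.
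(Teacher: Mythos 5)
Your proposal is correct and follows essentially the same route as the paper, whose proof is the one-line observation that the statement is a consequence of Proposition \ref{indrel2} and Proposition \ref{twistedLR}. You have simply made the implicit bridge explicit — the evaluation of $\tilde{\chi}$ on the coset $Sv$ via the preceding lemma, the wreath-product cycle-type computation showing $wv$ has type $\bigcup_i d_i\cdot\alpha^i$, and the count $|S_{|\omega^i|}|^{d_i}/z_{\alpha^i}$ — all of which are the intended (and correct) details behind the paper's citation of those two propositions.
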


\begin{proof} This is a consequence of Proposition \ref{indrel2} and Proposition \ref{twistedLR}.
 
\end{proof}

\subsection{Rational Levi subgroups and Weyl groups}\label{ratLevi}

By a \emph{Levi subgroup} of $G$, we shall mean a Levi subgroup of a parabolic subgroup of $G$, i.e., a subgroup of $G$ which is $\GL_n$-conjugate to some  subgroup of the form $\prod_{i=1}^r\GL_{n_i}$ with $\sum_i n_i=n$. A maximal torus of $G$ is a Levi subgroup which is isomorphic to $(\K^{\times})^n$. Let $L$ be an $F$-stable Levi subgroup of $G$. An $F$-stable  subtorus of $S$ of $L$ of rank $r$ is said to be \emph{split} if there is an isomorphism $S\simeq (\K^{\times})^r$ which is defined over $\F_q$, i.e., $S^F\simeq (\F_q^{\times})^r$. The $\F_q$-rank of $L$ is defined as the maximal value of the ranks of the split subtori of $L$. Since the maximal torus of diagonal matrices is split, any $F$-stable Levi subgroup that contains diagonal matrices is of $\F_q$-rank $n$. 

If $T$ is an $F$-stable maximal torus of $L$ of same $\F_q$-rank as $L$, in which case we say that $T$ is an $L$-split maximal torus of $L$. In this case we denote by $W_L$, instead of $W_L(T)$ (see Notation \ref{notation}), the Weyl group of $L$ with respect to $T$.

If $f$ is a group automorphism of $K$, we say that two elements $k$ and $h$ of $K$ are $f$-\emph{conjugate} if there exists $g\in K$ such that $k=ghf(g)^{-1}$.

The identification of the symmetric group $S_n$ with the monomial matrices in $\GL_n$ with entries in $\{0,1\}$ gives an isomorphism $S_n\simeq W_G$. Fix a sequence of integers ${\bf m}=(m_1,\dots,m_r)$ such that $\sum_im_i=n$ and consider the Levi subgroup $L_o=\GL_{\bf m}:=\prod_{i=1}^r\GL_{m_i}$. Then $W_{L_o}=S_{\bf m}:=\prod_{i=1}^rS_{m_i}$. The $G^F$-conjugacy classes of the $F$-stable Levi subgroups of $G$ that are $G$-conjugate to $L_o$ are parametrized by the conjugacy classes of $W_G(L_o)=W_{S_n}(S_{\bf m})$ \cite[Proposition 4.3]{DM}. For $v\in N_{S_n}(S_{\bf m})$, we denote by $L_v$ a representative of the $G^F$-conjugacy class (of  $F$-stable Levi subgroups) which corresponds to the conjugacy class of $v$ in $W_{S_n}(S_{\bf m})$. Then $(L_v,F)\simeq (L_o,vF)$, i.e., the action of the Frobenius $F$ on $L_v$ corresponds to the action of $vF$ on $L_o$ given by $vF(g):=vF(g)v^{-1}$ for any $g\in L_o$. Since $F$ acts trivially on $W_G\simeq S_n$, we have $(W_{L_v},F)\simeq (S_{\bf m},v)$. By \S\ref{LR}, there exists a decomposition 

$$S_{\bf m}=(S_{n_1})^{d_1}\times\cdots\times(S_{n_r})^{d_r}$$for some sequence $(d_1,n_1)(d_2,n_2)\cdots(d_r,n_r)$ and a specific choice of an element $\sigma$ in the coset $vS_{\bf m}$ which acts on each component $(S_{n_i})^{d_i}$ by circular permutation of the coordinates. Taking the $G^F$-conjugate $L_\sigma$ of $L_v$ if necessary we may assume that $v=\sigma$. We also have

$$L_o=\prod_{i=1}^r\left(\GL_{n_i}\right)^{d_i},\text{ and }\hspace{.2cm}(L_v)^F\simeq(L_o)^{vF}\simeq \prod_{i=1}^r\GL_{n_i}(\F_{q^{d_i}}).$$

Now let $L$ be any $F$-stable Levi subgroup of $G$. Consider the semi-direct product $W_L\rtimes\langle F\rangle$ where $\langle F\rangle$ is the cyclic group generated by the Frobenius automorphism on $W_L$. If $\psi$ is a character of $W_L\rtimes\langle F\rangle$, then for all $a\in W_L$, we have $\psi(F(a))=\psi(a)$ since $(F(a),1)\in W_L\rtimes\langle F\rangle$ is the conguate of $(a,1)$ by $(1,F)$. Hence the restriction of $\psi$ to $W_L$ is an $F$-stable character of $W_L$.
Conversely, given an $F$-stable character $\chi$ of $W_L$, we now define an extension $\tilde{\chi}$ of $\chi$ to $W_L\rtimes\langle F\rangle$ as follows. We have $L=L_v$ for some $\bf m$ and $v\in N_{S_n}(S_{\bf m})$ by the above discussion so that we may identify $W_L\rtimes\langle F\rangle$ with $S_{\bf m}\rtimes\langle v\rangle$. For an $v$-stable character $\chi$ of $S_{\bf m}$ we define the extension $\tilde{\chi}$ of $S_{\bf m}\rtimes\langle v\rangle$ as in \S\ref{LR}.

The $L^F$-conjugacy classes of the $F$-stable maximal tori of $L$ are parametrized by the $F$-conjugacy classes of $W_L$ \cite[Proposition 4.3]{DM}. If $w\in W_L$, we denote by $T_w$ an $F$-stable maximal torus of $L$ which is in the $L^F$-conjugacy class associated to the $F$-conjugacy class of $w$. We put $\mathfrak{t}_w:={\rm Lie}\,(T_w)$.

\subsection{Springer correspondence for relative Weyl groups}\label{actionW}

Let $P$ be a parabolic subgroup of $G$ and $L$ a Levi factor of $P$. Let $\mathfrak{l}$ be the Lie algebra of 
$L$ and let $z_{\mathfrak{l}}$ denotes its center. Recall that the classical Springer correspondence gives a bijection $$\mathfrak{C}=\mathfrak{C}_L:{\rm Irr}\,W_L\rightarrow\{\text{nilpotent orbits of }\mathfrak{l}\}$$which maps the trivial character to the regular nilpotent orbit. Moreover if $L$ is $F$-stable then $\mathfrak{C}$ restricts to a bijection between the $F$-stable irreducible characters of $W_L$ and the $F$-stable nilpotent orbits of $\mathfrak{l}$.
Recall that if $L=G$ and $\lambda\in\calP_n$, then the size of the Jordan blocks of the nilpotent orbit $\mathfrak{C}(\chi^{\lambda})$ are given by the partition $\lambda$.

Let $\epsilon\in{\rm Irr}\, W_L$ be the sign character. For $\chi\in{\rm Irr}\,W_L$ put $\chi':=\chi\otimes\epsilon$. Then let $\mathfrak{C}_\epsilon:{\rm Irr}\,W_L\overset{\sim}{\rightarrow}\{\text{nilpotent orbits of }\mathfrak{l}\}$ be the map which sends $\chi$ to $\mathfrak{C}(\chi')$. The bijection $\mathfrak{C}_\epsilon$ was actually the first correspondence to be discovered \cite{springer}.

Let $C$ be a nilpotent orbit of $\mathfrak{l}$ and put $\Sigma=\sigma+C$ with $\sigma\in z_{\mathfrak{l}}$. Consider the relative Weyl group $$W_G(L,\Sigma):=\{n\in N_G(L)\,|\, n\Sigma n^{-1}=\Sigma\}/L.$$Recall that $\Sigma$ is of the form $\sigma+C$ with $C$ a nilpotent orbit of $\mathfrak{l}$ and $\sigma\in z_\mathfrak{l}$. Put $M:=C_G(\sigma)$, then $W_G(L,\Sigma)=W_M(L,C)$. Let $\calO$ be the orbit of $\gl_n$ whose Zariski closure is the image of the projection $p:\mathbb{X}_{L,P,\Sigma}\rightarrow\g$ on the first coordinate.

Let $\g_\sigma$ be the set of elements $x\in\g$ whose semisimple part is $G$-conjugate to $\sigma$. Note that the image of $p$ is contained in $\g_\sigma$. The set $\g_\sigma$ has a finite number of $G$-orbits which are indexed by the irreducible characters of $W_M$ by $\mathfrak{C}_M$. If $\chi$ is an irreducible character of $W_M$ we denote by $\calO_\chi$ the corresponding adjoint orbit in $\g_\sigma$. For $\chi\in{\rm Irr}\, W_M$, put $$A_\chi={\rm Hom}_{W_M}\left({\rm Ind}_{W_L}^{W_M}(V_C),V_\chi\right)$$where $V_C$ is the irreducible $W_L$-module corresponding to the nilpotent orbit $C$ under $\mathfrak{C}$. 

We have the following result due to Springer in the case where $\calO$ is nilpotent regular (see Borho and MacPherson \cite[3.1]{BM} for the regular nilpotent case and Lusztig \cite[2.5]{LuCV} for the general case).

\begin{proposition}We have $${\rm Ind}_{\mathfrak{l}\subset\mathfrak{p}}^{\g}\left(\pIC{\overline{\Sigma}}\right)=p_*\left(\pIC {\mathbb{X}_{L,P,\Sigma}}\right)= \bigoplus_{\chi\in{\rm Irr}\,W_M} A_\chi\otimes\pIC {\overline{\calO}_\chi}$$and $A_\chi=0$ if $\calO_\chi$ is not included in $\overline{\calO}$. The multiplicity $A_{\chi_o}$  corresponding to $\calO=\calO_{\chi_o}$ is the trivial character of $W_M(L,C)$.\label{Springer}\end{proposition}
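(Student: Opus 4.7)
The plan is to combine Lemma \ref{ind}, the transitivity of parabolic induction (Proposition \ref{transitivity}), and the classical Springer correspondence for $W_M$, where $M:=C_G(\sigma)$. The first equality $p_*(\pIC{\mathbb{X}_{L,P,\Sigma}})={\rm Ind}_{\mathfrak{l}\subset\mathfrak{p}}^{\g}(\pIC{\overline{\Sigma}})$ is exactly Lemma \ref{ind}, so only the decomposition as a direct sum and the identification of the multiplicities need to be proved.

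First I would pick a parabolic $Q$ of $G$ with Levi factor $M$ containing $P$ (this is possible because $\sigma\in z_\mathfrak{l}$ forces $L\subset M$). Proposition \ref{transitivity} then gives
\[
{\rm Ind}_{\mathfrak{l}\subset\mathfrak{p}}^{\g}(\pIC{\overline{\Sigma}}) \;\simeq\; {\rm Ind}_{\mathfrak{m}\subset\mathfrak{q}}^{\g}\bigl({\rm Ind}_{\mathfrak{l}\subset\mathfrak{p}\cap\mathfrak{m}}^{\mathfrak{m}}(\pIC{\overline{\Sigma}})\bigr),
\]
provided the inner induction is perverse. Since $\sigma$ lies in the center of $\mathfrak{m}$, translation by $\sigma$ intertwines ${\rm Ind}_{\mathfrak{l}\subset\mathfrak{p}\cap\mathfrak{m}}^{\mathfrak{m}}(\pIC{\overline{\Sigma}})$ with the Springer-type induction ${\rm Ind}_{\mathfrak{l}\subset\mathfrak{p}\cap\mathfrak{m}}^{\mathfrak{m}}(\pIC{\overline{C}})$ attached to the nilpotent orbit $C$ of $\mathfrak{l}$ inside $\mathfrak{m}$. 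Perversity of the latter follows from Proposition \ref{semi-smallgen} applied inside $M$ (which forces the map $\mathbb{X}_{L,P\cap M,C}\rightarrow\overline{C}_M$ to satisfy the semi-smallness hypothesis of Proposition \ref{Lu1}), where $C_M$ denotes the $M$-orbit containing the image. The classical Springer correspondence for $M$ (in the form of Borho-MacPherson) now yields
\[
{\rm Ind}_{\mathfrak{l}\subset\mathfrak{p}\cap\mathfrak{m}}^{\mathfrak{m}}(\pIC{\overline{C}}) \;\simeq\; \bigoplus_{\chi\in\Irr W_M} A_\chi\otimes \pIC{\overline{C}_\chi},
\]
with $A_\chi={\rm Hom}_{W_M}({\rm Ind}_{W_L}^{W_M}(V_C),V_\chi)$, since $V_C=\mathfrak{C}_L^{-1}(C)$ is the Springer representation attached to $C$.

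Next I would apply ${\rm Ind}_{\mathfrak{m}\subset\mathfrak{q}}^{\g}$ termwise. The key point is that each constituent $\pIC{\overline{\sigma+C_\chi}}$ is mapped to $\pIC{\overline{\calO}_\chi}$. Indeed, by construction $\calO_\chi$ is the $G$-orbit of $\sigma+v$ for $v\in C_\chi$, and since $M=C_G(\sigma)$, Proposition \ref{proj} applied to the triple $(M,Q,\sigma+C_\chi)$ shows that the first-coordinate projection $\mathbb{X}_{M,Q,\sigma+C_\chi}\rightarrow\overline{\calO}_\chi$ is an isomorphism. Via Lemma \ref{ind}, this identifies ${\rm Ind}_{\mathfrak{m}\subset\mathfrak{q}}^{\g}(\pIC{\overline{\sigma+C_\chi}})$ with $\pIC{\overline{\calO}_\chi}$, whence the claimed decomposition. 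The vanishing $A_\chi=0$ whenever $\calO_\chi\not\subset\overline{\calO}$ is immediate from the fact that $p_*(\pIC{\mathbb{X}_{L,P,\Sigma}})$ is supported on the image $\overline{\calO}$ of $p$.

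Finally, for the top multiplicity $A_{\chi_o}$: the orbit $\calO_{\chi_o}=\calO$ corresponds under $\mathfrak{C}_M$ to a specific irreducible character $\chi_o$ of $W_M$, and on the open dense stratum of $\overline{\calO}$ the complex $p_*(\pIC{\mathbb{X}_{L,P,\Sigma}})$ restricts to the constant sheaf with fiber $A_{\chi_o}$. A direct analysis of the fibers of $p$ over this stratum (they are in bijection with cosets of $W_M(L,\Sigma)=W_M(L,C)$ via the transitive action on the flags parametrized by $\mathbb{X}_{L,P,\Sigma}$, see \S\ref{georesol}) identifies $A_{\chi_o}$ as the trivial character of $W_M(L,C)$: the rank is $1$ because over a generic point of $\calO$ the fiber is a single orbit under the stabilizer, and the action of $W_M(L,C)$ permutes the one-dimensional cohomology of this fiber trivially. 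The main obstacle is this last step — carefully tracing through which representation of $W_M(L,C)$ shows up on $A_{\chi_o}$ — and one way to finesse it is to reduce to the case $\sigma=0$ (pure nilpotent case handled by Borho-MacPherson) by the transitivity argument above, since the inner induction preserves the top component and its $W_M(L,C)$-action verbatim.
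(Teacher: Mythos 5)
Your overall architecture (reduce to $M=C_G(\sigma)$ by Lemma \ref{ind}, translation by the central element $\sigma$ and transitivity, Proposition \ref{transitivity}; then push forward by ${\rm Ind}_{\mathfrak{m}\subset\mathfrak{q}}^{\g}$, which by Lemma \ref{ind} and the last assertion of Proposition \ref{proj} sends each $\pIC{\overline{\sigma+C_\chi}}$ isomorphically to $\pIC{\overline{\calO}_\chi}$) is a sensible route; note however that the paper gives no proof at all of this statement, citing \cite{BM} for the regular nilpotent case and \cite{LuCV} for the general case. The first genuine gap is in your inner step: the decomposition of ${\rm Ind}_{\mathfrak{l}\subset\mathfrak{p}\cap\mathfrak{m}}^{\mathfrak{m}}\bigl(\pIC{\overline{C}}\bigr)$ with multiplicity spaces ${\rm Hom}_{W_M}\bigl({\rm Ind}_{W_L}^{W_M}(V_C),V_\chi\bigr)$ for an \emph{arbitrary} nilpotent orbit $C$ of $\mathfrak{l}$ is not in Borho--MacPherson, who treat only the regular nilpotent case; for general $C$ this is exactly the statement the paper attributes to Lusztig, so at the crux of your argument you are quoting (the nilpotent case of) the proposition rather than proving it. To make it self-contained you would need the endomorphism-algebra argument: induct the full Springer sheaf of $\mathfrak{l}$ in stages from a maximal torus, use ${\rm End}\bigl({\rm Ind}_{\mathfrak{t}}^{\mathfrak{m}}(\underline{\kappa}_0)\bigr)\simeq\kappa[W_M]$ compatibly with induction, and extract the $V_C$-isotypic component via Frobenius reciprocity --- which is precisely Lusztig's machinery.

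The second gap is the last step. Your description of the fibres is incorrect: by Proposition \ref{proj} the map $p$ restricts to an isomorphism $p^{-1}(\calO)\simeq\calO$, so the fibre over a point of the open stratum is a single point, not a set of $W_M(L,C)$-cosets; and "the action permutes the one-dimensional cohomology of this fibre trivially" is an assertion, not an argument. More importantly, the claim that $A_{\chi_o}$ is the \emph{trivial} character must be checked against the definition of the $W_M(L,C)$-action actually used here, which is the combinatorial one of \S\ref{prefini}--\S\ref{LR} (introduced in \S\ref{actionW} right after the proposition), namely the extension of the $W_L$-module $V_C$ to the wreath product $N_{W_M}(W_L,\rho)$. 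With that definition the statement follows from a computation you never make: $\dim A_{\chi_o}=1$ because the Littlewood--Richardson coefficient attached to the sum of the constituent partitions equals $1$, and for every $v\in W_M(L,C)$ the trace on $A_{\chi_o}$ is the twisted coefficient $c_\omega^{\omega^+}=1$ (Proposition \ref{twistedLRTR} plus a leading-monomial argument), so the character is trivial. Reducing to $\sigma=0$ does not finesse this, both because the non-regular nilpotent case is not covered by \cite{BM} and because the identification of a geometric action on fibres with the combinatorial action on $A_{\chi_o}$ would still have to be supplied.
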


 If $\calO$ is regular nilpotent, $L=T$ and if $\Sigma=\{0\}$, then this is the classical Springer correspondence.

The group $W_M(L,C)$ is naturally isomorphic to $W_{W_M}(W_L,\rho)$. As shown in \S \ref{LR}, the action of $W_L$ on $V_C$ can be extended to an action of $N_{W_M}(W_L,\rho)$ on $V_C$. By \S \ref{prefini} it gives a structure of $W_M(L,C)$-module on each $A_\chi$ and so by Proposition \ref{Springer} we have an action of $W_M(L,C)$ on $p_*\left(\pIC {\mathbb{X}_{L,P,\Sigma}}\right)$.

\begin{remark} It is also possible to define an action of $W_M(L,C)$ on $p_*\left(\pIC {\mathbb{X}_{L,P,\Sigma}}\right)$ using the approach in Bohro and MacPherson \cite{BM} by considering partial simultaneous resolutions. \end{remark}

To alleviate the notation put $K:=p_*\left(\pIC {\mathbb{X}_{L,P,\Sigma}}\right)$ and $K_\chi:=A_\chi\otimes\pIC {\overline{\calO}_\chi}$. Assume now that $(M,Q,L,P,\Sigma)$ is $F$-stable and let $F:\mathbb{X}_{L,P,\Sigma}\rightarrow \mathbb{X}_{L,P,\Sigma}$ be the Frobenius given by $F(X,gP)=(F(x),F(g)P)$. Then the morphism $f$ commutes with the Frobenius endomorphisms. Let $\varphi:F^*(\kappa)\simeq\kappa$ be the isomorphism (in the category of sheaves on $\mathbb{X}_{L,P,\Sigma}^o$) which induces the identity on stalks at $\mathbb{F}_q$-points. It induces a canonical isomorphism $F^*\left(\pIC {\mathbb{X}_{L,P,\Sigma}}\right)\simeq \pIC {\mathbb{X}_{L,P,\Sigma}}$ which in turns induces a canonical isomorphism $\tilde{\varphi}:F^*(K)\simeq K$. Note that the orbits $\calO_\chi$ are $F$-stable and $F$ acts trivially on $W_M$. Hence $F^*(K_\chi)\simeq K_\chi$ and so $\tilde{\varphi}$ induces an isomorphism $\tilde{\varphi}_\chi:F^*(K_\chi)\simeq K_\chi$ for each $\chi$. Now we define an isomorphism $\phi_\chi:F^*(\pIC {\overline{\calO}_\chi})\simeq \pIC {\overline{\calO}_\chi}$ with the requirement that its tensor product with the identity on $A_\chi$ gives $\tilde{\varphi}_\chi$.

We then have $${\bf X}_{\pIC {\overline{\calO}_\chi},\phi_\chi}=q^{\frac{1}{2}\left({\rm dim}\,\calO-{\rm dim}\,\calO_\chi\right)}{\bf X}_{\pIC {\overline{\calO}_\chi}}.$$

Since the $A_\chi$ are $W_M(L,C)$-modules, each $v\in W_M(L,C)$ induces an isomorphism $K_\chi\simeq K_\chi$ and so an isomorphism $\theta_v:K\simeq K$ such that $${\bf X}_{K,\theta_v\circ\tilde{\varphi}}=\sum_\chi{\rm Tr}\,\left(v\,|\, A_\chi\right)\,q^{{\frac{1}{2}({\rm dim}\, \calO-{\rm dim}\,\calO_\chi)}}\,{\bf X}_{\pIC {\overline{\calO}_\chi}}.$$

\subsection{Deligne-Lusztig induction and Fourier transforms}

Here we recall the definition of Deligne-Lusztig induction both in the group setting (which is now standard \cite{DLu}) and in the Lie algebra setting \cite{letellier1}. We then recall the commutation formula between Fourier transforms  and Deligne-Lusztig induction (in the Lie algebra case) which is the main result of \cite{letellier}. This commutation formula is an essential ingredient in the proof of the main theorem of the paper. Although the theory is available for any connected reductive algebraic groups we keep our assumption $G=\GL_n(\overline{\F}_q)$.

For any subset $Y$ of $X$, we denote by $1_Y$ the function $X\rightarrow\kappa$ that takes the value $1$ on $Y$ and the value $0$ elsewhere.

\subsubsection{Generalized induction}\label{generalized} Let $H$ and $K$ be two finite groups and let $M$ be a finite dimensional $\K$-vector space. We say that $M$ is an \emph{$H$-module-$K$} if it is a left $\kappa[H]$-module and a right $\kappa[K]$-module such that $(a\cdot x)\cdot b=a\cdot(x\cdot b)$ for any $a \in\kappa[H]$, $b\in\kappa[K]$ and $x\in M$. Then $M$ defines a functor from the category of finite dimensional left $\kappa[K]$-modules to the category of finite dimensional left $\kappa[H]$-modules by $V\mapsto M\otimes_{\kappa[K]}V$. This functor induces an obvious  $\kappa$-linear map $R_K^H:\calC(K)\rightarrow\calC(H)$ on $\kappa$-vector spaces of class functions.

The approach of generalized induction with bi-modules is due to Brou\'e. We have the following formula \cite[4.5]{DM}.

\begin{proposition}\label{trace} Let $f\in\calC(K)$ and $g\in H$, then 

$$R_K^H(f)(g)=|K|^{-1}\sum_{k\in K}{\rm Trace}\,\left(\left.\left(g,k^{-1}\right)\,\right|\, M\right) f(k).$$
\end{proposition}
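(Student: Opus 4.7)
The plan is to reduce the formula to the tautological case of a character-valued $f$ and then realize $M\otimes_{\kappa[K]}V$ as the image of an averaging projector on $M\otimes V$.

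First, I would observe that both sides of the claimed identity are $\kappa$-linear in $f\in\calC(K)$. Since the characters of finite-dimensional $\kappa[K]$-modules span $\calC(K)$, it is enough to verify the identity for $f=\chi_V$ the character of some finite-dimensional left $\kappa[K]$-module $V$. By the very definition of the functor attached to $M$, the class function $R_K^H(\chi_V)\in\calC(H)$ is the character of the left $\kappa[H]$-module $M\otimes_{\kappa[K]}V$, so I must show
\[
\mathrm{Trace}\bigl(g\,\bigl|\,M\otimes_{\kappa[K]}V\bigr)=|K|^{-1}\sum_{k\in K}\mathrm{Trace}\bigl((g,k^{-1})\,\bigl|\,M\bigr)\,\chi_V(k).
\]

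Next I would rewrite the tensor product over $\kappa[K]$ as the image of a projector. Let $K$ act on $M\otimes V$ by $k\cdot(m\otimes v)=(m\cdot k^{-1})\otimes(k\cdot v)$ and set $p:=|K|^{-1}\sum_{k\in K}k\in\mathrm{End}_\kappa(M\otimes V)$. Because we are in characteristic zero (or at any rate the group order is invertible), $p$ is an idempotent whose image is exactly the space of $K$-coinvariants of $M\otimes V$, which canonically coincides with $M\otimes_{\kappa[K]}V$. The left $H$-action on $M$, extended by the identity on $V$, commutes with $p$, so the endomorphism induced by $g$ on $M\otimes_{\kappa[K]}V$ is the restriction of $g\cdot p$ to the image of $p$. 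The standard trace-of-a-projector identity gives
\[
\mathrm{Trace}\bigl(g\,\bigl|\,M\otimes_{\kappa[K]}V\bigr)=\mathrm{Trace}\bigl(g\circ p\,\bigl|\,M\otimes V\bigr)=|K|^{-1}\sum_{k\in K}\mathrm{Trace}\bigl(g\cdot k\,\bigl|\,M\otimes V\bigr).
\]

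Finally, I would unwind the action of $g\cdot k$: it sends $m\otimes v$ to $(g\cdot m\cdot k^{-1})\otimes(k\cdot v)$, i.e.\ it factors as the action of $(g,k^{-1})\in H\times K$ on $M$ tensored with the action of $k$ on $V$. By multiplicativity of trace on tensor products,
\[
\mathrm{Trace}\bigl(g\cdot k\,\bigl|\,M\otimes V\bigr)=\mathrm{Trace}\bigl((g,k^{-1})\,\bigl|\,M\bigr)\,\mathrm{Trace}\bigl(k\,\bigl|\,V\bigr)=\mathrm{Trace}\bigl((g,k^{-1})\,\bigl|\,M\bigr)\,\chi_V(k),
\]
and substituting this back yields exactly the desired formula.

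There is no real obstacle here; this is a bookkeeping exercise, and the only place where care is required is matching conventions. The potentially confusing point is the appearance of $k^{-1}$ rather than $k$ in $\mathrm{Trace}((g,k^{-1})|M)$: this is dictated entirely by the convention that the bi-module structure $(h,k)\cdot m=h\cdot m\cdot k$ on $M$ combines with the rule $k\cdot v=k v$ on $V$ to realise the $\kappa[K]$-tensor product as $K$-coinvariants of the diagonal action $k\cdot(m\otimes v)=(mk^{-1})\otimes(kv)$. Once this sign is fixed consistently, the three displayed lines above assemble into a complete proof.
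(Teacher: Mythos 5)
Your proof is correct. The paper itself gives no argument for this proposition—it simply cites \cite[4.5]{DM}—and your reduction to characters followed by the averaging-idempotent computation is exactly the standard proof found there, so nothing is missing. The only cosmetic imprecision is that the image of $p=|K|^{-1}\sum_{k\in K}k$ is the space of $K$-invariants of $M\otimes V$ rather than the coinvariants; since $|K|$ is invertible in $\kappa$ these are canonically isomorphic compatibly with the $H$-action, so the identity $\mathrm{Trace}\bigl(g\,\bigl|\,M\otimes_{\kappa[K]}V\bigr)=\mathrm{Trace}\bigl(g\circ p\,\bigl|\,M\otimes V\bigr)$ and the rest of your computation stand as written.
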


\subsubsection{The group setting: Deligne-Lusztig induction}\label{DLgroup}

Let $L$ be an $F$-stable Levi subgroup of a parabolic subgroup $P$ of $G$ and let $V$ be the unipotent radical of $P$. Consider the Lang map $\mathcal{L}_G:G\rightarrow G$, $x\mapsto x^{-1}F(x)$. In \cite{Lufinite}, Lusztig considers the variety $\mathcal{L}_G^{-1}(V)$ which is endowed with an action of $G^F$ by left multiplication and with an action of $L^F$ by right  multiplication. These actions commutes and so make $H_c^i\left(\mathcal{L}_G^{-1}(V),\kappa\right)$ into a $G^F$-module-$L^F$. Consider the virtual $G^F$-module-$L^F$ $$H_c^*\left(\mathcal{L}_G^{-1}(V)\right)=\sum_i(-1)^iH_c^i\left(\mathcal{L}_G^{-1}(V),\kappa\right).$$The $\kappa$-linear map $R_L^G:\calC(L^F)\rightarrow\calC(G^F)$ associated with this virtual bi-module is called  \emph{Deligne-Lusztig induction}.

Let us put $$S_L^G(g,h):={\rm Trace}\,\left(\left(g,h^{-1}\right)\,\left|\,H_c^*\left(\mathcal{L}_G^{-1}(V)\right)\right. \right).$$

By Proposition \ref{trace} we have for any $f\in\calC(L^F)$

\begin{equation}R_L^G(f)(g)=|L^F|^{-1}\sum_{h\in L^F}S_L^G(g,h) f(h).\label{trace1}\end{equation}If $M$ is an $F$-stable Levi subgroup of $G$ containing $L$, we define  $R_L^M$ exaclty as above replacing the letter $G$ by the letter $M$.

Let $L_{\rm uni}$ be the subvariety of unipotent element of $L$. We now list some properties of this induction which are standard.

\begin{proposition} \label{basic}(i) $R_L^G$ does not depend on the choice of the parabolic subgroup $P$ having $L$ as  a Levi subgroup.

\noindent (ii) If $L\subset M$ is an inclusion of Levi subgroups, then $R_M^G\circ R_L^M=R_L^G$.

\noindent (iii) ${\rm Res}^G_{G_{\rm uni}}\circ R_L^G=R_L^G\circ{\rm Res}^L_{L_{\rm uni}}$ where ${\rm Res}^L_{L_{\rm uni}}:\calC(L^F)\rightarrow\calC (L^F)$ maps a function $f$ to the unipotently supported function that takes the same values as $f$ on $L_{\rm uni}^F$.
\end{proposition}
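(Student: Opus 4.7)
The plan is to prove these three standard properties of Deligne--Lusztig induction using the geometric realization via the bi-varieties $\mathcal{L}_G^{-1}(V)$, together with the Deligne--Lusztig character formula. All three facts go back to the original paper of Deligne and Lusztig; my proof would follow their arguments, specialized to $G = \GL_n$ where no subtleties from bad primes arise.

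For (ii), the key input is a geometric identity. Given $L \subset M \subset G$ and compatible parabolics $P \subset Q$ with Levi factors $L \subset M$ and unipotent radicals $V := R_u(P)$, $U := R_u(Q)$, $V_M := V \cap M$, the product decomposition $V = V_M \cdot U$ (as a variety) induces a multiplication map
$$\mu : \mathcal{L}_G^{-1}(U) \times \mathcal{L}_M^{-1}(V_M) \longrightarrow \mathcal{L}_G^{-1}(V), \qquad (x,y) \mapsto xy.$$
This map is $G^F \times L^F$-equivariant (with $M^F$ acting diagonally on the source by $m \cdot (x,y) = (xm^{-1}, my)$) and descends to an isomorphism $\mathcal{L}_G^{-1}(U) \times_{M^F} \mathcal{L}_M^{-1}(V_M) \simeq \mathcal{L}_G^{-1}(V)$. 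Applying compactly supported $\ell$-adic cohomology with the K\"unneth formula produces the identity $S_L^G(g,h) = |M^F|^{-1} \sum_{m \in M^F} S_M^G(g,m) \, S_L^M(m,h)$, and then Proposition~\ref{trace} yields $R_M^G \circ R_L^M = R_L^G$.

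For (iii), the point is to show that $S_L^G(g,h) = 0$ whenever $g \in G^F_{\rm uni}$ and $h \in L^F$ is not unipotent (and symmetrically). This is a consequence of the compatibility of Jordan decomposition with the Lang map: using the Deligne--Lusztig fixed-point trace formula, $S_L^G(g,h) = \sum_i (-1)^i {\rm Trace}((g,h^{-1}) \mid H_c^i(\mathcal{L}_G^{-1}(V)))$ can be computed via the fixed-point set of $(g,h)$ on $\mathcal{L}_G^{-1}(V)$, which consists of $x$ with $gxh = F(x)$; writing out the Jordan decompositions of both sides and projecting to the maximal torus quotient forces the semisimple part of $h$ to be $G$-conjugate to that of $g$, hence trivial when $g$ is unipotent.

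Property (i) is the subtlest and is the main obstacle. My approach would be to prove the Deligne--Lusztig character formula
$$R_L^G(f)(g) = \frac{1}{|L^F||C_G(s)^{F,\circ}|}\!\!\sum_{\{x \in G^F\,|\, x^{-1}sx \in L\}}\!\!\! Q^{C_G(s)^\circ}_{C_{xLx^{-1}}(s)^\circ}(u, 1)\, f(x^{-1}sx),$$
where $g = su$ is the Jordan decomposition and $Q$ denotes a two-variable Green function. The right-hand side is manifestly independent of $P$ because the Green functions $Q$ are defined intrinsically in terms of cohomology of unipotent varieties in centralizers of semisimple elements, with no mention of $P$. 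Deriving the character formula itself uses (ii) to reduce to the case where $s$ is central in $L$, and then a direct geometric computation of $\mathcal{L}_G^{-1}(V) \cap G_s$ via the Bruhat decomposition; for $G = \GL_n$ all the centralizers $C_G(s)^\circ$ are themselves products of general linear groups, which avoids any bad-prime pathology and makes the chain of reductions straightforward.
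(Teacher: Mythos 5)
The paper offers no proof of this proposition (it is quoted as a list of standard facts), so the only question is whether your sketches are sound. Your arguments for (ii) and (iii) are the standard ones and are essentially correct: the multiplication map does identify $\mathcal{L}_G^{-1}(U)\times_{M^F}\mathcal{L}_M^{-1}(V_M)$ with $\mathcal{L}_G^{-1}(V)$ (with $V=V_M\cdot U$), and K\"unneth plus the description of cohomology of a free quotient by a finite group gives $S_{L\subset P}^G(g,h)=|M^F|^{-1}\sum_{m\in M^F}S_{M\subset Q}^G(g,m)\,S_{L\subset P\cap M}^M(m,h)$, hence (ii); and for (iii) the Deligne--Lusztig fixed-point formula reduces the trace of $(g,h^{-1})$ to the fixed points of the pair of semisimple parts, whose fixed locus $\{x\mid x^{-1}g_sx=h_s\}\cap\mathcal{L}_G^{-1}(V)$ is empty unless $g_s$ and $h_s$ are $G$-conjugate, so $S_L^G(g,h)=0$ when $g$ is unipotent and $h_s\neq 1$, which is exactly what (iii) needs.

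For (i), however, there is a genuine gap. The two-variable Green functions appearing in the character formula are \emph{defined} as the restriction of $S_{L\subset P}^G$ to pairs of unipotent elements, i.e.\ as traces on $H_c^*(\mathcal{L}_G^{-1}(V))$ for the chosen parabolic; they are not ``defined intrinsically with no mention of $P$''. So proving the character formula only reduces the independence of $R_{L\subset P}^G$ to the independence of its restriction to unipotently supported functions, which is precisely the content of (i) -- the claimed ``manifest'' independence is circular. (Your displayed formula is also misstated for non-toral $L$: one must sum over the unipotent parts $v\in C_{xLx^{-1}}(s)^F$, with terms $Q^{C_G(s)^\circ}_{C_{xLx^{-1}}(s)^\circ}(u,v)\,f(x^{-1}sx\cdot v)$, not just $v=1$.) The honest routes are: (a) the Mackey formula, which for maximal tori is Deligne--Lusztig's inner product theorem and for general Levis of type $A$ is a theorem of Bonnaf\'e; expanding $\bigl\langle R_{L\subset P}^G(f)-R_{L\subset P'}^G(f),\,R_{L\subset P}^G(f)-R_{L\subset P'}^G(f)\bigr\rangle$ with it gives (i); or (b), closer to the spirit of this paper, use that every class function on $L^F\simeq\prod_i\GL_{n_i}(\F_{q^{d_i}})$ is uniform, i.e.\ a linear combination of Deligne--Lusztig characters $R_{T_w}^L(\theta)$ (this follows from Formula (\ref{charform1}) of Lusztig--Srinivasan together with (\ref{unipotentchar})), and then transitivity (ii) plus the torus case of independence give $R_{L\subset P}^G\bigl(R_{T_w}^L(\theta)\bigr)=R_{T_w}^G(\theta)$ for every $P$, hence (i) on all of $\calC(L^F)$. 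Note that even the torus case is not formal: it rests on the Deligne--Lusztig inner product formula, so some such input must be invoked explicitly.
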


For $w\in W_L$ we put $$Q_{T_w}^L:=R_{T_w}^L(1_1)$$ where $1_1$ denotes the function with value $1$ at $1$ and with value $0$ elsewhere. We call the function $Q_{T_w}^L$ the \emph{Green functions} of $L^F$. They are defined by Deligne and Lusztig in \cite{DLu}.

When $L=G$ in which case $W_L\simeq S_n$, the Green functions are related to the well-known Green polynomials as follows. The decomposition of $w$ as product of disjoint cycles gives a partition say $\lambda$. Then the value of $Q_{T_w}^G$ at the unipotent conjugacy class associated with the partition $\mu$ is the Green polynomial $Q^\mu_\lambda$ in the notation of \cite[III, 7]{macdonald}.

Because of Proposition \ref{basic} (iii), we may also write the function $Q_{T_w}^L$ as ${\rm Res}^L_{L_{\rm uni}}\circ R_{T_w}^L(1_{T_w})$. 

We have the following important formula \cite{DLu} due to Deligne and Lusztig.

\begin{theorem} Let $f\in\calC(T_w^F)$ and let $l\in L^F$. Then

\begin{equation}R_{T_w}^L(f)(l)=|C_L(l_s)^F|^{-1}\sum_{\{h\in L^F\,|\, l_s\in hT_wh^{-1}\}}Q_{hT_wh^{-1}}^{C_L(l_s)}(l_u)f(h^{-1}l_sh).\label{DLu1}\end{equation}where $l=l_sl_u$ is the Jordan decomposition of $l$ with $l_s$ the semisimple part and $l_u$ the unipotent part.

\label{DLu}\end{theorem}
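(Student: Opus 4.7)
The plan is to expand $R_{T_w}^L(f)(l)$ using the bi-module trace formula of Proposition \ref{trace} and then reduce the resulting cohomological trace to Green functions via a Lefschetz-type fixed point argument.

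First, I apply Proposition \ref{trace} to the virtual bimodule $H_c^*(\mathcal{L}_L^{-1}(V))$, which yields
$$R_{T_w}^L(f)(l) = \frac{1}{|T_w^F|}\sum_{t\in T_w^F} S_{T_w}^L(l,t)\, f(t),$$
where $S_{T_w}^L(l,t)=\mathrm{Tr}\bigl((l, t^{-1}) \mid H_c^*(\mathcal{L}_L^{-1}(V))\bigr)$ and the pair $(l, t^{-1})$ acts on $\mathcal{L}_L^{-1}(V)$ by $y\mapsto lyt^{-1}$. So the problem is reduced to evaluating this virtual trace for each commuting pair $(l,t)$.

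Next, using the Jordan decomposition $l = l_s l_u$, I factor $(l, t^{-1}) = (l_s, t^{-1}) \cdot (l_u, 1)$; the two factors commute, the first has finite order (semisimple), and the second is unipotent. Invoking the Deligne--Lusztig fixed point lemma---that for commuting endomorphisms with one of finite order, the trace on $H_c^*$ equals the trace of the other factor on the fixed-point variety of the finite-order one---one obtains
$$S_{T_w}^L(l, t) = \mathrm{Tr}\bigl((l_u, 1) \bigm| H_c^*\!\left(\mathcal{L}_L^{-1}(V)^{(l_s,\, t^{-1})}\right)\bigr).$$
The key geometric step is then to identify the fixed-point variety. A point $y\in\mathcal{L}_L^{-1}(V)$ is fixed by $(l_s,t^{-1})$ precisely when $y^{-1}l_s y = t$, which forces $l_s$ to be $L$-conjugate to $t\in T_w$ via $y$. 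Parametrizing solutions by those $h\in L^F$ with $h^{-1}l_sh=t$ and applying Lang's theorem to the $F$-stable connected reductive group $C_L(l_s)$, one identifies this fixed-point variety with a disjoint union, indexed by $C_L(l_s)^F$-orbits on the set of such $h$, of Lang varieties of the form $\mathcal{L}_{C_L(l_s)}^{-1}(V\cap C_L(l_s))$ (suitably translated by $h$). By the very definition of Green functions (Proposition \ref{basic}(iii) together with $Q_T^M:=R_T^M(1_1)$), the trace of the unipotent factor $l_u$ on the cohomology of each component equals $Q_{hT_wh^{-1}}^{C_L(l_s)}(l_u)$.

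Finally, I reassemble: substituting back into the sum over $t$ and repackaging by the $h$-parametrization converts $f(t)$ into $f(h^{-1}l_s h)$, and the combinatorial prefactor $|T_w^F|^{-1}$ collapses with the orbit-stabilizer count (the number of admissible $h$ producing a given $t$ equals $|C_L(l_s)^F\cap h T_w h^{-1}|$, and the fibres of $h\mapsto h^{-1}l_sh$ have size $|C_L(l_s)^F|$ by Lang) to the required factor $|C_L(l_s)^F|^{-1}$, yielding the announced formula. The main technical hurdle is the third step: one has to verify that the identification of $\mathcal{L}_L^{-1}(V)^{(l_s,t^{-1})}$ with Lang varieties for $C_L(l_s)$ is $F$-equivariant and correctly tracks how the twisted $F$-structure on $hT_wh^{-1}$ arises from conjugation by $h$---this is precisely where one uses that $C_L(l_s)$ is an $F$-stable, in general non-split, Levi subgroup of $L$ (cf.\ \S\ref{ratLevi}).
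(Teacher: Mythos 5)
The paper itself does not prove Theorem \ref{DLu}: it is quoted from Deligne and Lusztig (\cite{DLu}, Theorem 4.2), so there is no internal proof to compare against. Your outline is exactly the standard Deligne--Lusztig argument: the bimodule trace formula (Proposition \ref{trace}), the fixed-point theorem for a finite-order automorphism split into commuting parts, identification of the fixed points of $(l_s,t^{-1})$ on the Lang variety with Lang varieties for $C_L(l_s)$, and reassembly. As a strategy this is correct (it is the published proof), and in $\GL_n$ it simplifies further because centralizers of semisimple elements are connected, so for each $t\in T_w^F$ in the $L^F$-class of $l_s$ the fixed-point variety is a single Lang variety for $C_L(l_s)$ rather than a union indexed by components.

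Two details are off as written. First, the fixed-point lemma is not ``one factor of finite order'': both factors here have finite order; what the lemma requires is the decomposition $\sigma=su$ into commuting parts with $s$ of order prime to $p$ and $u$ of $p$-power order, the trace being that of $u$ on the fixed points of $s$ --- which is how you in fact apply it, so this is only a misstatement. Second, the final bookkeeping does not close as stated: since $Q_{hT_wh^{-1}}^{C_L(l_s)}=R_{hT_wh^{-1}}^{C_L(l_s)}(1_1)$ itself carries a factor $|T_w^F|^{-1}$ via Proposition \ref{trace}, the trace of $(l_u,1)$ on the cohomology of each Lang-variety component equals $|T_w^F|\,Q_{hT_wh^{-1}}^{C_L(l_s)}(l_u)$, not $Q_{hT_wh^{-1}}^{C_L(l_s)}(l_u)$; it is this $|T_w^F|$ that cancels the prefactor $|T_w^F|^{-1}$, while the factor $|C_L(l_s)^F|^{-1}$ arises solely because the fibre of $h\mapsto h^{-1}l_sh$ over a given $t$ is a full coset of $C_L(l_s)^F$. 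The asserted count of ``admissible $h$ producing a given $t$'' as $|C_L(l_s)^F\cap hT_wh^{-1}|$ is not correct and is not needed. With these two corrections your sketch is precisely the standard proof of the formula.
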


\subsubsection{The Lie algebra setting: Fourier transforms}

Fourier transforms of functions on reductive Lie algebras over finite fields were first investigated by Springer in the study of the geometry of nilpotent orbits \cite{springer}. Interesting applications in the representation theory of connected reductive groups over finite fields were then found by many authors including Kawanaka (e.g. \cite{Kawanaka}), Lusztig (e.g. \cite{Lufour}), Lehrer (e.g. \cite{Lehrer}), Waldspurger \cite{Wald} and the author himself (e.g. \cite{letellier}). 

Let us recall the definition and basic properties of Fourier transforms. The most important property of Fourier transforms will be stated in the next section \S \ref{DLla}.

We fix once for all a non-trivial additive character
$\Psi:\mathbb{F}_q\rightarrow\kappa^{\times}$
and we denote by $\mu:\mathfrak{g}\times\mathfrak{g}\rightarrow
\K$ the trace map $(a,b)\mapsto \text{Trace}
(ab)$. It is a non-degenerate $G$-invariant symmetric bilinear form
defined over $\mathbb{F}_q$. Let $\text{Fun}(\mathfrak{g}^F)$ be the $\kappa$-vector space of all functions $\mathfrak{g}^F\rightarrow \kappa$. We define the Fourier transform
$\mathcal{F}^{\mathfrak{g}}:
\text{Fun}(\mathfrak{g}^F)\rightarrow\text{Fun}(\mathfrak{g}^F)$ with
respect to $(\Psi,\mu)$ by
$$
\mathcal{F}^{\mathfrak{g}}(f)(x)=
\sum_{y\in\mathfrak{g}^F}\Psi\big(\mu(x,y)\big)\,f(y).
$$
A detailed review on properties of Fourier transforms can be found in \cite{Lehrer}. Here we just recall what we will need.

Define the convolution product $*$ on ${\rm Fun}\,(\mathfrak{g}^F)$ as $$(f*g)(x)=\sum_{y\in\mathfrak{g}^F}f(y)g(x-y)$$for all $x\in\mathfrak{g}^F$. Then for all $f,g\in {\rm Fun}\,(\mathfrak{g}^F)$, we have \cite[Proposition 3.2.1]{hausel-letellier-villegas}$$\calF^\mathfrak{g}(f*g)=\calF^\mathfrak{g}(f)\cdot\calF^\mathfrak{g}(g).$$For any $f\in{\rm Fun}\,(\mathfrak{g}^F)$ it is straightforward to check that \begin{equation}|\mathfrak{g}^F|\cdot f(0)=\sum_{x\in\mathfrak{g}^F}\calF^\mathfrak{g}(f)(x).\label{intfor}\end{equation}

\subsubsection{The Lie algebra setting: Deligne-Lusztig induction}\label{DLla}

We now review Deligne-Lusztig induction in the Lie algebra setting. Details and proofs can be found in \cite{letellier1} \cite{letellier}.

Consider $L,P,V$ as in \S \ref{DLgroup} and let $\mathfrak{l},\mathfrak{p},\mathfrak{n}$ be their respective Lie algebras.
We denote by $\calC(\mathfrak{g}^F)$ the $\kappa$-vector space of functions $\mathfrak{g}^F\rightarrow\kappa$ which are constant on adjoint orbits.

It is not clear wether there is a Lie algebra analogue of the variety $\mathcal{L}_G^{-1}(V)$. The naive guess $\mathcal{L}_\mathfrak{g}^{-1}(\mathfrak{n})$ with $\mathcal{L}_\mathfrak{g}:\mathfrak{g}\rightarrow\mathfrak{g}, x\mapsto F(x)-x$ does not give anything interesting.

However we have the following formula \cite[Lemma 12.3]{DM} obtained independently by Digne-Michel and Lusztig.

$$S_L^G(g,l)=|L^F|^{-1}\sum_{\{h\in G^F\,|\,hl_sh^{-1}=l_s\}} |C_L(l_s)^F| |C_G(l_s)^F|^{-1}S_{C_L(l_s)}^{C_G(l_s)}\left(h^{-1}g_uh,l_u\right).$$This formula reduces the computation of $S_L^G(g,l)$ to its computation at unipotent elements.

We define our $S_\mathfrak{l}^\mathfrak{g}(x,y)$ using the Lie algebra analogue of this formula as follows. Let $\mathfrak{g}_{\rm nil}$ be the variety of nilpotent elements of $\mathfrak{g}$ and let $\omega:\mathfrak{g}_{\rm nil}\rightarrow G_{\rm uni}$ be the isomorphism given by $x \mapsto x+1$. For $(x,y)\in\mathfrak{g}^F\times\mathfrak{l}^F$, we put

$$S_\mathfrak{l}^\mathfrak{g}(x,y):=|L^F|^{-1}\sum_{\{h\in G^F\,|\,hy_sh^{-1}=x_s\}} |C_L(y_s)^F| |C_G(y_s)^F|^{-1}S_{C_L(y_s)}^{C_G(y_s)}\left(h^{-1}\omega(x_n)h,\omega(y_n)\right)$$
where $x=x_s+x_n$ is the Jordan decomposition of $x$ with $x_s$ the semisimple part and $x_n$ the nilpotent part.

We define our Lie algebra version of Deligne-Lusztig induction  $R_\mathfrak{l}^\mathfrak{g}:\calC(\mathfrak{l}^F)\rightarrow\calC(\mathfrak{g}^F)$ as $$R_\mathfrak{l}^\mathfrak{g}(f)(x)=|L^F|^{-1}\sum_{y\in\mathfrak{l}^F}S_\mathfrak{l}^\mathfrak{g}(x,y)f(y).$$

This definition of $R_\mathfrak{l}^\mathfrak{g}$ works also if we replace the isomorphism $\omega$ by any $G$-equivariant isomorphism $\mathfrak{g}_{\rm nil}\simeq G_{\rm uni}$ defined over $\F_q$ (e.g. the exponential map when the characteristic is large enough). We actually prove in \cite[Remark 5.5.17]{letellier} that the definition of $R_\mathfrak{l}^\mathfrak{g}$ does not depend on the choice of such an isomorphism. 

It is also easy to prove that our induction $R_\mathfrak{l}^\mathfrak{g}$ satisfies the analogous properties in Proposition \ref{basic}, see \cite{letellier1} for details.

The Lie algebra analogue of Theorem \ref{DLu} is by definition of $R_{\mathfrak{t}_w}^\mathfrak{l}$: If $f\in\calC(\mathfrak{t}_w^F)$ and  $x\in \mathfrak{t}^F$, then 

\begin{equation}R_{\mathfrak{t}_w}^\mathfrak{l}(f)(x)=|C_L(x_s)^F|^{-1}\sum_{\{h\in L^F\,|\, x_s\in h\mathfrak{t}_wh^{-1}\}}Q_{hT_wh^{-1}}^{C_L(x_s)}\left(\omega(x_n)\right)f(h^{-1}x_s h).\label{charfordef1}\end{equation}

We will also use the following properties \cite[Proposition 3.2.24, Proposition 7.1.8]{letellier}.

\begin{proposition} Let $\calC$ be an $F$-stable  nilpotent orbit of $\mathfrak{l}$ and let $\sigma\in (z_\mathfrak{l})^F$ be such that $C_G(\sigma)=L$. Denote by $\calO^L$ the adjoint orbit $\sigma+\calC$ of $\mathfrak{l}$ and by $\calO$ the unique orbit of $\g$ which contains $\calO^L$. Then we have:

\noindent (i) $R_\mathfrak{l}^\g\left(1_{\calO^L}\right)=1_\calO$,

\noindent (ii) $R_\mathfrak{l}^\g\left({\bf X}_{\IC {\overline{\calO}^L}}\right)={\bf X}_{\IC {\overline{\calO}}}$.
\label{letellierprop}\end{proposition}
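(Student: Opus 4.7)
The plan is to prove (i) by a direct character computation using the Jordan-decomposition formula defining $S_\mathfrak{l}^\g$, and then to deduce (ii) from (i) by decomposing ${\bf X}_{\IC{\overline{\calO}^L}}$ into indicator functions of $L$-orbits and matching coefficients with ${\bf X}_{\IC{\overline{\calO}}}$. For (i), since $\sigma\in z_\mathfrak{l}$ commutes with every element of $\mathfrak{l}$, each $y\in\calO^L=\sigma+\calC$ has Jordan decomposition $y_s=\sigma$, $y_n=y-\sigma\in\calC$, so $(\calO^L)^F=\sigma+\calC^F$. Substituting into the Jordan formula for $S_\mathfrak{l}^\g$ and using $C_L(\sigma)=L$ together with the hypothesis $C_G(\sigma)=L$, so that the factor $|C_L(\sigma)^F|\cdot|C_G(\sigma)^F|^{-1}$ equals $1$ and the inner reductive pair reduces to $(L,L)$, one obtains
$$R_\mathfrak{l}^\g(1_{\calO^L})(x)=|L^F|^{-2}\sum_{n\in\calC^F}\ \sum_{\{h\in G^F\,:\,h\sigma h^{-1}=x_s\}}S_L^L\bigl(h^{-1}\omega(x_n)h,\,\omega(n)\bigr).$$
The inner sum is empty unless $x_s$ is $G^F$-conjugate to $\sigma$; by $G^F$-invariance of $R_\mathfrak{l}^\g$ I may assume $x_s=\sigma$, so the $h$-sum runs over $L^F$. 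Interchanging the two sums and using that $R_L^L$ is the identity on the class function $1_{\omega(\calC)}$ gives $\sum_{n\in\calC^F}S_L^L(g,\omega(n))=|L^F|\cdot 1_{\omega(\calC)}(g)$, after which the $L$-stability of $\omega(\calC)$ collapses the whole expression to $1_\calC(x_n)$. Hence $R_\mathfrak{l}^\g(1_{\calO^L})(x)=1$ precisely when $x_s$ is $G^F$-conjugate to $\sigma$ and the corresponding nilpotent part lies in $\calC$, i.e.\ when $x\in\calO^F$.

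For (ii), the hypothesis $C_G(\sigma)=L$ implies that $\overline{\calO}$ decomposes as the disjoint union of the $G$-orbits $\calO_{\calC'}:=G\cdot(\sigma+\calC')$ indexed by the nilpotent $L$-orbits $\calC'\subseteq\overline{\calC}$, and correspondingly $\overline{\calO}^L=\bigsqcup_{\calC'}(\sigma+\calC')$. Because $\IC{\overline{\calO}^L}$ and $\IC{\overline{\calO}}$ are equivariant and $H^1(F,L)=1$ for $L\subset\GL_n$, their characteristic functions are constant on the $F$-fixed points of each geometric orbit, and I may write
$${\bf X}_{\IC{\overline{\calO}^L}}=\sum_{\calC'}a_{\calC'}\cdot 1_{\sigma+\calC'},\qquad{\bf X}_{\IC{\overline{\calO}}}=\sum_{\calC'}b_{\calC'}\cdot 1_{\calO_{\calC'}}.$$
Applying (i) with $\calC$ replaced by each $\calC'$ gives $R_\mathfrak{l}^\g(1_{\sigma+\calC'})=1_{\calO_{\calC'}}$; therefore $R_\mathfrak{l}^\g({\bf X}_{\IC{\overline{\calO}^L}})=\sum_{\calC'}a_{\calC'}\cdot 1_{\calO_{\calC'}}$, and the proof of (ii) reduces to the coefficient equality $a_{\calC'}=b_{\calC'}$ for every $\calC'$.

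The main obstacle is precisely this coefficient matching, which says that $\IC{\overline{\calO}}$ and $\IC{\overline{\calO}^L}$ have identical stalks at $\sigma+n$, with compatible Frobenius actions, for any $n\in\calC'$. The cleanest input is Proposition~\ref{proj}: under the hypothesis $C_G(\sigma)=L$, the projection $p:\mathbb{X}_{L,P,\Sigma}\to\overline{\calO}$ is an isomorphism, and the étale-local structure of $\mathbb{X}_{L,P,\Sigma}$ near a point of $\sigma+\overline{\calC}\subset\overline{\Sigma}$ is a smooth $G/P$-bundle over $\overline{\Sigma}=\overline{\calO}^L$. This identifies the stalk of $\IC{\overline{\calO}}$ at $\sigma+n$ with that of $\IC{\overline{\calO}^L}$ at $\sigma+n$ and yields $a_{\calC'}=b_{\calC'}$. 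Equivalently, this is a Luna-type transverse slice statement: near a point with semisimple part $\sigma$, the singularity of $\overline{\calO}\subset\g$ is smoothly equivalent to that of $\overline{\calO}^L=\sigma+\overline{\calC}\subset\mathfrak{l}$, so the IC stalks with their canonical Frobenius actions coincide.
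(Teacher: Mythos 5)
Your proof is necessarily a different route from the paper's, because the paper does not actually prove Proposition \ref{letellierprop}: it quotes it from the author's monograph \cite[Propositions 3.2.24 and 7.1.8]{letellier}, where the statement is obtained inside the general theory of Deligne--Lusztig induction of invariant functions and its comparison with Lusztig's sheaf-theoretic parabolic induction. Your argument for (i) is a correct and self-contained unwinding of the definition: with $y_s=\sigma$ one has $C_L(\sigma)=C_G(\sigma)=L$, the inner pair collapses to $(L,L)$, the identity $R_L^L={\rm id}$ gives $\sum_{n\in\calC^F}S_L^L(g,\omega(n))=|L^F|\,1_{\omega(\calC)}(g)$, and connectedness of centralizers in $\GL_n$ (Lang's theorem) identifies the resulting support with $\calO^F$. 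This matches what one would do from the definitions, and I see no error there.

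Part (ii) is right in structure (decompose, apply (i) orbitwise, match IC stalk values), but two steps need tightening. First, the decomposition $\overline{\calO}=\coprod_{\calC'\subseteq\overline{\calC}}G\cdot(\sigma+\calC')$ is asserted; for $\gl_n$ it follows because the characteristic polynomial is constant on $\overline{\calO}$ (so every element has semisimple part conjugate to $\sigma$) and because the rank conditions as in Theorem \ref{CB5} show $\overline{\calO}\cap(\sigma+{\mathfrak l}_{\rm nil})=\sigma+\overline{\calC}$, i.e.\ the closure orders on nilpotent $L$-orbits and on the orbits $G\cdot(\sigma+\calC')$ correspond; this deserves a line. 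Second, and more seriously, the coefficient matching $a_{\calC'}=b_{\calC'}$ must be Frobenius-equivariant, and your primary justification via $\mathbb{X}_{L,P,\Sigma}$ does not literally work over $\F_q$: in the intended application (Theorem \ref{charfor}) $L=C_G(x_s)$ is in general a twisted Levi, so no $F$-stable parabolic with Levi factor $L$ exists and $\mathbb{X}_{L,P,\Sigma}$ need not be defined over $\F_q$, so a chain of isomorphisms through it does not control the Frobenius action on stalks. Your closing ``Luna-type slice'' sentence is the correct fix and should be promoted to the actual argument: the map $G\times(\sigma+\overline{\calC})\rightarrow\overline{\calO}$, $(g,x)\mapsto gxg^{-1}$, is defined over $\F_q$, surjective, and smooth with connected fibers (it factors through the bijective morphism $G\times_L(\sigma+\overline{\calC})\rightarrow\overline{\calO}$, an isomorphism by normality of $\overline{\calO}$, or alternatively by Proposition \ref{proj} with $M=L$); smooth pullback then identifies $\IC {\overline{\calO}}$ with $\kappa\boxtimes\IC {\overline{\calO}^L}$ on $G\times(\sigma+\overline{\calC})$ compatibly with the canonical Frobenius structures and with the degree-$0$ normalization on the open strata, which yields $a_{\calC'}=b_{\calC'}$ at $F$-fixed points and hence (ii) by linearity and (i). With these two repairs your proof is complete; its interest relative to the cited reference is that it avoids the sheaf-theoretic induction machinery entirely, at the cost of being specific to the split situation $C_G(\sigma)=L$, which is all the proposition claims.
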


Our definition of $R_\mathfrak{l}^\mathfrak{g}$ is not natural and is thus a little bit frustrating especially for other reductive groups where we do not always have an isomorphism between the nilpotent elements and the unipotent ones in small characteristics. However the following theorem \cite[Corollary 6.2.17]{letellier} shows that our definition of $R_\mathfrak{l}^\mathfrak{g}$ behaves well with Fourier transforms (which are not well-defined in the group setting).

\begin{theorem}Put $\epsilon_L=(-1)^{\F_q-{\rm rank}(L)}$. We have $$\calF^\mathfrak{g}\circ R_\mathfrak{l}^\mathfrak{g}=\epsilon_G\epsilon_L q^{{\rm dim} V}R_\mathfrak{l}^\mathfrak{g}\circ\calF^\mathfrak{l}.$$
 \label{theolet}

\end{theorem}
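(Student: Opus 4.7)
The plan is to lift the desired identity from the level of class functions to the level of perverse sheaves, where it becomes a manifestation of known compatibilities of Deligne's $\ell$-adic Fourier transform, and then descend via the sheaf–function dictionary. First, one reinterprets $R_{\mathfrak{l}}^{\mathfrak{g}}$ (which was \emph{defined} by the unnatural Jordan-decomposition formula involving $\omega:\mathfrak{g}_{\rm nil}\xrightarrow{\sim}G_{\rm uni}$) as the function attached to the geometric parabolic induction functor ${\rm Ind}_{\mathfrak{l}\subset\mathfrak{p}}^{\mathfrak{g}}$ of \S\ref{georesol}. Concretely, using the character formula \eqref{charfordef1}, it suffices to establish the identification on characteristic functions of adjoint orbits; after reducing, via the Jordan-decomposition character formula, to the case of functions with nilpotent support, the statement boils down to compatibility of Lusztig's geometric induction with the Deligne–Lusztig induction of Green functions from maximal tori, which is essentially the definition.

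Second, I would analyze the action of $\calF^{\mathfrak{g}}$ on ${\rm Ind}_{\mathfrak{l}\subset\mathfrak{p}}^{\mathfrak{g}}(K)$ via the diagram
\[
\mathfrak{l}\xleftarrow{\rho}V_1\xrightarrow{\rho'}V_2\xrightarrow{\rho''}\mathfrak{g}.
\]
Using (i) the compatibility of Deligne–Fourier with proper pushforward $(\rho'')_!$, (ii) its compatibility with smooth pullback with connected fibers up to a Tate twist, and (iii) the identification $\mathfrak{p}^{\perp}=\mathfrak{u}_{P^-}$ under the trace form $\mu$ (where $P^-$ is an opposite parabolic with the same Levi $L$), one obtains a natural isomorphism
\[
\calF^{\mathfrak{g}}\circ{\rm Ind}_{\mathfrak{l}\subset\mathfrak{p}}^{\mathfrak{g}}\;\simeq\;{\rm Ind}_{\mathfrak{l}\subset\mathfrak{p}^-}^{\mathfrak{g}}\circ\calF^{\mathfrak{l}}\;[\text{shift}](\text{Tate twist})
\]
at the level of $L$-equivariant perverse sheaves on $\mathfrak{l}$. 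The Tate twist contributes the factor $q^{\dim V}$ after passing to characteristic functions, while the shift combined with the Frobenius normalization of the sheaf-theoretic Fourier transform contributes the signs $\epsilon_G\epsilon_L$.

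Third, I would invoke the Lie algebra analogue of Proposition \ref{basic}(i) — independence of $R_{\mathfrak{l}}^{\mathfrak{g}}$ from the choice of parabolic containing $L$ as a Levi — to replace ${\rm Ind}_{\mathfrak{l}\subset\mathfrak{p}^-}^{\mathfrak{g}}$ by ${\rm Ind}_{\mathfrak{l}\subset\mathfrak{p}}^{\mathfrak{g}}$ on the right-hand side, \emph{after} translating back to functions. Combining the sheaf-level isomorphism with the tautology $\mathbf{X}_{\calF^{\mathfrak{l}}(K)}=\calF^{\mathfrak{l}}(\mathbf{X}_K)$ (Fourier–sheaf = Fourier–function on characteristic functions) and the identification of $R_\mathfrak{l}^\mathfrak{g}$ with ${\rm Ind}_{\mathfrak{l}\subset\mathfrak{p}}^\mathfrak{g}$ from Step~1 then yields the claimed formula.

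The main obstacle is Step~1: proving that the \emph{a priori} artificial definition of $R_{\mathfrak{l}}^{\mathfrak{g}}$ through $\omega$ agrees with the geometric parabolic induction at the function level, uniformly in all characteristics (where, unlike in characteristic $0$, the exponential map is unavailable and one genuinely needs to verify independence of the choice of $G$-equivariant isomorphism $\mathfrak{g}_{\rm nil}\simeq G_{\rm uni}$). Everything else is then bookkeeping: carefully tracking the $\epsilon$-signs (which arise from perverse shifts of magnitude $\F_q$-ranks) and the precise power of $q$ coming from $\dim\mathfrak{u}_P=\dim V$, and using the fact that $P$ and $P^-$ give the same induction on functions but differ by a Fourier-transversal.
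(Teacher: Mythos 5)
You should first be aware that the paper does not prove Theorem \ref{theolet} at all: it is quoted from the author's monograph (\cite[Corollary 6.2.17]{letellier}), so there is no internal proof to compare with, and your proposal has to stand on its own. Its architecture --- reinterpret $R_\mathfrak{l}^\mathfrak{g}$ as the function-level shadow of Lusztig's geometric induction ${\rm Ind}_{\mathfrak{l}\subset\mathfrak{p}}^{\mathfrak{g}}$, prove a sheaf-level commutation of the Deligne--Fourier transform with ${\rm Ind}$, then descend via the sheaf--function dictionary --- is indeed the kind of strategy used in the literature, and the sheaf-level commutation in your second step is essentially formal (base change plus the behaviour of the Fourier kernel). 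The genuine gap is in your Step 1, which you dismiss as ``essentially the definition''. Identifying the ad hoc $R_\mathfrak{l}^\mathfrak{g}$ (defined through the Jordan-decomposition formula and Deligne--Lusztig Green functions transported by $\omega$) with characteristic functions of geometrically induced complexes is the main content of the theorem. Already for $L=T$ a maximal torus and nilpotently supported functions it amounts to identifying the Deligne--Lusztig Green functions $Q_{T_w}^G$ with the Frobenius traces of Springer/intersection cohomology complexes, which is a theorem of Lusztig (\cite{Greenpoly}), not a definition; for a general Levi one needs the corresponding statement for generalized Green functions, i.e.\ a Lie algebra analogue of Lusztig's character-sheaf conjecture, which is precisely what makes the cited proof long. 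Moreover, when $L$ is not split the parabolic $P$ is not $F$-stable, so the induction diagram of \S\ref{georesol} carries no natural Frobenius equivariance; one must invoke the \emph{geometric} independence of ${\rm Ind}$ from the choice of $\mathfrak{p}$ just to put a Weil structure on the induced complex, and it is from this arithmetic twisting that the sign $\epsilon_G\epsilon_L$ arises. That sign depends on the $\F_q$-rank of $L$, hence on the Frobenius form of $L$, and cannot come from a purely geometric shift or Tate twist as you assert: over $\overline{\F}_q$ the shift and twist only account for $q^{\dim\mathfrak{u}_P}$ together with $(-1)^{\dim\mathfrak{g}-\dim\mathfrak{l}}=+1$ in type $A$.

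Two further points. Your proposed reduction ``to functions with nilpotent support'' can only be used to compare the two inductions, not the commutation statement itself, since $\calF^{\mathfrak{g}}$ does not preserve nilpotent support; as written the logic of the reduction is off. And the identification $\mathfrak{p}^{\perp}=\mathfrak{u}_{P^-}$ under the trace form is false: one has $\mathfrak{p}^{\perp}=\mathfrak{u}_P$ (and $\mathfrak{u}_P^{\perp}=\mathfrak{p}$), so the opposite parabolic need not enter the geometric statement at all. This particular slip is harmless for the final formula, because at the function level the induction is independent of the parabolic (the Lie algebra analogue of Proposition \ref{basic}(i)), but the bookkeeping in your second and third steps should be corrected accordingly.
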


This formula suggests that a more conceptual definition of $R_\mathfrak{l}^\mathfrak{g}$ should exist. In \cite{letellier3} we investigate this problem in greater details and bring a partial answer in terms of the geometry of the semi-direct product $G\ltimes\mathfrak{g}$.

It is proved by Lehrer \cite{Lehrer} that Fourier transforms commute with Harish-Chandra induction. Moreover when the parabolic $P$ is $F$-stable the induction $R_\mathfrak{l}^\mathfrak{g}$ coincides with Harish-Chandra induction (see \cite{letellier1}). Hence Lehrer's result is a particular case of the theorem. 

We also mention that when $\sigma\in\mathfrak{t}_w^F$ is regular (i.e. $C_G(\sigma)=T_w$) then it follows from Kazhdan and Springer's results \cite{Kazhdan}\cite{springer} that $\calF^\mathfrak{g}\circ R_{\mathfrak{t}_w}^\mathfrak{g}(1_\sigma)=\epsilon_G\epsilon_{T_w} q^{{\rm dim} U}R_{\mathfrak{t}_w}^\mathfrak{g}\circ\calF^{\mathfrak{t}_w}(1_\sigma)$ where $U$ is the unipotent radical of a Borel subgroup of $G$.

\subsection{Characters of finite general linear groups}\label{irrG}

The character table of $G^F$ was first computed by Green \cite{green}. In \cite{LSr}, Lusztig and Srinivasan describe it in terms of Deligne-Lusztig theory \cite{LSr}. This is done as follows.

Let $L$ be an $F$-stable Levi subgroup of $G$ and let $\varphi$ be an $F$-stable irreducible character of $W_L$. The function $\mathcal{X}_\varphi^L:L^F\rightarrow \kappa$ defined by \begin{equation}\mathcal{X}_{\varphi}^L=|W_L|^{-1}\sum_{w\in
W_L}\tilde{\varphi}\,(wF)R_{T_w}^L(1_{T_w})\label{unipotentchar}\end{equation}is an irreducible
character of $L^F$ (here $\tilde{\varphi}$ is the extension of $\varphi$ defined in \S \ref{ratLevi}). The characters $\mathcal{X}_{\varphi}^L$ are
called the \emph{unipotent characters} of $L^F$. \vspace{.2cm}

For $g\in G^F$ and $\theta\in \text{Irr}\,(L^F)$, let
${^g}\theta\in\text{Irr}\,(gL^Fg^{-1})$ be defined by
${^g}\theta(glg^{-1})=\theta(l)$. We say that a linear character
$\theta:L^F\rightarrow \kappa^{\times}$ is
\emph{regular} if it satisfies the conditions (a) and (b) of \cite[3.1]{LSr}. We denote by $\text{Irr}_{\rm reg}(L^F)$ the set
of regular linear characters of $L^F$. For
$\theta^L\in\text{Irr}_{\rm reg}(L^F)$, the virtual character
\begin{equation}\mathcal{X}:=\epsilon_G\epsilon_LR_L^G\big(\theta^L\cdot\mathcal{X}_{\varphi}^L\big)=
\epsilon_G\epsilon_L|W_L|^{-1}\sum_{w\in
W_L}\tilde{\varphi}\,(wF)R_{T_w}^G(\theta^{T_w})\label{charform1}\end{equation}
where $\theta^{T_w}:=\theta^L|_{T_w}$, is an irreducible true
character of $G^F$ and any irreducible character of $G^F$ is
obtained in this way \cite{LSr}. An irreducible character of $G^F$
is thus completely determined by the $G^F$-conjugacy class of a
datum $(L,\theta^L,\varphi)$ with $L$ an $F$-stable Levi subgroup of
$G$, $\theta^L\in\text{Irr}_{\rm reg}(L^F)$ and $\varphi\in
\text{Irr}\,\big(W_L\big)^F$. Characters associated to triples of the form $(L,\theta^L,1)$ are called \emph{semisimple}.

The characters $\epsilon_G\epsilon_{T_w}R_{T_w}^G(\theta)$ are called \emph{Deligne-Lusztig characters}.

\subsection{Fourier transforms of orbital simple perverse sheaves}

We have the Deligne-Fourier transform $\mathcal{F}^{\mathfrak{g}}:\mathcal{D}_c^b(\mathfrak{g})\rightarrow \mathcal{D}_c^b(\mathfrak{g})$ which is defined as follows. 

 We denote by $\mathbb{A}^1$ the affine line over $\K$. Let $h : \mathbb{A}^1\rightarrow \mathbb{A}^1$ be the Artin-Shreier covering defined by $h(t) = t^q-t$. Then, since $h$ is a Galois covering of
$\mathbb{A}^1$ with Galois group $\F_q$ , the sheaf $h_*(\kappa)$ is a local system on $\mathbb{A}^1$ on which
$\F_q$ acts. We denote by $\mathcal{L}_\Psi$ the subsheaf of $h_*(\kappa)$ on which $\F_q$ acts as $\Psi^{-1}$.
                                                ∼
There exists an isomorphism $\varphi_\Psi:F^*(\mathcal{L}_\Psi)\rightarrow\mathcal{L}_\Psi$ such that for any integer $i\geq 1$, we have $\mathbf{X}_{\mathcal{L}_\Psi,\varphi^{(i)}_\Psi}= \Psi\circ{\rm Trace}_{\F_{q^i}/\F_q}:\F_{q^i}\rightarrow \kappa$ (see Katz \cite[3.5.4]{Kat}).
Then for a complex $K\in \mathcal{D}_c^b(\mathfrak{g})$ we define $$\mathcal{F}^{\mathfrak{g}}(K):=(p_1)_!\big((p_2)^*(K)\otimes\mu^*(\mathcal{L}_\Psi)\big)[{\rm dim}\,\mathfrak{g}]$$where $p_1,p_2:\mathfrak{g}\times\mathfrak{g}\rightarrow\mathfrak{g}$ are the two projections. If $\varphi: F^*(K)\rightarrow K$ is an isomorphism, then it induces a natural ismorphism $\mathcal{F}(\varphi):F^*(\mathcal{F}^{\mathfrak{g}}(K))\rightarrow \mathcal{F}^{\mathfrak{g}}(K)$. Moreover,$$\mathbf{X}_{\mathcal{F}^{\mathfrak{g}}(K),\mathcal{F}(\varphi)}=(-1)^{{\rm dim}\,\mathfrak{g}}\mathcal{F}^{\mathfrak{g}}\big(\mathbf{X}_{K,\varphi}\big).$$

We will need to compute the characteristic functions of the perverse sheaves  $\mathcal{F}^{\mathfrak{g}}\big(\pIC {\overline{\calO}}\big)$, where $\calO$ an $F$-stable adjoint orbit of $\mathfrak{g}$. It is known by results of Lusztig that these perverse sheaves are closely related to the character sheaves on $G$ \cite{Lufour} and that the characteristic functions of character sheaves on $G$ give the irreducible characters of $G^F$ \cite{Greenpoly}. We thus expect to have a tight connection between the characteristic functions of the sheaves $\mathcal{F}^{\mathfrak{g}}\big(\pIC {\overline{\calO}}\big)$ on $\g$ and the irreducible characters of $G^F$.

 More precisely, let $x\in\calO^F$ and put $L=C_G(x_s)$. Let $\varphi$ be the $F$-stable irreducible character of $W_L$ that corresponds to the nilpotent orbit $\calO_{x_n}^L$ of $\mathfrak{l}={\rm Lie}\,(L)$ via the Springer correspondence $\mathfrak{C}_\epsilon$.

\begin{theorem} We have \begin{equation}\mathcal{F}^{\mathfrak{g}}\big(\mathbf{X}_{\IC {\overline{\calO}}}\big)=\epsilon_G\epsilon_Lq^{\frac{1}{2}{\rm dim}\,\calO}|W_L|^{-1}\sum_{w\in
W_L}\tilde{\varphi}(wF)\mathcal{R}_{\mathfrak{t}_w}^{\mathfrak{g}}(\eta^{\mathfrak{t}_w})\label{charforla}\end{equation}where $\eta^{\mathfrak{t}_w}:\mathfrak{t}_w^F\rightarrow \kappa$ is the character $z\mapsto \Psi\big(\mu(x_s,z)\big)$. \label{charfor}
 \end{theorem}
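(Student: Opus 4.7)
\medskip

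\noindent\textbf{Proof proposal.}

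The plan is to reduce the statement to the nilpotent case inside $\mathfrak{l}$ by peeling off the semisimple part of $x$, and then invoke the Springer-Lusztig identification of Green functions with Fourier transforms of IC sheaves on nilpotent orbits. Write $x = x_s + x_n$ with $L = C_G(x_s)$, put $\calC := L\cdot x_n \subset \mathfrak{l}_{\rm nil}$, and let $\calO^L$ denote the $L$-orbit of $x$ in $\mathfrak{l}$, so that $\calO^L = x_s + \calC$.

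First, I would apply Proposition \ref{letellierprop}(ii) to get ${\bf X}_{\IC {\overline{\calO}}} = R_\mathfrak{l}^{\mathfrak{g}}({\bf X}_{\IC {\overline{\calO^L}}})$, and then use Theorem \ref{theolet} to commute $\calF^{\mathfrak{g}}$ past $R_\mathfrak{l}^{\mathfrak{g}}$, obtaining
$$\calF^{\mathfrak{g}}\big({\bf X}_{\IC{\overline{\calO}}}\big)=\epsilon_G\epsilon_L\,q^{\dim\mathfrak{u}_P}\,R_\mathfrak{l}^{\mathfrak{g}}\!\left(\calF^{\mathfrak{l}}\big({\bf X}_{\IC{\overline{\calO^L}}}\big)\right).$$
Since $\calO^L$ is the translate of $\calC$ by the central element $x_s\in z_\mathfrak{l}$, the standard translation-to-character property of Fourier transforms gives $\calF^{\mathfrak{l}}({\bf X}_{\IC{\overline{\calO^L}}}) = \eta^{\mathfrak{l}}\cdot\calF^{\mathfrak{l}}({\bf X}_{\IC{\overline{\calC}}})$ where $\eta^{\mathfrak{l}}(z):=\Psi(\mu(x_s,z))$.

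Next, I would simplify the right-hand side of the theorem. By transitivity of $R$ we may write $R_{\mathfrak{t}_w}^{\mathfrak{g}} = R_\mathfrak{l}^{\mathfrak{g}}\circ R_{\mathfrak{t}_w}^{\mathfrak{l}}$. A short calculation from Formula (\ref{charfordef1}) using that $x_s\in z_\mathfrak{l}$ (so $\mu(x_s,y_n)=0$ for any nilpotent $y_n\in\mathfrak{l}$ and $\mu(x_s,h^{-1}y_sh)=\mu(x_s,y_s)$ for $h\in L^F$) gives the key identity
$$R_{\mathfrak{t}_w}^{\mathfrak{l}}\!\left(\eta^{\mathfrak{t}_w}\right)=\eta^{\mathfrak{l}}\cdot R_{\mathfrak{t}_w}^{\mathfrak{l}}(1_{\mathfrak{t}_w}).$$
Pulling $\eta^{\mathfrak{l}}$ out through $R_\mathfrak{l}^{\mathfrak{g}}$ (it is a class function on $\mathfrak{l}^F$, and the above identity lifts it across), and using $\dim\calO = 2\dim\mathfrak{u}_P + \dim\calC$, the theorem is reduced to proving the purely local identity in $\mathcal{C}(\mathfrak{l}^F)$:
\begin{equation}\calF^{\mathfrak{l}}\big({\bf X}_{\IC{\overline{\calC}}}\big)=q^{\frac{1}{2}\dim\calC}|W_L|^{-1}\sum_{w\in W_L}\tilde\varphi(wF)\,R_{\mathfrak{t}_w}^{\mathfrak{l}}(1_{\mathfrak{t}_w}).\label{keylem}\end{equation}

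The main obstacle is proving (\ref{keylem}), which is the nilpotent-orbit case of the theorem. Since $L$ is a product of general linear factors, $\mathfrak{l}_{\rm nil}$ has only finitely many orbits parametrized (via $\mathfrak{C}_\epsilon$) by $\widehat{W}_L$, and (\ref{keylem}) is essentially equivalent to Kazhdan's theorem expressing Green functions of $\mathrm{GL}$-type Lie algebras as $q$-weighted multiplicities of irreducible $W$-representations in the Springer sheaf. Concretely, one inverts the Lusztig-Shoji style formula
$$Q_{T_w}^L(\omega(u))=\sum_{\chi\in\widehat{W}_L}\tilde\chi(wF)\,q^{\frac{1}{2}\dim\mathfrak{C}_\epsilon(\chi)}\,{\bf X}_{\calF^{\mathfrak{l}}(\IC{\overline{\mathfrak{C}_\epsilon(\chi)}})}(u)^{\vee}$$
by orthogonality of the twisted characters $w\mapsto\tilde\chi(wF)$ on $F$-conjugacy classes, isolating the $\varphi$-isotypic component. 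This identity is part of the Springer theory for Lie algebras developed in the author's earlier monograph \cite{letellier}; the normalization constant $q^{\frac{1}{2}\dim\calC}$ comes from the purity shift that distinguishes $\IC$ from $\pIC$ on a $\frac{1}{2}\dim\calC$-codimensional closed subvariety, combined with $\epsilon_L\epsilon_{T_w}$ signs that are already absorbed into $R_{\mathfrak{t}_w}^{\mathfrak{l}}$. Once (\ref{keylem}) is in hand the assembly is immediate.
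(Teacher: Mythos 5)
Your reduction is sound and is in fact the same as the paper's own first half: you invoke Proposition \ref{letellierprop}(ii) to write ${\bf X}_{\IC {\overline{\calO}}}=R_{\mathfrak{l}}^{\g}\big({\bf X}_{\IC {\overline{\calO}^L}}\big)$, commute $\calF^{\g}$ past $R_{\mathfrak{l}}^{\g}$ by Theorem \ref{theolet}, use $\mu(x_s,y_n)=0$ and the centrality of $x_s$ in $\mathfrak{l}$ to trade $\eta^{\mathfrak{t}_w}$ for $\eta^{\mathfrak{l}}$, and the count $\dim\calO=2\dim\mathfrak{u}_P+\dim\calC$ is correct. The identity you isolate as the remaining key lemma, $\calF^{\mathfrak{l}}\big({\bf X}_{\IC {\overline{\calC}}}\big)=q^{\frac{1}{2}\dim\calC}\,|W_L|^{-1}\sum_{w\in W_L}\tilde\varphi(wF)\,R_{\mathfrak{t}_w}^{\mathfrak{l}}(1_{\mathfrak{t}_w})$, is exactly what the paper's proof establishes en route. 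The problem is that your proposed proof of this key lemma does not work as described.

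The formula you propose to invert is misstated (the ``$\vee$'' is undefined, and the Fourier transform sits on the wrong side), but the more fundamental issue is structural: any ``Lusztig--Shoji style'' identity involving $Q_{T_w}^L(\omega(u))$ only speaks about values at nilpotent elements $u$, so inverting it by orthogonality in $w$ can at best determine $\calF^{\mathfrak{l}}\big({\bf X}_{\IC {\overline{\calC}}}\big)$ on $\mathfrak{l}^F_{\rm nil}$, whereas the key lemma is an equality of class functions on all of $\mathfrak{l}^F$ (neither side is nilpotently supported). What is missing is a second application of Theorem \ref{theolet}, this time for the tori $\mathfrak{t}_w\subset\mathfrak{l}$. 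The paper first uses Lusztig's theorem \cite{Greenpoly}, that ${\rm Res}^L_{L_{\rm uni}}\big(\mathcal{X}^L_{\varphi'}\big)=q^{\delta}\,{\bf X}_{\IC {\overline{C}^L}}$ with $\delta=\frac{1}{2}\big(\dim C_L(x_n)-\dim T\big)$, together with (\ref{unipotentchar}), to get the nilpotently supported identity ${\bf X}_{\IC {\overline{\calC}}}=q^{-\delta}|W_L|^{-1}\sum_w\tilde\varphi'(wF)\,R_{\mathfrak{t}_w}^{\mathfrak{l}}(1_0)$, and only then applies $\calF^{\mathfrak{l}}$ and the commutation theorem term by term, using $\calF^{\mathfrak{t}_w}(1_0)=1_{\mathfrak{t}_w}$. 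This second commutation is precisely what produces the factor $q^{\frac{1}{2}(\dim L-\dim T)}$ (which combines with $q^{-\delta}$ to give $q^{\frac{1}{2}\dim\calC}$) and the signs $\epsilon_L\epsilon_{T_w}$ that convert $\varphi'=\varphi\otimes\epsilon$ back into $\varphi$; your claim that these signs are ``already absorbed into $R_{\mathfrak{t}_w}^{\mathfrak{l}}$'' is not true and conceals exactly the step you have skipped. Finally, the relevant input is Lusztig's Green-polynomial result rather than Kazhdan's theorem: Kazhdan's result (cited in the paper only in passing) concerns the regular semisimple case and does not give the statement you need.
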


\begin{remark} Note that Formula (\ref{charform1}) is similar to Formula (\ref{charforla}). It shows that $\mathcal{F}^{\mathfrak{g}}\big(\mathbf{X}_{\IC {\overline{\calO}}}\big)$ arises from the $G^F$-conjugacy class of a triple $(\mathfrak{l},\eta^{\mathfrak{l}},\varphi)$ with $\eta^{\mathfrak{l}}:\mathfrak{l}^F\rightarrow\kappa^{\times}, z\mapsto \Psi\big(\mu(x_s,z)\big)$ exactly as in the group setting.
 \label{rem5}\end{remark}

\begin{proof}[Proof of Theorem \ref{charfor}] Let $\calO^L$ be the $L$-orbit of $x$ in $\mathfrak{l}:={\rm Lie}\,(L)$. Then $\overline{\calO}{^L}$ decomposes as $x_s+\overline{\calO}{^L_{x_n}}$ where $\calO_{x_n}^L$ denotes the $L$-orbit of $x_n$ in $\mathfrak{l}$. Then \begin{equation*}\mathbf{X}_{\IC {\overline{\calO}{^L}}}=1_{x_s}*\mathbf{X}_{\IC {\overline{\calO}{^L_{x_n}}}}.\end{equation*}By Proposition \ref{letellierprop}, we have \begin{equation}
\mathbf{X}_{\IC {\overline{\calO}}}=\mathcal{R}_\mathfrak{l}^{\mathfrak{g}}\big(\mathbf{X}_{\IC {\overline{\calO}{^L}}}\big)\label{induc}\end{equation}
Hence from the commutation formula in Theorem \ref{theolet} we have \begin{align*}\mathcal{F}^{\mathfrak{g}}\big(\mathbf{X}_{\IC {\overline{\calO}}}\big)&=\epsilon_G\epsilon_Lq^{\frac{1}{2}({\rm dim}\,G-{\rm dim}\,L)}\,\mathcal{R}_\mathfrak{l}^{\mathfrak{g}}\circ\mathcal{F}^{\mathfrak{l}}\big(\mathbf{X}_{\IC {\overline{\calO}{^L}}}\big)\\ 
&=\epsilon_G\epsilon_Lq^{\frac{1}{2}({\rm dim}\,G-{\rm dim}\,L)}\,\mathcal{R}_\mathfrak{l}^{\mathfrak{g}}\left(\mathcal{F}^{\mathfrak{l}}\big(1_{x_s}\big)\cdot\mathcal{F}^{\mathfrak{l}}\big(\mathbf{X}_{\IC {\overline{\calO}{^L_{x_n}}}}\big)\right)
\end{align*}
We also have: \begin{equation}\mathbf{X}_{\IC {\overline{\calO}{^L_{x_n}}}}=q^{-\delta}|W_L|^{-1}\sum_{w\in
W_L}\tilde{\varphi}'(wF)\mathcal{R}_{\mathfrak{t}_w}^{\mathfrak{l}}(1_0)\label{ic}\end{equation}where $\delta=\frac{1}{2}\big({\rm dim}\,C_L(x_n)-{\rm dim}\,T\big)$. 

Indeed, by Formula (\ref{charfordef1}) the function $\mathcal{R}_{\mathfrak{t}_w}^{\mathfrak{l}}(1_0)$ corresponds to the Green function $Q_{T_w}^L$ via the isomorphism $\omega:\mathfrak{l}_{\rm nil}\simeq L_{\rm uni}$. Moreover if we put $C^L=\omega\big(\calO_{x_n}^L\big)$, then by Lusztig \cite{Greenpoly}, we have ${\rm Res}^L_{L_{\rm uni}}\left(\mathcal{X}_{\varphi'}^L\right)=q^{\delta}\,\mathbf{X}_{\IC {\overline{C}^L}}$ where $\mathcal{X}_{\varphi'}^L$ is the unipotent character of $L^F$ associated to $\varphi'$. Hence Formula (\ref{ic}) is obtained from Formula (\ref{unipotentchar}) via the isomorphism $\omega$.

We now deduce from Formula (\ref{ic}) and Theorem \ref{theolet} that $$\mathcal{F}^{\mathfrak{l}}\big(\mathbf{X}_{\IC {\overline{\calO}{^L_{x_n}}}}\big)=q^{-\delta}|W_L|^{-1}\sum_{w\in
W_L}\tilde{\varphi}'(wF)\epsilon_L\epsilon_{T_w}q^{\frac{1}{2}({\rm dim}\,L-{\rm dim}\,T_w)}\mathcal{R}_{\mathfrak{t}_w}^{\mathfrak{l}}(1_{\mathfrak{t}_w})$$Since $x_s$ is central in $\mathfrak{l}$, we deduce that $$\mathcal{F}^{\mathfrak{l}}\big(1_{x_s}\big)\cdot\mathcal{F}^{\mathfrak{l}}\big(\mathbf{X}_{\IC {\overline{\calO}{^L_{x_n}}}}\big)=q^{-\delta}|W_L|^{-1}\sum_{w\in
W_L}\tilde{\varphi}'(wF)\epsilon_L\epsilon_{T_w}q^{\frac{1}{2}({\rm dim}\,L-{\rm dim}\,T_w)}\mathcal{R}_{\mathfrak{t}_w}^{\mathfrak{l}}(\theta_{x_s}^w).$$From the transitivity property of Deligne-Lusztig induction  and the fact that $C_G(x)=C_L(x_n)$ we deduce that:
$$\mathcal{F}^{\mathfrak{g}}\big(\mathbf{X}_{\IC {\overline{\calO}}}\big)=\epsilon_G\epsilon_Lq^{\frac{1}{2}{\rm dim}\,\calO}|W_L|^{-1}\sum_{w\in
W_L}\tilde{\varphi}'(wF)\epsilon_L\epsilon_{T_w}\mathcal{R}_{\mathfrak{t}_w}^{\mathfrak{g}}(\theta_{x_s}^w).$$The map $W_L\rightarrow \{1,-1\}$, $w\mapsto \epsilon_L\epsilon_{T_w}$ is the sign character $\epsilon$ of $W_L$.
 \end{proof}

\begin{lemma} The functions $\mathcal{F}^{\mathfrak{g}}\big(\mathbf{X}_{\IC {\overline{\calO}}}\big)$ are $G^F$-invariant (i.e. constant on adjoint orbits) characters of the finite abelian group $(\mathfrak{g}^F,+)$.
 \end{lemma}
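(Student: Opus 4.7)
My plan is to establish the two assertions separately: first, that $\mathcal{F}^{\mathfrak{g}}(\mathbf{X}_{\IC{\overline{\calO}}})$ is $G^F$-invariant, and second, that as a function on the finite abelian group $(\mathfrak{g}^F,+)$ it is a true character, i.e.\ a nonnegative integer linear combination of the one-dimensional irreducible characters $\psi_a:y\mapsto\Psi(\mu(a,y))$, $a\in\mathfrak{g}^F$.

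For $G^F$-invariance, I would use that $\mathbf{X}_{\IC{\overline{\calO}}}$ is a $G^F$-invariant function on $\mathfrak{g}^F$ (since $\overline{\calO}$ is $G$-stable and $\IC{\overline{\calO}}$ is the canonical $G$-equivariant intersection cohomology complex) and that the bilinear form $\mu$ is $G$-invariant. For any $g\in G^F$, the change of variable $z\mapsto g^{-1}zg$ in the sum defining $\mathcal{F}^{\mathfrak{g}}(\mathbf{X}_{\IC{\overline{\calO}}})(gyg^{-1})=\sum_{z}\Psi(\mu(z,gyg^{-1}))\mathbf{X}_{\IC{\overline{\calO}}}(z)$ immediately gives $\mathcal{F}^{\mathfrak{g}}(\mathbf{X}_{\IC{\overline{\calO}}})(y)$.

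For the character property, I would invoke the tautological decomposition
\[
\mathcal{F}^{\mathfrak{g}}\bigl(\mathbf{X}_{\IC{\overline{\calO}}}\bigr)(y)=\sum_{a\in\mathfrak{g}^F}\mathbf{X}_{\IC{\overline{\calO}}}(a)\,\Psi\bigl(\mu(a,y)\bigr)=\sum_{a\in\mathfrak{g}^F}\mathbf{X}_{\IC{\overline{\calO}}}(a)\,\psi_a(y),
\]
which exhibits $\mathcal{F}^{\mathfrak{g}}(\mathbf{X}_{\IC{\overline{\calO}}})$ as a linear combination over the irreducible characters of $(\mathfrak{g}^F,+)$ with coefficients $\mathbf{X}_{\IC{\overline{\calO}}}(a)$. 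It therefore suffices to check that each coefficient $\mathbf{X}_{\IC{\overline{\calO}}}(a)$ is a nonnegative integer.

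The main (and only nontrivial) point is this positivity of the values of $\mathbf{X}_{\IC{\overline{\calO}}}$. I would invoke the semi-small resolution constructed in \S\ref{georesol}: with $L,P,\Sigma$ associated to $\calO$ as there, the resolution $p\tilde{\pi}:\mathbb{X}_{\hat{L},\hat{P},\{\sigma\}}\rightarrow\overline{\calO}$ is semi-small with nonsingular source, so by the Decomposition Theorem (Theorem \ref{BBDG}) together with the semi-small criterion (Proposition \ref{Lu1}) and Corollary \ref{remdecomp}, the perverse sheaf $\pIC{\overline{\calO}}$ is a direct summand of the pure complex $(p\tilde{\pi})_*(\underline{\kappa})[\dim\overline{\calO}]$. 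Consequently the stalk cohomology $\mathcal{H}^i\IC{\overline{\calO}}$ is pure of weight $i$, vanishes in odd degree, and on $\F_q$-points the Frobenius acts on $\mathcal{H}^{2i}$ as $q^i$ times the identity, yielding $\mathbf{X}_{\IC{\overline{\calO}}}(a)=\sum_i\dim\mathcal{H}^{2i}_a\IC{\overline{\calO}}\cdot q^i\in\Z_{\geq 0}$. This completes the proof; the heart of the argument is really just the purity of the IC stalks, everything else being formal from the Fourier inversion formula and the $G$-invariance of $\mu$.
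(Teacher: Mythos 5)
Your overall reduction is the same as the paper's: invariance is immediate, $\mathcal{F}^{\mathfrak{g}}$ of any non-negative combination of functions $1_C$ is a non-negative combination of the additive characters $\psi_a$, so everything hinges on showing that the values $\mathbf{X}_{\IC{\overline{\calO}}}(a)$ are non-negative integers. The gap is in how you establish that positivity. From the fact that $\pIC{\overline{\calO}}$ is a direct summand of $(p\tilde{\pi})_*(\underline{\kappa})[\dim\overline{\calO}]$ with $p\tilde{\pi}$ semi-small and proper you may conclude that $\pIC{\overline{\calO}}$ is pure \emph{as a complex} (in the sense of weights), but the word ``consequently'' in your last step is doing illegitimate work: purity of a complex does not formally imply pointwise purity of its stalks, nor vanishing of odd stalk cohomology, nor that Frobenius acts on $\mathcal{H}^{2i}_a$ literally by $q^i$ (rather than by eigenvalues of absolute value $q^i$, which alone would not give positivity of the alternating-sum trace). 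Those pointwise statements for nilpotent orbit closures in $\gl_n$ are exactly the non-trivial input that the paper imports from Lusztig's Green-polynomial paper (the identification of the local intersection cohomology with Kostka--Foulkes polynomials); they can be proved, e.g., using affine pavings of the Spaltenstein-type fibres together with an induction over strata, but they are not a formal consequence of semi-smallness plus the decomposition theorem, and your proposal supplies no such argument.

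There is a second, independent problem: you apply the resolution of \S\ref{georesol} directly to $\overline{\calO}$ over $\F_q$, but when the eigenvalues of $\calO$ are not all in $\F_q$ the Levi $\hat{L}=C_G(x_s)$ is a twisted Levi admitting no $F$-stable parabolic, so $\mathbb{X}_{\hat{L},\hat{P},\{\sigma\}}\rightarrow\overline{\calO}$ is not defined over $\F_q$ and cannot be used to control the Frobenius structure on $\IC{\overline{\calO}}$. This is precisely why the paper instead writes $\mathbf{X}_{\IC{\overline{\calO}}}=\mathcal{R}_{\mathfrak{l}}^{\mathfrak{g}}\big(1_{x_s}*\mathbf{X}_{\IC{\overline{\calO}^L_{x_n}}}\big)$ (Proposition \ref{letellierprop} and Formula (\ref{induc})): Deligne--Lusztig induction sends characteristic functions of the $L^F$-orbits $x_s+C'$ to characteristic functions of $G^F$-orbits, so positivity is reduced to the split nilpotent case over the fields $\F_{q^{d_i}}$, where Lusztig's theorem applies. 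You would need both ingredients (the reduction to the rational/nilpotent situation and the actual positivity theorem there) to close your argument.
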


\begin{proof} The functions $\mathcal{F}^{\mathfrak{g}}\big(\mathbf{X}_{\IC {\overline{\calO}}}\big)$ are clearly $G^F$-invariant. The function $\mathcal{F}^{\mathfrak{g}}(1_\calO)$ is a sum of linear characters of $\mathfrak{g}^F$ and therefore is character of $\mathfrak{g}^F$. We thus need to see that if we write $\mathbf{X}_{\IC {\overline{\calO}}}=\sum_C n_C 1_C$ as a sum over the adjoint orbits of $\mathfrak{g}^F$, then $n_C\in\N$. 
 Let us use the notation introduced in the proof of Theorem \ref{charfor}. Write $$\mathbf{X}_{\IC{\overline{\calO}{^L}}}=1_{x_s}*\mathbf{X}_{\IC {\overline{\calO}{^L_{x_n}}}}=1_{x_s}*\left(\sum_{C'}n_{C'}1_{C'}\right)=\sum_{C'}n_{C'}1_{x_s+C'}$$where the sum runs over the nilpotent $L^F$-orbits of $\mathfrak{l}^F$ (note that $x_s+C'$ is an $L^F$-orbit of $\mathfrak{l}^F$ since $x_s$ is central). By Proposition \ref{letellierprop}(i), for a nilpotent adjoint orbit of $\mathfrak{l}^F$, the function $\mathcal{R}_\mathfrak{l}^{\mathfrak{g}}(1_{x_s+C'})$ is the characteristic function of the $G^F$-orbit of an element in $x_s+C'$. By Formula (\ref{induc}) we are reduced to see that $n_{C'}\in\N$. We have $L^F\simeq \prod_i\GL_{n_i}(\F_{q^{d_i}})$ for some $n_i,d_i\in\N$, and so $\mathbf{X}_{\IC {\overline{\calO}{^L_{x_n}}}}$ is a product of functions of the form $\mathbf{X}_{\IC {\overline{\calO}_i}}$ on $\gl_{n_i}(\F_{q^{d_i}})$ where $\calO_i$ is a nilpotent orbit of $\gl_{n_i}(\overline{\F}_q)$. By Lusztig \cite{Greenpoly}, the values of the functions $\mathbf{X}_{\IC {\overline{\calO}_i}}$ are non-negative integers.
\end{proof}

\subsection{Generic characters and generic orbits}\label{gen}

Let $(L,\theta^L,\varphi)$ be a triple as in \S\ref{irrG} with $L$ an $F$-stable Levi subgroup, $\theta^L\in{\rm Irr}_{\rm reg}(L^F)$ and $\varphi\in{\rm Irr}\,(W_L)^F$ and let $\mathcal{X}$ be the associated irreducible character of $G^F$. Then we say that the $G^F$-conjugacy class of the pair $(L,\varphi)$ is the \emph{type} of $\mathcal{X}$. Similarly we define the \emph{type} of an adjoint orbit $\calO^F$ of $\mathfrak{g}^F$ as follows. Let $x\in\calO^F$ and let $M=C_G(x_s)$ and let $C^M$ be the $M$-orbit of $x_n\in\mathfrak{m}$. Then the $G^F$-conjugacy class of the pair $(M,C^M)$ is called the type of $\calO^F$.

 From the pair $(L,\varphi)$ we define $\omega=(d_1,\omega^1)(d_2,\omega^2)\cdots(d_r,\omega^r)\in\mathbf{T}_n$ as follows. There exist positive integers $d_i,n_i$ such that $L\simeq \prod_{i=1}^r\GL_{n_i}(\overline{\F}_q)^{d_i}$ and $L^F\simeq \prod_{i=1}^r\GL_{n_i}(\F_{q^{d_i}})$. The $F$-stable irreducible characters of $W_L$ correspond then to ${\rm Irr}\,(S_{n_1})\times\cdots\times {\rm Irr}\,(S_{n_r})$ and the later set is in bijection with $\calP_{n_1}\times\cdots\times\calP_{n_r}$ via Springer correspondence $\mathfrak{C}_\epsilon$ that sends the trivial character of $S_m$ to the partition $(1^m)$. If $q> n$, the set of types of irreducible characters of $G^F$ is thus parametrized by $\mathbf{T}_n$. Under this parameterisation, semisimple irreducible characters correspond to types of the form  $(d_1,(1^{n_1}))\cdots(d_r,(1^{n_r}))$ and unipotent characters to types of the form $(1,\lambda)$.

 From the pair $(M,C^M)$ we define $\tau=(d_1,\tau^1)(d_2,\tau^2)\cdots(d_r,\tau^r)\in\mathbf{T}_n$ as follows. There exist positive integers $d_i,n_i$ such that $M\simeq \prod_{i=1}^r\GL_{n_i}(\overline{\F}_q)^{d_i}$ and $M^F\simeq \prod_{i=1}^r\GL_{n_i}(\F_{q^{d_i}})$. The Jordan form of $C^M$ defines partitions $\tau^1,\dots,\tau^r$ of $n_1,\dots,n_r$ respectively. If $q\geq n$, the set of types of adjoint orbits of $\mathfrak{g}^F$ is thus parametrized by $\mathbf{T}_n$.

\begin{remark} Note that if $\calO^F$ is an orbit of $\mathfrak{g}^F$ of type $\omega=(d_1,\omega^1)\cdots(d_r,\omega^r)$, then in the sense of \S\ref{type} the $G$-orbit $\calO$ is of type  $$\tilde{\omega}:=\underbrace{\omega^1\cdots\omega^1}_{d_1}\underbrace{\omega^2\cdots\omega^2}_{d_2}\cdots\underbrace{\omega^r\cdots\omega^r}_{d_r}.$$In particular, the two notions coincide  if the eigenvalues of $\calO$ are in $\F_q$. 
\label{remgen}\end{remark}

\begin{definition} Let $\calO_1^F,\dots,\calO_k^F$ be $k$ adjoint orbits of $\mathfrak{g}^F$. We say that the tuple $(\calO_1^F,\dots,\calO_k^F)$ is \emph{generic} if $(\calO_1,\dots,\calO_k)$ is generic in the sense of Definition \ref{genorb}.
 \end{definition}
\vspace{.2cm}

Assume that $L$ is an $F$-stable Levi subgroup of $G$. We say that a linear additive character of $z_{\mathfrak{l}}^F$ is \emph{generic} if its restriction to $z_{\mathfrak{g}}^F$ is trivial and its restriction to $z_{\mathfrak{m}}^F$ is non-trivial for any proper $F$-stable Levi subgroup $M$ of $G$ which contains $L$.

Put $$(z_\mathfrak{l})_{\rm reg}:=\{x\in z_\mathfrak{l}\,|\, C_G(x)=L\}.$$

\noindent Let $\{(d_i,n_i)\}_{i=1,\dots,r}$ be pairs of positive integers such that $L\simeq \prod_{i=1}^r\left(\GL_{n_i}(\overline{\F}_q)\right)^{d_i}$ and $L^F\simeq \prod_{i=1}^r\GL_{n_i}(\F_{q^{d_i}})$.

Define $$K_L^o=\begin{cases}(-1)^{r-1}d^{r-1}\mu(d)(r-1)!&\,\,\text{  if }d_i=d\,\,\text{ for all }i.\\0&\,\, \text{ otherwise. }\end{cases}$$where $\mu$ is the ordinary M\"obius function. 

The proof of the following proposition is completely similar  to that of Proposition 4.2.1 in \cite{hausel-letellier-villegas}.

\begin{proposition} Let $\Gamma$ be a generic character of $z_\mathfrak{l}^F$. Then 

$$\sum_{z\in (z_\mathfrak{l})^F_{\rm reg}}\Gamma(z)=qK_L^o.$$

\label{gen1}\end{proposition}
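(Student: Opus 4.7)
The plan is to follow the same strategy as Proposition 4.2.1 of \cite{hausel-letellier-villegas}, namely Möbius inversion on the poset of $F$-stable Levi subgroups between $L$ and $G$, combined with the genericity hypothesis on $\Gamma$.

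First, I would partition $z_\mathfrak{l}^F$ according to the isomorphism type of the centralizer: for any $z \in z_\mathfrak{l}^F$, the centralizer $C_G(z)$ is an $F$-stable Levi subgroup $M$ with $L \subseteq M \subseteq G$, and $z \in (z_\mathfrak{m})_{\rm reg}^F$ for this $M$. This yields the decomposition $z_\mathfrak{l}^F = \bigsqcup_{M \supseteq L} (z_\mathfrak{m})_{\rm reg}^F$, and Möbius inversion in the poset of $F$-stable Levis between $L$ and $G$ gives
\[
\sum_{z \in (z_\mathfrak{l})_{\rm reg}^F} \Gamma(z) = \sum_{L \subseteq M \subseteq G} \mu(L,M)\,\sum_{z \in z_\mathfrak{m}^F} \Gamma(z),
\]
where $\mu$ denotes the Möbius function of this poset.

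Second, I would invoke the generic hypothesis on $\Gamma$: its restriction to $z_\mathfrak{m}^F$ is nontrivial for every proper Levi $M \subsetneq G$ with $M \supseteq L$, so the inner character sum vanishes for all such $M$. For $M = G$, $z_\mathfrak{g}^F = \F_q$ (the scalars) and $\Gamma$ is trivial there, so the inner sum is $q$. Hence
\[
\sum_{z \in (z_\mathfrak{l})_{\rm reg}^F} \Gamma(z) = q\cdot\mu(L,G),
\]
and it remains to identify $\mu(L,G)$ with $K_L^o$.

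Third and crucially, I would compute $\mu(L,G)$ combinatorially. Writing $V = \bigoplus_{i=1}^r \bigoplus_{j=1}^{d_i} V_{i,j}$ with $\dim V_{i,j} = n_i$ and $F$ cyclically permuting the $V_{i,j}$ within each $i$, the $F$-stable Levis $M \supseteq L$ correspond to $F$-equivariant partitions of the index set $\{(i,j)\}$. I expect the calculation to split into two contributions: a ``vertical'' one inside each single Frobenius orbit of size $d_i$, whose poset is the divisor lattice of $d_i$ contributing the classical Möbius function $\mu(d_i)$, and a ``horizontal'' one merging distinct Frobenius orbits, which is only possible when the orbits have matching size $d$ and matching $n_i$, and contributes the partition lattice Möbius value $(-1)^{r-1}(r-1)!$ on $r$ points together with a factor $d^{r-1}$ that accounts for the cyclic ``twists'' by $\mathbb{Z}/d$ when aligning one orbit with another. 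If the $d_i$'s are not all equal, merging across orbits of different sizes leaves orphan factors whose contributions cancel to produce $\mu(L,G) = 0$, matching $K_L^o = 0$. If all $d_i = d$, the two contributions combine to give exactly $(-1)^{r-1} d^{r-1}\mu(d)(r-1)! = K_L^o$.

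The main obstacle is the third step: carrying out the Möbius function computation precisely, especially showing cancellation when the $d_i$'s are not all equal, and pinning down the factor $d^{r-1}$ for the ``twisted'' horizontal mergings. A useful sanity check is the case $r=2$, $d_1=d_2=2$, $n_1=n_2=1$ (so $L = \GL_1^4 \subset \GL_4$): one enumerates the seven $F$-stable Levis $L, M_5, M_6, M_3, M_3', M_2, G$ between $L$ and $G$ and computes $\mu(L,G) = 2$, agreeing with $K_L^o = (-1)\cdot 2 \cdot(-1)\cdot 1 = 2$; similar checks confirm the vanishing when $d_1 \neq d_2$.
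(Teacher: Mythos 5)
Your proposal is correct and takes essentially the same route as the paper, whose entire proof is the remark that it is ``completely similar'' to Proposition 4.2.1 of \cite{hausel-letellier-villegas}: stratify $z_\mathfrak{l}^F$ by centralizers $C_G(z)$, use genericity to kill the character sums over $z_\mathfrak{m}^F$ for every proper $F$-stable Levi $M\supseteq L$ (leaving the contribution $q$ from $M=G$), and identify the Möbius function of the poset of $F$-stable Levis containing $L$ (equivalently, of $F$-invariant partitions of the set of blocks of $L$) with $K_L^o$. The only step you leave unfinished is that last combinatorial evaluation, which is precisely what the cited argument supplies (it is the classical computation of the Möbius function of the fixed-point partition lattice, nonzero exactly when all $d_i$ are equal), and your reduction, the expected formula, and your sanity checks (e.g.\ $\mu(L,G)=2=K_L^o$ for $r=2$, $d_1=d_2=2$, $n_1=n_2=1$, and vanishing for unequal $d_i$) are all correct.
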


For a group $H$, we denote by $Z_H$ its center.

\begin{lemma} Let $(\calO_1^F,\dots,\calO_k^F)$ be a generic tuple of adjoint orbits of $\mathfrak{g}^F$. Let $(L_i,\eta_i,\varphi_i)$ be a datum defining the character $\mathcal{F}^{\mathfrak{g}}\big(\mathbf{X}_{\IC {\overline{\calO}_i}}\big)$, see Remark \ref{rem5}. Then $\prod_{i=1}^k\big({^{g_i}}\eta_i\big)|_{z_{\mathfrak{m}}}$
is a generic character of $z_{\mathfrak{m}}^F$ for any $F$-stable Levi subgroup
$M$ of $G$ which satisfies the following condition: For all
$i\in\{1,\dots,k\}$, there exists $g_i\in G^F$ such that $Z_M$ is contained in $g_iL_ig_i^{-1}$.
\label{lemgen}\end{lemma}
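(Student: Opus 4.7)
The plan is to rewrite $\prod_{i=1}^k ({^{g_i}}\eta_i)|_{z_{\mathfrak{m}}^F}$ as a single $\Psi$-character built from the trace form $\mu$, and then to check the two defining properties of a generic character directly from Definition~\ref{genorb}. To begin I would invoke Theorem~\ref{charfor} together with Remark~\ref{rem5} to write $\eta_i$ as the linear character $u \mapsto \Psi(\mu(x_{i,s}, u))$ of $\mathfrak{l}_i^F$, where $x_{i,s}$ is the semisimple part of some chosen $x_i \in \calO_i^F$. Using the $G$-invariance of $\mu$, this yields $({^{g_i}}\eta_i)(z) = \Psi(\mu(g_i x_{i,s} g_i^{-1}, z))$, so
\begin{equation*}
\prod_{i=1}^k ({^{g_i}}\eta_i)(z) \;=\; \Psi\bigl(\mu(y, z)\bigr), \qquad y := \sum_{i=1}^k g_i x_{i,s} g_i^{-1}.
\end{equation*}
Because $Z_M \subseteq g_i L_i g_i^{-1}$, each summand $g_i x_{i,s} g_i^{-1}$ lies in $z_{g_i \mathfrak{l}_i g_i^{-1}}$ and hence commutes with $Z_M$, so $g_i x_{i,s} g_i^{-1} \in C_{\g}(Z_M) = \mathfrak{m}$. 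Since $g_i \in G^F$ and $x_{i,s}$ is $F$-fixed, we get $y \in \mathfrak{m}^F$, and the restriction of the character to $z_{\mathfrak{m}}^F$ is well-posed.

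Triviality on $z_{\g}^F$ is then immediate: a central scalar $\alpha\cdot{\rm Id}$ with $\alpha \in \F_q$ pairs with $y$ to $\alpha \sum_i \Tr(\calO_i) = 0$, which vanishes by the first clause of Definition~\ref{genorb}. The main step is to show non-triviality on $z_{\mathfrak{n}}^F$ for every proper $F$-stable Levi $N$ with $M \subseteq N \subsetneq G$. I plan to argue by contradiction: if the restriction to $z_{\mathfrak{n}}^F$ were trivial, then since $z_{\mathfrak{n}}^F$ spans $z_{\mathfrak{n}}$ over $\K$ and $\Psi$ is non-trivial, we would obtain $\mu(y, z) = 0$ for all $z \in z_{\mathfrak{n}}$. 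Writing $\K^n = \bigoplus_j V_j$ for the $Z_N$-isotypic decomposition, so that $z_{\mathfrak{n}}$ consists of matrices acting as a scalar $\alpha_j$ on each $V_j$, this forces $\Tr(y|_{V_j}) = 0$ for every $j$. However each $g_i x_{i,s} g_i^{-1} \in \mathfrak{m} \subseteq \mathfrak{n}$ preserves every $V_j$ and is a specific element of the semisimple class $\calC_i$; since $N$ is proper there exists some $V_j$ with $0 < \dim V_j < n$, and the genericity of $(\calO_1,\dots,\calO_k)$ applied to this $V_j$ yields the contradiction.

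The substantive step is this last paragraph, but it is really only a clean translation: the non-triviality of $\Psi \circ \mu(y, \cdot)$ on $z_{\mathfrak{n}}^F$ is dual, via the trace pairing, to a trace-vanishing condition on simultaneously invariant subspaces, which is exactly what genericity of the tuple of orbits forbids. The potential pitfall is to ensure that the subspaces $V_j$ on which the genericity condition is tested truly are preserved by one element from each class $\calC_i$, and this is guaranteed by the two containments $Z_M \subseteq g_i L_i g_i^{-1}$ (giving $y \in \mathfrak{m}$) and $M \subseteq N$ (giving $\mathfrak{m} \subseteq \mathfrak{n}$). No further machinery beyond Definition~\ref{genorb} and the trace-pairing form of $\eta_i$ from Theorem~\ref{charfor} is required.
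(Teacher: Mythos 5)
Your proposal is correct and follows essentially the same route as the paper's proof: you rewrite $\prod_i({^{g_i}}\eta_i)$ on $z_{\mathfrak{m}}^F$ as $z\mapsto\Psi\bigl(\mu\bigl(\sum_i g_i\sigma_ig_i^{-1},z\bigr)\bigr)$ with $\sum_i g_i\sigma_ig_i^{-1}\in\mathfrak{m}$, get triviality on $z_{\mathfrak{g}}^F$ from $\sum_i\Tr(\calO_i)=0$, and derive non-triviality on $z_{\mathfrak{n}}^F$ for proper Levis $N\supseteq M$ by blockwise trace-vanishing on the decomposition $\K^n=\bigoplus_j V_j$ attached to $N$, contradicting the subspace condition in Definition~\ref{genorb}. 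Your explicit justification that triviality on the $\F_q$-points already forces $\mu(y,\cdot)$ to vanish on all of $z_{\mathfrak{n}}$ is a welcome refinement of a step the paper leaves implicit.
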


\begin{proof} We may write $\eta_i=\mathcal{F}^{\mathfrak{l}_i}(1_{\sigma_i})$ where $\sigma_i\in z_{\mathfrak{l}_i}$ is the semisimple  part of an element of $\calO_i^F$. Note that $g_i\sigma_ig_i^{-1}$ is in the center of $g_i\mathfrak{l}_ig_i^{-1}$ and so it commutes with the elements of $z_\mathfrak{m}\subset g_i\mathfrak{l}_ig_i^{-1}$, i.e., $g_i\sigma_ig_i^{-1}\in C_\mathfrak{g}(z_\mathfrak{m})=\mathfrak{m}$.  Let $z\in z_{\mathfrak{m}}^F$. Then $$\prod_{i=1}^k\big({^{g_i}}\eta_i\big)(z)=\prod_{i=1}^k\mathcal{F}^{\mathfrak{l}_i}(1_{\sigma_i})(g_i^{-1}zg_i)=\prod_{i=1}^k\Psi\big(\mu(\sigma_i,g_i^{-1}zg_i)\big)=\prod_{i=1}^k\Psi\big(\mu(g_i\sigma_ig_i^{-1},z)\big)=\Psi\left(\mu\big(\sum_ig_i\sigma_ig_i^{-1},z\big)\right).$$If $z=\lambda.{\rm Id}\in z_{\mathfrak{g}}$, then $\mu\big(\sum_ig_i\sigma_ig_i^{-1},z\big)=\lambda\,{\rm Tr}\, \big(\sum_ig_i\sigma_ig_i^{-1}\big)=0$ by the first  genericity condition (see Definition \ref{genorb}). Let $L$ be an $F$-stable Levi subgroup such that $M\subsetneq L\subsetneq G$, i.e., such that $z_\mathfrak{g}\subsetneq z_\mathfrak{l}\subsetneq z_\mathfrak{m}$ and assume that $\prod_{i=1}^k\big({^{g_i}}\eta_i\big)|_{z_{\mathfrak{l}}}$ is trivial. There is a decomposition $\K^n=V_1\oplus V_2\oplus\cdots \oplus V_r$ such that $\mathfrak{l}\simeq \bigoplus_i \gl(V_i)$. Then any element $z\in z_\mathfrak{l}$ is a of the form $(\lambda_1.{\rm Id},\dots,\lambda_r.{\rm Id})$ for some $\lambda_1,\dots,\lambda_r\in\K$. Since $g_i\sigma_ig_i^{-1}\in \mathfrak{m}\subset\mathfrak{l}$ for all $i$, we may write $\sum_ig_i\sigma_ig_i^{-1}=(x_1,\dots,x_r)\in \gl(V_1)\oplus\cdots \oplus \gl(V_r)$. Since $\prod_{i=1}^k\big({^{g_i}}\eta_i\big)|_{z_{\mathfrak{l}}}$ is trivial we have $\sum_{i=1}^r\lambda_i{\rm Tr}\,(x_i)=0$ for all $\lambda_1,\dots,\lambda_r\in\K$. Hence ${\rm Tr}\,(x_i)=0$ for all $i=1,\dots,r$. This contradicts the second genericity assumption.\end{proof}

A linear character of $Z_L^F$ is said to be \emph{generic} if its restriction to $Z_G^F$ is trivial and its restriction to $Z_M^F$ is non-trivial for any $F$-stable proper Levi subgroup $M$ of $G$ such that $L\subset M$. 

Put $$(Z_L)_{\rm reg}:=\{x\in Z_L\,|\, C_G(x)=L\}.$$

We have the following proposition \cite[Proposition 4.2.1]{hausel-letellier-villegas}.

\begin{proposition}Let $\Gamma$ be a generic character of $Z_L^F$. Then
 
$$\sum_{z\in (Z_L)_{\rm reg}^F}\Gamma(z)=(q-1)K_L^o.$$
\label{gen2}\end{proposition}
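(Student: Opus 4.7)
The plan is to mimic the proof of Proposition \ref{gen1}, which is stated to be completely parallel to that of \cite[Proposition 4.2.1]{hausel-letellier-villegas}. The essential tool is M\"obius inversion on the poset $\mathcal{L}$ of $F$-stable Levi subgroups of $G$ containing $L$, ordered by inclusion. The only difference from the additive case is that $|Z_G^F|=q-1$ (instead of $|z_\g^F|=q$), which accounts for the change from $qK_L^o$ to $(q-1)K_L^o$.

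First I would stratify $Z_L^F$ as follows. For any $z\in Z_L^F$, its centralizer $M(z):=C_G(z)$ is an $F$-stable Levi with $L\subseteq M(z)$, and $z\in (Z_{M(z)})_{\rm reg}^F$. Conversely $(Z_M)_{\rm reg}^F\subset Z_L^F$ for every $M\in\mathcal{L}$. This gives the disjoint decomposition $Z_L^F=\bigsqcup_{M\in\mathcal{L}}(Z_M)_{\rm reg}^F$. Equivalently, $\mathbf{1}_{z\in Z_M^F}=\mathbf{1}_{M\subseteq C_G(z)}=\sum_{M\subseteq M'}\mathbf{1}_{C_G(z)=M'}$, so by M\"obius inversion in $\mathcal{L}$,
$$\sum_{z\in (Z_L)_{\rm reg}^F}\Gamma(z)=\sum_{M\in\mathcal{L}}\mu_\mathcal{L}(L,M)\sum_{z\in Z_M^F}\Gamma(z).$$

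Next I would use the orthogonality of characters of the finite abelian group $Z_M^F$: the inner sum equals $|Z_M^F|$ if $\Gamma|_{Z_M^F}$ is trivial, and vanishes otherwise. By the very definition of a generic character, $\Gamma|_{Z_M^F}$ is non-trivial for $L\subsetneq M\subsetneq G$ and is trivial for $M=G$; when $L=G$ the poset $\mathcal{L}$ reduces to $\{G\}$ and the identity is immediate. In all cases only the term $M=G$ survives and yields
$$\sum_{z\in (Z_L)_{\rm reg}^F}\Gamma(z)=(q-1)\,\mu_\mathcal{L}(L,G).$$

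The main obstacle is then the identity $\mu_\mathcal{L}(L,G)=K_L^o$. Writing $L$ as the stabilizer of a decomposition $\K^n=\bigoplus_{i=1}^r\bigoplus_{j=1}^{d_i}V_{i,j}$ with $\dim V_{i,j}=n_i$, on which $F$ acts cyclically within each $i$-block, the Levis in $\mathcal{L}$ correspond bijectively to $F$-stable set-partitions of the index set $B=\{(i,j)\}$. The poset $\mathcal{L}$ is therefore the lattice of $\langle F\rangle$-stable partitions of $B$. If the $d_i$'s are not all equal, a product decomposition of $\mathcal{L}$ over the distinct values of $d_i$ (together with the fact that a factor with non-trivial minimum contributes zero to the M\"obius function from $\hat 0$ to $\hat 1$) forces $\mu_\mathcal{L}(L,G)=0$, matching $K_L^o=0$. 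When $d_i=d$ for all $i$, the lattice factors (up to the standard reasoning for M\"obius functions of "horizontal" and "vertical" strata) into a partition lattice on the $r$ orbits, contributing $(-1)^{r-1}(r-1)!$, and the lattice of cyclically-invariant refinements of a single $r$-fold orbit of size $d$, which is isomorphic to the divisor lattice of $d$ and contributes the remaining $d^{r-1}\mu(d)$ after accounting for the $d^{r-1}$ cyclic "twists" available when aligning orbits. Combining these gives $\mu_\mathcal{L}(L,G)=(-1)^{r-1}d^{r-1}\mu(d)(r-1)!=K_L^o$, completing the proof.
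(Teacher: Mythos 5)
Your overall skeleton is sound, and it is the natural route (note that the paper itself gives no argument for this statement, deferring to \cite[Proposition 4.2.1]{hausel-letellier-villegas}): the stratification $Z_L^F=\coprod_{M}(Z_M)_{\rm reg}^F$ over the $F$-stable Levi subgroups $M\supseteq L$, M\"obius inversion in that poset $\mathcal{L}$, and orthogonality of characters of the abelian groups $Z_M^F$ correctly reduce the proposition to the single combinatorial identity $\mu_{\mathcal{L}}(L,G)=K_L^o$. One small slip on the way: you need the term $M=L$ to vanish as well (it carries $\mu_{\mathcal{L}}(L,L)=1$), i.e.\ you need $\Gamma$ nontrivial on $Z_L^F$ itself when $L\neq G$; this is covered by the paper's definition of generic with ``$L\subset M$'' read as $L\subseteq M$, but your phrase ``non-trivial for $L\subsetneq M\subsetneq G$'' omits it.

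The genuine gap is the evaluation of $\mu_{\mathcal{L}}(L,G)$. Identifying $\mathcal{L}$ with the lattice of $\sigma$-stable partitions of the block set $B$ (where $\sigma$ is the permutation induced by $F$, with $r$ orbits of sizes $d_1,\dots,d_r$) is correct, but both structural claims you then use are false. The lattice does not decompose as a product over the distinct values of $d_i$: for $L^F\cong \GL_{n_1}(\F_q)\times\GL_{n_2}(\F_{q^2})$ the poset is the three-element chain $L<M<G$ with $M^F\cong\GL_{n_1}(\F_q)\times\GL_{2n_2}(\F_q)$, which is not a product of the two one-orbit lattices; the vanishing $\mu=0$ there is not explained by any factorization. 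Nor, when all $d_i=d$, does the lattice factor into a partition lattice on the $r$ orbits times a divisor lattice: already for $r=d=2$ the fixed-point lattice has seven elements, because besides the two ``within-orbit'' merges and their join there are two diagonal atoms (matching the two orbits with or without a twist) which are incomparable to that join; one can check by hand that $\mu(\hat 0,\hat 1)=2=K_L^o$, but not by the argument you give. The identity you need, $\mu_{\mathcal{L}}(L,G)=(-1)^{r-1}d^{r-1}\mu(d)(r-1)!$ when all $d_i=d$ and $0$ otherwise, is precisely Hanlon's theorem on the M\"obius function of fixed-point partition lattices (Pacific J. Math. 96 (1981)), and it requires a real argument; equivalently one may verify directly the recursion $\sum_{M\in\mathcal{L}}K_M^o=0$ for $L\subsetneq G$ (with $K_G^o=1$), which characterizes $K^o$ as this M\"obius function and also finishes your proof by induction. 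As written, the combinatorial heart of your argument rests on structural claims about $\mathcal{L}$ that fail, so the last step must be replaced by such a citation or computation.
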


\begin{definition} Let $\mathcal{X}_1,\dots,\mathcal{X}_k$ be
$k$-irreducible characters of $G^F$. For each $i$, let
$(L_i,\theta_i,\varphi_i)$ be a datum defining $\mathcal{X}_i$. We
say that the tuple $(\mathcal{X}_1,\dots,\mathcal{X}_k)$ is
\emph{generic} if $\prod_{i=1}^k\big({^{g_i}}\theta_i\big)|_{Z_M}$
is a generic character of $Z_M^F$ for any $F$-stable Levi subgroup
$M$ of $G$ which satisfies the following condition: For all
$i\in\{1,\dots,k\}$, there exists $g_i\in G^F$ such that $Z_M\subset
g_iL_ig_i^{-1}$. \label{genericcondi}\end{definition}

\begin{example} Let $\mu^1,\dots,\mu^k$ be $k$ partitions of $n$ and denote by $R_{\mu^1},\dots, R_{\mu^k}$ the corresponding unipotent characters of $G^F$ (see beginning of this section). Consider $k$ linear characters $\alpha_1,\dots,\alpha_k$ of $\F_q^\times$. For each $i$, put $\calX_i:=(\alpha_i\circ{\rm det})\cdot R_{\mu^i}$. Then $\calX_i$ is an irreducible character of $G^F$ of same type as $R_{\mu^i}$. Then according to Definition \ref{genericcondi}, the tuple $(\calX_1,\dots,\calX_k)$ is generic if and only if the size of the subgroup of ${\rm Irr}\,\F_q^\times$ generated by $\alpha_1\cdots\alpha_k$ equals $n$.

\end{example}

Given $\omhat=(\omega_1,\dots,\omega_k)\in\big(\mathbf{T}_n\big)^k$, and assuming that ${\rm char}\,(\F_q)$ does not divide the gcd of $\{|\omega_i^j|\}_{i,j}$ and that $q$ is large enough, we can always find a generic tuple $(\mathcal{X}_1,\dots,\mathcal{X}_k)$ of irreducible characters of $G^F$ of type $\omhat$. The proof of this is similar to the proof of the existence of generic tuples of conjugacy classes of $\GL_n$ of a given type, see \cite{hausel-letellier-villegas}. 

\begin{definition} We say that an adjoint orbit of $\mathfrak{g}^F$ (or an irreducible character of $G^F$) is \emph{split} if the degrees of its type are all equal to $1$.
 
\end{definition}

\subsection{Multiplicities in tensor products}

Let $(\mathcal{X}_1,\dots,\mathcal{X}_k)$ be a generic tuple of irreducible characters of $G^F$. Assume that there exists a generic tuple $(\calO_1^F,\dots,\calO_k^F)$ of adjoint orbits of $\mathfrak{g}^F$ of same type as $(\calX_1,\dots,\calX_k)$. We put $d_\bfO=(2g-2)n^2+2+\sum_i{\rm dim}\,\calO_i$ as in Corollary \ref{lissite}.

Let $\Theta:\g^F\rightarrow\kappa$ be given by $x\mapsto q^{gn^2+g\,{\rm dim}\, C_G(x)}$, and let $\Lambda: G^F\rightarrow\kappa$ be given by $x\mapsto q^{g\,{\rm dim}\, C_G(x)}$. If $g=1$, note that $\Lambda$ is the character of the representation of $G^F$ in the group algebra $\kappa[\g^F]$ where $G^F$ acts on $\g^F$ by conjugation. 

\begin{theorem} We have $$\left\langle \Lambda\otimes\mathcal{X}_1\otimes\cdots\otimes\mathcal{X}_k,1\right\rangle_{G^F}=\frac{q^{-d_\bfO/2}(q-1)}{|G^F|}\left\langle\Theta\otimes\mathcal{F}^{\mathfrak{g}}\big(\mathbf{X}_{\IC {\overline{\calO}_1}}\big)\otimes\cdots \otimes\mathcal{F}^{\mathfrak{g}}\big(\mathbf{X}_{\IC {\overline{\calO}_k}}\big),1\right\rangle_{\mathfrak{g}^F}.$$
\label{multicomp1}\end{theorem}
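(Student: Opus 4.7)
The plan is to expand both sides using the Deligne--Lusztig-type formulas (\ref{charform1}) and (\ref{charforla}), reduce the theorem to a family of inner identities indexed by tuples $(w_1,\ldots,w_k)\in\prod_iW_{L_i}$, and then match them by applying the Deligne--Lusztig character formula and exploiting the genericity hypothesis. Substituting (\ref{charform1}) for each $\calX_i$ on the left and (\ref{charforla}) for each $\mathcal{F}^\g(\mathbf{X}_{\IC{\overline{\calO}_i}})$ on the right, both sides carry the same combinatorial prefactor $\prod_i\epsilon_G\epsilon_{L_i}|W_{L_i}|^{-1}\tilde\varphi_i(w_iF)$ summed over the same indexing set. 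A short bookkeeping with $d_\bfO=(2g-2)n^2+2+\sum_i\dim\calO_i$ and $|\g^F|=q^{n^2}$ then reduces the theorem, for each fixed tuple $(w_1,\ldots,w_k)$, to the inner identity
\begin{equation}
\sum_{x\in G^F}q^{g\dim C_\g(x)}\prod_iR_{T_{w_i}}^G(\theta_i|_{T_{w_i}})(x)=\frac{q-1}{q}\sum_{y\in\g^F}q^{g\dim C_\g(y)}\prod_i\mathcal{R}_{\mathfrak{t}_{w_i}}^\g(\eta_i|_{\mathfrak{t}_{w_i}})(y),\tag{$\ast$}
\end{equation}
where $\theta_i$ is the regular character defining $\calX_i$ and $\eta_i(z)=\Psi(\mu(\sigma_i,z))$ with $\sigma_i\in z_{\mathfrak{l}_i}^F$ the semisimple part of an element of $\calO_i^F$.

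Next, I would apply Theorem~\ref{DLu} on the left of $(\ast)$ and Formula~(\ref{charfordef1}) on the right to rewrite each Deligne--Lusztig induction as a sum of Green functions $Q_{hT_{w_i}h^{-1}}^{C_G(x_s)}(x_u)$ weighted by the linear character evaluated on the semisimple part. After interchanging summations so that the outer sum runs over semisimple parts, fix $x_s$ and set $M:=C_G(x_s)$; choosing $g_i\in G^F$ with $g_iT_{w_i}g_i^{-1}\subset M$, the product $\prod_i{}^{g_i}\theta_i$ restricts to a linear character of $Z_M^F$, and analogously $\prod_i{}^{g_i}\eta_i$ restricts to an additive character of $z_\mathfrak{m}^F$. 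Definition~\ref{genericcondi} (for the group side) and Lemma~\ref{lemgen} (for the Lie algebra side) make these restrictions \emph{generic}, so Propositions~\ref{gen2} and~\ref{gen1} force $M=G$ (i.e.\ $x_s\in Z_G^F$, $y_s\in z_\g^F$) and evaluate the remaining central sums respectively as $(q-1)K_G^o$ and $qK_G^o$, producing the $(q-1)/q$ ratio in $(\ast)$.

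Once restricted to central semisimple part, $C_G(x_s)=G$ and the Green functions collapse to $Q_{hT_{w_i}h^{-1}}^G$ on both sides, while $\dim C_\g(x)=\dim C_\g(x_u)$ (resp.\ $\dim C_\g(y)=\dim C_\g(y_n)$) since the semisimple part is central. The $G$-equivariant $\F_q$-isomorphism $\omega:\g_{\rm nil}\xrightarrow{\sim}G_{\rm uni}$ built into the definition of $\mathcal{R}_\mathfrak{l}^\g$ then identifies the inner unipotent and nilpotent Green-function sums term by term with matching $q^{g\dim C_\g}$ weights, yielding $(\ast)$ and hence the theorem. The main obstacle will be the semisimple-collapse step: since the individual characters $\theta_i$ and $\eta_i$ are defined on distinct Levi subgroups $L_i$, one must conjugate them uniformly into the common centralizer $M$ and invoke Lemma~\ref{lemgen} (and its multiplicative analogue implicit in Definition~\ref{genericcondi}) to rule out all non-central contributions; once this collapse is achieved the remaining comparison between the additive and multiplicative settings is a clean dictionary via $\omega$ together with the central-sum normalizations of Propositions~\ref{gen1} and~\ref{gen2}.
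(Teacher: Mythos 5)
Your setup is the same as the paper's: expand the left side with Formula (\ref{charform1}) and the right side with Theorem \ref{charfor}, observe that the prefactors $\prod_i\epsilon_G\epsilon_{L_i}|W_{L_i}|^{-1}\tilde\varphi_i(w_iF)$ match after the bookkeeping with $d_\bfO$ and $|\g^F|=q^{n^2}$, and reduce to an identity for each fixed tuple $(w_1,\dots,w_k)$; your identity $(\ast)$ is exactly the one the paper needs. The argument breaks down, however, at your ``semisimple-collapse'' step. Genericity does \emph{not} force $M=G$, and the non-central contributions do not vanish. Proposition \ref{gen2} (resp.\ Proposition \ref{gen1}) says that the sum of a generic character over $(Z_M)^F_{\rm reg}$ (resp.\ over $(z_\mathfrak{m})^F_{\rm reg}$) equals $(q-1)K_M^o$ (resp.\ $qK_M^o$) for \emph{every} Levi $M$ that occurs as the centralizer of a semisimple part, and $K_M^o$ is in general nonzero: for instance if $M$ is split, so that all $d_i=1$, then $K_M^o=(-1)^{r-1}(r-1)!\neq 0$. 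So elements whose semisimple part is non-central genuinely contribute to both sides of $(\ast)$, and an argument that discards them computes only a fragment of each sum.

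The correct mechanism, which is what the paper does, is the opposite of a collapse: break each side of $(\ast)$ into contributions indexed by types $\omega\in\mathbf{T}_n$, i.e.\ by pairs $(M,C)$ with $M=C_G(x_s)$ and $C$ a nilpotent orbit of $\mathfrak{m}$; on each type the weight $q^{g\,\dim C_G(x)}$ is constant, and after applying Formulas (\ref{DLu1}) and (\ref{charfordef1}) both sides carry the \emph{same} Green-function factor $\prod_i|M^F|^{-1}Q^M_{h_iT_{w_i}h_i^{-1}}(u)$ summed over the same $h_1,\dots,h_k$, multiplied by the central sums $\sum_{z\in(z_\mathfrak{m})^F_{\rm reg}}\prod_i\eta_i(h_i^{-1}zh_i)=qK_M^o$ and $\sum_{z\in(Z_M)^F_{\rm reg}}\prod_i\theta_i(h_i^{-1}zh_i)=(q-1)K_M^o$ respectively. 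The theorem then follows because the ratio $q/(q-1)$ is \emph{independent of} $M$ (the $K_M^o$ cancels type by type); as the paper stresses after the proof, this independence is precisely what genericity buys, and it fails for non-generic tuples. Your final dictionary via $\omega:\g_{\rm nil}\simeq G_{\rm uni}$ and the matching of Green functions is the right ingredient, but it must be applied on each stratum $(M,C)$ with the Green functions $Q^M$, not only at $M=G$; so the proposal needs this restructuring before it is a proof.
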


\begin{proof} For each $i=1,\dots,k$, let $(L_i,\theta_i,\varphi_i)$ be a datum defining $\mathcal{X}_i$. Then 

\begin{align*}|G^F|\left\langle \Lambda\otimes\mathcal{X}_1\otimes\cdots\otimes\mathcal{X}_k,1\right\rangle_{G^F}&=\sum_{x\in G^F}q^{g\,{\rm dim}\,C_G(x)}\prod_{i=1}^k\left(\epsilon_G\epsilon_{L_i}|W_{L_i}|^{-1}\sum_{w\in W_{L_i}}\tilde{\varphi}_i(wF)R_{T_w}^G(\theta_i)(x)\right)\\&=\prod_{i=1}^k\left(\epsilon_G\epsilon_{L_i}|W_{L_i}|^{-1}\right)\sum_{x\in G^F}q^{g\,{\rm dim}\,C_G(x)}\sum_{(w_1,\dots,w_k)\in W_{L_1}\times\cdots\times W_{L_k}}\prod_{i=1}^k\tilde{\varphi}_i(w_iF)R_{T_{w_i}}^G(\theta_i)(x)\\
&=\sum_{(w_1,\dots,w_k)\in W_{L_1}\times\cdots\times W_{L_k}}\left(\prod_{i=1}^k\epsilon_G\epsilon_{L_i}|W_{L_i}|^{-1}\tilde{\varphi}_i(w_iF)\right)\sum_{x\in G^F}q^{g\,{\rm dim}\,C_G(x)}\prod_{i=1}^kR_{T_{w_i}}^G(\theta_i)(x).\end{align*}

The type of $\calO_i$ is the $G^F$-conjugacy class of $(L_i,\calO_i^{L_i})$ where $\calO_i^{L_i}$ is an $F$-stable nilpotent orbit of $\mathfrak{l}_i$ that corresponds to $\varphi_i$ via Springer's correspondence. 

For $i=1,\dots,k$, let $(L_i,\eta_i,\varphi_i)$ be a datum defining $\calF^\mathfrak{g}({\bf X}_{\IC {\overline{\calO}_i}})$ as explained in Remark \ref{rem5}. Using Theorem \ref{charfor} we may proceed as above to get
\vspace{.2cm}

$\left\langle\Theta\otimes\mathcal{F}^{\mathfrak{g}}\big(\mathbf{X}_{\IC {\overline{\calO}_1}}\big)\otimes\cdots \otimes\mathcal{F}^{\mathfrak{g}}\big(\mathbf{X}_{\IC {\overline{\calO}_k}}\big),1\right\rangle_{\mathfrak{g}^F}$ \begin{align*}&=|\mathfrak{g}^F|^{-1}
\sum_{(w_1,\dots,w_k)\in W_{L_1}\times\cdots\times W_{L_k}}\left(\prod_{i=1}^k\epsilon_G\epsilon_{L_i}q^{\frac{1}{2}{\rm dim}\,\calO_i}|W_{L_i}|^{-1}\tilde{\varphi}_i(w_iF)\right)\sum_{x\in \mathfrak{g}^F}q^{gn^2+g\,{\rm dim}\,C_G(x)}\prod_{i=1}^kR_{\mathfrak{t}_{w_i}}^{\mathfrak{g}}(\eta_i)(x)\\&=q^{gn^2-n^2+\frac{1}{2}\sum_i{\rm dim}\,\calO_i}
\sum_{(w_1,\dots,w_k)\in W_{L_1}\times\cdots\times W_{L_k}}\left(\prod_{i=1}^k\epsilon_G\epsilon_{L_i}|W_{L_i}|^{-1}\tilde{\varphi}_i(w_iF)\right)\sum_{x\in \mathfrak{g}^F}q^{g\,{\rm dim}\,C_G(x)}\prod_{i=1}^kR_{\mathfrak{t}_{w_i}}^{\mathfrak{g}}(\eta_i)(x).\end{align*}Since $d_\bfO/2=gn^2-n^2+1+\frac{1}{2}\sum_i{\rm dim}\,\calO_i$, we need to see that:$$(q-1)\sum_{x\in \mathfrak{g}^F}q^{g\,{\rm dim}\,C_G(x)}\prod_{i=1}^kR_{\mathfrak{t}_{w_i}}^{\mathfrak{g}}(\eta_i)(x)=q\sum_{x\in G^F}q^{g\,{\rm dim}\,C_G(x)}\prod_{i=1}^kR_{T_{w_i}}^G(\theta_i)(x).$$Since the functions $R_{T_{w_i}}^G(\theta_i)$ and $R_{\mathfrak{t}_{w_i}}^{\mathfrak{g}}(\eta_i)$ are constant respectively on conjugacy classes and adjoint orbits, we need to verify that for a given type $\omega\in\mathbf{T}_n$:\begin{equation}(q-1)\sum_{x\sim\omega}\prod_{i=1}^kR_{\mathfrak{t}_{w_i}}^{\mathfrak{g}}(\eta_i)(x)=q\sum_{x\sim\omega}\prod_{i=1}^kR_{T_{w_i}}^G(\theta_i)(x).\label{qq}\end{equation}where $x\sim\omega$ means that the $G$-conjugacy class of $x$ is of type $\omega$. Let $(M,C)$ with $M$ an $F$-stable Levi subgroup and $C$ an $F$-stable nilpotent orbit of $\mathfrak{m}$ such that the $G^F$-conjugacy class of $(M,C)$ corresponds to $\omega$ as in \S \ref{gen}. Recall that $x\in\mathfrak{g}^F$ is of type $(M,C)$ if there exists $y$ in the $G^F$-orbit of $x$ such that $M=C_G(y_s)$ and $y_n\in C^F$. Similarly, an element $x\in G^F$ is of type $(M,C)$ if there exists $y$ in the $G^F$-orbit of $x$ such that $M=C_G(y_s)$ and $y_u-1\in C^F$.

\noindent Then the proof of Formula (\ref{qq}) reduces to the proof of the following identity: $$(q-1)\sum_{z\in(z_\mathfrak{m})_{\rm reg}^F}\prod_{i=1}^kR_{\mathfrak{t}_{w_i}}^{\mathfrak{g}}(\eta_i)(z+v)=q\sum_{z\in(Z_M)_{\rm reg}^F}\prod_{i=1}^kR_{T_{w_i}}^G(\theta_i)(zu)$$where $v$ is a fixed element in $C^F$ and $u=v+1$. By formulas (\ref{DLu1}) and (\ref{charfordef1}) we have

$R_{\mathfrak{t}_{w_i}}^{\mathfrak{g}}(\eta_i)(z+v)=|M^F|^{-1}\sum_{\{h\in G^F\,|\,z\in h\mathfrak{t}_{w_i}h^{-1}\}}Q_{hT_{w_i}h^{-1}}^M(u)\,\eta_i(h^{-1}zh)$, 

$R_{T_{w_i}}^G(\theta_i)(zu)=|M^F|^{-1}\sum_{\{h\in G^F\,|\,z\in hT_{w_i}h^{-1}\}}Q_{hT_{w_i}h^{-1}}^M(u)\,\theta_i(h^{-1}zh)$,

Since $C_G(z)=M$, we have $\{h\in G^F\,|\,z\in h\mathfrak{t}_{w_i}h^{-1}\}=\{h\in G^F\,|\,hT_{w_i}h^{-1}\subset M\}$. We thus have:

\begin{equation*}\sum_{z\in(z_\mathfrak{m})_{\rm reg}^F}\prod_{i=1}^kR_{\mathfrak{t}_{w_i}}^{\mathfrak{g}}(\eta_i)(z+v)=\sum_{h_1,\dots,h_k}\left(\prod_{i=1}^k|M^F|^{-1}Q_{h_iT_{w_i}h_i^{-1}}^M(u)\right)\sum_{z\in (z_\mathfrak{m})_{\rm reg}^F}\prod_{i=1}^k\eta_i(h_i^{-1}zh_i)\label{sec6eq1}\end{equation*}where the first sum runs over the set $\prod_{i=1}^k\{h\in G^F\,|\,hT_{w_i}h^{-1}\subset M\}$. Similarly we have 

\begin{equation*}\sum_{z\in(Z_M)_{\rm reg}^F}\prod_{i=1}^kR_{T_{w_i}}^G(\theta_i)(zu)=\sum_{h_1,\dots,h_k}\left(\prod_{i=1}^k|M^F|^{-1}Q_{h_iT_{w_i}h_i^{-1}}^M(u)\right)\sum_{z\in (Z_M)_{\rm reg}^F}\prod_{i=1}^k\theta_i(h_i^{-1}zh_i).\label{sec6eq2}\end{equation*}The inclusion $h_iT_{w_i}h_i^{-1}\subset M$ implies that $Z_M\subset h_iT_{w_i}h_i^{-1}\subset h_iL_ih_i^{-1}$. By Lemma \ref{lemgen}, the character $\big(\prod_{i=1}^k\!{^{h_i}\eta_i}\big)|_{z_\mathfrak{m}}$ is a generic character of $z_\mathfrak{m}$ and so by Proposition \ref{gen1} we have $$\sum_{z\in (z_\mathfrak{m})_{\rm reg}^F}\prod_{i=1}^k\eta_i(h_i^{-1}zh_i)=qK_M^o.$$Similarly, by Proposition \ref{gen2} we have $$\sum_{z\in (Z_M)_{\rm reg}^F}\prod_{i=1}^k\theta_i(h_i^{-1}zh_i)=(q-1)K_M^o.$$
\end{proof}

When the tuples $(\calX_1,\dots,\calX_k)$ and $(\calO_1^F,\dots,\calO_k^F)$ are not generic we do not have such a nice relation between mulitplicities. For instance let us choose $(\calX_1,\dots,\calX_k)$ and $(\calO_1^F,\dots,\calO_k^F)$ to be respectively unipotent and nilpotent of same type. With the notation in the proof of the theorem we have $L_i=G$ for all $i$ and the linear characters $\eta_i$ and $\theta_i$ are the trivial characters. Then 

$$\sum_{z\in (z_\mathfrak{m})_{\rm reg}^F}\prod_{i=1}^k\eta_i(h_i^{-1}zh_i)=\left|\left.(z_\mathfrak{m})^F_{\rm reg}\right.\right|,$$

$$\sum_{z\in (Z_M)_{\rm reg}^F}\prod_{i=1}^k\theta_i(h_i^{-1}zh_i)=\left|\left.(Z_M)^F_{\rm reg}\right.\right|.$$Hence, unlike the generic case, the relation between these two terms invloves the rational function $\frac{\left|\left.(z_\mathfrak{m})^F_{\rm reg}\right.\right|}{\left|\left.(Z_M)^F_{\rm reg}\right.\right|}$ which depends on $M$.  The independence of $M$ is crucial as we obtain the multiplicities by summing over $M$.

\subsection{Multiplicities and symmetric functions}\label{MSF}

\subsubsection{Definitions}

Consider $k$ separate sets $\x_1,\x_2,\dots,\x_k$ of infinitely many variables and denote by $\Lambda_k:=\Q(q)\otimes_\Z\Lambda(\x_1)\otimes_\Z\cdots\otimes_\Z\Lambda(\x_k)$ the ring of functions separately symmetric in each set $\x_1,\x_2,\dots,\x_k$ with coefficients in $\Q(q)$ where $q$ is an indeterminate. On $\Lambda(\x_i)$ consider the Hall pairing $\langle\,,\,\rangle_i$ that makes the set $\{m_\lambda(\x_i)\}_{\lambda\in\calP}$ of monomial symmetric functions and the set $\{h_\lambda(\x_i)\}_{\lambda\in\calP}$ of complete symmetric functions dual bases. On $\Lambda_k$, put $\langle\,,\,\rangle=\prod_i\langle\,,\,\rangle_i$. 

%Let $\x$ be another set of infinitely many variables  disjoint from the $\x_i$'s and denote by $\Lambda$ the ring $\Lambda(\x)\otimes_\Z\Q$ of symmetric functions in the variables $\x$. Recall that the set of power sums $\{p_n\}_{n\geq 1}$ generate the $\Q$-algebra $\Lambda$. For a positive integer $d$, we denote by $\x^d$ the set of variables $\{x_1^d,x_2^d,\dots\}$. Define the operation $\circ:\Lambda\times\Lambda_k[[T]]\rightarrow \Lambda_k[[T]]$ by the following two properties:(i) $p_n\circ f(\x_1,\cdots,\x_k,q,T)=f(\x_1^n,\cdots,\x_k^n,q^n,T^n)$ for all $n\geq 1$,(ii) $(\cdot,f):\Lambda_k[[T]]\rightarrow\Lambda_k[[T]]$ is a ring homomorphism for all $f\in\Lambda_k[[T]]$.

\noindent Consider $$\psi_n:\Lambda_k[[T]]\rightarrow\Lambda_k[[T]],\, f(\x_1,\dots,\x_k;q,T)\mapsto f(\x_1^n,\dots,\x_k^n;q^n,T^n)$$where we denote by $\x^d$ the set of variables $\{x_1^d,x_2^d,\dots\}$. The $\psi_n$ are called the \emph{Adams operations}. 

%They satisfy $\psi_1={\rm Id}$, and $\psi_n(a+b)=\psi_n(a)+\psi_n(b)$ for all $a,b\in\Lambda_k[[T]]$. 

%The complete graded ring $\Lambda_k[[T]]$ endowed with $\circ$ is thus a $\lambda$-ring. 

Define $\Psi:T\Lambda_k[[T]]\rightarrow T\Lambda_k[[T]]$ by $$\Psi(f)=\sum_{n\geq 1}\frac{\psi_n(f)}{n}.$$Its inverse is given by $$\Psi^{-1}(f)=\sum_{n\geq 1}\mu(n)\frac{\psi_n(f)}{n}$$where $\mu$ is the ordinary M\"obius function. 

Following Getzler \cite{getzler} we define $\Log:1+T\Lambda_k[[T]]\rightarrow T\Lambda_k[[T]]$ and its inverse $\Exp:T\Lambda_k[[T]]\rightarrow 1+\Lambda_k[[T]]$ as 

$$\Log(f)=\Psi^{-1}\left(\log(f)\right)$$and $$\Exp(f)=\exp\left(\Psi(f)\right).$$

\subsubsection{Cauchy function}\label{cauchy}

For an infinite set of variable $\x$, the transformed Hall-Littlewood symmetric function $\tilde{H}_\lambda(\x,q)\in\Lambda(\x)\otimes_\Z\Q(q)$ is defined as 
$$\tilde{H}_\lambda(\x,q):=\sum_\lambda\tilde{K}_{\nu\lambda}(q)s_\nu(\x)$$where  $\tilde{K}_{\nu\lambda}(q)=q^{n(\lambda)}K_{\nu\lambda}(q^{-1})$ is the transformed Kostka polynomial \cite[III (7.11)]{macdonald}.

For a partition $\lambda$, put 
$$\mathcal{H}_\lambda(q):=\frac{q^{g\langle\lambda,\lambda\rangle}}{a_\lambda(q)}$$where $a_\lambda(q)$ denotes the cardinality of the centralizer of a unipotent element of $\GL_n(\F_q)$ with Jordan form of type $\lambda$ \cite[IV, (2.7)]{macdonald}. Define the $k$-points Cauchy function$$\Omega(q):=\sum_{\lambda\in\calP}\left(\prod_{i=1}^k\tilde{H}_{\lambda}(\x_i,q)\right)\mathcal{H}_\lambda(q)T^{|\lambda|}.$$It leaves in $1+T\Lambda_k[[T]]$. These functions were considered  by Garsia and Haiman \cite{garsia-haiman}. 

Given a family of symmetric functions $u_\lambda(\x,q)$ indexed by partitions, we extend its definition to a type $\omega=(d_1,\omega^1)\cdots(d_r,\omega^r)\in{\bf T}_n$ by $u_\omega(\x,q):=\prod_{i=1}^ru_{\omega^i}(\x^{d_i},q^{d_i})$. 

For a multi-type $\omhat=(\omega_1,\dots,\omega_k)\in\big(\mathbf{T}_n\big)^k$, put $u_{\omhat}:=u_{\omhat_1}(\x_1,q)\cdots u_{\omhat_k}(\x_k,q)\in\Lambda_k$. 

Recall that $\lambda'$ denotes the dual partition of $\lambda$. For a type $\omega=(d_1,\lambda_1)\cdots(d_r,\lambda_r)$, we denote by $\omega'$ the type $(d_1,\lambda_1')\cdots(d_r,\lambda_r')$. 

Let $\omhat=(\omega_1,\dots,\omega_k)\in\big(\mathbf{T}_n\big)^k$ with $\omega_i=(d_1^i,\omega_i^1)\cdots(d_{r_i}^i,\omega_i^{r_i})$ and define
 \begin{equation}\H_\omhat(q):=(-1)^{r(\omhat)}(q-1)\left\langle s_{\omhat'},\Log\,\big(\Omega(q)\big)\right\rangle\label{Hom}\end{equation}where $r(\omhat):=kn+\sum_{i,j}|\omega_i^j|$ and where  $\left\langle s_{\omhat'},\Log\,\big(\Omega(q)\big)\right\rangle$ is the Hall pairing of $s_{\omhat'}$ with the coefficient of $\Log\,\big(\Omega(q)\big)$ in $T^n$.

Note that if the degrees $d_i^j$ are all equal to $1$, then $r(\omhat)=2kn$.

We rewrite Formula (\ref{Hom}) in some special cases:

\subsubsection{The split semisimple case}\label{Omegass}

We say that $\omega\in{\bf T}_n$ is a \emph{semisimple type} if it is the type of a semisimple adjoint orbit of $\g_n^F$ (or equivalently the type of a semisimple character of $G^F$). It is then of the form $(d_1,(1^{n_1}))\cdots(d_r,(1^{n_r}))$. If moreover $\omega$ is split, i.e., $d_i=1$ for all $i$, then $\lambda=(n_1,\dots,n_r)$ is a partition of $n$ and any partition of $n$ is obtained in this way from a unique split semisimple type of ${\bf T}_n$. Note that for a split semisimple type $\omega$ with corresponding partition $\lambda$, we have $s_{\omega'}(\x)=h_\lambda(\x)$.

For a multipartition $\lambdahat=(\lambda_1,\dots,\lambda_k)\in(\calP_n)^k$ with corresponding split semisimple multitype $\omhat\in({\bf T}_n)^k$ we put $\H_\lambdahat^{ss}(q):=\H_\omhat(q)$. Then Formula (\ref{Hom}) reads

$$\H_\lambdahat^{ss}(q)=(q-1)\left\langle h_\lambdahat,\Log(\Omega(q))\right\rangle.$$

Since $\{h_\lambdahat\}$ and $\{m_\lambdahat\}$ are dual bases with respect to the Hall pairing, we may recover $\Omega(q)$ from $\H_\lambdahat^{ss}(q)$ by the formula 

\begin{equation}\Omega(q)=\Exp\left(\sum_{n\geq 1}\sum_{\lambdahat\in(\calP_n)^k}\frac{\H_\lambdahat^{ss}(q)}{q-1}m_\lambdahat T^n\right)\label{semisimple}.\end{equation}

\subsubsection{The nilpotent case}

We say that a type $\omega\in{\bf T}_n$ is nilpotent if it is the type of a nilpotent adjoint orbit of $\g^F$ (or the type of a unipotent character of $G^F$) in which case it is of the form $\omega=(1,\lambda)$ for some partition $\lambda$ of $n$, and $s_\omega(\x)=s_\lambda(\x)$.

For a multipartition $\lambdahat=(\lambda_1,\dots,\lambda_k)\in(\calP_n)^k$, we put $\H_\lambdahat^n(q):=\H_\omhat(q)$, where $\omhat=\left((1,\lambda_1),\dots,(1,\lambda_k)\right)$.

Since the base $\{s_\lambda\}_{\lambda\in\calP}$ is auto-dual, we recover $\Omega(q)$ from the $\H_\lambdahat^n(q)$ by the formula 

\begin{equation}\Omega(q)=\Exp\left(\sum_{n\geq 1}\sum_{\lambdahat\in(\calP_n)^k}\frac{\H_{\lambdahat'}^n(q)}{q-1}s_\lambdahat T^n\right).\label{nilpotent}\end{equation}

\subsubsection{The regular semisimple case}

We say that a type $\omhat\in{\bf T}_n$ is semisimple regular if it is the type of a semisimple regular adjoint orbit of $\G^F$ (or the type of an irreducible Deligne-Lusztig character, see \S\ref{irrG}). Then it is of the form $\omhat=(d_1,1)\cdots(d_r,1)$ and so $\lambda=(d_1,\dots,d_r)$ is a partition of $n$. In this case, the fonction  $s_\omhat(\x)$ is the power symmetric function $p_\lambda(\x)$. 

For a multipartition $\lambdahat$ with corresponding regular semisimple multitype $\omhat$, we use the notation $\H_\lambdahat^{rss}(q)$ and $r(\lambdahat)$ instead of $\H_\omhat(q)$ and $r(\omhat)$.

Recall that for any two partitions $\lambda,\mu$, we have $\langle p_\lambda(\x),p_\mu(\x)\rangle=z_\lambda\delta_{\lambda\mu}$.

Then we recover $\Omega(q)$ from $\H_\lambdahat^{rss}(q)$ by the formula 

\begin{equation}\Omega(q)=\Exp\left(\sum_{n\geq 1}\sum_{\lambdahat\in(\calP_n)^k}\frac{(-1)^{r(\lambdahat)}\H_{\lambdahat}^{rss}(q)}{(q-1)z_\lambdahat}p_\lambdahat T^n\right).\label{power}\end{equation}

\subsubsection{Multiplicities}\label{multi695}

Let $(\mathcal{X}_1,\dots,\mathcal{X}_k)$ be a generic tuple of irreducible characters of $G^F$ of type $\omhat=(\omega_1,\dots,\omega_k)\in (\mathbf{T}_n)^k$.

\begin{theorem} We have $$\left\langle\Lambda\otimes \calX_1\otimes\cdots\otimes\calX_k,1\right\rangle_{G^F}=\H_\omhat(q).$$
\label{compmulti}\end{theorem}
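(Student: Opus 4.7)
The plan is to combine Theorem \ref{multicomp1} with the explicit character formula (\ref{charforla}) for $\mathcal{F}^{\g}({\bf X}_{\IC{\overline{\calO}_i}})$ and then reorganise the resulting sum combinatorially so as to match the definition (\ref{Hom}) of $\H_\omhat(q)$.

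First I would fix a generic tuple $(\calO_1^F,\dots,\calO_k^F)$ of adjoint orbits of $\g^F$ of the same type $\omhat$ as $(\calX_1,\dots,\calX_k)$ (this exists once $\mathrm{char}(\F_q)$ and $q$ are large enough, by Lemma \ref{existence} together with the splitting trick producing generic tuples of a prescribed type). Theorem \ref{multicomp1} then rewrites the left-hand side as a constant multiple of $\langle \Theta \otimes \bigotimes_i \mathcal{F}^\g({\bf X}_{\IC{\overline{\calO}_i}}),1\rangle_{\g^F}$, converting the problem into a Lie-algebra-side computation.

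Second, I would expand each $\mathcal{F}^{\g}({\bf X}_{\IC{\overline{\calO}_i}})$ via (\ref{charforla}) as a weighted sum over $W_{L_i}$ of $R_{\mathfrak{t}_w}^\g(\eta^{\mathfrak{t}_w}_i)$, and expand the $\g^F$-sum as a sum over adjoint orbit types. Using (\ref{charfordef1}) to split $R_{\mathfrak{t}_{w_i}}^\g(\eta_i)(x)$ at a point $x$ of type $(M,C)$ into a Green-function factor $Q^M_{h_iT_{w_i}h_i^{-1}}(\omega(x_n))$ depending on the nilpotent part and a linear-character factor $\eta_i(h_i^{-1}x_sh_i)$ depending on the semisimple part, the big sum factors into: (i) an outer sum over Levi subgroups $M \subset G$, (ii) a sum over $z \in (z_\mathfrak{m})^F_{\mathrm{reg}}$ of $\prod_i ({}^{h_i}\eta_i)(z)$, and (iii) a sum over nilpotent $L^F$-orbits of a product of Green functions weighted by extended $W_{L_i}$-characters.

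Third, exactly as in the proof of Theorem \ref{multicomp1}, the genericity of $(\calO_1^F,\dots,\calO_k^F)$ and Lemma \ref{lemgen} let me apply Proposition \ref{gen1} to the semisimple sums (ii), producing the factor $qK_M^o$. The coefficient $K_M^o$ is non-zero precisely when $M^F \simeq \prod_i \GL_{n_i}(\F_{q^d})$ for a single common degree $d$, and equals (up to sign) $(r-1)!\,d^{r-1}\mu(d)$. This Möbius weight is exactly what is produced by the plethystic $\Log$ via $\Psi^{-1}$. After this step, only the nilpotent piece (iii) remains, summed over partitions indexing nilpotent orbits in each $\gl_{n_i}(\F_{q^d})$.

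Finally, I would identify the remaining expression with a Hall pairing against $s_{\omhat'}$. The key ingredients are: the classical expansion $\tilde{H}_\lambda(\x,q) = \sum_\mu \tilde K_{\mu\lambda}(q)\,s_\mu(\x)$ together with Lusztig's identification of $Q^\mu_\lambda$ with transformed Kostka polynomials, which rewrites the product of Green polynomials weighted by Springer-correspondence characters as the coefficient of $\prod_i s_{\omega_i'}(\x_i)$ in a product $\prod_i \tilde H_\lambda(\x_i^d,q^d)$; the centralizer order $|C_G(x)^F|$ contributes $a_\lambda(q^d)^{-1}$ while the factor $\Theta$ contributes $q^{g\langle\lambda,\lambda\rangle}$, together giving $\calH_\lambda(q^d)$. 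Summing over nilpotent types gives $\Omega(q^d)$, and the Adams weights $K_M^o/(q-1)$ assemble into $\Psi^{-1}\log = \Log$, yielding (\ref{Hom}) after comparing the $(q-1)$ and sign factors.

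The main obstacle is bookkeeping: one must correctly track the extended-character factors $\tilde{\varphi}_i(w_iF)$ attached to non-trivial cycle structures in $W_{L_i}$ (so that the split unipotent case of \cite{hausel-letellier-villegas} generalises) and verify that they conspire with the Adams operations $\psi_d$ hidden in $s_\omhat(\x) = \prod_j s_{\omega^j}(\x^{d_j})$. This is precisely what Proposition \ref{twistedLRTR} expressing characters of induced representations of symmetric groups as twisted Littlewood--Richardson coefficients is set up to accomplish, so once it is combined with Proposition \ref{gen1} and the Green-polynomial/Hall-Littlewood dictionary, the identification with $\H_\omhat(q)$ falls out.
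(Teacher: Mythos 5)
Your route has a genuine gap at the very first step. The reduction to the Lie-algebra side via Theorem \ref{multicomp1} is only available under the hypothesis of that theorem, namely that there exists a generic tuple $(\calO_1^F,\dots,\calO_k^F)$ of adjoint orbits of $\g^F$ of the same type $\omhat$ as $(\calX_1,\dots,\calX_k)$. By Lemma \ref{existence} such a tuple exists if and only if $\gcd\{|\omega_i^j|\}_{i,j}=1$; when this gcd is $>1$ (for instance when all the $\calX_i$ are unipotent characters twisted by linear characters, so that every $|\omega_i^j|=n$) there is no generic orbit tuple of type $\omhat$ at all, while generic \emph{character} tuples of that type do exist and are covered by Theorem \ref{compmulti} — these are exactly the interesting non-admissible cases, e.g.\ the $E_6$ and $\tilde{E}_6$ examples of the introduction with $\left\langle R_{(3),(3),(3)},1\right\rangle=q$. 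So your argument, as organised, proves the statement only for indivisible types and cannot be completed for divisible ones; passing to orbits over a larger field does not help, since the genericity condition on traces is what Lemma \ref{existence} obstructs.

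The paper avoids this by never leaving the group side: it observes that in the split semisimple case the statement is \cite[Theorem 6.1.1]{hausel-letellier-villegas}, and that the main ingredient there (\cite[Theorem 4.3.1(2)]{hausel-letellier-villegas}) holds for arbitrary types, so one follows that proof verbatim, expanding each $\calX_i$ by Formula (\ref{charform1}) into Deligne--Lusztig characters, splitting the sum over $G^F$ by types $(M,C)$, using the genericity of the \emph{characters} together with Proposition \ref{gen2} to evaluate the central sums as $(q-1)K_M^o$, and assembling the Green-function part with the extended characters $\tilde{\varphi}_i(w_iF)$ into the Hall--Littlewood/$\Log$ expression defining $\H_\omhat(q)$. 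Your steps (ii)--(iv) are the Lie-algebra mirror of exactly this computation (the two sides are matched term by term in the proof of Theorem \ref{multicomp1}, with Proposition \ref{gen1} and the $\eta_i$ replacing Proposition \ref{gen2} and the $\theta_i$, and Proposition \ref{twistedLRTR} handling the degrees $d_i^j>1$), so the combinatorial core of your plan is sound; what is missing is that for general $\omhat$ only the group-side version of the argument is available, and that is the version the theorem's proof must use.
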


If the irreducible characters $\calX_1,\dots,\calX_k$ are all split semisimple with corresponding multipartition $\muhat\in(\calP_n)^k$,  then $\H_\omhat(q)=(q-1)\langle h_\muhat,\Log(\Omega(q))\rangle$ by \S\ref{Omegass}. Hence in the split semisimple case, this theorem is exactly \cite[Theorem 6.1.1]{hausel-letellier-villegas}. 

Since the main ingredient \cite[Theorem 4.3.1(2)]{hausel-letellier-villegas} in the proof of \cite[Theorem 6.1.1]{hausel-letellier-villegas} is available for any type $\omega\in{\bf T}_n$, we may follow line by line the proof of \cite[Theorem 7.1.1]{hausel-letellier-villegas} for arbitrary types (not necessarily split semisimple) to obtain the formula of  Theorem \ref{compmulti}.

\begin{remark}The theorem shows that the multiplicities of generic irreducible characters depend only on the types and not on the choices of irreducible characters of a given type. Note that $\H_\omhat(q)$ is clearly a rational function in $q$ with rational coefficients. On the other hand by Theorem \ref{compmulti}, it is also an integer for infinitely many values of $q$. Hence $\H_\omhat(q)$ is a polynomial in $q$ with rational coefficients.\label{dependence}\end{remark}

\section{Poincar\'e polynomials of quiver varieties and multiplicities}

Unless specified $\K$ is an arbitrary algebraically closed field.

For $i=1,\dots,k$ let $L_i,P_i,\sigma_i,C_i,\Sigma_i,\calO_i$ be as in \S \ref{intervar}.  Put $M_i:=C_{\GL_n}(\sigma_i)$ and ${\bf M}:=M_1\times\cdots\times M_k$.

We assume that $(\calO_1,\dots,\calO_k)$ is generic.

\subsection{Decomposition theorem and Weyl group action}\label{decompquiver}

Let $\rho:\mathbb{V}_{\bf L,P,\Sigma}\rightarrow\calV_\bfO$ and ${\rm p}:\mathbb{O}_{\bf L,P,\Sigma}\rightarrow \bfO$ be  the canonical projective maps (see  Diagram (\ref{mainpicture})). For an irreducible character $\chi=\chi_1\otimes\cdots\otimes\chi_k$ of the Weyl group $W_{\bf M}=W_{M_1}\times\cdots\times W_{M_k}$ we put $\bfO_\chi=(\gl_n)^{2g}\times\overline{\calO}_{\chi_1}\times\cdots\times\overline{\calO}_{\chi_k}$ where for each $i=1,\dots,k$, $\calO_{\chi_i}$ is the unique adjoint orbit contained in $\overline{\calO}_i$ corresponding to the character $\chi_i$ via the Springer correspondence $\mathfrak{C}$. 

By  Proposition \ref{Springer}, we have \begin{equation}{\rm p}_*\left(\pIC {\mathbb{O}_{\bf L,P,\Sigma}}\right)\simeq \pIC {\bfO}\oplus\left(\bigoplus_{\chi\in({\rm Irr}\,W_{\bf M})^*} A_\chi\otimes \pIC {\bfO_\chi}\right)\label{A1ic}\end{equation}where $({\rm Irr}\,W_{\bf M})^*:=({\rm Irr}\,W_{\bf M})-\{\chi_o\}$ and $$A_\chi={\rm Hom}_{W_{\bf M}}\left({\rm Ind}_{W_{\bf L}}^{W_{\bf M}}(V_{\bf C}),V_\chi\right)$$with $V_{\bf C}:=\bigotimes_iV_{C_i}$.

\begin{proposition} We have 
\begin{equation}(\rho/_{\PGL_n})_*\left(\pIC {\mathbb{Q}_{\bf L,P,\Sigma}}\right)\simeq \pIC {\calQ_\bfO}\oplus\left(\bigoplus_{\chi\in({\rm Irr}\,W_{\bf M})^*} A_\chi\otimes \pIC {\calQ_{\bfO_\chi}}\right). \label{A3ic}\end{equation}
\label{A3theo}\end{proposition}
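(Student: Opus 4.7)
The strategy is to transfer the decomposition \eqref{A1ic} from $\bfO$ down to the closed subvariety $\calV_\bfO$ via base change along the closed immersion $i:\calV_\bfO\hookrightarrow\bfO$, and then to descend the resulting identity along the principal $\PGL_n$-bundle $\calV_\bfO\to\calQ_\bfO$.

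For the first step, consider the Cartesian square
$$\xymatrix{\mathbb{V}_{\bf L,P,\Sigma}\ar[r]^-{j}\ar[d]_\rho & \mathbb{O}_{\bf L,P,\Sigma}\ar[d]^{\rm p}\\ \calV_\bfO\ar[r]^-{i} & \bfO}$$
in which $j$ is the canonical closed immersion and ${\rm p}$ is projective. Proper base change gives $i^*{\rm p}_*\simeq\rho_*j^*$, so applying $i^*$ to \eqref{A1ic} yields
$$\rho_*\bigl(j^*\pIC{\mathbb{O}_{\bf L,P,\Sigma}}\bigr)\simeq i^*\pIC{\bfO}\oplus\bigoplus_{\chi\in({\rm Irr}\,W_{\bf M})^*} A_\chi\otimes i^*\pIC{\bfO_\chi}.$$
By Theorem \ref{restriction} we have $j^*\IC{\mathbb{O}_{\bf L,P,\Sigma}}\simeq\IC{\mathbb{V}_{\bf L,P,\Sigma}}$, and Proposition \ref{restrictionVw}, applied to each tuple $(\calO_{\chi_1},\dots,\calO_{\chi_k})$ (which remains generic since each $\calO_{\chi_i}\subset\g_{\sigma_i}$ shares its eigenvalues with $\calO_i$), gives $i^*\IC{\bfO_\chi}\simeq\IC{\calV_{\bfO_\chi}}$. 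For every $\bfO'\unlhd\bfO$ the subvariety $\calV_{\bfO'}$ is cut out inside $\bfO'$ by the moment map landing in $\mathfrak{sl}_n$, hence has uniform codimension $n^2-1={\rm dim}\,\PGL_n$, matching the codimension of $\mathbb{V}_{\bf L,P,\Sigma}$ in $\mathbb{O}_{\bf L,P,\Sigma}$. Rewriting $\pIC{X}=\IC{X}[{\rm dim}\, X]$, the extra shift $[n^2-1]$ appears on every term on both sides and cancels, yielding
$$\rho_*\pIC{\mathbb{V}_{\bf L,P,\Sigma}}\simeq\pIC{\calV_\bfO}\oplus\bigoplus_{\chi\in({\rm Irr}\,W_{\bf M})^*} A_\chi\otimes\pIC{\calV_{\bfO_\chi}}.$$

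For the descent step, Propositions \ref{affinepGb} and \ref{quotientgen} imply that $\pi_V:\calV_\bfO\to\calQ_\bfO$ and $\pi_Q:\mathbb{V}_{\bf L,P,\Sigma}\to\mathbb{Q}_{\bf L,P,\Sigma}$ are principal $\PGL_n$-bundles fitting with $\rho$ and $\rho/_{\PGL_n}$ into a Cartesian square. Smooth base change along $\pi_V$ gives $\pi_V^*(\rho/_{\PGL_n})_*\pIC{\mathbb{Q}_{\bf L,P,\Sigma}}\simeq\rho_*\pi_Q^*\pIC{\mathbb{Q}_{\bf L,P,\Sigma}}$; because $\pi_V,\pi_Q$ are smooth of relative dimension $n^2-1$, the functor $\pi_Q^*[n^2-1]$ sends $\pIC{\mathbb{Q}_{\bf L,P,\Sigma}}$ to $\pIC{\mathbb{V}_{\bf L,P,\Sigma}}$, and analogously $\pi_V^*[n^2-1]$ sends $\pIC{\calQ_\bfO}$ and $\pIC{\calQ_{\bfO_\chi}}$ to $\pIC{\calV_\bfO}$ and $\pIC{\calV_{\bfO_\chi}}$ respectively. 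Combining these identifications with the decomposition just obtained on $\calV_\bfO$ and invoking the fact that $\pi_V^*[n^2-1]$ is an equivalence between perverse sheaves on $\calQ_\bfO$ and $\PGL_n$-equivariant perverse sheaves on $\calV_\bfO$, the desired decomposition on $\calQ_\bfO$ follows. The only delicate point is the uniform codimension count in the base-change step (which ensures the perverse shifts cancel term by term); once that is in place, the descent is automatic from the principal-bundle structure.
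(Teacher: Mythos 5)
Your argument is correct and follows essentially the same route as the paper: proper base change applied to the square relating $\rho$ and ${\rm p}$ in diagram (\ref{mainpicture}), combined with Theorem \ref{restriction} (and its special case Proposition \ref{restrictionVw}, the codimension bookkeeping being supplied by Corollary \ref{lissite} and Theorem \ref{strat}), yields the decomposition of $\rho_*\left(\pIC {\mathbb{V}_{\bf L,P,\Sigma}}\right)$ on $\calV_\bfO$, which is then descended along the principal $\PGL_n$-bundles. The only cosmetic difference is in the descent step: the paper applies the decomposition theorem to $\rho/_{\PGL_n}$ and identifies the summands by smooth pullback along $p_2:\calV_\bfO\rightarrow\calQ_\bfO$, whereas you invoke the equivalence between perverse sheaves on $\calQ_\bfO$ and $\PGL_n$-equivariant perverse sheaves on $\calV_\bfO$ — which needs the small extra remark (immediate from the perverse $t$-exactness and conservativity of $\pi_V^*[n^2-1]$, or from applying the decomposition theorem downstairs as the paper does) that $(\rho/_{\PGL_n})_*\left(\pIC {\mathbb{Q}_{\bf L,P,\Sigma}}\right)$ is perverse before that equivalence can be used.
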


The action of $W_{\bf M}({\bf L,C})$ on the $A_\chi$'s (see \S \ref{actionW}) induces thus an action of $W_{\bf M}({\bf L,C})$ on the complex $(\rho/_{\PGL_n})_*\left(\pIC {\mathbb{Q}_{\bf L,P,\Sigma}}\right)$ and so  on the hypercohomology $\mathbb{H}_c^i\left(\mathbb{Q}_{\bf L,P,\Sigma},\IC {\mathbb{Q}_{\bf L,P,\Sigma}}\right)=IH_c^i(\mathbb{Q}_{\bf L,P,\Sigma},\kappa)$. For $v\in W_{\bf M}({\bf L,C})$, we denote by $\theta_v:(\rho/_{\PGL_n})_*\left(\pIC {\mathbb{Q}_{\bf L,P,\Sigma}}\right)\simeq (\rho/_{\PGL_n})_*\left(\pIC {\mathbb{Q}_{\bf L,P,\Sigma}}\right)$ the corresponding automorphism.

\begin{proof}[Proof of Proposition \ref{A3theo}] By applying the proper base change to the top right square of the diagram (\ref{mainpicture}), it follows from the isomorphism (\ref{A1ic}) and Theorem \ref{restriction} that \begin{equation}\rho_*\left(\pIC {\mathbb{V}_{\bf L,P,\Sigma}}\right)\simeq \pIC {\calV_\bfO}\oplus\left(\bigoplus_{\chi\in({\rm Irr}\,W_{\bf M})^*} A_\chi\otimes \pIC {\calV_{\bfO_\chi}}\right). \label{A2ic}\end{equation}

Since the quotient maps $p_1:\mathbb{V}_{\bf L,P,\Sigma}\rightarrow \mathbb{Q}_{\bf L,P,\Sigma}$ and $p_2:\calV_\bfO\rightarrow\calQ_\bfO$ are principal $\PGL_n$-bundles they are smooth and so we have $(p_2)^*\left(\IC {\calQ_\bfO}\right)\simeq \IC {\calV_\bfO}$ and $(p_1) ^*\left(\IC {\mathbb{Q}_{\bf L,P,\Sigma}}\right)\simeq \IC {\mathbb{V}_{\bf L,P,\Sigma}}$. Applying the decomposition theorem to $\rho/_{\PGL_n}$ (Theorem \ref{BBDG}) and the base change theorem we see that if $\IC {Z,\zeta}[r]$ is a direct summand of $(\rho/_{\PGL_n})_*\left(\pIC {\mathbb{Q}_{\bf L,P,\Sigma}}\right)$ then $(p_2)^*\left(\IC {Z,\zeta}\right)=\IC {p_2^{-1}(Z),(p_2)^*(\zeta)}$ is (up to a shift) a direct summand of $\rho_*\left(\pIC {\mathbb{V}_{\bf L,P,\Sigma}}\right)$ and so we must have $Z=\calQ_{\bf O_\chi}$ for some $\chi$ and $\zeta=\kappa$. It is also clear that $\pIC {\calQ_{\bfO_\chi}}$ appears in  $(\rho/_{\PGL_n})_*\left(\pIC {\mathbb{Q}_{\bf L,P,\Sigma}}\right)$ with the same multiplicity as  $\pIC {\calV_{\bfO_\chi}}$ in $\rho_*\left(\pIC {\mathbb{V}_{\bf L,P,\Sigma}}\right)$.\end{proof}

Recall that $d_\bfO$ denotes the dimension of $\calQ_\bfO$. Put $r_\chi=(d_{\bfO_\chi}-d_\bfO)/2$. 

When $({\bf L,P,\Sigma})$ is defined over $\F_q$ the Proposition \ref{A3theo} can be made more precise as follows.

\begin{proposition} If $\K=\overline{\F}_q$ and if $({\bf L,P,\Sigma})$ is defined over $\F_q$ , then the  isomorphism \begin{equation*}(\rho/_{\PGL_n})_*\left(\pIC {\mathbb{Q}_{\bf L,P,\Sigma}}\right)\simeq \pIC {\calQ_\bfO}\oplus\left(\bigoplus_{\chi\in({\rm Irr}\,W_{\bf M})^*} A_\chi\otimes \pIC {\calQ_{\bfO_\chi}}(r_\chi)\right).\end{equation*}is defined over $\F_q$. In particular for $v\in W_{\bf M}({\bf L,C})$, we have

\begin{equation}{\bf X}_{(\rho/_{\PGL_n})_*\left(\pIC {\mathbb{Q}_{\bf L,P,\Sigma}}\right),\theta_v\circ\tilde{\varphi}}={\bf X}_{\pIC {\calQ_\bfO}}+\sum_{\chi\in({\rm Irr}\,W_{\bf M})^*}{\rm Tr}\,(v\,|\, A_\chi)\,q^{-r_\ttauhat}{\bf X}_{\pIC {\calQ_{\bfO_\chi}}}\label{decompquiverq}\end{equation}where $\tilde{\varphi}:F^*\left(\pi_*\left(\pIC {\mathbb{V}_{\bf L,P,\Sigma}}\right)\right)\simeq\pi_*\left(\pIC {\mathbb{V}_{\bf L,P,\Sigma}}\right)$ is the canonical isomorphism induced by the unique isomorphism $\varphi: F^*\left(\pIC {\mathbb{Q}_{\bf L,P,\Sigma}}\right)\simeq \pIC {\mathbb{Q}_{\bf L,P,\Sigma}}$ which induces the identity on $\mathcal{H}_x^{-d_\bfO}\left(\pIC {\mathbb{Q}_{\bf L,P,\Sigma}}\right)$ when $x\in \mathbb{Q}_{\bf L,P,\Sigma}^o(\F_q)$.

\end{proposition}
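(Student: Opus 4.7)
The plan is to trace the $\mathbb{F}_q$-rational structure through the three steps used to establish Proposition \ref{A3theo}, attaching the correct Tate twists at each stage. First, one starts with the Springer decomposition
\begin{equation*}
{\rm p}_*\left(\pIC {\mathbb{O}_{\bf L,P,\Sigma}}\right) \simeq \pIC {\bfO}\oplus\bigoplus_{\chi \neq \chi_o} A_\chi\otimes \pIC {\bfO_\chi}(r_\chi)
\end{equation*}
over $\mathbb{F}_q$. This is essentially what is worked out at the end of \S\ref{actionW}: the canonical isomorphism $F^*\left(\pIC{\mathbb{O}_{\bf L,P,\Sigma}}\right)\simeq \pIC{\mathbb{O}_{\bf L,P,\Sigma}}$ induces, via the Springer summands, canonical isomorphisms $\phi_\chi$ on each $\pIC{\overline{\calO}_\chi}$ satisfying ${\bf X}_{\pIC{\overline{\calO}_\chi},\phi_\chi}=q^{\frac{1}{2}({\rm dim}\,\calO-{\rm dim}\,\calO_\chi)}{\bf X}_{\pIC{\overline{\calO}_\chi}}$; tensoring with $(\gl_n)^{2g}$ and multiplying across the $k$ factors gives exactly the twist $(r_\chi)$.

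Next I would apply $i^*$ and proper base change to the Cartesian square
$$\xymatrix{\mathbb{V}_{\bf L,P,\Sigma} \ar[r]^-{\rho} \ar[d]_i & \calV_\bfO \ar[d] \\ \mathbb{O}_{\bf L,P,\Sigma} \ar[r]^-{\rm p} & \bfO}$$
whose vertical arrows are closed immersions. Theorem \ref{restriction} gives $i^*(\IC{\mathbb{O}_{\bf L,P,\Sigma}}) \simeq \IC{\mathbb{V}_{\bf L,P,\Sigma}}$ and Proposition \ref{restrictionVw} gives the analogous identity on the right column. These identifications are canonically defined over $\mathbb{F}_q$, so the decomposition descends to
\begin{equation*}
\rho_*\left(\pIC {\mathbb{V}_{\bf L,P,\Sigma}}\right) \simeq \pIC{\calV_\bfO} \oplus \bigoplus_{\chi \neq \chi_o} A_\chi \otimes \pIC{\calV_{\bfO_\chi}}(r_\chi)
\end{equation*}
with the same Tate twists; this uses the equality ${\rm codim}_{\bfO}(\bfO_\chi)={\rm codim}_{\calV_\bfO}(\calV_{\bfO_\chi})$ coming from Theorem \ref{strat} (or from the dimension formula of Corollary \ref{lissite}), which ensures that no extra shift is introduced.

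Then I would descend to the quotient using Proposition \ref{quotientgen}: the two maps $\mathbb{V}_{\bf L,P,\Sigma}\to \mathbb{Q}_{\bf L,P,\Sigma}$ and $\calV_\bfO\to\calQ_\bfO$ are principal $\PGL_n$-bundles defined over $\mathbb{F}_q$, hence smooth surjections, so pulling back intersection complexes along them is fully faithful (up to a uniform shift) and preserves the Frobenius structures. The $\mathbb{F}_q$-rational decomposition (\ref{A3ic}) with the displayed Tate twists follows. For the characteristic function formula, I would apply the Grothendieck--Lefschetz trace formula: the action $\theta_v$ is by construction the sum of $\mathrm{id}\otimes v$ on each summand $A_\chi\otimes\pIC{\calQ_{\bfO_\chi}}(r_\chi)$, so taking characteristic functions gives the trivial contribution $\mathbf{X}_{\pIC{\calQ_\bfO}}$ from the $\chi_o$-summand and, for $\chi\neq\chi_o$, the contribution $\mathrm{Tr}(v\mid A_\chi)\,q^{-r_\chi}\,\mathbf{X}_{\pIC{\calQ_{\bfO_\chi}}}$ (the twist $(r_\chi)$ producing the $q^{-r_\chi}$ factor in the characteristic function).

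The main obstacle will be verifying that the normalization of $\tilde{\varphi}$ specified in the statement --- namely the isomorphism $\varphi$ which is the identity on $\mathcal{H}^{-d_\bfO}_x\left(\pIC{\mathbb{Q}_{\bf L,P,\Sigma}}\right)$ at $\mathbb{F}_q$-points $x\in\mathbb{Q}_{\bf L,P,\Sigma}^o(\mathbb{F}_q)$ --- is transported through the chain of isomorphisms above to the canonical Frobenius on each $\pIC{\calQ_{\bfO_\chi}}(r_\chi)$ with exactly the stated normalization (the identity on the stalks of $\mathcal{H}^{-d_{\bfO_\chi}}$ over the smooth locus). This amounts to a careful comparison of the canonical isomorphisms at the level of the open dense strata where both sides restrict to (shifted) constant sheaves, tracking through the smooth base changes by $p_1$ and $p_2$ and through the Springer identification of the multiplicity space $A_{\chi_o}$ with the trivial $W_{\bf M}(\mathbf{L},\mathbf{C})$-module of Proposition \ref{Springer}.
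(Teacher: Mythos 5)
Your proposal is correct and follows essentially the same route as the paper, whose proof simply invokes the $\F_q$-rationality of the quotient diagram (last assertion of Proposition \ref{quotientgen}) together with the Frobenius-equivariant Springer decomposition and normalization of the $\phi_\chi$ discussed at the end of \S \ref{actionW}; your write-up just makes explicit the base-change and descent steps already used in Proposition \ref{A3theo}. Note that the factor you write as $q^{-r_\chi}$ is indeed the intended one (the $r_\ttauhat$ in the displayed formula is a typo).
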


\begin{proof}It follows from the last assertion of Proposition \ref{quotientgen} and the discussion at the end of \S \ref{actionW}.

\end{proof}

We can proceed as in G\"ottsche and Soergel \cite{Gottsche-Soergel} to prove the following proposition from the mixed Hodge module version of the isomorphism (\ref{A3ic}).

\begin{proposition} Assume $\K=\C$. Then 

\begin{equation} IH_c^i\big(\mathbb{Q}_{\bf L,P,\Sigma},\Q\big)\simeq IH_c^i\big(\calQ_\bfO,\Q\big)\oplus\left(\bigoplus_{\chi\in({\rm Irr}\,W_{\bf M})^*}A_\chi\otimes \left(IH_c^{i+2r_\chi}\big(\calQ_{\bfO_\chi},\Q\big)\otimes\Q(r_\chi)\right)\right)
 \label{decompMHS}\end{equation}is an isomorphism of mixed Hodge structures. 
 \end{proposition}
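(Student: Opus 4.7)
The plan is to lift the constructible isomorphism (\ref{A3ic}) to Saito's category of mixed Hodge modules on $\calQ_\bfO$ and then take compactly supported hypercohomology. The map $\rho/_{\PGL_n}:\mathbb{Q}_{\bf L,P,\Sigma}\to\calQ_\bfO$ is projective (since $\rho$ is projective over $\calV_\bfO$), and $\mathbb{Q}_{\bf L,P,\Sigma}$ is irreducible by Corollary \ref{strat2}. Hence $\pIC{\mathbb{Q}_{\bf L,P,\Sigma}}$ underlies a canonical polarizable pure Hodge module of weight $d_\bfO=\dim\mathbb{Q}_{\bf L,P,\Sigma}$; the equality $\dim\mathbb{Q}_{\bf L,P,\Sigma}=d_\bfO$ follows from Theorem \ref{strat} combined with Corollary \ref{lissite}, both sides equaling $(2g+k-2)n^2+2-\dim{\bf L}+\dim{\bf \Sigma}$.

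First, I would invoke Saito's decomposition theorem for proper direct images of pure Hodge modules, yielding a decomposition of $(\rho/_{\PGL_n})_*\pIC{\mathbb{Q}_{\bf L,P,\Sigma}}$ in $D^b\mathrm{MHM}(\calQ_\bfO)$ as a direct sum of shifts of semisimple pure Hodge modules. Forgetting the Hodge structure and comparing with (\ref{A3ic}), I would deduce that only the zeroth perverse cohomology contributes and that the simple constituents are precisely the $\pIC{\calQ_{\bfO_\chi}}$ (supported on the closures $\calQ_{\bfO_\chi}$ with trivial local systems on the open strata), with multiplicity space of dimension $\dim A_\chi$. This gives, in $\mathrm{MHM}(\calQ_\bfO)$, an isomorphism
$$(\rho/_{\PGL_n})_*\pIC{\mathbb{Q}_{\bf L,P,\Sigma}}\simeq\bigoplus_{\chi\in\mathrm{Irr}\,W_{\bf M}}B_\chi\otimes\pIC{\calQ_{\bfO_\chi}}$$
for some polarizable pure $\Q$-Hodge structures $B_\chi$ with underlying vector spaces $A_\chi$ (and $B_{\chi_o}=\Q$).

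Second, I would pin down the Tate twists by a weight argument in the spirit of G\"ottsche--Soergel. Since proper direct image preserves purity in Saito's theory, the left hand side is pure of weight $d_\bfO$; since $\pIC{\calQ_{\bfO_\chi}}$ is pure of weight $d_{\bfO_\chi}=d_\bfO+2r_\chi$, each $B_\chi$ must be pure of weight $-2r_\chi$. Any polarizable pure $\Q$-Hodge structure concentrated in Hodge type $(-r_\chi,-r_\chi)$ is a direct sum of copies of $\Q(r_\chi)$, so $B_\chi\simeq A_\chi\otimes\Q(r_\chi)$ with $A_\chi$ placed in Hodge type $(0,0)$.

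Finally, applying $\mathbb{H}_c^i$ (a functor of mixed Hodge structures in Saito's formalism) to the resulting isomorphism in $\mathrm{MHM}(\calQ_\bfO)$ and using $\mathbb{H}_c^i\bigl(\calQ_\bfO,\pIC Z\bigr)=IH_c^{\,i+\dim Z}\bigl(Z,\Q\bigr)$ for any closed irreducible subvariety $Z\subset\calQ_\bfO$, we obtain (\ref{decompMHS}) after reindexing by $d_\bfO$. The main technical obstacle is the weight-tracking step above: everything else is either immediate from Saito's formalism or already established at the level of perverse sheaves in Proposition \ref{A3theo}, but correctly matching the Tate twist $\Q(r_\chi)$ to the codimension of the stratum $\calQ_{\bfO_\chi}$ inside $\calQ_\bfO$ requires precisely the purity of the direct image and the uniqueness of polarizable pure Hodge structures of a given Hodge--Tate type.
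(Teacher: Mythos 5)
Your overall strategy is the one the paper intends (it simply says to take the mixed Hodge module version of (\ref{A3ic}) and proceed as in G\"ottsche--Soergel), and most of the formalism you invoke — Saito's decomposition theorem for the projective map $\rho/_{\PGL_n}$, matching the constituents against the topological decomposition of Proposition \ref{A3theo}, rigidity of polarizable variations with trivial monodromy, and taking $\mathbb{H}_c^i$ — is fine. The problem is the step you yourself single out as the crux: pinning down the Tate twists. Purity of the proper direct image does tell you that each multiplicity Hodge structure $B_\chi$ is polarizable pure of weight $-2r_\chi$, but it does \emph{not} tell you that $B_\chi$ is concentrated in Hodge type $(-r_\chi,-r_\chi)$. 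A polarizable pure $\Q$-Hodge structure of weight $2m$ need not be Hodge--Tate (weight-two structures with nonzero $(2,0)$-part abound), so the uniqueness statement you quote cannot be applied until Tate-ness of $B_\chi$ is established, and nothing in your argument establishes it. As written, the conclusion $B_\chi\simeq A_\chi\otimes\Q(r_\chi)$ is asserted, not proved.

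The missing input is geometric, and it is exactly what the G\"ottsche--Soergel argument supplies in the semismall setting: there the multiplicity spaces are identified with top-degree cohomology of the fibres, spanned by classes of irreducible components, hence of Tate type $(d,d)$ with $d$ half the codimension of the stratum. To repair your proof you should route the weight-tracking through that mechanism, e.g.\ by working with the semismall map obtained by descending $\tilde\pi$ (the map $\mathbb{Q}_{\bf \hat{L},\hat{P},\{\sigma\}}\rightarrow\calQ_\bfO$, cf.\ Propositions \ref{semi-small1} and \ref{cartesian}), whose fibres are products of parabolic Springer-type fibres in type $A$ and have cohomology of Tate type, and then extracting the $W_{\bf M}({\bf L,C})$-isotypic pieces; equivalently, one takes as input the mixed Hodge module (or, over $\F_q$, Frobenius-twisted) form of (\ref{A1ic})/(\ref{A3ic}) in which the twists $(r_\chi)$ already appear, as in Formula (\ref{decompquiverq}). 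With that identification in hand, the rest of your argument goes through; without it, the key equality of Hodge structures is exactly the point left unproved.
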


\subsection{A lemma}\label{A}

Assume that $\K=\overline{\F}_q$. Recall that $F:\gl_n\rightarrow\gl_n$ denotes the standard Frobenius endomorphism so that $(\gl_n)^F=\gl_n(\F_q)$.

Assume that $(\calO_1,\dots,\calO_k)$ is $F$-stable. We do not assume that the eigenvalues of the adjoint orbits $\calO_i$'s are in $\F_q$. 

\begin{lemma} We have $$|\PGL_n(\F_q)|\cdot\sum_{x\in\calQ_\bfO(\F_q)}\mathbf{X}_{\IC {\calQ_\bfO}}(x)=\sum_{x\in\calV_\bfO(\F_q)}\mathbf{X}_{\IC {\calV_\bfO}}(x)=\left\langle\Theta\otimes\mathcal{F}^{\gl_n}\big(\mathbf{X}_{\IC {\overline{\calO}_1}}\big)\otimes\cdots \otimes\mathcal{F}^{\gl_n}\big(\mathbf{X}_{\IC {\overline{\calO}_k}}\big),1\right\rangle_{\gl_n(\F_q)}$$where $\Theta:\gl_n(\F_q)\rightarrow\kappa$, $x\mapsto q^{gn^2+g\,{\rm dim}\, C_{\GL_n}(x)}$.
 \label{lemma}\end{lemma}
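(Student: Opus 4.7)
The plan is to prove the two equalities separately, each relying on one structural input already established in the paper.

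For the first equality, Proposition \ref{affinepGb} asserts that under the genericity hypothesis the quotient map $\pi\colon\calV_\bfO\to\calQ_\bfO$ is a principal $\PGL_n$-bundle in the \'etale topology; in particular it is smooth of relative dimension $n^2-1$ with geometrically connected fibres. Smooth pullback of intersection complexes along such a map (in the non-perverse normalisation, with no shift) gives $\pi^*\IC{\calQ_\bfO}\simeq\IC{\calV_\bfO}$, compatibly with the canonical Frobenius structures, so $\mathbf{X}_{\IC{\calV_\bfO}}(v)=\mathbf{X}_{\IC{\calQ_\bfO}}(\pi(v))$ for every $v\in\calV_\bfO(\F_q)$. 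Summing over $v$, I need each fibre $\pi^{-1}(x)$ for $x\in\calQ_\bfO(\F_q)$ to carry exactly $|\PGL_n(\F_q)|$ rational points: this fibre is a $\PGL_n$-torsor over $\Spec\F_q$, and since $\PGL_n$ is connected, Lang's theorem gives $H^1(\F_q,\PGL_n)=1$, so the torsor is trivial.

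For the second equality, Proposition \ref{restrictionVw} identifies $\IC{\calV_\bfO}$ with the restriction of $\IC{\bfO}$, so the middle sum rewrites as
\begin{equation*}
\sum_{\mathbf{y}\in\bfO(\F_q)}\mathbf{1}_{\nu(\mathbf{y})=0}\,\mathbf{X}_{\IC{\bfO}}(\mathbf{y}),\qquad \nu(\mathbf{y}):=\sum_i[A_i,B_i]+\sum_j X_j.
\end{equation*}
Fourier inversion on the additive group $\gl_n(\F_q)$ expands $\mathbf{1}_{z=0}=q^{-n^2}\sum_w\Psi(\Tr(zw))$, and the factorisation $\mathbf{X}_{\IC{\bfO}}(\mathbf{y})=\prod_j\mathbf{X}_{\IC{\overline{\calO}_j}}(X_j)$ (the $(\gl_n)^{2g}$ factor is smooth, so its IC sheaf is constant) lets me swap sums and split the expression as
\begin{equation*}
q^{-n^2}\sum_{w\in\gl_n(\F_q)}\prod_{i=1}^g\Bigl(\sum_{A,B}\Psi(\Tr([A,B]w))\Bigr)\prod_{j=1}^k\mathcal{F}^{\gl_n}\bigl(\mathbf{X}_{\IC{\overline{\calO}_j}}\bigr)(w),
\end{equation*}
the orbit factors being Fourier transforms by definition.

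The remaining key step is the commutator factor. Using $\Tr([A,B]w)=-\Tr(A[w,B])$ and summing over $A$ first, orthogonality of additive characters of $\gl_n(\F_q)$ collapses the $A$-sum to $q^{n^2}\mathbf{1}_{[w,B]=0}$; then summing over $B$ yields a factor $q^{\dim C_{\GL_n}(w)}$, for a total of $q^{n^2+\dim C_{\GL_n}(w)}$ per commutator. Taking the $g$-th power reproduces exactly $\Theta(w)=q^{gn^2+g\dim C_{\GL_n}(w)}$, while the prefactor $q^{-n^2}=|\gl_n(\F_q)|^{-1}$ matches the normalisation in $\langle\,,\,\rangle_{\gl_n(\F_q)}$, delivering the right-hand side of the lemma. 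Once the two structural inputs (Propositions \ref{affinepGb} and \ref{restrictionVw}) are in place, the proof is essentially a standard exercise in finite-field Fourier analysis; the only non-routine ingredient is the commutator identity producing $\Theta$, which is precisely the geometric counterpart of the calculation behind Theorem \ref{multicomp1}.
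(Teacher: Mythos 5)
Your proof is correct and follows essentially the same route as the paper: the first equality via the free $\PGL_n$-action, the principal-bundle pullback $q^*(\IC{\calQ_\bfO})\simeq\IC{\calV_\bfO}$ and Lang's theorem, and the second via Fourier analysis on $\gl_n(\F_q)$ with the commutator sum evaluating to $\Theta$. The only cosmetic difference is that you expand the delta function $\mathbf{1}_{z=0}$ directly as a sum of additive characters, whereas the paper packages the same computation as the convolution $\Xi*\mathbf{X}_{\IC{\overline{\calO}_1}}*\cdots*\mathbf{X}_{\IC{\overline{\calO}_k}}$ evaluated at $0$ together with the identity $|\gl_n^F|\,f(0)=\sum_x\calF^{\gl_n}(f)(x)$.
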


\begin{proof}  We continue to denote by $F$ the induced Frobenius endomorphism on $\calV_\bfO$. We will use the notation $\calV_\bfO^F$ instead of $\calV_\bfO(\F_q)$. Let $q:\calV_\bfO\rightarrow\calQ_\bfO$ be the quotient map.  Since $\PGL_n(\F_q)$ acts freely on $\calV_\bfO$ it induces an injective map $\calV_\bfO^F/\PGL_n(\F_q)\rightarrow\calQ_\bfO^F$. Since $\PGL_n(\overline{\F}_q)$ is connected, any $F$-stable orbit of $\calV_\bfO$ has a rational point. Hence the above map is also surjective. As $q$ is a principal $\PGL_n$-bundle we have $q^*(\IC {\calQ_\bfO})\simeq \IC {\calV_\bfO}$ and so $\mathbf{X}_{\IC {\calV_\bfO}}(x)=\mathbf{X}_{\IC {\calQ_\bfO}}(y)$ whenever $q(x)=y$. We thus deduce the first equality. 

If $i:\calV_\bfO\hookrightarrow\bfO$ denotes the inclusion, then by Proposition \ref{restrictionVw} we have $\IC {\calV_\bfO}=i^*\left(\IC {\bfO}\right)=i^*\left(\kappa^{\boxtimes\, 2g}\boxtimes\IC {\overline{\calO}_1}\boxtimes\cdots \boxtimes\IC {\overline{\calO}_k}\right)$ where $\kappa$ is the constant sheaf on $\GL_n$ and $\kappa^{\boxtimes\, 2g}:=\kappa\boxtimes\cdots\boxtimes\kappa$ ($2g$ times). Hence for $x=(a_1,b_1,\dots,a_g,b_g,x_1,\dots,x_k)\in\calV_\bfO^F$, we have $$\mathbf{X}_{\IC {\calV_\bfO}}(x)=\mathbf{X}_{\IC {\overline{\calO}_1}}(x_1)\cdots \mathbf{X}_{\IC {\overline{\calO}_k}}(x_k).$$For $z\in\gl_n^F$, put $$\Xi(z):=\sharp\left\{\left. (a_1,b_1,\dots,a_g,b_g)\in(\gl_n^F)^{2g}\right|\,\sum_i[a_i,b_i]=z\right\}.$$

Hence \begin{align*}\sum_{x\in\calV_\bfO^F}\mathbf{X}_{\IC {\calV_\bfO}}(x)&=\sum_{(x_1,\dots,x_k)\in\,\left(\overline{\calO}_1\times\cdots\times\overline{\calO}_k\right)^F}\Xi(-(x_1+\cdots+x_k))\,\mathbf{X}_{\IC {\overline{\calO}_1}}(x_1)\cdots \mathbf{X}_{\IC {\overline{\calO}_k}}(x_k)\\
&=\big(\Xi*\mathbf{X}_{\IC {\overline{\calO}_1}}*\cdots *\mathbf{X}_{\IC {\overline{\calO}_k}}\big)(0).\end{align*}By Formula (\ref{intfor}) we have $$|\gl_n^F|\cdot f(0)=\sum_{x\in\gl_n^F}\mathcal{F}^{\gl_n}(f)(x)$$for any $f\in\text{Fun}(\gl_n^F)$. We deduce that \begin{align*}\sum_{x\in\calV_\bfO^F}\mathbf{X}_{\IC {\calV_\bfO}}(x)&=|\gl_n^F|^{-1}\sum_{x\in\gl_n^F}\calF^{\gl_n}(\Xi)(x)\,\mathcal{F}^{\gl_n}\big(\mathbf{X}_{\IC {\overline{\calO}_1}}\big)(x)\cdots \mathcal{F}^{\gl_n}\big(\mathbf{X}_{\IC {\overline{\calO}_k}}\big)(x).\\&=\left\langle\calF^{\gl_n}(\Xi)\otimes\mathcal{F}^{\gl_n}\big(\mathbf{X}_{\IC {\overline{\calO}_1}}\big)\otimes\cdots \otimes\mathcal{F}^{\gl_n}\big(\mathbf{X}_{\IC {\overline{\calO}_k}}\big),1\right\rangle_{\gl_n^F}\end{align*}It remains to see that $\calF^{\gl_n}(\Xi)=\Theta$.

For $x\in\gl_n^F$, we have 

\begin{align*}\calF^{\gl_n}(\Xi)(x)&=\sum_y\Psi\left(\mu(x,y)\right)\Xi(y)\\
&=\sum_{(a_1,b_1,\dots,a_g,b_g)\in\,(\gl_n^F)^{2g}}\Psi\left(\mu\left(x,\sum_{i=1}^g[a_i,b_i]\right)\right)\\
&=\sum_{(a_1,b_1,\dots,a_g,b_g)\in\,(\gl_n^F)^{2g}}\prod_{i=1}^g\Psi\left(\mu(x,[a_i,b_i])\right)\\
&=\sum_{(a_1,b_1,\dots,a_g,b_g)\in\,(\gl_n^F)^{2g}}\prod_{i=1}^g\Psi\left(\mu(x,[a_i,b_i])\right)\\
&=\left(\sum_{a,b\in\gl_n^F}\Psi\left(\mu(x,[a,b])\right)\right)^g\\
&=\left(\sum_{a\in\gl_n^F}\sum_{b\in\gl_n^F}\Psi\left(\mu([x,a],b)\right)\right)^g\\
&=\left(|C_{\gl_n}(x)^F|\cdot |\gl_n^F|\right)^g=\Theta(x).
\end{align*}

 \end{proof}

\begin{proposition} Assume that ${\bf \Sigma}$ is a reduced to a point and that $({\bf L,P,\Sigma})$ is defined over $\F_q$. The varieties $\mathbb{V}_{\bf L,P,\Sigma}$ and $\mathbb{Q}_{\bf L,P,\Sigma}$ are polynomial count. Moreover, $$|\mathbb{Q}_{\bf L,P,\Sigma}(\F_q)|=\frac{|\mathbb{V}_{\bf L,P,\Sigma}(\F_q)|}{|\PGL_n(\F_q)|}.$$
\label{polycountresol}\end{proposition}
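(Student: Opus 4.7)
The plan is to exploit the smoothness of $\mathbb{V}_{\bf L,P,\Sigma}$ when ${\bf\Sigma}$ is a point, decompose the pushforward of its constant sheaf via the decomposition theorem, and reduce the resulting point count to character multiplicities that are already known to be polynomial in $q$.

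First I would observe that when each $\Sigma_i$ reduces to a single point, the nilpotent orbit $C_i=\{0\}$ is trivial, so the stratification of $\mathbb{V}_{\bf L,P,\Sigma}$ constructed in Theorem~\ref{strat} has a single stratum; hence $\mathbb{V}_{\bf L,P,\Sigma}=\mathbb{V}^o_{\bf L,P,\Sigma}$ is smooth and irreducible. In particular $\IC{\mathbb{V}_{\bf L,P,\Sigma}}=\kappa$, so $\mathbf{X}_{\IC{\mathbb{V}_{\bf L,P,\Sigma}}}\equiv 1$ and the Grothendieck--Lefschetz trace formula gives $|\mathbb{V}_{\bf L,P,\Sigma}(\F_q)|=\sum_{y\in\calV_\bfO(\F_q)}\mathbf{X}_{\rho_*(\kappa)}(y)$. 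A direct use of the dimension formula of Theorem~\ref{strat} (with ${\rm dim}\,{\bf\Sigma}=0$) shows that $\dim\mathbb{V}_{\bf L,P,\Sigma}=\dim\calV_\bfO$, so that $\rho$ is generically bijective.

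Second, I would invoke the decomposition (\ref{A2ic}) of $\rho_*\bigl(\pIC{\mathbb{V}_{\bf L,P,\Sigma}}\bigr)$ (which is defined over $\F_q$ up to Tate twists exactly as in (\ref{decompquiverq})) to rewrite
\begin{equation*}
|\mathbb{V}_{\bf L,P,\Sigma}(\F_q)|=\sum_{y\in\calV_\bfO(\F_q)}\!\!\mathbf{X}_{\IC{\calV_\bfO}}(y)\;+\;\sum_{\chi\ne\chi_o}\dim(A_\chi)\,q^{r_\chi}\!\!\sum_{y\in\calV_{\bfO_\chi}(\F_q)}\!\!\mathbf{X}_{\IC{\calV_{\bfO_\chi}}}(y),
\end{equation*}
where $r_\chi=(\dim\calV_\bfO-\dim\calV_{\bfO_\chi})/2\ge 0$ and $\bfO_\chi=(\gl_n)^{2g}\times\overline{\calO}_{\chi_1}\times\cdots\times\overline{\calO}_{\chi_k}$, with $\calO_{\chi_i}\subset\overline{\calO}_i$ the adjoint orbit attached to $\chi_i$ by the Springer correspondence. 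A key point to verify is that the tuple $(\calO_{\chi_1},\dots,\calO_{\chi_k})$ remains generic: the semisimple parts of the $\calO_{\chi_i}$'s coincide with those of the $\calO_i$'s, and genericity (Definition~\ref{genorb}) depends only on these semisimple parts. Applying Lemma~\ref{lemma} to each $\bfO_\chi$ converts the inner sum into $\bigl\langle\Theta\otimes\bigotimes_i\calF^\g(\mathbf{X}_{\IC{\overline{\calO}_{\chi_i}}}),\,1\bigr\rangle_{\g^F}$; Theorem~\ref{multicomp1} then rewrites this as $\frac{q^{d_{\bfO_\chi}/2}|G^F|}{q-1}\cdot\bigl\langle\Lambda\otimes\calX_1^\chi\otimes\cdots\otimes\calX_k^\chi,1\bigr\rangle_{G^F}$ for any compatible generic tuple of irreducible characters (which exists for $q$ sufficiently large), and Theorem~\ref{compmulti} together with Remark~\ref{dependence} identifies this multiplicity with $\H_{\omhat_\chi}(q)$, a polynomial in $q$. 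Summing a finite number of polynomial terms with non-negative integer coefficients $\dim(A_\chi)$ shows that $|\mathbb{V}_{\bf L,P,\Sigma}(\F_q)|$ is polynomial in $q$.

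Third, to handle $\mathbb{Q}_{\bf L,P,\Sigma}$ I would invoke Proposition~\ref{quotientgen}: the quotient map $\mathbb{V}_{\bf L,P,\Sigma}\to\mathbb{Q}_{\bf L,P,\Sigma}$ is a principal $\PGL_n$-bundle in the \'etale topology, and connectedness of $\PGL_n$ together with Lang's theorem ensures that every $F$-stable $\PGL_n$-orbit on $\mathbb{V}_{\bf L,P,\Sigma}$ contains an $\F_q$-point, with $\PGL_n(\F_q)$ acting freely and transitively on each such orbit. This immediately yields the identity $|\mathbb{V}_{\bf L,P,\Sigma}(\F_q)|=|\PGL_n(\F_q)|\cdot|\mathbb{Q}_{\bf L,P,\Sigma}(\F_q)|$ asserted in the proposition, and dividing shows $|\mathbb{Q}_{\bf L,P,\Sigma}(\F_q)|$ is a rational function of $q$ that takes integer values for infinitely many $q$, hence a polynomial.

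The main technical obstacle will be step two: keeping precise track of the $\F_q$-structure and Tate twists in the decomposition (\ref{A2ic}) so that Lemma~\ref{lemma} and Theorem~\ref{multicomp1} can be applied term by term, and checking that the tuples of orbits $(\calO_{\chi_1},\dots,\calO_{\chi_k})$ produced by Springer's correspondence remain generic (which reduces to the observation that Springer partners share the same semisimple part, hence the same eigenvalue data that enters the genericity condition).
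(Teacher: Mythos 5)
Your proposal is correct and follows essentially the same route as the paper: exploit that ${\bf\Sigma}$ being a point forces the variety to coincide with its smooth open stratum (so the IC sheaf is the constant sheaf), expand the pushforward via the Frobenius-equivariant decomposition (\ref{A2ic})/(\ref{decompquiverq}), and convert each stratum's point count into a generic character multiplicity through Lemma \ref{lemma}, Theorem \ref{multicomp1} and Theorem \ref{compmulti}, with the quotient identity coming from freeness of the connected $\PGL_n$-action and Lang's theorem. The paper carries out the computation on $\mathbb{Q}_{\bf L,P,\Sigma}$ and notes the argument for $\mathbb{V}_{\bf L,P,\Sigma}$ is similar, whereas you work on $\mathbb{V}_{\bf L,P,\Sigma}$ first; this is an immaterial difference.
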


\begin{proof} The second assertion follows from the fact that $\PGL_n$ is connected and acts freely on $\mathbb{V}_{\bf L,P,\Sigma}$, see beginning of the proof of Lemma \ref{lemma}. 

We only prove the first assertion for $\mathbb{Q}_{\bf L,P,\Sigma}$ as the proof for $\mathbb{V}_{\bf L,P,\Sigma}$ will be similar.

Since ${\bf \Sigma}$ is a point we have $\mathbb{Q}_{\bf L,P,\Sigma}=\mathbb{Q}_{\bf L,P,\Sigma}^o$ and so the variety $\mathbb{Q}_{\bf L,P,\Sigma}$ is nonsingular by Corollary \ref{strat2}. Hence $\IC {\mathbb{Q}_{\bf L,P,\Sigma}}$ is the constant sheaf $\kappa$ concentrated in degree $0$. By Formula (\ref{decompquiverq}) applied with $v=1$, we thus have \begin{equation}\mathbf{X}_{(\rho/_{\PGL_n})_*(\kappa)}=\mathbf{X}_{\IC {\calQ_\bfO}}+\sum_{\chi\in({\rm Irr}\,W_{\bf M})^*}\left({\rm dim}\, A_\chi\right) q^{-r_\chi}\mathbf{X}_{\IC {\calQ_{\bf O_\chi}}}\label{sum0}.\end{equation}By Grothendieck trace formula we have $$\sum_{x\in \calQ_\bfO^F}\mathbf{X}_{(\rho/_{\PGL_n})_*(\kappa)}(x)=|\mathbb{Q}_{\bf L,P,\Sigma}(\F_q)|.$$By Lemma \ref{lemma}, Theorem \ref{multicomp1} and Theorem  \ref{compmulti}, we see that there exists a rational function $Q\in \Q(T)$ such that for any $r\in\Z_{>0}$ $$\sum_{x\in\calQ_\bfO^{F^r}}\mathbf{X}_{\IC {\calQ_\bfO}}(x)=Q(q^r).$$By integrating Formula (\ref{sum0}) over $\calQ_\bfO^F$, we deduce that $$|\mathbb{Q}_{\bf L,P,\Sigma}(\F_{q^r})|=P(q^r)$$for some $P\in\Q(T)$. Since $P(q^r)$ is an integer for all $r\in\Z_{>0}$, the rational function $P$ must be a polynomial with rational coefficients. 
\end{proof}

\subsection{The split case}\label{pure}

In order to use Theorem \ref{HLRpure} we assume that $\K=\C$.  As in \cite[Appendix 7.1]{hausel-letellier-villegas}, we may define a finitely generated ring extension $R$ of $\Z$ and a $k$-tuple of $R$-schemes  $(\mathfrak{O}_1,\dots,\mathfrak{O}_k)$ such that $\mathfrak{O}_i$ is a spreading out of $\calO_i$ and such that for any ring homomorphism $\varphi:R\rightarrow \F_q$ into a finite field $\F_q$, the tuple $\left(\mathfrak{O}_1^{\,\varphi}(\overline{\F}_q),\dots,\mathfrak{O}_k^{\,\varphi}(\overline{\F}_q)\right)$ is a generic tuple of  adjoint orbits of $\gl_n(\overline{\F}_q)$ of same type as $(\calO_1,\dots,\calO_k)$. Denote by $\mathfrak{V}_\bfO$ the $R$-scheme defined from  $(\mathfrak{O}_1,\dots,\mathfrak{O}_k)$ as $\calV_\bfO$ was defined from $(\calO_1,\dots,\calO_k)$ (in the semisimple case this is written in details in \cite[Appendix A]{hausel-letellier-villegas}), and let $\mathfrak{Q}_\bfO$ be the affine quotient $\mathfrak{B}_\bfO/\!/\PGL_n$. Then $\mathfrak{V}_\bfO$  is a spreading out of $\calV_\bfO$. Recall (see for instance Crawley-Boevey and van den Bergh \cite[Appendix B]{crawley-boevey-etal}) that the standard constructions of GIT quotients are compatible with base change for $R$ sufficiently ``large'', namely in our case we have $\mathfrak{Q}_\bfO^\varphi=\mathfrak{V}_\bfO^\varphi/\!/\PGL_n$ for any ring homomorphism $\varphi:R\rightarrow k$ into a field $k$.

\begin{theorem} The cohomology group $IH_c^i\big(\calQ_\bfO,\C\big)$ vanishes if $i$ is odd. For any ring homomorphism $\varphi:R\rightarrow \F_q$ we have $$P_c\left(\calQ_\bfO,q\right)=\sum_{x\in \mathfrak{Q}_\bfO^{\,\varphi}(\F_q)}\mathbf{X}_{\IC {\mathfrak{Q}_\bfO^{\,\varphi}(\overline{\F}_q)}}(x)$$where $P_c(X,q):=\sum_i{\rm dim}\left(IH_c^{2i}(X,\C)\right)q^i$.\label{polycount}
 \end{theorem}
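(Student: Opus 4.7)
\noindent\emph{Proof plan.}
The goal is to verify that $\calQ_\bfO$ satisfies the property $(E)$ of \S\ref{epolynome} with respect to the stratification (\ref{partition}) and that $IH_c^\bullet(\calQ_\bfO,\C)$ is pure. Once both are in place, Proposition \ref{rempure} delivers the point-counting formula, while purity combined with the fact that $E^{ic}(\calQ_\bfO;x,y)$ depends only on $xy$ forces $IH_c^i(\calQ_\bfO,\C)=0$ for odd $i$. The argument will proceed by induction on the partial order $\unlhd$, the inductive hypothesis being that every closed stratum $\overline{\calQ_{\bfO'}^o}$ with $\bfO'\lneq\bfO$ is already pure and satisfies property $(E)$.

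The geometric engine of the induction step is the resolution
\[
\bar\rho:\mathbb{Q}_{\bf\hat L,\hat P,\{\sigma\}}\longrightarrow\calQ_\bfO,
\]
obtained by passing to $\PGL_n$-quotients in the composition of the semi-small resolution $\tilde\rho:\mathbb{V}_{\bf\hat L,\hat P,\{\sigma\}}\to\mathbb{V}_{\bf L,P,\Sigma}$ of Proposition \ref{cartesian} with $\rho:\mathbb{V}_{\bf L,P,\Sigma}\to\calV_\bfO$. Its source $\mathbb{Q}_{\bf\hat L,\hat P,\{\sigma\}}$ is nonsingular by Theorem \ref{strat} (its stratification has a single piece since $\hat\Sigma=\{\sigma\}$), is strongly polynomial count by Proposition \ref{polycountresol} applied with $({\bf\hat L,\hat P,\{\sigma\}})$, and has pure compactly supported cohomology via its identification with a quiver variety $\mathfrak{M}_{\xihat,\thetahat}(\v)$ combined with Theorem \ref{HLRpure}.

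Combining Proposition \ref{A3theo} for the map $\rho/_{\PGL_n}$ with the analogous decomposition of $(\tilde\rho/_{\PGL_n})_*\pIC{\mathbb{Q}_{\bf\hat L,\hat P,\{\sigma\}}}$ coming from the semi-smallness of $\tilde\rho$ and the Springer correspondence for the pairs $(L_i,C_i)$, the composed resolution will satisfy an isomorphism of perverse sheaves
\[
\bar\rho_*\bigl(\pIC{\mathbb{Q}_{\bf\hat L,\hat P,\{\sigma\}}}\bigr)\;\simeq\;\pIC{\calQ_\bfO}\;\oplus\;\bigoplus_{\bfO'\lneq\bfO} W_{\bfO'}\otimes\pIC{\overline{\calQ_{\bfO'}^o}}.
\]
Its hypercohomology incarnation, upgraded to mixed Hodge modules as in (\ref{decompMHS}), produces the isomorphism of mixed Hodge structures (\ref{isomhs}) required by property $(E)$, while spreading out the same decomposition to the ring $R$ yields the perverse-sheaf identity (\ref{isops}) over $\overline{\F}_q$. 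Combined with the inductive hypothesis this establishes $(E)$ for $\calQ_\bfO$, and purity of $IH_c^\bullet(\calQ_\bfO,\C)$ then propagates from purity on the left-hand side together with the inductive purity of each summand on the right.

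The hard part will be the careful upgrade of Proposition \ref{A3theo} from an isomorphism of perverse sheaves to a genuine isomorphism of mixed Hodge structures on $IH_c^\bullet$ in Saito's mixed Hodge module framework, together with the delicate verification that the generic tuples, the triples $({\bf L,P,\Sigma})$ and $({\bf\hat L,\hat P,\{\sigma\}})$, the resolutions $\tilde\rho$, $\rho$ and $\bar\rho$, the stratification (\ref{partition}) and the decomposition isomorphism above all descend to a common spreading out over a finitely generated subring $R\subset\C$; this last step is a standard but intricate argument in the spirit of \cite{hausel-villegas} and \cite{hausel-letellier-villegas}.
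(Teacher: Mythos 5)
Your proposal is correct and follows essentially the same route as the paper: verify the hypotheses of Theorem \ref{Katz2} (property $(E)$) via the decomposition results of \S\ref{decompquiver} applied to the nonsingular resolution $\mathbb{Q}_{\bf \hat{L},\hat{P},\{\sigma\}}$ together with Proposition \ref{polycountresol}, establish purity of $\calQ_\bfO$ through the identification with quiver varieties and Theorem \ref{HLRpure}, and conclude with Proposition \ref{rempure}, the odd-degree vanishing coming from purity plus the fact that the $E$-polynomial depends only on $xy$. The only cosmetic differences are that you make the induction over strata (which the paper delegates to Theorem \ref{Katz2}) explicit, and you deduce purity from the direct-summand decomposition for $\bar\rho$ rather than from the resolution $\mathfrak{M}_{\xihat_\bfO,\thetahat}(\v_\bfO)\rightarrow\calQ_\bfO$ used in Theorem \ref{purequiv}; both variants are sound.
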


\begin{theorem} If not empty, the variety $\calQ_\bfO$ is pure.\label{purequiv}\end{theorem}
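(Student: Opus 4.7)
Since $\calQ_\bfO$ is assumed non-empty and $(\calO_1,\dots,\calO_k)$ is generic, Lemma \ref{existence} forces $d:=\gcd\{|\omega_i^j|\}=1$. My first step is to deduce from this that $\v_\bfO$ is indivisible: the consecutive differences of the coordinates of $\v_\bfO$ along each leg of $\Gamma_\bfO$ are exactly the column lengths of the Young diagrams of the $\omega_i^j$, and each $|\omega_i^j|$ is a sum of such column lengths, so any common divisor of the coordinates of $\v_\bfO$ must divide every $|\omega_i^j|$ and hence equal $1$. Consequently the set $D_{\v_\bfO}$ of Definition \ref{gen6} is non-empty, and I fix a generic stability parameter $\thetahat\in D_{\v_\bfO}$.

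Next I would pass to a smooth resolution. Over $\C$, Proposition \ref{crawley} identifies $\calQ_\bfO$ with the quiver variety $\mathfrak{M}_\xihat(\v_\bfO)$, and Theorem \ref{nonemptymu} applied to the non-empty $\calQ_\bfO$ gives $\v_\bfO\in\Phi^+(\Gamma_\bfO)$ together with $\mathfrak{M}_\xihat^s(\v_\bfO)\neq\emptyset$. Proposition \ref{resolquiver} then produces a resolution of singularities
$$\pi:\mathfrak{M}_{\xihat,\thetahat}(\v_\bfO)\longrightarrow \mathfrak{M}_\xihat(\v_\bfO)\simeq\calQ_\bfO$$
whose source is smooth and irreducible. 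By Theorem \ref{HLRpure} the cohomology of this source carries a pure mixed Hodge structure, and purity is inherited by $H_c^*(\mathfrak{M}_{\xihat,\thetahat}(\v_\bfO),\C)$ via Poincar\'e duality on the smooth target.

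The final step will be Saito's decomposition theorem in the category of mixed Hodge modules, applied to the proper birational map $\pi$. Since the source is smooth and irreducible and $\pi$ is birational, this yields a MHM-direct-sum decomposition
$$\pi_*\bigl(\pIC {\mathfrak{M}_{\xihat,\thetahat}(\v_\bfO)}\bigr)\simeq \pIC{\calQ_\bfO}\oplus K,$$
with $K$ supported on a proper closed subvariety of $\calQ_\bfO$. Taking compactly supported hypercohomology realises $IH_c^*(\calQ_\bfO,\C)$ (up to a degree shift) as a direct summand of the pure MHS $H_c^*(\mathfrak{M}_{\xihat,\thetahat}(\v_\bfO),\C)$, and any direct summand of a pure mixed Hodge structure is itself pure; this gives the desired conclusion.

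The one genuine difficulty is the very first reduction, producing a smooth resolution with pure compactly supported cohomology: without indivisibility of $\v_\bfO$ there is no generic $\thetahat$ and Proposition \ref{resolquiver} is unavailable. The elementary numerical compatibility between Lemma \ref{existence} and the column statistics of the $\omega_i^j$ is exactly what supplies this indivisibility. Once that point is secured, Theorem \ref{HLRpure} and Saito's decomposition theorem are off-the-shelf ingredients.
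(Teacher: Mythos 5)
Your proof is correct and follows essentially the same route as the paper: use Theorem \ref{nonemptymu} to get $\mathfrak{M}_{\xihat_\bfO}^s(\v_\bfO)\neq\emptyset$, resolve $\calQ_\bfO$ by $\mathfrak{M}_{\xihat_\bfO,\thetahat}(\v_\bfO)$ for a generic $\thetahat$ (Proposition \ref{resolquiver}), and combine purity of the resolution (Theorem \ref{HLRpure}) with the decomposition theorem to exhibit $IH_c^*(\calQ_\bfO,\C)$ as a direct summand of a pure mixed Hodge structure. Your preliminary check that genericity of $(\calO_1,\dots,\calO_k)$ forces $\v_\bfO$ to be indivisible, so that a generic $\thetahat$ actually exists, fills in a point the paper leaves tacit; the only quibble is that the Poincar\'e duality you invoke lives on the smooth source of $\pi$, not its (singular) target.
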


\begin{proof}  Let $\thetahat$ be generic with respect to $\v_\bfO$. Since $\calQ_\bfO\neq\emptyset$, by Theorem \ref{nonemptymu}, we have $\calQ_\bfO^o\neq\emptyset$ and so $\mathfrak{M}_{\xihat_\bfO}^s(\v_\bfO)\simeq\calQ_\bfO^o$ is also not empty. The canonical projective map $\mathfrak{M}_{\xihat_\bfO, \thetahat}(\v_\bfO)\rightarrow \calQ_\bfO$ is then a resolution of singularities by Theorem \ref{resolquiver} and so the group $IH_c^i(\calQ_\bfO,\C)$ is a direct summand of $H_c^i(\mathfrak{M}_{\xihat_\bfO, \thetahat}(\v_\bfO),\C)$ as a mixed Hodge structure. By Theorem \ref{HLRpure}, the variety $\mathfrak{M}_{\xihat_\bfO, \thetahat}(\v_\bfO)$ is pure, hence so is $\calQ_\bfO$. \end{proof}

\begin{proof}[Proof of Theorem \ref{polycount}] By \S \ref{decompquiver} and Proposition \ref{polycountresol}, the variety $\calQ_\bfO$ satisfies the condition of Theorem \ref{Katz2}. Hence the theorem follows from Proposition \ref{rempure} and Theorem \ref{purequiv}. \end{proof}

Let $m:\tilde{{\bf T}}_n\rightarrow {\bf T}_n$ be the map that sends  $\tomega=\omega^1\cdots\omega^r\in\tilde{{\bf T}}_n$ to $(1,\omega^1)\cdots(1,\omega^r)\in{\bf T}_n$, and denote by $m^k$ the map $(m,\dots,m):(\tilde{{\bf T}}_n)^k\rightarrow ({\bf T}_n)^k$. 

Recall (see \S \ref{gen}) that a generic tuple of irreducible characters of $\GL_n(\F_q)$ of a given type $\omhat\in({\bf T}_n)^k$ always exists assuming that ${\rm char}(\F_q)$ and $q$ are large enough.

 We have the following relation between multiplicities and Poincar\'e polynomials of quiver varieties.

\begin{theorem} Let $\tomhat$ be the type of $(\calO_1,\dots,\calO_k)$ and let $\F_q$ be a finite field such that there exists a ring homomorphism $R\rightarrow\F_q$. Then for any generic tuple   $(\calX_1,\dots,\calX_k)$ of irreducible characters of $\GL_n(\F_q)$ of type $m^k(\tomhat)$ we have

$$P_c(\calQ_\bfO,q)=q^{d_\bfO/2}\left\langle \Lambda\otimes \calX_1\otimes\cdots\otimes\calX_k,1\right\rangle.$$
 \label{multiPP}

\end{theorem}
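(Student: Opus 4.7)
The plan is to chain together Theorem \ref{polycount}, Lemma \ref{lemma} and Theorem \ref{multicomp1}, which together reduce this to a bookkeeping of constants. First I would fix a ring homomorphism $\varphi:R\rightarrow\F_q$ (enlarging $R$ if necessary so that $q$ is large enough and ${\rm char}\,\F_q$ does not divide any of the integers involved in the existence of generic character tuples, cf.\ \S \ref{gen}). Set $\calO_i^\varphi:=\mathfrak{O}_i^\varphi(\overline{\F}_q)$; by construction $\bigl(\calO_1^\varphi,\dots,\calO_k^\varphi\bigr)$ is a generic $F$-stable tuple of adjoint orbits of $\gl_n(\overline{\F}_q)$ of the same type (in $\tilde{\bf T}_n^k$) as $(\calO_1,\dots,\calO_k)$. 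Since the eigenvalues of each $\calO_i$ can be arranged to lie in the fraction field of $R$ and then pushed to $\F_q$ via $\varphi$, the type of $\calO_i^\varphi$ in the sense of \S \ref{gen} (i.e.\ in ${\bf T}_n$) is precisely $m(\tomega_i)$, by Remark \ref{remgen}. Thus the tuple $(\calX_1,\dots,\calX_k)$ of generic irreducible characters of $\GL_n(\F_q)$ of type $m^k(\tomhat)$ is of the same type as $\bigl(\calO_1^\varphi,\dots,\calO_k^\varphi\bigr)$, so Theorem \ref{multicomp1} is applicable.

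Next, Theorem \ref{polycount} (which relies crucially on Theorem \ref{purequiv} that $\calQ_\bfO$ is pure) gives
\begin{equation*}
P_c\bigl(\calQ_\bfO,q\bigr)=\sum_{x\in\mathfrak{Q}_\bfO^{\,\varphi}(\F_q)}\mathbf{X}_{\IC {\mathfrak{Q}_\bfO^{\,\varphi}(\overline{\F}_q)}}(x).
\end{equation*}
Applying Lemma \ref{lemma} to the $F$-stable generic tuple $\bigl(\calO_1^\varphi,\dots,\calO_k^\varphi\bigr)$ converts this into
\begin{equation*}
P_c\bigl(\calQ_\bfO,q\bigr)=\frac{1}{|\PGL_n(\F_q)|}\left\langle\Theta\otimes\mathcal{F}^{\gl_n}\bigl(\mathbf{X}_{\IC {\overline{\calO_1^\varphi}}}\bigr)\otimes\cdots\otimes\mathcal{F}^{\gl_n}\bigl(\mathbf{X}_{\IC {\overline{\calO_k^\varphi}}}\bigr),1\right\rangle_{\gl_n(\F_q)}.
\end{equation*}
Finally, reading Theorem \ref{multicomp1} backwards for $(\calX_1,\dots,\calX_k)$ rewrites the inner product on the right-hand side as $\frac{|\GL_n(\F_q)|\,q^{d_\bfO/2}}{q-1}\langle\Lambda\otimes\calX_1\otimes\cdots\otimes\calX_k,1\rangle_{\GL_n(\F_q)}$. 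Using $|\GL_n(\F_q)|/|\PGL_n(\F_q)|=q-1$ the factors collapse, yielding the announced formula
\begin{equation*}
P_c(\calQ_\bfO,q)=q^{d_\bfO/2}\left\langle \Lambda\otimes\calX_1\otimes\cdots\otimes\calX_k,1\right\rangle.
\end{equation*}

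There is no real obstacle, since all the hard content lives in the results already proved: Theorem \ref{purequiv} (purity, via the Nakajima resolution and Theorem \ref{HLRpure}), the Katz-type polynomial-count formalism packaged in Theorem \ref{polycount}, the Fourier-theoretic identity in Lemma \ref{lemma}, and the commutation of Deligne--Lusztig induction with Fourier transforms underlying Theorem \ref{multicomp1}. The only point worth being careful about is the matching of types across the two notions (over $\C$ in $\tilde{\bf T}_n$ versus over $\F_q$ in ${\bf T}_n$), handled by arranging the spreading out so that all eigenvalues become $\F_q$-rational, and the verification that both the generic tuple of orbits $\bigl(\calO_1^\varphi,\dots,\calO_k^\varphi\bigr)$ and a generic tuple of characters of type $m^k(\tomhat)$ actually exist for $q$ large enough, which is ensured by Lemma \ref{existence} and the discussion at the end of \S \ref{gen}.
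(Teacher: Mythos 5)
Your proposal is correct and follows essentially the same route as the paper: the paper's proof likewise fixes a homomorphism $\varphi:R\rightarrow\F_q$, combines Theorem \ref{polycount} with Lemma \ref{lemma} to express $P_c(\calQ_\bfO,q)$ as $\frac{1}{|\PGL_n(\F_q)|}$ times the inner product of $\Theta$ with the Fourier transforms of the $\mathbf{X}_{\IC{\overline{\mathfrak{O}}_i}}$, and then concludes by Theorem \ref{multicomp1}. Your extra care about matching the two notions of type via Remark \ref{remgen} and the constant bookkeeping $|\GL_n(\F_q)|/|\PGL_n(\F_q)|=q-1$ simply makes explicit what the paper leaves implicit in its spreading-out setup.
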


\begin{remark}In the above theorem the existence of a ring homomorphism $R\rightarrow\F_q$ guaranty the existence of a generic tuple of irreducible characters of $\GL_n(\F_q)$.\end{remark}

\begin{proof}[Proof of Theorem \ref{multiPP}] Fix a ring homomorphism $\varphi:R\rightarrow \F_q$. To alleviate the notation we use $\mathfrak{O}_i$ instead of $\mathfrak{O}_i^\varphi(\overline{\F}_q)$. From Theorem \ref{polycount} and Lemma \ref{lemma}, we have 

$$P_c(\calQ_\bfO,q)=\frac{1}{|\PGL_n(\F_q)|}\left\langle \Theta\otimes \calF^{\gl_n}\left(\mathbf{X}_{\IC {\overline{\mathfrak{O}}_1}}\right)\otimes\cdots\otimes\calF^{\gl_n}\left(\mathbf{X}_{\IC {\overline{\mathfrak{O}}_k}}\right), 1\right\rangle.$$Hence Theorem \ref{multiPP} follows from Theorem \ref{multicomp1}
 \end{proof}

From the above theorem and  Theorem \ref{compmulti} we deduce the following result.

\begin{corollary}  Assume that $(\calO_1,\dots,\calO_k)$ is of type $\tomhat\in(\tilde{\bf T}_n)^k$. Then $$P_c\left(\calQ_\bfO,q\right)=q^{d_\bfO/2}\mathbb{H}_{m^k(\tomhat)}(q).$$\end{corollary}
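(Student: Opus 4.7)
The plan is to obtain the corollary as an immediate combination of the two cited results. Concretely, I would first invoke Theorem \ref{multiPP}, which requires choosing a finite field $\F_q$ admitting a ring homomorphism $R\to\F_q$ (where $R$ is the finitely generated ring constructed in \S \ref{pure} over which the tuple $(\calO_1,\dots,\calO_k)$ spreads out), together with a generic tuple $(\calX_1,\dots,\calX_k)$ of irreducible characters of $\GL_n(\F_q)$ of type $m^k(\tomhat)$. The existence of such an $\F_q$ is automatic once we enlarge $R$, and the existence of a generic tuple of irreducible characters of a prescribed type is guaranteed provided ${\rm char}(\F_q)$ and $q$ are large enough (see the discussion in \S \ref{gen}). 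Theorem \ref{multiPP} then yields
\[
P_c(\calQ_\bfO,q)=q^{d_\bfO/2}\left\langle \Lambda\otimes\calX_1\otimes\cdots\otimes\calX_k,1\right\rangle.
\]

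Next I would apply Theorem \ref{compmulti} to the generic tuple $(\calX_1,\dots,\calX_k)$ of type $m^k(\tomhat)$, which gives
\[
\left\langle \Lambda\otimes\calX_1\otimes\cdots\otimes\calX_k,1\right\rangle=\H_{m^k(\tomhat)}(q).
\]
Combining the two identities produces the desired formula $P_c(\calQ_\bfO,q)=q^{d_\bfO/2}\H_{m^k(\tomhat)}(q)$ for all sufficiently large $q$ arising from a homomorphism $R\to\F_q$. Since both sides are polynomials in $q$ (the left-hand side by Theorem \ref{polycount} together with the polynomial count property established in \S \ref{pure}, and the right-hand side by Remark \ref{dependence}), an equality holding on an infinite set of values of $q$ forces equality as polynomials, which completes the proof.

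There is no serious obstacle here: the only point requiring mild care is the compatibility of type conventions, namely that when $(\calO_1,\dots,\calO_k)$ has type $\tomhat\in(\tilde{\bf T}_n)^k$ in the sense of \S \ref{partype} (i.e.\ as a tuple of $\GL_n(\overline{\F}_q)$-orbits indexed by sequences of partitions), the corresponding tuple of $\gl_n(\F_q)$-orbits for a generic spreading has type $m^k(\tomhat)\in({\bf T}_n)^k$ in the refined sense of \S \ref{gen}. This is exactly the content of Remark \ref{remgen}, which ensures that the two notions of type coincide under $m^k$ when the relevant eigenvalues of the $\calO_i$'s lie in $\F_q$, so the hypotheses of Theorem \ref{multiPP} and Theorem \ref{compmulti} match up correctly.
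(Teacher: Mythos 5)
Your proposal is correct and follows essentially the same route as the paper: the corollary is deduced there directly by combining Theorem \ref{multiPP} with Theorem \ref{compmulti}, exactly as you do. Your extra remarks (polynomiality of both sides so that agreement at infinitely many $q$ gives a polynomial identity, and the type compatibility via Remark \ref{remgen}) are just explicit versions of points the paper leaves implicit.
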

 
\subsection{The general case}\label{generalcase}

Here $\K=\C$. Fix $\w\in W_{\bf M}({\bf L,C})$  and put 

$$P_c^\w\left(\mathbb{Q}_{\bf L,P,\Sigma};q\right):=\sum_i{\rm Tr}\left(\w\,\left|\, IH_c^{2i}\left(\mathbb{Q}_{\bf L,P,\Sigma},\C\right)\right.\right)q^i.$$

We now explain how to associate a multitype $\omhat=(\omega_1,\dots,\omega_k)\in({\bf T}_n)^k$ from the triple $({\bf L,C},\w)$.

Let $w_i$ be the coordinate of $\w$ in $W_{M_i}(L_i,C_i)$. In \S \ref{adjoint} we showed how to associate to $(L_i,C_i)$ a type $\tomega_i\in\tilde{{\bf T}}_n$. Write 

$$\tomega_i=\underbrace{\omega_i^1\cdots\omega_i^1}_{d_{i,1}}\underbrace{\omega_i^2\cdots\omega_i^2}_{d_{i,2}}\cdots\underbrace{\omega_i^{r_i}\cdots\omega_i^{r_i}}_{d_{i,r_i}}$$with $\omega_i^j\neq\omega_i^s$ if $j\neq s$. The group $W_{\GL_n}(L_i,C_i)$ is then isomorphic to $W_{\tomega_i}=\prod_{j=1}^{r_i}S_{d_{i,j}}$ and so the conjugacy classes of $W_{\GL_n}(L_i,C_i)$ are in bijection with $\mathfrak{H}^{-1}(\tomega_i)\subset {\bf T}_n$, see \S \ref{partype}. Hence to $w_i\in W_{M_i}(L_i,C_i)\subset W_{\GL_n}(L_i,C_i)$ corresponds a unique element in $\mathfrak{H}^{-1}(\tomega_i)$ which we denote by $\omega_i$.

\subsubsection{The main theorem}

Let $R$ be the finitely generated ring extension of $\Z$ considered in \S \ref{pure}.
 The main theorem of the paper is the following one.

\begin{theorem} Let $\F_q$ be a finite field such that there exists a ring homomorphim $R\rightarrow\F_q$. Let $(\calX_1,\dots,\calX_k)$ be a generic tuple of irreducible characters of $\GL_n(\F_q)$ of type $\omhat$. Then

$$P_c^\w\left(\mathbb{Q}_{\bf L,P,\Sigma};q\right)=q^{d_\bfO/2}\left\langle\Lambda\otimes \calX_1\otimes\cdots\otimes\calX_k,1\right\rangle.$$
\label{multiPP2}\end{theorem}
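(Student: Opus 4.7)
The plan is to combine the decomposition of $(\rho/_{\PGL_n})_*(\pIC{\mathbb{Q}_{\bf L,P,\Sigma}})$ from \S\ref{decompquiver} with the split-case result Theorem \ref{multiPP} and reduce the theorem to a character-theoretic identity on $G^F=\GL_n(\F_q)$. First I would spread $({\bf L,P,\Sigma})$ over a finitely generated ring $R\subset\C$ large enough that for every ring homomorphism $\varphi:R\to\F_q$ with $q$ sufficiently large, the reduction of $(\calO_1,\dots,\calO_k)$ is an $F$-stable generic tuple whose type equals $\omhat$, each tuple $(\calO_{\chi_1}^\varphi,\dots,\calO_{\chi_k}^\varphi)$ indexed by $\chi=(\chi_i)\in{\rm Irr}\,W_{\bf M}$ is again $F$-stable and generic, and generic tuples of irreducible characters of $\GL_n(\F_q)$ of all the requisite types exist.

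The first main step is to sum formula (\ref{decompquiverq}) over $\calQ_\bfO(\F_q)$. By the Grothendieck--Lefschetz trace formula applied to the complex $K=(\rho/_{\PGL_n})_*(\pIC{\mathbb{Q}_{\bf L,P,\Sigma}})$ and the identification $\H_c^i(\calQ_\bfO,K)=IH_c^{i+d_\bfO}(\mathbb{Q}_{\bf L,P,\Sigma},\C)$, the total equals $\sum_i(-1)^i{\rm Tr}(\w F\,|\,\H_c^i(\calQ_\bfO,K))$. The purity of $\mathbb{Q}_{\bf L,P,\Sigma}$, inherited from the purity of each $\calQ_{\bfO_\chi}$ (Theorem \ref{purequiv}) via the decomposition (\ref{decompMHS}), forces Frobenius to act on $IH_c^{2j}$ with all eigenvalues equal to $q^j$, so the alternating trace collapses to $(-1)^{d_\bfO}P_c^{\w}(\mathbb{Q}_{\bf L,P,\Sigma};q)$. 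On the characteristic-function side the signs $(-1)^{d_{\bfO_\chi}}=(-1)^{d_\bfO}$ (using $d_{\bfO_\chi}=d_\bfO+2r_\chi$) agree uniformly when converting $\pIC$ to $\IC$, and Theorem \ref{polycount} identifies each $\sum_{x\in\calQ_{\bfO_\chi}^F}\mathbf{X}_{\IC{\calQ_{\bfO_\chi}}}(x)$ with $P_c(\calQ_{\bfO_\chi};q)$. Equating yields
$$P_c^{\w}(\mathbb{Q}_{\bf L,P,\Sigma};q)=\sum_{\chi\in{\rm Irr}\,W_{\bf M}}{\rm Tr}(\w\,|\,A_\chi)\,q^{-r_\chi}\,P_c(\calQ_{\bfO_\chi};q).$$

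Next I would apply Theorem \ref{multiPP} to each $\calQ_{\bfO_\chi}$. Its proof uses only Theorem \ref{polycount}, Lemma \ref{lemma}, and Theorem \ref{multicomp1}, hence goes through for any $F$-structure on the orbits and not only the split one. This gives $P_c(\calQ_{\bfO_\chi};q)=q^{d_{\bfO_\chi}/2}\langle\Lambda\otimes\calX_1^\chi\otimes\cdots\otimes\calX_k^\chi,1\rangle_{G^F}$, and the combination $q^{-r_\chi}\cdot q^{d_{\bfO_\chi}/2}=q^{d_\bfO/2}$ factors out uniformly. Theorem \ref{multiPP2} therefore reduces to the character identity
$$\langle\Lambda\otimes\calX_1\otimes\cdots\otimes\calX_k,1\rangle_{G^F}=\sum_{\chi\in{\rm Irr}\,W_{\bf M}}{\rm Tr}(\w\,|\,A_\chi)\,\langle\Lambda\otimes\calX_1^\chi\otimes\cdots\otimes\calX_k^\chi,1\rangle_{G^F}.$$

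The hard part will be this last identity. Applying Theorem \ref{multicomp1} to both sides and using ${\rm Tr}(\w\,|\,A_\chi)=\prod_i{\rm Tr}(w_i\,|\,A_{\chi_i})$ together with $d_\bfO-d_{\bfO_\chi}=\sum_i(d_{\calO_i}-d_{\calO_{\chi_i}})$, the identity factors and it suffices to prove, for each $i\in\{1,\dots,k\}$, the equality of functions on $\gl_n(\F_q)$
$$\calF^{\g}\!\left(\mathbf{X}_{\IC{\overline\calO_i}}\right)=\sum_{\chi_i\in{\rm Irr}\,W_{M_i}}{\rm Tr}(w_i\,|\,A_{\chi_i})\,q^{(d_{\calO_i}-d_{\calO_{\chi_i}})/2}\,\calF^{\g}\!\left(\mathbf{X}_{\IC{\overline\calO_{\chi_i}}}\right).$$
By Proposition \ref{Springer} the right-hand side is the Fourier transform of the $w_i$-twisted characteristic function $\mathbf{X}_{(p_i)_*(\pIC{\mathbb{X}_{L_i,P_i,\Sigma_i}}),\,\theta_{w_i}\circ\tilde\varphi_i}$. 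Expanding each $\calF^{\g}(\mathbf{X}_{\IC{\overline\calO_{\chi_i}}})$ via Theorem \ref{charfor} as a weighted sum of Deligne--Lusztig inductions, using Proposition \ref{indrel2} to recognize $\sum_{\chi_i}{\rm Tr}(w_i|A_{\chi_i})\tilde\varphi_{\chi_i}(uF)$ as the induced extended character $({\rm Ind}_{W_{L_i}\dot w_i}^{W_{M_i}}\tilde\varphi_{w_i})(uF)$ attached to the coset $W_{L_i}\dot w_i$, and invoking transitivity of parabolic induction (Proposition \ref{basic}) together with the Fourier--Deligne--Lusztig commutation Theorem \ref{theolet}, the right-hand side reassembles into precisely the expression produced by Theorem \ref{charfor} for $\calF^{\g}(\mathbf{X}_{\IC{\overline\calO_i}})$ with the Springer datum attached to the triple $(L_i,C_i,w_i)$. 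Combining the $k$ factors completes the proof.
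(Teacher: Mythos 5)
Your first half is essentially the paper's argument: Formula (\ref{PPeq}) (which you re-derive by point counting and purity, where the paper simply reads it off from the mixed Hodge decomposition (\ref{decompMHS}) over $\C$ — a harmless detour, though the compatibility of the $\w$-action with the comparison between $\ell$-adic and Betti intersection cohomology would need a word), then Theorem \ref{multiPP} applied to each $\calQ_{\bfO_\chi}$, reducing everything to the multiplicity identity $\langle\Lambda\otimes \calX_1\otimes\cdots\otimes\calX_k,1\rangle=\sum_{\chi}{\rm Tr}(\w\,|\,A_\chi)\,\langle\Lambda\otimes \calX_1^\chi\otimes\cdots\otimes\calX_k^\chi,1\rangle$. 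The gap is in your final paragraph: this identity does \emph{not} factor into the pointwise identity of functions on $\gl_n(\F_q)$ that you assert. To apply Theorem \ref{multicomp1} to the left-hand side you must use a generic tuple of adjoint orbits of $\g^F$ of the \emph{same type} $\omhat$ as the characters; by Remark \ref{remgen} such an orbit $\calO'_i$ has, for each degree-$d$ part of $\omega_i$ with $d>1$, eigenvalues generating $\F_{q^{d}}$, whereas every $\calO_{\chi_i}$ lies in $\overline{\calO}_i$ and has the $\F_q$-rational eigenvalues of $\sigma_i$ (and even for $\w=1$ the orbits of type $\omega_i$ are not the $\calO_{\chi_i}$: they have one eigenvalue per factor of $L_i$, not of $M_i$). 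Applying Fourier inversion to your proposed per-factor identity would give $\mathbf{X}_{\IC {\overline{\calO'_i}}}=\sum_{\chi_i}{\rm Tr}(w_i\,|\,A_{\chi_i})\,q^{\ast}\,\mathbf{X}_{\IC {\overline{\calO}_{\chi_i}}}$, an equality of nonzero functions with different (in the twisted case disjoint) supports — false. The right-hand side is, as you correctly note, the $w_i$-twisted characteristic function of $(p_i)_*\bigl(\pIC {\mathbb{X}_{L_i,P_i,\Sigma_i}}\bigr)$, supported on $\overline{\calO}_i$; it is not the Fourier transform of the $\IC{}$ function of any orbit of type $\omega_i$, whose Springer-type datum $(\mathfrak{l}_i,\eta'_i,\varphi_i)$ involves additive characters attached to non-rational semisimple elements rather than to $\sigma_i$. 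So the reassembly via Theorem \ref{charfor} and Theorem \ref{theolet} cannot produce the claimed equality of functions.

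What is true — and all that is needed — is the equality of the global inner products, and this is precisely where genericity enters irreducibly: by Theorem \ref{compmulti} each side depends only on the types, giving $\H_\omhat(q)$ on the left and $\sum_{\chi}{\rm Tr}(\w\,|\,A_\chi)\,\H_{\ttauhat_\chi}(q)$ on the right, and the paper concludes by proving the purely combinatorial identity (\ref{Sfor}) relating $s_{\omega'}(\x)$ to the $s_{\ttau'_\chi}(\x)$, factor by factor, via the identification of ${\rm Tr}(w\,|\,A_\chi)$ with twisted Littlewood--Richardson coefficients (Propositions \ref{twistedLR} and \ref{twistedLRTR}, which rest on Proposition \ref{indrel2}). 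So the representation-theoretic ingredient you invoke is the right one, but it has to be applied inside the symmetric-function calculus defining $\H_\omhat$ (using $s_{\omega_i}(\x)=s_{\omega_i^1}(\x^{d_{i,1}})\cdots$ and its Schur expansion), not as a pointwise identity of Fourier transforms on $\gl_n(\F_q)$; as it stands, the last step of your proof is not repairable without this change of level.
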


\begin{remark} Assume that $\w=1$, i.e., the degree of the types $\omega_i$ are all equal to $1$. By Theorem \ref{multiPP}, we have 
 
$$P_c(\calQ_{\mathbf{S}};q)=q^{d_{\mathbf{S}}/2}\left\langle\Lambda\otimes \calX_1\otimes\cdots\otimes\calX_k,1\right\rangle.$$where $\mathbf{S}=(\gl_n)^{2g}\times\overline{S}_1\times\cdots\overline{S}_k$ with $(S_1,\dots,S_k)$ a generic tuple of adjoint orbits of $\gl_n$ of type $\tomhat$. Hence by Theorem \ref{multiPP2} we have $$P_c\left(\mathbb{Q}_{\bf L,P,\Sigma};q\right)=P_c(\calQ_{\mathbf{S}};q).$$

%This is should be compared with the fact that if $\Sigma=\sigma+C$ then $$P_c(\mathbb{X}_{L,P,\Sigma};q)=P_c(\calC;q)$$where $\calC$ is an adjoint orbit of type $(L,C)$. The later fact is well-known at least when $\Sigma=\{0\}$ as in this case the fibres of the canonical map $\GL_n/L\rightarrow \GL_n/P$ are affine spaces isomorphic to the unipotent radical of $P$.

%These two situations are particular cases of a more general situation with quiver varieties. Indeed it is well-known by the work of Crawley-Boevey and van den Bergh \cite{crawley-boevey-etal} that for any  $\thetahat\in\Z^I$ the two quiver varieties $\mathfrak{M}_{0,\thetahat}(\v)$ and $\mathfrak{M}_{0,\thetahat}(\v)$ are fibres of the same locally trivial fibration (for the complex topology) and therefore have isomorphic intersection cohomology.

\end{remark}

From Theorem \ref{compmulti} we deduce the following identity.

\begin{corollary}$$P_c^\w\left(\mathbb{Q}_{\bf L,P,\Sigma};q\right)=q^{d_\bfO/2}\mathbb{H}_\omhat(q).$$
\end{corollary}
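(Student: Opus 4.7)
The plan is to obtain this identity immediately by composing the previous two results, and the work consists only in checking that the combination is legitimate as an identity of polynomials in $q$. First I would invoke Theorem \ref{multiPP2} for a finite field $\F_q$ admitting a ring homomorphism $R\to\F_q$ (as in \S \ref{pure}), applied to any generic tuple $(\calX_1,\dots,\calX_k)$ of irreducible characters of $\GL_n(\F_q)$ of type $\omhat$. Such a tuple exists as soon as $q$ is large enough and $\mathrm{char}(\F_q)$ avoids a finite set of primes, by \S \ref{gen}, and one may shrink the spectrum of $R$ to a smaller ring (still of finite type over $\Z$) to absorb these conditions. This gives
$$P_c^\w\left(\mathbb{Q}_{\bf L,P,\Sigma};q\right)=q^{d_\bfO/2}\left\langle\Lambda\otimes \calX_1\otimes\cdots\otimes\calX_k,1\right\rangle_{\GL_n(\F_q)}.$$

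Next I would invoke Theorem \ref{compmulti}, which identifies the multiplicity on the right hand side with $\H_\omhat(q)$, to conclude
$$P_c^\w\left(\mathbb{Q}_{\bf L,P,\Sigma};q\right)=q^{d_\bfO/2}\H_\omhat(q)$$
for all such $\F_q$. The left hand side, defined by traces of $\w$ on the intersection cohomology groups $IH_c^{2i}(\mathbb{Q}_{\bf L,P,\Sigma},\C)$, is a polynomial in $q$ by construction. The right hand side is also a polynomial in $q$ with rational coefficients, by Remark \ref{dependence}.

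The only step requiring a small argument is the passage from ``numerical equality at infinitely many prime powers'' to ``equality of polynomials''. Since the set of prime powers $q$ for which a ring homomorphism $R\to \F_q$ exists is infinite (it contains all sufficiently large prime powers outside a finite set of bad characteristics), and both sides are polynomials in the single variable $q$, the agreement at infinitely many values forces the polynomial identity. I do not anticipate any obstacle beyond this routine verification, as both of the ingredients have already been established in the preceding sections.
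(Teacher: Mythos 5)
Your proof is correct and follows essentially the same route as the paper: the corollary is obtained by combining Theorem \ref{multiPP2} with Theorem \ref{compmulti}, the paper leaving implicit the passage from agreement at the (infinitely many) prime powers $q$ admitting a homomorphism $R\to\F_q$ to an identity of polynomials, which you spell out. Your extra remarks (shrinking $R$, polynomiality of both sides via Remark \ref{dependence}) are harmless and consistent with the paper's treatment.
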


\subsubsection{Proof of Theorem \ref{multiPP2}}\label{lastpre}

By (\ref{decompMHS}) we have 

\begin{equation}P_c^\w(\mathbb{Q}_{\bf L,P,\Sigma};q)=P_c(\calQ_\bfO;q)+\sum_{\chi\in({\rm Irr}\, W_{\mathbf{M}})^*}{\rm Tr}\, (\w\,|\, A_\chi)q^{-r(\chi)} P_c(\calQ_{\bfO_\chi};q).\label{PPeq}\end{equation}To alleviate the notation, for each $\tauhat\in({\bf T}_n)^k$ we choose a generic tuple $(\calX_1,\dots,\calX_k)$ of irreducible characters of type $\tauhat$ and we put $R_\tauhat:=\calX_1\otimes\cdots\otimes\calX_k$. For $\ttauhat\in(\tilde{{\bf T}}_n)^k$ we denote $R_\ttauhat$ instead of $R_{m^k(\ttauhat)}$.

Now for each irreducible character $\chi$ of $W_{\bf M}$ we denote by $\ttauhat_\chi$ the type of $\bfO_\chi$ and we denote simply by $\ttauhat$ the type of $\bfO$. By Theorem \ref{multiPP} we have 

$$P_c(\calQ_{\bfO_\chi};q)=q^{d_{\bfO_\chi}/2}\left\langle \Lambda\otimes R_{\ttauhat_\chi},1\right\rangle.$$Hence we are thus reduced to prove the following identity

\begin{equation*}\left\langle \Lambda\otimes R_\omhat\right\rangle=\left\langle \Lambda\otimes R_\ttauhat\right\rangle+\sum_{\chi\in({\rm Irr}\, W_{\mathbf{M}})^*}{\rm Tr}\, (\w\,|\, A_\chi)\left\langle \Lambda\otimes R_{\ttauhat_\chi},1\right\rangle.\end{equation*}

By Theorem \ref{compmulti} we need to see that 

\begin{equation}\mathbb{H}_\omhat(q)=\mathbb{H}_\ttauhat(q)+\sum_{\chi\in({\rm Irr}\, W_{\mathbf{M}})^*}{\rm Tr}\, (\w\,|\, A_\chi)\,\mathbb{H}_{\ttauhat_\chi}(q)\label{Heq}\end{equation}where $\mathbb{H}_\ttauhat(q):=\mathbb{H}_{m^k(\ttauhat)}(q)$.

From the definition of $\mathbb{H}_\omega(q)$ (cf. Formula (\ref{Hom})) we are reduced to the following problem on Schur functions $\{s_\omega(\x)\}_{\omega\in{\bf T}_n}$:

Let $L,C,M,\calO, A_\chi$ be as in \S \ref{actionW}. For $\chi\in{\rm Irr}\, W_M$, denote by $\ttau_\chi\in\tilde{{\bf T}}_n$ the type of $\calO_\chi$ (with the convention that $\ttau_1=\ttau$). Let $\tomega\in\tilde{\bf T}_n$ be the type associated to $(L,C)$. Fix $w\in W_M(L,C)$ and let $\omega\in\mathfrak{H}^{-1}(\tomega)\in{\bf T}_n$ be the type corresponding to $(L,C,w)$. To prove Formula (\ref{Heq}) it is enough to prove the following identity

\begin{equation}(-1)^{r(\omega)}s_{\omega'}(\x)=s_{\ttau'}(\x)+\sum_{\chi\in({\rm Irr}\, W_M)^*}{\rm Tr}\, (w\,|\, A_\chi)s_{\ttau_\chi'}(\x)\label{Sfor}\end{equation}where for $\tnu=\nu^1\nu^2\cdots\nu^r\in\tilde{\bf T}_n$, $s_\tnu(\x):=s_{\nu^1}(\x)s_{\nu^2}(\x)\cdots s_{\nu^r}(\x)$ and where $r(\omega)=n+\sum_i|\omega^i|$.

We now explain how to get Formula (\ref{Sfor}) from Proposition \ref{twistedLRTR}.

We may assume that $L=\prod_{j=1}^r\left(\GL_{n_{j,1}}\times\cdots\times\GL_{n_{j,s_j}}\right)$ so that $M=\prod_{i=1}^r\GL_{m_i}$ and  $\GL_{n_{j,1}}\times\cdots\times\GL_{n_{j,s_j}}\subset\GL_{m_j}$.
Then the nilpotent orbit $C$ may be written as $$C=\prod_{j=1}^r \left(C_{j,1}\times\cdots\times C_{j,s_j}\right)$$with $C_{j,l}$ a nilpotent orbit of $\gl_{n_{j,l}}$. Let $\omega^{j,l}$ be the partition of $n_{j,l}$ given by the size of the Jordan blocks of $C_{j,l}$, and for each $j=1,2,\dots,r$, let $\tomega_j \in{\bf \tilde{T}}_{m_j}$ be the type given by the collection $\{\omega^{j,l}\}_{l=1,\dots,s_j}$. 

Then \begin{equation}W_M(L,C)\simeq\prod_{j=1}^rW_{\tomega_j}.\label{Weyldecomp}\end{equation}

Consider the map $\tilde{\mathfrak{F}}_r:\tilde{{\bf T}}_{m_1}\times\cdots\times\tilde{{\bf T}}_{m_r}\rightarrow\tilde{\bf T}_n$ where $\tilde{\mathfrak{F}}_r(\tmu_1,\dots,\tmu_r)$ is defined by re-ordering the partitions in the concatenation of the types $\tmu_1,\dots,\tmu_r$. 

\begin{example} Consider the lexicographic ordering on partitions. Then the image of $\left((3,2,1)(2,1),(3,1)\right)$ by  $\tilde{\mathfrak{F}}_2:\tilde{\bf T}_9\times\tilde{\bf T}_4\rightarrow\tilde{ \bf T}_{13}$ is $(3,2,1)(3,1)(2,1)$.
\end{example}

Similarly we define $\mathfrak{F}_r:{\bf T}_{m_1}\times\cdots\times{\bf T}_{m_r}\rightarrow{\bf T}_n$.

We denote by $S:\tilde{{\bf T}}\rightarrow\calP$ the map which assigns to a type  $\lambda^1\cdots\lambda^r\in\tilde{\bf T}$ the partition   $\sum_{i=1}^r\lambda^i$.

Consider the following commutative diagram

$$\xymatrix{{\bf T}_{m_1}\times\cdots\times{\bf T}_{m_r}\ar[r]^{\mathfrak{H}^r}\ar[d]^{\mathfrak{F}_r}&\tilde{{\bf T}}_{m_1}\times\cdots\times\tilde{{\bf T}}_{m_r}\ar[r]^{S^r}\ar[d]^{\tilde{\mathfrak{F}}_r}&\calP_{m_1}\times\cdots\times\calP_{m_r}\\
{\bf T}_n\ar[r]^{\mathfrak{H}}&\tilde{\bf T}_n&}.$$

Note that $\tomega=\tilde{\mathfrak{F}}_r(\tomega_1,\dots,\tomega_r)$. Let $w_j$ be the coordinate of $w\in W_M(L,C)$  in $W_{\tomega_j}$. The element $w_j$ defines a unique element $\omega_j\in\mathfrak{H}^{-1}(\tnu_j)\subset{\bf T}_{m_i}$. Then $\omega=\mathfrak{F}_r(\omega_1,\dots,\omega_r)$ and so

\begin{equation} s_\omega(\x)=s_{\omega_1}(\x)\cdots s_{\omega_k}(\x).\end{equation}

For each $i=1,2,\dots,r$, put $\tau^i=S(\tomega_i)\in\calP_{m_i}$. Note that the collection of the partitions $\tau^1,\dots,\tau^r$ gives the type $\ttau$ of $\calO$.

Now for each $i=1,2,\dots,r$, we have $$s_{\omega_i}(\x)=\sum_{\lambda\unlhd \tau^i}c_{\omega_i}^\lambda s_\lambda(\x)$$and so $$s_\omega(\x)=\sum_{(\lambda^1,\dots,\lambda^r)\unlhd(\tau^1,\dots\tau^r)}\left(\prod_ic_{\omega_i}^{\lambda^i}\right)s_{\lambda^1}(\x)\cdots s_{\lambda^r}(\x)$$where $(\lambda^1,\dots,\lambda^r)\unlhd(\tau^1,\dots\tau^r)$ means that $\lambda^i\unlhd\tau^i$ for all $i=1,\dots,r$. Note that the set of sequences $(\lambda^1,\dots\lambda^r)$ such that $(\lambda^1,\dots,\lambda^r)\unlhd(\tau^1,\dots\tau^r)$ is in bijection with the set $\{\ttau_\chi\,|\, \chi\in{\rm Irr}\, W_M(L,C)\}$. The bijection associates to a sequence $(\lambda^1,\dots\lambda^r)$ the unique type given by the collection of partitions $\lambda^1,\dots,\lambda^r$. Moreover if $(\lambda^1,\dots,\lambda^r)$ corresponds to $\chi$, we have $\prod_ic_{\omega_i}^{\lambda^i}={\rm Tr}\,(w\,|\, A_\chi)$ by Proposition \ref{twistedLRTR}, hence

\begin{align*}s_\omega(\x)&=\sum_{\chi\in{\rm Irr}\, W_M} {\rm Tr}\,(w\,|\, A_\chi)s_{\ttau_\chi}(\x)\\
 &=s_\ttau(\x)+\sum_{\chi\in({\rm Irr}\, W_M)^*}{\rm Tr}\,(w\,|\, A_\chi)s_{\ttau_\chi}(\x)
\end{align*}from which we deduce our Formula (\ref{Sfor}).

\subsubsection{Application to multiplicities in tensor products}

Assume that $(\calX_1,\dots,\calX_k)$ is a generic tuple of irreducible characters of type $\omhat$. Theorem \ref{multiPP2} has the following consequences.

\begin{theorem} We have:

\noindent (a)  The multiplicity $\langle\Lambda\otimes \calX_1\otimes\cdots\otimes\calX_k,1\rangle$ is a polynomial in $q$ of degree $d_\bfO/2$ with integer coefficients (with the convention that $d_\bfO=-\infty$ if $\calQ_\bfO=\emptyset$). If moreover the degrees of the characters $\calX_1,\dots,\calX_k$ are all split, then the coefficients of that polynomial are positive.

\noindent (b) The coefficient of $q^{d_\bfO/2}$ in $\langle\Lambda\otimes\calX_1\otimes\cdots\otimes\calX_k ,1\rangle$ equals $1$.

\noindent (c)  We have $\langle\Lambda\otimes \calX_1\otimes\cdots\otimes\calX_k,1\rangle\neq 0$ if and only if $\v_\bfO\in\Phi(\Gamma_\bfO)$. If $g=0$, then $\v_\bfO$ is a real root if and only if $\left\langle\calX_1\otimes\cdots\otimes\calX_k ,1\right\rangle=1$.

\noindent (d) If $g\geq 1$, we always have $\langle\Lambda\otimes \calX_1\otimes\cdots\otimes\calX_k,1\rangle\neq 0$.
\end{theorem}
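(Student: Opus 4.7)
The plan is to deduce everything from Theorem \ref{multiPP2}, which rewrites the multiplicity as
\[
\langle\Lambda\otimes\calX_1\otimes\cdots\otimes\calX_k,1\rangle
=q^{-d_\bfO/2}P_c^{\w}(\mathbb{Q}_{\bf L,P,\Sigma};q)
=\sum_{i\geq 0}\mathrm{Tr}\bigl(\w\,\bigm|\,IH_c^{2i}(\mathbb{Q}_{\bf L,P,\Sigma},\C)\bigr)\,q^{i-d_\bfO/2}.
\]
Because $\w$ is of finite order and acts on a $\Q$-form of intersection cohomology, each trace is a rational algebraic integer, hence an integer; so the multiplicity is a Laurent polynomial in $q$ with integer coefficients. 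The fact that it is actually a polynomial, and even a polynomial of degree exactly $d_\bfO/2$, follows as soon as one verifies (b) together with the vanishing of all terms below $q^{d_\bfO/2}$ that is forced by Theorem \ref{multiPP2} (the right-hand side is $q^{d_\bfO/2}$ times a genuine polynomial $\H_\omhat(q)$ from Theorem \ref{compmulti}). Finally, when $\w=1$ (the split case) the traces become dimensions of $IH_c^{2i}$, which are non-negative, giving (a).

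For (b) I would use the decomposition (\ref{A3ic}),
\[
(\rho/_{\PGL_n})_*\bigl(\pIC{\mathbb{Q}_{\bf L,P,\Sigma}}\bigr)\simeq\pIC{\calQ_\bfO}\oplus\bigoplus_{\chi\neq \chi_o}A_\chi\otimes\pIC{\calQ_{\bfO_\chi}},
\]
together with $\dim\mathbb{Q}_{\bf L,P,\Sigma}=d_\bfO=\dim\calQ_\bfO$ (and $\dim\calQ_{\bfO_\chi}<d_\bfO$ for $\chi\neq\chi_o$). Passing to hypercohomology, the top group $IH_c^{2d_\bfO}(\mathbb{Q}_{\bf L,P,\Sigma},\C)$ is naturally identified with $IH_c^{2d_\bfO}(\calQ_\bfO,\C)$, which is one-dimensional because $\calQ_\bfO$ is irreducible (Corollary \ref{lissite}), and the $W({\bf L,\Sigma})$-action on this summand is the action on $A_{\chi_o}$, which is the trivial character of $W_{\bf M}({\bf L,C})$ by the last sentence of Proposition \ref{Springer}. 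Hence $\mathrm{Tr}(\w\mid IH_c^{2d_\bfO})=1$, which is the leading coefficient asserted in (b).

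For (c) and (d) the key point is that non-vanishing of the multiplicity is equivalent, via Theorem \ref{multiPP2} and (b), to non-emptiness of $\mathbb{Q}_{\bf L,P,\Sigma}$; and since $\rho/_{\PGL_n}:\mathbb{Q}_{\bf L,P,\Sigma}\to\calQ_\bfO$ is surjective with $\calQ_\bfO$ non-empty iff $\mathbb{Q}_{\bf L,P,\Sigma}$ is, Theorem \ref{TH6} gives the equivalence with $\v_\bfO\in\Phi(\Gamma_\bfO)$. For the real-root statement when $g=0$: if $\v_\bfO$ is a real root, Proposition \ref{real} says $\calV_\bfO^o=\calV_\bfO$ is a single $\PGL_n$-orbit, so $\calQ_\bfO$ is a point; since $\dim\mathbb{Q}_{\bf L,P,\Sigma}=d_\bfO=2-{}^t\v_\bfO\mathbf{C}\v_\bfO=0$ and $\mathbb{Q}_{\bf L,P,\Sigma}$ is irreducible (Corollary \ref{strat2}), it too is a reduced point, so $P_c^\w=1$ and the multiplicity equals $1$. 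Conversely, if the multiplicity equals $1$ then by (b) the leading term is $q^{d_\bfO/2}$ with coefficient $1$, and a polynomial identically equal to $1$ of that form forces $d_\bfO=0$, i.e.\ $\v_\bfO$ real. Part (d) is immediate from Proposition \ref{imaginary}: for $g\geq 1$ the vector $\v_\bfO$ is always an imaginary root, hence always in $\Phi(\Gamma_\bfO)$.

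The only real subtlety — the main ``obstacle'' hidden behind this otherwise formal argument — is the claim in (b) that the action of $W({\bf L,\Sigma})$ is trivial on the top intersection cohomology. This is where one must invoke precisely that the summand $\pIC{\calQ_\bfO}$ in (\ref{A3ic}) corresponds to the trivial representation, as stated in Proposition \ref{Springer}, and that all other summands live on strict closed substrata and therefore contribute only to $IH_c^{<2d_\bfO}$; everything else in the theorem is bookkeeping on top of this fact and the previously-established Theorems \ref{multiPP2}, \ref{TH6}, \ref{compmulti} and Proposition \ref{imaginary}.
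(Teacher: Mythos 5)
Your proposal is correct and follows essentially the same route as the paper: everything rests on Theorem \ref{multiPP2} together with the decomposition (\ref{A3ic})/(\ref{PPeq}), irreducibility to get the one-dimensional top intersection cohomology with trivial $\w$-action (leading coefficient $1$), and then Theorem \ref{nonemptymu}, Proposition \ref{real} and Proposition \ref{imaginary} for (c) and (d). The only differences are cosmetic: you get integrality of the coefficients from the finite-order action on a $\Q$-form and the one-dimensionality of $IH_c^{2d_\bfO}(\calQ_\bfO,\C)$ directly from irreducibility of $\calQ_\bfO$ (Corollary \ref{lissite}), whereas the paper uses the integer-valued traces on the $A_\chi$ in (\ref{PPeq}) and passes through the resolution $\mathbb{Q}_{\bf \hat{L},\hat{P},\{\sigma\}}$ and Theorem \ref{strat}.
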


\begin{proof} Let us first see that if $\calQ_\bfO\neq\emptyset$ then ${\rm dim}\, IH_c^{2d_\bfO}(\calQ_\bfO,\C)=1$. Consider a resolution $\mathbb{Q}_{\bf \hat{L},\hat{P},\{\sigma\}}\rightarrow\calQ_\bfO$. It is clear from Formula (\ref{PPeq}) applied to ${\bf \hat{L},\hat{P},\{\sigma\}}$ instead of  ${\bf L,P,\Sigma}$  that ${\rm dim}\, H_c^{2d_\bfO}(\mathbb{Q}_{\bf \hat{L},\hat{P},\{\sigma\}},\C)={\rm dim}\, IH_c^{2d_\bfO}(\calQ_\bfO,\C)$. But $\mathbb{Q}_{\bf \hat{L},\hat{P},\{\sigma\}}$ is irreducible by Theorem \ref{strat} and so ${\rm dim}\, H_c^{2d_\bfO}(\mathbb{Q}_{\bf \hat{L},\hat{P},\{\sigma\}},\C)=1$. 

It is thus clear from Formula (\ref{PPeq}) that $P_c^\w\left(\mathbb{Q}_{\bf L,P,\Sigma};q\right)$ is a polynomial in $q$ of degree $d_\bfO$ with integer coefficients and that the coefficient of $q^{d_\bfO}$ is equal to $1$. It is also clear that if $\w=1$, then the coefficients are positive. Hence  $q^{-d_\bfO/2}P_c^\w\left(\mathbb{Q}_{\bf L,P,\Sigma};q\right)=\langle\Lambda\otimes \calX_1\otimes\cdots\otimes\calX_k,1\rangle$ satisfies the assertions (a) and (b) of the theorem.

From what we just said it is clear that $\langle\Lambda\otimes \calX_1\otimes\cdots\otimes\calX_k,1\rangle\neq 0$ if and only if $\calQ_\bfO\neq\emptyset$. Hence the assertion (c) follows from Theorem \ref{nonemptymu} and Proposition \ref{real}.

Finally the assertion (d) follows from the assertion (c) and Proposition \ref{imaginary}.

\end{proof}

\end{document}